\let\c@figure\c@table
\let\ftype@figure\ftype@table
\newtheorem{theorem}{Theorem} [section]
\newtheorem{proposition}[theorem]{Proposition}	
\newtheorem{corollary}[theorem]{Corollary}
\newtheorem{lemma}[theorem]{Lemma}
\newtheorem{assumption}[theorem]{Assumption}
\newtheorem{conjecture}[theorem]{Conjecture}
\theoremstyle{definition}
\newtheorem{definition}[theorem]{Definition}
\newtheorem{remark}[theorem]{Remark}
\DeclareMathOperator\arctanh{arctanh}
\DeclareMathOperator{\dist}{dist}
\newcommand{\C}{\mathbb{C}}
\newcommand{\R}{\mathbb{R}}
\newcommand{\N}{\mathbb{N}}
\newcommand{\PP}{\mathbb{P}}
\newcommand{\re}{\text{\upshape Re\,}}
\newcommand{\im}{\text{\upshape Im\,}}
\let\oldbibliography\thebibliography
\renewcommand{\thebibliography}[1]{\oldbibliography{#1}
\setlength{\itemsep}{-0.5pt}}
\def\Xint#1{\mathchoice
{\XXint\displaystyle\textstyle{#1}}%
{\XXint\textstyle\scriptstyle{#1}}%
{\XXint\scriptstyle\scriptscriptstyle{#1}}%
{\XXint\scriptscriptstyle\scriptscriptstyle{#1}}%
\!\int}
\def\XXint#1#2#3{{\setbox0=\hbox{$#1{#2#3}{\int}$}
\vcenter{\hbox{$#2#3$}}\kern-.5\wd0}}
\def\dashint{\;\Xint-}
\tikzset{->-/.style={decoration={
				markings,
				mark=at position #1 with {\arrow{latex}}},postaction={decorate}}}
	\tikzset{-<-/.style={decoration={
				markings,
				mark=at position #1 with {\arrowreversed{latex}}},postaction={decorate}}}
\tikzset{
	master/.style={
		execute at end picture={
			\coordinate (lower right) at (current bounding box.south east);
			\coordinate (upper left) at (current bounding box.north west);
		}
	},
	slave/.style={
		execute at end picture={
			\pgfresetboundingbox
			\path (upper left) rectangle (lower right);
		}
	}
}
\tikzset{cross/.style={cross out, draw, 
         minimum size=2*(#1-\pgflinewidth), 
         inner sep=0pt, outer sep=0pt}}
\numberwithin{equation}{section}
\def\ds{\displaystyle}
\def\bigO{{\cal O}}
\newcommand{\oset}[3][0ex]{%
  \mathrel{\mathop{#3}\limits^{
    \vbox to#1{\kern-2\ex@
    \hbox{$\scriptstyle#2$}\vss}}}}
\begin{document}
\title{Hole probabilities and balayage of measures \\ for planar Coulomb gases}
\author{Christophe Charlier}

\maketitle

\begin{abstract}

We study hole probabilities of two-dimensional Coulomb gases with a general potential and arbitrary temperature. The hole region $U$ is assumed to satisfy $\partial U\subset S$, where $S$ is the support of the equilibrium measure $\mu$. Let $n$ be the number of points. As $n \to \infty$, we prove that the probability that no points lie in $U$ behaves like $\exp(-Cn^{2}+o(n^{2}))$. We determine $C$ in terms of $\mu$ and the balayage measure $\nu = \mathrm{Bal}(\mu|_{U},\partial U)$. If $U$ is unbounded, then $C$ also involves the Green function of $\Omega$ with pole at $\infty$, where $\Omega$ is the unbounded component of $U$. We also provide several examples where $\nu$ and $C$ admit explicit expressions: we consider several point processes, such as the elliptic Ginibre, Mittag-Leffler, and spherical point processes, and various hole regions, such as circular sectors, ellipses, rectangles, and the complement of an ellipse. This work generalizes previous results of Adhikari and Reddy in several directions.
\end{abstract}
\noindent
{\small{\sc AMS Subject Classification (2020)}: 31A99, 49Q20, 60G55, 41A60.}

\noindent
{\small{\sc Keywords}: Coulomb gases, balayage measures, Green functions.}


{
\hypersetup{linkcolor=black}

\small \tableofcontents \addtocontents{toc}{\vspace{-0.2cm}} \normalsize

}

\section{Introduction}


\medskip \noindent The Coulomb gas model for $n$ points on the plane with external potential $Q:\C\to \R\cup\{+\infty\}$ is the probability measure
\begin{align}
& \frac{1}{Z_{n}}\prod_{1\leq j<k \leq n}|z_{j}-z_{k}|^{\beta} \prod_{j=1}^{n} e^{-n \frac{\beta}{2} Q(z_{j})}d^{2}z_{j}, & & z_{1},\ldots,z_{n}\in \C, \label{general density intro}
\end{align}
where $Z_{n}$ is the normalization constant, $d^{2}z$ is the two-dimensional Lebesgue measure, and $\beta >0$ is the inverse temperature. For $\beta=2$, \eqref{general density intro} is also the law of the complex eigenvalues of some random normal matrices (see e.g. \cite{CZ1998}). Under suitable assumptions on $Q$, as $n \to \infty$ the points $z_{1},\ldots,z_{n}$ will accumulate with high probability on the support $S$ of an equilibrium measure $\mu$ \cite{HM2013}. For example, the well-studied Ginibre ensemble corresponds to $Q(z)=|z|^{2}$, for which $\mu$ is the uniform measure supported on the unit disk, see Figure \ref{fig:hole regions ginibre intro} (left).

\medskip We are interested in the asymptotics as $n\to +\infty$ of the hole probability $\mathcal{P}_{n}:=\mathbb{P}(\#\{z_{j}\in U\}=0)$, where $U \subset \C$ is an open set satisfying $\partial U \subset S$. This is a classical problem in the theory of point processes with a long history, and it has been widely studied for particular potentials $Q$ and hole regions $U$. For example, determining the precise asymptotics of $\mathcal{P}_{n}$ in the case where $Q(z)=|z|^{2}$ and $U=\{z:|z|\leq r\}$ for some $r<1$ has received great attention over the years \cite{GHS1988, ForresterHoleProba, JLM1993, APS2009, L et al 2019, C2021}. In this paper, our focus is not on obtaining precise asymptotics, but rather on considering general potentials $Q$ and hole regions $U$. Of particular importance for us are the works \cite{AR2017, A2018} of Adhikari and Reddy, which we briefly discuss now:
\begin{itemize}[leftmargin=5mm]
\item For $Q(z) = |z|^{2}$, $\beta=2$ and sets $U\subset S$ satisfying the exterior ball condition (see \eqref{eqn:exterior ball condition lol}), it is proved in \cite{AR2017} that $\mathcal{P}_{n} = \exp (-Cn^{2}+o(n^{2}))$ as $n\to \infty$, where $C = \frac{1}{2}\big(  \int_{\partial U}Q(z)d\nu(z) - \int_{U}Q(z)d\mu(z) \big)$ and $\nu$ is the balayage measure of $\mu|_{U}$ on $\partial U$. In particular, $\nu$ has the same mass as $\mu|_{U}$ but $\nu$ is supported on $\partial U$ (the balayage operation is further explained in Subsection \ref{subsection:background potential} below). It is also proved in \cite{AR2017} that the equilibrium measure of the Ginibre process, when \textit{conditioned} on the hole event $\#\{z_{j}\in U\}=0$, is given by $\mu|_{S\setminus U} + \nu$. Thus, for $U\subset S$ and to a first order approximation, conditioning on the hole event (i) does not produce a macroscopic effect outside $\smash{\overline{U}}$ and (ii) forces roughly $\mu(U) \, n$ points to accumulate in a small interface around $\partial U$ (the width of this interface is expected to be  $\bigO(n^{-1})$ \cite{Seo, ACCL1, ACC2023}). This conditional point process is illustrated in Figure \ref{fig:hole regions ginibre intro} (right) when $U$ is a disk (in this case, due to the symmetry, $\nu$ is the uniform measure on $\partial U$).

\smallskip These results were then generalized in \cite{A2018} for arbitrary $\beta>0$ and for a class of rotation-invariant potentials, i.e.~when $Q(z)=g(|z|)$ for some $g:[0,\infty)\to \R\cup \{+\infty\}$. (It is also assumed in \cite{A2018} that $S$ is a disk, among other things.)
\item It is in general a challenging problem to compute $\nu$ (and therefore $C$) explicitly. However, when $Q(z)=|z|^{2}$ and when $U$ is either a disk, an annulus, a half-disk, a cardioid, an ellipse, or an equilateral triangle, $C$ was explicitly determined in \cite{AR2017} (for the equilateral triangle and the half-disk, $\nu$ was not explicitly determined, but the authors were still able to compute $C$ using clever arguments). In \cite{A2018}, the more general case $Q(z)=g(|z|)$ is considered, but explicit results for $\nu$ and $C$ were obtained only when $U$ is centered at $0$ and is either a  disk or an annulus.
\end{itemize}

\begin{figure}
\begin{center}
\begin{tikzpicture}[master]
\node at (0,0) {\includegraphics[width=3.5cm]{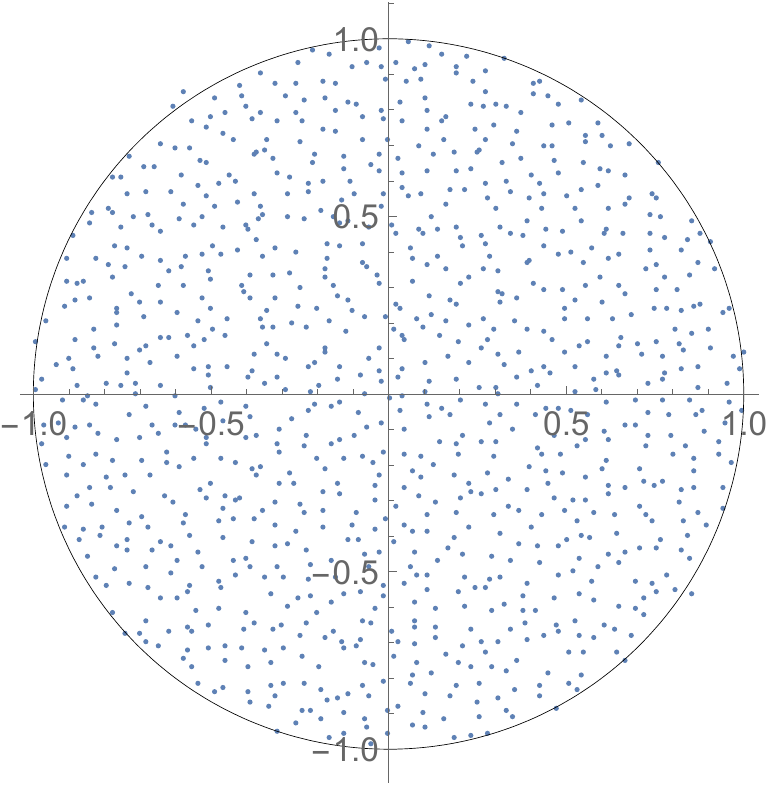}};
\end{tikzpicture}\hspace{-0.3cm}
\begin{tikzpicture}[slave]
\node at (0,0) {\includegraphics[width=3.5cm]{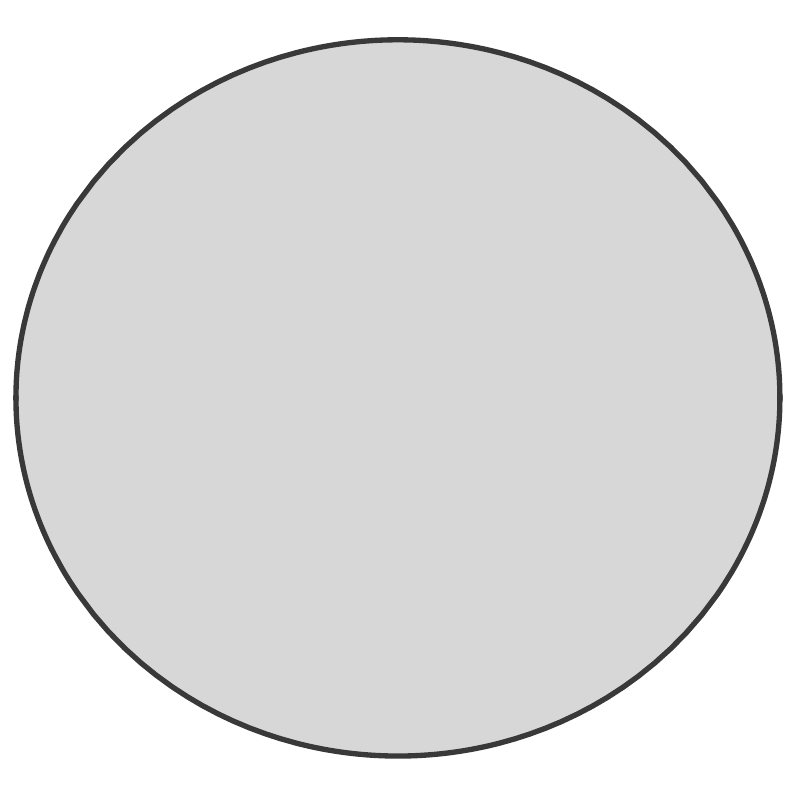}};
\node at (0,0) {\footnotesize $S$};
\end{tikzpicture}\hspace{0.5cm}
\begin{tikzpicture}[slave]
\node at (0,0) {\includegraphics[width=3.5cm]{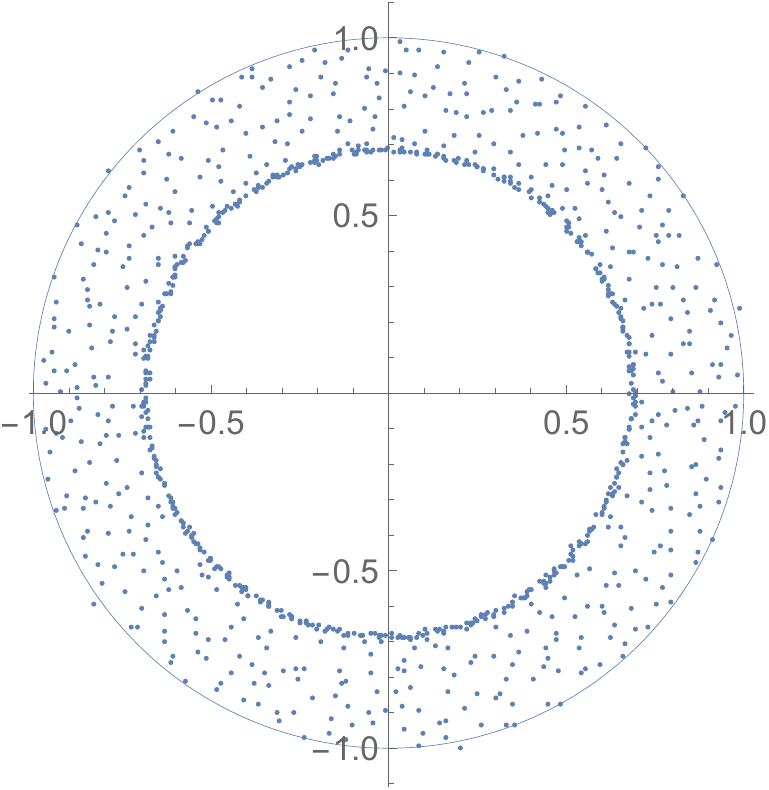}};
\draw[dashed] (-2.1,-2)--(-2.1,2);
\end{tikzpicture}\hspace{-0.3cm}
\begin{tikzpicture}[slave]
\node at (0,0) {\includegraphics[width=3.5cm]{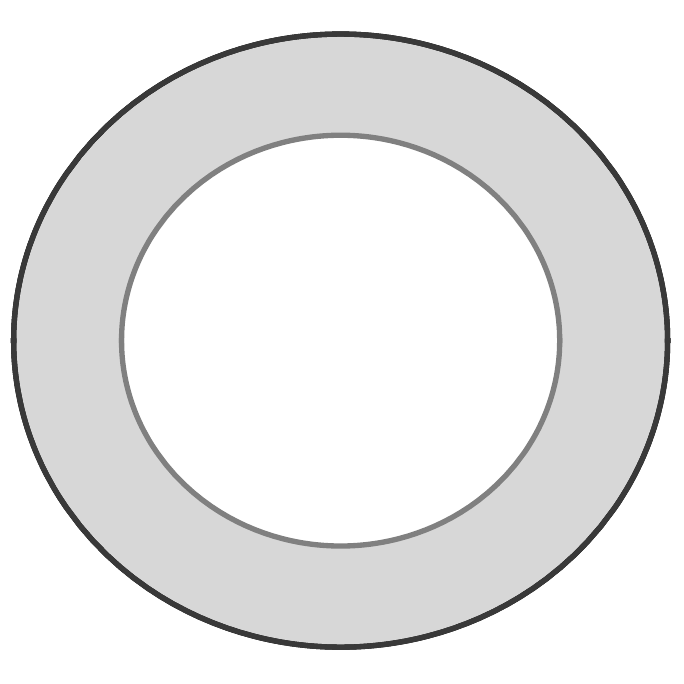}};
\node at (0,1.3) {\footnotesize $S\setminus U$};
\node at (0,0) {\footnotesize $U$};
\end{tikzpicture}
\end{center}
\vspace{-0.4cm}\caption{\label{fig:hole regions ginibre intro} Left: \eqref{general density intro} with $Q(z) = |z|^{2}$, $\beta=2$ and $n=1000$. $S$ is the unit circle. Right: the same point process, but \textit{conditioned} on the event that there is a hole on $U = \{z:|z|\leq 0.67\}$.}
\end{figure}

\noindent The results of this paper generalize the aforementioned results from \cite{AR2017, A2018} in several directions:
\begin{itemize}[leftmargin=5.5mm]
\item Theorem \ref{thm:general pot} establishes the asymptotic formula $\mathcal{P}_{n} = \exp (-Cn^{2}+o(n^{2}))$ as $n\to +\infty$ under weaker assumptions on $Q$ and $U$ than in \cite{AR2017, A2018}. In particular:
\begin{itemize}
\item \vspace{-0.1cm} $Q$ is not assumed to be rotation-invariant.
\item If $S$ is compact, then $U$ is allowed to have an unbounded component $\Omega$.
\end{itemize}
In the above formula, $C$ is given by
\begin{align}\label{C in intro}
C = \frac{\beta}{4}\bigg(  \int_{\partial U}Q(z)d\nu(z) + 2\, c^{\hspace{0.02cm}\mu}_{U} - \int_{U}Q(z)d\mu(z) \bigg).
\end{align}
Here $c^{\hspace{0.02cm}\mu}_{U}=0$ if $U$ is bounded, but if $U$ is unbounded then $c^{\hspace{0.02cm}\mu}_{U}$ is in general non-zero and involves the Green function of $\Omega$ with pole at $\infty$, where $\Omega$ is the unbounded component of $U$. 

Proposition \ref{prop:general pot} also establishes that $\mu|_{S\setminus U} + \nu$ is the equilibrium measure associated with \eqref{general density intro}, when \eqref{general density intro} is conditioned on the hole event $\#\{z_{j}\in U\}=0$.

Two examples of sets $S$ and $U$ covered by Theorem \ref{thm:general pot} are illustrated in Figure \ref{fig:hole regions examples}. 
\item We obtain explicit expressions for $\nu$ and $C$ for various choices of $Q$ and $U$. 

\smallskip Four examples of $Q$ that are considered are the following:
\begin{itemize}
\item $Q(z) = \frac{1}{1-\tau^{2}}\big( |z|^{2}-\tau \, \re z^{2} \big)$ for some $\tau \in [0,1)$. Then \eqref{general density intro} is known as the \textit{elliptic Ginibre point process} and $\mu$ is the uniform measure supported on an ellipse:
\begin{align}\label{mu S EG beginning of intro}
d\mu(z) = \frac{d^{2}z}{\pi(1-\tau^{2})}, \qquad S=\Big\{ z\in \C: \Big( \frac{\re z}{1+\tau} \Big)^{2} + \Big( \frac{\im z}{1-\tau} \Big)^{2} \leq 1 \Big\}.
\end{align}
The elliptic Ginibre point process is illustrated in Figure \ref{fig:Ginibre, ML and Spherical} (left).
\item $Q(z) = |z|^{2b}$ for some $b>0$. Then \eqref{general density intro} is known as the \textit{Mittag-Leffler point process}, and $\mu$ is supported on a disk:
\begin{align*}
d\mu(z) = \frac{b^{2}}{\pi}|z|^{2b-2}d^{2}z, \qquad S=\{z \in \C: |z| \leq b^{-\frac{1}{2b}}\}.
\end{align*}
The Mittag-Leffler point process is illustrated for $b=2$ in Figure \ref{fig:Ginibre, ML and Spherical} (middle), and for various values of $b$ in the first row of Figure \ref{fig:circular sector}.
\begin{figure}
\begin{center}
\begin{tikzpicture}[master]
\node at (0,0) {\includegraphics[width=5.63cm]{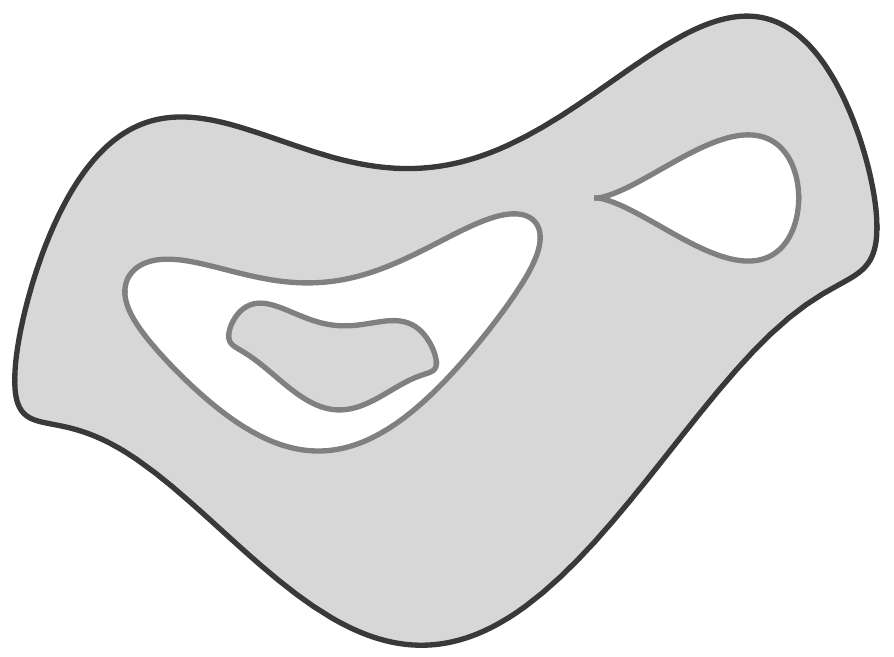}};
\end{tikzpicture} \hspace{0.51cm}
\begin{tikzpicture}[slave]
\node at (0,0) {\includegraphics[width=5.63cm]{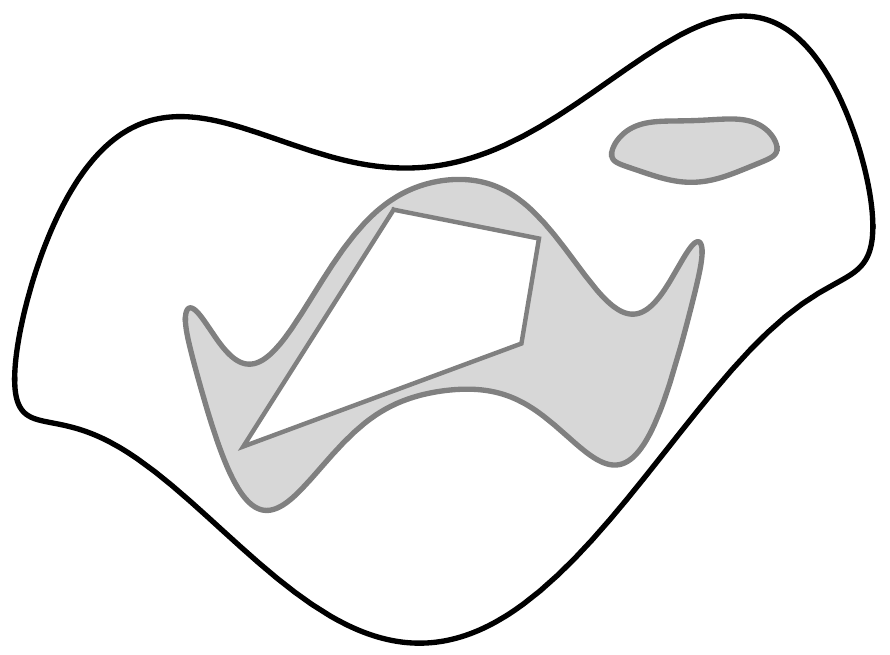}};
\end{tikzpicture}
\end{center}
\vspace{-0.4cm}\caption{\label{fig:hole regions examples}  Two examples of regions $U$ satisfying Assumption \ref{ass:U}. $U$ is bounded on the left, and unbounded on the right. $S\setminus U$ is the shaded region, $U$ is in white, $\partial S$ is in black, and $\partial U$ is in gray.} 
\end{figure}
\item $Q(z) = (1\hspace{-0.05cm}-\hspace{-0.05cm}\frac{1}{n}\hspace{-0.05cm}+\hspace{-0.05cm}\frac{4}{n\beta}) \log(1+|z|^{2})$. In this case \eqref{general density intro} becomes
\begin{align}\label{spherical ensemble}
& \frac{1}{Z_{n}}\prod_{1\leq j<k \leq n}|z_{j}-z_{k}|^{\beta} \prod_{j=1}^{n} e^{-(n-1) \frac{\beta}{2} \log(1+|z_{j}|^{2})}\frac{d^{2}z_{j}}{(1+|z_{j}|^{2})^{2}}, & & z_{1},\ldots,z_{n}\in \C.
\end{align}
The above potential is the only one considered in this paper for which $S$ is not compact:
\begin{align}\label{def of mu and S spherical}
d\mu(z) = \frac{d^{2}z}{\pi(1+|z|^{2})^{2}}, \qquad S=\C.
\end{align}
The point process \eqref{spherical ensemble} was first introduced in \cite{Kris2006}. It is known as the \textit{spherical point process}, because it can also be seen as a Coulomb gas on the unit sphere $\mathbb{S}^{2} \subset \R^{3}$ with a zero potential. To see this, let $\varphi:\mathbb{S}^{2}\to (\C\cup\{\infty\}), \varphi(u,v,w)=\frac{u+iv}{1-w}$ be the stereographic projection with respect to the north pole $(0,0,1)$. Then \eqref{spherical ensemble} is the pushforward measure by $\varphi$ of the following measure:
\begin{align}
\frac{1}{\hat{Z}_{n}}\prod_{1\leq j<k \leq n}|\hat{z}_{j}-\hat{z}_{k}|^{\beta} \prod_{j=1}^{n} d^{2}\hat{z}_{j}, & & \hat{z}_{1},\ldots,\hat{z}_{n}\in \mathbb{S}^{2}, \label{spherical ensemble on the sphere}
\end{align}
where $\hat{Z}_{n} = 2^{n[(n-1) \frac{\beta}{2}+2]}Z_{n}$ is the normalization constant, and $d^{2}\hat{z}$ is the area measure on $\mathbb{S}^{2}$. The point process \eqref{spherical ensemble on the sphere} is illustrated in Figure \ref{fig:Ginibre, ML and Spherical} (right).
\item $Q(z) = g(|z|)$ for some $g: [0,+\infty)\to \R \cup \{+\infty\}$. For such $Q$, \eqref{general density intro} is called a \textit{rotation-invariant point process}, because \eqref{general density intro} remains unchanged if each $z_{j}$'s is multiplied by the same constant $e^{i\gamma}$, $\gamma \in \R$. We also assume that $Q$ satisfies Assumption \ref{ass:Q} below. In particular, $g(r)-2\log r \to +\infty$ as $r\to +\infty$ and  the set $S$ is compact. We consider the case where $S$ consists of a finite number of annuli centered at $0$:
\begin{align}\label{support S when Q=g}
S = \{z : |z| \in \mathrm{S} \}, \qquad \mathrm{S} := [r_{0},r_{1}]\cup [r_{2},r_{3}] \ldots \cup [r_{2\ell},r_{2\ell+1}]
\end{align}
for some $0 \leq r_{0} < r_{1} < \ldots < r_{2\ell}<r_{2\ell+1}<+\infty$. We further assume that $g \in C^{2}(\mathrm{S}\setminus \{0\})$, so that by \cite[Theorem II.1.3]{SaTo} we have
\begin{align}\label{mu radially symmetric intro}
d\mu(z) = \frac{\Delta Q(z)}{4\pi} d^{2}z = d\mu_{\mathrm{rad}}(r) \frac{d\theta}{2\pi}, \qquad d\mu_{\mathrm{rad}}(r):=\frac{r}{2} \big( g''(r) + \frac{1}{r}g'(r) \big) dr,
\end{align}
where $z=re^{i\theta}$, $r\geq 0$, $\theta \in (-\pi,\pi]$, and $\Delta = \partial_{x}^{2}+\partial_{y}^{2}$ is the standard Laplacian. 

The Mittag-Leffler potential $Q(z)= |z|^{2b}$ corresponds to the special case $g(r) = r^{2b}$, but the potential $\log(1+|z|^{2})$ does not fall in this class because $\log(1+|z|^{2})-2\log|z| = \bigO(|z|^{-2})$ as $|z|\to + \infty$. 
\end{itemize}
\end{itemize}
\begin{figure}
\begin{tikzpicture}[master]
\node at (0,0) {\includegraphics[width=4.8cm]{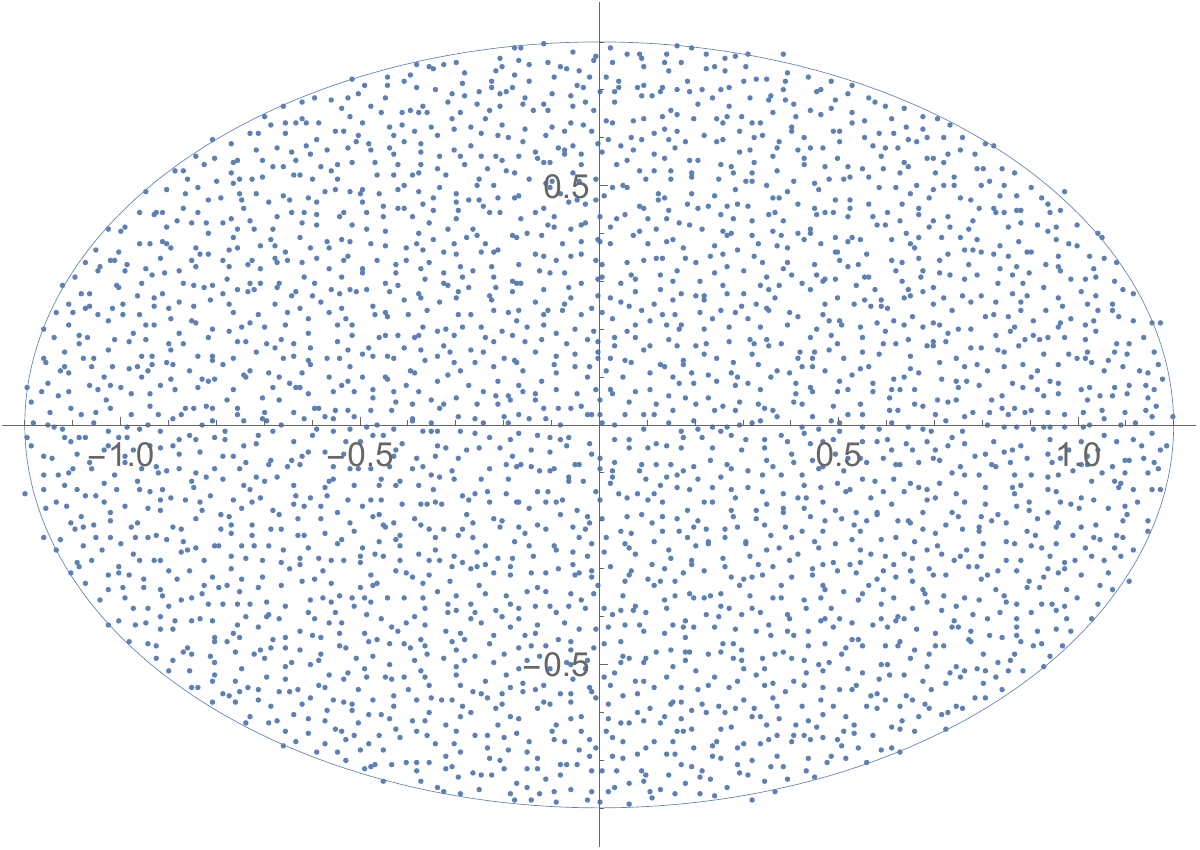}};
\end{tikzpicture}
\begin{tikzpicture}[slave]
\node at (0,0) {\includegraphics[width=4cm]{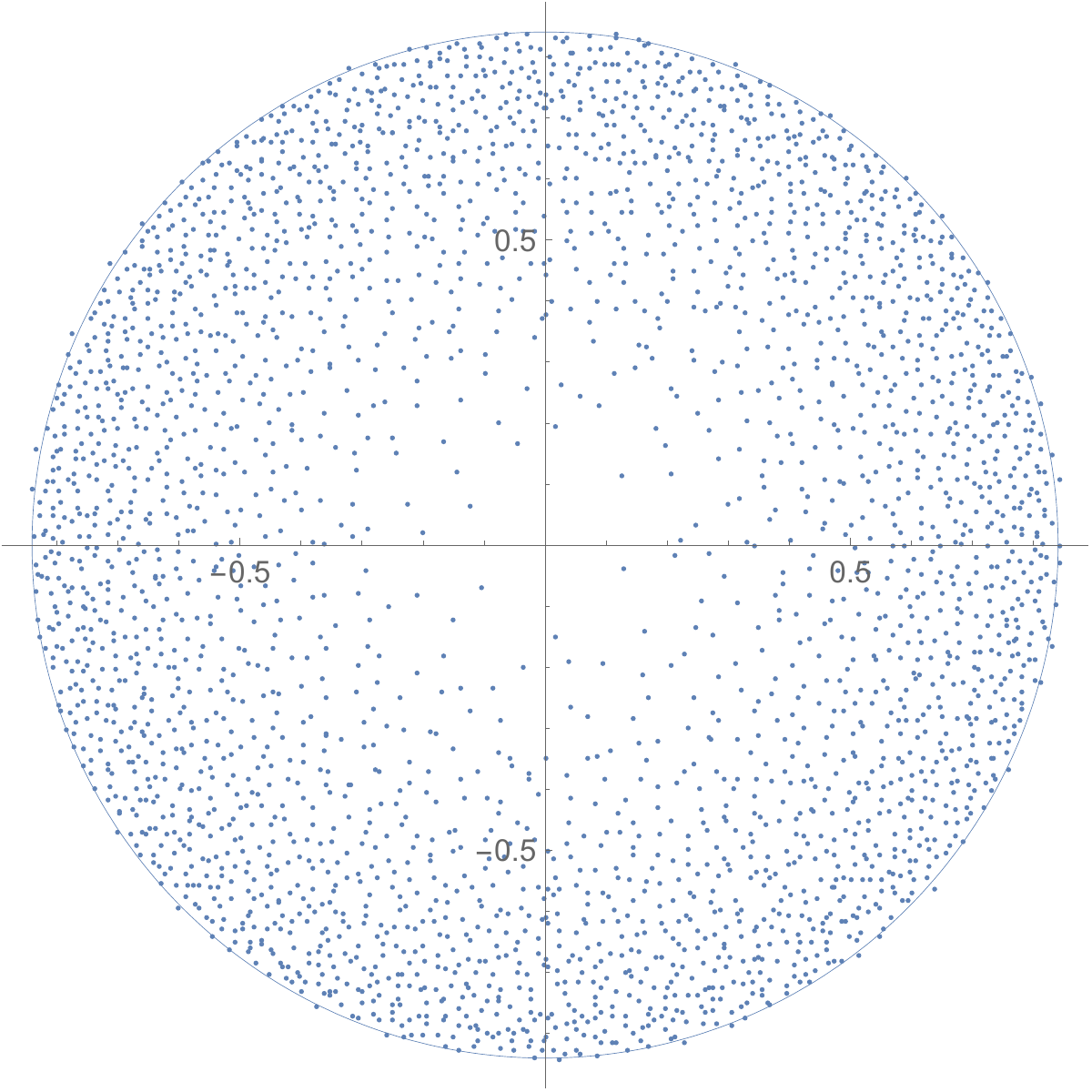}};
\end{tikzpicture}
\begin{tikzpicture}[slave]
\node at (0,0) {\includegraphics[width=4.5cm]{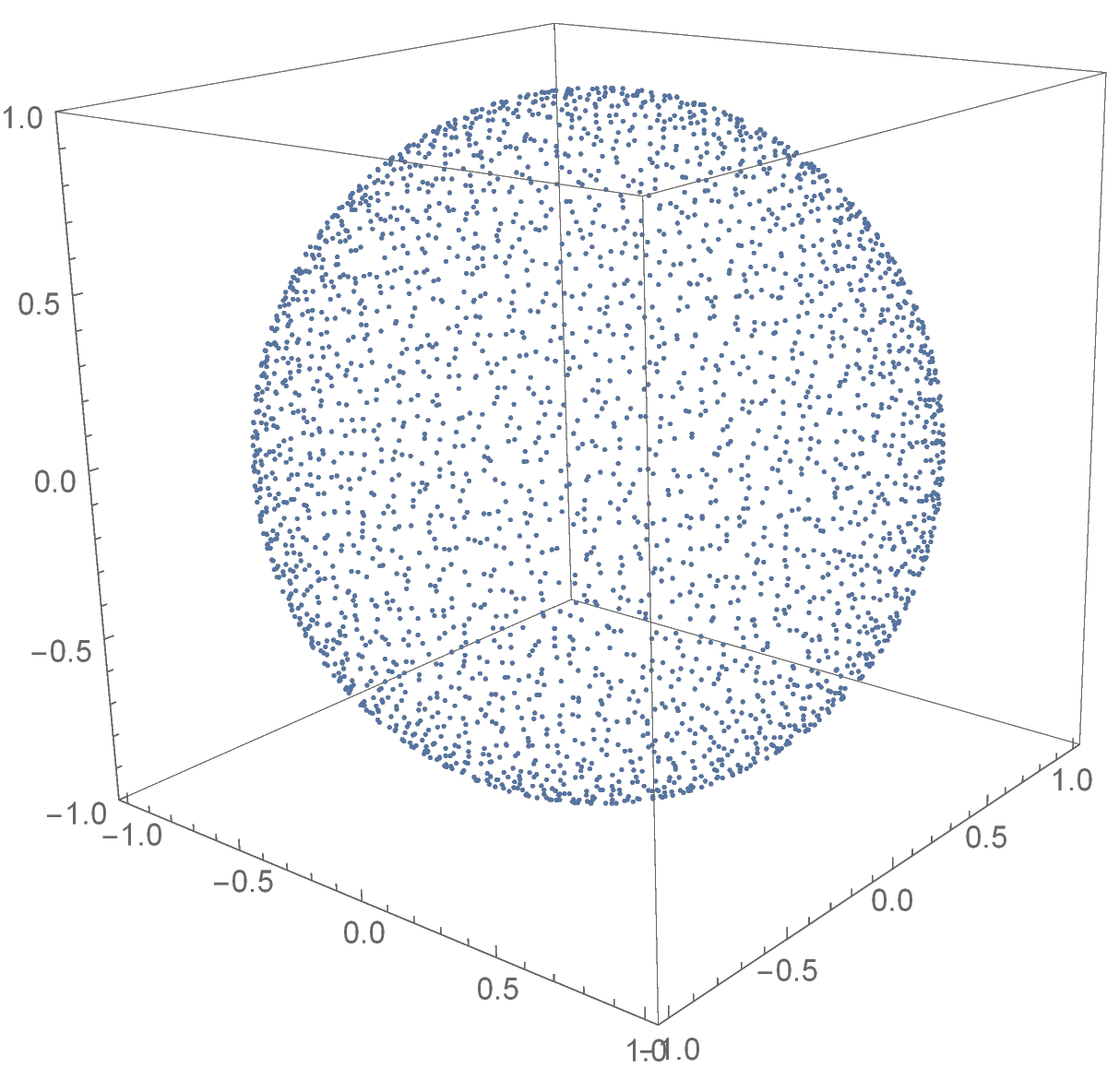}};
\end{tikzpicture}
\caption{\label{fig:Ginibre, ML and Spherical} Left: the elliptic Ginibre point process with $\tau=0.2$ and $n=3000$. Middle: the Mittag-Leffler  point process with $b=2$ and $n=3000$. Right: the spherical point process with $n=3000$.}
\end{figure}
We determine $\nu$ and $C$ explicitly for the four choices of $Q$ listed above, and for several hole regions $U$ as indicated in Tables \ref{table1} and \ref{table2}. These tables also summarize some results from \cite{AR2017, A2018}. 
\begin{table}[h]
\begin{center}
\begin{tikzpicture}
\node at (0,0) {};
\node at (0.8,0) {$\begin{array}{|l@{}|c|c|c|}\hline
\mbox{\backslashbox[3.35cm]{\hspace{0.1cm}$U$}{\vspace{-0.15cm}$Q$}} &  |z|^{2} & (1\hspace{-0.05cm}-\hspace{-0.05cm}\frac{1}{n}\hspace{-0.05cm}+\hspace{-0.05cm}\frac{4}{n\beta})\log(1+|z|^{2})  & g(|z|) \rule{0pt}{2.6ex} \\ \hline 
	\rule{0pt}{1.2\normalbaselineskip} \begin{subarray}{l} 
\mbox{disk centered at $0$:} \\[0.1cm] 
\ds \{z: |z| < a\} \end{subarray} \rule[-0.8\normalbaselineskip]{0pt}{0pt} & $\cite{AR2017}$ & \mbox{Thm} \, \ref{thm:centered disk} & \cite{A2018} \mbox{ and } \mbox{Thm} \, \ref{thm:centered disk} \\ \hline
	\rule{0pt}{1.2\normalbaselineskip} \begin{subarray}{l} 
\mbox{annulus centered at $0$:} \\[0.1cm] 
\ds \{z: a < |z| < c\} \end{subarray} \rule[-0.8\normalbaselineskip]{0pt}{0pt} & $\cite{AR2017}$ & \mbox{Thm} \, \ref{thm:regular annulus} & \cite{A2018} \mbox{ and } \mbox{Thm} \, \ref{thm:regular annulus} \\ \hline
	\rule{0pt}{1.7\normalbaselineskip} \begin{subarray}{l} 
\mbox{complement of a disk} \\[0.1cm]
\mbox{centered at $0$: } \\[0.1cm]
 \ds \{z: a < |z|\} \end{subarray} \rule[-1.3\normalbaselineskip]{0pt}{0pt}
 & \mbox{Thm} \, \ref{thm:unbounded annulus} &  & \mbox{Thm} \, \ref{thm:unbounded annulus} \\ \hline
 	\rule{0pt}{1.8\normalbaselineskip} \begin{subarray}{l} 
\mbox{circular sector} \\[0.1cm]
\mbox{centered at $0$:} \\[0.1cm]
\ds \{z: |z| < a, \theta \in (0,\tfrac{2\pi}{p}) \}\, \end{subarray} \rule[-0.9\normalbaselineskip]{0pt}{0pt}
 & \begin{subarray}{l} 
\mbox{for $p=2$: \cite{AR2017}} \\[0.1cm]
\mbox{for $p>1$:} \\[0.1cm]
\mbox{Thms \ref{thm:g sector nu}, \ref{thm:g sector C}}  \end{subarray} & \begin{subarray}{l} 
\ds \nu \mbox{ for $p\geq 1$:~Thm \ref{thm:g sector nu}} \\[0.1cm]
\ds C \mbox{ for $p\geq 2$:~Thm \ref{thm:g sector C}}  \end{subarray} & \begin{subarray}{l} 
\ds \nu \mbox{ for $p\geq 1$:~Thm \ref{thm:g sector nu}} \\[0.1cm]
\ds C \mbox{ for $p\geq 2$:~Thm~\ref{thm:g sector C}}  \end{subarray} \\ \hline 
\end{array}$};
\end{tikzpicture}
\end{center}
\vspace{-0.5cm}\caption{\label{table1} Choices of $Q$ and $U$ for which $\nu$ and $C$ are explicitly known: part 1.}
\end{table}

\newpage

\begin{table}[h]
\begin{center}
\begin{tikzpicture}
\node at (0,0) {$
\begin{array}{|l@{}|c|c|c|}\hline
	\mbox{\backslashbox[4cm]{\hspace{0.1cm}$U$}{\vspace{-0.15cm}$Q(z)$}} & |z|^{2} & \frac{|z|^{2}-\tau \, \re z^{2}}{1-\tau^{2}} & |z|^{2b}, \; b \in \N_{>0}  \\ \hline
	\rule{0pt}{0.9\normalbaselineskip} 
\mbox{disk}  \rule[-0.4\normalbaselineskip]{0pt}{0pt} & \cite{AR2017} & \mbox{Thm} \, \ref{thm:Elliptic disk C} & \mbox{Thms} \, \ref{thm:ML disk nu}, \ref{thm:ML disk C} \\ \hline
	\rule{0pt}{0.9\normalbaselineskip} 
\mbox{ellipse centered at $0$} \rule[-0.4\normalbaselineskip]{0pt}{0pt} & \cite{AR2017} & \mbox{Thm $\ref{thm:Elliptic disk C}$} & \mbox{Thms} \, \ref{thm:ML ellipse nu}, \ref{thm:ML ellipse C} \\ \hline
	\rule{0pt}{0.9\normalbaselineskip}
\mbox{ellipse, annulus, cardioid}
\rule[-0.4\normalbaselineskip]{0pt}{0pt} & \mbox{for $U$ centered at $0$: \cite{AR2017}} & \mbox{Thm $\ref{thm:Elliptic disk C}$} &  \\ \hline
	\rule{0pt}{0.9\normalbaselineskip}
\mbox{circular sector}
\rule[-0.4\normalbaselineskip]{0pt}{0pt} & \mbox{Thm $\ref{thm:Elliptic disk C}$} & \mbox{Thm $\ref{thm:Elliptic disk C}$} &  \\ \hline
	\rule{0pt}{1.1\normalbaselineskip} \begin{subarray}{l} 
\mbox{equilateral triangle}
\end{subarray} \rule[-0.8\normalbaselineskip]{0pt}{0pt} & \begin{subarray}{l} 
\mbox{For $C$: \cite{AR2017}} \\[0.05cm] 
\ds \mbox{For }\nu\mbox{: Thm \ref{thm:Ginibre triangle nu}} \end{subarray} & \mbox{Thms } \ref{thm:Ginibre triangle nu}, \ref{thm:Ginibre triangle C} &  \\ \hline
	\rule{0pt}{1.4\normalbaselineskip} 
\mbox{rectangle}  \rule[-0.9\normalbaselineskip]{0pt}{0pt} & \begin{subarray}{l} \mbox{Thms \ref{thm:Ginibre rectangle nu}, \ref{thm:Ginibre rectangle C}} \\[0.1cm]
\mbox{or Thms \ref{thm:ML rectangle nu}, \ref{thm:ML rectangle C}} \end{subarray} & \mbox{Thms \ref{thm:Ginibre rectangle nu}, \ref{thm:Ginibre rectangle C}} & \mbox{Thms \ref{thm:ML rectangle nu}, \ref{thm:ML rectangle C}} \\ \hline
	\rule{0pt}{1.3\normalbaselineskip} \begin{subarray}{l} 
\mbox{complement of an ellipse } \\[0.1cm]
\mbox{centered at $0$} 
\end{subarray} \rule[-0.9\normalbaselineskip]{0pt}{0pt} & \mbox{Thms \ref{thm:EG complement ellipse nu}, \ref{thm:EG complement ellipse C}} & \mbox{Thms \ref{thm:EG complement ellipse nu}, \ref{thm:EG complement ellipse C}} &  \\ \hline
	\rule{0pt}{0.9\normalbaselineskip}
\mbox{complement of a disk}  \rule[-0.4\normalbaselineskip]{0pt}{0pt} & \mbox{Thms \ref{thm:EG complement disk nu}, \ref{thm:EG complement disk C}} & \mbox{Thms \ref{thm:EG complement disk nu}, \ref{thm:EG complement disk C}} &  \\ \hline
\end{array}
$};
\end{tikzpicture}
\end{center}
\vspace{-0.5cm}\caption{\label{table2}Choices of $Q$ and $U$ for which $\nu$ and $C$ are explicitly known: part 2.}
\end{table}

%
%
%

\noindent Before stating our results in more detail, we briefly introduce some definitions and recall useful facts from potential theory. 

\subsection{Potential theoretic background}\label{subsection:background potential}
For standard books on logarithmic potential theory, we refer to e.g. \cite{SaTo, RFbook}.
\subsubsection*{Logarithmic energy, polar sets, capacity.}
Let $\sigma$ be a Borel measure on $\C$. The support of $\sigma$, denoted $\mathrm{supp} \, \sigma$, is the set of all $z\in \C$ such that $\sigma(\mathcal{N}) > 0$ for every open set $\mathcal{N}$ containing $z$. If $\sigma$ is finite and compactly supported, its logarithmic potential $p_{\sigma}: \C \to (-\infty,\infty]$ and logarithmic energy $I_{0}[\sigma]$ are defined by
\begin{align*}
p_{\sigma}(z) = \int \log\frac{1}{|z-w|}d\sigma(w), \qquad I_{0}[\sigma] = \iint \log\frac{1}{|z-w|}d\sigma(z)d\sigma(w) = \int p_{\sigma}(z) d\sigma(z).
\end{align*}
A set $P \subset \mathbb{C}$ is called \textit{polar} if $I_{0}[\sigma]=+\infty$ for every compactly supported finite measure $\sigma \neq 0$ with $\mathrm{supp}\,\sigma \subset P$. Polar sets play the role of negligible sets in potential theory. If something holds for all $z \in E\setminus P$ for some $E\subset \C$ and some Borel polar set $P$, then we say that it holds  {\it quasi-everywhere} (q.e.) on $E$. Given $E\subset \C$, let $\mathcal P(E)$ be the set of all Borel probability measures $\sigma$ with $\mathrm{supp}\,\sigma \subset E$. The quantity
\begin{align*}
\mathrm{cap}(E):=e^{-\inf\{I_{0}[\sigma]:\sigma \in \mathcal P(E)\}},
\end{align*}
is called the (logarithmic) \textit{capacity} of $E$. Thus $\mathrm{cap}(P)=0$ if and only if $P$ is a polar set. Every Borel polar set has Lebesgue measure zero (see e.g. \cite[Theorem 3.2.4]{RFbook}).

\subsubsection*{Potentials and equilibrium measures}
Throughout this paper, we will often (but not always) assume that $Q$ is admissible on $\C$. 
\begin{definition}\label{def:admissible}
A potential $Q:\C \to \R\cup\{+\infty\}$ is admissible on a closed set $E \subset \C$ if:
\begin{enumerate}
\item[(i)] \vspace{-0.1cm} $Q$ is lower semi-continuous on $E$.
\item[(ii)] \vspace{-0.1cm} The set $E_{0}:=\{z\in E: Q(z) < + \infty\}$ has positive capacity.
\item[(iii)] \vspace{-0.1cm} If $E$ is unbounded, then $Q(z)-(2+\epsilon)\log |z| \to +\infty$ as $|z|\to +\infty$, $z\in E$, for some $\epsilon >0$.
\end{enumerate}
\end{definition}
Given $\sigma \in \mathcal{P}(E)$ and a potential $Q$, the weighted energy is defined by 
\begin{align}\label{def of Rmu}
I_{Q}[\sigma] = \iint \log [(|z-w|e^{-\frac{Q(z)}{2}}e^{-\frac{Q(w)}{2}})^{-1}]d\sigma(z)d\sigma(w) = \int p_{\sigma}(z)d\sigma(z)+\int Q(z)d\sigma(z),
\end{align}
where the last equality holds whenever both integrals $\int p_{\sigma}(z)d\sigma(z)$, $\int Q(z)d\sigma(z)$ exist and are finite. The probability measure $\mu$ minimizing $\sigma \mapsto I_{Q}[\sigma]$ among all $\sigma\in \mathcal{P}(E)$ is called {\it the equilibrium measure} for $Q$ on $E$.

The equilibrium measure for $Q$ on $U^{c}$ plays an important role in the large $n$ behavior of the hole probability $\mathbb{P}(\#\{z_{j}\in U\}=0)$ for the following heuristic reason. From \eqref{general density intro}, we have
\begin{align*}
\mathbb{P}(\#\{z_{j}\in U\}=0) = \frac{1}{Z_{n}}\int_{U^{c}}\dots \int_{U^{c}} \prod_{1\leq j<k \leq n}|z_{j}-z_{k}|^{\beta} \prod_{j=1}^{n} e^{-n \frac{\beta}{2} Q(z_{j})}d^{2}z_{j}.
\end{align*}
Thus one expects the main contribution in the large $n$ asymptotics of $\mathbb{P}(\#\{z_{j}\in U\}=0)$ to come from the $n$-tuples $(z_{1},\ldots,z_{n})$ maximizing the above integrand, i.e. such that
\begin{align}\label{n tuple}
\frac{1}{n^{2}} \sum_{1 \leq j < k \leq n} \log \frac{1}{|z_{j}-z_{k}|^{2}} + \frac{1}{n} \sum_{j=1}^{n}Q(z_{j}) = \int_{z\neq w} \log \frac{1}{|z-w|^{2}} d\mu_{n}(z)d\mu_{n}(w) + \int Q(z) d\mu_{n}(z)
\end{align}
is minimal under the constraint that $z_{1},\ldots,z_{n}\in U^{c}$, where $\mu_{n}:=\frac{1}{n}\sum_{j=1}^{n}\delta_{z_{j}}$. The $n=+\infty$ analogue of this minimization problem is precisely the problem of finding $\mu$ minimizing $\sigma \mapsto I_{Q}[\sigma]$ among all $\sigma\in \mathcal{P}(U^{c})$.

The proof of the following proposition can be found in \cite[Theorems I.1.3 and I.3.3]{SaTo}.
\begin{proposition}\label{prop:eq measure}
Let $E\subset \C$ be a closed set, and suppose that $Q$ is an admissible potential on $E$. Then the equilibrium measure $\mu$ for $Q$ on $E$ exists, is unique, and $I_{Q}[\mu]$ is finite. Furthermore, $I_{0}[\mu]$ is finite, $\mathrm{supp}\, \mu$ is compact, $\mathrm{supp}\, \mu \subset E_{0}$, $\mathrm{cap}(\mathrm{supp}\, \mu)>0$, and there exists a constant $F\in \R$ such that
\begin{align}
& 2p_{\mu}(z)+Q(z) = F, & & \mbox{for q.e. } z \in \mathrm{supp}\, \mu, \label{EL1} \\
& 2p_{\mu}(z)+Q(z) \leq F, & & \mbox{for all } z \in \mathrm{supp}\, \mu, \label{EL2} \\
& 2p_{\mu}(z)+Q(z) \geq F, & & \mbox{for q.e. } z \in E\setminus \mathrm{supp}\, \mu. \label{EL3}
\end{align}
Some of the above conditions uniquely characterize $\mu$: if $Q$ is admissible on $E$, $\sigma\in \mathcal{P}(E)$ has compact support, $I_{0}[\sigma]$ is finite, and \eqref{EL1} and \eqref{EL3} are satisfied for some constant $F$ and with $\mu$ replaced by $\sigma$, then $\sigma = \mu$.
\end{proposition}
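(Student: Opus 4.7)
The plan is to follow the classical direct method in the calculus of variations, combined with the characteristic positivity of the logarithmic energy on signed measures of zero total charge. First I would establish that $V := \inf_{\sigma \in \mathcal P(E)} I_Q[\sigma]$ is finite: admissibility condition (ii) gives $\mathrm{cap}(E_{0})>0$, hence a compactly supported $\sigma_{0}\in\mathcal P(E_0)$ of finite logarithmic energy; since $Q$ is lower semi-continuous and finite on $E_0$, one can restrict $\sigma_0$ to a compact subset where $Q$ is bounded above and obtain $I_Q[\sigma_0]<\infty$. For a minimizing sequence $\{\sigma_n\}$, admissibility condition (iii) forces tightness (the contribution of far-away mass to $\int Q\,d\sigma_n$ blows up), so by Prokhorov a subsequence converges weakly to some $\mu\in\mathcal P(E)$. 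Lower semi-continuity of $\sigma\mapsto I_Q[\sigma]$ under weak convergence, proved by truncating the kernel $\log\tfrac{1}{|z-w|}\wedge M$ and using monotone convergence in $M$ together with the l.s.c.\ of $Q$, yields $I_Q[\mu]\le V$, so $\mu$ is a minimizer and $I_Q[\mu]$ (hence $I_0[\mu]$) is finite.

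Next I would derive the Euler--Lagrange conditions by the standard first-variation trick: for any compactly supported $\sigma\in\mathcal P(E_0)$ with $I_0[\sigma]<\infty$, write $\eta=\sigma-\mu$ and expand
\begin{align*}
I_Q[\mu+\epsilon\eta]=I_Q[\mu]+\epsilon\int(2p_\mu+Q)\,d\eta+\epsilon^{2}I_0[\eta],
\end{align*}
using the symmetry of the logarithmic kernel. Non-negativity of the $O(\epsilon)$ term for all admissible $\sigma$ forces $\int(2p_\mu+Q)\,d\sigma\ge\int(2p_\mu+Q)\,d\mu=:F$, and choosing $\sigma$ concentrated near points of $\mathrm{supp}\,\mu$ reverses the inequality there. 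This gives \eqref{EL1} q.e.\ on $\mathrm{supp}\,\mu$ and \eqref{EL3} q.e.\ on $E\setminus\mathrm{supp}\,\mu$; the pointwise inequality \eqref{EL2} on $\mathrm{supp}\,\mu$ follows from lower semi-continuity of $2p_\mu+Q$ together with the maximum principle for logarithmic potentials. Compactness of $\mathrm{supp}\,\mu$ is then a consequence of \eqref{EL3} and admissibility (iii): for $|z|$ large, $Q(z)-2\log|z|\to+\infty$ and $2p_\mu(z)+2\log|z|\to 0$, so $2p_\mu(z)+Q(z)>F$ eventually, which is incompatible with $z\in\mathrm{supp}\,\mu$. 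The inclusion $\mathrm{supp}\,\mu\subset E_0$ follows because $Q=+\infty$ off $E_0$ is inconsistent with \eqref{EL2}, and $\mathrm{cap}(\mathrm{supp}\,\mu)>0$ because $\mu$ itself has finite logarithmic energy.

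For uniqueness, both the uniqueness of the minimizer and the characterization statement, the key tool is the strict positivity of $I_0[\eta]$ for any compactly supported signed measure $\eta$ with $\eta(\C)=0$, $I_0[|\eta|]<\infty$ and $\eta\neq 0$. This classical fact, proved via the Fourier/Riesz representation $I_0[\eta]=\tfrac{1}{2\pi}\int_0^\infty|\widehat{\eta}(r)|^{2}\tfrac{dr}{r}$ or equivalently by writing $\log\tfrac{1}{|z-w|}$ as a positive superposition of $\min(\log\tfrac{1}{|z|},\log\tfrac{1}{|w|})$-type kernels, implies that $\sigma\mapsto I_Q[\sigma]$ is strictly convex, hence the minimizer is unique. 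For the characterization, given $\sigma$ satisfying \eqref{EL1} and \eqref{EL3} (with $\mu$ replaced by $\sigma$) for some constant $F'$, one integrates these (in)equalities against $\mu-\sigma$, whose total charge vanishes, to obtain
\begin{align*}
2\int p_\sigma\,d(\mu-\sigma)+\int Q\,d(\mu-\sigma)\ge 0
\end{align*}
and the analogous inequality with $\mu,\sigma$ interchanged; adding the two yields $-2\,I_0[\mu-\sigma]\ge 0$, so $I_0[\mu-\sigma]\le 0$, and strict positivity forces $\mu=\sigma$. Care is needed to ensure that the exceptional polar sets where \eqref{EL1} and \eqref{EL3} may fail are $\mu$- and $\sigma$-null, which follows from $I_0[\mu],I_0[\sigma]<\infty$ together with the fact that measures of finite logarithmic energy put no mass on polar sets.

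The main obstacle in such a proof is not any single step but rather the careful handling of the polar exceptional sets throughout: verifying that $\mu$ does not charge them, that the Euler--Lagrange equalities can be integrated, and that the truncation arguments behind lower semi-continuity of $I_Q$ commute with the weak limit. All the other ingredients (tightness, strict convexity, first variation) are by now completely standard, which is precisely why we content ourselves here with a reference to \cite[Theorems I.1.3 and I.3.3]{SaTo}.
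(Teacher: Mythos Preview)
The paper does not give its own proof of this proposition; it simply refers the reader to \cite[Theorems I.1.3 and I.3.3]{SaTo}. Your sketch is a correct outline of exactly the classical argument carried out in that reference (direct method for existence, first variation for the Euler--Lagrange conditions, and strict positive-definiteness of the logarithmic energy on compactly supported signed measures of zero total mass for uniqueness and for the characterization), and you even conclude with the same citation, so your proposal and the paper are fully aligned.
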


Proposition \ref{prop:eq measure} does not apply for $Q(z)=\log(1+|z|^{2})$ if $E$ is unbounded, because then $\log(1+|z|^{2})$ is not admissible on $E$. Instead, the following similar proposition holds.
\begin{proposition}\label{prop:eq measure spherical}
Let $Q(z)=\log(1+|z|^{2})$ and let $E\subset \C$ be a closed set with $\mathrm{cap}(E)>0$. Then the equilibrium measure $\mu$ for $Q$ on $E$ exists, is unique, and $I_{Q}[\mu]$ is finite. Furthermore, $\mathrm{cap}(\mathrm{supp}\, \mu)>0$, and there exists a constant $F\in \R$ such that \eqref{EL1}--\eqref{EL3} hold.

\smallskip \noindent Some of the above conditions uniquely characterize $\mu$: if $Q(z)=\log(1+|z|^{2})$,  $E\subset \C$ is a closed set with $\mathrm{cap}(E)>0$, $I_{Q}[\sigma]$ is finite, and \eqref{EL1} and \eqref{EL3} are satisfied for some constant $F$ and with $\mu$ replaced by $\sigma$, then $\sigma = \mu$.
\end{proposition}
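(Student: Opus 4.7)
My strategy is to reduce the statement to the classical existence and uniqueness theory for logarithmic energies on compact subsets of the Riemann sphere, via stereographic projection. Let $\varphi:\mathbb{S}^{2}\setminus\{N\}\to\C$ be stereographic projection from the north pole $N$, and recall the chordal-distance identity
\[
|\hat{z}-\hat{w}| \;=\; \frac{2\,|z-w|}{\sqrt{(1+|z|^{2})(1+|w|^{2})}}, \qquad \hat z := \varphi^{-1}(z),\ \hat w := \varphi^{-1}(w).
\]
With $Q(z)=\log(1+|z|^{2})$ this gives the key cancellation
\[
-\log|z-w|+\tfrac{1}{2}Q(z)+\tfrac{1}{2}Q(w) \;=\; \log 2 \;-\; \log|\hat{z}-\hat{w}|,
\]
so that for any $\sigma\in\mathcal{P}(E)$ with spherical pushforward $\hat\sigma:=(\varphi^{-1})_{*}\sigma$,
\[
I_{Q}[\sigma] \;=\; \log 2 \;+\; \widehat{I}[\hat{\sigma}], \qquad \widehat{I}[\hat\sigma] \;:=\; \iint \big(-\log|\hat z-\hat w|\big)\,d\hat\sigma(\hat z)\,d\hat\sigma(\hat w).
\]
The kernel $-\log|\hat z-\hat w|$ is lower semi-continuous and bounded below on $\mathbb{S}^{2}\times\mathbb{S}^{2}$ (since $|\hat z-\hat w|\leq 2$), so minimizing $I_{Q}$ over $\mathcal{P}(E)$ amounts, up to the additive constant $\log 2$, to minimizing $\widehat{I}$ over $\mathcal{P}(\hat E)$, where $\hat E:=\overline{\varphi^{-1}(E)}\subset\mathbb{S}^{2}$ is compact.

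Existence then follows the classical template. The functional $\widehat I$ is weak-$*$ lower semi-continuous (Fatou, applied to the bounded-below l.s.c.\ kernel), $\mathcal{P}(\hat E)$ is weak-$*$ compact (Prokhorov), and the hypothesis $\mathrm{cap}(E)>0$, together with inner regularity of capacity, produces a compact $K\subset E$ of positive capacity whose unweighted equilibrium measure has compact support and continuous potential; since $Q$ is continuous, this measure has finite $I_{Q}$, so the infimum is finite. Thus $\widehat I$ attains its minimum at some $\hat\mu\in\mathcal{P}(\hat E)$. Atoms contribute $+\infty$ to $\widehat I$, so $\hat\mu(\{N\})=0$ and $\hat\mu$ pulls back to a probability measure $\mu\in\mathcal{P}(E)$ minimizing $I_{Q}$, with $I_{Q}[\mu]<+\infty$. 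Since a measure of finite logarithmic energy cannot charge a polar set, $\mathrm{cap}(\mathrm{supp}\,\mu)>0$.

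The Euler-Lagrange conditions \eqref{EL1}--\eqref{EL3} arise from the standard variational argument: for any competitor $\nu\in\mathcal{P}(E)$ with $I_{Q}[\nu]<+\infty$, differentiating $t\mapsto I_{Q}[(1-t)\mu+t\nu]$ at $t=0^{+}$ gives $\int(2p_{\mu}+Q)\,d(\nu-\mu)\geq 0$, and suitable choices of $\nu$ (Dirac-like masses on and off $\mathrm{supp}\,\mu$, modulo polar exceptional sets) yield \eqref{EL1}--\eqref{EL3} with a common constant $F$. For the converse characterization, suppose $\sigma\in\mathcal{P}(E)$ has $I_{Q}[\sigma]<+\infty$ and satisfies \eqref{EL1} and \eqref{EL3} for some constant $F$. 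Combining these inequalities with the corresponding ones for $\mu$, using $\int p_{\mu}\,d\sigma=\int p_{\sigma}\,d\mu$ (Fubini, valid by finiteness of $\widehat I$ on both measures) and that polar sets carry no mass of finite-energy measures, one arrives at $\widehat I[\hat\mu-\hat\sigma]\leq 0$; strict positivity of the logarithmic energy of a nonzero signed measure of total mass zero and finite energy on the sphere then forces $\sigma=\mu$.

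The main obstacle, and the only point where the failure of admissibility of $Q$ in the sense of Definition \ref{def:admissible} intervenes, is ensuring that the spherical minimizer $\hat\mu$ does not leak mass onto the added boundary point $\{N\}\in\hat E$ when $E$ is unbounded; this is precisely what the no-atoms observation for logarithmic equilibrium measures takes care of. Once this is in hand, the rest of the argument is a direct transcription of the classical theory through the cancellation identity displayed above.
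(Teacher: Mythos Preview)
Your approach is correct and is essentially the same as the paper's: the paper does not give a self-contained proof but simply invokes \cite[Theorem 1.2 with $s=0$]{DS2007}, noting that this reference is formulated on the unit sphere rather than on $\C$. Your stereographic reduction, the identity $I_{Q}[\sigma]=\log 2+\widehat{I}[\hat\sigma]$, the compactness/lower-semicontinuity existence argument, the no-atom observation at $N$, and the positivity-of-energy uniqueness argument are precisely the ingredients underlying that cited result, so you have effectively reconstructed the proof that the paper outsources.
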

Proposition \ref{prop:eq measure spherical} follows from \cite[Theorem 1.2 with $s=0$]{DS2007} (note that \cite[Theorem 1.2]{DS2007} is formulated on the unit sphere instead of $\C$).

If $E=\C$ in Proposition \ref{prop:eq measure spherical}, then $\mu$ and $S$ are given by \eqref{def of mu and S spherical}.

\subsubsection*{Green functions and balayage measures}
We start with the definition of the Green function of an unbounded domain with pole at infinity. 
\begin{definition}\label{def:green with pole at inf}
Suppose that $U\subset \C\cup \{\infty\}$ is open, connected, unbounded, $\infty \in U$ and $\mathrm{cap}(\partial U)>0$. Then $g_{U}(z,\infty)$ is the unique function satisfying the following conditions:
\begin{itemize}
\item \vspace{-0.15cm} $g_{U}(z,\infty)$ is harmonic in $U\setminus \{\infty\}$ and is bounded as $z$ stays away from $\infty$,
\item \vspace{-0.15cm} $g_{U}(z,\infty)-\frac{1}{2\pi}\log |z| = \bigO(1)$ as $z\to\infty$,
\item \vspace{-0.15cm} $\lim_{z\to x, z\in U}g_{U}(z,\infty)=0$ for q.e. $x\in \partial U$.
\end{itemize}
\end{definition}
\noindent For the existence and uniqueness of $g_{U}(z,\infty)$, see e.g. \cite[Section II.4]{SaTo}.

\medskip \noindent We now turn to the definition and properties of balayage measures. For the proof of the following proposition, see \cite[Theorem II.4.7]{SaTo} (see also \cite[Theorems II.4.1, II.4.4]{SaTo}).
\begin{proposition}[Definition and properties of balayage measures]\label{prop:def of bal}
Suppose that $U\subset (\C\cup \{\infty\})$ is open, $\partial U$ is compact in $\C$, $\mathrm{cap}(\partial U)>0$, and $U$ has finitely many connected components. Let $\sigma$ be a finite Borel measure with compact support in $\C$ and such that $\sigma(U^{c})=0$. Then there exists a unique measure $\nu$ supported on $\partial U$ such that $\nu(\partial U)=\sigma(U)$, $p_{\nu}$ is bounded on $\partial U$, $\nu(P)=0$ for every Borel polar set $P\subset \C$, and $p_{\nu}(z)=p_{\sigma}(z)+c^{\hspace{0.02cm}\sigma}_{U}$ for q.e. $z\in U^c$, where $c^{\hspace{0.02cm}\sigma}_{U}=2\pi\int_{\Omega}g_{\Omega}(t,\infty)d\sigma(t)$ and $\Omega$ is the unbounded component of $U$ ($c^{\hspace{0.02cm}\sigma}_{U}=0$ if $U$ has no unbounded component). $\nu$ is said to be the balayage measure of $\sigma$ on $\partial U$, and is also denoted $\nu = \mathrm{Bal}(\sigma,\partial U)$.
\end{proposition}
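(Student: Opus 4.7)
The plan is to construct $\nu$ explicitly via harmonic measure and the Green function, then verify the listed properties and uniqueness. By linearity and Fubini it suffices to build $\nu_t:=\mathrm{Bal}(\delta_t,\partial U)$ for each $t\in U$ and set $\nu:=\int \nu_t\,d\sigma(t)$. Fix $t\in U$ and let $V$ be the connected component of $U$ containing $t$; define $\nu_t$ to be the harmonic measure $\omega(t,\cdot,V)$ on $\partial V$.

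If $V$ is bounded, then for every fixed $w\in V^c$ the function $z\mapsto\log|z-w|^{-1}$ is bounded and harmonic on $V$, so the harmonic measure representation yields $p_{\delta_t}(w)=p_{\nu_t}(w)$ for all such $w$, giving $c_U^{\delta_t}=0$. If $V=\Omega$ is the unbounded component, the same function is no longer bounded near $\infty$, but by Definition \ref{def:green with pole at inf} the combination $z\mapsto\log|z-w|^{-1}+2\pi g_\Omega(z,\infty)$ is harmonic on $\Omega$, bounded near $\infty$, and has q.e.\ boundary values $\log|z-w|^{-1}$ on $\partial\Omega$. Applying harmonic measure then yields
\begin{equation*}
p_{\nu_t}(w) = p_{\delta_t}(w) + 2\pi g_\Omega(t,\infty) \qquad \text{q.e.\ on } \Omega^c\supset U^c.
\end{equation*}
Integrating against $\sigma$ gives the desired identity $p_\nu=p_\sigma+c_U^\sigma$ q.e.\ on $U^c$ with $c_U^\sigma=2\pi\int_\Omega g_\Omega(t,\infty)\,d\sigma(t)$, and the equality $\nu(\partial U)=\sigma(U)$ follows from $\nu_t(\partial V)=1$.

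The remaining structural properties---that $\nu$ does not charge Borel polar sets, and that $p_\nu$ is bounded on $\partial U$---are where the real technical work lies. The former is inherited from the corresponding property of harmonic measure. The latter is the main obstacle: one needs a priori bounds on $p_{\nu_t}$ near the boundary that are uniform in $t\in\mathrm{supp}\,\sigma$, which requires treating irregular points of $\partial U$ via Wiener-type criteria and exploiting the hypothesis $\mathrm{cap}(\partial U)>0$ together with the finiteness of the number of components of $U$. These are standard in the proof of \cite[Theorem II.4.7]{SaTo}; the only subtlety added by our setting is the need to track the Green-function correction uniformly when $t$ approaches $\partial \Omega$.

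For uniqueness, suppose $\nu_1,\nu_2$ both satisfy the conclusion. Then $\tau:=\nu_1-\nu_2$ is supported on $\partial U$, has zero total mass, and finite logarithmic energy (using boundedness of each $p_{\nu_i}$). Moreover $p_\tau=0$ q.e.\ on $U^c\supset\mathrm{supp}\,\tau$, so $I_0[\tau]=\int p_\tau\,d\tau=0$. Since the logarithmic energy is positive definite on compactly supported signed measures of total mass zero with finite energy (see \cite[Lemma I.1.8]{SaTo}), this forces $\tau=0$ and hence $\nu_1=\nu_2$.
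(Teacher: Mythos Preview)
The paper does not actually prove this proposition; it simply cites \cite[Theorem II.4.7]{SaTo} (together with Theorems II.4.1 and II.4.4 there). Your sketch is precisely the construction carried out in that reference: build the balayage of a point mass as harmonic measure on the component containing it, add the Green-function correction $2\pi g_\Omega(\cdot,\infty)$ on the unbounded component to recover a function bounded at infinity with the right boundary values, integrate against $\sigma$, and deduce uniqueness from the positive-definiteness of the logarithmic energy on compactly supported signed measures of zero total mass and finite energy.

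One small slip worth flagging: in the bounded case you write that $z\mapsto\log|z-w|^{-1}$ is bounded on $V$ for every $w\in V^c$, but this fails when $w\in\partial V$. The clean argument works first for $w\notin\overline{V}$, where the function is genuinely bounded and harmonic on $V$, giving $p_{\nu_t}(w)=p_{\delta_t}(w)$ for all such $w$; the extension to q.e.\ $w\in\partial V$ then follows (e.g.\ by continuity of potentials off the support, or by the finer boundary analysis you allude to). The same caveat applies in the unbounded case. Apart from this, your outline and your acknowledgment that the boundedness of $p_\nu$ on $\partial U$ is the main technical point match what is done in the cited source.
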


\section{Main results}\label{Subsection:general result}

Our main result is an asymptotic formula as $n\to + \infty$ for $\mathbb{P}(\# \{z_{j}\in U\} = 0)$, where $\mathbb{P}$ refers to either \eqref{general density intro} or to the spherical ensemble \eqref{spherical ensemble}. When $\PP$ refers to \eqref{general density intro}, our result will be valid for potentials $Q$ such that Assumption \ref{ass:Q} holds. For such $Q$, we let $\mu$ be the equilibrium measure for $Q$ on $\C$, and $S:=\mathrm{supp}\, \mu$ (here $S$ is compact by Proposition \ref{prop:eq measure}). When $\PP$ refers to \eqref{spherical ensemble}, we let the measure $\mu$ and its support $S$ be given by \eqref{def of mu and S spherical}. 

\begin{assumption}\label{ass:Q}
$Q$ is admissible on $\C$, and there exists $F\in \R$ such that 
\begin{align}\label{regularity condition}
2p_{\mu}(z) + Q(z) \geq F, \qquad \mbox{for all } z \in S^{c}.
\end{align}
Moreover, $Q$ is H\"{o}lder continuous in a neighborhood of $S$: there exist $\delta \hspace{-0.05cm}>\hspace{-0.05cm}0$, $K\hspace{-0.05cm}>\hspace{-0.05cm}0$, $\alpha \hspace{-0.05cm}\in\hspace{-0.05cm} (0,1]$ so that
\begin{align}\label{holder}
|Q(z)-Q(w)| \leq K |z-w|^{\alpha}, \qquad \mbox{for all } z,w \in \{s\in \C:\mathrm{dist}(s,S) \leq \delta\}.
\end{align}
\end{assumption}
\bigskip (Thus \eqref{regularity condition} requires that \eqref{EL3} holds for $\mu$ with ``q.e." replaced by ``for all".) Theorem \ref{thm:general pot} will be valid under the following assumptions on $U$.
\begin{assumption}\label{ass:U}
The set $U\subset (\C\cup \{\infty\})$ is open, $\partial U$ is compact in $\C$ and satisfies $\partial U \subset S$, $\mathrm{cap}(\partial U)>0$, and $U$ has finitely many connected components.
\end{assumption}
\begin{assumption}\label{ass:U2}
$U$ satisfies the exterior ball condition, i.e.
there exists $\epsilon_{0}>0$ such that for every $z\in \partial U$ there exists $\eta\in U^c$ such that
\begin{align}\label{eqn:exterior ball condition lol}
U^c \supset D(\eta,\epsilon_{0}) \;\; \mbox{and
}\; |z-\eta | = \epsilon_{0},
\end{align}
where $D(\eta,\epsilon_{0})$ is the open disk centered at $\eta$ of radius $\epsilon_{0}$.
\end{assumption}
\begin{remark}
Any open set $U$ with compact and $C^{2}$ boundary satisfies Assumption \ref{ass:U2}. A rectangle does not have $C^{2}$ boundary but satisfies Assumption \ref{ass:U2}. A circular sector $U=\{re^{i\theta}:r\in(0,a), \theta \in (0,\frac{2\pi}{p})\}$ with $a>0$ and $p\in (1,\infty)$ satisfies Assumption \ref{ass:U2} if and only if $p \geq 2$.
\end{remark}

We now state our first main result.
\begin{theorem}\label{thm:general pot} Fix $\beta>0$.

\medskip \noindent (i) Suppose $Q:\C \to \R\cup\{+\infty\}$ and $U\subset \C$ are such that Assumptions \ref{ass:Q}, \ref{ass:U} and \ref{ass:U2} hold. As $n \to +\infty$, we have $\mathbb{P}(\# \{z_{j}\in U\} = 0) = \exp \big( -Cn^{2}+o(n^{2}) \big)$, where $\mathbb{P}$ refers to \eqref{general density intro},
\begin{align}\label{C in main thm}
C = \frac{\beta}{4}\bigg(  \int_{\partial U}Q(z)d\nu(z) + 2c^{\hspace{0.02cm}\mu}_{U} - \int_{U}Q(z)d\mu(z) \bigg),
\end{align}
$\mu$ is the equilibrium measure for $Q$ on $\C$, $\nu = \mathrm{Bal}(\mu|_{U},\partial U)$, $c^{\hspace{0.02cm}\mu}_{U}=2\pi\int_{\Omega}g_{\Omega}(z,\infty)d\mu(z)$ and $\Omega$ is the unbounded component of $U$ ($c^{\hspace{0.02cm}\mu}_{U}=0$ if $U$ has no unbounded component).

\medskip \noindent (ii) The above statement also holds for the spherical ensemble \eqref{spherical ensemble}, provided that $U$ is also bounded. More precisely, let $U$ be bounded and such that Assumptions \ref{ass:U} and \ref{ass:U2} hold with $S=\C$. As $n \to +\infty$, we have $\mathbb{P}(\# \{z_{j}\in U\} = 0) = \exp \big( -Cn^{2}+o(n^{2}) \big)$, where $\mathbb{P}$ refers to \eqref{spherical ensemble}, $C$ is given by
\begin{align*}
C = \frac{\beta}{4}\bigg(  \int_{\partial U}\log(1+|z|^{2})d\nu(z) - \int_{U}\log(1+|z|^{2})d\mu(z) \bigg),
\end{align*}
$d\mu(z) = \frac{d^{2}z}{\pi(1+|z|^{2})^{2}}$, $\mathrm{supp} \, \mu = \C$, and $\nu = \mathrm{Bal}(\mu|_{U},\partial U)$.

\end{theorem}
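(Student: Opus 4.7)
The plan is to prove Theorem \ref{thm:general pot} via a large-deviation / energy-minimization argument in the spirit of \cite{AR2017,A2018}, but adapted to the generality of the present setting. Writing
\begin{equation*}
\mathbb{P}(\#\{z_{j}\in U\}=0) = \frac{1}{Z_{n}}\int_{(U^{c})^{n}}\prod_{j<k}|z_{j}-z_{k}|^{\beta}\prod_{j=1}^{n}e^{-n\frac{\beta}{2}Q(z_{j})}\,d^{2}z_{j}
\end{equation*}
and using the heuristic \eqref{n tuple}, the integrand behaves like $\exp(-\tfrac{\beta}{2}n^{2}I_{Q}[\mu_{n}])$ up to lower-order factors, where $\mu_{n}=\tfrac{1}{n}\sum_{j}\delta_{z_{j}}$. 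Combined with the classical asymptotics $\tfrac{1}{n^{2}}\log Z_{n}\to -\tfrac{\beta}{2}I_{Q}[\mu]$, the theorem reduces to two tasks: (a) identifying the minimizer $\mu^{\star}$ of $I_{Q}$ over $\mathcal{P}(U^{c})$ and computing $I_{Q}[\mu^{\star}]-I_{Q}[\mu]$; and (b) establishing matching large-deviation upper and lower bounds on $\log \mathbb{P}$ at the scale $n^{2}$.

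For step (a), I would guess $\mu^{\star}=\mu|_{S\setminus U}+\nu$ and verify it via the uniqueness portion of Proposition \ref{prop:eq measure}. By Proposition \ref{prop:def of bal}, $p_{\nu}=p_{\mu|_{U}}+c^{\,\mu}_{U}$ q.e.\ on $U^{c}$, so $2p_{\mu^{\star}}+Q = 2p_{\mu}+Q+2c^{\,\mu}_{U}$ q.e.\ on $U^{c}$. Combined with \eqref{EL1} for $\mu$ on $S$ and \eqref{regularity condition} on $S^{c}$, this shows the right-hand side equals $F+2c^{\,\mu}_{U}$ q.e.\ on $\mathrm{supp}\,\mu^{\star}\subset(S\setminus U)\cup\partial U$ and is $\geq F+2c^{\,\mu}_{U}$ q.e.\ on $U^{c}$, confirming that $\mu^{\star}$ is the equilibrium measure for $Q$ on $U^{c}$ with Euler-Lagrange constant $F^{\star}=F+2c^{\,\mu}_{U}$. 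Integrating \eqref{EL1} against $\sigma$ yields the identity $I_{Q}[\sigma]=\tfrac{1}{2}(F^{\sigma}+\int Q\,d\sigma)$ (valid since $\sigma$ does not charge polar sets when $I_{0}[\sigma]<\infty$); applying it to $\mu$ and $\mu^{\star}$ and simplifying, one obtains
\begin{equation*}
\tfrac{\beta}{2}\bigl(I_{Q}[\mu^{\star}]-I_{Q}[\mu]\bigr) \;=\; \tfrac{\beta}{4}\biggl(2c^{\,\mu}_{U}+\int_{\partial U}Q\,d\nu-\int_{U}Q\,d\mu\biggr),
\end{equation*}
matching \eqref{C in main thm}.

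For step (b), the upper bound on $\log\mathbb{P}$ is standard: restricting to $(U^{c})^{n}$, applying the lower semi-continuity of $I_{Q}$ to $\mu_{n}$ along converging subsequences, and invoking tightness on a sufficiently large compactum, one obtains $\liminf \tfrac{-1}{n^{2}}\log\mathbb{P}\geq \tfrac{\beta}{2}(I_{Q}[\mu^{\star}]-I_{Q}[\mu])$. The lower bound is where the actual work lies: one integrates over a small neighborhood of an explicit $n$-point configuration in $(U^{c})^{n}$ whose empirical measure approximates $\mu^{\star}$, namely $\approx(1-\mu(U))n$ Fekete-type points discretizing $\mu|_{S\setminus U}$ together with $\approx\mu(U)n$ points placed in a thin layer of $U^{c}$ just outside $\partial U$ that discretizes $\nu$. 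Assumption \ref{ass:U2} (exterior ball) is precisely what makes this construction possible, as it guarantees such a layer lies entirely in $U^{c}$; the boundedness of $p_{\nu}$ on $\partial U$ (Proposition \ref{prop:def of bal}) lets the boundary-layer piece have discrete logarithmic energy within $o(n^{2})$ of the target $n^{2}I_{0}[\nu]$, while the Hölder condition \eqref{holder} controls the contribution of $Q$. The principal technical obstacle is exactly this lower-bound construction: one must squeeze roughly $\mu(U)n$ points into an arbitrarily thin strip around $\partial U$ while keeping the total energy within $o(n^{2})$ of its prediction. Part (ii) follows by the same scheme with Proposition \ref{prop:eq measure spherical} replacing Proposition \ref{prop:eq measure}; the boundedness of $U$ is used to ensure $I_{Q}[\mu^{\star}]<\infty$ for $Q(z)=\log(1+|z|^{2})$, which is not admissible on the unbounded $U^{c}$.
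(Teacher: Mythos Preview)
Your step (a) coincides with the paper's Proposition \ref{prop:generalformula}: the identification $\mu^{\star}=\mu|_{S\setminus U}+\nu$ via the Euler--Lagrange characterization plus the balayage identity, and the computation of $I_{Q}[\mu^{\star}]-I_{Q}[\mu]$, are carried out exactly as you describe.

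The divergence is in step (b), specifically the lower bound. You propose a two-piece construction: roughly $(1-\mu(U))n$ Fekete-type points discretizing $\mu|_{S\setminus U}$, plus $\mu(U)n$ points squeezed into a thin collar of $U^{c}$ along $\partial U$ to discretize $\nu$. The paper avoids this entirely by taking a \emph{single} Fekete set $\mathcal{F}_{n}=\{z_{1}^{*},\dots,z_{n}^{*}\}$ for $Q$ on $U^{c}$; by \cite[Theorem III.2.8]{SaTo} together with \eqref{lol20} these points may be chosen in $\mathrm{supp}\,\mu^{\star}=S\setminus U$. Since Fekete points already realise the optimal discrete energy $\delta_{n}^{Q}(U^{c})$, no bulk/boundary splitting, no cross-term estimate, and no control of the self-energy of nearly-collinear collar points is needed. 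One then integrates over balls of radius $\asymp n^{-2-2/\alpha}$ about the $z_{j}^{*}$; the substantive technical input is a separation bound $|z_{j}^{*}-z_{k}^{*}|\gtrsim n^{-1-2/\alpha}$ (Lemma \ref{lemma:dist fekete}), proved by a polynomial argument that uses both the exterior-ball condition and the H\"older bound \eqref{holder}, and which guarantees that these balls are disjoint and each meets $U^{c}$ in area $\gtrsim n^{-4-4/\alpha}$. Your collar construction could be made to work, but it is heavier than necessary; the Fekete route outsources all energy bookkeeping to the single limit \eqref{eqn:limit}. The paper also uses the pointwise Fekete inequality for the upper bound (Lemma \ref{lemma:upper bound}) rather than LDP machinery, which is more direct though equivalent at this level. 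For part (ii) the paper does not rely on admissibility at all but passes to the sphere $\mathbb{S}^{2}$ via stereographic projection (Subsection \ref{subsection:spherical fekete}) and reproves the Fekete limit and separation lemma there; the boundedness of $U$ enters only through the freedom to rotate the sphere so that the analysis takes place on a compact subset of $\C$.
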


\begin{remark}
Theorem \ref{thm:general pot} (i) extends \cite[Theorem 1.5]{AR2017} and \cite[Theorem 1.2 (B)]{A2018} for potentials $Q$ that are not assumed to be rotation-invariant, and for hole regions $U$ that are allowed to be unbounded. We also only assume that $Q$ is H\"{o}lder continuous in a neighborhood of $S$, which is an improvement over the assumption made in \cite[Theorem 1.2 (B)]{A2018} that $Q$ is Lipschitz continuous in a neighborhood of $S$. The H\"{o}lder continuity is important to handle potentials such as $Q(z) = |z|^{2b}$ with $b\in (0,\frac{1}{2})$.  
\end{remark}

\begin{remark}\label{remark:AR star domain}
In \cite{AR2017}, the authors studied hole probabilities for the point process \eqref{general density intro} with $Q(z) = |z|^{2}$ and $\beta=2$. They considered two types of hole regions $U$ (with $U\subset S$, $U$ open):
\begin{itemize}
\item[(a)] Hole regions $U$ satisfying the exterior ball conditions (see \cite[Theorem 1.5]{AR2017}).
\item[(b)] Hole regions $U$ for which there exists a decreasing sequence of open sets $U_{1} \supseteq U_{2} \supseteq \ldots$ with $\overline{U}\subset U_{m} \subseteq S$ for all $m\in \N_{>0}$ and such that the sequence $\nu_{m}:=\mathrm{Bal}(\mu|_{U_{m}},\partial U_{m})$ converges weakly to $\nu := \mathrm{Bal}(\mu|_{U},\partial U)$ (see \cite[Theorem 1.4]{AR2017}). 
\end{itemize}
Furthermore, using the fact that for $Q(z) = |z|^{2}$ the measure $\mu$ has uniform density, it is proved in \cite[Remark 5.2]{AR2017} that (b') implies that $\nu_{m}$ converges weakly to $\nu$ for some $U_{m}$ satisfying $U \subseteq U_{m}$ (but not necessarily $\overline{U}\subset U_{m}$), where (b') is the following condition:
\begin{itemize}
\item[(b')] The open set $U$ is a star domain. 
\end{itemize}
Indeed, if $U$ satisfies (b'), then $\nu_{m}$ converges weakly to $\nu$ with $U_{m}:= (U-z_{\star})(1+\frac{1}{m})+z_{\star}$, where $z_{\star} \in U$ is a star center of $U$ (for more details see \cite[Remark 5.2]{AR2017}). Criteria (b') allows to treat hole regions $U$ not covered by (a), and vice-versa. For example, circular sectors $\smash{U=\{re^{i\theta}:r\in(0,a), \theta \in (0,\frac{2\pi}{p})\}}$ with $a>0$ and $p\in (1,2)$ satisfy (b') but not (a), and annuli satisfy (a) but not (b'). Note also that circular sectors with $p=1$ satisfy (b') but not (b) (because $\overline{U}\not\subset U_{m}$). 

\medskip \noindent \cite[Theorems 1.5 and 1.4]{AR2017} were generalized in \cite[Theorem 1.2 (B) and (A)]{A2018} for general $\beta>0$ and for a class of rotation-invariant potentials $Q$. When $\mu$ is not uniform, (b') cannot be used and therefore condition (b) becomes difficult to verify in practice, even in the rotation-invariant setting \cite{A2018}. Hence we have not tried to extend  Theorem \ref{thm:general pot} for hole regions satisfying condition (b).
\end{remark}

\begin{remark}\label{remark:U is bounded is no loss of generality for spherical}
In Theorem \ref{thm:general pot} (ii), it is assumed that $U$ is bounded. This is without loss of generality. Indeed, assume that $U$ is unbounded, and take $z_{\star} \in \overline{U}^{c}$ ($\overline{U}^{c} \neq \emptyset$ by Assumption \ref{ass:U}). Let $\varphi:\mathbb{S}^{2}\to (\C\cup\{\infty\})$ be the stereographic projection with respect to the north pole. Suppose that $h:\mathbb{S}^{2}\to \mathbb{S}^{2}$ is a rotation of $\mathbb{S}^{2}$ that sends $\varphi^{-1}(z_{\star})$ to the north pole $(0,0,1)$. Then, since \eqref{spherical ensemble on the sphere} remains unchanged under the change of variables $\hat{z}_{j}\to h(\hat{z}_{j})$, $j=1,\ldots,n$, we have that $\mathbb{P}(\# \{z_{j}\in U\} = 0)=\mathbb{P}(\# \{z_{j}\in U_{\star}\} = 0)$, where $\mathbb{P}$ refers to \eqref{spherical ensemble} and $U_{\star} := \varphi \circ h \circ \varphi^{-1}(U)$ is bounded.
\end{remark}

\medskip Our next result is about the equilibrium measure for $Q$ on $U^{c}$. As mentioned in Subsection \ref{subsection:background potential}, this equilibrium measure plays an important role in the study of the hole probability $\mathbb{P}(\# \{z_{j}\in U\} = 0)$. This measure is also naturally related to the point process \eqref{general density intro}, when \eqref{general density intro} is conditioned on the hole event $\#\{z_{j}\in U\}=0$. Indeed, the $n$-tuples $(z_{1},\ldots,z_{n})$ maximizing the density of this conditional point process are precisely those $n$-tuples that minimize \eqref{n tuple} under the constraint that $z_{1},\ldots,z_{n}\in U^{c}$. Note that Proposition \ref{prop:general pot} is valid under weaker assumptions on $Q$ and $U$ than in Theorem \ref{thm:general pot}.

\begin{proposition}\label{prop:general pot}
(i) Suppose that $Q:\C \to \R\cup\{+\infty\}$ is admissible on $\C$. Let $\mu$ be the equilibrium measure for $Q$ on $\C$, and $S:=\mathrm{supp}\, \mu$. Suppose also that $U\subset \C$ is such that Assumption \ref{ass:U} holds. Then $\mu|_{S\setminus U} + \nu$ is the equilibrium measure for $Q$ on $U^{c}$, where $\nu = \mathrm{Bal}(\mu|_{U},\partial U)$.

\medskip \noindent (ii) The above statement also holds if $Q(z)=\log(1+|z|^{2})$, $\smash{d\mu(z) = \frac{d^{2}z}{\pi(1+|z|^{2})^{2}}}$, $\mathrm{supp} \, \mu = \C$, and $U\subset \C$ is such that Assumption \ref{ass:U} holds, provided that $U$ is also bounded. 
\end{proposition}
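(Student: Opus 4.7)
I propose to identify $\sigma := \mu|_{S\setminus U}+\nu$ as the equilibrium measure on $U^c$ by verifying the uniqueness criterion of Proposition \ref{prop:eq measure} (for part (i)) and Proposition \ref{prop:eq measure spherical} (for part (ii)). Concretely, one must check that $\sigma\in\mathcal{P}(U^c)$, that $\sigma$ has compact support and finite $I_0[\sigma]$ in part (i) (finite $I_Q[\sigma]$ in part (ii)), and that $\sigma$ satisfies the Euler--Lagrange identities \eqref{EL1} and \eqref{EL3} on $U^c$ for some constant $F_\sigma\in\R$. Note that $Q$ is automatically admissible on $U^c$ in part (i), since the compactness of $\partial U$ forces $U^c$ to contain a neighbourhood of infinity (or coincide with an unbounded set of positive capacity); similarly $\mathrm{cap}(U^c)>0$ in part (ii), where $U$ is bounded, so Proposition \ref{prop:eq measure spherical} applies.

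\textbf{Basic properties of $\sigma$.} By Proposition \ref{prop:def of bal}, $\nu$ has total mass $\mu(U)$, does not charge polar sets, and $p_\nu$ is bounded on $\partial U$. Hence $\sigma$ has mass $\mu(S\setminus U)+\mu(U)=\mu(S)=1$. Since $U$ is open, $S\setminus U\subset U^c$ and $\partial U\subset U^c$; combined with $\partial U\subset S$ (Assumption \ref{ass:U}), this gives $\mathrm{supp}\,\sigma\subset U^c\cap S$. In part (i), $S$ is compact by Proposition \ref{prop:eq measure}, so $\mathrm{supp}\,\sigma$ is compact. Finiteness of $I_0[\sigma]$ follows by decomposing
\begin{align*}
I_0[\sigma]=I_0[\mu|_{S\setminus U}]+2\int p_{\mu|_{S\setminus U}}\,d\nu+I_0[\nu],
\end{align*}
where $I_0[\mu|_{S\setminus U}]\le I_0[\mu]+O(1)$ is finite (using that $p_\mu$ is bounded on $S$ via the EL identity for $\mu$), $p_{\mu|_{S\setminus U}}$ is bounded on $\partial U$ (being the potential of a measure whose support is separated from $\partial U$... or more directly, by the balayage identity applied below), and $I_0[\nu]=\int p_\nu\,d\nu<\infty$ by the boundedness of $p_\nu$ on $\partial U$. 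In part (ii), $\int\log(1+|z|^2)\,d\sigma(z)<\infty$ is immediate from $d\mu(z)=\frac{d^2z}{\pi(1+|z|^2)^2}$ and the continuity of $Q$ on the compact set $\partial U$.

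\textbf{Potential identity via balayage and Euler--Lagrange.} The core step is the identity
\begin{align*}
p_\sigma(z)\;=\;p_\mu(z)+c^{\,\mu}_U \qquad\text{for q.e.\ }z\in U^c.
\end{align*}
Indeed, writing $\mu=\mu|_{S\setminus U}+\mu|_U$ and applying the defining property $p_\nu=p_{\mu|_U}+c^{\,\mu|_U}_U$ of the balayage (Proposition \ref{prop:def of bal}), one gets $p_\sigma=p_{\mu|_{S\setminus U}}+p_\nu=p_\mu+c^{\,\mu|_U}_U$ q.e.\ on $U^c$, and $c^{\,\mu|_U}_U=c^{\,\mu}_U$ because $\Omega\subset U$. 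Combined with the Euler--Lagrange relations for $\mu$ on $\C$ (equality q.e.\ on $S$, inequality q.e.\ on $\C\setminus S$, hence $2p_\mu+Q\ge F$ q.e.\ on $\C$), this yields, setting $F_\sigma:=F+2c^{\,\mu}_U$,
\begin{align*}
2p_\sigma(z)+Q(z)=F_\sigma\text{ for q.e.\ }z\in \mathrm{supp}\,\sigma,\qquad 2p_\sigma(z)+Q(z)\ge F_\sigma\text{ for q.e.\ }z\in U^c,
\end{align*}
where the equality uses $\mathrm{supp}\,\sigma\subset S$. These are exactly the EL conditions \eqref{EL1} and \eqref{EL3} of $\sigma$ on $U^c$, and the uniqueness in Proposition \ref{prop:eq measure} (resp.\ \ref{prop:eq measure spherical}) finishes the proof.

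\textbf{Main obstacle.} The principal technical point is the careful bookkeeping of the ``q.e.'' exceptional sets: one needs the balayage identity for $p_\nu$, the EL identities for $\mu$, and the identification of $\mathrm{supp}\,\sigma$ inside $S$ to all fail only on a common polar set, so that the resulting pointwise statements for $\sigma$ hold q.e.\ on the appropriate regions. This uses crucially that $\nu$ does not charge polar sets, together with the analogous property inherited by $\mu|_{S\setminus U}\le\mu$, ensuring that $\sigma$ does not charge polar sets either. Part (ii) requires no new ideas beyond these, using the spherical variant Proposition \ref{prop:eq measure spherical} in place of Proposition \ref{prop:eq measure} and the boundedness of $U$ to legitimately apply Proposition \ref{prop:def of bal} to the compactly supported measure $\mu|_U$.
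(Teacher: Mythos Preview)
Your proposal is correct and follows essentially the same route as the paper: establish $p_\sigma=p_\mu+c^{\,\mu}_U$ q.e.\ on $U^c$ via the balayage identity, transfer the Euler--Lagrange relations for $\mu$ on $\C$ to $\sigma$ on $U^c$ with shifted constant $F+2c^{\,\mu}_U$, and invoke the uniqueness criterion of Proposition~\ref{prop:eq measure} (resp.\ \ref{prop:eq measure spherical}). The only cosmetic difference is that the paper verifies finiteness of $I_0[\sigma]$ more directly by integrating the already-established identity $p_\sigma=\tfrac12(F+2c^{\,\mu}_U-Q)$ against $\sigma$ and using that $Q$ is bounded on $S$, rather than via your decomposition $I_0[\sigma]=I_0[\mu|_{S\setminus U}]+2\int p_{\mu|_{S\setminus U}}\,d\nu+I_0[\nu]$ (where, incidentally, the parenthetical ``support separated from $\partial U$'' is not correct since $\partial U\subset S$, but your fallback to the balayage identity is fine).
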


\subsection{Results for general rotation-invariant potentials}
\begin{figure}[h]
\begin{tikzpicture}[master]
\node at (0,0) {\includegraphics[width=3.63cm]{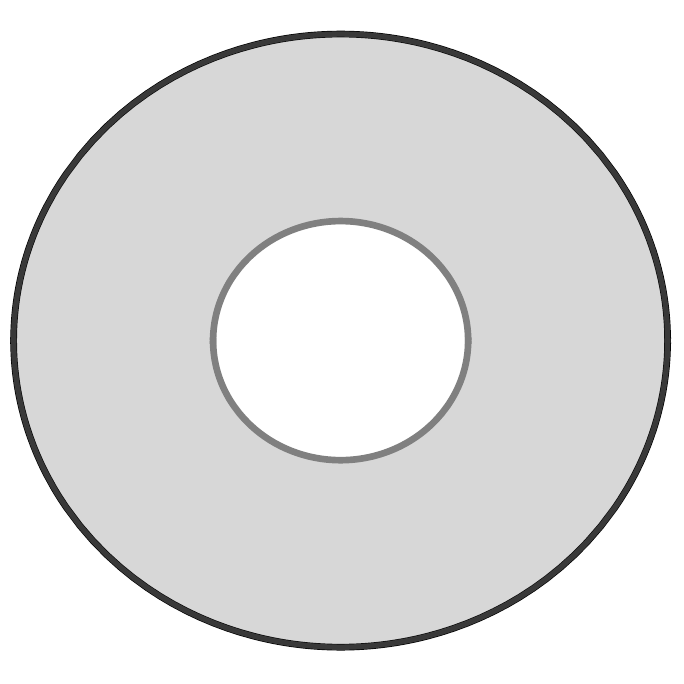}};
\node at (0,-2) {\footnotesize Theorem \ref{thm:centered disk}};
\end{tikzpicture} \hspace{-0.21cm}
\begin{tikzpicture}[slave]
\node at (0,0) {\includegraphics[width=3.63cm]{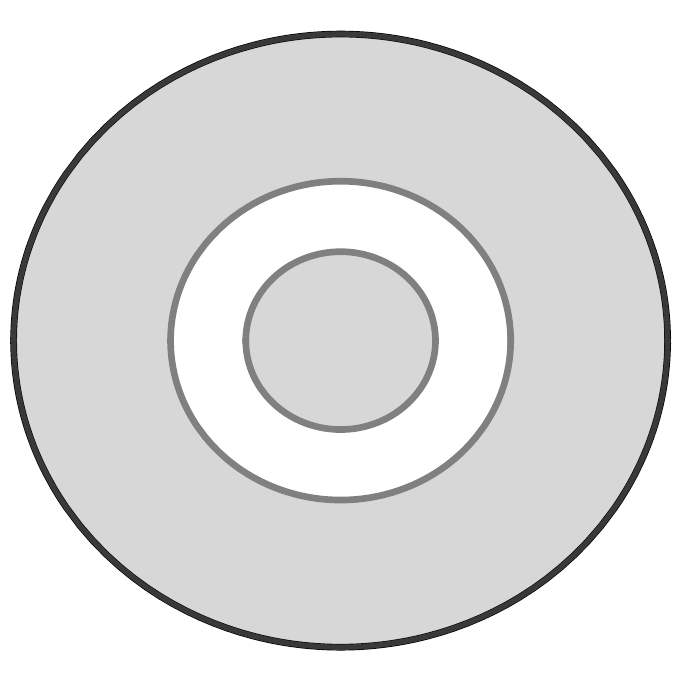}};
\node at (0,-2) {\footnotesize Theorem \ref{thm:regular annulus}};
\end{tikzpicture} \hspace{-0.21cm}
\begin{tikzpicture}[slave]
\node at (0,0) {\includegraphics[width=3.63cm]{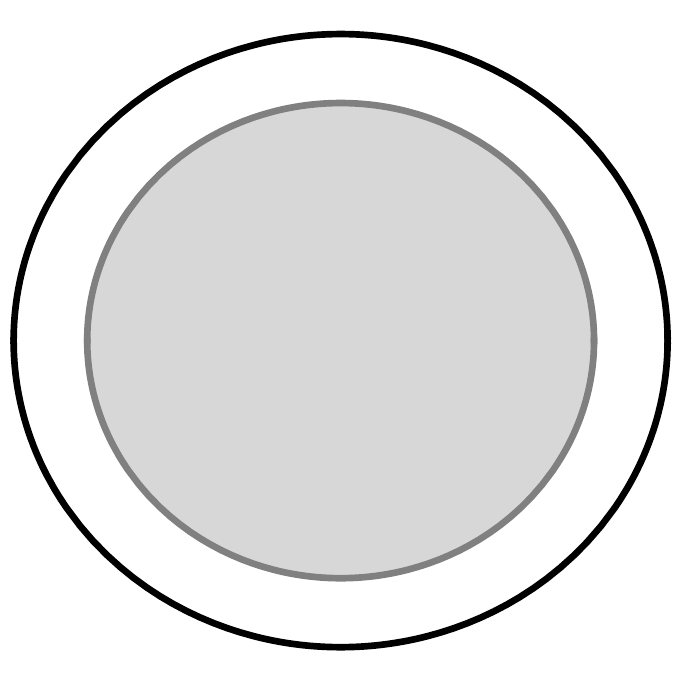}};
\node at (0,-2) {\footnotesize Theorem \ref{thm:unbounded annulus}};
\end{tikzpicture} \hspace{-0.21cm}
\begin{tikzpicture}[slave]
\node at (0,0) {\includegraphics[width=3.63cm]{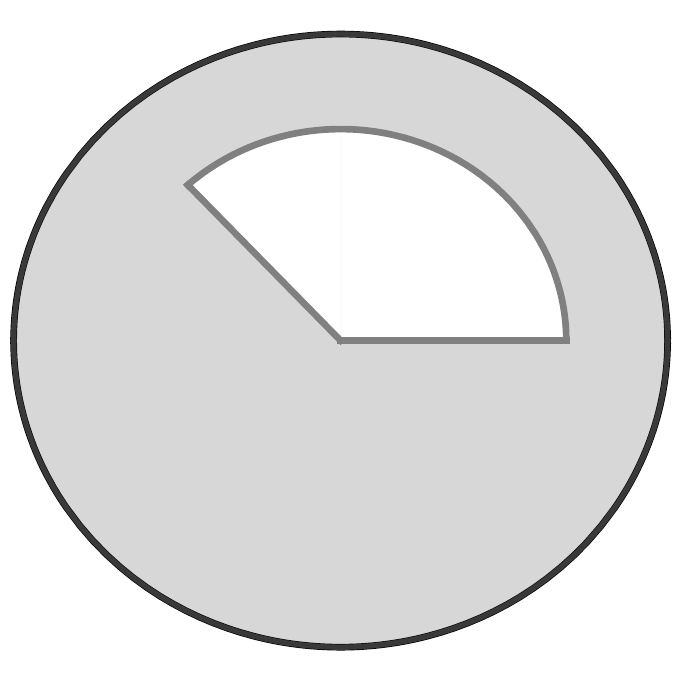}};
\node at (0,-2) {\footnotesize Theorems \ref{thm:g sector nu}, \ref{thm:g sector C}};
\end{tikzpicture}
\caption{\label{fig:hole prob for general rot invariant potential} Results for rotation-invariant $Q$, when $U$ is a disk (left), an annulus (middle-left), the complement of a disk (middle-right), and a circular sector (right). $S\setminus U$ is the shaded region, $U$ is in white, $\partial S$ is in black, and $\partial U$ is in gray. In the pictures, $S$ is a disk, but we also consider the more general situation \eqref{support S when Q=g 2}.}
\end{figure}
In this subsection, we study the balayage measure $\nu = \mathrm{Bal}(\mu|_{U},\partial U)$ and the hole probability $\mathbb{P}(\# \{z_{j}\in U\} = 0)$, where $\mathbb{P}$ refers to \eqref{general density intro}, in the case where $Q(z) = g(|z|)$ for some $g: [0,+\infty)\to \R \cup \{+\infty\}$ such that Assumption \ref{ass:Q} holds. We assume as in \eqref{support S when Q=g} that 
\begin{align}\label{support S when Q=g 2}
S := \mathrm{supp} \,\mu  = \{z : |z| \in \mathrm{S} \}, \qquad \mathrm{S} := [r_{0},r_{1}]\cup [r_{2},r_{3}] \ldots \cup [r_{2\ell},r_{2\ell+1}]
\end{align}
for some $0 \leq r_{0} < r_{1} < \ldots < r_{2\ell}<r_{2\ell+1}<+\infty$. We further assume that $g \in C^{2}(\mathrm{S}\setminus \{0\})$, so that as in \eqref{mu radially symmetric intro} we have 
\begin{align}\label{mu radially symmetric intro 2}
d\mu(z) = \frac{\Delta Q(z)}{4\pi} d^{2}z = d\mu_{\mathrm{rad}}(r) \frac{d\theta}{2\pi}, \qquad d\mu_{\mathrm{rad}}(r):=\frac{r}{2} \big( g''(r) + \frac{1}{r}g'(r) \big) dr,
\end{align}
where $z=re^{i\theta}$, $r\geq 0$, $\theta \in (-\pi,\pi]$, and $\mathrm{supp} \, \mu_{\mathrm{rad}} := \mathrm{S}$. We consider situations where the hole region $U$ is a disk, an annulus, the complement of a disk, and a circular sector. When $U$ is bounded, i.e. for the disk, the annulus and the circular sector, we also treat the spherical point process \eqref{spherical ensemble} (in this case, $\mu$ and $S$ are given by \eqref{def of mu and S spherical}, $g(r):=\log(1+r^{2})$, $d\mu_{\mathrm{rad}}(r) := \frac{2r}{(1+r^{2})^{2}}dr$ and $\mathrm{S}:=[0,+\infty)$). The results of this subsection are summarized in Figure \ref{fig:hole prob for general rot invariant potential} (with $\ell=0$ and $r_{0}=0$). 

\subsubsection{The disk}
\begin{theorem}\label{thm:centered disk}
Fix $\beta>0$.

\medskip \noindent (i) Suppose $Q(z) = g(|z|)$ for some $g: [0,+\infty)\to \R \cup \{+\infty\}$ such that Assumption \ref{ass:Q} holds, and that $S = \{z : |z| \in \mathrm{S} \}$ with $\mathrm{S} = [r_{0},r_{1}]\cup [r_{2},r_{3}] \cup \ldots \cup [r_{2\ell},r_{2\ell+1}]$ for some $0 \leq r_{0} < r_{1} < ... <r_{2\ell+1}<+\infty$. Suppose also that $g \in C^{2}(\mathrm{S}\setminus \{0\})$ and recall that $\mu_{\mathrm{rad}}$ is given by \eqref{mu radially symmetric intro 2}.

\medskip \noindent Let $a\in [r_{2k},r_{2k+1}]\setminus \{0\}$ for some $k\in \{0,\ldots,\ell\}$, and let $U = \{z: |z| < a\}$. Then $\nu := \mathrm{Bal}(\mu|_{U},\partial U)$ is given by
\begin{align}\label{nu for centered disk}
d\nu(z) = \frac{d\theta}{2\pi}\int_{r_{0}}^{a}d\mu_{\mathrm{rad}}(r)  = \bigg( \sum_{j=0}^{k-1} \frac{r_{2j+1}g'(r_{2j+1})-r_{2j}g'(r_{2j})}{2} + \frac{a g'(a)-r_{2k}g'(r_{2k})}{2} \bigg)\frac{d\theta}{2\pi}
\end{align}
where $z = ae^{i\theta}$, $\theta \in (-\pi,\pi]$. If $r_{0}=0$, then $r_{0}g'(r_{0})$ must be replaced by $0$ in \eqref{nu for centered disk}. Moreover, as $n \to +\infty$, we have $\mathbb{P}(\# \{z_{j}\in U\} = 0) = \exp \big( -Cn^{2}+o(n^{2}) \big)$, where $\mathbb{P}$ refers to \eqref{general density intro} and
\begin{align}
C & = \frac{\beta}{4} \int_{r_{0}}^{a}\big(g(a)-g(r)\big) d\mu_{\mathrm{rad}}(r). \label{C in thm disk rot inv}
\end{align}
(ii) A similar statement holds for the spherical point process \eqref{spherical ensemble}. More precisely, let $\mu$ and $S$ be given by \eqref{def of mu and S spherical}, $a>0$ and $U = \{z: |z| < a\}$.  Then $\nu := \mathrm{Bal}(\mu|_{U},\partial U)$ is given by $d\nu(z) = \frac{a^{2}}{1+a^{2}} \frac{d\theta}{2\pi}$ where $z = ae^{i\theta}$, $\theta \in (-\pi,\pi]$. Moreover, as $n \to +\infty$, we have $\mathbb{P}(\# \{z_{j}\in U\} = 0) = \exp \big( -Cn^{2}+o(n^{2}) \big)$, where $\mathbb{P}$ refers to \eqref{spherical ensemble} and $C$ is given by \eqref{C in thm disk rot inv} with $g(r)=\log(1+r^{2})$ and $d\mu_{\mathrm{rad}}(r) = \frac{2r}{(1+r^{2})^{2}}dr$, i.e.
\begin{align}\label{C in thm disk spherical}
C = \frac{\beta}{4} \bigg( \log(1+a^{2})- \frac{a^{2}}{1+a^{2}} \bigg).
\end{align}
\end{theorem}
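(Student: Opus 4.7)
The plan is to invoke Theorem \ref{thm:general pot} and compute the two ingredients $\nu$ and $C$ explicitly using the rotational symmetry. Both Assumption \ref{ass:U} and Assumption \ref{ass:U2} are trivially satisfied by the disk $U = \{|z|<a\}$: $\partial U$ is a circle contained in $S$ since $a\in [r_{2k},r_{2k+1}]\subset \mathrm{S}$, it has positive capacity, and exterior balls of any small enough radius can be placed at every boundary point.

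For part (i), rotation invariance of $\mu|_U$ and of $\partial U$, together with the uniqueness statement in Proposition \ref{prop:def of bal}, implies that $\nu := \mathrm{Bal}(\mu|_U,\partial U)$ is invariant under all rotations about the origin. Since $\nu$ is supported on the circle $\partial U$, it must therefore be a constant multiple of arclength: $d\nu(z) = c\, d\theta/(2\pi)$. Proposition \ref{prop:def of bal} gives $c = \nu(\partial U) = \mu(U)$, and using \eqref{mu radially symmetric intro 2} we compute
\begin{align*}
\mu(U) \;=\; \int_0^a d\mu_{\mathrm{rad}}(r) \;=\; \int_{r_0}^a d\mu_{\mathrm{rad}}(r),
\end{align*}
since $\mu_{\mathrm{rad}}$ is supported on $\mathrm{S}$. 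The second equality in \eqref{nu for centered disk} then follows from the identity $d\mu_{\mathrm{rad}}(r) = \tfrac{1}{2}(rg'(r))'\,dr$, combined with the decomposition $\mathrm{S}\cap[0,a] = [r_0,r_1]\cup\ldots\cup[r_{2k-2},r_{2k-1}]\cup[r_{2k},a]$ and the fundamental theorem of calculus on each piece.

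The constant $C$ now follows directly from \eqref{C in main thm}: since $U$ is bounded, $c^{\hspace{0.02cm}\mu}_U=0$, and rotation invariance of $Q$ combined with the explicit form of $\nu$ yields
\begin{align*}
\int_{\partial U} Q(z)\,d\nu(z) = g(a)\int_{r_0}^a d\mu_{\mathrm{rad}}(r), \qquad \int_U Q(z)\,d\mu(z) = \int_{r_0}^a g(r)\,d\mu_{\mathrm{rad}}(r).
\end{align*}
Subtracting gives \eqref{C in thm disk rot inv}.

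For part (ii), the argument is identical but invokes Theorem \ref{thm:general pot}(ii) instead. Rotation invariance again forces $d\nu = \mu(U)\,d\theta/(2\pi)$, and a direct computation gives $\mu(U) = \int_0^a \tfrac{2r}{(1+r^2)^2}\,dr = \tfrac{a^2}{1+a^2}$. Plugging into the formula for $C$ in Theorem \ref{thm:general pot}(ii) and evaluating $\int_0^a \log(1+r^2)\,\tfrac{2r}{(1+r^2)^2}\,dr$ via the substitution $u=1+r^2$ followed by integration by parts yields \eqref{C in thm disk spherical}. There is essentially no obstacle to overcome: all the analytical work was absorbed into Theorem \ref{thm:general pot}, and rotational symmetry here reduces the computation of $\nu$ and $C$ to elementary integration.
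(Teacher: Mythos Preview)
Your proof is correct. The only notable difference from the paper is in how the balayage measure is identified: the paper defines the candidate $d\nu = \kappa\,\frac{d\theta}{2\pi}$ with $\kappa=\int_0^a d\mu_{\mathrm{rad}}(r)$ and then verifies directly, via the mean-value formula \eqref{well-known formula}, that $p_\nu(z)=p_{\mu|_U}(z)=\kappa\log\tfrac{1}{|z|}$ for all $z\in U^c$, thereby invoking the uniqueness in Proposition~\ref{prop:def of bal}. You instead observe that the rotational symmetry of $\mu|_U$ and $\partial U$, combined with the same uniqueness statement, forces $\nu$ to be rotation-invariant and hence uniform on the circle; the mass condition $\nu(\partial U)=\mu(U)$ then fixes the constant. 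Your route is slightly more conceptual and avoids the explicit potential computation, while the paper's route is more self-contained (no implicit appeal to how balayage transforms under isometries). For the constant $C$, the two arguments are identical.
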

\begin{remark}
Some of the results contained in Theorem \ref{thm:centered disk} were already known:
\begin{itemize}
\item For the Ginibre case $g(r)=r^{2}$, \eqref{C in thm disk rot inv} becomes $C=\frac{\beta}{8}a^{4}$. For $\beta=2$ this constant was already obtained in \cite{GHS1988, ForresterHoleProba, JLM1993, APS2009, AR2017, L et al 2019, C2021}, and for general $\beta>0$ in \cite{A2018}.
\item For the Mittag-Leffler case $g(r)=r^{2b}$ with $b>0$, \eqref{C in thm disk rot inv} becomes $C=\frac{\beta}{8}ba^{4b}$, which agrees with the results from \cite{A2018, C2021}.
\item For $\ell=k=0$ and $r_{0}=0$, \eqref{C in thm disk rot inv} becomes (after integration by parts) $C = \frac{\beta}{8} \int_{0}^{a}r \, g'(r)^{2} dr$, which recovers the result from \cite{A2018}.
\item The constant $C$ in \eqref{C in thm disk spherical} was already known for $\beta=2$, see \cite[Proposition 3.1]{AZ2015}. 
\end{itemize}
\end{remark}
\subsubsection{The annulus}

\begin{theorem}\label{thm:regular annulus}
Fix $\beta>0$.

\medskip \noindent (i) Suppose $Q(z) = g(|z|)$ for some $g: [0,+\infty)\to \R \cup \{+\infty\}$ such that Assumption \ref{ass:Q} holds, and that $S = \{z : |z| \in \mathrm{S} \}$ with $\mathrm{S} = [r_{0},r_{1}]\cup [r_{2},r_{3}] \cup \ldots \cup [r_{2\ell},r_{2\ell+1}]$ for some $0 \leq r_{0} < r_{1} < \ldots <r_{2\ell+1}<+\infty$. Suppose also that $g \in C^{2}(\mathrm{S}\setminus \{0\})$ and recall that $\mu_{\mathrm{rad}}$ is given by \eqref{mu radially symmetric intro 2}.

\medskip \noindent Let $\rho_{1}\in [r_{2k},r_{2k+1}]\setminus \{0\}$, $\rho_{2}\in [r_{2q},r_{2q+1}]$ for some $k,q\in \{0,\ldots,\ell\}$ and such that $\rho_{1}<\rho_{2}$, and let $U = \{z: \rho_{1} < |z| < \rho_{2} \}$. Then $\nu := \mathrm{Bal}(\mu|_{U},\partial U)$ is given by
\begin{align}\label{nu for centered annulus}
d\nu(z) = \begin{cases}
\lambda\kappa \frac{d\theta}{2\pi}, & \mbox{if } z = \rho_{1}e^{i\theta}, \\
(1-\lambda)\kappa \frac{d\theta}{2\pi}, & \mbox{if } z = \rho_{2}e^{i\theta},
\end{cases}
\end{align}
where $\theta \in (-\pi,\pi]$, $\kappa := \int_{\rho_{1}}^{\rho_{2}} d\mu_{\mathrm{rad}}(r)$ and $\lambda := \big( \log \rho_{2} - \frac{1}{\kappa} \int_{\rho_{1}}^{\rho_{2}} (\log r ) d\mu_{\mathrm{rad}}(r)\big) / \log(\rho_{2}/\rho_{1})$. Moreover, as $n \to +\infty$, we have $\mathbb{P}(\# \{z_{j}\in U\} = 0) = \exp \big( -Cn^{2}+o(n^{2}) \big)$, where $\mathbb{P}$ refers to \eqref{general density intro} and
\begin{align}
C = \frac{\beta}{4}  \int_{\rho_{1}}^{\rho_{2}} \big( (1-\lambda) g(\rho_{2})  + \lambda g(\rho_{1}) -g(r) \big)d\mu_{\mathrm{rad}}(r). \label{C in thm annulus rot inv}
\end{align}

\medskip \noindent (ii) A similar statement holds for the spherical point process \eqref{spherical ensemble}. More precisely, let $\mu$ and $S$ be given by \eqref{def of mu and S spherical}, $0<\rho_{1}<\rho_{2}<+\infty$ and $U = \{z : \rho_{1} < |z| < \rho_{2}\}$. Then $\nu := \mathrm{Bal}(\mu|_{U},\partial U)$ is given by \eqref{nu for centered annulus}. Moreover, as $n \to +\infty$, we have $\mathbb{P}(\# \{z_{j}\in U\} = 0) = \exp \big( -Cn^{2}+o(n^{2}) \big)$, where $\mathbb{P}$ refers to \eqref{spherical ensemble} and $C$ is given by \eqref{C in thm annulus rot inv} with $g(r)=\log(1+r^{2})$ and $d\mu_{\mathrm{rad}}(r) = \frac{2r}{(1+r^{2})^{2}}dr$, i.e.
\begin{align}\label{C in thm annulus spherical}
C = \frac{\beta}{4} \bigg( \log \frac{1+\rho_{2}^{2}}{1+\rho_{1}^{2}} - \frac{\rho_{2}^{2}-\rho_{1}^{2}}{(1+\rho_{1}^{2})(1+\rho_{2}^{2})} - \frac{\big(\log \frac{1+\rho_{2}^{2}}{1+\rho_{1}^{2}}\big)^{2}}{2\log(\rho_{2}/\rho_{1})} \bigg).
\end{align}
\end{theorem}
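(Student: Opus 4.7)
The plan is to identify $\nu$ explicitly using the uniqueness in Proposition \ref{prop:def of bal}, and then plug into \eqref{C in main thm} from Theorem \ref{thm:general pot}. Since $\mu|_U$ and $\partial U$ are both rotation-invariant and the characterizing conditions of the balayage are preserved under rotations, uniqueness forces $\nu$ to be rotation-invariant. As $\partial U$ is the disjoint union of the circles $\{|z|=\rho_1\}$ and $\{|z|=\rho_2\}$, this constrains $\nu$ to the form $\nu = \alpha_1 \mathrm{u}_{\rho_1} + \alpha_2 \mathrm{u}_{\rho_2}$, where $\mathrm{u}_r$ denotes the uniform probability measure on $\{|z|=r\}$. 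I determine $\alpha_1,\alpha_2$ from the mean-value identity $p_{\mathrm{u}_r}(z)=-\max(\log|z|,\log r)$, which combined with Fubini gives
\begin{equation*}
p_{\mu|_U}(z) = -\int_{\rho_1}^{\rho_2}\max(\log|z|,\log r)\,d\mu_{\mathrm{rad}}(r).
\end{equation*}
Since $U$ is bounded, the balayage constant in Proposition \ref{prop:def of bal} vanishes. Matching $p_\nu=p_{\mu|_U}$ on $\{|z|>\rho_2\}$ forces $\alpha_1+\alpha_2=\kappa$ (consistent with the mass condition $\nu(\partial U)=\mu(U)=\kappa$), while matching on $\{|z|<\rho_1\}$ gives $\alpha_1\log\rho_1+\alpha_2\log\rho_2=\int_{\rho_1}^{\rho_2}\log r\,d\mu_{\mathrm{rad}}(r)$. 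Solving this linear system yields $\alpha_1=\lambda\kappa$ and $\alpha_2=(1-\lambda)\kappa$ with $\lambda$ as stated. Since $p_\nu$ is continuous (hence bounded on the compact set $\partial U$) and $\nu$ charges no polar set, Proposition \ref{prop:def of bal} identifies this $\nu$ as the balayage measure.

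For the constant $C$ in (i), Assumption \ref{ass:U} holds because $\partial U\subset S$ by the placement of $\rho_1,\rho_2\in \mathrm S$, and Assumption \ref{ass:U2} is immediate for an annulus. Applying Theorem \ref{thm:general pot}(i), together with $c^{\hspace{0.02cm}\mu}_U=0$, yields
\begin{equation*}
C = \frac{\beta}{4}\Bigl(\lambda\kappa\,g(\rho_1)+(1-\lambda)\kappa\,g(\rho_2)-\int_{\rho_1}^{\rho_2} g(r)\,d\mu_{\mathrm{rad}}(r)\Bigr),
\end{equation*}
which coincides with \eqref{C in thm annulus rot inv} once the first two terms are absorbed into the integrand via $\kappa=\int_{\rho_1}^{\rho_2}d\mu_{\mathrm{rad}}(r)$.

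Part (ii) proceeds identically. The rotational symmetry argument and the ansatz are unaffected by the choice of $Q$, so \eqref{nu for centered annulus} follows from Proposition \ref{prop:def of bal} exactly as above. Applying Theorem \ref{thm:general pot}(ii) (valid here since $U$ is bounded) reduces $C$ to the same integral, now with $g(r)=\log(1+r^2)$ and $d\mu_{\mathrm{rad}}(r)=\tfrac{2r}{(1+r^2)^2}\,dr$. Direct evaluation gives $\kappa=\frac{\rho_2^2-\rho_1^2}{(1+\rho_1^2)(1+\rho_2^2)}$, and integration by parts expresses $\int_{\rho_1}^{\rho_2}\log r\,d\mu_{\mathrm{rad}}(r)$ in terms of $\log(\rho_2/\rho_1)$ and $\log\tfrac{1+\rho_2^2}{1+\rho_1^2}$, after which \eqref{C in thm annulus spherical} follows by routine algebra. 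The only substantive step anywhere is the symmetry-plus-uniqueness argument that pins down the ansatz for $\nu$; the rest is bookkeeping.
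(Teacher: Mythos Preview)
Your proof is correct and follows essentially the same approach as the paper: both start from the rotation-invariant ansatz $\nu=\alpha_1\mathrm{u}_{\rho_1}+\alpha_2\mathrm{u}_{\rho_2}$, use the mean-value formula \eqref{well-known formula} to compute $p_{\mu|_U}$ and $p_\nu$ on $\{|z|\le\rho_1\}$ and $\{|z|\ge\rho_2\}$, solve for $\alpha_1,\alpha_2$, invoke Proposition~\ref{prop:def of bal} for uniqueness, and then plug into Theorem~\ref{thm:general pot}. Your extra step of \emph{deriving} the ansatz from rotational symmetry plus uniqueness is a nice touch the paper omits (it simply posits the form), but otherwise the arguments are the same.
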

\begin{remark}Here are some particular cases of Theorem \ref{thm:regular annulus} (i):
\begin{itemize}
\item If $k=q$, then $\kappa$ and $\lambda$ simplifies to
\begin{align*}
\kappa = \frac{\rho_{2} g'(\rho_{2}) - \rho_{1} g'(\rho_{1})}{2}, \qquad \lambda  = \frac{g(\rho_{2})-g(\rho_{1})-\rho_{1}g'(\rho_{1})\log(\rho_{2}/\rho_{1})}{(\rho_{2}g'(\rho_{2})-\rho_{1}g'(\rho_{1}))\log(\rho_{2}/\rho_{1})},
\end{align*}
and $C$ in \eqref{C in thm annulus rot inv} can be rewritten (using integration by parts) as
\begin{align*}
C = \frac{\beta}{8} \bigg\{ \int_{\rho_{1}}^{\rho_{2}} rg'(r)^{2}dr - \big( (1-\lambda)\rho_{1} g'(\rho_{1}) + \lambda \rho_{2} g'(\rho_{2}) \big) (g(\rho_{2})-g(\rho_{1})) \bigg\}.
\end{align*}
For $\ell=0$ and $r_{0}=0$, the above formula was previously known from \cite{A2018}.
\item For the Mittag-Leffler case $g(r) = r^{2b}$, the constant $C$ in \eqref{C in thm annulus rot inv} becomes
\begin{align}\label{lol92}
C = \frac{\beta}{2} \bigg( \frac{b}{4}(\rho_{2}^{4b}-\rho_{1}^{4b}) - \frac{(\rho_{2}^{2b}-\rho_{1}^{2b})^{2}}{4\log(\rho_{2}/\rho_{1})} \bigg),
\end{align}
which was already known from \cite{A2018}.
\end{itemize}
\end{remark}
\subsubsection{The complement of a disk}
\begin{theorem}\label{thm:unbounded annulus}
Suppose $Q(z) = g(|z|)$ for some $g: [0,+\infty)\to \R \cup \{+\infty\}$ such that Assumption \ref{ass:Q} holds, and that $S = \{z : |z| \in \mathrm{S} \}$ with $\mathrm{S} = [r_{0},r_{1}]\cup [r_{2},r_{3}] \cup \ldots \cup [r_{2\ell},r_{2\ell+1}]$ for some $0 \leq r_{0} < r_{1} < \ldots <r_{2\ell+1}<+\infty$. Suppose also that $g \in C^{2}(\mathrm{S}\setminus \{0\})$ and recall that $\mu_{\mathrm{rad}}$ is given by \eqref{mu radially symmetric intro 2}.

\medskip \noindent Let $a \in [r_{2k},r_{2k+1}]\setminus \{0\}$ for some $k\in \{0,\ldots,\ell\}$, and let $U = \{z: a < |z| \}$. Then $\nu := \mathrm{Bal}(\mu|_{U},\partial U)$ is given by
\begin{align}\label{nu for unbounded annulus}
d\nu(z) = \frac{d\theta}{2\pi}\int_{a}^{r_{2\ell+1}}d\mu_{\mathrm{rad}}(r), \qquad z = ae^{i\theta}, \; \theta \in (-\pi,\pi].
\end{align}
Fix $\beta >0$. As $n \to +\infty$, $\mathbb{P}(\# \{z_{j}\in U\} = 0) = \exp \big( -C n^{2}+o(n^{2}) \big)$, where $\mathbb{P}$ refers to \eqref{general density intro} and
\begin{align}\label{C for unbounded annulus}
& C = \frac{\beta}{4} \int_{a}^{r_{2\ell+1}} \big(g(a)-g(r)+2\log \tfrac{r}{a}\big)d\mu_{\mathrm{rad}}(r).
\end{align}
\end{theorem}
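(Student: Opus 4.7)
The plan is to apply Theorem \ref{thm:general pot}(i) directly, after using rotational symmetry to identify $\nu$ and the explicit Green function to identify $c^\mu_U$. First I would check Assumptions \ref{ass:U} and \ref{ass:U2}: the set $U = \{|z|>a\}$ is open and connected with compact boundary $\partial U = \{|z|=a\} \subset S$ (since $a \in [r_{2k},r_{2k+1}] \subset \mathrm{S}$), and $\mathrm{cap}(\partial U) = a > 0$. The exterior ball condition holds by choosing, for any $z \in \partial U$, the point $\eta = (1 - \epsilon_0/a)z$ with $\epsilon_0 < a$; then $|z-\eta|=\epsilon_0$ and $D(\eta,\epsilon_0) \subset D(0,a) = U^c$.

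Next I would identify the candidate balayage measure. By rotational invariance of both $U$ and $\mu|_U$, the measure $\nu$ must be uniform in the angular variable on $\partial U$, and Proposition \ref{prop:def of bal} forces its total mass to be $\mu(U) = \int_a^{r_{2\ell+1}} d\mu_{\mathrm{rad}}(r) =: \kappa$. This already yields \eqref{nu for unbounded annulus}. To confirm rigorously via Proposition \ref{prop:def of bal}, I would verify the potential identity $p_\nu(z) = p_{\mu|_U}(z) + c^{\mu|_U}_U$ on $U^c = \overline{D(0,a)}$ using the classical mean value formula $\int_0^{2\pi} \log|z - re^{i\theta}|\,\frac{d\theta}{2\pi} = \log\max(|z|,r)$. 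For $|z|\leq a \leq r$ both potentials are constants, namely $p_\nu(z) = -\kappa\log a$ and $p_{\mu|_U}(z) = -\int_a^{r_{2\ell+1}}(\log r)\,d\mu_{\mathrm{rad}}(r)$, and their difference equals $\int_a^{r_{2\ell+1}}\log(r/a)\,d\mu_{\mathrm{rad}}(r)$. Since $\nu$ so defined satisfies all the characterizing properties (correct support, correct mass, bounded $p_\nu$, charges no polar sets), Proposition \ref{prop:def of bal} identifies it as $\mathrm{Bal}(\mu|_U,\partial U)$.

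To evaluate the constant in \eqref{C for unbounded annulus}, note that $\Omega = U$ here, and the Green function of $\{|z|>a\}$ with pole at infinity is the well-known $g_\Omega(z,\infty) = \tfrac{1}{2\pi}\log(|z|/a)$ (it is harmonic, vanishes on $\{|z|=a\}$, and has the correct logarithmic behavior at infinity, so uniqueness from Definition \ref{def:green with pole at inf} applies). Therefore
\[
c^\mu_U = 2\pi \int_\Omega g_\Omega(z,\infty)\,d\mu(z) = \int_a^{r_{2\ell+1}} \log(r/a)\,d\mu_{\mathrm{rad}}(r),
\]
consistent with the constant identified in the balayage step. Finally, since $Q(z) = g(|z|)$ equals $g(a)$ on $\partial U$, we have $\int_{\partial U}Q\,d\nu = g(a)\kappa$ and $\int_U Q\,d\mu = \int_a^{r_{2\ell+1}} g(r)\,d\mu_{\mathrm{rad}}(r)$. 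Plugging into \eqref{C in main thm} and combining the three pieces gives exactly \eqref{C for unbounded annulus}.

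There is no real obstacle here beyond bookkeeping: the theorem packages the entire asymptotic analysis, so the proof reduces to two elementary potential-theoretic calculations for a circularly symmetric geometry. The only place where one must be slightly careful is in checking that the $c^\mu_U$ arising from Theorem \ref{thm:general pot}(i) (integral of $g_\Omega$ against $\mu$) and the $c^{\mu|_U}_U$ arising in Proposition \ref{prop:def of bal} (constant in the potential identity) agree numerically — as they should, since the Green-function integral and the logarithmic mean value produce the same quantity $\int_a^{r_{2\ell+1}}\log(r/a)\,d\mu_{\mathrm{rad}}(r)$.
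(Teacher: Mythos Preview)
Your proposal is correct and follows essentially the same approach as the paper: both arguments verify the candidate uniform measure on $\partial U$ via the mean-value identity for $\log|z-re^{i\theta}|$, compute $g_\Omega(z,\infty)=\tfrac{1}{2\pi}\log(|z|/a)$ to obtain $c^\mu_U=\int_a^{r_{2\ell+1}}\log(r/a)\,d\mu_{\mathrm{rad}}(r)$, invoke Proposition~\ref{prop:def of bal} for the balayage identification, and then plug into Theorem~\ref{thm:general pot}(i). The only cosmetic difference is that you explicitly check Assumptions~\ref{ass:U} and~\ref{ass:U2}, which the paper leaves implicit.
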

\begin{remark}
For the Mittag-Leffler case $g(r) = r^{2b}$, the constant $C$ in \eqref{C for unbounded annulus} becomes
\begin{align*}
C = \frac{\beta}{2} \bigg( a^{2b} - \frac{ba^{4b}}{4} - \frac{1}{2b}\log(b\, a^{2b}) - \frac{3}{4b} \bigg).
\end{align*}
For $\beta=2$, this constant was already known from \cite{CMV2016} for $b=1$ and from \cite{C2021} for general $b>0$.
\end{remark}
\subsubsection{The circular sector}

\begin{theorem}\label{thm:g sector nu}
(i) Suppose that $Q(z) = g(|z|)$ for some $g: [0,+\infty)\to \R \cup \{+\infty\}$, $Q$ is admissible on $\C$, and that $S = \{z : |z| \in \mathrm{S} \}$ with $\mathrm{S} = [0,r_{1}]\cup [r_{2},r_{3}] \cup \ldots \cup [r_{2\ell},r_{2\ell+1}]$ for some $0 < r_{1} < \ldots <r_{2\ell+1}<+\infty$. Suppose also that $g \in C^{2}(\mathrm{S}\setminus \{0\})$ and recall that $\mu_{\mathrm{rad}}$ is given by \eqref{mu radially symmetric intro 2}.

\medskip \noindent Let $p\in [1,\infty)$, $a\in (0,r_{1}]$, $U=\{re^{i\theta} : 0<r<a, \, 0<\theta < \frac{2\pi}{p}\}$, and $\nu := \mathrm{Bal}(\mu|_{U},\partial U)$. 

\noindent For $r\in (0,a)$, we have
\begin{align}
\frac{d\nu(r)}{dr} & = \frac{d\nu(r e^{\frac{2\pi i}{p}})}{dr} = \frac{1}{\pi^{2}r} \int_{0}^{a} \bigg( \arctanh \frac{(\min\{r,x\})^{\frac{p}{2}}}{(\max\{r,x\})^{\frac{p}{2}}} - \arctanh \frac{r^{\frac{p}{2}}x^{\frac{p}{2}}}{a^{p}}   \bigg) d\mu_{\mathrm{rad}}(x) \label{g sector nu segment} \\
& = \frac{1}{\pi^{2}r} \sum_{m=0}^{+\infty} \frac{1}{2m+1} \int_{0}^{a} \bigg[\bigg(\frac{\min\{r,x\}}{\max\{r,x\}}\bigg)^{pm+\frac{p}{2}}-\bigg(\frac{r\, x}{a^{2}}\bigg)^{pm+\frac{p}{2}} \bigg]d\mu_{\mathrm{rad}}(x). \label{g sector nu segment 2}
\end{align}
For $z=ae^{i\theta}$, $\theta \in (0,\frac{2\pi}{p})$, we have
\begin{align}
\frac{d\nu(z)}{d\theta} & = \frac{2}{\pi^{2}} \int_{0}^{a} \im \arctanh \big( e^{\frac{ip\theta}{2}} ( \tfrac{x}{a} )^{\frac{p}{2}} \big) \; d\mu_{\mathrm{rad}}(x) \label{g sector nu arxtanh} \\
& = \frac{1}{\pi^{2}} \sum_{m=0}^{+\infty}  \frac{\sin((pm+\frac{p}{2})\theta)}{(m+\frac{1}{2}) \, a^{pm+\frac{p}{2}}}\int_{0}^{a} x^{pm+\frac{p}{2}} d\mu_{\mathrm{rad}}(x). \label{g sector nu}
\end{align}
In \eqref{g sector nu segment} and \eqref{g sector nu arxtanh}, $\arctanh z := \frac{1}{2}(\log (1+z) - \log(1-z))$ where the principal branch
is used for the logarithms.

\medskip \noindent (ii) A similar statement holds if $\mu$ and $S$ are defined by \eqref{def of mu and S spherical}. More precisely, let $p\in [1,\infty)$, $a\in (0,+\infty)$ and $U=\{re^{i\theta} : 0<r<a, \, 0<\theta < \frac{2\pi}{p}\}$.  Then $\nu := \mathrm{Bal}(\mu|_{U},\partial U)$ is given by \eqref{g sector nu segment}--\eqref{g sector nu} with $d\mu_{\mathrm{rad}}(x) = \frac{2x}{(1+x^{2})^{2}}dx$. 
\end{theorem}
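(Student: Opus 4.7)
The plan is to realize $\nu$ as a superposition of harmonic measures and evaluate them via the conformal map from the sector onto the upper half-disk. The starting point is the classical identity
\[
\nu \;=\; \int_{U} \omega_{U}(z_{0},\cdot)\, d\mu(z_{0}),
\]
where $\omega_{U}(z_{0},\cdot)$ is the harmonic measure of $U$ from $z_{0}$. The right-hand side is supported on $\partial U$ with total mass $\mu(U)$; for $z\in U^{c}$ the map $w\mapsto\log\frac{1}{|z-w|}$ is harmonic on $U$, so $\int_{\partial U}\log\frac{1}{|z-w|}\, d\omega_{U}(z_{0},w)=\log\frac{1}{|z-z_{0}|}$, and its potential therefore equals $p_{\mu|_{U}}$ on $U^{c}$. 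Since the sector is bounded, $c^{\mu|_{U}}_{U}=0$ in Proposition~\ref{prop:def of bal}, whose uniqueness clause then identifies the two measures (once one checks the right-hand side has bounded potential on $\partial U$ and charges no polar set, which follows from the explicit formulae below).

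The map $\psi(z)=(z/a)^{p/2}$ sends $U$ conformally onto the upper half-disk $D^{+}=\{w:|w|<1,\ \im w>0\}$, the arc onto the upper semicircle, and the two straight sides onto $(0,1)$ and $(-1,0)$ respectively. By conformal invariance of harmonic measure, it suffices to compute $\omega_{D^{+}}(w_{0},\cdot)$ at $w_{0}=\psi(z_{0})=\rho_{0}e^{i\tau_{0}}$ with $\rho_{0}=(s/a)^{p/2}$, $\tau_{0}=p\phi/2$. Schwarz reflection gives the Green function $g_{D^{+}}(w,w_{0})=g_{\mathbb{D}}(w,w_{0})-g_{\mathbb{D}}(w,\overline{w_{0}})$. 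On the upper semicircle, combining the Poisson-kernel series $\frac{1-\rho^{2}}{|e^{i\tau}-\rho e^{i\tau_{0}}|^{2}}=1+2\sum_{k\ge 1}\rho^{k}\cos(k(\tau-\tau_{0}))$ with $\cos(k(\tau-\tau_{0}))-\cos(k(\tau+\tau_{0}))=2\sin(k\tau)\sin(k\tau_{0})$ yields
\[
\frac{d\omega_{D^{+}}(w_{0})}{d\tau}\bigg|_{e^{i\tau}}=\frac{2}{\pi}\sum_{k=1}^{\infty}\rho_{0}^{k}\sin(k\tau_{0})\sin(k\tau).
\]
On the diameter, the outer normal derivative $-\partial_{v}g_{D^{+}}|_{v=0}$ (with $w=u+iv$) equals
\[
\frac{d\omega_{D^{+}}(w_{0})}{du}\bigg|_{u\in(-1,1)}=\frac{y_{0}}{\pi}\bigg[\frac{1}{|u-w_{0}|^{2}}-\frac{1}{|1-w_{0}u|^{2}}\bigg],\qquad y_{0}=\im w_{0};
\]
using $\frac{y_{0}}{|u-w_{0}|^{2}}=\im\frac{1}{u-w_{0}}$ and expanding in geometric series in $\rho_{0}$ (separating the cases $|u|<\rho_{0}$ and $|u|>\rho_{0}$), the bracket simplifies to $\frac{1}{u}\sum_{k\ge 1}\sin(k\tau_{0})\big[(\min(|u|,\rho_{0})/\max(|u|,\rho_{0}))^{k}-(|u|\rho_{0})^{k}\big]$.

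It remains to pull back by $\psi$ (on the arc, $\tau=p\theta/2$; on the diameter, $u=(r/a)^{p/2}$ with $du/dr=\tfrac{p u}{2r}$) and integrate against $d\mu(z_{0})=d\mu_{\mathrm{rad}}(s)\tfrac{d\phi}{2\pi}$ for $\phi\in(0,2\pi/p)$. The orthogonality
\[
\int_{0}^{2\pi/p}\sin(kp\phi/2)\,\frac{d\phi}{2\pi}=\frac{1-(-1)^{k}}{kp\pi}
\]
kills all even-$k$ modes. Setting $k=2m+1$ produces exactly the Fourier modes $\sin((pm+\tfrac{p}{2})\theta)$ on the arc and the powers $(2m+1)\tfrac{p}{2}$ on the segment, yielding \eqref{g sector nu} and \eqref{g sector nu segment 2}. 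The closed forms \eqref{g sector nu arxtanh} and \eqref{g sector nu segment} then follow from $\sum_{m\ge 0}y^{2m+1}/(2m+1)=\arctanh(y)$ and $\im\arctanh(ye^{i\alpha})=\sum_{m\ge 0}y^{2m+1}\sin((2m+1)\alpha)/(2m+1)$. Equality of the densities on the two straight sides reflects the symmetry $\phi\mapsto \tfrac{2\pi}{p}-\phi$ preserving $\mu|_{U}$. Part~(ii) is identical, only with the explicit radial density $d\mu_{\mathrm{rad}}(x)=2x/(1+x^{2})^{2}\,dx$.

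The main technical point I anticipate is verifying the Proposition~\ref{prop:def of bal} hypotheses for the candidate measure thus produced, especially the boundedness of its logarithmic potential on $\partial U$ and the non-charging of polar sets near the corner points $0$, $a$, and $ae^{2\pi i/p}$ (and in particular when $p=1$, where the domain has an interior angle $2\pi$ at $a$). Both can be read off the $\arctanh$ form together with the fact that $d\mu_{\mathrm{rad}}(r)$ vanishes like $r\,dr$ near the origin. All sum-integral interchanges are justified by the geometric decay $(s/a)^{kp/2}$ with $s<a$ and dominated convergence.
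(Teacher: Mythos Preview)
Your approach is correct and takes a genuinely different route from the paper. Both arguments start from the Green-function representation of the balayage (your $\nu=\int_U\omega_U(z_0,\cdot)\,d\mu(z_0)$ is exactly the paper's Theorem~\ref{thm:dnu in terms of green general}), but they diverge in how they extract the explicit densities. The paper composes the map to the half-disk with a further M\"obius map to the full disk, obtains the raw integrals \eqref{lol49}--\eqref{lol48}, and then simplifies: for the radial sides via a contour-integration lemma (Lemma~\ref{lemma:simplif int segment}), and for the arc via a Fourier/moment argument based on Lemma~\ref{lemma:further properties of nu}~(iii). You instead stop at the half-disk, exploit the reflection formula $g_{D^+}=g_{\mathbb D}(\cdot,w_0)-g_{\mathbb D}(\cdot,\overline{w_0})$, and read off the Fourier series directly from the Poisson kernel; the $\phi$-average then kills the even modes in one stroke. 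Your route is more elementary (no residue calculus) and gets both boundary pieces by the same mechanism; the paper's route showcases two distinct techniques (complex contour deformation; moment matching), which are reused elsewhere in the paper.

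One small over-caution on your side: once you invoke the harmonic-measure disintegration, you are already computing the balayage, not proposing an ansatz to be checked. The hypotheses of Proposition~\ref{prop:def of bal} are built into Theorem~\ref{thm:dnu in terms of green general} (see \cite[Theorem~II.4.11]{SaTo}), so the separate verification you flag as ``the main technical point'' is not needed. Also, your phrase ``outer normal derivative $-\partial_v g_{D^+}|_{v=0}$'' should read inner normal (the density you wrote down is positive, consistent with $\partial_v g_{D^+}|_{v=0}$).
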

\begin{remark}\label{boundary of U if p=1}
If $p=1$, then $\partial U$ should be understood as $\partial U = \big((0,a)+i0_{+}\big) \cup \big((0,a)-i0_{+}\big) \cup \{z: |z|=a\}$, and the density of $\nu$ along $(0,a)+i0_{+}$ is the same as the density of $\nu$ along $(0,a)-i0_{+}$ and is given by the right-hand sides of \eqref{g sector nu segment} and \eqref{g sector nu segment 2}.
\end{remark}
\begin{remark}\label{remark:circular sector ML}
For the Mittag-Leffler case $g(r) = r^{2b}$ with $b>0$, \eqref{g sector nu} becomes
\begin{align}\label{nu circular ML}
\frac{d\nu(z)}{d\theta} = \frac{2 b^{2}a^{2b}}{\pi^{2}} \sum_{m=0}^{+\infty}  \frac{\sin((pm+\frac{p}{2})\theta)}{(m+\frac{1}{2})(2b+pm+\frac{p}{2})}.
\end{align}
This formula was previously known from \cite{AR2017} for $b=1$ and $p=2$. If $2b \notin \frac{p}{2}+p \N$, then \eqref{g sector nu segment 2} becomes
\begin{align}\label{nu circular ML segment}
\frac{d\nu(r)}{dr} = \frac{d\nu(r e^{\frac{2\pi i}{p}})}{dr} = \frac{2p \, b^{2} a^{2b}}{\pi^{2}r} \sum_{m=0}^{+\infty} \frac{(\frac{r}{a})^{2b}-(\frac{r}{a})^{pm+\frac{p}{2}}}{(pm+\frac{p}{2}-2b)(pm+\frac{p}{2}+2b)}.
\end{align}
If $2b \in \frac{p}{2}+p \N$, then $\frac{d\nu(r)}{dr}$ and $\frac{d\nu(r e^{\frac{2\pi i}{p}})}{dr}$ can be obtained by taking limits in \eqref{nu circular ML segment}. In particular, $\frac{d\nu(r)}{dr}  \asymp r^{\min\{2b,\frac{p}{2}\}-1}$ as $r\to 0$ if $2b \neq \frac{p}{2}$, while $\frac{d\nu(r)}{dr}  \asymp r^{2b-1}\log \frac{1}{r}$  if $2b = \frac{p}{2}$ (see also Figure \ref{fig:circular sector}). 
\end{remark}

\begin{figure}
\begin{tikzpicture}[master]
\node at (0,0) {\includegraphics[width=3.63cm]{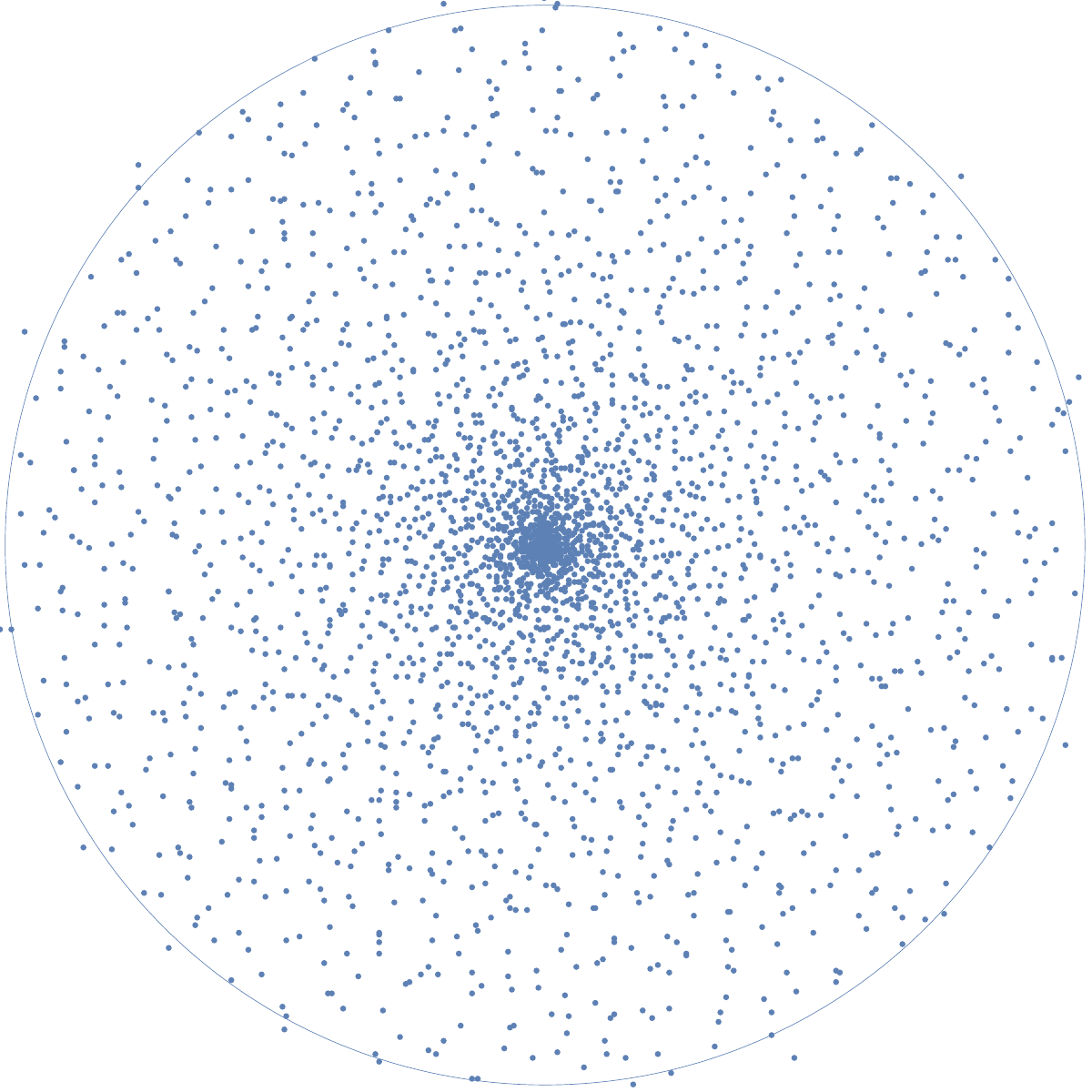}};
\node at (0,2) {\footnotesize $b=\frac{1}{3}$};
\end{tikzpicture} \hspace{-0.21cm}
\begin{tikzpicture}[slave]
\node at (0,0) {\includegraphics[width=3.63cm]{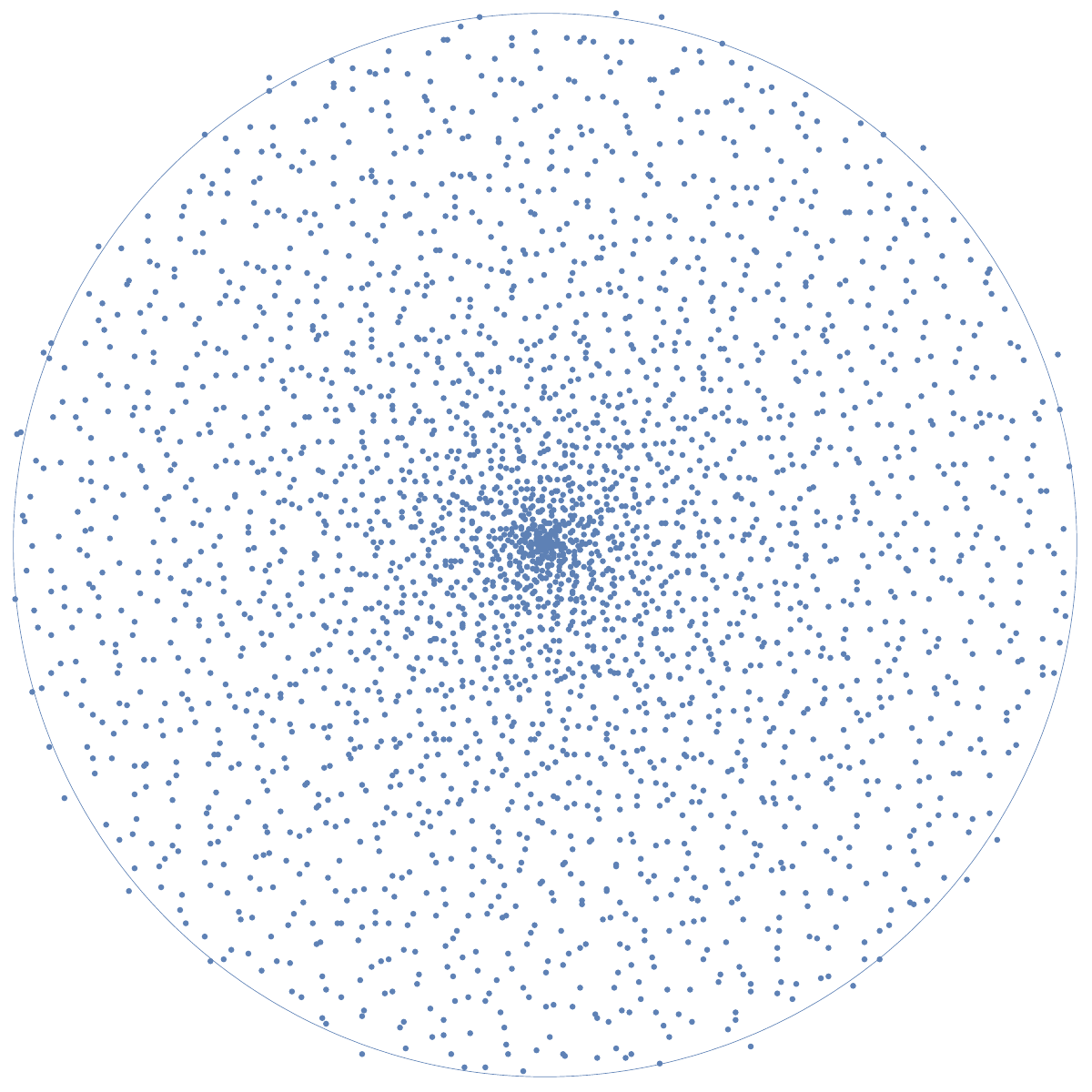}};
\node at (0,2) {\footnotesize $b=\frac{1}{2}$};
\end{tikzpicture} \hspace{-0.21cm}
\begin{tikzpicture}[slave]
\node at (0,0) {\includegraphics[width=3.63cm]{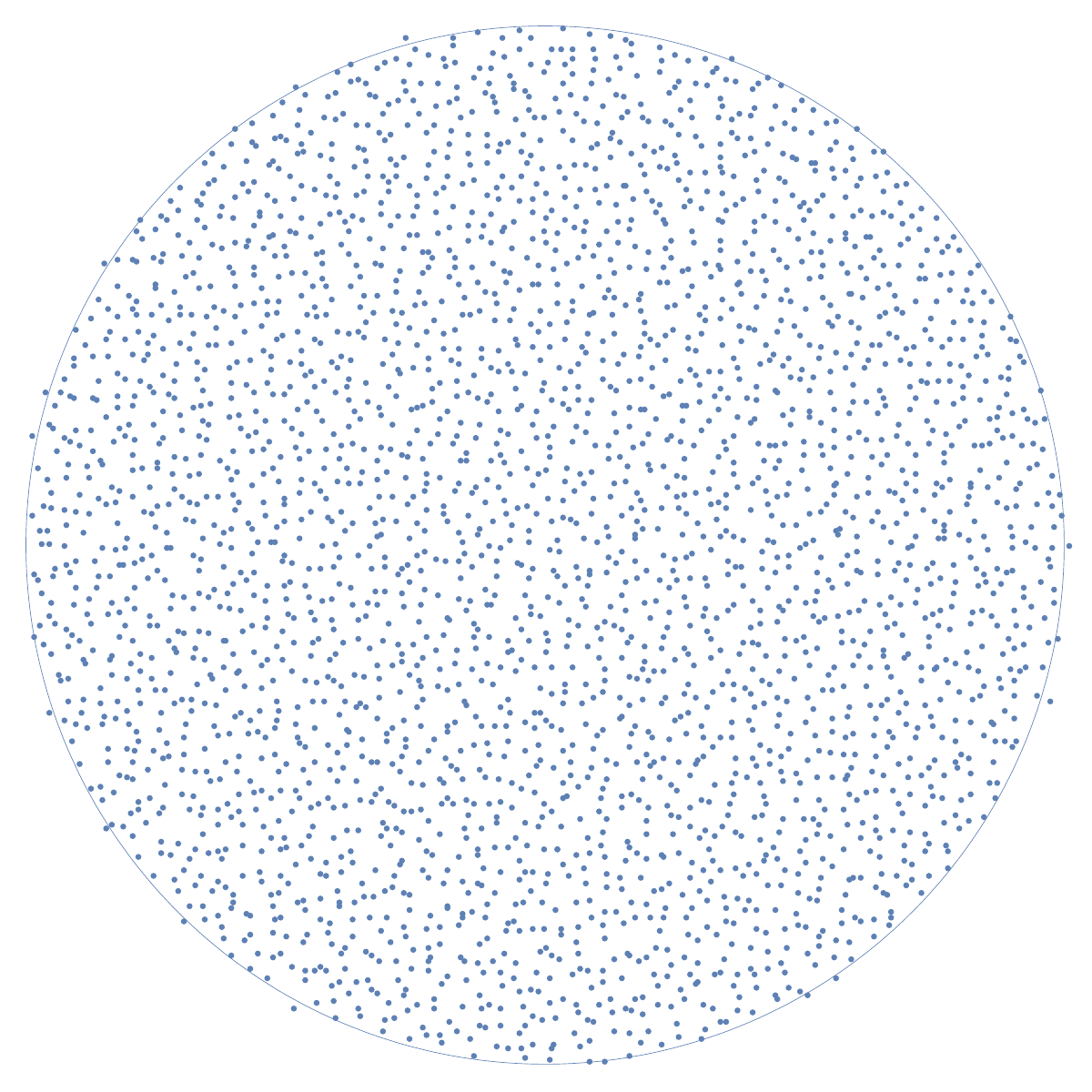}};
\node at (0,2) {\footnotesize $b=1$};
\draw[dashed] (-9.3,-1.9)--(5.3,-1.9);
\end{tikzpicture} \hspace{-0.21cm}
\begin{tikzpicture}[slave]
\node at (0,0) {\includegraphics[width=3.63cm]{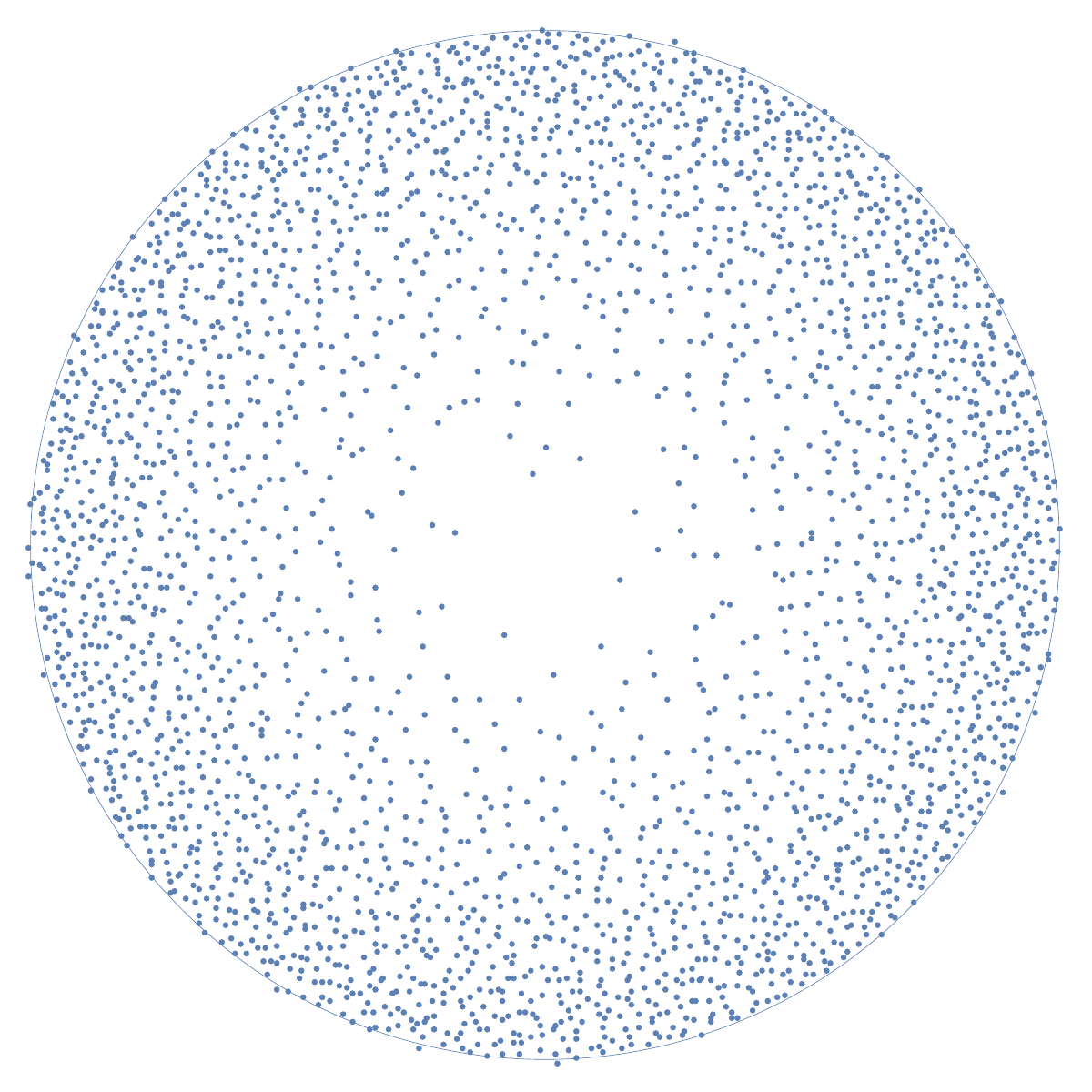}};
\node at (0,2) {\footnotesize $b=2$};
\end{tikzpicture} \\[-0.5cm]
\begin{tikzpicture}[slave]
\node at (0,0) {\includegraphics[width=3.63cm]{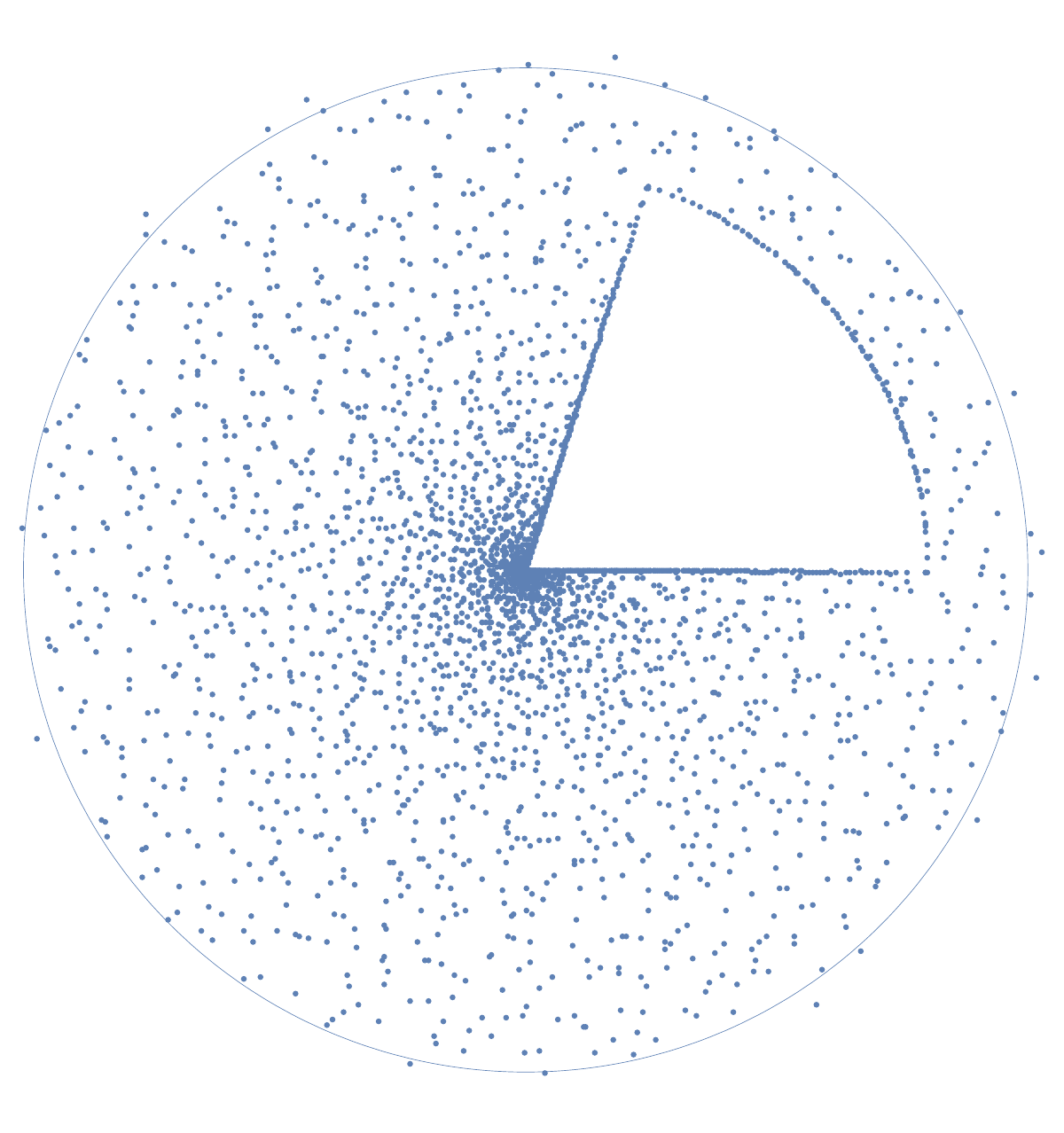}};
\end{tikzpicture} \hspace{-0.21cm}
\begin{tikzpicture}[slave]
\node at (0,0) {\includegraphics[width=3.63cm]{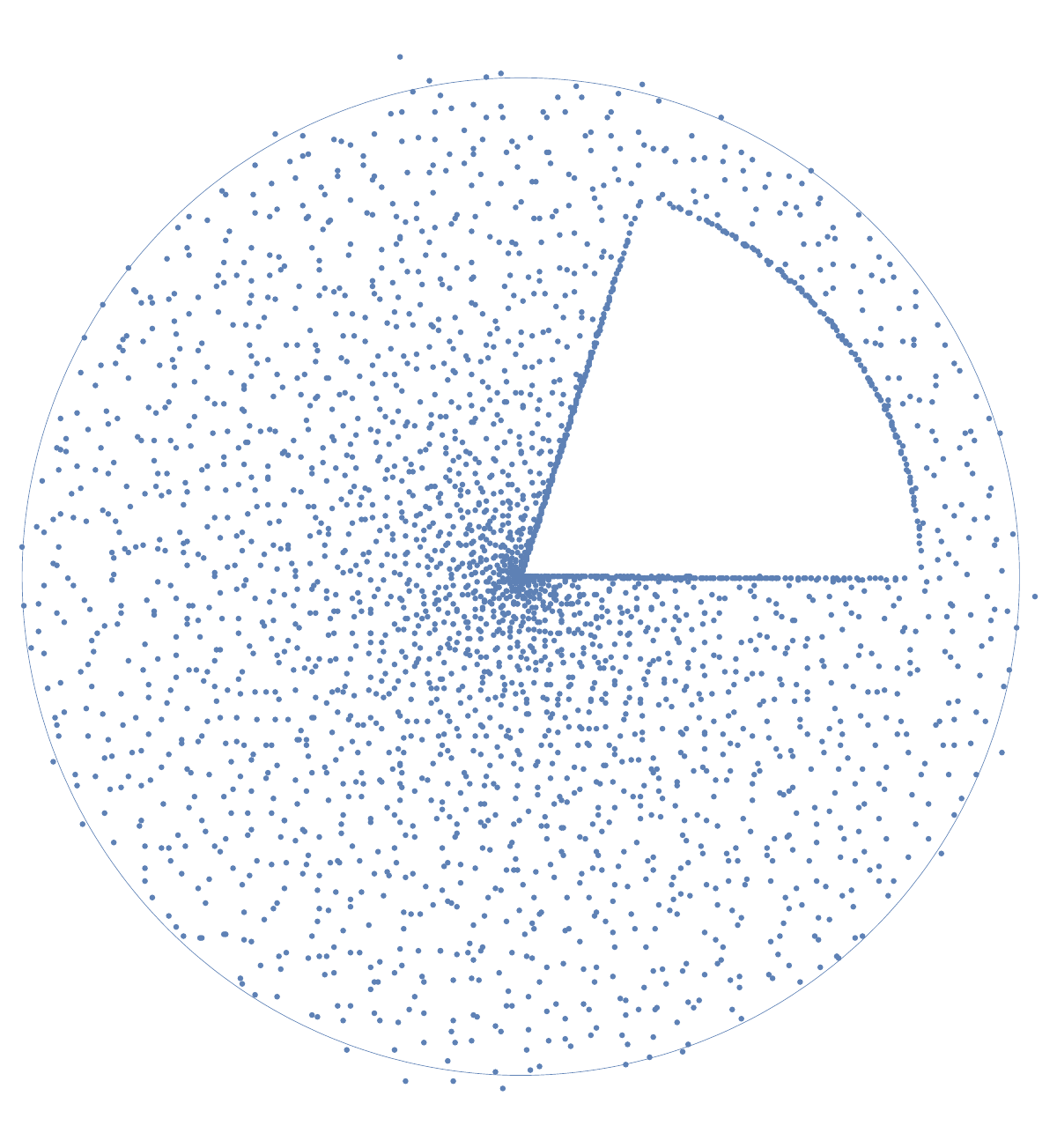}};
\end{tikzpicture} \hspace{-0.21cm}
\begin{tikzpicture}[slave]
\node at (0,0) {\includegraphics[width=3.63cm]{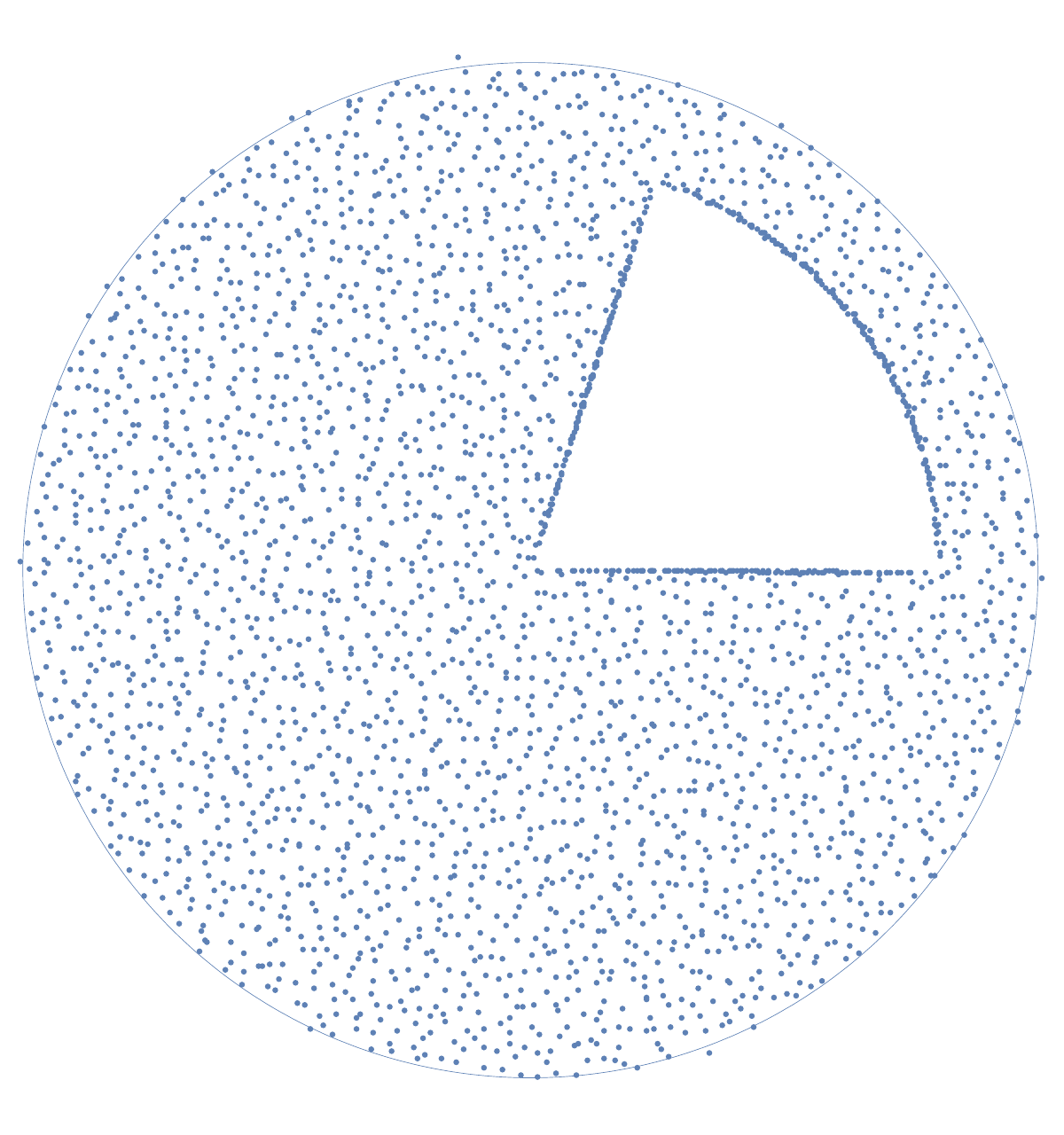}};
\node at (-1.9,1.6) {\footnotesize $p=5$};
\end{tikzpicture} \hspace{-0.21cm}
\begin{tikzpicture}[slave]
\node at (0,0) {\includegraphics[width=3.63cm]{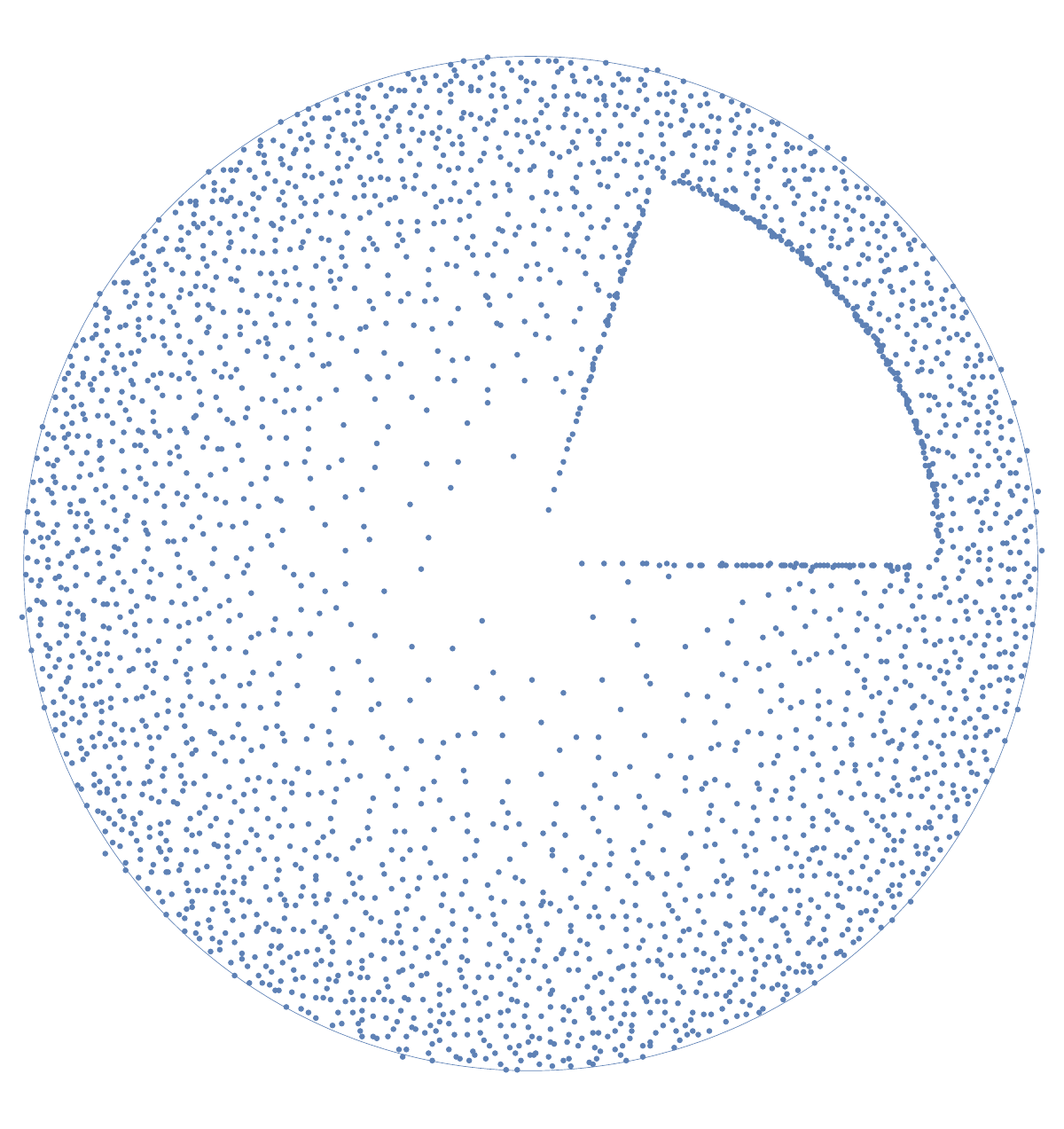}};
\end{tikzpicture} \\[-0.9cm]
\begin{center}
\begin{tikzpicture}[master]
\node at (0,0) {\includegraphics[width=14cm]{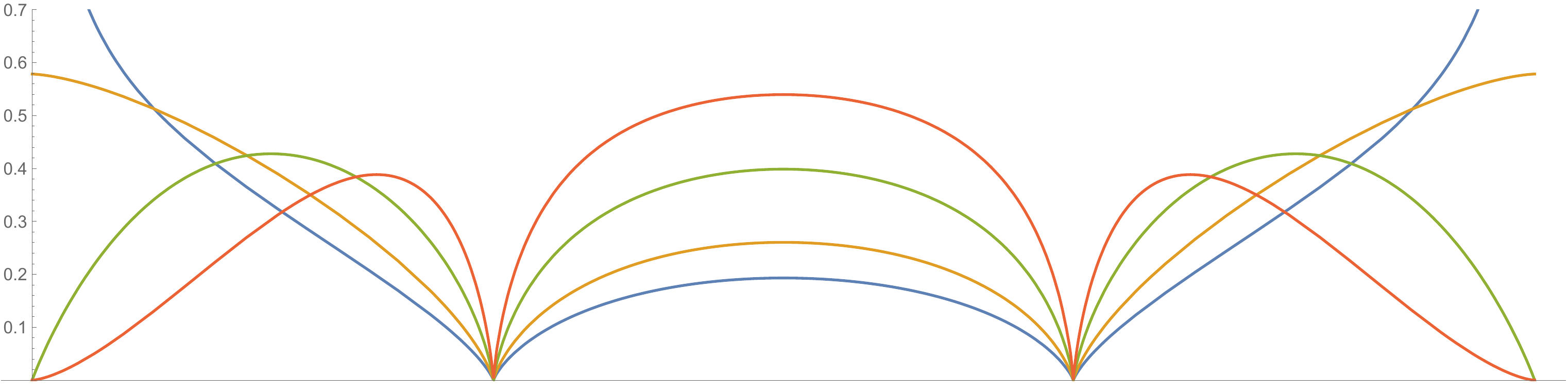}};
\node at (0,-1) {\footnotesize $b=\frac{1}{3}$};
\node at (0,-0.25) {\footnotesize $b=\frac{1}{2}$};
\node at (0,0.4) {\footnotesize $b=1$};
\node at (0,1.1) {\footnotesize $b=2$};
\node at (-5.5,1.5) {\footnotesize $\asymp x^{-\frac{1}{3}}$};
\node at (-6.35,1.2) {\footnotesize $\asymp 1$};
\node at (-6.35,-1) {\footnotesize $\asymp x$};
\node at (-6,-1.4) {\footnotesize $\asymp x^{\frac{3}{2}}$};
\node at (-6.35,1.2) {\footnotesize $\asymp 1$};
\draw[fill] (-6.71,-1.65) circle (0.04);
\node at (-6.71,-1.9) {\footnotesize $0$};
\draw[fill] (-2.59,-1.65) circle (0.04);
\node at (-2.59,-1.9) {\footnotesize $a$};
\draw[fill] (2.59,-1.65) circle (0.04);
\node at (2.61,-1.9) {\footnotesize $a e^{\frac{2\pi i}{p}}$};
\draw[fill] (6.71,-1.65) circle (0.04);
\node at (6.71,-1.9) {\footnotesize $0$};
\draw[dashed] (-7.3,-2.15)--(7.3,-2.15);
\end{tikzpicture}
\end{center}\,\\[-0.85cm]
\begin{tikzpicture}[master]
\node at (0,0) {\includegraphics[width=3.63cm]{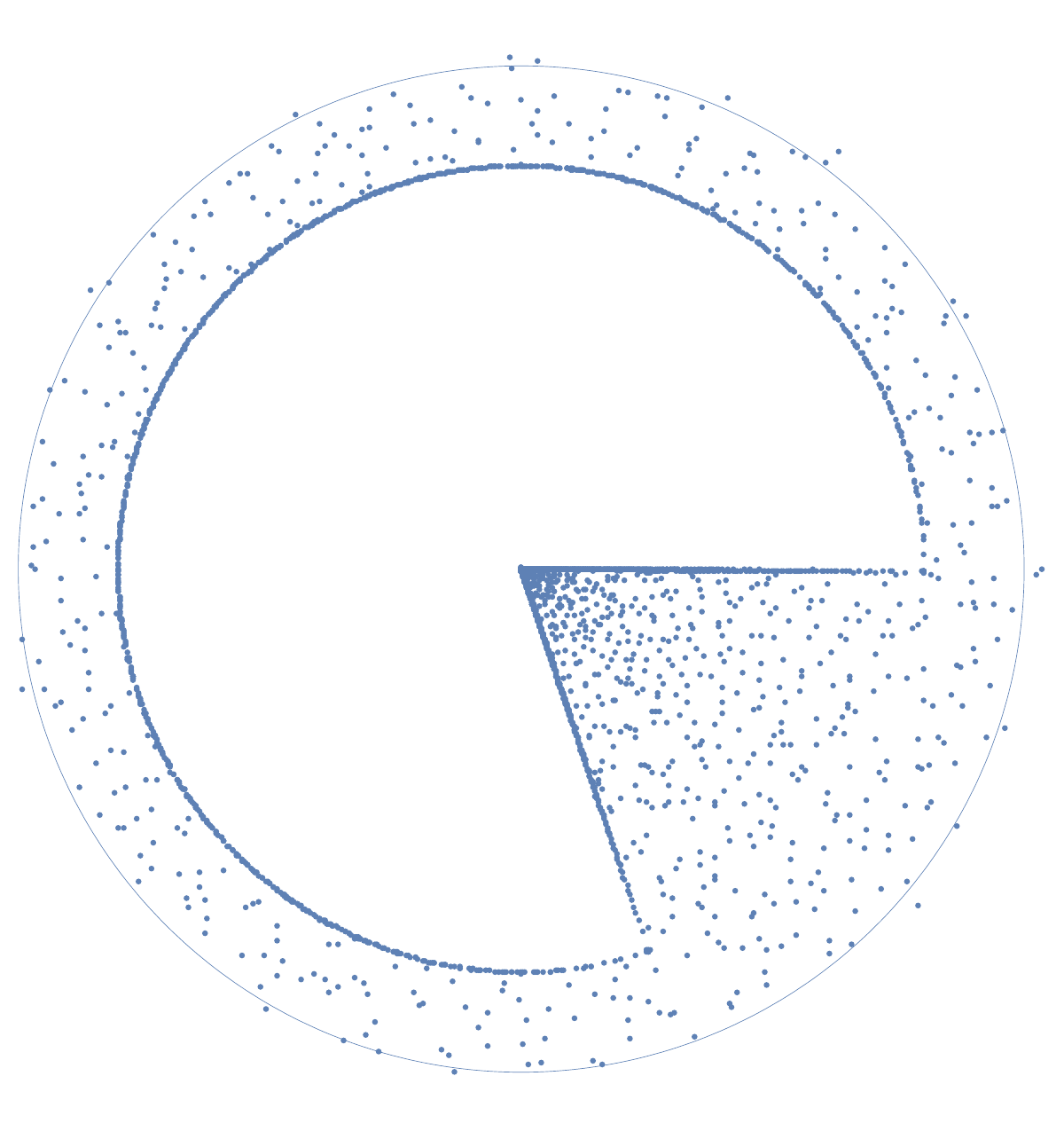}};
\end{tikzpicture} \hspace{-0.21cm}
\begin{tikzpicture}[slave]
\node at (0,0) {\includegraphics[width=3.63cm]{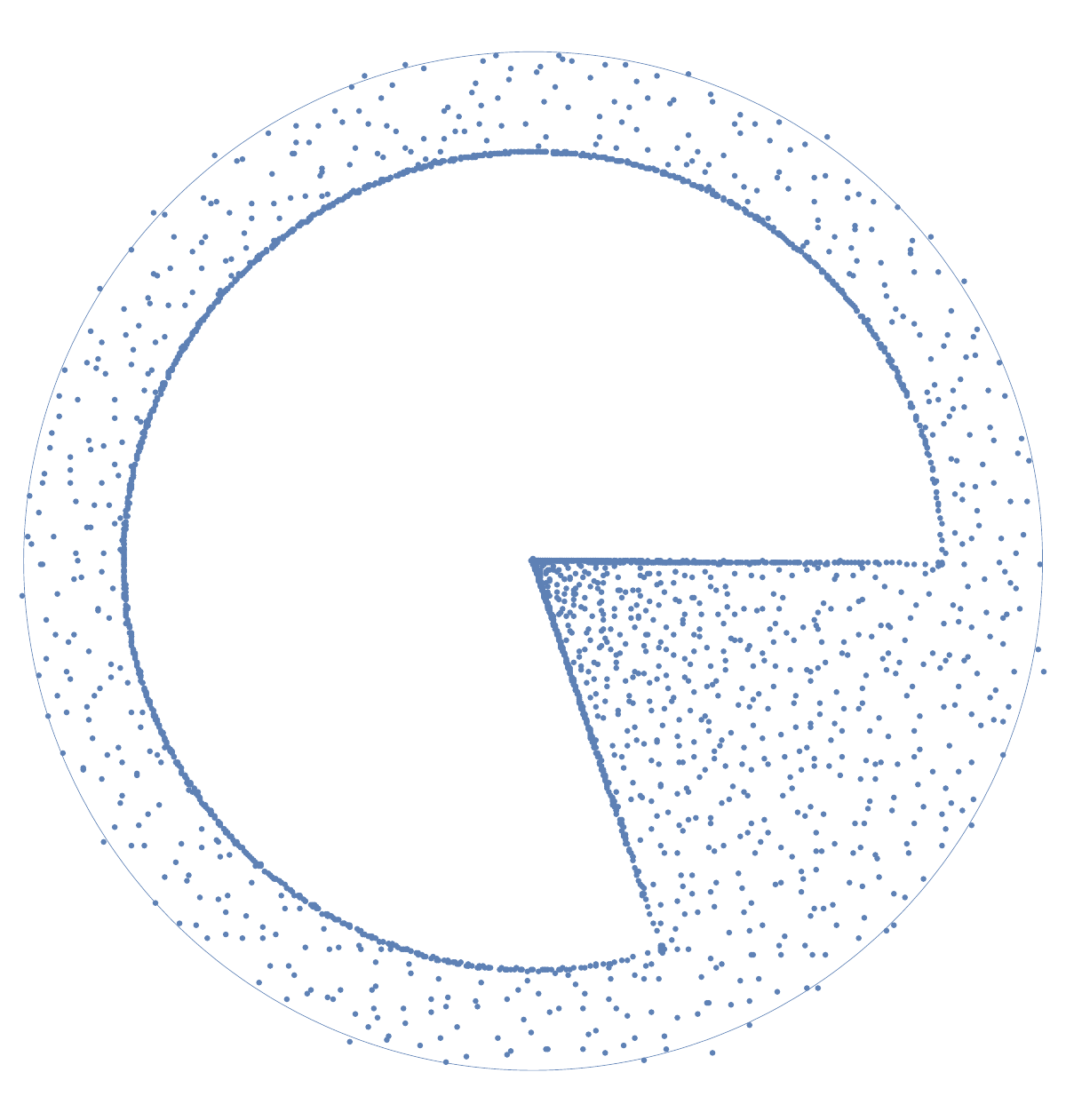}};
\end{tikzpicture} \hspace{-0.21cm}
\begin{tikzpicture}[slave]
\node at (0,0) {\includegraphics[width=3.63cm]{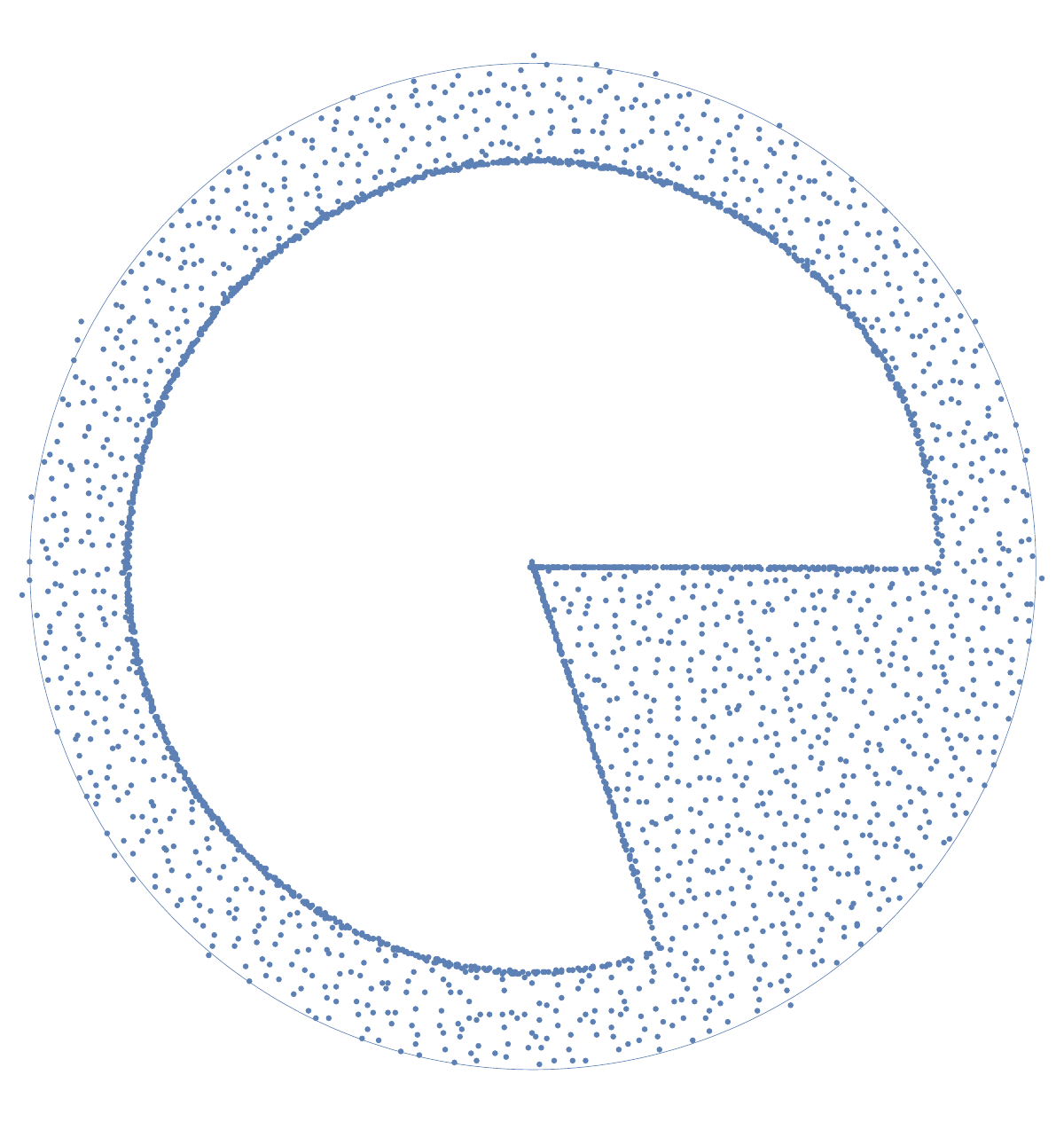}};
\node at (-1.9,1.6) {\footnotesize $p=\frac{5}{4}$};
\end{tikzpicture} \hspace{-0.21cm}
\begin{tikzpicture}[slave]
\node at (0,0) {\includegraphics[width=3.63cm]{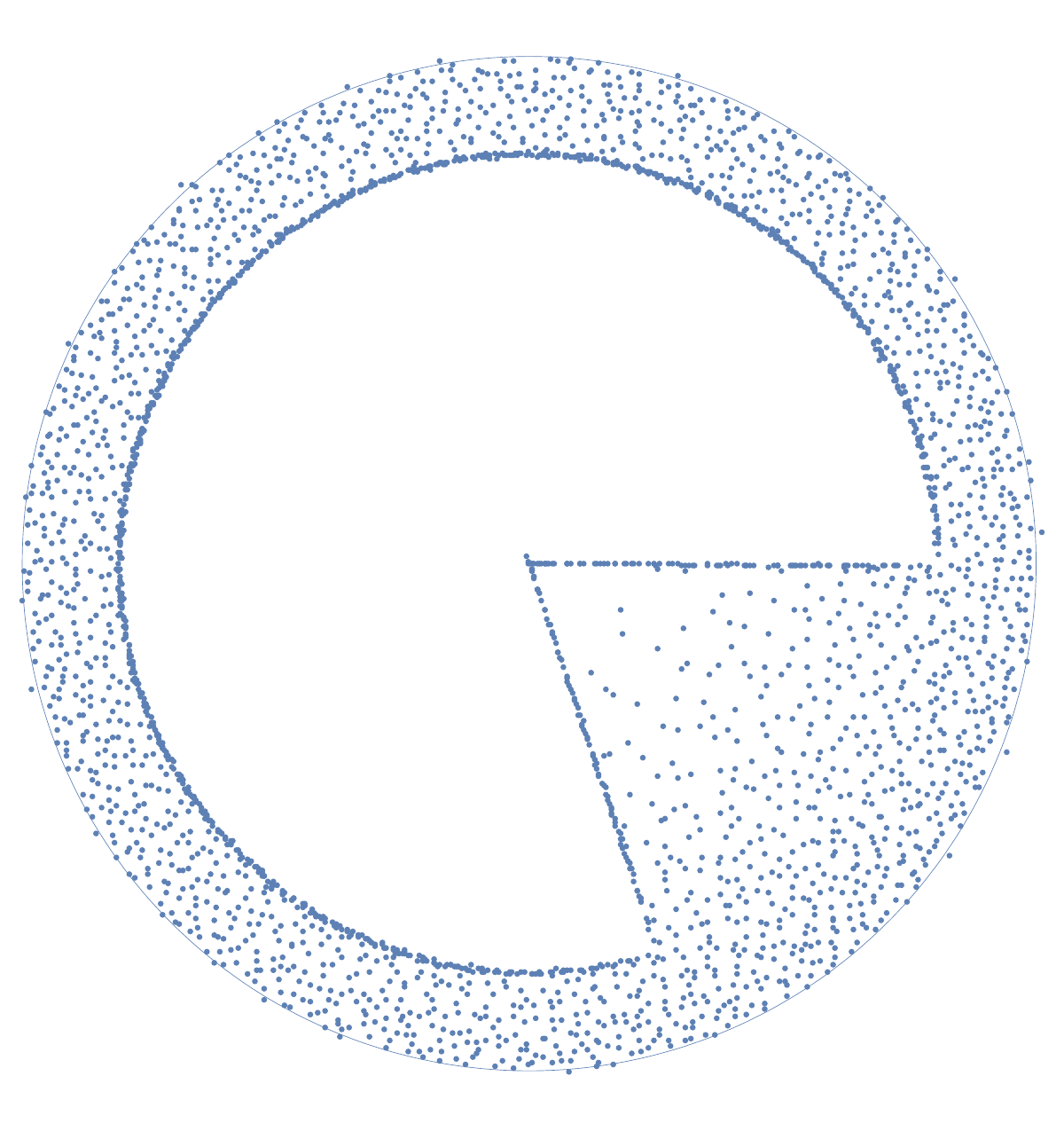}};
\end{tikzpicture}\\[-1cm]
\begin{center}
\begin{tikzpicture}[master]
\node at (0,0) {\includegraphics[width=14cm]{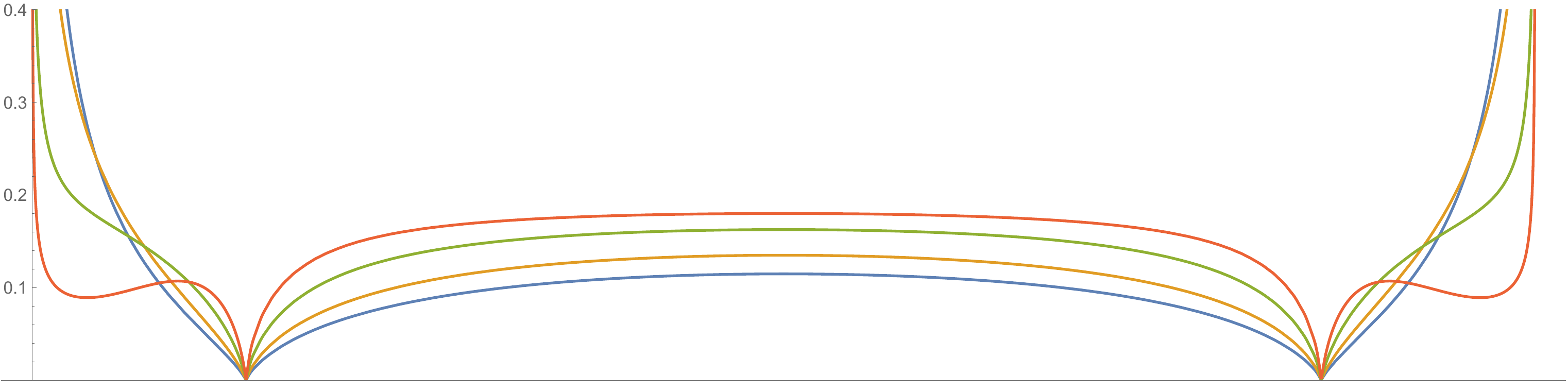}};
\node at (0,-1) {\footnotesize $b=\frac{1}{3}$};
\node at (-2,0.6) {\footnotesize $b=\frac{1}{2}$};
\draw[->-=1] (-2.05,0.5)--(-2.05,-0.6);
\node at (0,0.6) {\footnotesize $b=1$};
\draw[->-=1] (0,0.5)--(0,-0.3);
\node at (2,0.6) {\footnotesize $b=2$};
\draw[->-=1] (2,0.5)--(2,-0.2);
\node at (-5.3,1.52) {\footnotesize $\asymp x^{-\frac{1}{3}}$};
\draw[->-=1] (-5.8,1.45)--(-6.36,1.45);
\node at (-6.15,-1.3) {\footnotesize $\asymp x^{-\frac{1}{4}}$};
\draw[->-=1] (-6.5,-1.25)--(-6.5,-0.85);
\draw[->-=1] (-6.35,-1.25)--(-6.35,1);
\draw[->-=1] (-6.2,-1.25)--(-6.2,-0.18);
\draw[fill] (-6.71,-1.65) circle (0.04);
\node at (-6.71,-1.9) {\footnotesize $0$};
\draw[fill] (-4.8,-1.65) circle (0.04);
\node at (-4.8,-1.9) {\footnotesize $a$};
\draw[fill] (4.8,-1.65) circle (0.04);
\node at (4.8,-1.9) {\footnotesize $a e^{\frac{2\pi i}{p}}$};
\draw[fill] (6.71,-1.65) circle (0.04);
\node at (6.71,-1.9) {\footnotesize $0$};
\end{tikzpicture}
\end{center}
\vspace{-0.5cm}\caption{\label{fig:circular sector} Row 1: the Mittag-Leffler point process for the indicated values of $b$. In each plot, the thin blue circle is $\{z: |z|=b^{-\frac{1}{2b}}\}$. Row 2: the Mittag-Leffler ensemble conditioned on $\#\{z_{j}\in U\}=0$ with $U$ as in Theorem \ref{thm:g sector nu}, $p=5$, and $a=0.8b^{-\frac{1}{2b}}$. Row 3: the normalized density $\frac{d\nu(z)/|dz|}{\nu(\partial U)}$ for $p=5$ and $a=0.8b^{-\frac{1}{2b}}$. Rows 4 and 5: similar to rows 2 and 3, but with $p=\frac{5}{4}$ and $a=0.8b^{-\frac{1}{2b}}$.}
\end{figure}

In view of Remark \ref{remark:circular sector ML}, we are led to formulate the following conjecture.
\begin{conjecture}\label{conj:behavior of balayage measure}
Let $\mu$ be a probability measure and $S:=\mathrm{supp} \, \mu$. Suppose $S$ is compact, $U$ satisfies Assumption \ref{ass:U}, $\partial U$ is piecewise smooth, $U\subset S$, $\mu$ has a continuous density with respect to $d^{2}z$ on $U\setminus\{z_{0}\}$ for some $z_{0}\in \partial U$, and $d\mu(z)/d^{2}z \asymp |z-z_{0}|^{2b-2}$ as $z\to z_{0}$ for some $b>0$. At $z_{0}$, suppose that $\partial U$ makes an angle $\frac{2\pi}{p}$ pointing inwards $U$ for some $p \in [1,+\infty)$. If $2b \neq \frac{p}{2}$, then  $\nu := \mathrm{Bal}(\mu|_{U},\partial U)$ satisfies $d\nu(z)/|dz| \asymp |z-z_{0}|^{\min\{2b,\frac{p}{2}\}-1} $ as $z\to z_{0}$, where $|dz|$ is the arclength measure on $\partial U$. If $2b = \frac{p}{2}$, then instead we have $d\nu(z)/|dz| \asymp |z-z_{0}|^{2b-1}\log\frac{1}{|z-z_{0}|}$ as $z\to z_{0}$.
\end{conjecture}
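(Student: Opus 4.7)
The plan is to reduce the conjecture, which is a local statement about $\nu = \mathrm{Bal}(\mu|_U,\partial U)$ in a neighborhood of $z_0$, to an explicit Poisson-kernel computation on the upper half-plane. First, by linearity of the balayage operator (a direct consequence of the potential-theoretic characterization in Proposition \ref{prop:def of bal}), we decompose $\mu|_U = \mu_1 + \mu_2$ with $\mu_1 := \mu|_{U\cap D(z_0,\varepsilon)}$ for a small fixed $\varepsilon > 0$. The ``far'' piece $\mathrm{Bal}(\mu_2,\partial U)$ is the balayage of a measure whose logarithmic potential is real-analytic in a neighborhood of $z_0$, and classical regularity of the Dirichlet problem at a corner of interior angle $\frac{2\pi}{p}$ yields that this piece is absolutely continuous near $z_0$ with density $\asymp |z-z_0|^{p/2-1}$ with respect to arclength (the harmonic-measure exponent at the vertex of a sector).

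For the ``near'' piece $\mathrm{Bal}(\mu_1,\partial U)$, I would use a Riemann map $\varphi$ sending a neighborhood of $z_0$ in $U$ onto a neighborhood of $0$ in $\mathbb{H} := \{w:\mathrm{Im}\,w>0\}$; by standard boundary-regularity results for conformal maps at piecewise-smooth corners, $\varphi(z) \sim c(z-z_0)^{p/2}$ as $z\to z_0$ for some $c\neq 0$, and $\varphi$ is a $C^1$-diffeomorphism onto its image away from $z_0$. Under $\varphi$, the restriction $\mu_1$ pushes forward to a measure $\sigma$ on $\mathbb{H}$ whose density satisfies $\rho(w) \asymp |w|^{\gamma-2}$ as $w\to 0$, where $\gamma := 4b/p$. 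The balayage of a measure on $\mathbb{H}$ onto $\mathbb{R}$ is given explicitly by the Poisson-kernel integral $\tfrac{1}{\pi}\int_{\mathbb{H}}\tfrac{t}{(x-s)^{2}+t^{2}}d\sigma(s+it)$, and the scaling $w = xu$ shows that this density, as $x\to 0^{+}$, is $\asymp x^{\gamma-1}$ when $\gamma<1$ (the rescaled integral is absolutely convergent over $\mathbb{H}$), $\asymp \log(1/x)$ when $\gamma=1$ (logarithmic divergence of the rescaled integral at infinity), and tends to a positive constant when $\gamma>1$. Transferring back to $z$-coordinates via $|z-z_0| \asymp |w|^{2/p}$ and $|dz| \asymp |w|^{2/p-1}|dw|$, together with the identity $(p/2)\gamma = 2b$, produces density $\asymp r^{2b-1}$, $\asymp r^{2b-1}\log(1/r)$, or $\asymp r^{p/2-1}$ in the three regimes, which is exactly the claimed asymptotic.

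The main technical obstacle will be to justify that these reductions commute with the global balayage up to errors that are subordinate to the $\mathrm{Bal}(\mu_2,\partial U)$ contribution. Two issues arise. First, balayage is only conformally covariant in a rather weak sense, and the Riemann map $\varphi$ is defined only locally, so one must glue the local model with the global balayage by comparing logarithmic potentials and controlling the discrepancy via the exterior Dirichlet problem near the corner; a Hölder estimate for the boundary trace of the potential should suffice. Second, in the regime $2b > p/2$ the $\mu_1$-contribution has the \emph{same} singular order as the $\mu_2$-contribution, so bounding each piece separately is insufficient to rule out cancellation in the leading coefficient. The explicit Mittag-Leffler computation of Remark \ref{remark:circular sector ML} suggests that both contributions have strictly positive leading coefficients and add constructively, and a careful asymptotic expansion of $p_\nu - p_{\mu|_U}$ at $z_0$, carried out along the lines of the sector formulas in Theorem \ref{thm:g sector nu}, should confirm this in full generality.
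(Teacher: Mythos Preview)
The statement you are attempting to prove is stated in the paper as a \emph{conjecture}, not a theorem, and the paper gives no proof of it. The paper only verifies the conjecture in a handful of explicit cases: the Mittag-Leffler circular sector (Remark~\ref{remark:circular sector ML}), the equilateral triangle with uniform measure (Theorem~\ref{thm:Ginibre triangle nu}), and the rectangle (Subsection~\ref{subsection: behavior near a corner}), each time by direct computation of $\nu$ via Fourier or moment methods. There is therefore nothing in the paper to compare your argument against.

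Your outline is a plausible general strategy, and you correctly locate the two substantive difficulties yourself. The first---that balayage onto $\partial U$ is not the same as balayage onto the boundary of a local conformal image, so that the near/far decomposition must be reconciled with the \emph{global} Dirichlet problem on $U$---is genuine: the Riemann map $\varphi$ you invoke is only defined on a subdomain, and the balayage of $\mu_1$ onto $\partial U$ is not literally the pullback of the half-plane Poisson integral of $\sigma$. One standard way around this is to work instead with the Green function $g_U$ of the full domain and expand it near the corner (cf.\ the representation in Theorem~\ref{thm:dnu in terms of green general}); corner asymptotics of $g_U$ would then feed directly into the density of $\nu$ without any gluing. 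The second issue---ruling out cancellation when $2b>p/2$---is also real, and your appeal to ``strictly positive leading coefficients'' is at present a heuristic rather than an argument; in the paper's explicit examples positivity is read off from closed formulas, which will not be available in general. As written, your proposal is a reasonable research plan but not yet a proof.
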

\begin{remark}
\vspace{-0.05cm}This conjecture is verified in Theorem \ref{thm:g sector nu} for $g(r)=r^{2b}$ (see Remark \ref{remark:circular sector ML}), and also in all other cases considered below, such as in Theorem \ref{thm:Ginibre triangle nu} where $U$ is an equilateral triangle, $p\in \{2,6\}$ and $b=1$, and in Theorems \ref{thm:Ginibre rectangle nu} and \ref{thm:ML rectangle nu} where $U$ is a rectangle, $p\in \{2,4\}$ and $b \in \N_{>0}$. 
\end{remark}
\begin{remark}
For the spherical case $g(r) = \log(1+r^{2})$, \eqref{g sector nu} can be rewritten as
\vspace{-0.05cm}\begin{align}\label{nu circular S}
\frac{d\nu(z)}{d\theta} = \frac{1}{\pi^{2}} \sum_{m=0}^{+\infty} \frac{\mathrm{B}(\frac{a^{2}}{1+a^{2}}, 1+\frac{pm+p/2}{2},1-\frac{pm+p/2}{2})}{a^{pm+p/2}} \frac{\sin((pm+\frac{p}{2})\theta)}{m+\frac{1}{2}},
\end{align}
where $\mathrm{B}(z,\alpha,\beta) := \int_{0}^{z} x^{\alpha-1}(1-x)^{\beta-1}dx$ is the incomplete Beta function. The density $d\nu(r)/dr$ in \eqref{g sector nu segment 2} can similarly be expressed in terms of the generalized incomplete Beta function. 
\end{remark}
\begin{theorem}\label{thm:g sector C}
Let $\beta>0$. 

\medskip \noindent (i) Suppose $Q(z) = g(|z|)$ for some $g: [0,+\infty)\to \R \cup \{+\infty\}$ such that Assumption \ref{ass:Q} holds, and that $S = \{z : |z| \in \mathrm{S} \}$ with $\mathrm{S} = [r_{0},r_{1}]\cup [r_{2},r_{3}] \cup \ldots \cup [r_{2\ell},r_{2\ell+1}]$ for some $0 \leq r_{0} < r_{1} < \ldots <r_{2\ell+1}<+\infty$. Suppose also that $g \in C^{2}(\mathrm{S}\setminus \{0\})$ and recall that $\mu_{\mathrm{rad}}$ is given by \eqref{mu radially symmetric intro 2}.

\medskip \noindent Let $p\in [2,+\infty)$, $a\in (0,r_{1}]$ and $U=\{re^{i\theta} : 0<r<a, \, 0<\theta < \frac{2\pi}{p}\}$. As $n \to +\infty$, we have $\mathbb{P}(\# \{z_{j}\in U\} = 0) = \exp \big( -Cn^{2}+o(n^{2}) \big)$, where $\mathbb{P}$ refers to \eqref{general density intro} and
\begin{multline}
C = \frac{\beta}{4} \bigg( 2 \int_{0}^{a} g(r) d\nu(r) + g(a) \frac{2}{p\pi^{2}} \sum_{m=0}^{+\infty} \frac{1}{(m+\frac{1}{2})^{2}} \frac{\int_{0}^{a} r^{pm+\frac{p}{2}} d\mu_{\mathrm{rad}}(r)}{a^{pm+\frac{p}{2}}}   - \frac{1}{p} \int_{0}^{a} g(r) d\mu_{\mathrm{rad}}(r)\bigg), \label{C in thm cir sector}
\end{multline}
where $d\nu(r)$ is given for $r\in (0,a)$ by \eqref{g sector nu segment}. Moreover, if $g(r)$ can be written as a uniformly convergent series of the form $\sum_{k\in \mathcal{I}} g_{k}r^{k}$ for all $r \in [0,a]$, where $\mathcal{I}\subset [0,+\infty)$ is countable with no accumulation points and such that $\mathcal{I}\cap (\frac{p}{2}+p\N) = \emptyset$, then $C$ in \eqref{C in thm cir sector} can be further simplified as
\begin{align}
& C = \frac{\beta}{4}\bigg(  2\sum_{k\in \mathcal{I}} \frac{p g_{k}a^{k}}{\pi^{2}} \bigg[ \sum_{m=0}^{+\infty} \frac{1}{(k+pm+\frac{p}{2})(k-pm-\frac{p}{2})} \bigg( \frac{\int_{0}^{a} r^{pm+\frac{p}{2}} d\mu_{\mathrm{rad}}(r)}{a^{pm+\frac{p}{2}}} - \frac{\int_{0}^{a} r^{k} d\mu_{\mathrm{rad}}(r)}{a^{k}} \bigg) \bigg] \nonumber \\
& + g(a) \frac{2}{p \pi^{2}} \sum_{m=0}^{+\infty} \frac{1}{(m+\frac{1}{2})^{2}}  \frac{\int_{0}^{a} r^{pm+\frac{p}{2}} d\mu_{\mathrm{rad}}(r)}{a^{pm+\frac{p}{2}}} - \frac{1}{p} \int_{0}^{a} g(r) d\mu_{\mathrm{rad}}(r) \bigg).\label{g sector C}
\end{align}
If $\mathcal{I}\cap (\frac{p}{2}+p\N) \neq \emptyset$, then $C$ can be obtained by first replacing $p$ by $p'$ in the right-hand side of \eqref{g sector C}, and then taking $p'\to p$.

\medskip \noindent (ii) A similar statement holds for the spherical point process \eqref{spherical ensemble}. More precisely, let $p\in [2,\infty)$, $a\in (0,+\infty)$ and $U=\{re^{i\theta} : 0<r<a, \, 0<\theta < \frac{2\pi}{p}\}$. As $n \to +\infty$, we have $\mathbb{P}(\# \{z_{j}\in U\} = 0) = \exp \big( -Cn^{2}+o(n^{2}) \big)$, where $\mathbb{P}$ refers to \eqref{spherical ensemble} and $C$ is given by \eqref{C in thm cir sector} with $\mu$ and $S$ as in \eqref{def of mu and S spherical}, $\nu := \mathrm{Bal}(\mu|_{U},\partial U)$ is given by Theorem \ref{thm:g sector nu} (ii), $g(r) = \log(1+r^{2})$ and $d\mu_{\mathrm{rad}}(r) = \frac{2r}{(1+r^{2})^{2}}dr$. Moreover, if $a<1$ and $p$ is such that $(\frac{p}{2}+p\N)\cap 2\N = \emptyset$, then   $C$ can be rewritten as
\begin{align}\label{C circular S}
C & = \frac{\beta}{4}\bigg(  \sum_{k=1}^{+\infty} \frac{(-1)^{k+1}}{k}\frac{2p a^{2k}}{\pi^{2}} \bigg[ \sum_{m=0}^{+\infty} \frac{1}{(2k+pm+\frac{p}{2})(2k-pm-\frac{p}{2})} \nonumber \\
& \times \bigg( \frac{\mathrm{B}(\frac{a^{2}}{1+a^{2}}, 1+\frac{pm+p/2}{2},1-\frac{pm+p/2}{2})}{a^{pm+\frac{p}{2}}} - \frac{\mathrm{B}(\frac{a^{2}}{1+a^{2}}, 1+k,1-k)}{a^{2k}} \bigg) \bigg] \nonumber \\
& + \frac{2 \log(1+a^{2})}{p \pi^{2}} \sum_{m=0}^{+\infty} \frac{\mathrm{B}(\frac{a^{2}}{1+a^{2}}, 1+\frac{pm+p/2}{2},1-\frac{pm+p/2}{2})}{(m+\frac{1}{2})^{2}a^{pm+\frac{p}{2}}} - \frac{a^{2}-\log(1+a^{2})}{p(1+a^{2})}  \bigg).
\end{align}
If $(\frac{p}{2}+p\N)\cap 2\N \neq \emptyset$, then $C$ can be obtained by replacing $p$ by $p'$ in the right-hand side of \eqref{C circular S}, and then taking $p'\to p$. 

\medskip \noindent (iii) For $Q(z) = |z|^{2}$, statement (i) also holds if $p\in (1,2)$ and $a\neq r_{1} =1$.
\end{theorem}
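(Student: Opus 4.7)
The plan is to combine Theorem \ref{thm:general pot} with the explicit balayage measure from Theorem \ref{thm:g sector nu}. For $p\in[2,+\infty)$ the circular sector $U$ satisfies Assumption \ref{ass:U2}, while Assumption \ref{ass:U} is clear from $a\in(0,r_1]$ and $\partial U\subset S$. Moreover $U$ is bounded so $c^{\,\mu}_U=0$, and Theorem \ref{thm:general pot} yields $\mathbb{P}(\#\{z_j\in U\}=0)=\exp(-Cn^2+o(n^2))$ with
\[
C=\tfrac{\beta}{4}\Big(\int_{\partial U}Q\,d\nu-\int_U Q\,d\mu\Big).
\]
It remains to evaluate the two integrals explicitly using Theorem \ref{thm:g sector nu}.

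I would decompose $\partial U=\Sigma_0\cup\Sigma_{2\pi/p}\cup\Gamma$, where $\Sigma_0=[0,a]$, $\Sigma_{2\pi/p}=\{re^{2\pi i/p}:r\in[0,a]\}$, and $\Gamma=\{ae^{i\theta}:\theta\in[0,2\pi/p]\}$. Because $Q(z)=g(|z|)$ is radial and Theorem \ref{thm:g sector nu} gives the same density on $\Sigma_0$ and $\Sigma_{2\pi/p}$, the segment contribution is $2\int_0^a g(r)\,d\nu(r)$, which is the first term in \eqref{C in thm cir sector}. On the arc, $Q(z)=g(a)$ is constant, so the contribution is $g(a)\,\nu(\Gamma)$; integrating the Fourier series \eqref{g sector nu} against $d\theta$ term by term via $\int_0^{2\pi/p}\sin((pm+p/2)\theta)\,d\theta=2/(pm+p/2)$ produces exactly the middle term of \eqref{C in thm cir sector}. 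The bulk integral splits by \eqref{mu radially symmetric intro 2} into $\int_U Q\,d\mu=\frac{1}{p}\int_0^a g(r)\,d\mu_{\mathrm{rad}}(r)$, giving the last term. This establishes \eqref{C in thm cir sector}.

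To derive the simplified form \eqref{g sector C}, I substitute the expansion $g(r)=\sum_{k\in\mathcal I}g_k r^k$ into $2\int_0^a g(r)\,d\nu(r)$ using the series representation \eqref{g sector nu segment 2}. Interchanging summations is justified by the uniform convergence hypothesis together with $\mathcal I\cap(\tfrac{p}{2}+p\N)=\emptyset$, which keeps every denominator bounded away from zero. Each elementary integral $\int_0^a r^{k-1}(r/x)^{pm+p/2}dr$, $\int_0^a r^{k-1}(x/r)^{pm+p/2}\chi_{r>x}\,dr$, and $\int_0^a r^{k-1}(rx/a^2)^{pm+p/2}dr$ evaluates in closed form; combining via partial fractions over $(k\pm(pm+p/2))$ and then integrating against $d\mu_{\mathrm{rad}}(x)$ yields the bracket in \eqref{g sector C}. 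The case $\mathcal I\cap(\tfrac{p}{2}+p\N)\neq\emptyset$ follows by $p\to p'\to p$ continuity. For (ii), apply (i) with $g(r)=\log(1+r^2)=\sum_{k\geq1}(-1)^{k+1}r^{2k}/k$, whose radius of convergence on $[0,a]$ requires $a<1$; recognizing $\int_0^a x^{pm+p/2}\,d\mu_{\mathrm{rad}}(x)=\mathrm{B}(a^2/(1+a^2),1+(pm+p/2)/2,1-(pm+p/2)/2)$ as an incomplete Beta integral delivers \eqref{C circular S}.

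The main obstacle is part (iii): for $p\in(1,2)$ the sector violates Assumption \ref{ass:U2} at the origin, so Theorem \ref{thm:general pot} is not directly available. In the Ginibre setting $Q(z)=|z|^2$, however, $\mu$ has uniform density on $\overline{\mathbb D}$ and $U$ is a star domain with star center $0$ strictly inside $S$ when $a<1$. Following Remark \ref{remark:AR star domain} and \cite[Remark 5.2]{AR2017}, I would approximate $U$ by the rescaled sectors $U_m=(1+1/m)U$, which eventually satisfy $\overline{U}\subset U_m\subset S$ and satisfy the exterior ball condition; then $\nu_m=\mathrm{Bal}(\mu|_{U_m},\partial U_m)\rightharpoonup \nu$ weakly. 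Applying the already-proved part (i) to each $U_m$ and passing to the limit in \eqref{C in thm cir sector} (using continuity of $Q$ and dominated convergence to control the series tail) gives the same formula for $U$. Verifying the weak convergence $\nu_m\rightharpoonup \nu$ together with the necessary uniform estimates to pass to the limit inside the infinite sum is the most delicate analytical point of the argument.
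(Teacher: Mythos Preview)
Your derivation of \eqref{C in thm cir sector} matches the paper's: decompose $\partial U$ into the two radial segments and the arc, use the symmetry $d\nu(r)=d\nu(re^{2\pi i/p})$ from Theorem~\ref{thm:g sector nu}, and integrate the Fourier series \eqref{g sector nu} over $\theta$.

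For the simplified form \eqref{g sector C} you take a genuinely different route. You substitute the series \eqref{g sector nu segment 2} for $d\nu(r)/dr$ and integrate $\int_0^a r^{k-1}[\cdot]\,dr$ directly; a short computation with $q=pm+\tfrac{p}{2}$ gives
\[
\int_0^a r^{k-1}\Bigl[\Bigl(\tfrac{\min\{r,x\}}{\max\{r,x\}}\Bigr)^{q}-\Bigl(\tfrac{rx}{a^2}\Bigr)^{q}\Bigr]dr=\frac{2q}{(k+q)(k-q)}\bigl(x^{q}a^{k-q}-x^{k}\bigr),
\]
which after multiplying by $\tfrac{1}{2m+1}=\tfrac{p}{2q}$ and integrating against $d\mu_{\mathrm{rad}}(x)$ yields exactly \eqref{lol8}. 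The paper instead avoids \eqref{g sector nu segment 2}, which it calls ``difficult to simplify'': it takes the real part of the moment identity \eqref{moments circular sector} to express $\int_0^a r^k\,d\nu(r)$ through $\int_0^{2\pi/p}\cos(k\theta)h(\theta)\,d\theta$, expands $h$ via \eqref{h as a Fourier series smaller sector}, and then collapses the resulting terms with the partial-fraction identity $\tfrac{\tan(\pi y)}{y}=-\tfrac{2}{\pi}\sum_{m\ge0}\tfrac{1}{(y+m+\tfrac12)(y-m-\tfrac12)}$. Your approach is more self-contained (no tangent identity needed); the paper's approach sidesteps any Fubini justification inside \eqref{g sector nu segment 2}. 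Both are valid.

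Part (iii), however, contains a real gap. The rescaled sectors $U_m=(1+\tfrac{1}{m})U$ have the \emph{same} opening angle $2\pi/p>\pi$ at the origin as $U$, hence the same reentrant corner; they do \emph{not} satisfy Assumption~\ref{ass:U2}, so you cannot apply part~(i) (nor Theorem~\ref{thm:general pot}) to them. Rescaling does not repair the exterior ball failure. The paper's route is simply to cite \cite[Theorem~1.2~(A)]{A2018} (or \cite[Theorem~1.4]{AR2017} for $\beta=2$), which is precisely the version of the leading-order asymptotics for hole regions satisfying condition~(b) of Remark~\ref{remark:AR star domain}: a decreasing family $U_m\supset\overline{U}$ with $\nu_m\rightharpoonup\nu$. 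For $Q(z)=|z|^2$ and $U$ a star domain with $a<1$, this weak convergence is established in \cite[Remark~5.2]{AR2017}; no exterior ball condition on the $U_m$ is needed. So the correct fix for your argument is to replace the appeal to part~(i) by a direct appeal to that cited result, exactly as the paper does.
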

\begin{remark}\label{remark:circular sector and ML}
For $g(r)=r^{2b}$ with $b >0$, $C$ in \eqref{g sector C} becomes
\begin{align}\label{C circular ML}
C = \beta  a^{4b} \frac{b}{p} \bigg( \frac{1}{4\pi^{2}}\sum_{m=0}^{+\infty} \frac{1}{(m+\frac{1}{2}+\frac{2b}{p})^{2}} + \frac{b}{p \pi^{2}} \sum_{m=0}^{+\infty} \frac{1}{(m+\frac{1}{2})^{2}(m+\frac{1}{2}+\frac{2b}{p})} - \frac{1}{8} \bigg).
\end{align}
This constant can also be rewritten in terms of the digamma function $\psi(z) := \frac{\Gamma'(z)}{\Gamma(z)}$, where $\Gamma(z) := \int_{0}^{+\infty}t^{z-1}e^{-t}dt$ is the Gamma function. Indeed, using the identities (see also \cite[5.2.2, 5.7.6, 5.15.1, 5.15.3, 25.6.1]{NIST})
\begin{align*}
& \psi(z) = -\gamma_{\mathrm{E}} + \sum_{m=0}^{+\infty} \frac{z-1}{(m+1)(m+z)}, \qquad \psi^{(1)}(z) = \sum_{m=0}^{+\infty} \frac{1}{(m+z)^{2}}, \qquad \mbox{for all } z>0, 
\end{align*}
and $\psi^{(1)}(\tfrac{1}{2}) = \frac{\pi^{2}}{2}$, where $\gamma_{\mathrm{E}}\approx 0.577$ is Euler's gamma constant, we get
\begin{align*}
C = \beta  a^{4b} \frac{\psi(\frac{1}{2})-\psi(\frac{1}{2}+\frac{2b}{p})+\frac{b}{p}\big(\psi^{(1)}(\frac{1}{2})+\psi^{(1)}(\frac{1}{2}+\frac{2b}{p})\big)}{4\pi^{2}}.
\end{align*}
Using well-known special values of $\psi$ and $\psi^{(1)}$ (see e.g. \cite[5.4.14, 5.4.15, 5.15.4]{NIST}), we get
\begin{align*}
& C|_{\frac{b}{p}=\frac{1}{4}} = \beta a^{4b} \bigg( \frac{1}{24} - \frac{\log 2}{2\pi^{2}} \bigg), \quad C|_{\frac{b}{p}=\frac{1}{2}} = \beta a^{4b} \bigg( \frac{1}{8} - \frac{1}{\pi^{2}} \bigg), \quad C|_{\frac{b}{p}=\frac{3}{4}} = \beta a^{4b} \bigg( \frac{1}{8} - \frac{\log 2}{2\pi^{2}} - \frac{7}{16\pi^{2}} \bigg), \\
& C|_{\frac{b}{p}=1} = \beta a^{4b} \bigg( \frac{1}{4} - \frac{16}{9\pi^{2}} \bigg), \quad C|_{\frac{b}{p}=\frac{3}{2}} = \beta a^{4b} \bigg( \frac{3}{8} - \frac{187}{75\pi^{2}} \bigg), \quad C|_{\frac{b}{p}=2} = \beta a^{4b} \bigg( \frac{1}{2} - \frac{35072}{11025\pi^{2}} \bigg).
\end{align*}
In particular, $C|_{b=1,p=2,\beta=2}=a^{4}(\frac{1}{4}-\frac{2}{\pi^{2}})$, which was previously known from \cite{AR2017}.
\end{remark}

\subsection{Results for the elliptic Ginibre point process}\label{subsection:elliptic Ginibre intro}
In this subsection, we consider the elliptic Ginibre point process, which we recall is defined by \eqref{general density intro} with $Q(z) = \frac{1}{1-\tau^{2}}\big( |z|^{2}-\tau \, \re z^{2} \big)$ and $\tau \in [0,1)$. Let $\mu$ be the equilibrium measure for $Q$ on $\C$, and $S:=\mathrm{supp} \, \mu$. Recall also from \eqref{mu S EG beginning of intro} that
\begin{align}\label{mu S EG}
d\mu(z) = \frac{d^{2}z}{\pi(1-\tau^{2})}, \qquad S=\Big\{ z\in \C: \Big( \frac{\re z}{1+\tau} \Big)^{2} + \Big( \frac{\im z}{1-\tau} \Big)^{2} \leq 1 \Big\}.
\end{align}
We refer to \cite{BFreview} for a survey of recent results on the elliptic Ginibre point process. Below, we state explicit results on the balayage measure $\nu := \mathrm{Bal}(\mu|_{U},\partial U)$ and the hole probability $\mathbb{P}(\# \{z_{j}\in U\} = 0)$, in the cases where $U$ is either an ellipse, an annulus, a cardioid, a circular sector, an equilateral triangle, a rectangle, the complement of an ellipse centered at $0$, and the complement of a disk. 

\begin{figure}[h]
\begin{tikzpicture}[master]
\node at (0,0) {\includegraphics[width=3.63cm]{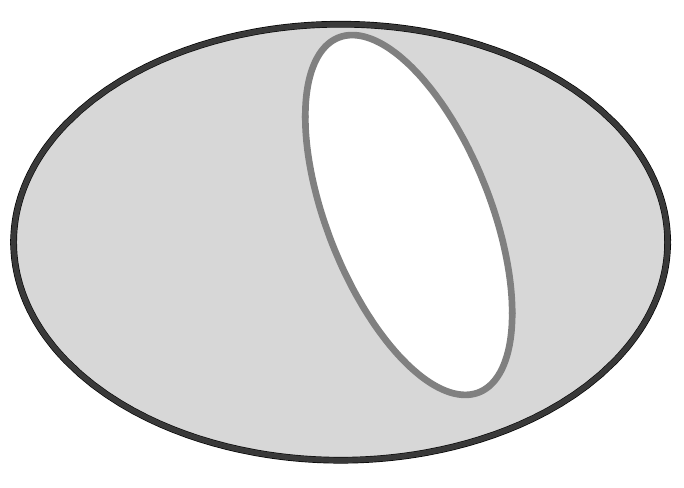}};
\node at (0,1.45) {\footnotesize The ellipse};
\end{tikzpicture} \hspace{-0.21cm}
\begin{tikzpicture}[slave]
\node at (0,0) {\includegraphics[width=3.63cm]{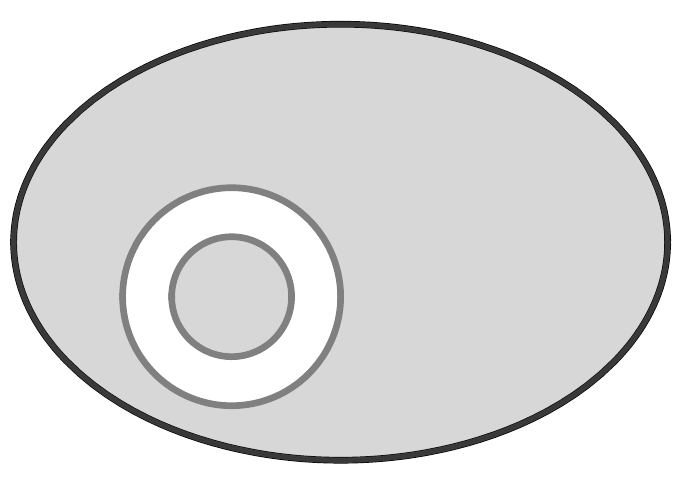}};
\node at (0,1.45) {\footnotesize The annulus};
\end{tikzpicture} \hspace{-0.21cm}
\begin{tikzpicture}[slave]
\node at (0,0) {\includegraphics[width=3.63cm]{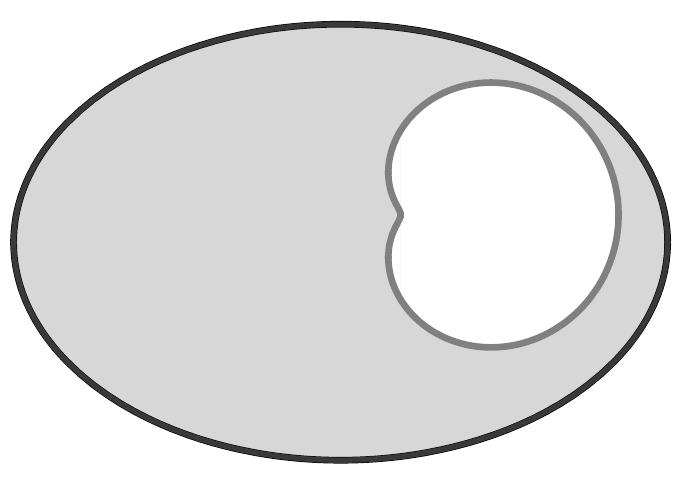}};
\node at (-2,-1.4) {\footnotesize Theorem \ref{thm:Elliptic disk C}};
\node at (0,1.45) {\footnotesize The cardioid};
\end{tikzpicture} \hspace{-0.21cm}
\begin{tikzpicture}[slave]
\node at (0,0) {\includegraphics[width=3.63cm]{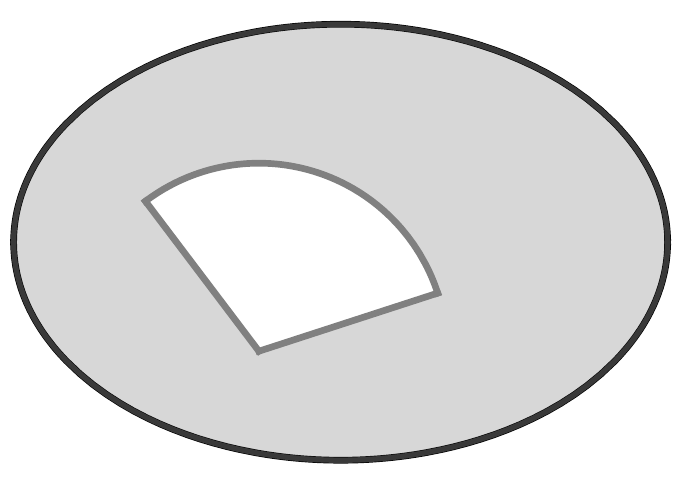}};
\node at (0,1.45) {\footnotesize The circular sector};
\end{tikzpicture}
\caption{\label{fig:hole prob for EG with disk and ellipse} $S\setminus U$ is the shaded region, $U$ is in white, $\partial S$ is in black, and $\partial U$ is in gray.}
\end{figure}
The next result is general and relates the leading order terms in the large $n$ asymptotics of $\mathbb{P}(\# \{z_{j}\in \zeta_{0}+\rho e^{i\theta_{0}}U\} = 0)$ and $\mathbb{P}(\# \{z_{j}\in U\} = 0)|_{\tau=0}$, where $\zeta_{0}\in \C$, $\rho>0$ and $\theta_{0}\in [0,2\pi)$, provided that $U\subset S$ and $\zeta_{0}+\rho e^{i\theta_{0}}U \subset S$.
\begin{theorem}\label{thm:general U EG} 
Let $\tau \in [0,1)$, $\zeta_{0}\in \C$, $\rho>0$, $\theta_{0}\in [0,2\pi)$ and $Q(z) = \frac{1}{1-\tau^{2}}\big( |z|^{2}-\tau \, \re z^{2} \big)$. Suppose $U\subset \C$ is bounded and such that Assumption \ref{ass:U} holds (with $S$ replaced by $\C$), and define
\begin{align}\label{C in main thm EG}
C(\tau,\zeta_{0},\rho,\theta_{0}) = \frac{\beta}{4}\bigg(  \int_{\zeta_{0}+\rho e^{i\theta_{0}}\partial U}Q(z)d\nu(z) - \int_{\zeta_{0}+\rho e^{i\theta_{0}}U}Q(z) \frac{d^{2}z}{\pi(1-\tau^{2})} \bigg),
\end{align}
where $\nu := \mathrm{Bal}(\frac{d^{2}z}{\pi(1-\tau^{2})}|_{\zeta_{0}+\rho e^{i\theta_{0}}U},\zeta_{0}+\rho e^{i\theta_{0}}\partial U)$. Then
\begin{align*}
C(\tau,\zeta_{0},\rho,\theta_{0}) = \frac{\rho^{4}}{(1-\tau^{2})^{2}}C(0,0,1,0).
\end{align*}
\end{theorem}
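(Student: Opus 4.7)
The plan is to perform the affine change of variables $z = \Phi(w) := \zeta_{0}+\rho e^{i\theta_{0}}w$, which maps $U$ bijectively onto $V:=\zeta_{0}+\rho e^{i\theta_{0}}U$ and $\partial U$ onto $\partial V$. Writing $\mu_{\tau} := \tfrac{1}{\pi(1-\tau^{2})}d^{2}z$ and $\mu_{0}:=\tfrac{1}{\pi}d^{2}w$, the rule $d^{2}z = \rho^{2}d^{2}w$ turns $\mu_{\tau}|_{V}$ into $\tfrac{\rho^{2}}{1-\tau^{2}}\mu_{0}|_{U}$ under pullback by $\Phi$. The whole point is that $Q(\Phi(w))$ is a polynomial in $w,\bar w$ whose only non-harmonic piece is a constant multiple of $|w|^{2}$, and the harmonic pieces can be removed using the mean-value-type property of balayage.

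I would first show
\begin{align*}
\nu = \tfrac{\rho^{2}}{1-\tau^{2}}\,\Phi_{\ast}\nu_{0}, \qquad \nu_{0} := \mathrm{Bal}(\mu_{0}|_{U},\partial U),
\end{align*}
via the uniqueness clause of Proposition \ref{prop:def of bal}. Using the identity $\log|\Phi(w)-\Phi(w')| = \log\rho + \log|w-w'|$, a direct computation shows that the logarithmic potentials of $\tfrac{\rho^{2}}{1-\tau^{2}}\Phi_{\ast}\nu_{0}$ and of $\mu_{\tau}|_{V}$ both equal $\tfrac{\rho^{2}}{1-\tau^{2}}(p_{\mu_{0}|_{U}}(w)-\log\rho \cdot \mu_{0}(U))$ at $z=\Phi(w)$ for q.e.\ $w\in U^{c}$, once one invokes the defining identity $p_{\nu_{0}} = p_{\mu_{0}|_{U}}$ q.e.\ off $U$. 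Mass preservation gives $(\tfrac{\rho^{2}}{1-\tau^{2}}\Phi_{\ast}\nu_{0})(\partial V)=\mu_{\tau}(V)$, and $c_{V}^{\mu_{\tau}}=0$ since $V$ is bounded, so uniqueness yields the claim.

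Substituting $z=\Phi(w)$ in the two integrals defining $C(\tau,\zeta_{0},\rho,\theta_{0})$ then produces a common prefactor $\tfrac{\rho^{2}}{1-\tau^{2}}$, and I am reduced to evaluating
\begin{align*}
\int_{\partial U}Q(\Phi(w))\,d\nu_{0}(w) - \int_{U}Q(\Phi(w))\,d\mu_{0}(w).
\end{align*}
Expanding $Q(\Phi(w))$ as a polynomial in $w$ and $\bar w$, one obtains a constant term, a harmonic polynomial of degree $\leq 2$ (built from $w, \bar w$ and $\re(e^{2i\theta_{0}}w^{2})$), and the single non-harmonic monomial $\tfrac{\rho^{2}}{1-\tau^{2}}|w|^{2}$. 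The constant and harmonic pieces drop out of the difference: mass preservation kills the constant, and the identity $\int h\,d\nu_{0}=\int h\,d\mu_{0}|_{U}$ for every harmonic polynomial $h$ kills the rest. This last identity is obtained by matching the Laurent expansions at infinity of $p_{\nu_{0}}$ and $p_{\mu_{0}|_{U}}$, which coincide on $\C\setminus\overline{U}$ because $p_{\nu_{0}} - p_{\mu_{0}|_{U}}$ is harmonic there and vanishes q.e. Only the $|w|^{2}$ term survives, producing $\tfrac{\rho^{2}}{1-\tau^{2}}\cdot\tfrac{4}{\beta}C(0,0,1,0)$ by the very definition of $C(0,0,1,0)$. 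Combining everything yields the stated identity.

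The only real bookkeeping hurdle is step one, the scaling law for the balayage measure under $\Phi$; once this is in place, the harmonic cancellation is automatic because $|z|^{2}$ is the unique non-harmonic piece of $Q$.
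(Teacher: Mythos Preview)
Your proof is correct and follows essentially the same route as the paper: establish the scaling law $d\nu(\zeta_{0}+\rho e^{i\theta_{0}}w)=\tfrac{\rho^{2}}{1-\tau^{2}}d\nu_{0}(w)$ via the uniqueness clause in Proposition~\ref{prop:def of bal}, then observe that $Q(\Phi(w))-\tfrac{\rho^{2}}{1-\tau^{2}}|w|^{2}$ is harmonic so its contribution to the difference vanishes. The only cosmetic difference is that the paper invokes Lemma~\ref{lemma:further properties of nu}(iii) directly for the harmonic cancellation, whereas you recover the same fact by matching moments through the Laurent expansion of the potentials at infinity; both justifications are standard and equivalent here.
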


\subsubsection{The ellipse, the annulus, the cardioid, and the circular sector}
By combining Theorems \ref{thm:general U EG} and \ref{thm:general pot} with several findings from \cite{AR2017}, we readily obtain the following.

\begin{theorem}\label{thm:Elliptic disk C}
Fix $\beta>0$, $\tau \in [0,1)$, $\zeta_{0}\in \C$, $\theta_{0}\in [0,2\pi)$.

\medskip \noindent (The ellipse.) Let $a,c>0$ be such that $\zeta_{0}+e^{i\theta_{0}}U \subset S$, where $U=\{z:(\tfrac{\re z}{a})^{2}+(\tfrac{\im z}{c})^{2}<1 \}$. As $n \to +\infty$, we have $\mathbb{P}(\# \{z_{j}\in \zeta_{0}+e^{i\theta_{0}}U\} = 0) = \exp \big( -C n^{2}+o(n^{2}) \big)$, where
\begin{align}\label{C ellipse EG}
& C = \frac{1}{(1-\tau^{2})^{2}} \frac{\beta}{4}  \frac{a^{3}c^{3}}{a^{2}+c^{2}}.
\end{align}

\medskip \noindent (The annulus.) Let $0<\rho_{1}<\rho_{2}$ be such that $\zeta_{0}+U \subset S$, where $U=\{z: \rho_{1}<|z|<\rho_{2} \}$. As $n \to +\infty$, we have $\mathbb{P}(\# \{z_{j}\in \zeta_{0}+U\} = 0) = \exp \big( -C n^{2}+o(n^{2}) \big)$, where
\begin{align}\label{C annulus EG}
& C = \frac{1}{(1-\tau^{2})^{2}} \frac{\beta}{4} \bigg( \frac{\rho_{2}^{4}-\rho_{1}^{4}}{2} - \frac{(\rho_{2}^{2}-\rho_{1}^{2})^{2}}{2\log(\rho_{2}/\rho_{1})} \bigg).
\end{align}

\medskip \noindent (The cardioid.) Let $a>0$, $c\in [0,\frac{1}{2})$ be such that $\zeta_{0}+e^{i\theta_{0}}U \subset S$, where $U=\{re^{i\theta}: r<a(1+2c\cos\theta), \, \theta\in [0,2\pi) \}$. As $n \to +\infty$, $\mathbb{P}(\# \{z_{j}\in \zeta_{0}+e^{i\theta_{0}}U\} = 0) = \exp \big( -C n^{2}+o(n^{2}) \big)$, where
\begin{align}\label{C cardioid EG}
& C = \frac{1}{(1-\tau^{2})^{2}}\frac{\beta}{4}a^{4} \bigg( (c^{2}+1)^{2} - \frac{1}{2} \bigg).
\end{align}

\medskip \noindent (The circular sector.) Let $a>0$, $p\in [2,+\infty)$ be such that $\zeta_{0}+e^{i\theta_{0}}U \subset S$, where $U=\{re^{i\theta} : r\in (0,a), \, \theta \in (0,\tfrac{2\pi}{p})\}$. As $n \to +\infty$, we have $\mathbb{P}(\# \{z_{j}\in \zeta_{0}+e^{i\theta_{0}}U\} = 0) = \exp \big( -C n^{2}+o(n^{2}) \big)$, where
\begin{align}\label{C circular sector EG}
& C = \frac{1}{(1-\tau^{2})^{2}} \frac{\beta}{4}  a^{4} \frac{\psi(\frac{1}{2})-\psi(\frac{1}{2}+\frac{2}{p})+\frac{1}{p}\big(\psi^{(1)}(\frac{1}{2})+\psi^{(1)}(\frac{1}{2}+\frac{2}{p})\big)}{\pi^{2}}.
\end{align}
See also Figure \ref{fig:hole prob for EG with disk and ellipse}.
\end{theorem}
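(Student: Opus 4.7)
The proof is an application of Theorem \ref{thm:general U EG} together with Theorem \ref{thm:general pot}(i) to reduce each case to a standard Ginibre computation, which is then invoked from \cite{AR2017} or from Theorem \ref{thm:g sector C}.

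First I would verify the hypotheses of Theorem \ref{thm:general pot}(i) for $Q(z)=\frac{1}{1-\tau^{2}}(|z|^{2}-\tau\re z^{2})$ and the hole region $V:=\zeta_{0}+e^{i\theta_{0}}U$. The potential $Q$ is smooth with quadratic growth, so Assumption \ref{ass:Q} holds with $\mu$ and $S$ as in \eqref{mu S EG}. Assumption \ref{ass:U} is immediate since $\partial V$ is compact, has positive capacity, and $V$ has finitely many components (one for the ellipse, cardioid and sector; two for the annulus). For Assumption \ref{ass:U2}, the ellipse, annulus and cardioid all have piecewise $C^{2}$ boundary, and the circular sector satisfies the exterior ball condition precisely when $p\geq 2$ (see the remark following Assumption \ref{ass:U2}). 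Since $V$ is bounded, $c^{\hspace{0.02cm}\mu}_{V}=0$ in \eqref{C in main thm}, so Theorem \ref{thm:general pot}(i) yields $\mathbb{P}(\#\{z_{j}\in V\}=0)=\exp(-Cn^{2}+o(n^{2}))$ with $C$ as in \eqref{C in main thm EG} and $\rho=1$.

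Next I would apply Theorem \ref{thm:general U EG} with $\rho=1$ to obtain
\begin{align*}
C(\tau,\zeta_{0},1,\theta_{0}) \;=\; \frac{1}{(1-\tau^{2})^{2}}\, C(0,0,1,0),
\end{align*}
so all dependence on $\tau,\zeta_{0},\theta_{0}$ is captured by the prefactor $(1-\tau^{2})^{-2}$. It remains to identify $C(0,0,1,0)$, which is exactly the leading hole constant for the standard Ginibre ensemble ($\tau=0$) with hole region $U$. For the ellipse, annulus and cardioid this has been computed in \cite{AR2017}, giving $\frac{\beta}{4}\frac{a^{3}c^{3}}{a^{2}+c^{2}}$, $\frac{\beta}{4}\bigl(\frac{\rho_{2}^{4}-\rho_{1}^{4}}{2}-\frac{(\rho_{2}^{2}-\rho_{1}^{2})^{2}}{2\log(\rho_{2}/\rho_{1})}\bigr)$ and $\frac{\beta}{4}a^{4}\bigl((c^{2}+1)^{2}-\tfrac{1}{2}\bigr)$ respectively, from which \eqref{C ellipse EG}, \eqref{C annulus EG} and \eqref{C cardioid EG} follow by dividing by $(1-\tau^{2})^{2}$.

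For the circular sector with $p\in[2,\infty)$, $C(0,0,1,0)$ is the Mittag-Leffler hole constant with $g(r)=r^{2}$, i.e.\ $b=1$, which is the content of Theorem \ref{thm:g sector C}(i). Specializing the closed-form evaluation in Remark \ref{remark:circular sector and ML} to $b=1$ gives
\begin{align*}
C(0,0,1,0) \;=\; \frac{\beta}{4}a^{4}\,\frac{\psi(\tfrac{1}{2})-\psi(\tfrac{1}{2}+\tfrac{2}{p})+\tfrac{1}{p}\bigl(\psi^{(1)}(\tfrac{1}{2})+\psi^{(1)}(\tfrac{1}{2}+\tfrac{2}{p})\bigr)}{\pi^{2}},
\end{align*}
and dividing by $(1-\tau^{2})^{2}$ yields \eqref{C circular sector EG}. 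No step presents a serious obstacle: the whole content of the theorem is the observation that Theorem \ref{thm:general U EG} reduces the elliptic Ginibre computation to a standard Ginibre one. The only mild technical point is verifying the exterior ball condition for the sector, where $p\geq 2$ is sharp, so that Theorem \ref{thm:general pot}(i) actually applies.
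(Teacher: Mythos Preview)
Your proof is correct and follows essentially the same route as the paper: verify Assumptions \ref{ass:U} and \ref{ass:U2}, invoke Theorem \ref{thm:general pot}(i), then apply Theorem \ref{thm:general U EG} to reduce to $C(0,0,1,0)$, which is read off from \cite{AR2017} for the ellipse, annulus and cardioid and from Theorem \ref{thm:g sector C} (via Remark \ref{remark:circular sector and ML} with $b=1$) for the sector. One trivial slip: the annulus has one connected component, not two, though this is immaterial to the argument.
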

\begin{remark}\label{remark:tau=theta0:z0=0}
For $\tau=\theta_{0}=\zeta_{0}=0$ and $\beta=2=p$, the constants $C$ in \eqref{C ellipse EG}, \eqref{C annulus EG}, \eqref{C cardioid EG} and \eqref{C circular sector EG} were previously known from \cite{AR2017}. For the results \eqref{C cardioid EG} and \eqref{C circular sector EG}, we excluded $c=\frac{1}{2}$ and $p\in (1,2)$, since in these cases $U$ does not meet Assumption \ref{ass:U2}.
\end{remark}
\subsubsection{The triangle}

\begin{theorem}\label{thm:Ginibre triangle nu}
Let $\tau \in [0,1)$, $\theta_{0}\in [0,2\pi)$, $a>0$ and $\zeta_{0}\in \C$ be such that $U:=\zeta_{0}+e^{i\theta_{0}}aP \subset S$, where $P$ is the open equilateral triangle with vertices at $e^{-\frac{\pi i}{3}},e^{\frac{\pi i}{3}},-1$, and let $\nu = \mathrm{Bal}(\mu|_{U},\partial U)$. Then  
\begin{align}\label{Ginibre triangle nu}
\frac{d\nu(z)}{dy} = \frac{3a^{2}-4y^{2}}{8\pi a (1-\tau^{2})}, \qquad z=\zeta_{0}+e^{i\theta_{0}}(\tfrac{a}{2}+iy), \; y\in (-\tfrac{\sqrt{3}}{2}a, \tfrac{\sqrt{3}}{2}a),
\end{align}
and $d\nu(\zeta_{0}+(z-\zeta_{0})e^{\frac{2\pi i}{3}j})=d\nu(z)$ for all $j\in \{0,1,2\}$ and $z\in \zeta_{0}+e^{i\theta_{0}}\big(\frac{a}{2}+i(-\frac{\sqrt{3}}{2}a, \frac{\sqrt{3}}{2}a)\big)$ (see also Figure \ref{fig:density triangle Ginibre}). 
\end{theorem}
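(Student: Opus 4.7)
Since $\mu|_U$ has the constant density $\tfrac{1}{\pi(1-\tau^2)}$ on $U$ and $U$ is invariant under rotation by $\tfrac{2\pi}{3}$ about its centroid $\zeta_0$, the uniqueness part of Proposition~\ref{prop:def of bal} transfers this $C_3$ symmetry to $\nu$; in particular, the stated identity $d\nu(\zeta_0+(z-\zeta_0)e^{\frac{2\pi ij}{3}})=d\nu(z)$ is automatic, and it suffices to compute the density on the right edge $\{\zeta_0+e^{i\theta_0}(\tfrac{a}{2}+iy):y\in(-\tfrac{\sqrt{3}}{2}a,\tfrac{\sqrt{3}}{2}a)\}$.

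To do so I would introduce the torsion function $V$ on $U$, namely the unique solution of $-\Delta V=\tfrac{2}{1-\tau^2}$ in $U$ with $V|_{\partial U}=0$. Using the identity $\Delta\log|\cdot-w|^{-1}=-2\pi\delta_w$ together with the balayage relation $p_\nu=p_{\mu|_U}$ on $U^c$ (Proposition~\ref{prop:def of bal}), one sees that $h:=p_\nu-p_{\mu|_U}$ vanishes on $U^c$ and satisfies $\Delta h=\tfrac{2}{1-\tau^2}$ in $U$ with zero boundary values, so $h=-V$ on $\overline U$. Matching the distributional Laplacians of $h$ across $\partial U$ (equivalently, applying the classical jump relation for the single-layer logarithmic potential) yields
\begin{equation*}
\frac{d\nu}{ds}(\xi)=-\frac{1}{2\pi}\,\partial_{n_+}V(\xi),\qquad \xi\in\partial U,
\end{equation*}
where $n_+$ denotes the outward unit normal from $U$. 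This is the main conceptual step: it reduces the balayage of a uniform density to a classical Poisson (torsion) problem on the triangle.

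The torsion function on an equilateral triangle admits a well-known product-of-distances solution. Let $d_1,d_2,d_3$ be the signed distances to the three sides of $U$, positive inside. Each $d_j$ is affine with $|\nabla d_j|=1$ and $\Delta d_j=0$; the three inward normals $\nabla d_j$ are pairwise at angle $\tfrac{2\pi}{3}$, so $\nabla d_i\cdot\nabla d_j=-\tfrac12$ for $i\ne j$. A direct expansion combined with Viviani's identity $d_1+d_2+d_3=\tfrac{3a}{2}$ (the altitude of $U$) gives
\begin{equation*}
-\Delta(d_1d_2d_3)=-2\sum_{i<j}(\nabla d_i\cdot\nabla d_j)\,d_k=d_1+d_2+d_3=\tfrac{3a}{2},
\end{equation*}
so that $V=\tfrac{4}{3a(1-\tau^2)}\,d_1d_2d_3$. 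On the right edge, parametrized by $z=\zeta_0+e^{i\theta_0}(\tfrac{a}{2}+iy)$, one has $d_1(z)=0$ and $\partial_{n_+}d_1=-1$, while a short computation yields $d_2d_3=\tfrac{9a^2-12y^2}{16}$. Substituting into the formula for $d\nu/ds$ and using $ds=|dy|$ produces exactly \eqref{Ginibre triangle nu}; the identity on the other two sides then follows by the $C_3$ invariance noted at the start.
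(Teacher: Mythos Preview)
Your argument is correct and is genuinely different from the paper's. The paper obtains $\nu$ by the moment-matching method of Subsection~\ref{section:ansatz method}: it computes $\int_U z^n\,d\mu$ for a general regular $p$-gon centered at $0$ with weight $|z|^{2b-2}$ (Lemma~\ref{lemma:moments mu general polygon}), makes a polynomial ansatz $d\hat\nu(z)=\sum_{\ell}c_\ell y^{2\ell}\,dy$ on one side (Lemma~\ref{lemma: moment nu hat triangle}), and then solves the resulting linear system \eqref{system for fq gq} for the $c_\ell$, which is consistent precisely when $p=3$ and $b=1$; Lemma~\ref{lemma: moment when U is bounded} then certifies $\hat\nu=\nu$.

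Your route via the torsion function is more direct and more conceptual in this particular case: it exploits the classical fact that the Poisson problem $-\Delta V=\text{const}$ on an equilateral triangle has the polynomial solution $V\propto d_1d_2d_3$, so the density $\frac{d\nu}{ds}=-\frac{1}{2\pi}\partial_n V$ drops out immediately. This makes transparent \emph{why} the density is polynomial. Note that your jump-relation step can be made fully rigorous without assuming a priori regularity of $\nu$: by Theorem~\ref{thm:dnu in terms of green general} one has $\frac{d\nu}{ds}(\xi)=\int_U\partial_{\mathbf n_\xi}g_U(w,\xi)\,d\mu(w)=\partial_{\mathbf n_\xi}W(\xi)$ with $W(z)=\int_U g_U(z,w)\,d\mu(w)$, and since $-\Delta_z g_U(z,w)=\delta_w$ one gets $-\Delta W=\tfrac{1}{\pi(1-\tau^2)}$ with $W|_{\partial U}=0$, i.e.\ $W=V/(2\pi)$; this recovers exactly your formula. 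What the paper's approach buys in return is a unified framework: the same moment computation is carried out for general $p$ and $b$, and the failure of the polynomial ansatz for $(p,b)\neq(3,1)$ (discussed around \eqref{system for fq gq} and in Remark~\ref{remark:triangle is polynomial}) gives evidence for Conjecture~\ref{conj:behavior of balayage measure}.
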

\begin{remark}
The density \eqref{Ginibre triangle nu} vanishes linearly as $y$ approaches $\pm \tfrac{\sqrt{3}}{2}a$. This is consistent with Conjecture \ref{conj:behavior of balayage measure} (with $p=6$ and $b=1$). 
\end{remark}
\begin{remark}\label{remark:triangle is polynomial}
The density of $\nu$ in \eqref{Ginibre triangle nu} is a (non-constant) polynomial with respect to the arclength measure on $\partial U$. This seems to be particular to this setting, i.e. that $U$ is an equilateral triangle and that $\mu$ has uniform density. Consider for example the situation where $U$ is a regular polygon centered at $0$ with $p$ vertices ($p\geq 3$) and $\mu|_{U} = |z|^{2b-2}d^{2}z$, $b \in \N_{>0}$. Conjecture \ref{conj:behavior of balayage measure}, if true, implies that $\mathrm{Bal}(\mu|_{U},\partial U)$ does not have a polynomial density with respect to the arclength measure on $\partial U$ for $p\geq 4$. Moreover, the analysis of Section \ref{section:triangle} suggests that $\mathrm{Bal}(\mu|_{U},\partial U)$ has a polynomial density  only if $p=3$ (the equilateral triangle) and $b=1$ (the uniform measure).
\end{remark}

\begin{figure}
\begin{center}
\hspace{-0.3cm}\begin{tikzpicture}[master]
\node at (0,0) {\includegraphics[width=4.8cm]{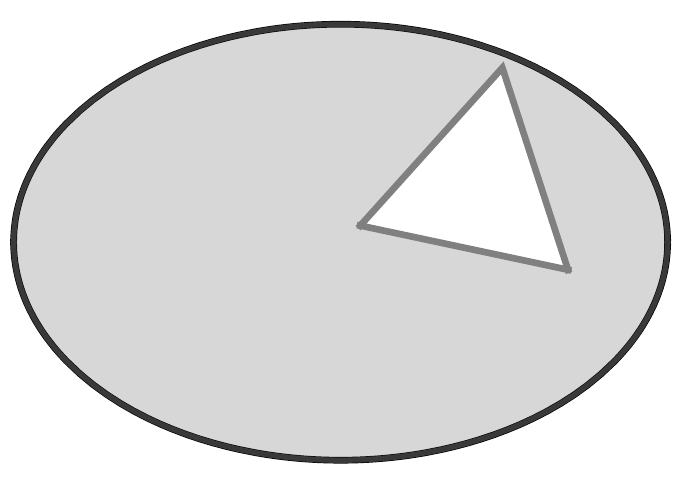}};
\draw[fill] (0.14,0.135) circle (0.04);
\node at (0.14,-0.05) {\footnotesize $v_{1}$};
\draw[fill] (1.58,-0.17) circle (0.04);
\node at (1.58,-0.35) {\footnotesize $v_{2}$};
\node at (0,-2) {\footnotesize Theorems \ref{thm:Ginibre triangle nu}, \ref{thm:Ginibre triangle C}};
\end{tikzpicture}  \hspace{-0.3cm}
\begin{tikzpicture}[slave]
\node at (0,0) {\includegraphics[width=4.8cm]{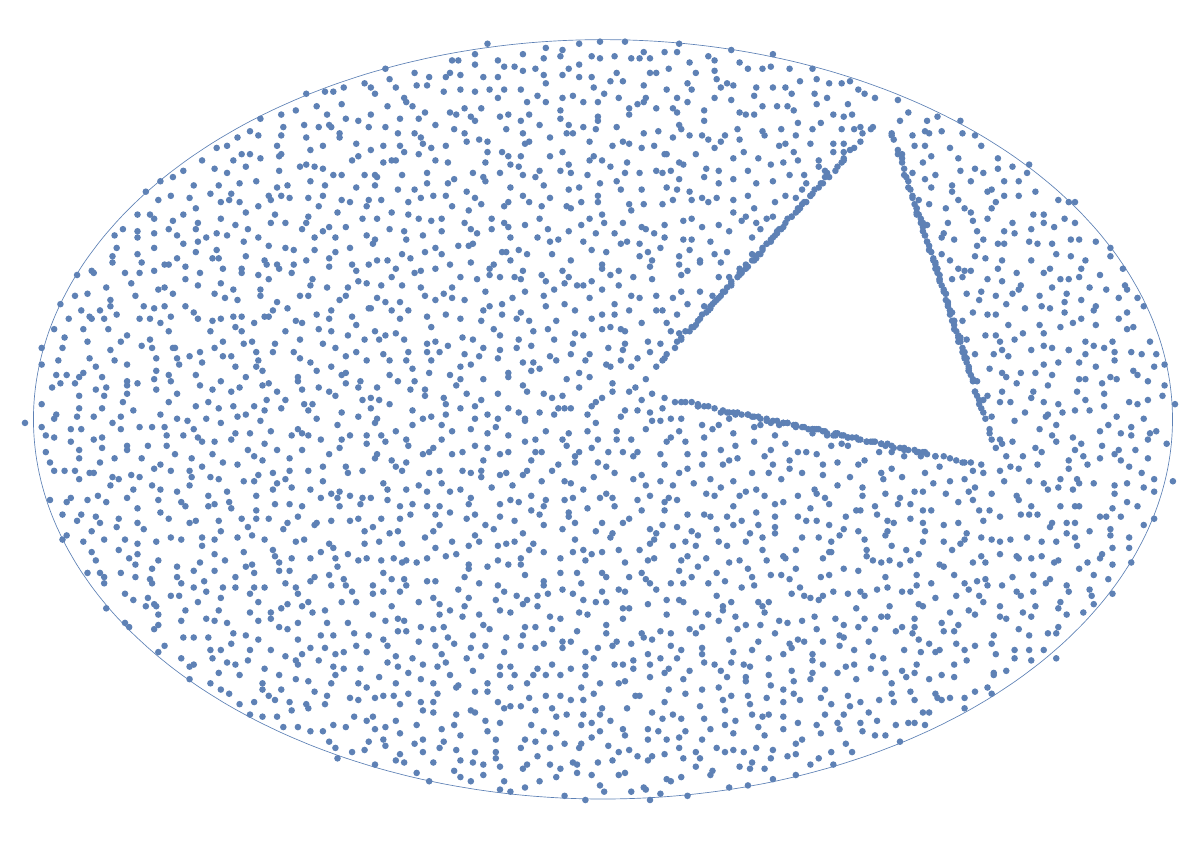}};
\end{tikzpicture} \hspace{-0.3cm}
\begin{tikzpicture}[slave]
\node at (0,0) {\includegraphics[width=4.8cm]{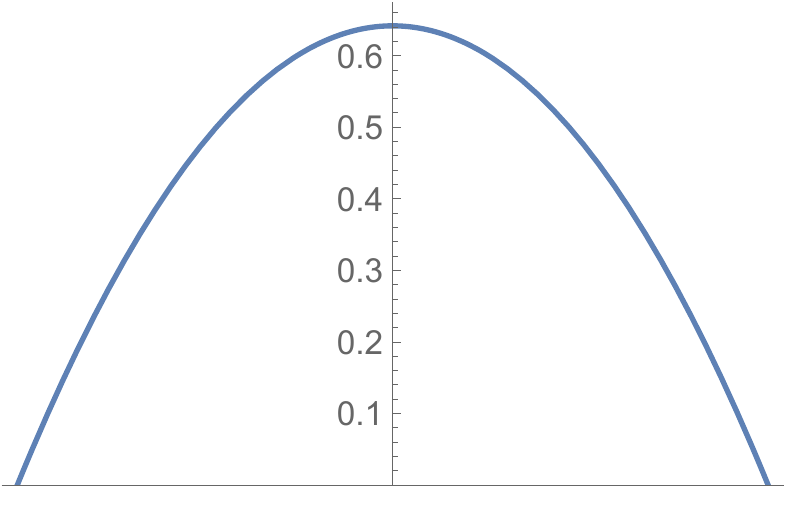}};

\node at (-2.25,-1.67) {\footnotesize $v_{1}$};
\draw[fill] (-2.29,-1.42) circle (0.04);
\node at (2.3,-1.67) {\footnotesize $v_{2}$};
\draw[fill] (2.29,-1.42) circle (0.04);
\node at (0,-1.9) {\footnotesize Theorem \ref{thm:Ginibre triangle nu}};
\end{tikzpicture}
\end{center}
\caption{\label{fig:density triangle Ginibre} In all figures, $\tau=0.2$ and $U$ is as in Theorem \ref{thm:Ginibre triangle nu} with $\zeta_{0}=0.5+i0.2$, $a=0.45$ and $\theta_{0} = \frac{\pi}{10}$. Middle: the elliptic Ginibre point process conditioned on $\# \{z_{j}\in U\} = 0$. Right: the normalized density $z \mapsto \frac{d\nu(z)/|dz|}{\nu(\partial U)}$ with $z\in (v_{1},v_{2})\subset \partial U$.}
\end{figure}

\begin{theorem}\label{thm:Ginibre triangle C}
Fix $\beta >0$, and let $\tau \in [0,1)$, $\theta_{0}\in [0,2\pi)$, $a >0$ and $\zeta_{0}\in \C$ be such that $U=\zeta_{0}+e^{i\theta_{0}}aP \subset S$, where $P$ is the open equilateral triangle with vertices at $e^{-\frac{\pi i}{3}},e^{\frac{\pi i}{3}},-1$. As $n \to +\infty$, we have $\mathbb{P}(\# \{z_{j}\in U\} = 0) = \exp \big( -C n^{2}+o(n^{2}) \big)$, where
\begin{align}\label{C triangle EG}
& C = \frac{9\sqrt{3} \beta a^{4}}{320\pi (1-\tau^{2})^{2}}.
\end{align}
\end{theorem}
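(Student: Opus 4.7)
The plan is to reduce to the pure Ginibre case $\tau=0$ using the scaling Theorem \ref{thm:general U EG}, and then perform an elementary polynomial integration using the explicit balayage formula of Theorem \ref{thm:Ginibre triangle nu}.

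First I would check that the hypotheses of Theorem \ref{thm:general pot}(i) are met for $U=\zeta_{0}+e^{i\theta_{0}}aP$. The potential $Q(z)=(|z|^{2}-\tau\,\re z^{2})/(1-\tau^{2})$ is real-analytic, hence admissible on $\C$ and H\"older in a neighborhood of $S$, so Assumption \ref{ass:Q} holds. Since $P$ is a convex polygon with all interior angles equal to $\pi/3<\pi$, at each vertex a small disk placed on the outward angle bisector lies in $U^{c}$, so Assumption \ref{ass:U2} is satisfied. The set $U$ is bounded, so Assumption \ref{ass:U} holds and $c^{\hspace{0.02cm}\mu}_{U}=0$. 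Theorem \ref{thm:general pot}(i) then gives $\mathbb{P}(\#\{z_{j}\in U\}=0)=\exp(-Cn^{2}+o(n^{2}))$ with $C=C(\tau,\zeta_{0},a,\theta_{0})$ as defined in \eqref{C in main thm EG}.

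Next, applying Theorem \ref{thm:general U EG} with $U=P$ and $\rho=a$ yields
\begin{equation*}
C(\tau,\zeta_{0},a,\theta_{0}) = \frac{a^{4}}{(1-\tau^{2})^{2}}\, C(0,0,1,0),
\end{equation*}
so everything reduces to the hole constant for the standard Ginibre ensemble with hole region $P$. For this case $\mu = \frac{1}{\pi}\,d^{2}z|_{\overline{\D}}$ and $Q(z)=|z|^{2}$; Theorem \ref{thm:Ginibre triangle nu} (with $\tau=0$, $a=1$, $\zeta_{0}=\theta_{0}=0$) gives the balayage $\nu_{0}:=\mathrm{Bal}(\mu|_{P},\partial P)$ with density $d\nu_{0}/dy = (3-4y^{2})/(8\pi)$ on the right edge $\{\tfrac12+iy:y\in(-\tfrac{\sqrt3}{2},\tfrac{\sqrt3}{2})\}$, and the other two edges carry the same density by $2\pi/3$-rotational symmetry. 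It remains to compute the two integrals appearing in \eqref{C in main thm EG}.

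By symmetry, $\int_{\partial P}|z|^{2}d\nu_{0}=3\int_{\text{right edge}}|z|^{2}d\nu_{0}$; on the right edge $|z|^{2}=\tfrac14+y^{2}$, and an elementary polynomial integration gives
\begin{equation*}
\int_{\partial P}|z|^{2}d\nu_{0}(z) = \frac{3}{8\pi}\int_{-\sqrt{3}/2}^{\sqrt{3}/2}\Bigl(\tfrac{1}{4}+y^{2}\Bigr)(3-4y^{2})\,dy = \frac{3\sqrt{3}}{10\pi}.
\end{equation*}
The second integral is the polar moment of the equilateral triangle of side $\sqrt{3}$ centered at $0$: one finds $\int_{P}|z|^{2}d^{2}z = \tfrac{3\sqrt{3}}{16}$, so $\int_{P}|z|^{2}\,\tfrac{d^{2}z}{\pi}=\tfrac{3\sqrt{3}}{16\pi}$. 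Combining,
\begin{equation*}
C(0,0,1,0) = \frac{\beta}{4}\left(\frac{3\sqrt{3}}{10\pi}-\frac{3\sqrt{3}}{16\pi}\right) = \frac{\beta}{4}\cdot\frac{3\sqrt{3}}{\pi}\cdot\frac{6}{160} = \frac{9\sqrt{3}\beta}{320\pi},
\end{equation*}
and multiplying by $a^{4}/(1-\tau^{2})^{2}$ yields \eqref{C triangle EG}. The real work of the proof sits upstream, in the explicit balayage provided by Theorem \ref{thm:Ginibre triangle nu}; once that and the scaling reduction of Theorem \ref{thm:general U EG} are in hand, the present statement collapses to two routine polynomial integrations.
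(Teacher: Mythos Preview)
Your proof is correct and follows the same reduction strategy as the paper: verify the assumptions of Theorem \ref{thm:general pot}(i), then apply the scaling relation of Theorem \ref{thm:general U EG} to reduce to the base case $\tau=\zeta_0=\theta_0=0$, $a=1$.

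The only difference is how the base constant $C(0,0,1,0)$ is obtained. The paper simply quotes the value from \cite{AR2017} (where it was proved for $\beta=2$ by an indirect argument that did \emph{not} compute $\nu$) and uses the obvious linearity of \eqref{C in main thm EG} in $\beta$. You instead compute $C(0,0,1,0)$ directly from the explicit balayage density of Theorem \ref{thm:Ginibre triangle nu}, via two elementary polynomial integrals. Your route is more self-contained and exploits the paper's own new contribution (the explicit $\nu$ for the triangle), whereas the paper's route is shorter but relies on the external reference. Both are valid; your computation of $\int_{\partial P}|z|^2\,d\nu_0=\frac{3\sqrt{3}}{10\pi}$ and $\int_P|z|^2\,\frac{d^2z}{\pi}=\frac{3\sqrt{3}}{16\pi}$ checks out.
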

\begin{remark}
For $\tau=0$, $\beta=2$, $\zeta_{0}=0$ and $\theta_{0} = \frac{\pi}{3}$, Theorem \ref{thm:Ginibre triangle C} was previously known from \cite{AR2017}.
\end{remark}

\begin{figure}[h]
\begin{center}
\hspace{-0.7cm}\begin{tikzpicture}[master]
\node at (0,0) {\includegraphics[width=4.5cm]{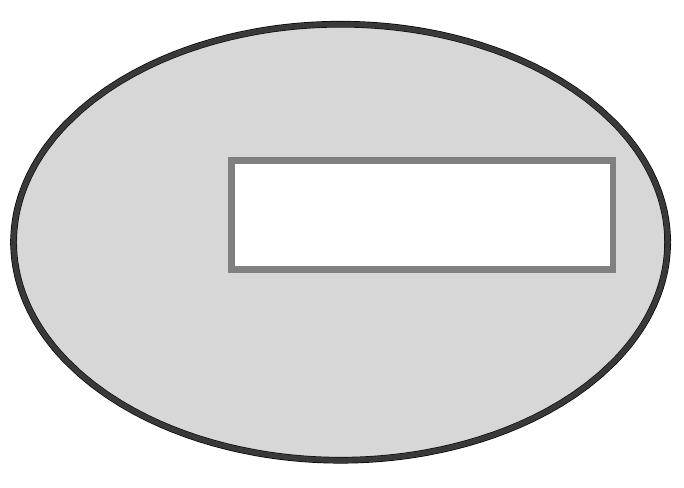}};
\draw[fill] (-0.725,0.55) circle (0.04);
\node at (-0.725,0.72) {\footnotesize $a_{1}\hspace{-0.06cm}+\hspace{-0.05cm}ic_{2}$};
\draw[fill] (1.79,-0.18) circle (0.04);
\node at (1.5,-0.4) {\footnotesize $a_{2}\hspace{-0.06cm}+\hspace{-0.05cm}ic_{1}$};
\node at (0,-2) {\footnotesize Theorems \ref{thm:Ginibre rectangle nu}, \ref{thm:Ginibre rectangle C}};
\end{tikzpicture}  \hspace{-0.35cm}
\begin{tikzpicture}[slave]
\node at (0,0) {\includegraphics[width=4.5cm]{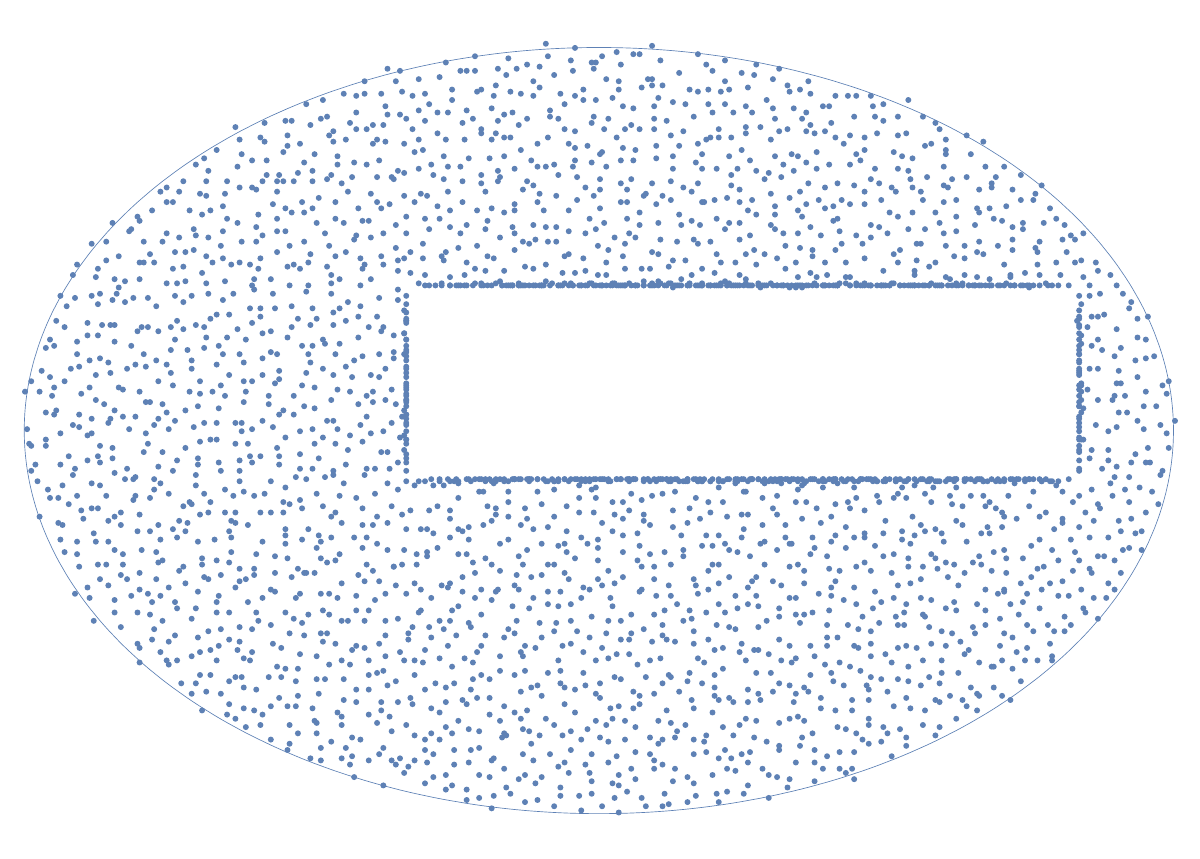}};
\end{tikzpicture} \hspace{0.25cm}
\begin{tikzpicture}[slave]
\node at (0,0) {\includegraphics[width=5.4cm]{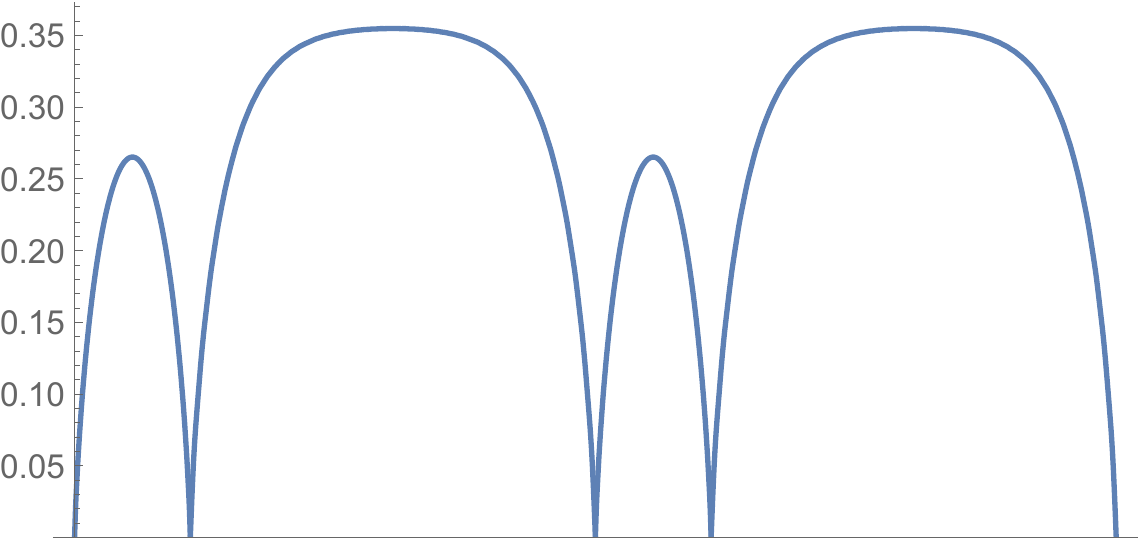}};

\node at (0,-1.9) {\footnotesize Theorem \ref{thm:Ginibre rectangle nu}};
\draw[fill] (-2.35,-1.228) circle (0.04);
\node at (-2.35,-1.45) {\footnotesize $a_{2}\hspace{-0.06cm}+\hspace{-0.05cm}ic_{1}$};


\draw[fill] (0.12,-1.228) circle (0.04);
\node at (0.12,-1.45) {\footnotesize $a_{1}\hspace{-0.06cm}+\hspace{-0.05cm}ic_{2}$};


\draw[fill] (2.57,-1.228) circle (0.04);
\node at (2.57,-1.45) {\footnotesize $a_{2}\hspace{-0.06cm}+\hspace{-0.05cm}ic_{1}$};
\end{tikzpicture}
\end{center}
\caption{\label{fig:density rectangle Ginibre} In all figures, $\tau=0.2$ and $U=\{z:\re z \in (a_{1},a_{2}), \im z \in (c_{1}, c_{2})\}$ with $a_{1}=-0.4$, $a_{2}=1$, $c_{1}=-0.1$, $c_{2}=0.3$. Middle: the elliptic Ginibre point process conditioned on $\# \{z_{j}\in U\} = 0$. Right: the normalized density $z \mapsto \frac{d\nu(z)/|dz|}{\nu(\partial U)}$ with $z \in \partial U$ going in the counterclockwise direction.}
\end{figure}

\subsubsection{The rectangle}

\begin{theorem}\label{thm:Ginibre rectangle nu}
\noindent \smallskip Let $\tau\in [0,1)$, $a_{2}>a_{1}$, $c_{2}>c_{1}$ be such that $U:=\{z: \re z \in (a_{1},a_{2}), \im z \in (c_{1}, c_{2})\} \subset S$, and let $\nu = \mathrm{Bal}(\mu|_{U},\partial U)$. 

\medskip \noindent (Right and left sides.) For $z=a_{2}+iy$, $y\in (c_{1},c_{2})$, and also for $z=a_{1}+iy$, $y\in (c_{1},c_{2})$, we have
\begin{align}\label{square right left b1}
\frac{d\nu(z)}{dy}= \frac{1}{1-\tau^{2}} \sum_{m=0}^{+\infty} \frac{4(c_{2}-c_{1})}{\pi^{3}} \frac{\tanh(\frac{a_{2}-a_{1}}{c_{2}-c_{1}}(1+2m)\frac{\pi}{2})}{(1+2m)^{2}} \sin \bigg( \frac{y-c_{1}}{c_{2}-c_{1}} (1+2m) \pi \bigg) .
\end{align}
(Top and bottom sides.) For $z=x+ic_{2}$, $x\in (a_{1},a_{2})$, and also for $z=x+ic_{1}$, $x\in (a_{1},a_{2})$, we have
\begin{align}\label{square top bottom b1}
\frac{d\nu(z)}{dx}= \frac{1}{1-\tau^{2}}\sum_{m=0}^{+\infty} \frac{4(a_{2}-a_{1})}{\pi^{3}} \frac{\tanh(\frac{c_{2}-c_{1}}{a_{2}-a_{1}}(1+2m)\frac{\pi}{2})}{(1+2m)^{2}} \sin \bigg( \frac{x-a_{1}}{a_{2}-a_{1}} (1+2m) \pi \bigg).
\end{align}
See also Figure \ref{fig:density rectangle Ginibre}.
\end{theorem}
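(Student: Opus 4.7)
Since $\mu|_U$ has uniform density $\frac{1}{\pi(1-\tau^2)}$ on $U$, the natural approach is to characterize $\nu$ via the Dirichlet Poisson problem
\begin{equation*}
-\Delta \phi = \tfrac{2}{1-\tau^{2}} \text{ in } U, \qquad \phi|_{\partial U} = 0,
\end{equation*}
and then recover $\nu$ as $-\frac{1}{2\pi}(\partial_{n}\phi)\,|dz|$ on $\partial U$, where $n$ is the outward normal from $U$. To justify this, extend $\phi$ by $0$ to all of $\C$; a short distributional computation based on Green's identity gives $-\Delta\phi = 2\pi\,\mu|_{U} + (\partial_n \phi|_{\partial U})\,\delta_{\partial U}$, so the candidate $\nu := -\frac{1}{2\pi}(\partial_n \phi)\,|dz|$ satisfies $-\Delta(p_{\mu|_U} - p_\nu - \phi) = 0$ on $\C$. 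The divergence theorem yields $\nu(\partial U) = \mu(U)$, and so $p_{\mu|_U} - p_\nu - \phi$ is a bounded harmonic function on $\C$ tending to $0$ at infinity, hence identically zero. On $U^c$ we have $\phi=0$, so $p_\nu = p_{\mu|_U}$ there, and Proposition \ref{prop:def of bal} identifies $\nu$ as the balayage measure (boundedness of $p_\nu$ on $\partial U$ and vanishing on polar sets being immediate from the existence of a bounded density).

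\textbf{Solving the PDE by Fourier series.} After translating so that $U = (0,L_1)\times(0,L_2)$ with $L_1 := a_2-a_1$ and $L_2 := c_2-c_1$, I expand
\begin{equation*}
\phi(x,y) = \sum_{n \geq 1} X_n(x)\sin\!\bigl(\tfrac{n\pi y}{L_2}\bigr),
\end{equation*}
which is the expansion adapted to reading off the density on the vertical sides. The sine series of the constant $\frac{2}{1-\tau^2}$ on $(0,L_2)$ involves only odd modes with coefficients $b_{2m+1} = \frac{8}{(1-\tau^2)(2m+1)\pi}$. Writing $\lambda_m := (2m+1)\pi/L_2$, the ODE $-X_{2m+1}'' + \lambda_m^2 X_{2m+1} = b_{2m+1}$ with $X_{2m+1}(0) = X_{2m+1}(L_1) = 0$ admits the explicit solution
\begin{equation*}
X_{2m+1}(x) = \frac{8 L_2^2}{(1-\tau^2)\pi^3 (2m+1)^3}\left[1 - \frac{\cosh(\lambda_m(x - L_1/2))}{\cosh(\lambda_m L_1/2)}\right].
\end{equation*}
Differentiating and evaluating at $x = L_1$ gives $X'_{2m+1}(L_1) = -\frac{8L_2}{(1-\tau^2)\pi^2 (2m+1)^2}\tanh(\lambda_m L_1/2)$. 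Dividing by $-2\pi$, multiplying by the sine mode, summing over $m$, and returning to the original coordinates $y \in (c_1,c_2)$ produces \eqref{square right left b1} exactly. The left side $x = a_1$ yields the same density, either by the reflection symmetry $x \mapsto a_1 + a_2 - x$ (which preserves $U$ and the uniform measure) or by the analogous computation at $x = 0$. Formula \eqref{square top bottom b1} for the top and bottom sides follows from the analogous Fourier expansion in $x - a_1$.

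\textbf{Main obstacle.} The principal subtlety is justifying the distributional identity for $-\Delta\phi$ and the termwise extraction of the density near the four corners, where $\phi$ is at best $C^1$. The uniform bound $|\tanh(\lambda_m L_1/2)| \leq 1$ and the $(2m+1)^{-2}$ decay of the Fourier coefficients ensure absolute and uniform convergence of the series for $\partial_n\phi$ on any compact subset of an open side, while the sine factor forces the density to vanish at each corner, consistent with Conjecture \ref{conj:behavior of balayage measure} at $p=4$ and $b=1$. Once the Dirichlet-problem reduction of the first paragraph is established, the remaining steps are routine Fourier-series manipulations.
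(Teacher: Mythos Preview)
Your argument is correct and the computations check out. The connection to the paper's approach is that your torsion function $\phi$ is nothing but $2\pi\int_U g_U(\cdot,w)\,d\mu(w)$, so your formula $d\nu=-\tfrac{1}{2\pi}\partial_n\phi\,|dz|$ is exactly Theorem~\ref{thm:dnu in terms of green general} specialized to uniform $\mu$. The execution, however, is genuinely different. The paper derives Theorem~\ref{thm:Ginibre rectangle nu} as the $b=1$ case of Theorem~\ref{thm:ML rectangle nu}: it inserts the double eigenfunction expansion \eqref{series representation for gU in the case of a square} of $g_U$ into Theorem~\ref{thm:dnu in terms of green general}, integrates against $\mu$, and then collapses the $q$-sum via the identities \eqref{lol59}--\eqref{lol61} together with $\coth y-1/\sinh y=\tanh(y/2)$ to produce the $\tanh$ factor. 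You instead solve the Poisson problem by a \emph{single} sine series in $y$ and an ODE in $x$; the $\tanh$ then drops out directly from the boundary values of the $\cosh$ solution, bypassing the $q$-summation entirely. Your route is shorter and more transparent for $b=1$; the paper's route is set up to treat the whole Mittag-Leffler family $d\mu\propto|z|^{2b-2}d^2z$ uniformly, where no analogue of a constant-source torsion function is available.

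One small point to add: to invoke Proposition~\ref{prop:def of bal} you need $\nu$ to be a \emph{positive} measure, which you do not verify explicitly. This follows from the strong maximum principle ($\phi>0$ in $U$) and Hopf's boundary lemma, which give $\partial_n\phi<0$ on each open side of the rectangle; alternatively it is visible a posteriori from the sign of each term in the series on the fundamental domain.
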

\begin{remark}\label{remark:vanishing of balayage near a corner}
In Subsection \ref{subsection: behavior near a corner}, we show that the density \eqref{square right left b1} satisfies $\frac{d\nu(z)}{dy} = \frac{2}{\pi^{2}}\frac{y-c_{1}}{1-\tau^{2}}\log\frac{1}{y-c_{1}} + \bigO(y-c_{1})$ as $y \to c_{1}$, $y>c_{1}$. This is consistent with Conjecture \ref{conj:behavior of balayage measure} (with $p=4$ and $b=1$).
\end{remark}
\begin{remark}\label{remark: elliptic for square}
In Subsection \ref{subsection:balayage square conformal}, we provide another (less explicit) expression for $\nu$ in terms of elliptic functions, in the special case where $a_{2}=c_{2}=-a_{1}=-c_{1}=\frac{c}{2}$ for some $c>0$.
\end{remark}

\begin{theorem}\label{thm:Ginibre rectangle C}
Fix $\beta >0$, and let $\tau\in [0,1)$, $a_{2}>a_{1}$, $c_{2}>c_{1}$ be such that $U:=\{z: \re z \in (a_{1},a_{2}),  \im z \in (c_{1}, c_{2})\} \subset S$.  As $n \to +\infty$, we have $\mathbb{P}(\# \{z_{j}\in U\} = 0) = \exp \big( -C n^{2}+o(n^{2}) \big)$, where
\begin{align*}
& C = \frac{\beta(a_{2}-a_{1})^{2}(c_{2}-c_{1})^{2}}{4\pi (1-\tau^{2})^{2}} \bigg( \frac{\alpha + \alpha^{-1}}{6} - \frac{32}{\pi^{5}} \sum_{m=0}^{+\infty} \frac{\frac{1}{\alpha^{2}} \tanh(\alpha (1+2m)\frac{\pi}{2}) + \alpha^{2} \tanh(\frac{1}{\alpha}(1+2m)\frac{\pi}{2})}{(1+2m)^{5}} \bigg)
\end{align*}
with $\alpha := \frac{a_{2}-a_{1}}{c_{2}-c_{1}}$.
\end{theorem}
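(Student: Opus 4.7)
The first step is to reduce to a canonical Ginibre case. Write the rectangle $U$ as $\zeta_0 + \rho V$ with $\zeta_0 = a_1 + ic_1$, $\rho = c_2 - c_1 =: k$, and base rectangle $V = (0,\alpha)\times(0,1)$. Theorem \ref{thm:general U EG} then gives
\[
C = \frac{k^4}{(1-\tau^2)^2}\, C_0(\alpha),
\]
where $C_0(\alpha)$ is the coefficient for the standard Ginibre ensemble ($\tau=0$) with hole region $V$. Since $(a_2-a_1)^2(c_2-c_1)^2 = k^4\alpha^2$, it is enough to establish
\[
C_0(\alpha) = \frac{\beta\alpha^2}{4\pi}\bigg(\frac{\alpha+\alpha^{-1}}{6} - \frac{32}{\pi^5}\sum_{m=0}^{+\infty}\frac{\alpha^{-2}\tanh(\alpha(1+2m)\tfrac{\pi}{2}) + \alpha^{2}\tanh(\tfrac{1}{\alpha}(1+2m)\tfrac{\pi}{2})}{(1+2m)^{5}}\bigg).
\]

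By Theorem \ref{thm:general pot} (noting $c^{\mu}_U=0$ since $V$ is bounded),
$C_0(\alpha) = \tfrac{\beta}{4}\big(I_\partial - I_V\big)$, where $I_V = \int_V |z|^2\tfrac{d^2z}{\pi}$ and $I_\partial = \int_{\partial V}|z|^2\,d\nu$ with $\nu=\mathrm{Bal}(\mu|_V,\partial V)$. A direct computation gives $I_V = \tfrac{\alpha(\alpha^2+1)}{3\pi}$. For $I_\partial$, I use the Fourier expansions in Theorem \ref{thm:Ginibre rectangle nu} (specialized to $\tau=0$) on each of the four sides. The only auxiliary integrals needed are, for odd $k=2m+1$,
\[
\int_0^1 \sin(k\pi y)\,dy = \frac{2}{k\pi}, \qquad \int_0^1 y^2\sin(k\pi y)\,dy = \frac{1}{k\pi} - \frac{4}{(k\pi)^3}.
\]
Splitting $|z|^2 = x^2+y^2$ on each side and summing the four contributions (with the substitution $u=x/\alpha$ on the horizontal sides), one obtains
\[
I_\partial = \frac{8(\alpha^2+1)}{\pi^4}\bigl[S_3(\alpha)+\alpha^2 S_3(1/\alpha)\bigr] - \frac{32}{\pi^6}\bigl[S_5(\alpha)+\alpha^4 S_5(1/\alpha)\bigr],
\]
where $S_s(\beta):=\sum_{m=0}^{+\infty}\tfrac{\tanh(\beta(1+2m)\pi/2)}{(1+2m)^s}$.

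The key observation is that $S_3(\alpha)+\alpha^2 S_3(1/\alpha)$ can be evaluated in closed form for free, using the balayage mass identity $\nu(\partial V)=\mu(V)$ from Proposition \ref{prop:def of bal}. Integrating the densities \eqref{square right left b1}--\eqref{square top bottom b1} termwise (with $\tau=0$) over $\partial V$ yields
\[
\nu(\partial V) = \frac{16}{\pi^4}\bigl[S_3(\alpha)+\alpha^2 S_3(1/\alpha)\bigr] = \mu(V) = \frac{\alpha}{\pi},
\]
hence $S_3(\alpha)+\alpha^2 S_3(1/\alpha) = \tfrac{\alpha\pi^3}{16}$. Substituting back,
\[
I_\partial - I_V = \frac{\alpha(\alpha^2+1)}{2\pi} - \frac{\alpha(\alpha^2+1)}{3\pi} - \frac{32}{\pi^6}\bigl[S_5(\alpha)+\alpha^4 S_5(1/\alpha)\bigr] = \frac{\alpha(\alpha^2+1)}{6\pi} - \frac{32}{\pi^6}\bigl[S_5(\alpha)+\alpha^4 S_5(1/\alpha)\bigr].
\]
Multiplying by $\tfrac{\beta}{4}$, factoring $\alpha^2$ out of the bracket (using $\alpha(\alpha^2+1)=\alpha^2(\alpha+\alpha^{-1})$ and $S_5(\alpha)+\alpha^4 S_5(1/\alpha) = \alpha^2[\alpha^{-2}S_5(\alpha)+\alpha^2 S_5(1/\alpha)]$), and then multiplying by $k^4/(1-\tau^2)^2$ produces the stated formula.

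The main obstacle is identifying the cubic-series sum in closed form; a priori this looks like a delicate Ramanujan-type modular identity, but it is actually handed to us by the total-mass property of the balayage measure. Once this is recognized, the rest is routine Fourier bookkeeping and the reduction via Theorem \ref{thm:general U EG}.
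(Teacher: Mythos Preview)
Your proof is correct and closely parallels the paper's, but with a cleaner organization. The paper works directly with general $\tau$, computes $\int_U Q\,d\mu$ and $\int_{\partial U} Q\,d\nu$ via the Fourier expansions of Theorem~\ref{thm:Ginibre rectangle nu}, and simplifies the cubic-series contribution using $T_{0,\alpha}=\pi^3/32$ (cited from \cite{GRtable} or from Theorem~\ref{thm: some nice series}). This produces an intermediate formula (equation \eqref{lol74}) that is asymmetric in $\tau$, and the paper then invokes either the identity $T_{2,\alpha}=\pi^5(\alpha^{-1}-\alpha)/384$ or, alternatively, Theorem~\ref{thm:general U EG} to recast it in the symmetric form of the statement.

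You instead apply Theorem~\ref{thm:general U EG} at the outset, reducing to the $\tau=0$ case on a normalized rectangle, which makes the symmetric form fall out directly and avoids the detour through \eqref{lol74} and any need for $T_{2,\alpha}$. Your derivation of the cubic-series identity $S_3(\alpha)+\alpha^2 S_3(1/\alpha)=\alpha\pi^3/16$ from the balayage mass constraint $\nu(\partial V)=\mu(V)$ is exactly the $n=0$ case of the moment-matching argument the paper uses in Section~\ref{section: rectangle} to prove Theorem~\ref{thm: some nice series}, but extracting it in isolation is a nice self-contained shortcut. One minor point: you apply the density formulas \eqref{square right left b1}--\eqref{square top bottom b1} to $V=(0,\alpha)\times(0,1)$, which need not lie in the unit disk; this is harmless because those formulas compute $\mathrm{Bal}(\tfrac{d^2z}{\pi}|_V,\partial V)$ via the Green function and do not actually use the hypothesis $U\subset S$, but it is worth a sentence to note.
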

\begin{remark}
For the square of side length $c>0$, i.e. $a_{2}-a_{1}=c_{2}-c_{1}=c$, the constant $C$ of Theorem \ref{thm:Ginibre rectangle C} further simplifies as
\begin{align}\label{C for EG square}
& C = \frac{1}{(1-\tau^{2})^{2}} \frac{\beta c^{4}}{\pi}\bigg( \frac{1}{12} -  \frac{16}{\pi^{5}} \sum_{m=0}^{+\infty} \frac{\tanh(\frac{(1+2m)\pi}{2})}{(1+2m)^{5}} \bigg) \approx (1.1187 \cdot 10^{-2}) \frac{\beta c^{4}}{(1-\tau^{2})^{2}}.
\end{align}
\end{remark}

Define $T_{v,\alpha}$ and $T_{v}$ for $v\in [0,+\infty)$ and $\alpha \in (0,+\infty)$ by
\begin{align}
& T_{v,\alpha} := \frac{\alpha^{-1-\frac{v}{2}}}{2} \sum_{m=0}^{+\infty} \frac{\tanh(\alpha (1+2m)\frac{\pi}{2})}{(1+2m)^{3+v}} + e^{\pi i \frac{v}{2}} \frac{\alpha^{1+\frac{v}{2}}}{2} \sum_{m=0}^{+\infty} \frac{\tanh(\alpha^{-1} (1+2m)\frac{\pi}{2})}{(1+2m)^{3+v}}, \label{def of Tva} \\
& T_{v} := \sum_{m=0}^{+\infty} \frac{\tanh(\frac{(1+2m)\pi}{2})}{(1+2m)^{3+v}}. \label{def of Tv}
\end{align}
If $v\in 4\N$, then $T_{v}= T_{v,1}$, while if $v\in 2\N  \setminus 4\N$, then $T_{v} \neq T_{v,1} = 0$. Note that $T_{2}$ already appeared in \eqref{C for EG square}. To prove Theorem \ref{thm:Ginibre rectangle C}, we will need simplified expressions for $T_{0,\alpha}$ and $T_{2,\alpha}$. The identity $T_{0,\alpha} = \frac{\pi^{3}}{32}$ was previously known (see e.g. \cite[0.243.3]{GRtable}), but we did not find in the literature simplified expressions for $T_{v,\alpha}$ when $v \neq 0$. The next theorem, which we consider of independent interest, gives a simplified expression for $T_{v,\alpha}$ when $v\in 2\N$.
\begin{theorem}\label{thm: some nice series}
Let $\alpha \in (0,+\infty)$. The sequence $\{T_{2v,\alpha}\}_{v\in \N}$ satisfies the recursive formula 
\begin{align}
T_{2v,\alpha} & = \frac{1}{(2v)!} \frac{\pi^{3+2v}}{4^{v+3}(v+1)(2v+1)} \frac{(\sqrt{\alpha}+\frac{i}{\sqrt{\alpha}})^{2+2v} - (\sqrt{\alpha} - \frac{i}{\sqrt{\alpha}})^{2+2v}}{2i} \nonumber \\
& - \sum_{q=0}^{v-1} \frac{\pi^{2v-2q}}{(2v-2q)!\, 4^{v-q}} \frac{(\sqrt{\alpha}+\frac{i}{\sqrt{\alpha}})^{2v-2q} + (\sqrt{\alpha} - \frac{i}{\sqrt{\alpha}})^{2v-2q}}{2}T_{2q,\alpha}, \label{recursion Tva}
\end{align}
and the initial condition $T_{0,\alpha} = \frac{\pi^{3}}{32}$. In particular,
\begin{align*}
& T_{2,\alpha} = \pi^{5} \frac{\frac{1}{\alpha}-\alpha}{384}, \qquad T_{4,\alpha} = \pi^{7} \frac{6(\frac{1}{\alpha^{2}}+\alpha^{2})-5}{23040}, \qquad T_{6,\alpha} = \pi^{9} \frac{17(\frac{1}{\alpha^{3}}-\alpha^{3})-14(\frac{1}{\alpha}-\alpha)}{645120}, \\
& T_{8,\alpha} =  \frac{310(\frac{1}{\alpha^{4}}+\alpha^{4}) - 255(\frac{1}{\alpha^{2}} + \alpha^{2}) + 252}{\pi^{-11} \; 116121600}, \; T_{10,\alpha} =  \frac{2073(\frac{1}{\alpha^{5}}-\alpha^{5}) - 1705 (\frac{1}{\alpha^{3}}-\alpha^{3}) + 1683(\frac{1}{\alpha}-\alpha)}{\pi^{-13} \; 7664025600}.
\end{align*}
If $\alpha=1$, \eqref{recursion Tva} simplifies as follows: the sequence $\{T_{4v}\}_{v\in \N}$ satisfies
\begin{align*}
T_{4v} = \frac{(-1)^{v}\pi^{4v+3}}{(4v)! \, 4^{v+2}(4v+1)(4v+2)} - \sum_{q=0}^{v-1} \frac{(-1)^{v-q}\pi^{4(v-q)}}{(4(v-q))!\, 4^{v-q}}T_{4q},
\end{align*}
and the initial condition $T_{0} = \frac{\pi^{3}}{32}$. In particular, 
\begin{align*}
T_{0} = \frac{\pi^{3}}{32}, \qquad T_{4} = \frac{7\pi^{7}}{23040}, \qquad T_{8} = \frac{181 \pi^{11}}{58060800}, \qquad T_{12} = \frac{178559 \pi^{15}}{5579410636800}.
\end{align*}
\end{theorem}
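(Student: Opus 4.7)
The plan is to first establish a closed-form evaluation
\begin{align*}
T_{2v,\alpha} \;=\; \frac{\pi^{2v+3}}{\alpha^{v+1}\,2^{2v+5}}\,[x^{2v+2}]\!\big(\tan(x)\tanh(\alpha x)\big),
\end{align*}
where $[x^{n}]$ denotes the coefficient of $x^{n}$ in the Taylor series at $0$, and then to derive the recursion \eqref{recursion Tva} from this identity by using the functional equation $\tan(x)\tanh(\alpha x)\cdot\cos(x)\cosh(\alpha x)=\sin(x)\sinh(\alpha x)$.

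For the closed form, I would fix $v\geq 0$ and integrate the meromorphic function $f(z)=\tan(\pi z/2)\tanh(\alpha\pi z/2)/z^{2v+3}$ around a large rectangle $R_N$ with vertical sides at $\operatorname{Re}z=\pm 2N$ and horizontal sides at $\operatorname{Im}z=\pm 2N/\alpha$, so that both $\tan(\pi z/2)$ and $\tanh(\alpha\pi z/2)$ remain uniformly bounded on $\partial R_N$. The perimeter grows like $N$ while $|f|=\mathcal O(|z|^{-2v-3})$, so $\oint_{\partial R_N}\!f\,dz\to 0$. Inside $R_N$ the residue theorem picks up: (i) simple poles at the real odd integers $z=2m+1$ coming from $\tan(\pi z/2)$, contributing a term proportional to $S(\alpha,2v+3):=\sum_{m\geq 0}\tanh(\alpha(2m+1)\pi/2)/(2m+1)^{2v+3}$; (ii) simple poles at $z=i(2k+1)/\alpha$ coming from $\tanh(\alpha\pi z/2)$, contributing a term proportional to $S(1/\alpha,2v+3)$ with an extra factor $(-1)^{v+1}\alpha^{2v+2}$ arising from $\tan(iy)=i\tanh(y)$ and $i^{2v+3}$; and (iii) a pole of order $2v+3$ at the origin whose residue equals $[z^{2v+2}]\,\tan(\pi z/2)\tanh(\alpha\pi z/2)$. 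Folding the real and imaginary sums to $m,k\geq 0$ by the symmetry $m\mapsto-m-1$ and using the definition of $T_{2v,\alpha}$ to identify $\alpha^{-v-1}S(\alpha,2v+3)+(-1)^v\alpha^{v+1}S(1/\alpha,2v+3)=2T_{2v,\alpha}$, the vanishing of the total residue yields the claimed closed form.

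I would then turn to the recursion. Using
\begin{align*}
\cos(x)\cosh(\alpha x)=\tfrac{1}{2}\big(\cos((1{+}i\alpha)x)+\cos((1{-}i\alpha)x)\big),\quad \sin(x)\sinh(\alpha x)=\tfrac{i}{2}\big(\cos((1{+}i\alpha)x)-\cos((1{-}i\alpha)x)\big),
\end{align*}
and the identities $1\pm i\alpha=\pm i(\alpha\mp i)$, hence $(1\pm i\alpha)^{2n}=(-1)^{n}(\alpha\mp i)^{2n}$, together with $(\sqrt{\alpha}\pm i/\sqrt{\alpha})^{2n}=(\alpha\pm i)^{2n}/\alpha^{n}$, I would expand both sides of the functional equation $\tan(x)\tanh(\alpha x)\cdot\cos(x)\cosh(\alpha x)=\sin(x)\sinh(\alpha x)$ as power series in $x^{2}$. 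Writing $f_{2v+2}:=[x^{2v+2}]\tan(x)\tanh(\alpha x)$ and comparing coefficients of $x^{2V+2}$ yields a linear relation expressing $f_{2V+2}$ as an explicit ``leading'' term minus a linear combination of $f_{2v+2}$ with $v<V$. Substituting $f_{2v+2}=\alpha^{v+1}\,2^{2v+5}\pi^{-2v-3}\,T_{2v,\alpha}$ from the closed form and simplifying the emerging $(\alpha\pm i)^{2n}$ expressions into $(\sqrt{\alpha}\pm i/\sqrt{\alpha})^{2n}$ via the correspondence above produces exactly \eqref{recursion Tva}; the initial condition $T_{0,\alpha}=\pi^{3}/32$ drops out of the closed form at $v=0$ since $[x^{2}]\tan(x)\tanh(\alpha x)=\alpha$.

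Finally, the explicit values $T_{2,\alpha},\ldots,T_{10,\alpha}$ are obtained by iterating the recursion and simplifying $(\sqrt{\alpha}+i/\sqrt{\alpha})^{2n}\pm(\sqrt{\alpha}-i/\sqrt{\alpha})^{2n}$ by the binomial theorem. For the $\alpha=1$ specialization I would observe that $(1+i/1)^{2n}=(1+i)^{2n}=2^{n}i^{n}$, so that $\tfrac{1}{2i}\big((1{+}i)^{2V+2}-(1{-}i)^{2V+2}\big)$ vanishes unless $V\in 4\N$ and $\tfrac{1}{2}\big((1{+}i)^{2n}+(1{-}i)^{2n}\big)$ vanishes unless $n\in 4\N$; this collapses \eqref{recursion Tva} to the stated recursion on $\{T_{4v}\}_{v\in\N}$ and the explicit $T_{0},T_{4},T_{8},T_{12}$ follow by direct iteration. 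The main obstacle will be carefully tracking the algebraic correspondence between $(1\pm i\alpha)^{2n}$, $(\alpha\pm i)^{2n}$ and $(\sqrt{\alpha}\pm i/\sqrt{\alpha})^{2n}$ and the accompanying signs and powers of $i$ in the residues at $z=i(2k+1)/\alpha$; once this bookkeeping is set up, the remaining manipulations reduce to a routine comparison of Taylor coefficients.
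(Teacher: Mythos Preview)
Your proposal is correct and takes a genuinely different route from the paper. The paper derives the recursion as a by-product of balayage theory: it takes $\mu=\frac{d^{2}z}{\pi}$, lets $U$ be a centered rectangle with aspect ratio $\alpha$, and invokes the moment-matching identity $\int_{\partial U}z^{2n}\,d\nu=\int_{U}z^{2n}\,d\mu$ for $\nu=\mathrm{Bal}(\mu|_{U},\partial U)$. The right side is computed directly and produces the $\frac{(\sqrt{\alpha}+i/\sqrt{\alpha})^{2n+2}-(\sqrt{\alpha}-i/\sqrt{\alpha})^{2n+2}}{2i}$ term; the left side is expanded via the Fourier representation of $\nu$ established earlier in Theorem~\ref{thm:Ginibre rectangle nu} (itself proved through the Green function of the rectangle), giving a finite linear combination of the $T_{2v,\alpha}$. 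Equating the two for each $n$ yields \eqref{recursion Tva}. Thus the paper's argument is embedded in, and depends on, the potential-theoretic machinery developed beforehand.

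Your approach is self-contained and more elementary: the residue computation delivers the closed form $T_{2v,\alpha}=\frac{\pi^{2v+3}}{\alpha^{v+1}2^{2v+5}}[x^{2v+2}]\tan(x)\tanh(\alpha x)$ directly from the definition, and the recursion then follows from the Cauchy-product identity for the Taylor coefficients of $\sin(x)\sinh(\alpha x)=\tan(x)\tanh(\alpha x)\cdot\cos(x)\cosh(\alpha x)$. What the paper's route gains is a conceptual explanation of why such a recursion should exist (moment preservation under balayage on a rectangle); what yours gains is independence from Theorem~\ref{thm:Ginibre rectangle nu} and an explicit generating-function identity that makes the structure transparent. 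One small correction in your $\alpha=1$ discussion: $(1+i)^{2V+2}-(1-i)^{2V+2}=2^{V+1}i^{V+1}(1-(-1)^{V+1})$ and $(1+i)^{2n}+(1-i)^{2n}=2^{n}i^{n}(1+(-1)^{n})$ vanish unless $V$ is even and $n$ is even respectively, i.e.\ $V,n\in 2\N$ rather than $4\N$; this is exactly what is needed to collapse \eqref{recursion Tva} to the stated recursion on $\{T_{4v}\}$, so the remainder of your argument goes through as described.
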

Theorem \ref{thm: some nice series} will also be useful to prove Corollary \ref{coro:ML square C} below.
\subsubsection{The complement of an ellipse centered at $0$}
\begin{figure}
\begin{center}
\hspace{-0.3cm}\begin{tikzpicture}[master]
\node at (0,0) {\includegraphics[width=4.8cm]{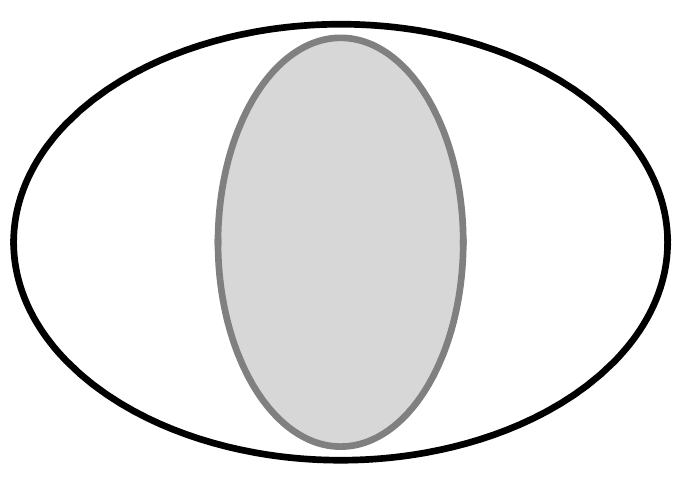}};
\draw[fill] (0.857,0) circle (0.04);
\node at (1.03,0) {\footnotesize $a$};
\draw[fill] (0,1.436) circle (0.04);
\node at (0,1.28) {\footnotesize $ic$};
\node at (0,-2) {\footnotesize Theorems \ref{thm:EG complement ellipse nu}, \ref{thm:EG complement ellipse C}};
\end{tikzpicture}  \hspace{-0.3cm}
\begin{tikzpicture}[slave]
\node at (0,0) {\includegraphics[width=4.8cm]{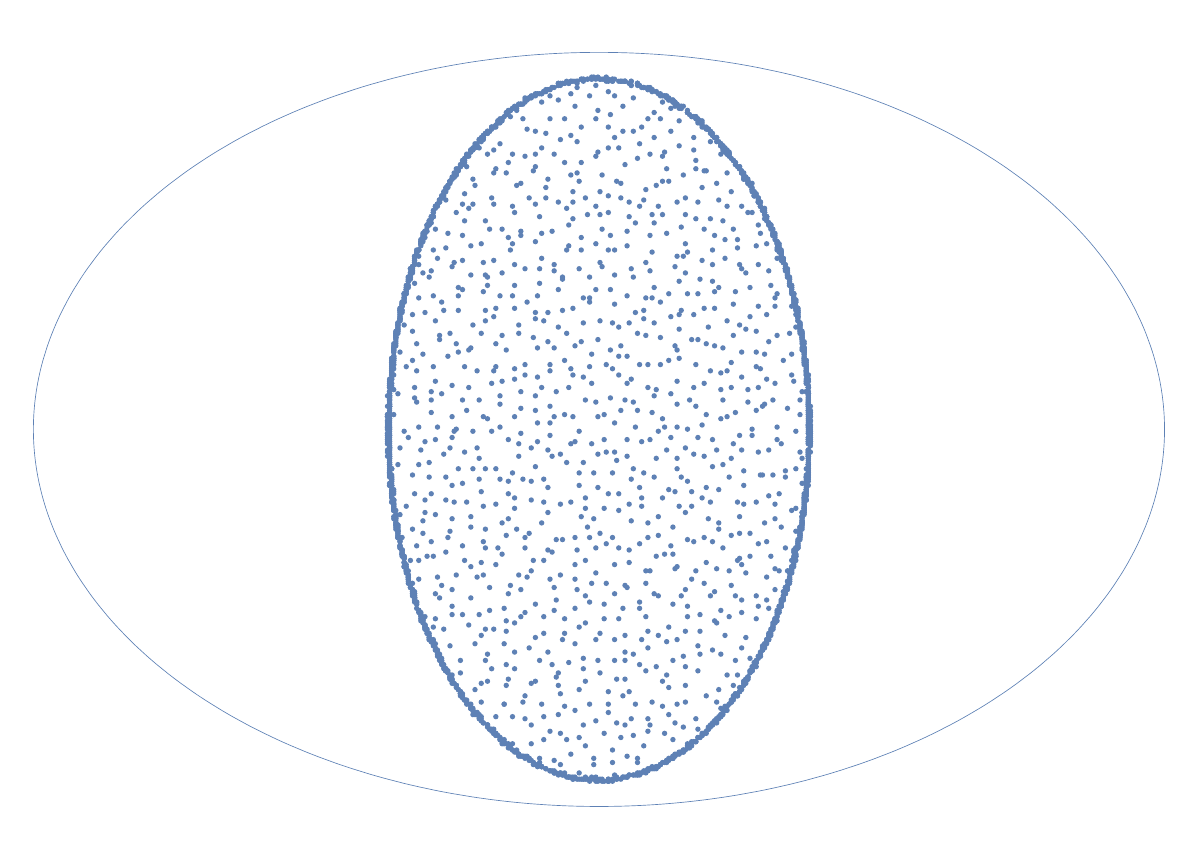}};
\end{tikzpicture} \hspace{-0.3cm}
\begin{tikzpicture}[slave]
\node at (0,0) {\includegraphics[width=4.8cm]{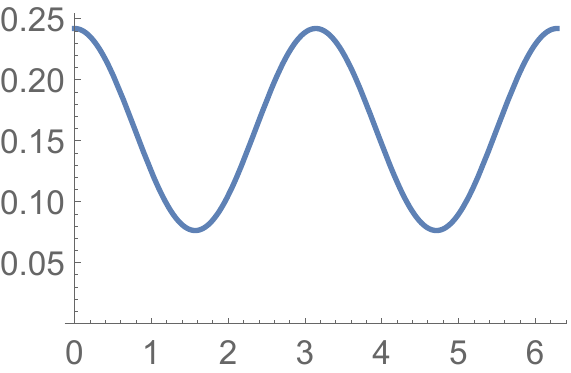}};

\node at (0,-1.9) {\footnotesize Theorem \ref{thm:EG complement ellipse nu}};
\end{tikzpicture}
\end{center}
\caption{\label{fig:density complement ellipse Ginibre} In all figures, $\tau=0.2$ and $U=\{z:(\frac{\re z}{a})^{2}+(\frac{\im z}{c})^{2}>1\}$ with $a=0.45$ and $c = 0.75$. Middle: the elliptic Ginibre point process conditioned on $\# \{z_{j}\in U\} = 0$. Right: the normalized density $\theta \mapsto \frac{d\nu(z)/d\theta}{\nu(\partial U)}$ with $z=a\cos \theta + i c \sin \theta \in \partial U$, $\theta \in [0,2\pi)$.}
\end{figure}

\begin{theorem}\label{thm:EG complement ellipse nu}
Let $\tau\in [0,1)$, $a\in (0,1+\tau]$, and $c\in (0,1-\tau]$. Then $U^{c}\subset S$, where $U:=\{z:(\frac{\re z}{a})^{2}+(\frac{\im z}{c})^{2}>1\}$.
Let $\nu=\mathrm{Bal}(\mu|_{U},\partial U)$. For $z=a \cos \theta + i c \sin \theta$, $\theta \in (-\pi,\pi]$, we have $d\nu(z) = \big( c_{0} + 2 c_{1} \cos (2 \theta) \big)d\theta$, where
\begin{align}\label{def of c0 c1 complement ellipse intro}
& c_{0} = \frac{1}{2\pi} \frac{1-\tau^{2}-a c}{1-\tau^{2}}, \qquad c_{1} = \frac{a+c}{8\pi} \frac{(a+c)\tau - a + c}{1-\tau^{2}},
\end{align}
see also Figure \ref{fig:density complement ellipse Ginibre}.
\end{theorem}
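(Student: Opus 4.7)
The plan is to verify the uniqueness characterization of balayage in Proposition~\ref{prop:def of bal}. Write $E := \{z:(\re z/a)^{2}+(\im z/c)^{2}\leq 1\} = U^{c}$ for the closed inner ellipse. The proposed $\nu$ is supported on $\partial U = \partial E$ with bounded density, and direct integration gives $\nu(\partial U) = 2\pi c_{0} = \frac{1-\tau^{2}-ac}{1-\tau^{2}} = \mu(U)$. The only nontrivial point is to show that $p_{\nu}-p_{\mu|_{U}}$ is constant q.e.\ on $E$. Both measures have bounded densities and compact support, so both potentials are continuous on $\C$; moreover $\nu$ is supported on $\partial E$ and $\mu|_{U}$ on $\overline{U}$, both disjoint from $\mathrm{int}(E)$, so $p_{\nu}-p_{\mu|_{U}}$ is harmonic on $\mathrm{int}(E)$. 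By the maximum principle it suffices to check that this difference is constant on $\partial U$; the additive constant is then automatically $c^{\mu|_{U}}_{U}$ by uniqueness.

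To evaluate $p_{\mu|_{U}}$ on $\partial U$, I would decompose $\mu|_{U} = \mu - \mu|_{E}$. The Euler--Lagrange equation \eqref{EL1} applied to the elliptic Ginibre gives $p_{\mu}(z) = \tfrac{F}{2} - \tfrac{x^{2}}{2(1+\tau)} - \tfrac{y^{2}}{2(1-\tau)}$ for $z=x+iy\in S$. For $p_{\mu|_{E}}$, observe that the scaling $\zeta = \tfrac{2}{a+c}z$ maps $E$ onto the equilibrium support of the elliptic Ginibre with parameter $\tilde\tau := \tfrac{a-c}{a+c}$, so the pushforward of $\mu|_{E}$ is a scalar multiple of that equilibrium measure; pulling back its Euler--Lagrange equation yields $p_{\mu|_{E}}(z) = \mathrm{const} - \frac{cx^{2}+ay^{2}}{(1-\tau^{2})(a+c)}$ for $z\in E$. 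Subtracting and using $x^{2}-y^{2}=\tfrac{a^{2}-c^{2}}{2}+\tfrac{a^{2}+c^{2}}{2}\cos(2\theta)$ on $z=a\cos\theta+ic\sin\theta$ gives
\begin{align*}
p_{\mu|_{U}}(z)\big|_{\partial U} = A + \frac{(\tau(a+c)-(a-c))(a^{2}+c^{2})}{4(1-\tau^{2})(a+c)}\cos(2\theta)
\end{align*}
for some constant $A$.

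For $p_{\nu}$ on $\partial U$, I would use the Joukowski map $z(w) = \tfrac{a+c}{2}w + \tfrac{a-c}{2}w^{-1}$, which sends $w=e^{i\theta}$ to $z = a\cos\theta+ic\sin\theta$. With $\lambda_{0} := \tfrac{a-c}{a+c}\in(-1,1)$, elementary algebra yields $z-z' = \tfrac{a+c}{2}\cdot\tfrac{(w-w')(ww'-\lambda_{0})}{ww'}$. Restricting to the unit circle $|w|=|w'|=1$ and using the standard expansions
\begin{align*}
\log|1-e^{i\phi}|^{2} = -2\sum_{n=1}^{\infty}\tfrac{\cos(n\phi)}{n}, \qquad \log|1-\lambda_{0}e^{-i\phi}|^{2} = -2\sum_{n=1}^{\infty}\tfrac{\lambda_{0}^{n}\cos(n\phi)}{n},
\end{align*}
gives
\begin{align*}
\log|z-z'| = \log\tfrac{a+c}{2} - \sum_{n=1}^{\infty}\tfrac{\cos n(\theta-\theta')}{n} - \sum_{n=1}^{\infty}\tfrac{\lambda_{0}^{n}\cos n(\theta+\theta')}{n}.
\end{align*}
Integrating $-\log|z-z'|$ against $d\nu(z')=(c_{0}+2c_{1}\cos(2\theta'))d\theta'$, orthogonality kills all Fourier modes except $n=2$, producing
\begin{align*}
p_{\nu}(z)\big|_{\partial U} = -2\pi c_{0}\log\tfrac{a+c}{2} + c_{1}\pi(1+\lambda_{0}^{2})\cos(2\theta).
\end{align*}
Using $1+\lambda_{0}^{2} = \tfrac{2(a^{2}+c^{2})}{(a+c)^{2}}$ and matching the $\cos(2\theta)$ coefficients with those obtained for $p_{\mu|_{U}}|_{\partial U}$ reproduces the stated formula for $c_{1}$. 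The main bookkeeping hurdle is obtaining the correct quadratic form for $p_{\mu|_{E}}$ via the rescaling to $\tilde\tau$, with the right coefficients independently of whether $a\geq c$ or $a\leq c$; the subsequent Fourier calculation for $p_{\nu}$ is routine.
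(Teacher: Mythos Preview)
Your argument is correct and takes a genuinely different route from the paper. The paper uses the moment-matching criterion for unbounded $U$ (Lemma~\ref{lemma: moment when U is unbounded} with $z_{0}=0$): it first computes $\int_{U}z^{-n}d\mu$ and $\int_{\partial U}z^{-n}d\hat\nu$ for all $n\in\N$, which fixes $c_{0},c_{1}$, and then must verify the logarithmic identity $\int_{\partial U}\log|z|\,d\hat\nu = \int_{U}\log|z|\,d\mu - c^{\mu}_{U}$. This last step forces the paper to compute $c^{\mu}_{U}$ explicitly (Lemma~\ref{lemma: cQ unbounded annulus}), which is by far the heaviest part of the proof and involves the conformal map from $U$ to the exterior disk together with a long sequence of contour-deformation integrals.

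You bypass all of this by computing $p_{\mu|_{U}}$ directly on $E$: the decomposition $\mu|_{U}=\mu-\mu|_{E}$ combined with the Euler--Lagrange relation for $\mu$ and the rescaling of $\mu|_{E}$ to an elliptic Ginibre equilibrium measure with parameter $\tilde\tau=\frac{a-c}{a+c}$ gives $p_{\mu|_{U}}$ as an explicit quadratic on $E$. The Joukowski/Fourier computation of $p_{\hat\nu}|_{\partial U}$ is then a clean two-line orthogonality argument. This is considerably more economical than the paper's treatment. The one point you pass over quickly is why ``the additive constant is then automatically $c^{\mu}_{U}$'': this does not follow from Proposition~\ref{prop:def of bal} as stated (where $c^{\sigma}_{U}$ is fixed a priori), but it is a standard consequence of the positivity of the logarithmic energy for neutral signed measures. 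Indeed, if $\hat\nu$ and the true balayage $\nu$ both have mass $\mu(U)$ and $p_{\hat\nu}-p_{\nu}$ is constant q.e.\ on $\mathrm{supp}(\hat\nu-\nu)\subset\partial U$, then $I_{0}[\hat\nu-\nu]=0$, forcing $\hat\nu=\nu$.
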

\begin{remark}
If the ellipse $\partial U = \{z:(\frac{\re z}{a})^{2}+(\frac{\im z}{c})^{2}=1\}$ has the same eccentricity as the larger ellipse $\partial S = \{z:(\frac{\re z}{1+\tau})^{2}+(\frac{\im z}{1-\tau})^{2}=1\}$, then \eqref{def of c0 c1 complement ellipse intro} yields $c_{1}=0$, which means that $\nu$ is the uniform measure on $\partial U$ with respect to $d\theta$ (but not with respect to the arclength measure, except if $a=c$).
\end{remark}

\begin{theorem}\label{thm:EG complement ellipse C}
Let $\beta>0$, $\tau\in [0,1)$, $a\in (0,1+\tau]$, and $c\in (0,1-\tau]$. Then $U^{c}\subset S$, where $U:=\{z:(\frac{\re z}{a})^{2}+(\frac{\im z}{c})^{2}>1\}$. As $n \to +\infty$, $\mathbb{P}(\# \{z_{j}\in U\} = 0) = \exp \big( -C n^{2}+o(n^{2}) \big)$, where
\begin{align}
C = \frac{\beta}{4(1-\tau^{2})} \bigg\{ & a^{2}(1-\tau) \bigg( 1 - \frac{a^{2}+2ac}{8(1+\tau)} \bigg) + c^{2} (1+\tau) \bigg( 1 - \frac{c^{2} + 2ac}{8(1-\tau)} \bigg) \nonumber \\
& + \frac{a^{2}c^{2}}{4} + (1-\tau^{2}) \bigg( 2 \log \Big( \frac{2}{a+c} \Big) - \frac{3}{2} \bigg) \bigg\}. \label{EG complement ellipse C}
\end{align}
\end{theorem}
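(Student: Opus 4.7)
The plan is to invoke Theorem~\ref{thm:general pot}(i) with the unbounded connected component $\Omega = U$. All its hypotheses are satisfied: the elliptic Ginibre potential $Q$ is admissible and smooth, and $U$ is open with compact $C^\infty$ boundary lying inside $S$, so Assumptions~\ref{ass:Q}, \ref{ass:U}, \ref{ass:U2} hold. The theorem then reduces the problem to evaluating
\[
C = \frac{\beta}{4}\bigl(I_1 + 2 c^{\hspace{0.02cm}\mu}_{U} - I_3\bigr), \qquad I_1 := \int_{\partial U} Q\, d\nu, \qquad I_3 := \int_U Q\, d\mu,
\]
with $\nu$ the explicit measure furnished by Theorem~\ref{thm:EG complement ellipse nu}.

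For $I_1$, parametrizing $\partial U$ by $z = a\cos\theta + ic\sin\theta$, a direct computation gives $Q(z)|_{\partial U} = \tilde\alpha + \tilde\beta\cos(2\theta)$ with explicit $\tilde\alpha,\tilde\beta$ depending on $a,c,\tau$. Since $d\nu = (c_0 + 2c_1\cos 2\theta)\, d\theta$, Fourier orthogonality immediately yields $I_1 = 2\pi(\tilde\alpha c_0 + \tilde\beta c_1)$. For $I_3$, the hypotheses $a\in(0,1+\tau]$ and $c\in(0,1-\tau]$ ensure $U^c\subset S$, so $I_3 = \int_S Q\,d\mu - \int_{U^c} Q\,d\mu$; both pieces are evaluated in elliptic polar coordinates $z = A r\cos\phi + i B r\sin\phi$ (with $(A,B) = (1+\tau,1-\tau)$ for $S$ and $(A,B) = (a,c)$ for $U^c$), since in each chart $|z|^2 - \tau\re z^2 = r^2[A^2(1-\tau)\cos^2\phi + B^2(1+\tau)\sin^2\phi]$ factors cleanly and the $\phi$-integral is elementary; one obtains $\int_S Q\, d\mu = \tfrac12$ and a clean polynomial for $\int_{U^c} Q\, d\mu$.

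For $c^{\hspace{0.02cm}\mu}_{U}$, rather than attack the Green's-function integral directly, I exploit the defining identity from Proposition~\ref{prop:def of bal}: $p_\nu(z) = p_{\mu|_U}(z) + c^{\hspace{0.02cm}\mu}_{U}$ for q.e.\ $z\in U^c$. Choosing $z=0$ (an interior point of $U^c$) and writing $p_{\mu|_U}(0) = p_\mu(0) - p_{\mu|_{U^c}}(0)$, all three logarithmic potentials reduce, after the same elliptic polar changes of variables, to angular integrals handled by the classical identity
\[
\int_0^{2\pi}\log\bigl(A^2\cos^2\phi + B^2\sin^2\phi\bigr)\, d\phi = 4\pi\log\tfrac{A+B}{2} \qquad (A,B>0),
\]
together with its first Fourier refinement $\int_0^{2\pi}\log(A^2\cos^2\phi + B^2\sin^2\phi)\cos(2\phi)\,d\phi = 2\pi\tfrac{A-B}{A+B}$ (for $A>B>0$). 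Both follow from the factorization $|A\cos\phi + iB\sin\phi|^2 = \bigl|\tfrac{A+B}{2} + \tfrac{A-B}{2}e^{-2i\phi}\bigr|^2$ combined with the Fourier expansion of $\log|1+re^{-2i\phi}|$ for $|r|<1$. This produces $p_\mu(0)=\tfrac12$, a clean expression for $p_{\mu|_{U^c}}(0)$, and, via the $c_1$-mode of $\nu$, a corresponding expression for $p_\nu(0)$.

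Assembling the three pieces into $C = \tfrac{\beta}{4}(I_1 + 2c^{\hspace{0.02cm}\mu}_{U} - I_3)$, the logarithmic contributions collapse to the single $2\log\tfrac{2}{a+c}$ term appearing in \eqref{EG complement ellipse C}, while the polynomial remainder is most efficiently managed via the bookkeeping variables $\Sigma := a^2(1-\tau)+c^2(1+\tau)$ and $\Delta := a^2(1-\tau)-c^2(1+\tau)$, in terms of which the $c_1$-contribution takes the compact form proportional to $\Delta\,(a+c)[(a+c)\tau - a + c]$. I expect the main obstacle to be this last algebraic simplification, since the $\Delta$-piece mixes non-trivially with the $\Sigma$-contributions coming from $c_0$, $c^{\hspace{0.02cm}\mu}_{U}$, and $I_3$; two useful consistency checks are that the formula reduces at $\tau=0$ to the Ginibre complement-of-ellipse answer (consistent with Theorem~\ref{thm:general U EG} philosophy, though that theorem itself requires $U$ bounded and so does not apply) and that the $c_1$-contribution vanishes exactly when $\partial U$ and $\partial S$ have the same eccentricity, i.e.\ $(1-\tau)a = (1+\tau)c$.
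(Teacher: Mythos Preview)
Your proposal is correct and takes a genuinely different route from the paper in two places.

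For $\int_U Q\,d\mu$, the paper parametrises $S\cap U$ in polar coordinates via the boundary functions $R_i,R_e$ (defined in \eqref{def of Ri and Re}) and reduces the computation to the integrals $\int_{-\pi}^{\pi}(1-\tau\cos 2\theta)R_e(\theta)^4\,d\theta$ and its $R_i$-analogue, each handled by a contour-deformation/residue argument (Lemma~\ref{lemma: int (1-cos)R4 dtheta}). Your complement trick $\int_U = \int_S - \int_{U^c}$ with two separate elliptic-polar charts is more direct and bypasses this.

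The more substantial divergence is in $c_U^\mu$. The paper computes it \emph{directly} as $2\pi\int_U g_U(z,\infty)\,d\mu(z)$: it writes $g_U(z,\infty)=\frac{1}{2\pi}\log|\varphi(z)|$ with $\varphi$ the Joukowski-type conformal map from $U$ to $\{|w|>1\}$, finds an explicit primitive for $r\mapsto r\log\varphi(re^{i\theta})$, and then evaluates four resulting angular integrals $I_1,\dots,I_4$ by contour deformation; this is by far the longest computation in Section~\ref{section:complement ellipse} (Lemma~\ref{lemma: cQ unbounded annulus}). Your approach instead reads $c_U^\mu$ off the balayage identity $p_\nu(0)=p_{\mu|_U}(0)+c_U^\mu$ at the single point $0\in U^c$, reducing everything to the two elementary Fourier identities you quote. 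This is considerably shorter.

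One remark worth noting: the paper's heavy computation of $c_U^\mu$ in Lemma~\ref{lemma: cQ unbounded annulus} is not redundant---it is the key ingredient in the paper's \emph{proof} of Theorem~\ref{thm:EG complement ellipse nu} itself (to verify the second equation in \eqref{eqn:moment unbounded with mass}). So your shortcut works precisely because you may invoke Theorem~\ref{thm:EG complement ellipse nu} as a black box; within the paper's logical flow, $c_U^\mu$ had to be computed independently first. For the purpose of proving Theorem~\ref{thm:EG complement ellipse C} given Theorem~\ref{thm:EG complement ellipse nu}, however, your argument is complete and more economical.
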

\begin{remark}
If $\beta=2$, $\tau=0$ and $c=a$, then $C$ in \eqref{EG complement ellipse C} becomes $C=a^{2}-\frac{a^{4}}{4}-\frac{3}{4}-\log a$, which recovers the result \cite[(51)]{CMV2016}. Also, if $a=1+\tau$ and $c=1-\tau$, then \eqref{EG complement ellipse C} yields $C=0$, and thus $\mathbb{P}(\# \{z_{j}\in U\} = 0) = \exp \big( o(n^{2}) \big)$ as $n\to + \infty$.
\end{remark}

\subsubsection{The complement of a disk}

\begin{theorem}\label{thm:EG complement disk nu}
Let $\tau \in [0,1)$, $x_{0},y_{0}\in \R$ and $a>0$ be such that $U^{c}\subset S$, where $U:=\{z:|z-(x_{0}+iy_{0})|>a\}$. Let $\nu=\mathrm{Bal}(\mu|_{U},\partial U)$. For $z=x_{0}+iy_{0}+a e^{i\theta}$, $\theta \in (-\pi,\pi]$, we have 
\begin{align}\label{def of dnu complement disk}
& d\nu(z) = \frac{a}{\pi(1-\tau^{2})}\bigg( \frac{1-\tau^{2}}{2a} - \frac{a}{2} - x_{0}(1-\tau)\cos \theta - y_{0} (1+\tau) \sin \theta + a\tau \cos(2\theta) \bigg)d\theta,
\end{align}
see also Figure \ref{fig:density complement disk Ginibre}.
\end{theorem}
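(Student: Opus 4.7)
The plan is to verify that the measure $\tilde{\nu}$ defined by the density in \eqref{def of dnu complement disk} satisfies the characterizing properties of Proposition \ref{prop:def of bal}, and then invoke uniqueness to conclude $\tilde\nu = \nu$. After translating so that $z_0 := x_0+iy_0$ becomes the origin, I would first check that $\tilde\nu(\partial U) = \mu(U) = 1 - \frac{a^2}{1-\tau^2}$; only the constant Fourier mode of the density contributes, and this matches since $\mu$ has uniform density $\frac{1}{\pi(1-\tau^2)}$ and $\overline{D(z_0,a)} \subset S$. Then, for $z \in \overline{D(z_0,a)}$, writing $\mu|_U = \mu - \mu|_{\overline{D(z_0,a)}}$ and using the equilibrium identity $2p_\mu(z)+Q(z)=F$ valid on $S$, I obtain
\[
p_{\mu|_U}(z) = \frac{F-Q(z)}{2} - p_{\mu|_{\overline{D(z_0,a)}}}(z).
\]
The second potential is classical: by rotational symmetry and the identity $\int_0^{2\pi} \log|re^{i\phi}-se^{i\psi}|^{-1}\,d\psi = 2\pi\log\max(r,s)^{-1}$,
\[
p_{\mu|_{\overline{D(z_0,a)}}}(z) = \frac{1}{1-\tau^2}\Bigl(-a^2\log a + \tfrac{1}{2}(a^2 - |z-z_0|^2)\Bigr).
\]

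To compute $p_{\tilde\nu}$ at $z = z_0 + re^{i\phi}$ with $r \leq a$, I would use the Fourier expansion
\[
\log\frac{1}{|re^{i\phi}-ae^{i\theta}|} = -\log a + \sum_{n=1}^{\infty}\frac{1}{n}\Bigl(\frac{r}{a}\Bigr)^n \cos\bigl(n(\phi-\theta)\bigr),
\]
and observe that the density of $\tilde\nu$ (in shifted coordinates) is a trigonometric polynomial involving only the Fourier modes $1, \cos\theta, \sin\theta, \cos 2\theta$. Orthogonality then collapses the series to the contributions from $n=0,1,2$, so $p_{\tilde\nu}(z)$ becomes an explicit polynomial in $r\cos\phi$ and $r\sin\phi$ of total degree at most two, with coefficients read off directly from the density.

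The verification then reduces to an algebraic comparison of two degree-two polynomials. Expanding $Q(z) = \frac{(1-\tau)(\re z)^2 + (1+\tau)(\im z)^2}{1-\tau^2}$ around $z_0$ produces coefficients of $\cos\phi$, $\sin\phi$, $\cos 2\phi$ on the $p_{\mu|_U}$ side which must match those coming from the Fourier expansion of $\tilde\nu$ — this is precisely what forces the density in \eqref{def of dnu complement disk}, including the nonobvious coefficient $a\tau$ of $\cos 2\theta$. Once all non-constant terms cancel, $p_{\tilde\nu}(z) - p_{\mu|_U}(z)$ is a constant $c$ on $\overline{D(z_0,a)}$, and Proposition \ref{prop:def of bal} identifies $\tilde\nu$ with $\nu$ (and forces $c = c_U^\mu$). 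The main step requiring care is keeping track of the $(1\pm\tau)$ denominators in $Q$ and in $\tilde\nu$ so that the linear and quadratic coefficients match cleanly; everything else is organized automatically by Fourier mode.
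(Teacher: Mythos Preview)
Your approach is correct and considerably more direct than the paper's. The paper proves this theorem via the Green-function representation of Theorem~\ref{thm:dnu in terms of green general}: it writes $d\nu(w) = \big(\int_U \partial_{\mathbf{n}_w} g_U(z,w)\,d\mu(z)\big)\,|dw|$, parametrizes $S\cap U$ in polar coordinates centred at $z_0$ via a function $R(e^{i\theta})$ encoding the outer ellipse $\partial S$, and then evaluates the resulting integrals $I_1,I_2,I_3$ (with $I_3$ split further into $I_{3,1},I_{3,2},I_{3,3}$) by residue calculus and a delicate analysis of the branch structure of an analytic continuation of $R$. This occupies several pages.

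By contrast, your key observation is that on $U^c\subset S$ the equilibrium relation gives $p_{\mu}(z)=\tfrac{1}{2}(F-Q(z))$, whence
\[
p_{\mu|_U}(z)=p_\mu(z)-p_{\mu|_{\overline{D(z_0,a)}}}(z)=\tfrac{F-Q(z)}{2} - p_{\mu|_{\overline{D(z_0,a)}}}(z),
\]
and both terms on the right are explicit quadratic polynomials in $(r\cos\phi,\, r\sin\phi)$ once one recalls the classical potential of the uniform disk measure. Matching this against the Fourier expansion of $p_{\tilde\nu}$ is a short coefficient check, completely sidestepping the integration over $S\cap U$, the function $R$, and all the contour deformations.

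One small point to fill in: Proposition~\ref{prop:def of bal} as stated characterizes $\nu$ with the \emph{specific} constant $c^\mu_U$, so you need one extra line to pass from ``$p_{\tilde\nu}-p_{\mu|_U}=c$ on $U^c$'' to $\tilde\nu=\nu$. This is routine: $p_{\tilde\nu}-p_\nu$ is then constant on $\overline{D(z_0,a)}$, continuous on $\C$ (both measures have bounded densities on a smooth curve), harmonic on $U$, and tends to $0$ at $\infty$ since $\tilde\nu(\partial U)=\nu(\partial U)$; solving the Dirichlet problem on $U\cup\{\infty\}$ with constant boundary data and value $0$ at $\infty$ forces that constant to vanish, whence $p_{\tilde\nu}\equiv p_\nu$ and $\tilde\nu=\nu$.
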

\begin{remark}
If $x_{0}=y_{0}=0$, then $\nu$ in \eqref{def of dnu complement disk} is equal to $\nu$ in Theorem \ref{thm:EG complement ellipse nu} with $c=a$, as it must.
\end{remark}
\begin{figure}[h]
\begin{center}
\hspace{-0.3cm}\begin{tikzpicture}[master]
\node at (0,0) {\includegraphics[width=4.8cm]{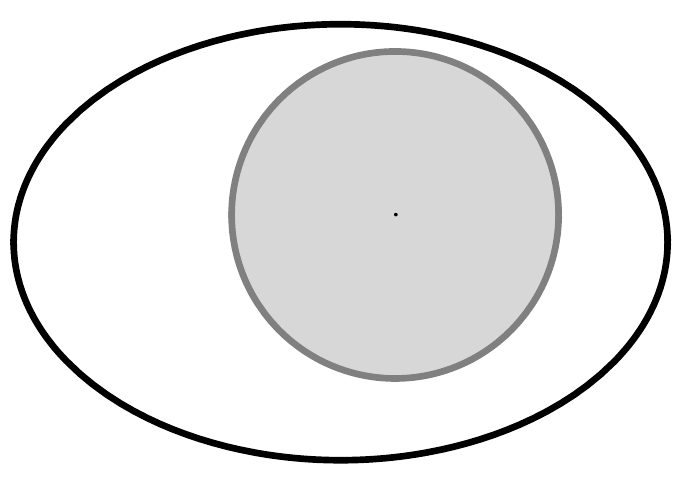}};
\draw[fill] (0.38,0.2) circle (0.04);
\node at (0.38,0) {\footnotesize $z_{0}$};
\node at (0,-2) {\footnotesize Theorems \ref{thm:EG complement disk nu}, \ref{thm:EG complement disk C}};
\end{tikzpicture}  \hspace{-0.3cm}
\begin{tikzpicture}[slave]
\node at (0,0) {\includegraphics[width=4.8cm]{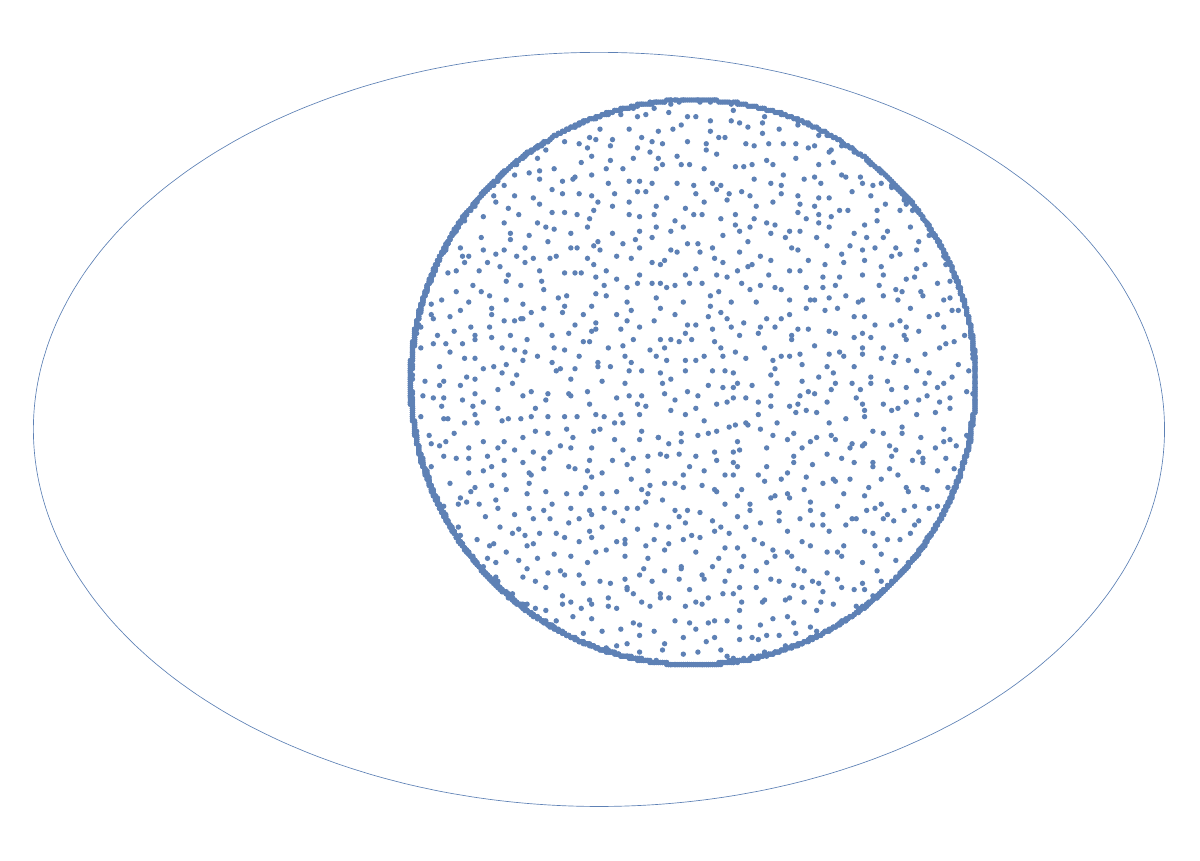}};
\end{tikzpicture} \hspace{-0.3cm}
\begin{tikzpicture}[slave]
\node at (0,0) {\includegraphics[width=4.8cm]{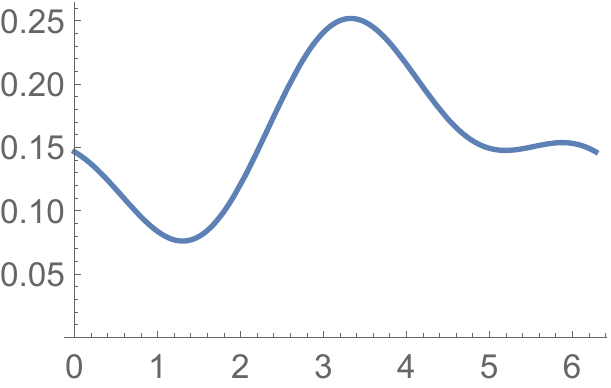}};

\node at (0,-1.9) {\footnotesize Theorem \ref{thm:EG complement disk nu}};
\end{tikzpicture}
\end{center}
\vspace{-0.5cm}\caption{\label{fig:density complement disk Ginibre} In all figures, $U=\{z:|z-z_{0}|> a\}$ with $z_{0}=0.2+i\, 0.1$ and $a=0.6$. Middle: the elliptic Ginibre point process conditioned on $\# \{z_{j}\in U\} = 0$. Right: the normalized density $\theta \mapsto \frac{d\nu(z)/d\theta}{\nu(\partial U)}$ with $z = z_{0}+ae^{i\theta}\subset \partial U$, $\theta \in [0,2\pi)$.}
\end{figure}

\begin{theorem}\label{thm:EG complement disk C}
Fix $\beta>0$, and let $\tau \in [0,1)$, $x_{0},y_{0}\in \R$ and $a>0$ be such that $U^{c}\subset S$, where $U:=\{z:|z-(x_{0}+iy_{0})|>a\}$. As $n \to +\infty$, $\mathbb{P}(\# \{z_{j}\in U\} = 0) = \exp \big( -C n^{2}+o(n^{2}) \big)$, where
\begin{align}
C = \frac{\beta}{4} \bigg( & \frac{1}{\pi(1-\tau^{2})} \int_{-\pi}^{\pi} R(\theta)^{2}\log R(\theta) \; d\theta - \frac{3}{2} + \frac{x_{0}^{2}}{1+\tau} + \frac{y_{0}^{2}}{1-\tau} \nonumber \\ 
& -2 \log a + 2a^{2}\bigg( \frac{1}{1-\tau^{2}} - \frac{x_{0}^{2}}{(1+\tau)^{2}} - \frac{y_{0}^{2}}{(1-\tau)^{2}} \bigg) - \frac{(1+2\tau^{2})a^{4}}{2(1-\tau^{2})^{2}} \bigg), \label{EG complement disk C}
\end{align}
where $R$ is independent of $a$ and given by
\begin{align}\label{def of R Uc is a disk}
R(\theta) := \frac{(1-\tau^{2})^{2}( \frac{\sqrt{2+2\tau^{2}-x_{0}^{2}-y_{0}^{2}+(x_{0}^{2}-y_{0}^{2}-4\tau)\cos (2\theta) + 2x_{0}y_{0}\sin(2\theta)}}{\sqrt{2}(1-\tau^{2})} - \frac{x_{0}\cos \theta}{(1+\tau)^{2}} - \frac{y_{0}\sin\theta}{(1-\tau)^{2}})}{1+\tau^{2}-2\tau\cos(2\theta)}.
\end{align}
Moreover, if $x_{0}=y_{0}=0$, then $\int_{-\pi}^{\pi} R(\theta)^{2}\log R(\theta) \; d\theta = 0$, and 
\begin{align}\label{C EG centered disk}
C = \frac{\beta}{2} \bigg( \frac{a^{2}}{1-\tau^{2}} - \frac{(1+2\tau^{2})a^{4}}{4(1-\tau^{2})^{2}} - \frac{3}{4} -  \log a \bigg).
\end{align}
\end{theorem}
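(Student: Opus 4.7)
The plan is to apply Theorem~\ref{thm:general pot}(i), which reduces the proof of \eqref{EG complement disk C} to evaluating the three ingredients $\int_{\partial U} Q\,d\nu$, $c^{\hspace{0.02cm}\mu}_U$ and $\int_U Q\,d\mu$, given the explicit density of $\nu$ from Theorem~\ref{thm:EG complement disk nu}. Writing $z_0 = x_0 + iy_0$ and parametrizing $\partial U$ by $z = z_0 + ae^{i\theta}$, both $Q(z)$ and the density $d\nu/d\theta$ in \eqref{def of dnu complement disk} are trigonometric polynomials in $\theta$ of degree at most $2$, so Fourier orthogonality delivers $\int_{\partial U} Q\,d\nu$ as an explicit rational expression in $x_0, y_0, a, \tau$. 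Similarly, I decompose $\int_U Q\,d\mu = \int_S Q\,d\mu - \int_{\overline{D(z_0,a)}} Q\,d\mu$: the first piece equals $\tfrac{1}{2}$ via the substitution $z = (1+\tau)u + i(1-\tau)v$ (which maps the open unit disk onto the interior of $S$ with Jacobian $1-\tau^2$ and reduces the integrand to $(1+\tau)u^2 + (1-\tau)v^2$), while the second is handled in polar coordinates around $z_0$ by angular orthogonality.

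The crucial step is the evaluation of $c^{\hspace{0.02cm}\mu}_U$. Since $U$ is connected and unbounded, $\Omega = U$, and the Green function of the exterior of a disk is known explicitly, $g_\Omega(z,\infty) = \frac{1}{2\pi}\log\frac{|z-z_0|}{a}$, so
\[
c^{\hspace{0.02cm}\mu}_U = \int_S \log|z-z_0|\,d\mu(z) - \log a - \int_{\overline{D(z_0,a)}}\log\tfrac{|z-z_0|}{a}\,d\mu(z),
\]
where the last integral reduces to $-\tfrac{a^2}{2(1-\tau^2)}$ by a trivial radial computation. For $\int_S \log|z-z_0|\,d\mu$ I exploit that $S$ is convex and $z_0 \in S$, so $S$ is star-shaped about $z_0$: writing $z = z_0 + re^{i\theta}$ with $0 \le r \le R(\theta)$ and integrating radially yields
\[
\int_S \log|z-z_0|\,d\mu = \frac{1}{2\pi(1-\tau^2)}\int_{-\pi}^\pi R(\theta)^2 \log R(\theta)\,d\theta - \frac{1}{2},
\]
where the constant $-\tfrac{1}{2}$ arises from the normalization $\int_{-\pi}^\pi R^2\,d\theta = 2\pi(1-\tau^2)$, itself a consequence of $\mu(S)=1$. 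Solving the quadratic $\frac{(x_0+R\cos\theta)^2}{(1+\tau)^2} + \frac{(y_0+R\sin\theta)^2}{(1-\tau)^2} = 1$ for the positive root $R$, and simplifying via the identity $(1-\tau)^2\cos^2\theta + (1+\tau)^2\sin^2\theta = 1 + \tau^2 - 2\tau\cos(2\theta)$, produces precisely the expression \eqref{def of R Uc is a disk}.

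Assembling the three contributions and collecting terms yields \eqref{EG complement disk C}; the principal obstacle is the purely algebraic bookkeeping needed to verify that the $a^2$-quadratic-in-$(x_0,y_0)$ parts and the $a^4$ parts combine into the stated form. For the symmetric case $x_0 = y_0 = 0$, one finds $R_0(\theta)^2 = (1-\tau^2)^2|1 - \tau e^{2i\theta}|^{-2}$, and the vanishing $\int_{-\pi}^\pi R_0^2 \log R_0\,d\theta = 0$ follows by inserting the Fourier/power series $|1-\tau e^{2i\theta}|^{-2} = (1-\tau^2)^{-1}\sum_{k\in\mathbb{Z}}\tau^{|k|}e^{2ik\theta}$ and $\log|1-\tau e^{2i\theta}| = -\sum_{n\ge 1}\frac{\tau^n}{n}\cos(2n\theta)$ and applying term-by-term orthogonality; this collapses the formula to \eqref{C EG centered disk}.
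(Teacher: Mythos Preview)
Your proof is correct and follows the same overall architecture as the paper (apply Theorem~\ref{thm:general pot}(i), compute $\int_{\partial U}Q\,d\nu$, $\int_U Q\,d\mu$, and $c^{\hspace{0.02cm}\mu}_U$ separately), but you take a more elementary route for two of the three ingredients. The paper parametrizes $S\cap U$ in polar coordinates about $z_0$ and is then forced to evaluate the angular moments $J_1=\int R^2\,d\theta$, $J_2=\int(\cdots)R^3\,d\theta$, $J_3=\int(\cdots)R^4\,d\theta$; these are handled in Lemma~\ref{lemma:J1J2J3} by analytically continuing $R$ off the unit circle (the function \eqref{def of R(z) disk}) and doing residue/branch-cut computations. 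Your decomposition $\int_U=\int_S-\int_{\overline{D(z_0,a)}}$ sidesteps all of that: $\int_S Q\,d\mu=\tfrac12$ and $\int_{\overline{D(z_0,a)}}Q\,d\mu$ are one-line calculations, and the only fact about $R$ you need is $\int_{-\pi}^\pi R^2\,d\theta=2\pi(1-\tau^2)$, which you obtain for free from $\mu(S)=1$ rather than from contour integration. Likewise, for the vanishing of $\int R_0^2\log R_0\,d\theta$ when $x_0=y_0=0$, the paper refers back to \eqref{lol38} (itself proved via residues), whereas your Poisson-kernel/Fourier argument is self-contained.

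What the paper's approach buys is consistency with Section~\ref{section:complement disk}.1: the analytic continuation \eqref{def of R(z) disk} and the odd-symmetry cancellations along $[-\sqrt{w_1},\sqrt{w_1}]$ are already set up for the balayage computation of Theorem~\ref{thm:EG complement disk nu}, so Lemma~\ref{lemma:J1J2J3} reuses that machinery. Your argument is shorter and avoids complex analysis entirely for the $C$-computation, but it does rely on the explicit $\nu$ from Theorem~\ref{thm:EG complement disk nu} as a black box (as does the paper's Lemma~\ref{lemma:int Q dnu complement disk}).
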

\begin{remark}
It is easy to check that $C$ in \eqref{C EG centered disk} is equal to $C$ in \eqref{EG complement ellipse C} when $c=a$, as it must. Also, if $\tau = 0 = x_{0}=y_{0}$, then \eqref{C EG centered disk} yields $C=a^{2}-\frac{a^{4}}{4}-\frac{3}{4}-\log a$, which recovers \cite[(51)]{CMV2016}. 
\end{remark}
\begin{remark}
For $\theta \in (-\pi,\pi]$, $R(\theta)$ in \eqref{def of R Uc is a disk} is the only positive number such that $x_{0}+iy_{0}+R(\theta)e^{i\theta} \in \partial S$, i.e. 
\begin{align*}
\bigg( \frac{x_{0}+R(\theta)\cos \theta}{1+\tau} \bigg)^{2} + \bigg( \frac{y_{0}+R(\theta)\sin \theta}{1-\tau} \bigg)^{2} =1.
\end{align*}
Since $\tau,x_{0},y_{0},a$ are such that $U^{c}\subset S$, we have $R(\theta) \geq a$ for all $\theta \in (-\pi,\pi]$.
\end{remark}

\subsection{Results for the Mittag-Leffler point process}
Recall that the Mittag-Leffler point process is defined by \eqref{general density intro} with $Q(z) = |z|^{2b}$ for some $b>0$, and that $\mu$ and $S:=\mathrm{supp} \, \mu$ are given by
\begin{align}\label{mu S ML}
d\mu(z) = \frac{b^{2}}{\pi}|z|^{2b-2}d^{2}z, \qquad S=\{z \in \C: |z| \leq b^{-\frac{1}{2b}}\}.
\end{align}
This point process has attracted a lot of interest recently \cite{A2018, AKS2023, C2021 FH, C2021, CL2023, ACCL1, ACCL2, BC2022, ACC2023, Berezin2023}. In this subsection, we focus on the case $b\in \N_{>0}$. We obtain explicit results for $\nu := \mathrm{Bal}(\mu|_{U},\partial U)$ and the hole probability $\mathbb{P}(\# \{z_{j}\in U\} = 0)$, in the cases where $U$ is either a disk, an ellipse centered at $0$, and a rectangle.

\medskip Already for $b\in \N_{>0}$, obtaining explicit results for $\nu=\mathrm{Bal}(\mu|_{U},\partial U)$ turns out to be much more difficult than for $\mathrm{Bal}(d^{2}z|_{U},\partial U)$. This explains why we cover less hole regions here for the Mittag-Leffler point process than what we did in Subsection \ref{subsection:elliptic Ginibre intro} for the elliptic Ginibre point process.
\subsubsection{The disk}
\begin{figure}
\begin{center}
\hspace{-1cm}\begin{tikzpicture}[master]
\node at (0,0) {\includegraphics[width=3.4cm]{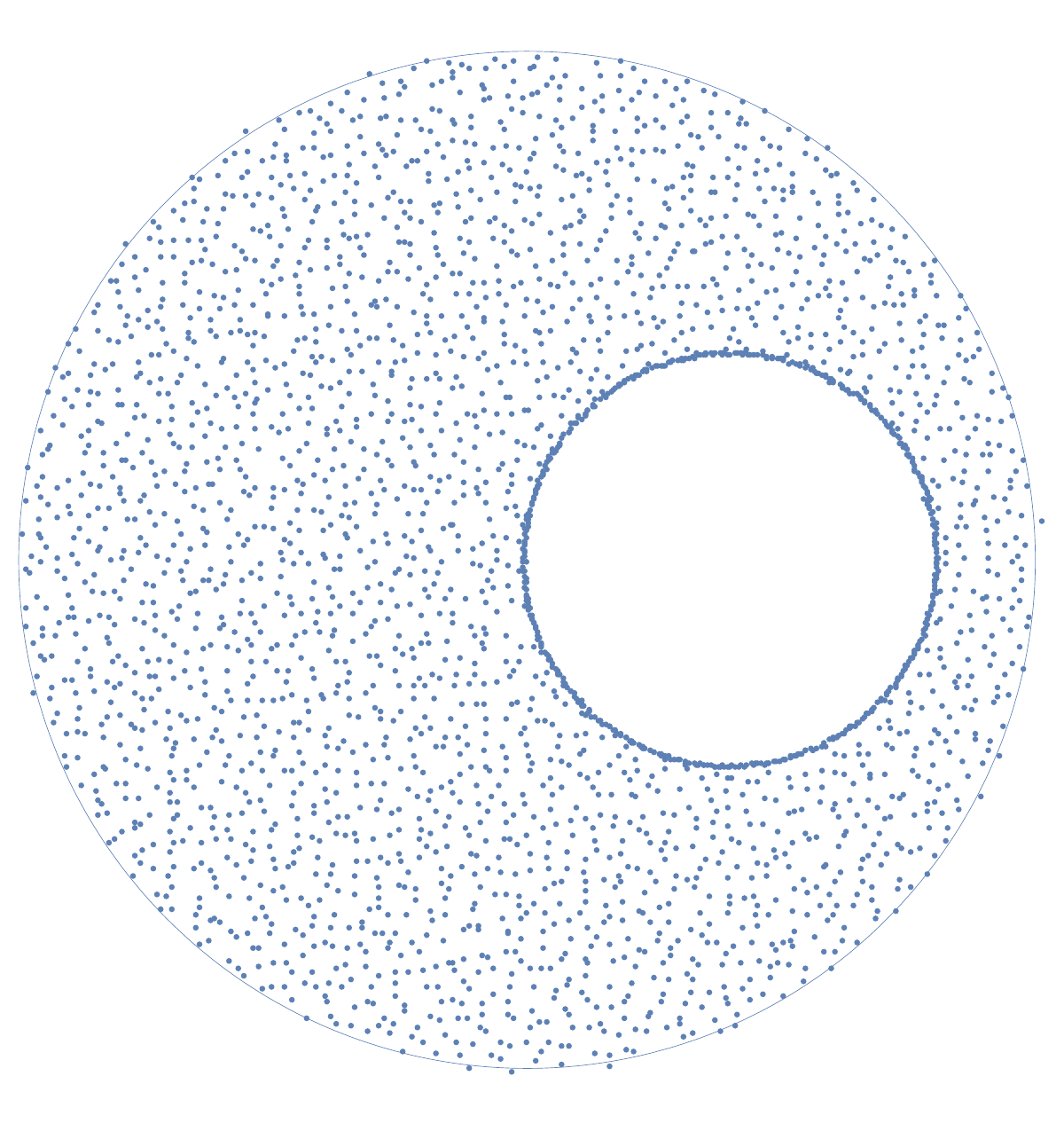}};
\node at (0,1.8) {\footnotesize $b=1$};
\end{tikzpicture} \hspace{-0.4cm}
\begin{tikzpicture}[slave]
\node at (0,0) {\includegraphics[width=3.4cm]{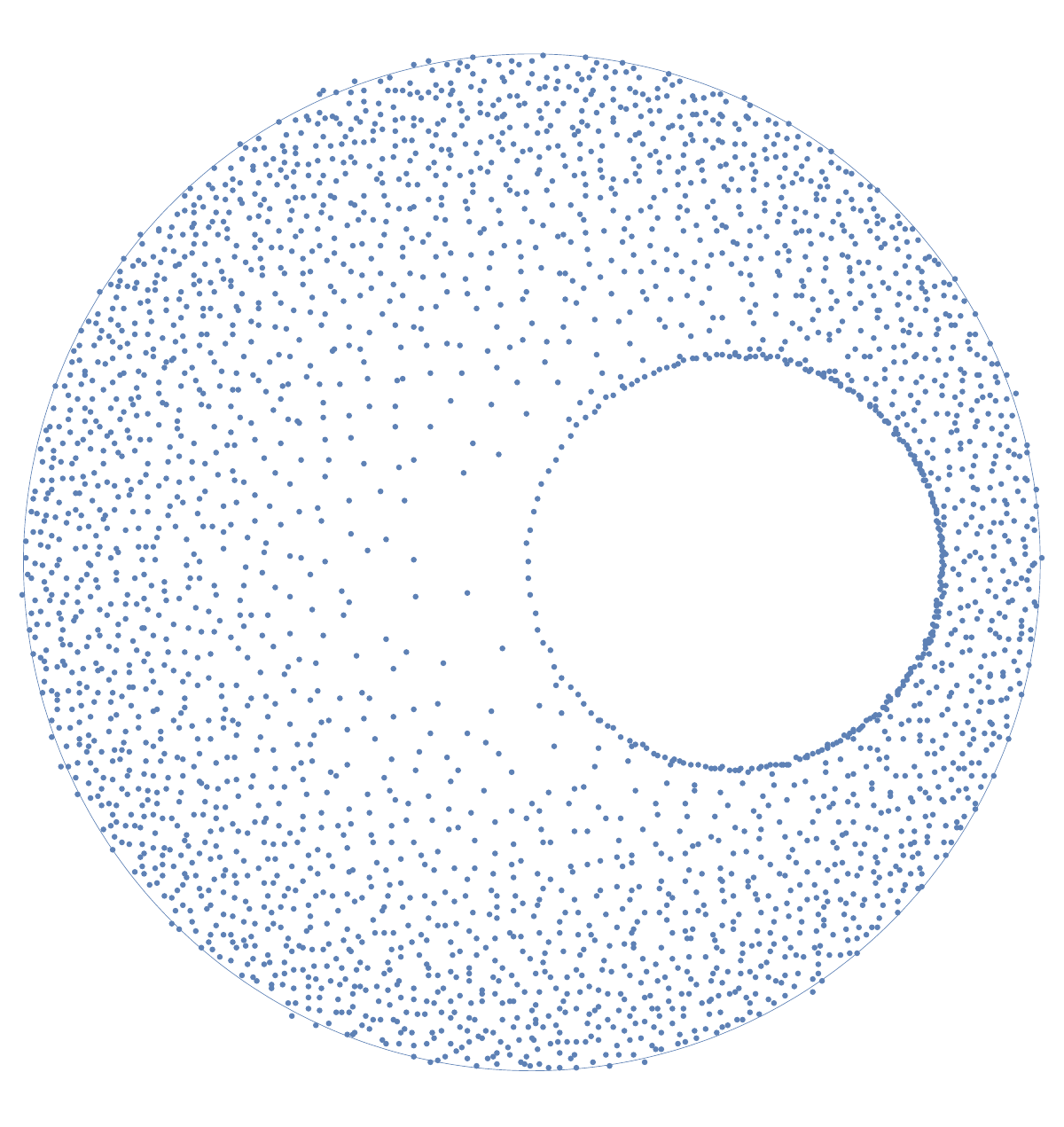}};
\node at (0,1.8) {\footnotesize $b=2$};
\end{tikzpicture} \hspace{-0.4cm}
\begin{tikzpicture}[slave]
\node at (0,0) {\includegraphics[width=3.4cm]{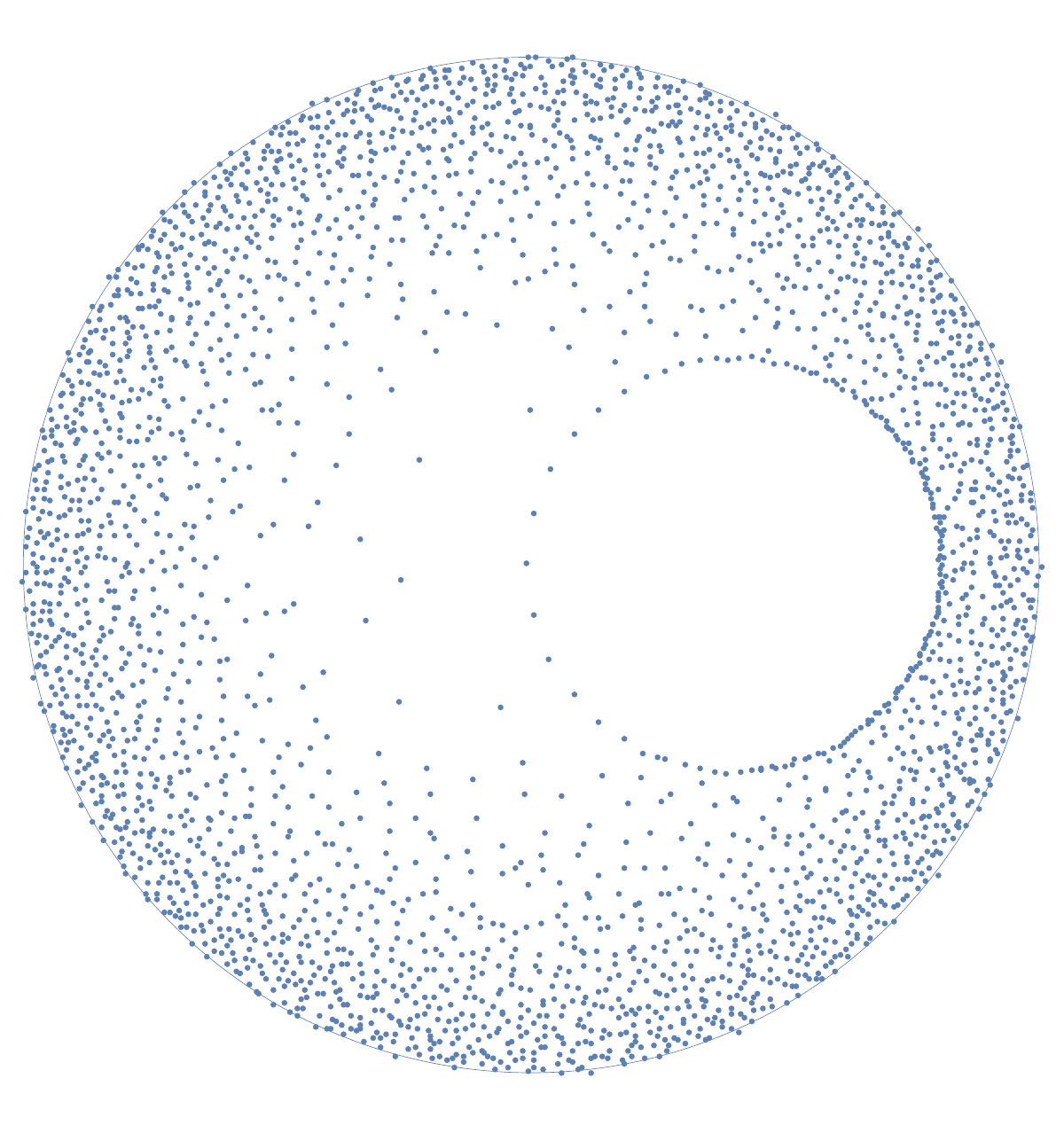}};
\node at (0,1.8) {\footnotesize $b=3$};
\end{tikzpicture}  \hspace{0.5cm}
\begin{tikzpicture}[slave]
\node at (0,0) {\includegraphics[width=4.9cm]{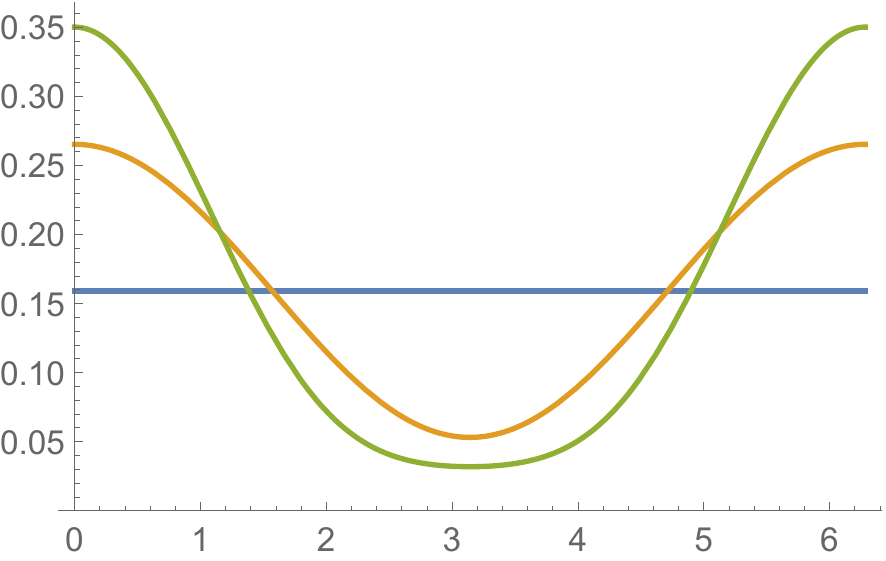}};
\node at (0.1,0.15) {\footnotesize $b=1$};
\node at (0.1,-0.58) {\footnotesize $b=2$};
\node at (0.1,-1.1) {\footnotesize $b=3$};
\node at (0.1,1.8) {\footnotesize Theorem \ref{thm:ML disk nu}};
\end{tikzpicture}
\end{center}
\caption{\label{fig:ML disk points} Left: The Mittag-Leffler ensemble conditioned on $\# \{z_{j}\in U\} = 0$ with $U=\{z:|z-x_{0}|<a\}$, $x_{0}=a=0.4b^{-\frac{1}{2b}}$ and $b\in \{1,2,3\}$. Right: the normalized density $\theta \mapsto \frac{d\nu(z)/d\theta}{\nu(\partial U)}$ with $z=x_{0}+ae^{i\theta}$, $x_{0}=a=0.4\smash{b^{-\frac{1}{2b}}}$ and $b=1,2,3$.}
\end{figure}
We start with the case where $U$ is a disk, i.e. $U:=\{z:|z-(x_{0}+i y_{0})|<a \}$ for some $x_{0},y_{0}\in \R$. Since $\mu$ is rotation-invariant, we assume without loss of generality that $y_{0}=0$ and $x_{0}\geq 0$.
\begin{theorem}\label{thm:ML disk nu}
Let $b\in \N_{>0}$, $x_{0} \geq 0$ and $a >0$ be such that $U:=\{z:|z-x_{0}|<a \} \subset S$ (i.e. $x_{0}+a \leq b^{-\frac{1}{2b}}$), and let $\nu=\mathrm{Bal}(\mu|_{U},\partial U)$. Then for $z=x_{0}+a e^{i\theta}$, $\theta \in [0,2\pi)$, we have $d\nu(z) = \big( c_{0} + 2 \sum_{\ell=1}^{b-1} c_{\ell} \cos (\ell \theta) \big)\frac{d\theta}{\pi}$, where
\begin{align}\label{def of cj DISK intro}
c_{\ell} = \frac{b^{2}}{a^{\ell}} \sum_{\substack{k=0 \\ k-\ell \, \mathrm{even}}}^{b-1} \binom{b-1}{k} \binom{k}{\frac{k-\ell}{2}} x_{0}^{k} \sum_{m=0}^{b-1-k}\binom{b-1-k}{m}\frac{x_{0}^{2(b-1-k-m)}a^{2+\ell+k+2m}}{2+\ell+k+2m},
\end{align}
see also Figure \ref{fig:ML disk points}.
\end{theorem}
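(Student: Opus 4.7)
The plan is to compute $\nu$ explicitly via the Poisson integral representation of the balayage from a disk, and then carry out a Fourier/binomial expansion of $|w|^{2b-2}$.

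By rotation-invariance of $\mu$ about $0$, we may and do assume $y_{0}=0$. For $U=D(x_{0},a)$ and $w\in U$, the balayage $\mathrm{Bal}(\delta_{w},\partial U)$ coincides with the Poisson kernel $P_{U}(w,z)\,d\theta=\tfrac{1}{2\pi}\tfrac{a^{2}-|w-x_{0}|^{2}}{|z-w|^{2}}\,d\theta$ on $\partial U$, where $z=x_{0}+ae^{i\theta}$, $\theta\in(-\pi,\pi]$. Indeed, for $z\in \overline{U}^{c}$ the map $w\mapsto \log\tfrac{1}{|z-w|}$ is harmonic in a neighborhood of $\overline{U}$, so the Poisson integral formula gives $\log\tfrac{1}{|z-w|}=\int_{\partial U}\log\tfrac{1}{|z-z'|}\,P_{U}(w,z')\,d\theta$, which together with continuity on $\partial U$ and Proposition \ref{prop:def of bal} (with $c^{\delta_{w}}_{U}=0$ since $U$ is bounded) identifies $\mathrm{Bal}(\delta_{w},\partial U)$. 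Linearity of balayage then yields
\begin{align*}
\frac{d\nu(z)}{d\theta}=\int_{U}P_{U}(w,z)\,d\mu(w)=\frac{b^{2}}{\pi}\int_{0}^{a}\!\!\int_{0}^{2\pi}P_{U}(w,z)\,|w|^{2b-2}\,r\,d\phi\,dr,\quad w=x_{0}+re^{i\phi}.
\end{align*}

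Next, I would combine the Fourier series $P_{U}(w,z)=\tfrac{1}{2\pi}\sum_{n\in \mathbb{Z}}(r/a)^{|n|}e^{in(\theta-\phi)}$ with the trinomial expansion $|w|^{2b-2}=(x_{0}^{2}+2x_{0}r\cos\phi+r^{2})^{b-1}$. Three successive binomial expansions — first of $((x_{0}^{2}+r^{2})+2x_{0}r\cos\phi)^{b-1}$, then of $(2\cos\phi)^{k}=\sum_{j=0}^{k}\binom{k}{j}e^{i(k-2j)\phi}$, and finally of $(x_{0}^{2}+r^{2})^{b-1-k}$ — identify the Fourier coefficient of $e^{i\ell\phi}$ in $|w|^{2b-2}$ as
\begin{align*}
\sum_{\substack{k=0\\ k-\ell\text{ even}}}^{b-1}\binom{b-1}{k}\binom{k}{(k-\ell)/2}x_{0}^{k}\sum_{m=0}^{b-1-k}\binom{b-1-k}{m}x_{0}^{2(b-1-k-m)}r^{k+2m}.
\end{align*}
Substituting into the double integral, the orthogonality $\int_{0}^{2\pi}e^{i(\ell-n)\phi}d\phi=2\pi\delta_{n,\ell}$ collapses the Fourier sum, while $\int_{0}^{a}r^{k+2m+|\ell|+1}dr=a^{k+2m+|\ell|+2}/(k+2m+|\ell|+2)$ evaluates the radial integral. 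Because the coefficients are symmetric under $\ell\mapsto -\ell$ (as $x_{0},a,r\in \R$), combining the $\pm\ell$ contributions produces $d\nu(z)/d\theta=\tfrac{1}{\pi}(c_{0}+2\sum_{\ell=1}^{b-1}c_{\ell}\cos(\ell\theta))$ with $c_{\ell}$ as stated; the cutoff $\ell\leq b-1$ follows from $|\ell|\leq k\leq b-1$.

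The main obstacle is choosing the right algebraic expansion in the second step: expanding $|w|^{2b-2}=(w\bar{w})^{b-1}$ around $0$ gives an equivalent but less compact formula, and reconciling it with the stated form requires a nontrivial combinatorial identity. The trinomial expansion about $x_{0}$ is what naturally generates the binomial $\binom{k}{(k-\ell)/2}$ together with the parity constraint ``$k-\ell$ even'' directly from $(2\cos\phi)^{k}$. No other step should cause difficulty — the Poisson representation of balayage from a disk is classical, and the remaining manipulations are routine bookkeeping of finite sums once the expansion is in place.
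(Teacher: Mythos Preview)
Your proof is correct but follows a genuinely different route from the paper. The paper proves this theorem via the moment-matching method of Subsection~\ref{section:ansatz method}: it posits the ansatz $d\hat\nu(z)=(c_{0}+2\sum_{\ell=1}^{b-1}c_{\ell}\cos(\ell\theta))\tfrac{d\theta}{\pi}$, computes $\int_{U}z^{n}\,d\mu$ and $\int_{\partial U}z^{n}\,d\hat\nu$ explicitly (Lemmas~\ref{lemma:moments of mu DISK} and~\ref{lemma:moments of nu DISK}), identifies the $c_{\ell}$ so that all positive moments match, and then invokes Lemma~\ref{lemma: moment when U is bounded} to conclude $\hat\nu=\nu$. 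You instead use the Green-function approach of Subsection~\ref{subsection: green function method for nu}: the balayage from a disk is the Poisson kernel, so $d\nu/d\theta=\int_{U}P_{U}(w,z)\,d\mu(w)$, and the Fourier expansion of $P_{U}$ together with the trinomial expansion of $|w|^{2b-2}$ about $x_{0}$ yields the $c_{\ell}$ directly.

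Your approach is more direct here and has the conceptual advantage of not requiring an ansatz, but it exploits the explicit Poisson kernel specific to disks. The paper's moment-matching method is the workhorse it reuses for the ellipse, triangle, and complement-of-ellipse cases, where the Green function is far less tractable; choosing it for the disk keeps the exposition uniform. One small point worth making explicit in your write-up: the termwise interchange of the Poisson Fourier series with $\int_{0}^{a}\int_{0}^{2\pi}$ is cleanest justified Fourier-coefficient-by-coefficient in $\theta$ (via Fubini on the nonnegative integrand $P_{U}\,|w|^{2b-2}$), since the series $\sum(r/a)^{|n|}$ is not uniformly summable up to $r=a$.
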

\begin{remark}
In the particular case where $x_{0}=a$, we have $0\in \partial U$. Since $b \geq 1$, from \eqref{mu S ML} one expects $d\nu(z)/d\theta$ to attain its minimum at $z=0$, and the numerics suggest that $d\nu(z)/d\theta|_{z=0}>0$ (see also Figure \ref{fig:ML disk points}). This inequality is consistent with Conjecture \ref{conj:behavior of balayage measure} (with $p=2$ and $b\in \N_{>0}$). 
\end{remark}

\begin{theorem}\label{thm:ML disk C}
Fix $\beta>0$. Let $b\in \N_{>0}$, $x_{0} \geq 0$ and $a >0$ be such that $x_{0}+a \leq b^{-\frac{1}{2b}}$, so that $U:=\{z:|z-x_{0}|<a \} \subset S$. As $n \to +\infty$, we have $\mathbb{P}(\# \{z_{j}\in U\} = 0) = \exp \big( -C n^{2}+o(n^{2}) \big)$, where
\begin{align*}
C = \frac{\beta}{4} \bigg\{ & 2\sum_{j=0}^{b} \binom{b}{j}a^{j}x_{0}^{j}(x_{0}^{2}+a^{2})^{b-j} \bigg[ c_{0} \binom{j}{\frac{j}{2}} \mathbf{1}_{j \, \mathrm{even}} + 2 \sum_{\ell=1}^{b-1} c_{\ell} \binom{j}{\frac{j+\ell}{2}} \mathbf{1}_{j+\ell \, \mathrm{even}} \bigg] \\
& - b^{2} \sum_{j=0}^{b-1} \binom{2b-1}{2j} \binom{2j}{j} \sum_{k=0}^{2b-1-2j} \binom{2b-1-2j}{k} \frac{x_{0}^{2(2b-1-j-k)}a^{2(1+j+k)}}{1+j+k} \bigg\}.
\end{align*}
($\mathbf{1}_{j \, \mathrm{even}}:=1$ if $j$ is even and $0$ otherwise.) In particular, for $b=1,2,3$, we have $C|_{b=1} = \frac{\beta a^{4}}{8}$ and
\begin{align*}
C|_{b=2} = \frac{\beta a^{4}}{4}(a^{4}+8a^{2}x_{0}^{2}+8x_{0}^{4}), \quad C|_{b=3} = \frac{3\beta a^{4}}{8}(a^{8}+24a^{6}x_{0}^{2}+102a^{4}x_{0}^{4}+108a^{2}x_{0}^{6}+27x_{0}^{8}).
\end{align*}
\end{theorem}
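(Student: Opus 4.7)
The plan is to apply Theorem~\ref{thm:general pot} (i) with $Q(z)=|z|^{2b}$ and the bounded disk $U$. Since $U$ has no unbounded component, $c^{\hspace{0.02cm}\mu}_{U}=0$; Assumption~\ref{ass:Q} is verified for $b\in\N_{>0}$ (where H\"older continuity is automatic) and Assumption~\ref{ass:U2} is immediate because $\partial U$ is a circle. The task then reduces to computing
\[
\int_{\partial U}|z|^{2b}\,d\nu(z)\;-\;\int_{U}|z|^{2b}\,d\mu(z),
\]
with $\nu$ supplied explicitly by Theorem~\ref{thm:ML disk nu}.

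For the boundary integral, parametrize $z=x_{0}+ae^{i\theta}$, so that $|z|^{2}=x_{0}^{2}+a^{2}+2ax_{0}\cos\theta$ and
\[
|z|^{2b}=\sum_{j=0}^{b}\binom{b}{j}(x_{0}^{2}+a^{2})^{b-j}(2ax_{0})^{j}\cos^{j}\theta.
\]
Using $\cos^{j}\theta=2^{-j}\sum_{k=0}^{j}\binom{j}{k}e^{i(j-2k)\theta}$ and orthogonality against $\cos(\ell\theta)$, one obtains
\[
\int_{-\pi}^{\pi}\cos^{j}\theta\cos(\ell\theta)\tfrac{d\theta}{\pi}=\tfrac{1}{2^{j-1}}\binom{j}{(j+\ell)/2}\mathbf{1}_{j+\ell\text{ even}},\qquad \ell\geq 0.
\]
Inserting the Fourier expansion $d\nu(z)=\bigl(c_{0}+2\sum_{\ell=1}^{b-1}c_{\ell}\cos(\ell\theta)\bigr)\tfrac{d\theta}{\pi}$ from Theorem~\ref{thm:ML disk nu} and collecting the factors $(2ax_{0})^{j}/2^{j-1}=2(ax_{0})^{j}$ produces exactly the first summand in the claimed formula.

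For the bulk integral, parametrize $U$ by $z=x_{0}+re^{i\theta}$ with $r\in(0,a)$, so that $|z|^{4b-2}=(x_{0}^{2}+r^{2}+2x_{0}r\cos\theta)^{2b-1}$. Expanding in the binomial, integrating $\theta$ first (only $j=2i$ even survives, contributing $2\pi\cdot 2^{-2i}\binom{2i}{i}$), then expanding $(x_{0}^{2}+r^{2})^{2b-1-2i}$ binomially and computing $\int_{0}^{a}r^{2(i+k)+1}\,dr=\tfrac{a^{2(i+k+1)}}{2(i+k+1)}$, and finally multiplying by $b^{2}/\pi$, gives the second summand after relabeling $i=j$. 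For $b\in\{1,2,3\}$, the formula reduces as stated by direct substitution of $c_{\ell}$ from \eqref{def of cj DISK intro} (many sums become trivially short) and simplification.

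The calculation presents no real obstacle; it is a double-binomial bookkeeping problem, and the principal effort is aligning indices so that the parity conditions $\mathbf{1}_{j\text{ even}}$ and $\mathbf{1}_{j+\ell\text{ even}}$ emerge from the angular integration exactly as written in the theorem statement. The cleanest organization is therefore to expand boundary and bulk integrals independently, match them to the two summands, and verify the small-$b$ specializations separately.
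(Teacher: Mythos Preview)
Your proposal is correct and follows essentially the same approach as the paper: apply Theorem~\ref{thm:general pot} (i), then compute $\int_{\partial U}|z|^{2b}d\nu(z)$ and $\int_{U}|z|^{2b}d\mu(z)$ separately by binomial expansion of $|x_{0}+re^{i\theta}|^{2}$ and angular integration, with the balayage density taken from Theorem~\ref{thm:ML disk nu}. The paper organizes the expansions using $e^{i\theta}+e^{-i\theta}$ rather than $\cos\theta$, but the computations are otherwise identical.
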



\subsubsection{The ellipse}

Recall that $\mu$ and $S$ are given by \eqref{mu S ML}.
\begin{theorem}\label{thm:ML ellipse nu}
Let $b\in \N_{>0}$, and $a,c \in (0, b^{-\frac{1}{2b}}]$. Then $U := \{z:(\frac{\re z}{a})^{2}+(\frac{\im z}{c})^{2}<1\} \subset S$, and the measure $\nu:=\mathrm{Bal}(\mu|_{U},\partial U)$ is given by $d\nu(z) = \big( c_{0} + 2 \sum_{\ell=1}^{b} c_{\ell} \cos (2\ell \theta) \big)\frac{d\theta}{\pi}$ for $z = a \cos \theta + i c \sin \theta$, $\theta \in [0,2\pi)$, where
\begin{align}
& c_{0} = e_{0}, \qquad c_{\ell} = \frac{e_{\ell}}{\alpha^{\ell}\gamma^{-\ell}+\alpha^{-\ell}\gamma^{\ell}} \mbox{ for } \ell=1,\ldots,b, \label{def of ej}  \\
& e_{\ell} = \frac{(\alpha^{2}-\gamma^{2})b^{2}}{2} \sum_{j=0}^{b-1} \binom{b-1}{j} (\alpha^{2}+\gamma^{2})^{b-1-j} \sum_{\substack{k=0 \, \mathrm{or} \, 1 \\ j-k \, \mathrm{even}}}^{j} \frac{1+\mathbf{1}_{k \neq 0}}{2} (\alpha^{j+k} \gamma^{j-k} + \alpha^{j-k} \gamma^{j+k}) \binom{j}{\frac{j-k}{2}} d_{\ell}^{(k)}, \nonumber
\end{align}
with $\alpha = \frac{a+c}{2}$, $\gamma=\frac{a-c}{2}$, $\mathbf{1}_{k \neq 0}:=1$ if $k \neq 0$ and $\mathbf{1}_{k \neq 0}:=0$ otherwise, $d_{0}^{(k)} = d_{1}^{(k)} = \ldots = d_{k-1}^{(k)} = 0$, and for $\ell \in \{k,k+1,\ldots,b\}$ we have
\begin{align}\label{def of d ell pkp}
d_{\ell}^{(k)} = \frac{\binom{2\ell}{\ell-k}}{b+\ell} + 2\ell  \sum_{m=0}^{\ell-k-1}  \frac{(-1)^{\ell-k-m}}{b+k+m} \frac{(k+\ell+m-1)!}{m!(2k+m)!(\ell-k-m)!}.
\end{align}
In particular, with $\rho := \gamma/\alpha \in (-1,1)$ (if $\rho=0$ then $\rho^{-1}:=+\infty$), we have
\begin{align*}
& \frac{d\nu(z)|_{b=1}}{d\theta} = \frac{\alpha^{2}}{2\pi}(1-\rho^{2})\bigg( 1 - \frac{2\cos(2\theta)}{\rho^{-1}+\rho} \bigg), \qquad \frac{d\nu(z)|_{b=2}}{d\theta} = \frac{\alpha^{4}}{\pi}(1-\rho^{4}) \bigg( 1 - \frac{2 \cos(4\theta)}{\rho^{-2}+\rho^{2}} \bigg), \\
& \frac{d\nu(z)|_{b=3}}{d\theta} = \frac{3\alpha^{6}}{\pi} \bigg\{ \frac{1+3\rho^{2}-3\rho^{4}-\rho^{6}}{2} \bigg( 1 - \frac{2\cos(4\theta)}{\rho^{-2}+\rho^{2}} \bigg) + (1-\rho^{6}) \bigg( \frac{\cos(2\theta)}{\rho^{-1}+\rho} - \frac{\cos(6\theta)}{\rho^{-3}+\rho^{3}} \bigg) \bigg\},
\end{align*}
see also Figure \ref{fig:ML ellipse points}.
\end{theorem}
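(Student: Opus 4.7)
The plan is to use the uniqueness characterization of balayage in Proposition \ref{prop:def of bal}: since both $\mu|_U$ and the candidate measure $\nu$ have bounded densities and compact support, their logarithmic potentials are continuous on $\C$, so it suffices to verify $\nu(\partial U)=\mu(U)$ and $p_\nu(z)=p_{\mu|_U}(z)$ for every $z\in\partial U$; the identity then propagates to $U^c$ by harmonicity outside $\partial U$ together with the correct $-\mu(U)\log|z|$ behavior at infinity.

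The computational engine is the Joukowski-type map $\psi(t)=\alpha t+\gamma/t$, which parametrizes $\partial U$ via $z(\theta)=\psi(e^{i\theta})$, maps $\{|t|>1\}$ biholomorphically onto $\C\setminus\overline U$, and also maps the annulus $\{\sqrt{|\gamma/\alpha|}<|t|<1\}$ biholomorphically onto $U$ minus the (measure-zero) focal segment, with Jacobian $d^{2}w=|\alpha-\gamma/t^{2}|^{2}\,d^{2}t$. The factorization $\psi(t_{1})-\psi(t_{2})=\tfrac{\alpha}{t_{1}}(t_{1}-t_{2})\bigl(t_{1}-\gamma/(\alpha t_{2})\bigr)$ combined with the Taylor series of $\log(1-x)$ gives, for $t_{1}=e^{i\theta}$ and $|t_{2}|,|\gamma/(\alpha t_{2})|\le 1$ (in an integrated sense on the unit circle), the Fourier expansion
\[
\log|z(\theta)-\psi(t_{2})| \;=\; \log\alpha \;-\; \re\sum_{n\ge1}\frac{t_{2}^{n}+(\gamma/(\alpha t_{2}))^{n}}{n}\,e^{-in\theta}.
\]
Taking $t_{2}=e^{i\phi}$ and integrating against the candidate $d\nu=\bigl(c_{0}+2\sum_{\ell=1}^{b}c_{\ell}\cos 2\ell\phi\bigr)d\phi/\pi$, Fourier orthogonality gives
\[
p_{\nu}(z(\theta)) \;=\; -2c_{0}\log\alpha \;+\; \sum_{\ell=1}^{b}\frac{c_{\ell}}{\ell}\,\frac{\alpha^{2\ell}+\gamma^{2\ell}}{\alpha^{2\ell}}\cos(2\ell\theta).
\]
For $p_{\mu|_{U}}(z(\theta))$, parametrize $w=\psi(s)$ on the annulus with $s=\rho e^{i\phi}$ and apply the same expansion with $t_{2}=s$. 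The remaining integrand $|\psi(s)|^{2b-2}|\alpha-\gamma/s^{2}|^{2}$ is a polynomial in $\cos 2\phi$ of degree at most $b$, since $|\psi(s)|^{2}=\alpha^{2}\rho^{2}+\gamma^{2}/\rho^{2}+2\alpha\gamma\cos 2\phi$ and $|\alpha-\gamma/s^{2}|^{2}=\alpha^{2}+\gamma^{2}/\rho^{4}-2\alpha\gamma\cos(2\phi)/\rho^{2}$. Hence the $\phi$-integration annihilates every Fourier mode $\cos(n\theta)$ except $n=2\ell$ with $1\le \ell\le b$, yielding
\[
p_{\mu|_{U}}(z(\theta)) \;=\; -\mu(U)\log\alpha \;+\; \sum_{\ell=1}^{b}A_{\ell}\cos(2\ell\theta),
\]
where each $A_{\ell}$ reduces to an explicit combinatorial sum whose remaining $\rho$-integrals over $(\sqrt{|\gamma/\alpha|},1)$ are elementary monomial integrals.

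Matching Fourier coefficients produces $2c_{0}=\mu(U)$ (hence the formula for $c_{0}=e_{0}$) and $\tfrac{c_{\ell}}{\ell}\tfrac{\alpha^{2\ell}+\gamma^{2\ell}}{\alpha^{2\ell}}=A_{\ell}$; writing $c_{\ell}=e_{\ell}\,\alpha^{\ell}\gamma^{\ell}/(\alpha^{2\ell}+\gamma^{2\ell})$ converts this into the stated formulas \eqref{def of ej}--\eqref{def of d ell pkp}. The main obstacle is this last combinatorial reorganization: the direct binomial expansion of $|\psi(s)|^{2b-2}$ and the subsequent $\phi$- and $\rho$-integrations produce an expression equivalent to but superficially different from the announced one, and identifying the resulting $\rho$-integrals of $\rho^{m}$-type factors on $(\sqrt{|\gamma/\alpha|},1)$ with the coefficients $d_{\ell}^{(k)}$ of \eqref{def of d ell pkp} is the nontrivial bookkeeping step. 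As a sanity check, the $b=1$ case can be done directly, giving $c_{0}=ac/2$ and $c_{1}=-(\alpha^{2}-\gamma^{2})\alpha\gamma/(2(\alpha^{2}+\gamma^{2}))$, in agreement with the stated $b=1$ formula.
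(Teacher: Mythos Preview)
Your approach is genuinely different from the paper's and is sound in principle, but the step you label ``nontrivial bookkeeping'' is in fact the entire technical content of the theorem, and you have not carried it out.

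The paper does \emph{not} match potentials on $\partial U$. Instead it uses the moment criterion of Lemma~\ref{lemma: moment when U is bounded}: with the ansatz $d\hat\nu=(c_0+2\sum_{\ell=1}^{b}c_\ell\cos 2\ell\theta)\tfrac{d\theta}{\pi}$, it computes $\int_{\partial U}z^{n}d\hat\nu$ and $\int_{U}z^{n}d\mu$ in the elliptic coordinates $z=ar\cos\theta+icr\sin\theta$, $r\in(0,1)$. The radial integral $\int_0^1 r^{n+2b-1}dr=\tfrac{1}{n+2b}$ forces the moments of $\mu$ to involve $\tfrac{1}{\frac{n}{2}+b}\binom{n}{\frac{n}{2}-k}$, whereas the moments of $\hat\nu$ involve $\binom{n}{\frac{n}{2}-\ell}$. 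The heart of the proof is Lemma~\ref{lemma:dpkp}, which establishes the identity
\[
\frac{1}{\frac{n}{2}+b}\binom{n}{\frac{n}{2}-k}=\sum_{\ell=k}^{b} d_{\ell}^{(k)}\binom{n}{\frac{n}{2}-\ell}\qquad (n\in\N)
\]
by setting up the linear system obtained from $n=2k,2k+2,\dots,2b$, exhibiting an explicit candidate inverse $B$ for the resulting lower-triangular matrix $A$, and verifying $BA=I$ via the finite-difference identity $\sum_{j=0}^{m}(-1)^{j}\binom{m}{j}P(j)=0$ for $\deg P\le m-1$. The paper explicitly flags this matrix inversion as ``the most important technical challenge.'' The formula~\eqref{def of d ell pkp} is the \emph{output} of this inversion; it is not something one can simply recognize after the fact.

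Your Joukowski route is elegant and correctly establishes that $p_\nu-p_{\mu|_U}$ is a trigonometric polynomial of degree $\le 2b$ on $\partial U$, so that matching finitely many Fourier coefficients suffices. But your radial variable runs over $(\sqrt{|\gamma/\alpha|},1)$ rather than $(0,1)$, and the resulting integrals $\int_{\sqrt{|\gamma/\alpha|}}^{1}\rho^{2m+1}d\rho=\tfrac{1-(\gamma/\alpha)^{m+1}}{2m+2}$ produce an expression for $A_\ell$ whose structure is not visibly that of~\eqref{def of ej}--\eqref{def of d ell pkp}. Showing the two agree for general $b$ requires a combinatorial identity of the same depth as Lemma~\ref{lemma:dpkp}; your proposal supplies neither that identity nor an alternative closed form with a proof. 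The $b=1$ check you give is correct but does not address this, since for $b=1$ both approaches are trivial (a single coefficient).
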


\begin{figure}
\begin{center}
\hspace{-1cm}\begin{tikzpicture}[master]
\node at (0,0) {\includegraphics[width=3.4cm]{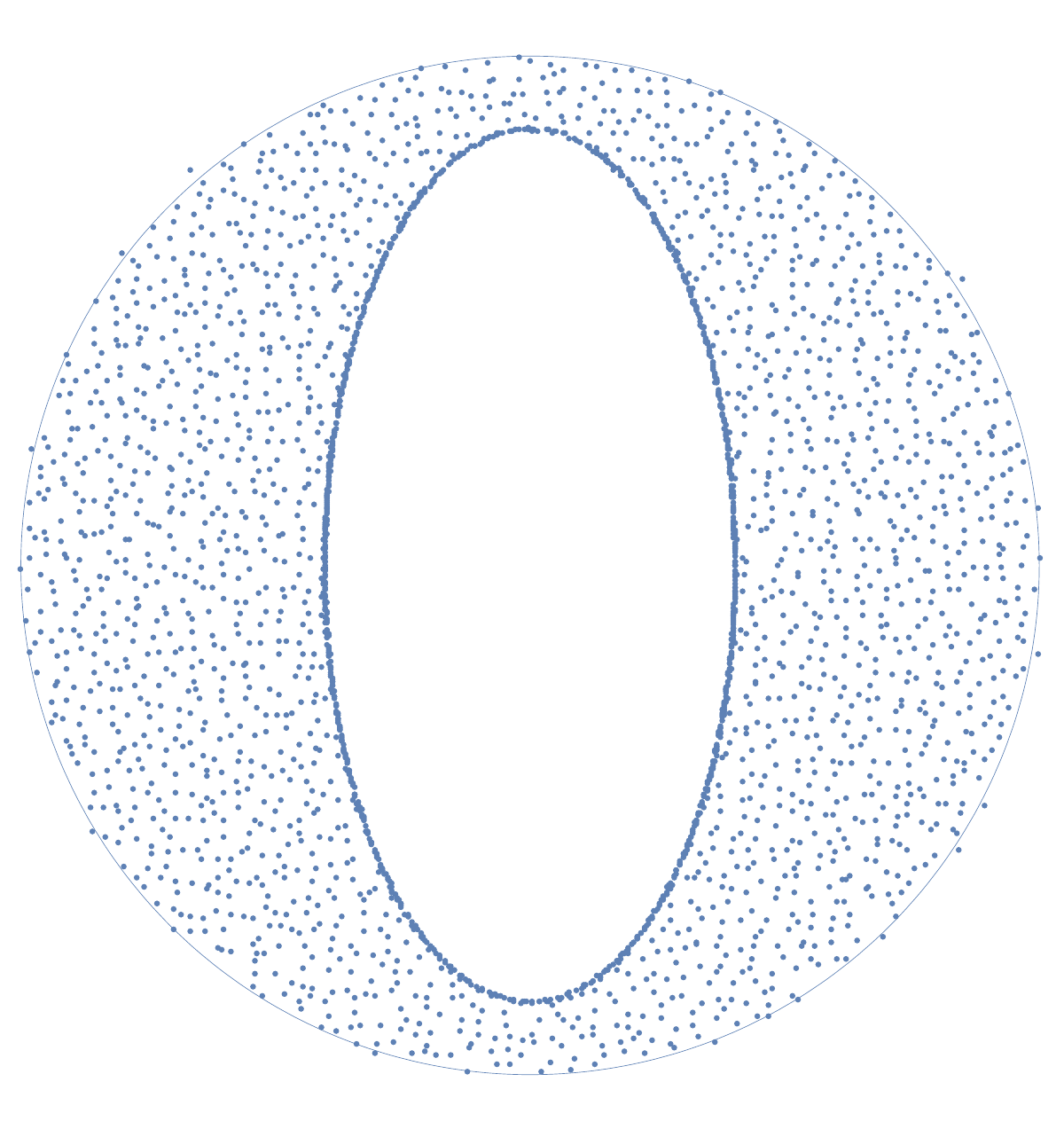}};
\node at (0,1.8) {\footnotesize $b=1$};
\end{tikzpicture}\hspace{-0.4cm}
\begin{tikzpicture}[slave]
\node at (0,0) {\includegraphics[width=3.4cm]{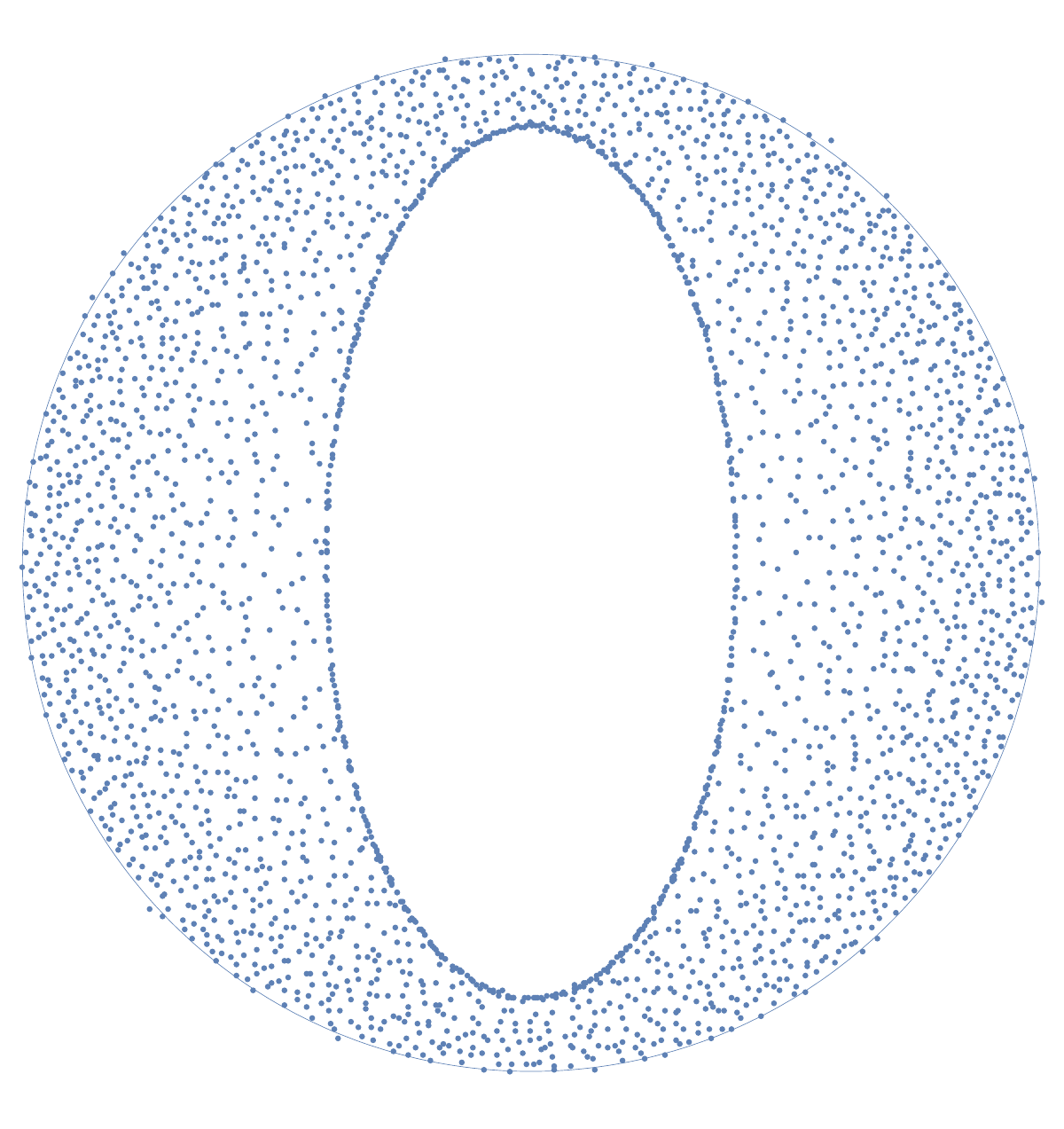}};
\node at (0,1.8) {\footnotesize $b=2$};
\end{tikzpicture}\hspace{-0.4cm}
\begin{tikzpicture}[slave]
\node at (0,0) {\includegraphics[width=3.4cm]{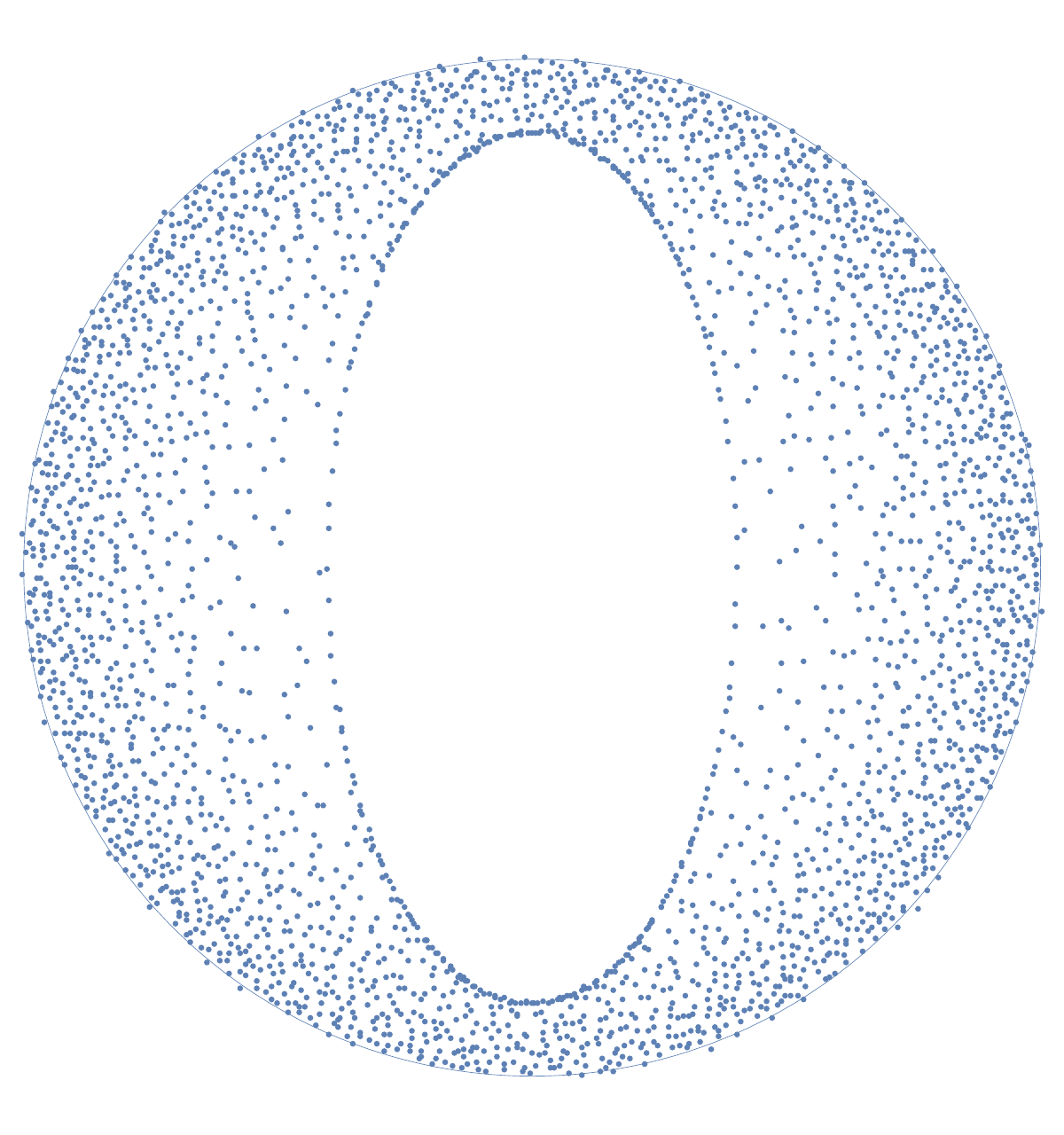}};
\node at (0,1.8) {\footnotesize $b=3$};
\end{tikzpicture}\hspace{0.5cm}
\begin{tikzpicture}[slave]
\node at (0,0) {\includegraphics[width=4.9cm]{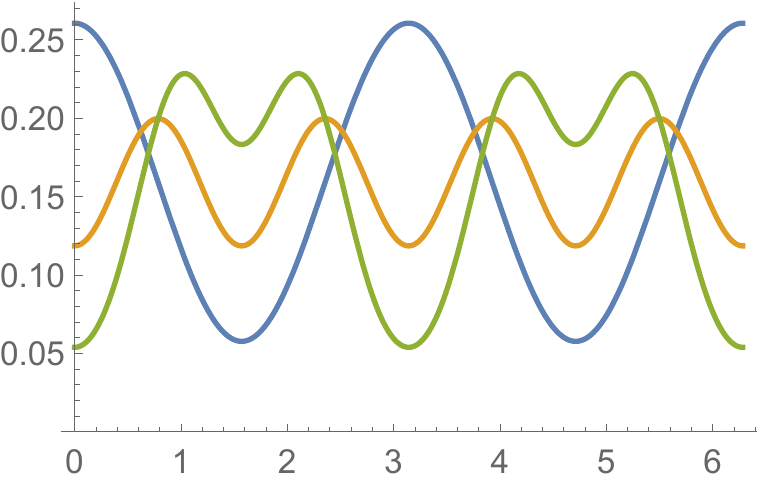}};
\node at (-0.9,-0.78) {\footnotesize $b=1$};
\node at (1.8,-1) {\footnotesize $b=2$};
\draw[->-=1] (1.8,-0.9)--(2.3,-0.05);
\node at (0.2,-0.78) {\footnotesize $b=3$};
\node at (0.2,1.8) {\footnotesize Theorem \ref{thm:ML ellipse nu}};
\end{tikzpicture}
\end{center}
\caption{\label{fig:ML ellipse points} Left: The Mittag-Leffler ensemble conditioned on $\# \{z_{j}\in U\} = 0$ with $U=\{z:(\frac{\re z}{a})^{2}+(\frac{\im z}{c})^{2}<1\}$, $a=0.4b^{-\frac{1}{2b}}$, $c=0.85b^{-\frac{1}{2b}}$ and $b\in \{1,2,3\}$. Right: the normalized density $\theta \mapsto \frac{d\nu(z)/d\theta}{\nu(\partial U)}$ with $z=a \cos \theta + i \, c \sin \theta$, $a=0.4b^{-\frac{1}{2b}}$, $c=0.85b^{-\frac{1}{2b}}$ and $b=1,2,3$.}
\end{figure}

\begin{theorem}\label{thm:ML ellipse C}
Fix $\beta>0$. Let $b\in \N_{>0}$ and $a,c \in (0,b^{-\frac{1}{2b}}]$. Then $U := \{z:(\frac{\re z}{a})^{2}+(\frac{\im z}{c})^{2}<1\} \subset S$, and as $n \to +\infty$, we have $\mathbb{P}(\# \{z_{j}\in U\} = 0) = \exp \big( -C n^{2}+o(n^{2}) \big)$, where
\begin{align*}
C = \frac{\beta}{4} & \bigg\{ \sum_{j=0}^{b} \binom{b}{j} \alpha^{j} \gamma^{j} (\alpha^{2}+\gamma^{2})^{b-j} 2 \bigg( c_{0} \binom{j}{\frac{j}{2}} \mathbf{1}_{j \, \mathrm{even}} + \sum_{\ell=1}^{b} 2 c_{\ell} \binom{j}{\frac{j+\ell}{2}} \mathbf{1}_{j+\ell \, \mathrm{even}} \bigg) \\
& - (\alpha^{2}-\gamma^{2}) \frac{b}{2} \sum_{j=0}^{b-1} \binom{2b-1}{2j}\binom{2j}{j} \alpha^{2j}\gamma^{2j}(\alpha^{2}+\gamma^{2})^{2b-1-2j} \bigg\},
\end{align*}
where $c_{0},\ldots,c_{b}$ are given by \eqref{def of ej}, $\alpha = \frac{a+c}{2}$, $\gamma=\frac{a-c}{2}$, $\mathbf{1}_{j \, \mathrm{even}}:=1$ if $j$ is even and $\mathbf{1}_{j \, \mathrm{even}}:=0$ otherwise. In particular,
\begin{align*}
& C|_{b=1} = \frac{\beta (\alpha^{2}-\gamma^{2})^{3}}{8(\alpha^{2}+\gamma^{2})}, \qquad C|_{b=2} = \frac{\beta (\alpha^{4}-\gamma^{4})^{3}}{4(\alpha^{4}+\gamma^{4})}, \\
& C|_{b=3}  =  3\beta\frac{\alpha^{16} + \gamma^{16}  +  15(\alpha^{14}\gamma^{2}  + \alpha^{2}\gamma^{14})  +  46(\alpha^{12}\gamma^{4}  +  \alpha^{4}\gamma^{12})  +  43(\alpha^{10}\gamma^{6}  +  \alpha^{6}\gamma^{10})  +  42\alpha^{8}\gamma^{8}}{8(\alpha^{2}  -  \gamma^{2})^{-3}(\alpha^{10} + \alpha^{6}\gamma^{4} + \alpha^{4}\gamma^{6} + \gamma^{10} )}.
\end{align*}
\end{theorem}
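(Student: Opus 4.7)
The plan is to apply Theorem \ref{thm:general pot} (i) directly with $Q(z)=|z|^{2b}$. The potential is smooth (since $b\in \mathbb{N}_{>0}$), bounded below, and grows faster than $2\log|z|$, so Assumption \ref{ass:Q} holds. The ellipse $U$ has $C^2$ boundary (hence satisfies Assumption \ref{ass:U2}), is bounded and lies in $S$ with $\partial U\subset S$, so Assumption \ref{ass:U} holds. Since $U$ is bounded, $c^{\hspace{0.02cm}\mu}_{U}=0$, and the theorem reduces the problem to evaluating
\[
C \;=\; \frac{\beta}{4}\bigg(\int_{\partial U}|z|^{2b}\,d\nu(z) \;-\; \int_{U}|z|^{2b}\,d\mu(z)\bigg).
\]

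First I would handle the bulk integral $\int_{U}|z|^{2b}\,d\mu = \frac{b^{2}}{\pi}\int_{U}|z|^{4b-2}\,d^{2}z$. Passing to elliptic coordinates $x=ar\cos\phi$, $y=cr\sin\phi$ gives $d^{2}z = ac\,r\,dr\,d\phi$, and with $\alpha=(a+c)/2$, $\gamma=(a-c)/2$, the key identity
\[
a^{2}\cos^{2}\phi+c^{2}\sin^{2}\phi \;=\; (\alpha^{2}+\gamma^{2}) + 2\alpha\gamma\cos(2\phi)
\]
holds. The radial integral $\int_{0}^{1}r^{4b-1}dr = 1/(4b)$ is trivial. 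Binomial expansion of $[(\alpha^{2}+\gamma^{2})+2\alpha\gamma\cos(2\phi)]^{2b-1}$ combined with $\int_{0}^{2\pi}\cos^{k}(2\phi)\,d\phi = 2\pi\,2^{-k}\binom{k}{k/2}\mathbf{1}_{k\text{ even}}$ and the identity $ac = \alpha^{2}-\gamma^{2}$ produces exactly the subtracted term in the stated formula.

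Next I would compute the boundary integral. Parametrizing $\partial U$ by $z(\theta) = a\cos\theta + ic\sin\theta$, the same identity yields $|z(\theta)|^{2}=(\alpha^{2}+\gamma^{2})+2\alpha\gamma\cos(2\theta)$. Expanding $|z(\theta)|^{2b}$ by the binomial theorem and then linearizing each $\cos^{j}(2\theta)$ into the Fourier modes $\{\cos(2\ell\theta)\}_{0\leq \ell\leq j}$ via $\cos^{j}(2\theta)=2^{-j}\binom{j}{j/2}\mathbf{1}_{j\text{ even}}+2^{-j+1}\sum_{\ell\geq 1,\,j+\ell\text{ even}}^{j}\binom{j}{(j+\ell)/2}\cos(2\ell\theta)$ gives
\[
|z(\theta)|^{2b} \;=\; A_{0} + \sum_{\ell=1}^{b} A_{\ell}\cos(2\ell\theta),
\]
with $A_{\ell}$ expressible as the sums of binomial products that appear in the stated formula. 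Theorem \ref{thm:ML ellipse nu} provides $d\nu(z)/d\theta = \pi^{-1}(c_{0}+2\sum_{\ell=1}^{b}c_{\ell}\cos(2\ell\theta))$. By orthogonality of $\{1,\cos(2\theta),\ldots,\cos(2b\theta)\}$ on $[0,2\pi]$, the boundary integral collapses to $2A_{0}c_{0}+2\sum_{\ell=1}^{b}A_{\ell}c_{\ell}$, matching the added term in the stated formula. Combining the two pieces proves the general formula.

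The explicit cases $b=1,2,3$ then follow by substituting the closed forms of $c_{0},\ldots,c_{b}$ furnished by Theorem \ref{thm:ML ellipse nu} and simplifying. The main obstacle is purely computational: careful bookkeeping of the parity indicators $\mathbf{1}_{j\text{ even}}$ and $\mathbf{1}_{j+\ell\text{ even}}$ (which control which terms survive in each Fourier mode), and verifying the nontrivial algebraic cancellations in $\alpha,\gamma$ for $b=3$, where the numerator is a high-degree symmetric polynomial that must collapse to the factored form displayed. No new analytic input is needed beyond Theorems \ref{thm:general pot} and \ref{thm:ML ellipse nu} and elementary Fourier orthogonality.
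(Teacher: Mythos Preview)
Your proposal is correct and follows essentially the same approach as the paper: both arguments apply Theorem~\ref{thm:general pot}(i), reduce to computing $\int_{U}|z|^{2b}d\mu$ and $\int_{\partial U}|z|^{2b}d\nu$, pass to elliptic coordinates via the identity $|z(\theta)|^{2}=(\alpha^{2}+\gamma^{2})+2\alpha\gamma\cos(2\theta)$, expand binomially, and pair against the Fourier form of $\nu$ from Theorem~\ref{thm:ML ellipse nu}. The paper writes the trigonometric computations in complex exponential form rather than using real cosine orthogonality, but this is purely cosmetic.
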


\subsubsection{The square}
Recall that $\mu$ and $S$ are given by \eqref{mu S ML}.

\medskip Theorems \ref{thm:ML rectangle nu} and \ref{thm:ML rectangle C} below state results about the balayage measure $\mathrm{Bal}(\mu|_{U},\partial U)$ and the hole probability $\mathbb{P}(\# \{z_{j}\in U\} = 0)$ in the case where $U\subset S$ is an arbitrary rectangle. The formulas in the statements of Theorems \ref{thm:ML rectangle nu} and \ref{thm:ML rectangle C} are rather long. Corollaries \ref{coro:ML square nu} and \ref{coro:ML square C} are about the special case where $U$ is a square centered at $0$. This special case is worth being stated separately, because the corresponding formulas for $\mathrm{Bal}(\mu|_{U},\partial U)$ and $\mathbb{P}(\# \{z_{j}\in U\} = 0)$ are much simpler (due to significant simplifications).
\begin{corollary}\label{coro:ML square nu}
Let $b\in \N_{>0}$, $c>0$ be such that $U:=\{z:\re z \in (-\frac{c}{2},\frac{c}{2}), \im z \in (-\frac{c}{2},\frac{c}{2})\} \subset S$, and let $\nu = \mathrm{Bal}(\mu|_{U},\partial U)$. For $z=\frac{c}{2}+iy$, $y \in (-\frac{c}{2},\frac{c}{2})$, we have
\begin{align}\label{density square general b}
d\nu(z) & = \sum_{m=0}^{+\infty} \mathcal{C}_{b,m} \cos\bigg( \frac{y}{c}(1+2m)\pi \bigg)dy, 
\end{align}
where
\begin{align}
\mathcal{C}_{b,m} = & \frac{16 b^{2} c^{2b-1}}{\pi^{3} 2^{2b}}(-1)^{m} \sum_{\ell=0}^{b-1} \binom{b-1}{\ell} \sum_{v_{1}=0}^{\ell} \sum_{v_{2}=0}^{b-1-\ell} \frac{(2\ell)!}{(2\ell-2v_{1})!} \frac{(2b-2-2\ell)!}{(2b-2-2\ell-2v_{2})!} \frac{2^{2v_{1}+2v_{2}}(-1)^{v_{2}}}{\pi^{2v_{1}+2v_{2}}} \nonumber \\
\times & \bigg( \mathbf{1}_{v_{1}+v_{2} \, \mathrm{even}} \frac{\tanh(\frac{(1+2m)\pi}{2})}{(1+2m)^{2+2v_{1}+2v_{2}}} + \sum_{q=0}^{v_{1}-1} \frac{4(-1)^{v_{1}+q}}{\pi(1+2m)^{3+2v_{2}+2q}} \bigg( 1-\frac{1}{2^{2v_{1}-2q}} \bigg)\zeta(2v_{1}-2q) \bigg) \label{def Cbm}
\end{align}
and $d\nu(i^{j}z)=d\nu(z)$ for all $j\in \{0,1,2,3\}$ and $z\in \frac{c}{2}+ i (-\frac{c}{2}, \frac{c}{2})$, see also Figure \ref{fig:density square}. In \eqref{def Cbm}, $\zeta$ is the Riemann zeta function (see e.g. \cite[Chapter 25]{NIST} for the definition of $\zeta$).
\end{corollary}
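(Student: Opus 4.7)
The plan is to exploit the dihedral symmetry of the square to reduce the problem to computing a single cosine Fourier series on one side of $\partial U$, and then to use the defining property of balayage by testing against carefully chosen harmonic functions.

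First, since $\mu$ is rotation-invariant and $U$ is invariant under $z\mapsto iz$ and $z\mapsto\overline z$, the uniqueness part of Proposition \ref{prop:def of bal} forces $\nu$ to share both symmetries, which immediately yields $d\nu(i^j z)=d\nu(z)$. It therefore suffices to determine $d\nu$ on the right edge $\{c/2+iy:y\in(-c/2,c/2)\}$. The $y\mapsto-y$ symmetry makes this density even, and the two adjacent corners of $U$ are convex right angles, forcing the density to vanish there (consistent with Conjecture \ref{conj:behavior of balayage measure} with $p=4$, and verified a posteriori by the explicit formula). Hence the density admits a Fourier expansion using only odd-index cosines,
$$\frac{d\nu(c/2+iy)}{dy}=\sum_{m\ge 0}\mathcal{C}_{b,m}\cos\bigl((1+2m)\pi y/c\bigr).$$

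Next, I would extract $\mathcal{C}_{b,m}$ by testing $\nu$ against a tailored harmonic function. Setting $\alpha_m:=(1+2m)\pi/c$, define
$$h_m(x,y)=\frac{\cosh(\alpha_m x)}{\cosh(\alpha_m c/2)}\cos(\alpha_m y).$$
Then $\Delta h_m=0$; $h_m(x,\pm c/2)=0$ since $\cos((1+2m)\pi/2)=0$; and $h_m(\pm c/2,y)=\cos(\alpha_m y)$. The identity $\int_U h\,d\mu=\int_{\partial U}h\,d\nu$ for harmonic $h$, which follows from Proposition \ref{prop:def of bal} applied to the harmonic measures $\mathrm{Bal}(\delta_z,\partial U)$ and then integrated against $\mu(dz)$, combined with the $4$-fold symmetry of $\nu$ and the orthogonality $\int_{-c/2}^{c/2}\cos(\alpha_m y)\cos(\alpha_{m'}y)\,dy=(c/2)\delta_{mm'}$, yields
$$c\,\mathcal{C}_{b,m}=\frac{b^2/\pi}{\cosh(\alpha_m c/2)}\int_{-c/2}^{c/2}\!\int_{-c/2}^{c/2}\cosh(\alpha_m x)\cos(\alpha_m y)(x^2+y^2)^{b-1}\,dx\,dy.$$

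The final step is to compute the double integral in closed form. Expanding $(x^2+y^2)^{b-1}=\sum_{\ell=0}^{b-1}\binom{b-1}{\ell}x^{2\ell}y^{2(b-1-\ell)}$ splits it into products of one-dimensional integrals $\int_{-c/2}^{c/2}x^{2\ell}\cosh(\alpha_m x)\,dx$ and $\int_{-c/2}^{c/2}y^{2v}\cos(\alpha_m y)\,dy$; I would evaluate each by iterated integration by parts, introducing indices $v_1\in\{0,\ldots,\ell\}$ and $v_2\in\{0,\ldots,b-1-\ell\}$ for the number of IBP reductions performed on each factor. The boundary values $\cos(\alpha_m c/2)=0$ and $\sin(\alpha_m c/2)=(-1)^m$ enforce the parity gating $\mathbf{1}_{v_1+v_2\,\mathrm{even}}$ and the overall sign $(-1)^m$, while the cancellation $\sinh(\alpha_m c/2)/\cosh(\alpha_m c/2)=\tanh((1+2m)\pi/2)$ generates the first term of \eqref{def Cbm}. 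The remaining fully-reduced IBP residues are rational multiples of $(1+2m)^{-k}$; using Euler's evaluations of $\zeta$ at positive even integers (equivalently $(1-2^{-2s})\zeta(2s)=\sum_{n\ge 0}(1+2n)^{-2s}$), these rational prefactors can be rewritten in the $\zeta$-form of the second line of \eqref{def Cbm}.

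The main obstacle is combinatorial bookkeeping: correctly aligning the triple sum over $(\ell,v_1,v_2)$ and the inner IBP-remainder index $q$ with the exact shape of \eqref{def Cbm}, including the parity indicator and the $q$-dependent exponents and signs. A direct cross-check in the cases $b=1$ (where the $\zeta$-sum is empty and one recovers $\mathcal{C}_{1,m}=4c(-1)^m\tanh((1+2m)\pi/2)/(\pi^3(1+2m)^2)$) and $b=2$ (where a single $\zeta(2)$ term appears) confirms both the formula and the overall normalization $16 b^2 c^{2b-1}/(\pi^3 2^{2b})$.
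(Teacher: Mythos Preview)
Your approach is different from the paper's and is sound through the integral formula for $\mathcal{C}_{b,m}$. The paper derives the Corollary by specializing the general rectangle result (Theorem~\ref{thm:ML rectangle nu}) to $a_2=c_2=-a_1=-c_1=c/2$; that theorem is proved via Theorem~\ref{thm:dnu in terms of green general} and the double Fourier series \eqref{series representation for gU in the case of a square} for $g_U$, and the $\zeta$-values enter only when one of the two series indices is summed via partial-fraction identities such as \eqref{lol61}. Your route---testing against the single harmonic family $h_m$ through Lemma~\ref{lemma:further properties of nu}(iii)---bypasses the double series entirely and legitimately yields
\[
c\,\mathcal{C}_{b,m}=\frac{b^2/\pi}{\cosh(\alpha_m c/2)}\sum_{\ell}\binom{b-1}{\ell}\int_{-c/2}^{c/2}x^{2\ell}\cosh(\alpha_m x)\,dx\int_{-c/2}^{c/2}y^{2(b-1-\ell)}\cos(\alpha_m y)\,dy.
\]

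The gap is in the last paragraph, where you claim this reduces to \eqref{def Cbm}. Iterated integration by parts on the $x$-integral produces both $\sinh(\alpha_m c/2)$ and $\cosh(\alpha_m c/2)$ boundary terms, so after dividing by $\cosh(\alpha_m c/2)$ you obtain $\tanh$ contributions for \emph{every} pair $(v_1,v_2)$, not only those with $v_1+v_2$ even; the boundary values $\cos(\alpha_m c/2)=0$, $\sin(\alpha_m c/2)=(-1)^m$ do not by themselves enforce the indicator $\mathbf{1}_{v_1+v_2\,\mathrm{even}}$. What actually happens is that the odd-parity $\tanh$ terms cancel under the involution $\ell\leftrightarrow b-1-\ell$, $v_1\leftrightarrow v_2$ (which flips the sign $(-1)^{v_2}\to(-1)^{v_1}=(-1)^{k}(-1)^{v_2}$ at level $v_1+v_2=k$), a separate identity you would have to state and prove. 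More seriously, the polynomial-in-$(1+2m)^{-1}$ remainder you get carries factorials $(2\ell)!/(2\ell-2v_1'-1)!$ and odd powers of $c/2$, with no $\zeta$-values at all---nothing in your computation is being summed over a second index. Euler's formula lets you eliminate the $\zeta$'s from \eqref{def Cbm}, but showing that the resulting rational expression coincides with your IBP remainder, with its different factorial structure, is a genuine combinatorial identity that you have not supplied; the $b=1,2$ checks confirm agreement but do not prove it for general $b$.
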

\begin{remark}
Note that formula \eqref{def Cbm} only involves the Riemann zeta function at positive even integer, for which we have the explicit formula (see e.g. \cite[25.6.2]{NIST})
\begin{align*}
\zeta(2m) = \frac{(2\pi)^{2m}}{(2m)!2}|B_{2m}|, \qquad m \in \N_{>0},
\end{align*}
where $B_{m}$ are the Bernoulli numbers defined via the generating function (see e.g. \cite[24.2.1]{NIST}) 
\begin{align*}
\frac{t}{e^{t}-1} = \sum_{m=0}^{+\infty} B_{m} \frac{t^{m}}{m!}, \qquad |t|<2\pi.
\end{align*}
\end{remark}

\begin{figure}
\begin{center}
\hspace{-1cm}\begin{tikzpicture}[master]
\node at (0,0) {\includegraphics[width=3.4cm]{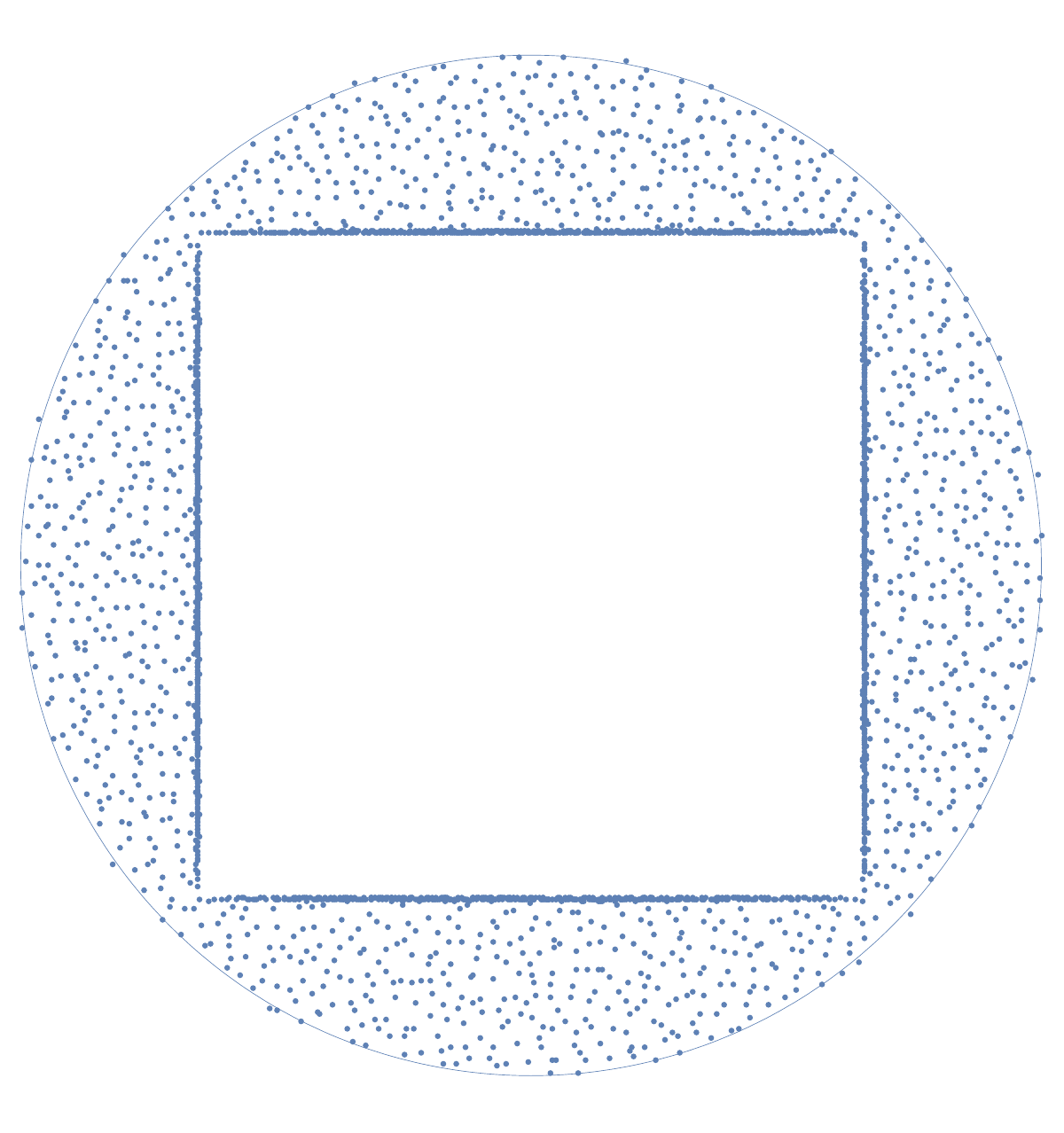}};
\node at (0,1.8) {\footnotesize $b=1$};
\end{tikzpicture}\hspace{-0.4cm}
\begin{tikzpicture}[slave]
\node at (0,0) {\includegraphics[width=3.4cm]{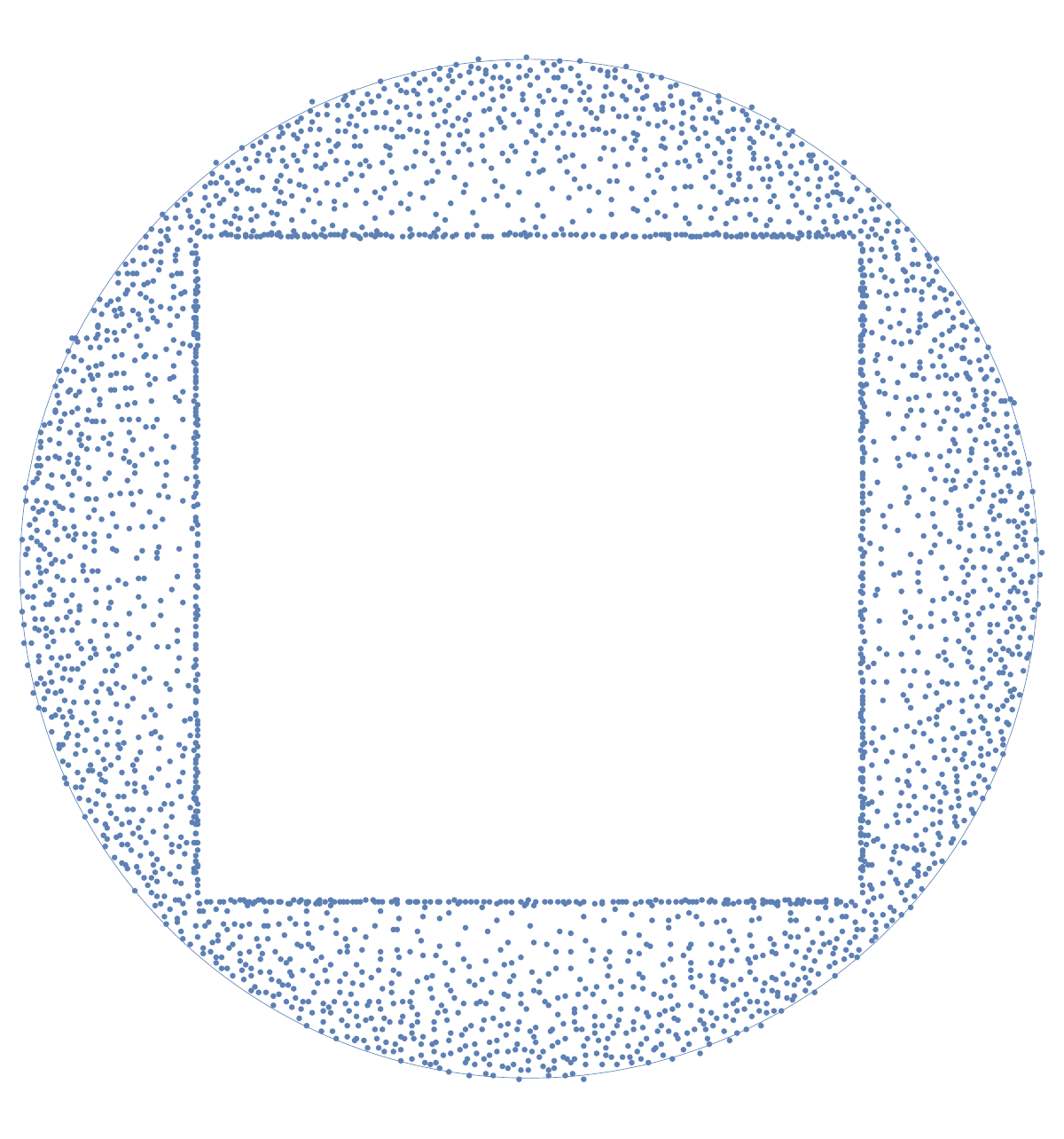}};
\node at (0,1.8) {\footnotesize $b=3$};
\end{tikzpicture}\hspace{-0.4cm}
\begin{tikzpicture}[slave]
\node at (0,0) {\includegraphics[width=3.4cm]{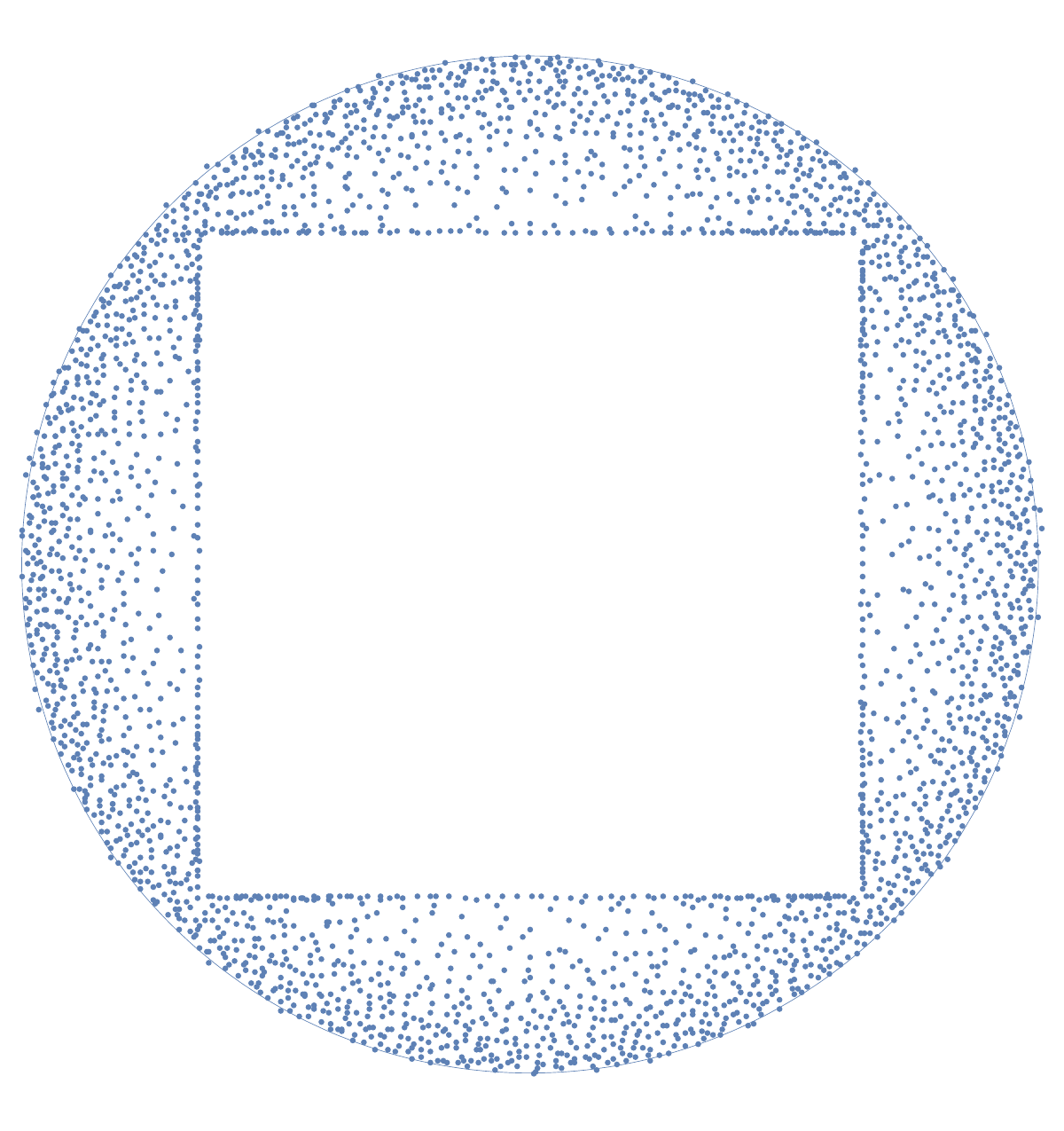}};
\node at (0,1.8) {\footnotesize $b=4$};
\end{tikzpicture}\hspace{0.5cm}
\begin{tikzpicture}[slave]
\node at (0,0) {\includegraphics[width=4.9cm]{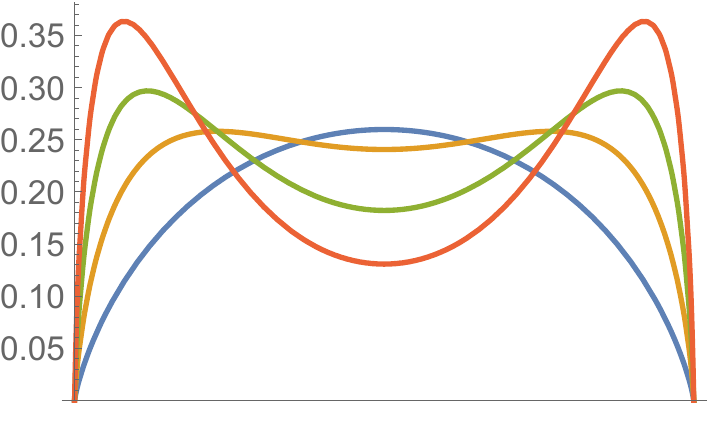}};
\node at (0.2,0.7) {\footnotesize $b \hspace{-0.05cm}=\hspace{-0.05cm}1$};
\node at (0.2,0.3) {\footnotesize $b \hspace{-0.05cm}=\hspace{-0.05cm}2$};
\node at (0.2,-0.1) {\footnotesize $b \hspace{-0.05cm}=\hspace{-0.05cm}3$};
\node at (0.2,-0.45) {\footnotesize $b \hspace{-0.05cm}=\hspace{-0.05cm}4$};
\node at (0.2,1.8) {\footnotesize Corollary \ref{coro:ML square nu}};
\draw[fill] (-1.925,-1.315) circle (0.04);
\node at (-1.925,-1.55) {\footnotesize  $\frac{c}{2}\hspace{-0.05cm}-\hspace{-0.05cm}i\frac{c}{2}$};
\draw[fill] (2.33,-1.315) circle (0.04);
\node at (2.33,-1.55) {\footnotesize  $\frac{c}{2}\hspace{-0.05cm}+\hspace{-0.05cm}i\frac{c}{2}$};
\end{tikzpicture}
\end{center}
\caption{\label{fig:density square} Left: The Mittag-Leffler ensemble conditioned on $\# \{z_{j}\in U\} = 0$ with $U=\{z:\re z \in (a_{1},a_{2}), \im z \in (c_{1}, c_{2})\}$, $a_{2}=c_{2}=-a_{1}=-c_{1}=\frac{c}{2}$, $c=1.3b^{-\frac{1}{2b}}$, and $b\in \{1,3,4\}$. Right: the normalized density $z \mapsto \frac{d\nu(z)/|dz|}{\nu(\partial U)}$ with $z\in \frac{c}{2}+i(-\frac{c}{2},\frac{c}{2}) \subset \partial U$, $c=1.3b^{-\frac{1}{2b}}$ and $b=1,2,3,4$.}
\end{figure}

\begin{corollary}\label{coro:ML square C}
Fix $\beta>0$, and let $b\in \N_{>0}$, $c>0$ be such that $U:=\{z:\re z \in (-\frac{c}{2},\frac{c}{2}), \im z \in (-\frac{c}{2},\frac{c}{2})\} \subset S$. As $n \to +\infty$, we have $\mathbb{P}(\# \{z_{j}\in U\} = 0) = \exp \big( -C n^{2}+o(n^{2}) \big)$, where
\begin{align*}
& C = \frac{\beta b^{2}c^{4b}}{4^{2b}\pi} \bigg\{ \frac{32}{\pi^{3}} \sum_{j=0}^{b} \binom{b}{j} \sum_{v=0}^{j} \frac{(2j)!(-1)^{v}4^{v}}{(2j-2v)!\pi^{2v}} \sum_{\ell=0}^{b-1} \binom{b-1}{\ell} \sum_{v_{1}=0}^{\ell}\sum_{v_{2}=0}^{b-1-\ell} \frac{(2\ell)!}{(2\ell-2v_{1})!} \nonumber \\
& \times \frac{(2b-2-2\ell)!}{(2b-2-2\ell-2v_{2})!} \frac{4^{v_{1}+v_{2}}(-1)^{v_{2}}}{\pi^{2v_{1}+2v_{2}}} \bigg\{ \mathbf{1}_{v_{1}+v_{2}\, \mathrm{even}} T_{2(v_{1}+v_{2}+v)} + \sum_{q=0}^{v_{1}-1} \frac{4(-1)^{v_{1}+q}}{\pi}\bigg( 1- \frac{1}{4^{2+v_{2}+q+v}} \bigg) \nonumber \\
& \times \zeta\big(2(2+v_{2}+q+v)\big) \bigg( 1- \frac{1}{4^{v_{1}-q}} \bigg) \zeta\big(2(v_{1}-q)\big) \bigg\} -  \sum_{j=0}^{2b-1} \frac{\binom{2b-1}{j}}{(2j+1)(4b-1-2j)} \bigg\},
\end{align*}
where the $T_{v}$'s are defined in \eqref{def of Tv}, and $\mathbf{1}_{v \, \mathrm{even}}:=1$ if $v$ is even and $\mathbf{1}_{v \, \mathrm{even}}:=0$ otherwise. In particular, substituting also the simplified expressions for $\{T_{4v}\}_{v\in \N}$ from Theorem \ref{thm: some nice series}, we get
\begin{align*}
& C|_{b=1} = \frac{\beta c^{4}}{\pi}\bigg( \frac{1}{12} - 16 \frac{T_{2}}{\pi^{5}} \bigg) \approx (1.1187 \cdot 10^{-2}) \beta c^{4}, \quad C|_{b=2} = \frac{\beta c^{8}}{\pi}\bigg( \frac{1}{5} - 64 \frac{T_{2}}{\pi^{5}} \bigg) \approx (2.3057 \cdot 10^{-3}) \beta c^{8}, \\
& C|_{b=3} = \frac{\beta c^{12}}{\pi} \bigg( \frac{125501}{73920} - 81 \frac{T_{2}}{\pi^{5}} - 25 920 \frac{T_{6}}{\pi^{9}} - 2073600 \frac{T_{10}}{\pi^{13}} \bigg) \approx (4.2438 \cdot 10^{-4}) \beta c^{12}, \\
& C|_{b=4} = \frac{\beta c^{16}}{\pi} \bigg( \frac{2152349}{45045} - 64 \frac{T_{2}}{\pi^{5}} - 184320 \frac{T_{6}}{\pi^{9}} - 132710400 \frac{T_{10}}{\pi^{13}} \bigg) \approx (8.2742 \cdot 10^{-5}) \beta c^{16}.
\end{align*}
\end{corollary}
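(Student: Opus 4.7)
The plan is to combine Theorem~\ref{thm:general pot}(i) with the explicit Fourier expansion of $\nu$ from Corollary~\ref{coro:ML square nu}. Since $U$ is bounded, $c^{\hspace{0.02cm}\mu}_{U}=0$, so
\begin{align*}
C=\tfrac{\beta}{4}\Big(\textstyle\int_{\partial U}|z|^{2b}d\nu(z)-\int_{U}|z|^{2b}d\mu(z)\Big).
\end{align*}
The bulk integral is elementary: writing $d\mu=(b^{2}/\pi)|z|^{2b-2}d^{2}z$, binomially expanding $(x^{2}+y^{2})^{2b-1}$ and integrating each monomial over $(-c/2,c/2)^{2}$ yields $\int_{U}|z|^{2b}d\mu=\frac{4b^{2}c^{4b}}{\pi\cdot 4^{2b}}\sum_{j=0}^{2b-1}\binom{2b-1}{j}/\big((2j+1)(4b-1-2j)\big)$, which produces the last term of the claimed formula.

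For the boundary integral, I use the $4$-fold symmetry of $\nu$ to reduce to the right side $\{c/2+iy:y\in(-c/2,c/2)\}$. Expanding $|c/2+iy|^{2b}=(c^{2}/4+y^{2})^{b}=\sum_{j=0}^{b}\binom{b}{j}(c/2)^{2(b-j)}y^{2j}$ and plugging in the Fourier series~\eqref{density square general b}, I must evaluate the integrals
\begin{align*}
I_{j}(m):=\int_{-c/2}^{c/2}y^{2j}\cos\!\big((1+2m)\pi y/c\big)\,dy.
\end{align*}
Repeated integration by parts gives the recursion $I_{j}(m)=\frac{2(c/2)^{2j}(-1)^{m}}{k_{m}}-\frac{2j(2j-1)}{k_{m}^{2}}I_{j-1}(m)$ with $k_{m}=(1+2m)\pi/c$; here the odd-order boundary terms vanish because $\cos((1+2m)\pi/2)=0$ while $\sin((1+2m)\pi/2)=(-1)^{m}$. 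Solving the recursion,
\begin{align*}
I_{j}(m)=2(-1)^{m}\sum_{v=0}^{j}\frac{(-1)^{j-v}(2j)!}{(2v)!\,k_{m}^{2j-2v+1}}(c/2)^{2v}.
\end{align*}

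Now substitute this together with~\eqref{def Cbm} into $\int_{\partial U}|z|^{2b}d\nu=4\sum_{m}\mathcal{C}_{b,m}\sum_{j=0}^{b}\binom{b}{j}(c/2)^{2(b-j)}I_{j}(m)$. The two factors of $(-1)^{m}$ cancel, and exchanging the finite sums with the sum over $m$ produces only two types of series in $m$: the $\tanh$-piece of $\mathcal{C}_{b,m}$ contributes $\sum_{m}\tanh((1+2m)\pi/2)/(1+2m)^{3+2w}=T_{2w}$ via~\eqref{def of Tv}, while the $\zeta$-piece of $\mathcal{C}_{b,m}$ contributes $\sum_{m}(1+2m)^{-s}=(1-2^{-s})\zeta(s)$ for $s$ a positive even integer. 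Identifying the emerging index $w$ with $v_{1}+v_{2}+v$ and the emerging Riemann zeta index with $2(2+v_{2}+q+v)$ yields exactly the expression in the statement, with the indicator $\mathbf{1}_{v_{1}+v_{2}\,\mathrm{even}}$ arising from the parity constraint needed to produce an even power of $(1+2m)$ in the $T_{2w}$ denominator.

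For the explicit values at $b\in\{1,2,3,4\}$, I evaluate the rational $\sum_{j=0}^{2b-1}\binom{2b-1}{j}/((2j+1)(4b-1-2j))$ (giving $2/3,\,46/105,\,\dots$) and then replace the $T_{4v}$ appearing in the general formula by the closed forms from Theorem~\ref{thm: some nice series} at $\alpha=1$, namely $T_{0}=\pi^{3}/32$, $T_{4}=7\pi^{7}/23040$, $T_{8}=181\pi^{11}/58060800$, $T_{12}=178559\pi^{15}/5579410636800$, while $T_{2},T_{6},T_{10}$ (which are nonzero at $\alpha=1$ but have no closed form) remain in the answer. The main obstacle is not analytic but combinatorial: the general formula is a quintuple sum in $(j,\ell,v,v_{1},v_{2})$ plus an inner sum in $q$, and one must track these indices carefully so that the collapsing of $m$-sums into $T_{2v}$'s and $\zeta$-values yields exactly the stated structure; in particular, the $\zeta(2(v_{1}-q))$ factor must be paired with the correct $\zeta(2(2+v_{2}+q+v))$ factor coming from the two independent $\zeta$-series in $\mathcal{C}_{b,m}$ and $I_{j}(m)$.
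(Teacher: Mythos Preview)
Your argument is correct and matches the paper's approach: invoke Theorem~\ref{thm:general pot}(i), evaluate $\int_U|z|^{2b}d\mu$ by binomial expansion (this is Lemma~\ref{lemma:int |z|2b dmu RECTANGLE} specialized to $a_2=c_2=-a_1=-c_1=c/2$), and evaluate $\int_{\partial U}|z|^{2b}d\nu$ by inserting the Fourier series from Corollary~\ref{coro:ML square nu} and collapsing the $m$-series into $T_{2w}$ and $(1-2^{-s})\zeta(s)$, exactly as in the paper's derivation of~\eqref{int |z|2b dnu SQUARE general b}. One minor expository slip that does not affect correctness: the indicator $\mathbf{1}_{v_1+v_2\,\mathrm{even}}$ is not produced by a parity constraint in your combination of $\mathcal{C}_{b,m}$ with $I_j(m)$ (the exponent $2+2v_1+2v_2$ in the $\tanh$ term of~\eqref{def Cbm} is already even for all $v_1,v_2$); it is simply present in~\eqref{def Cbm} and carried through.
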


\subsubsection{The rectangle}

Recall that $\mu$ and $S$ are given by \eqref{mu S ML}. We now consider the case where $U\subset S$ is an arbitrary rectangle.

\begin{theorem}\label{thm:ML rectangle nu}
Let $b \in \N_{>0}$, $a_{2}>a_{1}$, $c_{2}>c_{1}$ be such that $U:=\{z:\re z \in (a_{1},a_{2}), \im z \in (c_{1},c_{2})\} \subset S$, and let $\nu = \mathrm{Bal}(\mu|_{U},\partial U)$. 

\medskip  \noindent (Right side.) For $z=a_{2}+iy$, $y\in (c_{1},c_{2})$, we have
\begin{align}\label{square right}
d\nu(z)= \sum_{m=1}^{+\infty} \mathcal{C}_{b,m}^{R}\sin \bigg( \frac{y-c_{1}}{c_{2}-c_{1}} m \pi \bigg) dy,
\end{align}
where
\begin{align*}
& \mathcal{C}_{b,m}^{R} = \frac{-4b^{2}}{(c_{2}-c_{1})\pi^{2}} \sum_{\ell=0}^{b-1}  \binom{b-1}{\ell} \bigg\{ \sum_{v_{1}=0}^{\ell} \frac{(2\ell)!(a_{2}-a_{1})^{2v_{1}+1}}{(2\ell-2v_{1})!\pi^{2v_{1}+1}} \bigg( a_{2}^{2\ell-2v_{1}} \bigg[  \frac{\frac{(a_{2}-a_{1})m}{(c_{2}-c_{1})}\pi \coth (\frac{(a_{2}-a_{1})m}{(c_{2}-c_{1})}\pi) - 1}{2(\frac{(a_{2}-a_{1})m}{(c_{2}-c_{1})})^{2v_{1}+2}} \\
& + \sum_{j=0}^{v_{1}-1} \frac{(-1)^{v_{1}+j}}{(\frac{(a_{2}-a_{1})m}{(c_{2}-c_{1})})^{2+2j}}\zeta(2v_{1}-2j) \bigg] -a_{1}^{2\ell-2v_{1}} \bigg[  \frac{\frac{(a_{2}-a_{1})m}{(c_{2}-c_{1})}\pi \frac{1}{\sinh (\frac{(a_{2}-a_{1})m}{(c_{2}-c_{1})}\pi)} - 1}{2(\frac{(a_{2}-a_{1})m}{(c_{2}-c_{1})})^{2v_{1}+2}} \\
& + \sum_{j=0}^{v_{1}-1} \frac{(-1)^{v_{1}+j}}{(\frac{(a_{2}-a_{1})m}{(c_{2}-c_{1})})^{2+2j}} \bigg( \frac{1}{2^{2v_{1}-2j-1}}-1 \bigg)\zeta(2v_{1}-2j) \bigg] \bigg) \bigg\} \times \bigg\{ \sum_{v_{2}=0}^{b-1-\ell} \frac{(2(b-1-\ell))!(c_{2}-c_{1})^{2v_{2}+1}}{(2(b-1-\ell-v_{2}))!\pi^{2v_{2}+1}} \\
& \times (-1)^{v_{2}} \bigg( \frac{(-1)^{m}}{m^{2v_{2}+1}} c_{2}^{2(b-1-\ell-v_{2})}-\frac{1}{m^{2v_{2}+1}} c_{1}^{2(b-1-\ell-v_{2})} \bigg) \bigg\}.
\end{align*}
(Left side.) For $z=a_{1}+iy$, $y\in (c_{1},c_{2})$, we have
\begin{align}\label{square left}
d\nu(z)= \sum_{m=1}^{+\infty} \mathcal{C}_{b,m}^{L}\sin \bigg( \frac{y-c_{1}}{c_{2}-c_{1}} m \pi \bigg) dy,
\end{align}
where
\begin{align*}
& \mathcal{C}_{b,m}^{L} = \frac{4b^{2}}{(c_{2}-c_{1})\pi^{2}} \sum_{\ell=0}^{b-1}  \binom{b-1}{\ell} \bigg\{ \sum_{v_{1}=0}^{\ell} \frac{(2\ell)!(a_{2}-a_{1})^{2v_{1}+1}}{(2\ell-2v_{1})!\pi^{2v_{1}+1}} \bigg( a_{2}^{2\ell-2v_{1}} \bigg[  \frac{\frac{(a_{2}-a_{1})m}{(c_{2}-c_{1})}\pi \frac{1}{\sinh (\frac{(a_{2}-a_{1})m}{(c_{2}-c_{1})}\pi)} - 1}{2(\frac{(a_{2}-a_{1})m}{(c_{2}-c_{1})})^{2v_{1}+2}} \\
& + \sum_{j=0}^{v_{1}-1} \frac{(-1)^{v_{1}+j}}{(\frac{(a_{2}-a_{1})m}{(c_{2}-c_{1})})^{2+2j}} \bigg( \frac{1}{2^{2v_{1}-2j-1}}-1 \bigg) \zeta(2v_{1}-2j) \bigg] -a_{1}^{2\ell-2v_{1}} \bigg[  \frac{\frac{(a_{2}-a_{1})m}{(c_{2}-c_{1})}\pi \coth (\frac{(a_{2}-a_{1})m}{(c_{2}-c_{1})}\pi) - 1}{2(\frac{(a_{2}-a_{1})m}{(c_{2}-c_{1})})^{2v_{1}+2}} \\
& + \sum_{j=0}^{v_{1}-1} \frac{(-1)^{v_{1}+j}}{(\frac{(a_{2}-a_{1})m}{(c_{2}-c_{1})})^{2+2j}}  \zeta(2v_{1}-2j) \bigg] \bigg) \bigg\} \times \bigg\{ \sum_{v_{2}=0}^{b-1-\ell} \frac{(2(b-1-\ell))!(c_{2}-c_{1})^{2v_{2}+1}}{(2(b-1-\ell-v_{2}))!\pi^{2v_{2}+1}} \\
& \times (-1)^{v_{2}} \bigg( \frac{(-1)^{m}}{m^{2v_{2}+1}} c_{2}^{2(b-1-\ell-v_{2})}-\frac{1}{m^{2v_{2}+1}} c_{1}^{2(b-1-\ell-v_{2})} \bigg) \bigg\}.
\end{align*}
(Top side.) For $z=x+ic_{2}$, $x\in (a_{1},a_{2})$, we have
\begin{align}\label{square top}
d\nu(z)= \sum_{m=1}^{+\infty} \mathcal{C}_{b,m}^{T}\sin \bigg( \frac{x-a_{1}}{a_{2}-a_{1}} m \pi \bigg)  dx,
\end{align}
where
\begin{align*}
& \mathcal{C}_{b,m}^{T} = \frac{-4b^{2}}{(a_{2}-a_{1})\pi^{2}} \sum_{\ell=0}^{b-1}  \binom{b-1}{\ell} \bigg\{ \sum_{v_{1}=0}^{\ell} \frac{(2\ell)!(a_{2}-a_{1})^{2v_{1}+1}}{(2\ell-2v_{1})!\pi^{2v_{1}+1}}(-1)^{v_{1}} \bigg( \frac{(-1)^{m}}{m^{2v_{1}+1}}a_{2}^{2\ell-2v_{1}} - \frac{a_{1}^{2\ell-2v_{1}}}{m^{2v_{1}+1}} \bigg) \bigg\} \\
& \times \bigg\{ \sum_{v_{2}=0}^{b-1-\ell} \frac{(2(b-1-\ell))!(c_{2}-c_{1})^{2v_{2}+1}}{(2(b-1-\ell-v_{2}))!\pi^{2v_{2}+1}} \bigg( c_{2}^{2(b-1-\ell-v_{2})} \bigg[ \frac{\frac{(c_{2}-c_{1})m}{(a_{2}-a_{1})}\pi \coth(\frac{(c_{2}-c_{1})m}{(a_{2}-a_{1})}\pi)-1}{2(\frac{(c_{2}-c_{1})m}{(a_{2}-a_{1})})^{2v_{2}+2}} \\
& + \sum_{j=0}^{v_{2}-1} \frac{(-1)^{v_{2}+j}}{(\frac{(c_{2}-c_{1})m}{(a_{2}-a_{1})})^{2+2j}}\zeta(2v_{2}-2j) \bigg] - c_{1}^{2(b-1-\ell-v_{2})} \bigg[ \frac{\frac{(c_{2}-c_{1})m}{(a_{2}-a_{1})}\pi \frac{1}{\sinh(\frac{(c_{2}-c_{1})m}{(a_{2}-a_{1})}\pi)}-1}{2(\frac{(c_{2}-c_{1})m}{(a_{2}-a_{1})})^{2v_{2}+2}} \\
& + \sum_{j=0}^{v_{2}-1} \frac{(-1)^{v_{2}+j}}{(\frac{(c_{2}-c_{1})m}{(a_{2}-a_{1})})^{2+2j}} \bigg( \frac{1}{2^{2v_{2}-2j-1}} - 1 \bigg) \zeta(2v_{2}-2j) \bigg] \bigg) \bigg\}.
\end{align*}
(Bottom side.) For $z=x+ic_{1}$, $x\in (a_{1},a_{2})$, we have
\begin{align}\label{square bottom}
d\nu(z)= \sum_{m=1}^{+\infty} \mathcal{C}_{b,m}^{B}\sin \bigg( \frac{x-a_{1}}{a_{2}-a_{1}} m \pi \bigg) dx,
\end{align}
where
\begin{align*}
& \mathcal{C}_{b,m}^{B} = \frac{4b^{2}}{(a_{2}-a_{1})\pi^{2}} \sum_{\ell=0}^{b-1}  \binom{b-1}{\ell} \bigg\{ \sum_{v_{1}=0}^{\ell} \frac{(2\ell)!(a_{2}-a_{1})^{2v_{1}+1}}{(2\ell-2v_{1})!\pi^{2v_{1}+1}}(-1)^{v_{1}} \bigg( \frac{(-1)^{m}}{m^{2v_{1}+1}}a_{2}^{2\ell-2v_{1}} - \frac{a_{1}^{2\ell-2v_{1}}}{m^{2v_{1}+1}} \bigg) \bigg\} \\
& \times \bigg\{ \sum_{v_{2}=0}^{b-1-\ell} \frac{(2(b-1-\ell))!(c_{2}-c_{1})^{2v_{2}+1}}{(2(b-1-\ell-v_{2}))!\pi^{2v_{2}+1}} \bigg( c_{2}^{2(b-1-\ell-v_{2})} \bigg[ \frac{\frac{(c_{2}-c_{1})m}{(a_{2}-a_{1})}\pi \frac{1}{\sinh(\frac{(c_{2}-c_{1})m}{(a_{2}-a_{1})}\pi)}-1}{2(\frac{(c_{2}-c_{1})m}{(a_{2}-a_{1})})^{2v_{2}+2}} \\
& + \sum_{j=0}^{v_{2}-1} \frac{(-1)^{v_{2}+j}}{(\frac{(c_{2}-c_{1})m}{(a_{2}-a_{1})})^{2+2j}} \bigg( \frac{1}{2^{2v_{2}-2j-1}} - 1 \bigg) \zeta(2v_{2}-2j) \bigg] - c_{1}^{2(b-1-\ell-v_{2})} \bigg[ \frac{\frac{(c_{2}-c_{1})m}{(a_{2}-a_{1})}\pi \coth(\frac{(c_{2}-c_{1})m}{(a_{2}-a_{1})}\pi)-1}{2(\frac{(c_{2}-c_{1})m}{(a_{2}-a_{1})})^{2v_{2}+2}} \\
& + \sum_{j=0}^{v_{2}-1} \frac{(-1)^{v_{2}+j}}{(\frac{(c_{2}-c_{1})m}{(a_{2}-a_{1})})^{2+2j}}\zeta(2v_{2}-2j)      \bigg] \bigg) \bigg\}.
\end{align*}
\end{theorem}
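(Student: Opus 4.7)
The plan is to reduce the computation of $\nu := \mathrm{Bal}(\mu|_{U},\partial U)$ to a Poisson problem on the rectangle $R:=U$. Since $R$ is bounded, the constant in Proposition~\ref{prop:def of bal} vanishes, so $\nu$ is the unique measure on $\partial U$ with $p_{\nu}=p_{\mu|_{U}}$ on $U^{c}$. Setting $V:=p_{\nu}-p_{\mu|_{U}}$ on $\overline{R}$, continuity of the logarithmic potentials gives $V=0$ on $\partial R$, and since $\Delta p_{\nu}=0$ on $R$ and $-\Delta p_{\mu|_{U}}=2\pi\cdot\tfrac{b^{2}}{\pi}|z|^{2b-2}$, the function $V$ solves $\Delta V=2b^{2}(x^{2}+y^{2})^{b-1}$ on $R$ with Dirichlet boundary condition $V|_{\partial R}=0$. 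The single-layer jump relation for the logarithmic potential identifies the arclength density of $\nu$ at $\xi\in\partial R$ as $\rho(\xi)=\tfrac{1}{2\pi}\partial_{\mathbf{n}_{\mathrm{out}}}V(\xi)$, reducing the theorem to computing this outward normal derivative on each of the four sides of $R$.

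I would solve this Poisson problem by expanding $V$ in the Dirichlet sine basis of $-\Delta$ on $R$. With $L_{1}:=a_{2}-a_{1}$ and $L_{2}:=c_{2}-c_{1}$, write $V(x,y)=\sum_{j,k\geq 1}v_{jk}\sin(\tfrac{j\pi(x-a_{1})}{L_{1}})\sin(\tfrac{k\pi(y-c_{1})}{L_{2}})$ with $v_{jk}=-\tfrac{2b^{2}f_{jk}}{\pi^{2}(j^{2}/L_{1}^{2}+k^{2}/L_{2}^{2})}$, where $f_{jk}$ is the double sine coefficient of $(x^{2}+y^{2})^{b-1}=\sum_{\ell=0}^{b-1}\binom{b-1}{\ell}x^{2\ell}y^{2(b-1-\ell)}$. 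Repeated integration by parts yields the closed form
\begin{align*}
A(\ell,j):=\int_{a_{1}}^{a_{2}}x^{2\ell}\sin\!\Bigl(\tfrac{j\pi(x-a_{1})}{L_{1}}\Bigr)dx=\sum_{v_{1}=0}^{\ell}\frac{(-1)^{v_{1}}(2\ell)!\,L_{1}^{2v_{1}+1}}{(2\ell-2v_{1})!\,(j\pi)^{2v_{1}+1}}\bigl(a_{1}^{2\ell-2v_{1}}-(-1)^{j}a_{2}^{2\ell-2v_{1}}\bigr),
\end{align*}
together with the analogous formula $B(\ell',k)$ in the $y$-variable. Differentiating $V$ at $x=a_{2}$ brings out the factor $(-1)^{j}j\pi/L_{1}$, and the density on the right side becomes
\begin{align*}
\rho^{R}(y)=-\frac{4b^{2}}{L_{1}^{2}L_{2}\pi^{2}}\sum_{k\geq 1}\sin\!\Bigl(\tfrac{k\pi(y-c_{1})}{L_{2}}\Bigr)\sum_{\ell}\binom{b-1}{\ell}B(b-1-\ell,k)\sum_{j\geq 1}\frac{(-1)^{j}\,j\,A(\ell,j)}{j^{2}/L_{1}^{2}+k^{2}/L_{2}^{2}}.
\end{align*}
The left, top, and bottom sides are treated identically; the only difference is whether the factors $(-1)^{j}$ and $(-1)^{k}$ appear, depending on whether $\cos(j\pi(x-a_{1})/L_{1})$ is evaluated at $x=a_{2}$ versus $x=a_{1}$, and analogously in the $y$-variable.

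The heart of the argument is the closed-form evaluation of the inner $j$-series. Substituting the IBP expansion of $A(\ell,j)$ and separating the $a_{1}^{2\ell-2v_{1}}$ and $a_{2}^{2\ell-2v_{1}}$ contributions reduces the calculation to the two identities
\begin{align*}
\sum_{j\geq 1}\frac{1}{j^{2v}(j^{2}+c^{2})}&=\sum_{q=0}^{v-1}\frac{(-1)^{q}\zeta(2v-2q)}{c^{2q+2}}+\frac{(-1)^{v}(\pi c\coth(\pi c)-1)}{2c^{2v+2}},\\
\sum_{j\geq 1}\frac{(-1)^{j}}{j^{2v}(j^{2}+c^{2})}&=\sum_{q=0}^{v-1}\frac{(-1)^{q}(\tfrac{1}{2^{2v-2q-1}}-1)\zeta(2v-2q)}{c^{2q+2}}+\frac{(-1)^{v}(\pi c/\sinh(\pi c)-1)}{2c^{2v+2}},
\end{align*}
taken with $c=kL_{1}/L_{2}$, whence $\pi c=\alpha_{k}\pi$ with $\alpha_{k}=k(a_{2}-a_{1})/(c_{2}-c_{1})$, matching the arguments of $\coth$ and $1/\sinh$ in \eqref{square right}. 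Both identities follow by iterating the partial-fraction decomposition $\tfrac{1}{j^{2w}(j^{2}+c^{2})}=\tfrac{1}{c^{2}j^{2w}}-\tfrac{1}{c^{2}j^{2w-2}(j^{2}+c^{2})}$ and invoking the Mittag-Leffler expansions $\sum_{j}\tfrac{1}{j^{2}+c^{2}}=\tfrac{\pi c\coth(\pi c)-1}{2c^{2}}$ and $\sum_{j}\tfrac{(-1)^{j}}{j^{2}+c^{2}}=\tfrac{\pi c/\sinh(\pi c)-1}{2c^{2}}$. On the right side, the $(-1)^{j}$ from differentiation pairs with the $(-1)^{j}$ in front of $a_{2}^{2\ell-2v_{1}}$ inside $A(\ell,j)$, so that $a_{2}^{2\ell-2v_{1}}$ attaches to the $\coth$-series while $a_{1}^{2\ell-2v_{1}}$ attaches to the $1/\sinh$-series, reproducing the structure of $\mathcal{C}^{R}_{b,m}$ in \eqref{square right}; on the left side the roles of $a_{1},a_{2}$ are reversed, and similarly on the top/bottom with $c_{i}$ replacing $a_{i}$. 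The main technical obstacle will be justifying the term-by-term differentiation of the double sine series at $\partial R$; I would handle this by first performing the $j$-summation in closed form, after which the remaining one-dimensional Fourier sine series in $k$ has coefficients decaying as $O(1/k^{2})$ (the leading $v_{1}=v_{2}=0$ contribution produces the $\tanh(\alpha_{k}\pi/2)/k^{2}$ decay familiar from the $b=1$ case in Theorem~\ref{thm:Ginibre rectangle nu}), so $\rho$ is continuous on each side, the resulting positive measure has potential $p_{\mu|_{U}}$ on $U^{c}$ by construction, and the uniqueness in Proposition~\ref{prop:def of bal} identifies it with $\nu$.
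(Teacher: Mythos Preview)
Your proposal is correct and takes essentially the same approach as the paper. The paper invokes Theorem~\ref{thm:dnu in terms of green general} directly and then substitutes the double sine eigenfunction expansion of $g_{U}$ (cited from Duffy), whereas you recast the same computation as a Dirichlet--Poisson problem for $V=p_{\nu}-p_{\mu|_{U}}$ followed by the single-layer jump relation; these are equivalent since $V(z)=-2\pi\int_{U}g_{U}(z,w)\,d\mu(w)$ and $\tfrac{1}{2\pi}\partial_{\mathbf{n}_{\mathrm{out}}}V=\int_{U}\partial_{\mathbf{n}_{\mathrm{in}}}g_{U}(w,\cdot)\,d\mu(w)$, and the integration-by-parts formula for $A(\ell,j)$ together with the two $\sum_{j}1/(j^{2v}(j^{2}+c^{2}))$ identities you isolate are exactly the paper's (8.8) and Lemma~(8.10)--(8.12).
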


\begin{figure}
\begin{center}
\begin{tikzpicture}[master]
\node at (0,0) {\includegraphics[width=3.63cm]{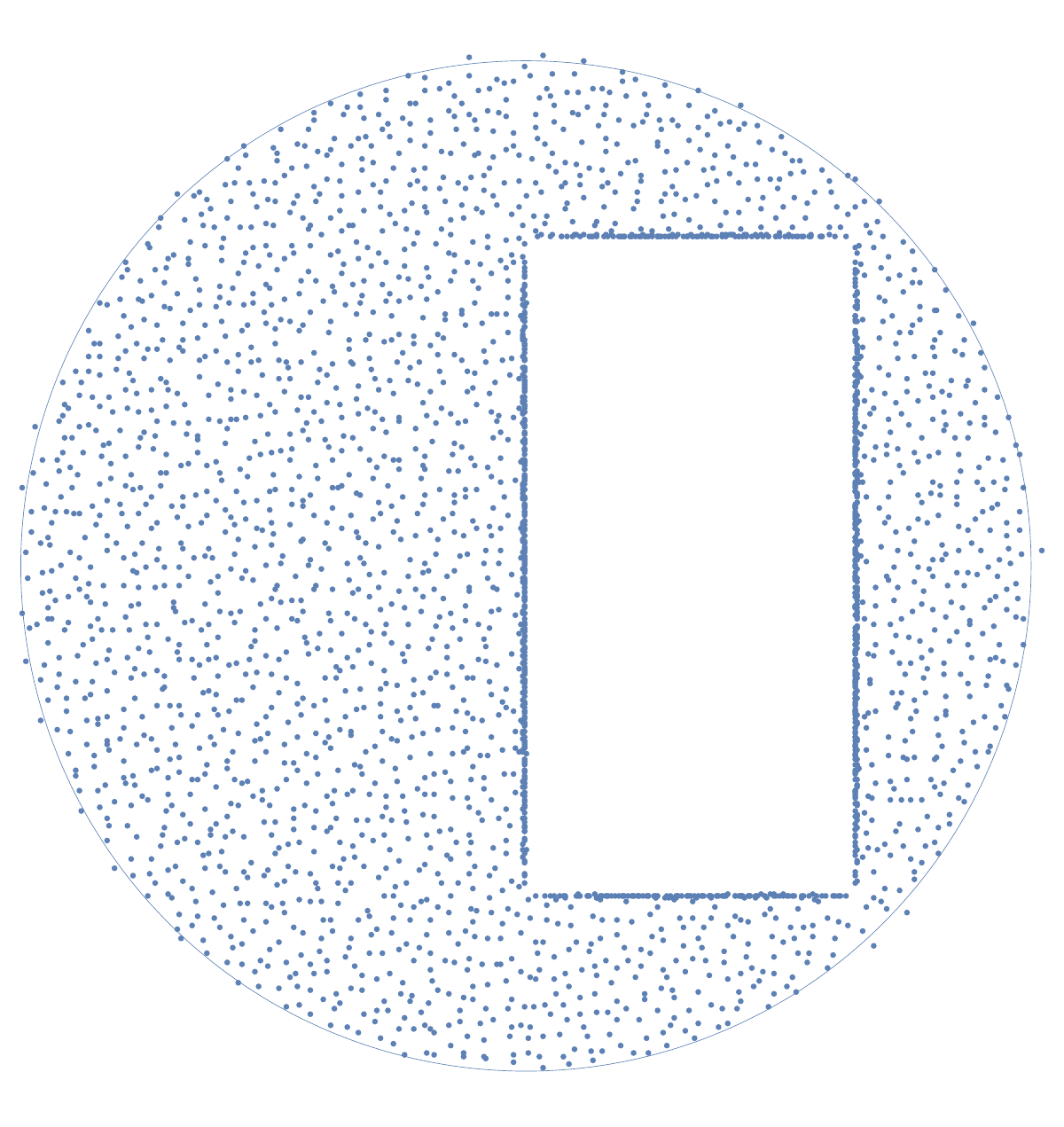}};
\node at (0,1.9) {\footnotesize $b=1$};
\end{tikzpicture}\hspace{-0.21cm}
\begin{tikzpicture}[slave]
\node at (0,0) {\includegraphics[width=3.63cm]{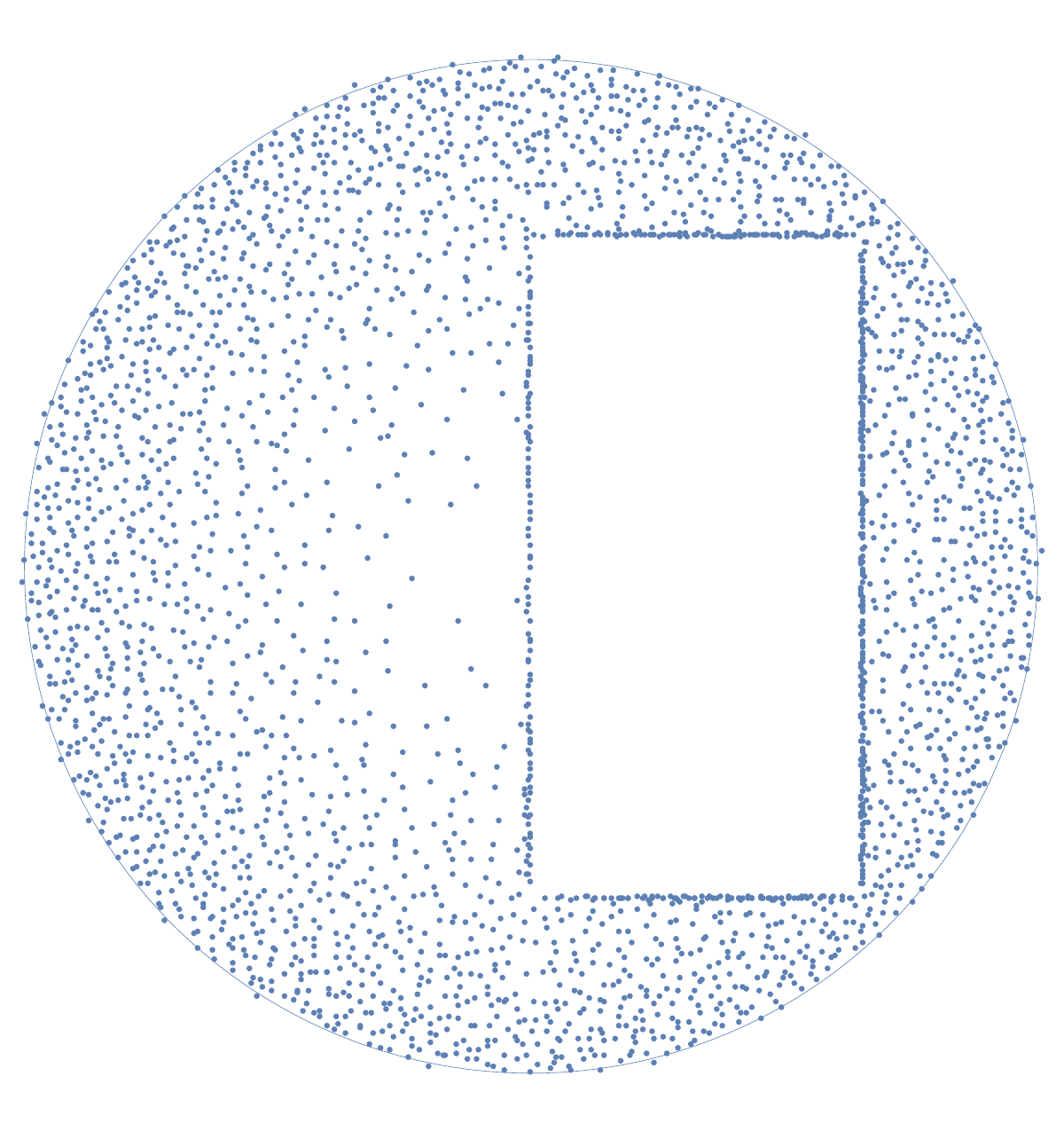}};
\node at (0,1.9) {\footnotesize $b=2$};
\end{tikzpicture}\hspace{-0.21cm}
\begin{tikzpicture}[slave]
\node at (0,0) {\includegraphics[width=3.63cm]{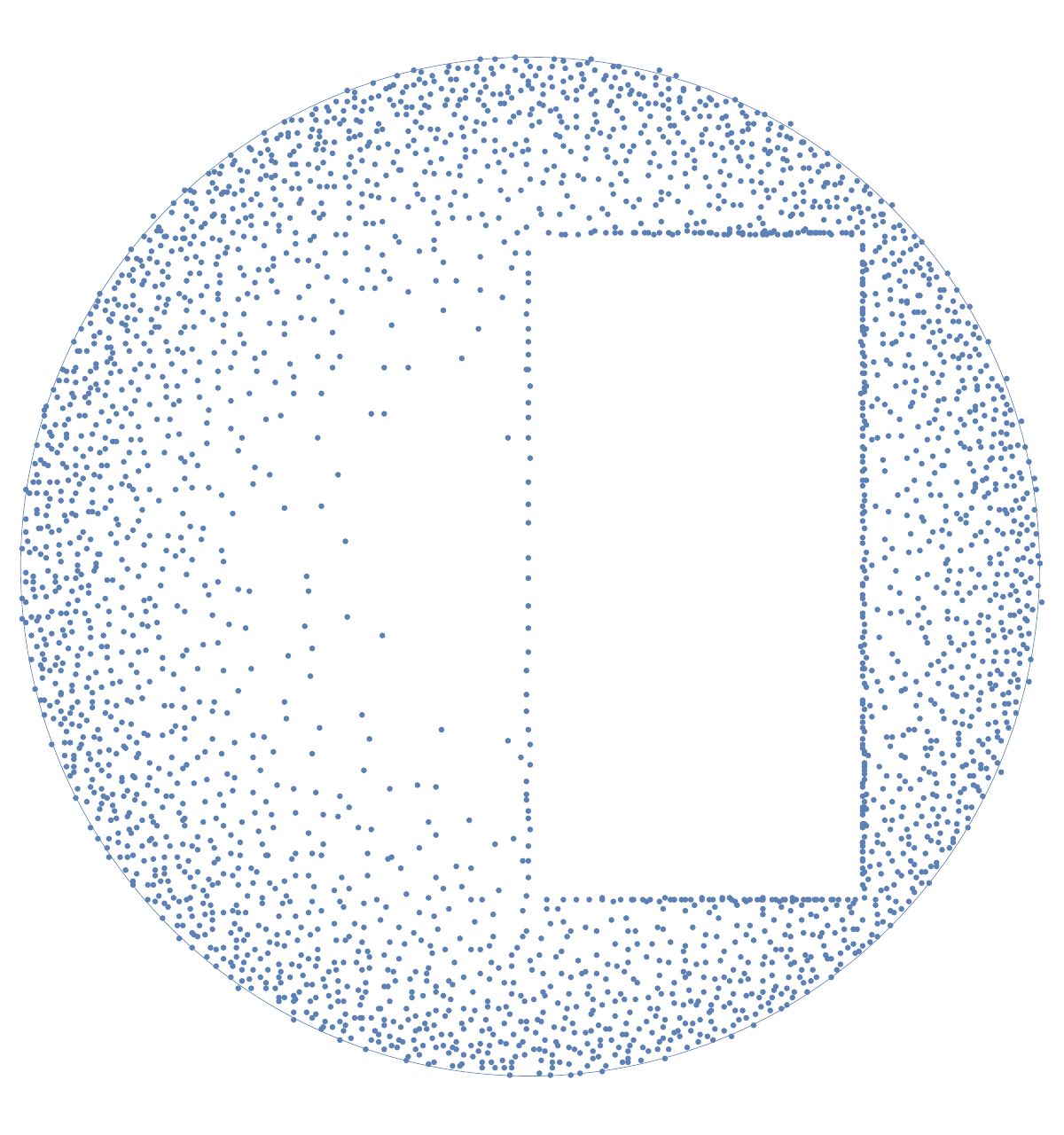}};
\node at (0,1.9) {\footnotesize $b=3$};
\end{tikzpicture}\hspace{-0.21cm}
\begin{tikzpicture}[slave]
\node at (0,0) {\includegraphics[width=3.63cm]{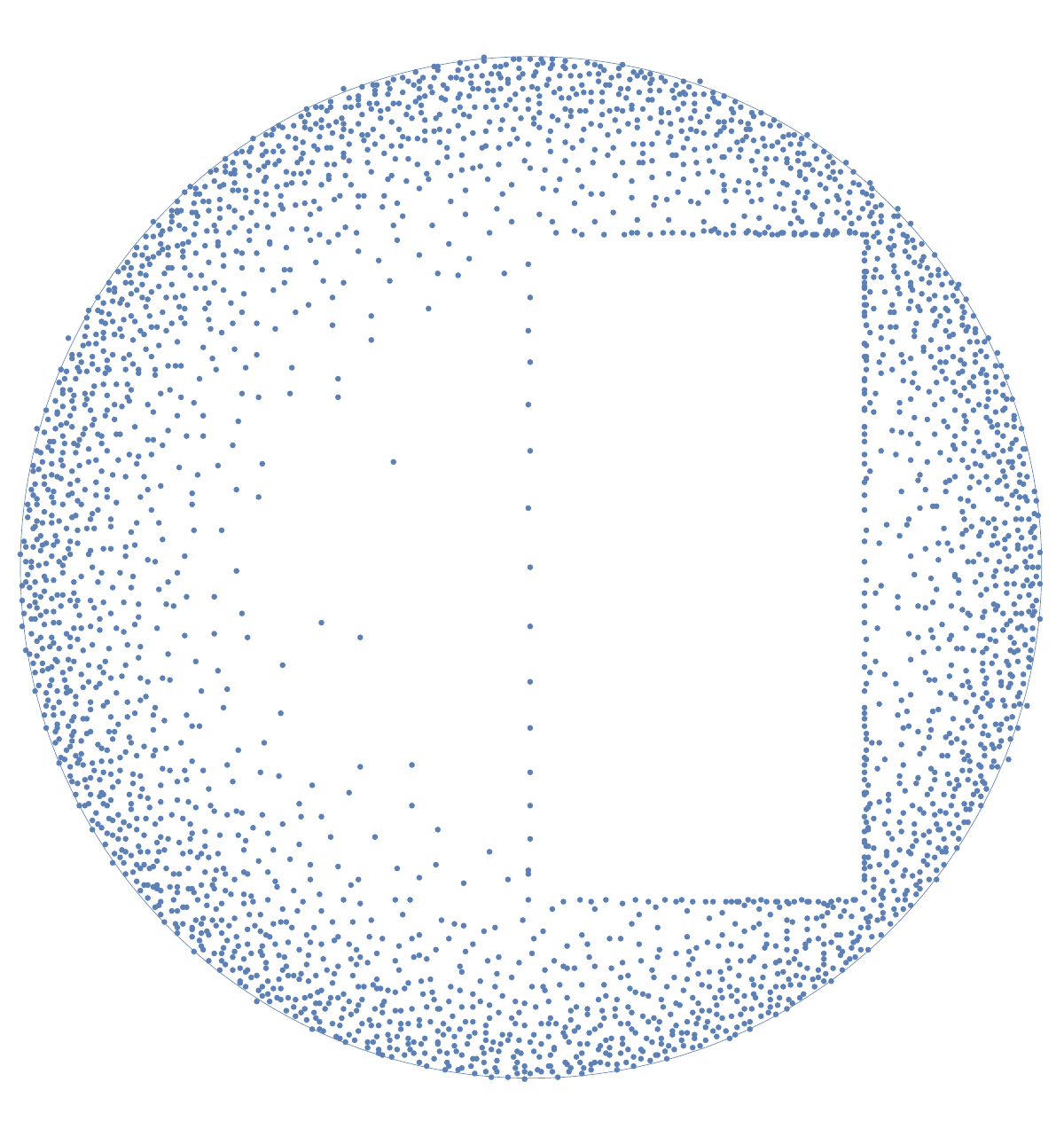}};
\node at (0,1.9) {\footnotesize $b=4$};
\end{tikzpicture}\\
\begin{tikzpicture}[master]
\node at (0,0) {\includegraphics[width=14cm]{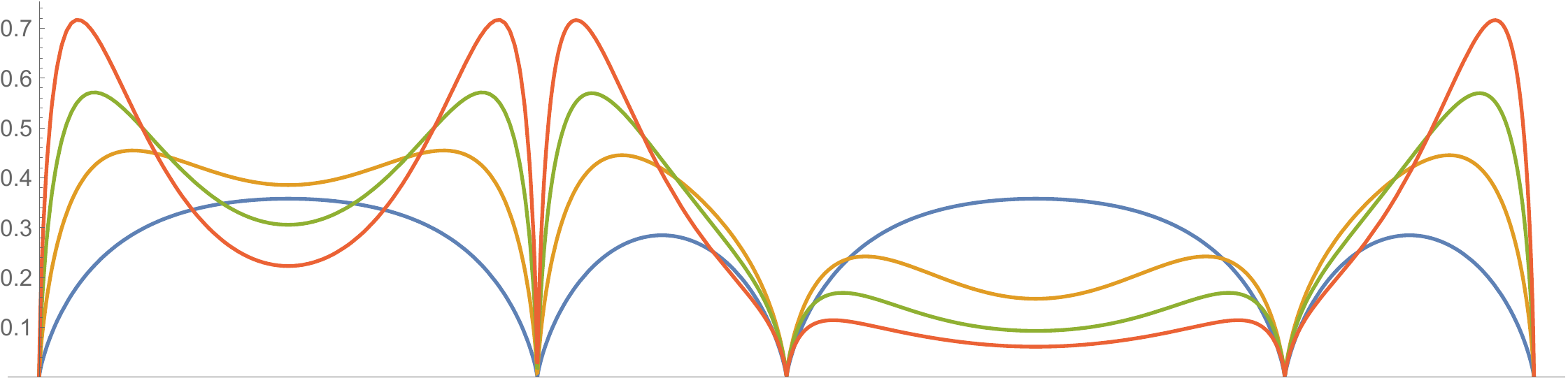}};
\node at (2.3,0.15) {\footnotesize $b \hspace{-0.05cm}=\hspace{-0.05cm}1$};
\node at (2.3,-0.75) {\footnotesize $b \hspace{-0.05cm}=\hspace{-0.05cm}2$};
\node at (2.3,-1.1) {\footnotesize $b \hspace{-0.05cm}=\hspace{-0.05cm}3$};
\node at (2.3,-1.45) {\footnotesize $b \hspace{-0.05cm}=\hspace{-0.05cm}4$};
\node at (0.2,1.8) {\footnotesize Theorem \ref{thm:ML rectangle nu}};
\draw[fill] (-6.65,-1.625) circle (0.04);
\node at (-6.5,-1.93) {\footnotesize  $a_{2}\hspace{-0.05cm}+\hspace{-0.05cm}ic_{1}$};
\draw[fill] (-2.205,-1.625) circle (0.04);
\node at (-2.205,-1.93) {\footnotesize  $a_{2}\hspace{-0.05cm}+\hspace{-0.05cm}ic_{2}$};
\draw[fill] (0.025,-1.625) circle (0.04);
\node at (0.025,-1.93) {\footnotesize  $a_{1}\hspace{-0.05cm}+\hspace{-0.05cm}ic_{2}$};
\draw[fill] (4.475,-1.625) circle (0.04);
\node at (4.475,-1.93) {\footnotesize  $a_{1}\hspace{-0.05cm}+\hspace{-0.05cm}ic_{1}$};
\draw[fill] (6.7,-1.625) circle (0.04);
\node at (6.55,-1.93) {\footnotesize  $a_{2}\hspace{-0.05cm}+\hspace{-0.05cm}ic_{1}$};
\end{tikzpicture}
\end{center}
\caption{\label{fig:density rectangle} Row 1: The Mittag-Leffler ensemble conditioned on $\# \{z_{j}\in U\} = 0$ with $U=\{z:\re z \in (a_{1},a_{2}), \im z \in (c_{1}, c_{2})\}$, $a_{1}=0$, $a_{2}=c_{2}=-c_{1}=\frac{c}{2}$, $c=1.3b^{-\frac{1}{2b}}$, and $b\in \{1,2,3,4\}$. Row 2: the normalized density $z \mapsto \frac{d\nu(z)/|dz|}{\nu(\partial U)}$ with $z\in \partial U$ and $b=1,2,3,4$.}
\end{figure}

\begin{remark}
In the case where $a_{1}=0$, $a_{2}=c_{2}=-c_{1}$, we have $0\in \partial U$. Since $b \geq 1$, by \eqref{mu S ML} we expect $d\nu(z)/|dz|$ to attain a local minimum at $z=0$, and the numerics suggest that $(d\nu(z)/|dz|)|_{z=0}>0$ (see also Figure \ref{fig:density rectangle}). This strict inequality is consistent with Conjecture \ref{conj:behavior of balayage measure} (with $p=2$ and $b\in \N_{>0}$). 
\end{remark}

\begin{theorem}\label{thm:ML rectangle C}
Fix $\beta>0$. Let $b \in \N_{>0}$ and $a_{2}>a_{1}$, $c_{2}>c_{1}$ be such that $U:=\{z:\re z \in (a_{1},a_{2}), \im z \in (c_{1},c_{2})\} \subset S$. As $n \to +\infty$, we have $\mathbb{P}(\# \{z_{j}\in U\} = 0) = \exp \big( -C n^{2}+o(n^{2}) \big)$, where
\begin{align}
& C = \frac{\beta}{4} \bigg\{ \sum_{k=0}^{b} \binom{b}{k} \sum_{v=0}^{k} \frac{(2k)!(-1)^{1+v}}{(2k-2v)!}  \bigg[ \bigg( \frac{c_{2}-c_{1}}{\pi} \bigg)^{2v+1} \nonumber \\
& \times \sum_{m=1}^{+\infty} \Big( a_{2}^{2(b-k)} \mathcal{C}_{b,m}^{R} + a_{1}^{2(b-k)} \mathcal{C}_{b,m}^{L}   \Big) \bigg( c_{2}^{2k-2v} \frac{(-1)^{m}}{m^{2v+1}} - \frac{c_{1}^{2k-2v}}{m^{2v+1}} \bigg) \nonumber \\
& + \bigg( \frac{a_{2}-a_{1}}{\pi} \bigg)^{2v+1} \sum_{m=1}^{+\infty} \Big( c_{2}^{2(b-k)} \mathcal{C}_{b,m}^{T}  + c_{1}^{2(b-k)} \mathcal{C}_{b,m}^{B}   \Big) \bigg( a_{2}^{2k-2v} \frac{(-1)^{m}}{m^{2v+1}} - \frac{a_{1}^{2k-2v}}{m^{2v+1}} \bigg) \bigg] \nonumber \\
& - \frac{b^{2}}{\pi} \sum_{j=0}^{2b-1} \binom{2b-1}{j} \frac{a_{2}^{2j+1}-a_{1}^{2j+1}}{2j+1} \frac{c_{2}^{2(2b-1-j)+1}-c_{1}^{2(2b-1-j)+1}}{2(2b-1-j)+1} \bigg\}, \label{lol76}
\end{align}
where the coefficients $\mathcal{C}_{b,m}^{R}$, $\mathcal{C}_{b,m}^{L}$, $\mathcal{C}_{b,m}^{T}$ and $\mathcal{C}_{b,m}^{B}$ are defined in the statement of Theorem \ref{thm:ML rectangle nu}. 
\end{theorem}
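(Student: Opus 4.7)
The plan is to apply Theorem~\ref{thm:general pot}(i) to the Mittag-Leffler potential $Q(z)=|z|^{2b}$ with the rectangular hole $U$, and then evaluate the two integrals in \eqref{C in main thm} in closed form using the explicit balayage measure supplied by Theorem~\ref{thm:ML rectangle nu}. The hypotheses of Theorem~\ref{thm:general pot}(i) are easy to verify: $Q$ is smooth on $\mathbb{C}$ (hence H\"older continuous near the compact set $S$) and admissible, the associated equilibrium measure \eqref{mu S ML} is radial and satisfies the Euler--Lagrange inequality \eqref{regularity condition} outside $S$ by a direct computation, so Assumption~\ref{ass:Q} holds; the rectangle $U\subset S$ is bounded with $\partial U\subset S$ of positive capacity, satisfying Assumption~\ref{ass:U}, and fulfils the exterior ball condition by the remark following Assumption~\ref{ass:U2}. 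Since $U$ is bounded we have $c_{U}^{\,\mu}=0$, so the main formula reduces to
\begin{equation*}
C=\frac{\beta}{4}\!\left(\int_{\partial U}|z|^{2b}\,d\nu(z)-\int_{U}|z|^{2b}\,d\mu(z)\right).
\end{equation*}

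The bulk term is obtained directly: using $d\mu(z)=\frac{b^{2}}{\pi}|z|^{2b-2}d^{2}z$ and expanding $(x^{2}+y^{2})^{2b-1}$ by the binomial theorem, then integrating each monomial $x^{2j}y^{2(2b-1-j)}$ over $[a_{1},a_{2}]\times[c_{1},c_{2}]$, yields precisely the last line of \eqref{lol76}.

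For the boundary term, I split $\partial U$ into its four sides and use the Fourier expansions \eqref{square right}--\eqref{square bottom}. On the right side $z=a_{2}+iy$, $y\in(c_{1},c_{2})$, the binomial identity $|a_{2}+iy|^{2b}=\sum_{k=0}^{b}\binom{b}{k}a_{2}^{2(b-k)}y^{2k}$ reduces the contribution to evaluating
\begin{equation*}
\int_{c_{1}}^{c_{2}}y^{2k}\sin\!\Bigl(\tfrac{y-c_{1}}{c_{2}-c_{1}}m\pi\Bigr)dy=\sum_{v=0}^{k}\frac{(2k)!(-1)^{1+v}}{(2k-2v)!}\!\Bigl(\tfrac{c_{2}-c_{1}}{\pi}\Bigr)^{\!2v+1}\!\Bigl(c_{2}^{2k-2v}\tfrac{(-1)^{m}}{m^{2v+1}}-\tfrac{c_{1}^{2k-2v}}{m^{2v+1}}\Bigr),
\end{equation*}
which follows from $2k+1$ integrations by parts; only odd powers of $1/m$ appear because the sine factor vanishes at both endpoints, leaving cosine evaluations equal to $(-1)^{m}$ at $c_{2}$ and $1$ at $c_{1}$. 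The left side is treated identically with $\mathcal{C}^{L}_{b,m}$ and $|a_{1}+iy|^{2b}$, pairing with the right side into the factor $a_{2}^{2(b-k)}\mathcal{C}^{R}_{b,m}+a_{1}^{2(b-k)}\mathcal{C}^{L}_{b,m}$ in \eqref{lol76}. The top and bottom sides are treated by the symmetric computation in $x$, yielding $c_{2}^{2(b-k)}\mathcal{C}^{T}_{b,m}+c_{1}^{2(b-k)}\mathcal{C}^{B}_{b,m}$. Adding the four side contributions and subtracting the bulk term gives \eqref{lol76}.

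The analysis is a long but essentially routine calculation; the main obstacle is the combinatorial bookkeeping needed to ensure that the right/left and top/bottom contributions pair into the symmetric structure of \eqref{lol76} with the correct indices $k,v$. Termwise integration of the Fourier series against the polynomial $|z|^{2b}$ on each side is legitimate because one can check from the explicit expressions in Theorem~\ref{thm:ML rectangle nu} that $\mathcal{C}^{R}_{b,m},\mathcal{C}^{L}_{b,m},\mathcal{C}^{T}_{b,m},\mathcal{C}^{B}_{b,m}=O(m^{-2})$ as $m\to\infty$, which ensures absolute convergence of the resulting double sums.
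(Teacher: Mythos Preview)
Your proposal is correct and follows essentially the same approach as the paper: apply Theorem~\ref{thm:general pot}(i), compute $\int_{U}|z|^{2b}d\mu$ via the binomial expansion of $(x^{2}+y^{2})^{2b-1}$ (this is Lemma~\ref{lemma:int |z|2b dmu RECTANGLE}), and compute $\int_{\partial U}|z|^{2b}d\nu$ by splitting into the four sides, expanding $|z|^{2b}$ via the binomial theorem, and using the integral formula \eqref{int xp sin} (which is exactly your repeated integration by parts) to evaluate $\int y^{2k}\sin(\tfrac{y-c_{1}}{c_{2}-c_{1}}m\pi)\,dy$. The paper does not spell out the $O(m^{-2})$ decay of the Fourier coefficients that you invoke to justify termwise integration, so your proposal is in fact slightly more explicit on this point.
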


\subsubsection*{Related works and discussion}
Hole probabilities of rare events as considered in this paper are also called \textit{large gaps} in the literature. In dimension one, large gap problems have been investigated by many authors, see e.g. \cite{DIZ1997, FK2020} for the sine point process, \cite{DXZ2020} for the Pearcey point process, \cite{MNSV2011, DS2017, CGS2019} for the eigenvalues of some Hermitian random matrices, and \cite{KPTTZ, BMS2023} for the real eigenvalues of some non-Hermitian random matrices. For a longer (but still far from exhaustive) list of references, see the introduction of \cite{C2021}. 

\medskip The works \cite{A2018, AR2017, AZ2015, APS2009, C2021, CMV2016, ForresterHoleProba, GHS1988, JLM1993, L et al 2019} have already been mentioned earlier in the introduction and treat large gap problems of two-dimensional Coulomb gases. We also mention the works \cite{AS2013, AIS2014} on hole probabilities for the eigenvalues of products of non-Hermitian random matrices. It is proved in \cite{C2021} that when $U$ is a union of annuli, $Q(z) = |z|^{2b}+\frac{2\alpha}{n} \log |z|$ for some $b>0$, $\alpha>-1$, and $\beta=2$, the large $n$ asymptotics of $\mathbb{P}(\# \{z_{j}\in U\} = 0)$, where $\mathbb{P}$ refers to \eqref{general density intro}, are of the form
\begin{align}\label{precise asymp}
\exp \bigg( C_{1} n^{2} + C_{2} n \log n + C_{3} n +  C_{4} \sqrt{n} + C_{5}\log n + C_{6} + \mathcal{F}_{n} + \bigO\big( n^{-\frac{1}{12}}\big)\bigg),
\end{align}
for some explicit constants $C_{1},\ldots,C_{6}$ and where $\mathcal{F}_{n}$ is of order $1$. Extending this result to general rotation-invariant potentials is still an open problem (see however \cite{ABE2023, ABES2023, BKS2023} for related results). More generally, when $U$ and $Q$ are such that Assumptions \ref{ass:Q}, \ref{ass:U} and \ref{ass:U2} hold, we expect that the large $n$ asymptotics of $\mathbb{P}(\# \{z_{j}\in U\} = 0)$ are also of the form \eqref{precise asymp}. However, even in the determinantal case $\beta=2$, we are not aware of a work where subleading terms have been obtained for hole probabilities when either $Q$ or $U$ are not rotation-invariant.

\medskip For clarity, let us denote the partition function (=normalization constant) of \eqref{general density intro} by $Z_{n}^{Q}$, where the dependence in $Q$ has been made explicit. It is easy to see that
\begin{align}\label{lol93}
\mathbb{P}(\# \{z_{j}\in U\} = 0) = \frac{Z_{n}^{\tilde{Q}}}{Z_{n}^{Q}}, \qquad \mbox{where } \quad  \tilde{Q}(z) := \begin{cases}
Q(z), & \mbox{if } z \notin U, \\
+\infty, & \mbox{if } z \in U.
\end{cases}
\end{align}
(Note that $Z_{n}^{\tilde{Q}}$ can also be interpreted as the partition function of the point process \eqref{general density intro} associated with the potential $Q$ and conditioned on the hole event $\#\{z_{j}\in U\}=0$.) It follows from \eqref{lol93} that large gap problems can also be seen as asymptotic problems for partition functions with hard edges along $\partial U$ and with non-zero potentials (see e.g. \cite{Seo, HW2020, ACCL1, ACCL2, ACC2023} for studies on hard edges).  The method developed in e.g. \cite{BBLM2015, BGM2017, LY2023, BKP2023} uses matrix-valued Riemann-Hilbert problems and allows to obtain precise asymptotics for partition functions in the soft edge case where $Q$ is a Gaussian weight perturbed by a finite number of point charges. Developing an analogous Riemann-Hilbert method to treat a large gap problem where either $Q$ or $U$ is not rotation-invariant would be of high interest.

\medskip It is worth to mention that the difficulties encountered when analyzing asymptotics of partition functions with hard edges and soft edges are drastically different. Such differences can already be seen at the level of the equilibrium measure:
\begin{itemize}[leftmargin=4.5mm]
\item In the hard edge cases considered here, by Proposition \ref{prop:general pot}, the relevant equilibrium measure is $\mu|_{S\setminus U} + \nu$. Determining the support of $\mu|_{S\setminus U} + \nu$ is trivial (it is obviously $(S\setminus U) \cup \partial U = S\setminus U$), and the main challenge is to determine the density of $\nu$.
\item In soft edge cases such as in \cite{CK2022, Byun}, the difficulties are exactly the opposite: determining the support of the equilibrium measure is challenging, but the density of the equilibrium measure is trivial (when $Q$ is smooth the density is simply $d\mu(z) = \frac{\Delta Q(z)}{4\pi} d^{2}z$, see \cite[Theorem II.1.3]{SaTo}).
\end{itemize}
 
\medskip Another interesting line of research, which is not studied in this work, concerns large gap problems of two-dimensional Coulomb gases in the case where $\emptyset \subsetneq U \cap \partial S \subsetneq \partial S$. This problem was considered in \cite{ASZ2014} for the Gaussian potential $Q(z) = |z|^{2}$, where several properties of the equilibrium measure are given. However, to our knowledge, even in the case where $Q(z) = |z|^{2}$ and $U$ is a half-plane, the equilibrium measure for $Q$ on $U^{c}$ is not explicitly known. 
 
\medskip Large gap problems in dimension two have been studied for other models than the Coulomb gas model, such as the zeros of random entire functions \cite{Nishry}. In this model, conditioning on the hole event always produces a macroscopic gap outside the hole region \cite{GN2019} (in contrast with the two-dimensional Coulomb gas), and the associated conditional equilibrium measure is studied in \cite{NW2023}. We also refer to \cite{GN2018} for a recent survey on the hole event for two-dimensional point processes. 

\subsubsection*{Outline of the rest of the paper}

\begin{itemize}[leftmargin=4mm]
\item In Section \ref{section: general Q and U}, we prove Theorem \ref{thm:general pot} and Proposition \ref{prop:general pot}.
\item \vspace{-0.2cm} In Section \ref{section: method to obtain balayage}, we describe several methods that will be used to obtain explicit balayage measures.
\item \vspace{-0.2cm} In Section \ref{section:general rotation-invariant pot}, we prove Theorems \ref{thm:centered disk}, \ref{thm:regular annulus}, \ref{thm:unbounded annulus}, \ref{thm:g sector nu}, \ref{thm:g sector C} for general rotation-invariant potentials.
\item \vspace{-0.2cm} In Section \ref{section:elliptic Ginibre and several hole regions}, we prove Theorems \ref{thm:general U EG}, \ref{thm:Elliptic disk C} and \ref{thm:Ginibre triangle C}. Here $Q(z)=\frac{1}{1-\tau^{2}}\big( |z|^{2}-\tau \, \re z^{2} \big)$ and bounded hole regions $U$ are considered.
\item \vspace{-0.2cm} In Section \ref{section:disk}, we prove Theorems  \ref{thm:ML disk nu}, \ref{thm:ML disk C}. Here the hole region is a disk.
\item \vspace{-0.2cm} In Section \ref{section:ellipse centered}, we prove Theorems \ref{thm:ML ellipse nu}, \ref{thm:ML ellipse C}. Here the hole region is an ellipse centered at $0$.
\item \vspace{-0.2cm} In Section \ref{section:triangle}, we prove Theorem \ref{thm:Ginibre triangle nu}. Here the hole region is an equilateral triangle.
\item \vspace{-0.2cm} In Section \ref{section: rectangle}, we prove Theorems \ref{thm:Ginibre rectangle nu}, \ref{thm:ML rectangle nu}, \ref{thm:Ginibre rectangle C}, \ref{thm:ML rectangle C} and Corollaries \ref{coro:ML square nu}, \ref{coro:ML square C}. Here the hole region is a rectangle. We also prove Theorem \ref{thm: some nice series} which gives simple formulas for the series $T_{2v,\alpha}$.
\item \vspace{-0.2cm} In Section \ref{section:complement ellipse}, we prove Theorems \ref{thm:EG complement ellipse nu}, \ref{thm:EG complement ellipse C}. Here the hole region is the complement of an ellipse centered at $0$.
\item \vspace{-0.2cm} In Section \ref{section:complement disk}, we prove Theorems \ref{thm:EG complement disk nu}, \ref{thm:EG complement disk C}. Here the hole region is the complement of a disk.
\end{itemize}

\section{Hole probabilities for general potentials and hole regions}\label{section: general Q and U}
In this section, we prove our most general results Theorem \ref{thm:general pot} and Proposition \ref{prop:general pot}. The proof extends ideas from \cite{AR2017,A2018} to handle hole regions $U$ that are possibly unbounded, and potentials $Q$ satisfying the H\"{o}lder condition \eqref{holder} and that are not necessarily rotation-invariant. We will first prove Theorem \ref{thm:general pot} (i) and Proposition \ref{prop:general pot} (i) when $Q$ is admissible in the sense of Definition \ref{def:admissible}. Since the potential $\log(1+|z|^{2})$ is not admissible, the proofs of Theorem \ref{thm:general pot} (ii) and Proposition \ref{prop:general pot} (ii) are slightly different and we provide the details in Subsection \ref{subsection:spherical fekete}.

\subsection{Preliminaries}\label{subsection:prelim}

We start by briefly recalling well-known facts about Fekete points and balayage measures that will be used to prove Theorem \ref{thm:general pot} and Proposition \ref{prop:general pot}.

\subsubsection*{Fekete points}
For $n \in \N$, consider
\begin{align}\label{fekete delta n Q}
\delta_n^{Q}(E):=\sup_{z_1,z_2,\ldots,z_n\in E}\Bigg\{\prod_{1\leq j<k \leq n}|z_j-z_k|^{2}e^{-Q(z_j)}e^{-Q(z_k)}\Bigg\}^{\frac{1}{n(n-1)}}.
\end{align}
A set $\mathcal{F}_n = \{z_1^*,z_2^*,\ldots,z_n^* \}\subset E$ is said to be a Fekete set for $Q$ on $E$ if it minimizes \eqref{n tuple} under the constraint that $z_{1},\ldots,z_{n}\in E$, i.e. 
\begin{align}\label{fekete zstar}
\delta_n^{Q}(E)=\Bigg\{ \prod_{1\leq j<k \leq n}|z_j^{*}-z_k^{*}|^{2}e^{-Q(z_j^{*})}e^{-Q(z_k^{*})} 
\Bigg\}^{\frac{1}{n(n-1)}}.
\end{align}
\begin{remark}
If $\eta \in E$ and $\mathcal{F}_n = \{z_1^*,z_2^*,\ldots,z_n^* \}\subset E$ is a Fekete set for $Q$ on $E$, then by \eqref{fekete delta n Q}--\eqref{fekete zstar} we have
\begin{align*}
\prod_{k = 2}^{n} |\eta -z_k^{*}|^{2} e^{-Q(\eta)}e^{-Q(z_k^{*})} \prod_{2\leq j<k \leq n}|z_j^{*}-z_k^{*}|^{2}e^{-Q(z_j^{*})}e^{-Q(z_k^{*})}  \leq \prod_{1\leq j<k \leq n}|z_j^{*}-z_k^{*}|^{2}e^{-Q(z_j^{*})}e^{-Q(z_k^{*})},
\end{align*}
which yields after simplification
\begin{align}\label{fekete inequality}
e^{-(n-1)Q(\eta)} \prod_{k = 2}^{n} |\eta -z_k^{*}|^{2} \leq e^{-(n-1)Q(z_1^{*})}\prod_{k = 2}^{n} |z_1^{*}-z_k^{*}|^{2}.
\end{align}
\end{remark}

Finding $-\log \delta_n^{Q}(E)$ can be seen as a discrete version of the minimization problem $\inf_{\sigma \in \mathcal{P}(E)} I_{Q}[\sigma]$, where $I_{Q}[\sigma]$ is given by \eqref{def of Rmu}. In fact, if $Q$ is quasi-admissible on $E$,\footnote{A potential $Q$ is quasi-admissible on $E$ if it satisfies properties (i) and (iii) of Definition \ref{def:admissible}.} then by \cite[Theorems III.1.1 and III.1.3, (I.6.1) and (I.1.13)]{SaTo} the sequence $\{\delta_n^{Q}(E)\}_{n=2}^{\infty}$ decreases to $e^{-\mathcal{I}_{Q}[E]}$, where 
\begin{align}\label{def of Ical}
\mathcal{I}_{Q}[E]:=\inf_{\sigma \in \mathcal{P}(E)} I_{Q}[\sigma] = I_{Q}[\mu_{E}],
\end{align}
and $\mu_{E}$ is the equilibrium measure for $Q$ on $E$. In particular,
\begin{align}\label{eqn:limit}
\lim_{n \to \infty} \delta_n^{Q}(E) = e^{-\mathcal{I}_{Q}[E]} = e^{-I_{Q}[\mu_{E}]}.
\end{align}

\subsubsection*{Balayage measures}
The definition and some properties of the balayage measure are listed in Proposition \ref{prop:def of bal}. In what follows we will also need the following.
\begin{lemma}\label{lemma:further properties of nu}
Suppose that $U$, $\sigma$ and $c^{\hspace{0.02cm}\sigma}_{U}$ are as in Proposition \ref{prop:def of bal}. The measure $\nu = \mathrm{Bal}(\sigma,\partial U)$ satisfies the following properties:
\begin{enumerate}
\item[(i)] $p_{\nu}(z)=p_{\sigma}(z)+c^{\hspace{0.02cm}\sigma}_{U}$ for every $z \notin \overline{U}$ and for q.e. $z\in \partial U$.
\item[(ii)] $p_{\nu}(z)\leq p_{\sigma}(z)+c^{\hspace{0.02cm}\sigma}_{U}$ for every $z \in \C$.
\item[(iii)] If $h$ is a continuous function on $\overline{U}$ that is harmonic on $U$, then
\begin{align*}
\int_{\partial U} h d\nu = \int_{U} h d\sigma.
\end{align*}
\end{enumerate}
\end{lemma}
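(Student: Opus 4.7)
The plan is to derive all three items from the defining identity in Proposition~\ref{prop:def of bal}, which already gives $p_{\nu}(z)=p_{\sigma}(z)+c^{\,\sigma}_{U}$ for q.e.\ $z\in U^{c}$.

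For (i), the assertion on $\partial U$ is immediate because $\partial U\subset U^{c}$. To upgrade ``q.e.'' to ``every'' outside $\overline{U}$, I would observe that the supports of $\nu$ and $\sigma$ both lie in $\overline{U}$, so $p_{\nu}$ and $p_{\sigma}$ are individually harmonic on the open set $\overline{U}^{c}$. Their difference $p_{\nu}-p_{\sigma}$ is therefore harmonic on $\overline{U}^{c}$ and equals the constant $c^{\,\sigma}_{U}$ outside a polar (hence Lebesgue-null) subset. Two harmonic functions agreeing off a set of measure zero on a connected open set must coincide, so the identity holds pointwise on each connected component of $\overline{U}^{c}$.

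For (ii), the strategy is to invoke the principle of domination (Saff--Totik, Theorem~II.3.2) with the measures $\nu$ and $\sigma$ and the constant $c^{\,\sigma}_{U}$. Three hypotheses need to be verified: equal total masses $\nu(\C)=\nu(\partial U)=\sigma(U)=\sigma(\C)$, where the last equality uses $\sigma(U^{c})=0$; finite logarithmic energy $I_{0}[\nu]$, which follows from boundedness of $p_{\nu}$ on $\mathrm{supp}\,\nu=\partial U$ together with the finiteness of $\nu$; and the pointwise inequality $p_{\nu}\le p_{\sigma}+c^{\,\sigma}_{U}$ q.e.\ on $\mathrm{supp}\,\nu$, which is immediate from~(i). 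The principle of domination then promotes this inequality from q.e.\ on $\mathrm{supp}\,\nu$ to all $z\in\C$.

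For (iii), the plan is to represent $\nu$ as an average of harmonic measures. A standard consequence of the uniqueness assertion in Proposition~\ref{prop:def of bal}, applied to a point mass, is $\mathrm{Bal}(\delta_{w},\partial U)=\omega_{w}^{U}$ for every $w\in U$, where $\omega_{w}^{U}$ denotes the harmonic measure of $\partial U$ from $w$ relative to $U$. Linearity of balayage and Fubini then give $\nu=\int_{U}\omega_{w}^{U}\,d\sigma(w)$ and
\begin{equation*}
\int_{\partial U}h\,d\nu=\int_{U}\!\Bigl(\int_{\partial U}h\,d\omega_{w}^{U}\Bigr)d\sigma(w)=\int_{U}h(w)\,d\sigma(w),
\end{equation*}
where the final step uses the Dirichlet/mean-value identity $\int_{\partial U}h\,d\omega_{w}^{U}=h(w)$ for $h\in C(\overline{U})$ harmonic on $U$. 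This step is standard when $U$ is bounded. The hard part, and where I expect the main obstacle to lie, is the unbounded case: even though recurrence of planar Brownian motion still makes each $\omega_{w}^{U}$ a probability measure on the compact set $\partial U$, justifying the Dirichlet identity requires handling the behavior of $h$ on the unbounded component $\Omega$, which I would do by decomposing $\sigma$ along the connected components of $U$ and using the Green function $g_{\Omega}(\cdot,\infty)$ of Definition~\ref{def:green with pole at inf} together with an exhaustion by expanding disks.
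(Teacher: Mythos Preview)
Your proposal is essentially correct, but it is worth noting that the paper does not actually prove this lemma: its entire proof is a citation to \cite[Theorem~II.4.7]{SaTo} (with a pointer to Theorems~II.4.1 and~II.4.4 there). What you have written is, in outline, a reconstruction of the arguments behind those theorems.

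Your arguments for (i) and (ii) are standard and correct; the use of harmonicity to upgrade q.e.\ to everywhere on $\overline{U}^{c}$, and of the principle of domination (with $\nu(\C)=\sigma(\C)$ and $I_{0}[\nu]<\infty$ coming from the boundedness of $p_{\nu}$ on $\partial U$), is exactly how these statements are proved in \cite{SaTo}. For (iii), you are right that the unbounded case is the delicate point. The resolution in \cite{SaTo} is to work on the Riemann sphere: since $U\subset\C\cup\{\infty\}$ in Proposition~\ref{prop:def of bal}, the closure $\overline{U}$ is taken in $\C\cup\{\infty\}$, so ``$h$ continuous on $\overline{U}$'' already forces $h$ to have a limit at $\infty$ when $\infty\in\overline{U}$. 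With this interpretation the harmonic-measure identity $\int_{\partial U}h\,d\omega_{w}^{U}=h(w)$ goes through, and your averaging argument $\nu=\int_{U}\omega_{w}^{U}\,d\sigma(w)$ (which the paper itself cites later as \cite[eq.\ between (II.5.2) and (II.5.3)]{SaTo}) completes the proof. Your proposed exhaustion by disks is one way to make this rigorous, but it is not needed once one adopts the spherical viewpoint.
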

\begin{proof}
See \cite[Theorem II.4.7]{SaTo} (see also \cite[Theorems II.4.1 and II.4.4]{SaTo}).
\end{proof}

Following \cite{AR2017, A2018}, we will prove Theorem \ref{thm:general pot} (i) in two steps:
\begin{itemize}[leftmargin=*]
\item In Subsection \ref{subsection: eq measure}, we establish Proposition \ref{prop:general pot} (i), as well as the identity
\begin{align}\label{lol87}
\mathcal{I}_{Q}[U^{c}] - \mathcal{I}_{Q}[\C] = \frac{1}{2}\bigg(  \int_{\partial U}Q(z)d\nu(z) + 2c^{\hspace{0.02cm}\mu}_{U} - \int_{U}Q(z)d\mu(z) \bigg).
\end{align}
\item Next, in Subsection \ref{gap}, we prove that as $n\to +\infty$
\begin{align}\label{lol86}
\mathcal{P}_{n} = \exp \big( -Cn^{2}+o(n^{2}) \big), \qquad \mbox{where } \quad C=\frac{\beta}{2}(\mathcal{I}_{Q}[U^{c}] - \mathcal{I}_{Q}[\C]),
\end{align}
and where $\mathcal{P}_{n}:=\mathbb{P}(\# \{z_{j}\in U\} = 0)$. The proof of \eqref{lol86} is subdivided into two parts:
\begin{itemize}[leftmargin=*]
\item The following upper bounds are proved in Lemma \ref{lemma:upper bound}:
\begin{align}\label{upper bounds outline}
\hspace{-0.3cm} \limsup_{n\to \infty}\frac{1}{n^2}\log \mathcal{P}_{n} \leq -\frac{\beta}{2}\mathcal{I}_{Q}[U^{c}] - \liminf_{n\to \infty} \frac{1}{n^2}\log Z_{n} \quad \mbox{and} \quad \limsup_{n\to \infty}\frac{1}{n^2}\log Z_{n} \leq -\frac{\beta}{2}\mathcal{I}_{Q}[\C].
\end{align}
\item The following lower bounds are proved in Lemma \ref{lemma:lower bound}:
\begin{align}\label{lower bounds outline}
\hspace{-0.3cm} \liminf_{n\to \infty}\frac{1}{n^2}\log\mathcal{P}_{n} \geq - \frac{\beta}{2} \mathcal{I}_{Q}[U^{c}] - \limsup_{n\to\infty} \frac{1}{n^{2}}\log Z_{n} \quad \mbox{and} \quad \liminf_{n\to \infty}\frac{1}{n^2}\log Z_{n} \geq -\frac{\beta}{2}\mathcal{I}_{Q}[\C].
\end{align}
\end{itemize}
Taken together, the inequalities \eqref{upper bounds outline} and \eqref{lower bounds outline} imply \eqref{lol86}.
\end{itemize}
Theorem \ref{thm:general pot} (i) then directly follows by combining the results of the two steps outlined above.

\subsection{Proof of Proposition \ref{prop:general pot} (i)}\label{subsection: eq measure}
In this subsection, we prove Proposition \ref{prop:generalformula} (which, in turn, implies Proposition \ref{prop:general pot} (i)).
\begin{proposition}\label{prop:generalformula}
Let $Q$ be an admissible potential on $\C$. Let $\mu$ be the equilibrium measure for $Q$ on $\C$,  $S:=\mathrm{supp}\, \mu$, and let $U\subset \C$ be such that Assumption \ref{ass:U} holds.  Then the equilibrium measure for $Q$ on $U^c$ is $\nu+\mu|_{S\setminus U}$ and
\begin{align}\label{RU-Rempty}
\mathcal{I}_{Q}[U^{c}] - \mathcal{I}_{Q}[\C] = \frac{1}{2}\bigg(  \int_{\partial U}Q(z)d\nu(z) + 2c^{\hspace{0.02cm}\mu}_{U} - \int_{U}Q(z)d\mu(z) \bigg),
\end{align}
where $\nu = \mathrm{Bal}(\mu|_{U},\partial U)$, $c^{\hspace{0.02cm}\mu}_{U}=2\pi\int_{\Omega}g_{\Omega}(z,\infty)d\mu(z)$ and $\Omega$ is the unbounded component of $U$ ($c^{\hspace{0.02cm}\mu}_{U}=0$ if $U$ has no unbounded component). 
\end{proposition}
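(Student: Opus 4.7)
The plan is to produce a candidate for the equilibrium measure for $Q$ on $U^{c}$ and then verify it via the characterization at the end of Proposition \ref{prop:eq measure}. The natural candidate, motivated by the heuristic that conditioning on the hole event sweeps the mass from $U$ onto $\partial U$, is
\[
\tilde{\mu} := \mu|_{S\setminus U} + \nu, \qquad \nu := \mathrm{Bal}(\mu|_{U},\partial U).
\]
First I would check that $\tilde{\mu}\in\mathcal{P}(U^{c})$ and has compact support: $\mathrm{supp}\,\tilde{\mu}\subset (S\setminus U)\cup\partial U\subset U^{c}$ since $U$ is open and $\partial U\subset S$, and $\tilde{\mu}(\C)=\mu(S\setminus U)+\nu(\partial U)=\mu(S\setminus U)+\mu(U)=1$ by Proposition \ref{prop:def of bal}. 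I would also record that $I_{0}[\tilde{\mu}]<+\infty$: the self-energy of $\mu|_{S\setminus U}$ is dominated by $I_{0}[\mu]<\infty$, Proposition \ref{prop:def of bal} gives $p_{\nu}$ bounded on $\partial U$ so $I_{0}[\nu]$ and the cross term are finite.

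The key step is to verify the Euler--Lagrange equalities \eqref{EL1} and \eqref{EL3} for $\tilde{\mu}$, with the new constant $\tilde{F}:=F+2c^{\hspace{0.02cm}\mu}_{U}$. The main tool is the defining property of the balayage measure $\nu$ recorded in Lemma \ref{lemma:further properties of nu}(i): $p_{\nu}(z)=p_{\mu|_{U}}(z)+c^{\hspace{0.02cm}\mu}_{U}$ for every $z\notin\overline{U}$ and q.e.\ $z\in\partial U$. Adding $p_{\mu|_{S\setminus U}}(z)$ on both sides gives
\[
p_{\tilde{\mu}}(z) = p_{\mu}(z)+c^{\hspace{0.02cm}\mu}_{U}
\]
for every $z\in U^{c}\setminus\partial U$ outside $\overline{U}$ and q.e.\ $z\in\partial U$, i.e.\ this identity holds q.e.\ on $U^{c}$. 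Combining with the Euler--Lagrange relations for $\mu$ on $\C$ (Proposition \ref{prop:eq measure}) and the fact that $\partial U\subset S$, one obtains $2p_{\tilde{\mu}}+Q=\tilde{F}$ q.e.\ on $(S\setminus U)\cup\partial U=\mathrm{supp}\,\tilde{\mu}$, and $2p_{\tilde{\mu}}+Q\geq \tilde{F}$ q.e.\ on $U^{c}\setminus S=U^{c}\setminus\mathrm{supp}\,\tilde{\mu}$, using \eqref{EL3} and the hypothesis \eqref{regularity condition}. The uniqueness statement in Proposition \ref{prop:eq measure} then identifies $\tilde{\mu}$ as the equilibrium measure for $Q$ on $U^{c}$.

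For the energy formula \eqref{RU-Rempty}, I would use the Frostman-type identity obtained by integrating \eqref{EL1} against $\mu$ (resp.\ $\tilde{\mu}$), which yields
\[
I_{Q}[\mu] \;=\; \tfrac{1}{2}\Bigl(F+\textstyle\int Q\,d\mu\Bigr), \qquad I_{Q}[\tilde{\mu}] \;=\; \tfrac{1}{2}\Bigl(\tilde{F}+\textstyle\int Q\,d\tilde{\mu}\Bigr).
\]
Subtracting and substituting $\tilde{F}=F+2c^{\hspace{0.02cm}\mu}_{U}$ together with $\int Q\,d\tilde{\mu}-\int Q\,d\mu=\int_{\partial U}Q\,d\nu-\int_{U}Q\,d\mu$ gives \eqref{RU-Rempty}.

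The main obstacle is bookkeeping around the quasi-everywhere statements: one must justify that the measures $\mu$, $\nu$, $\tilde{\mu}$ do not charge the exceptional polar sets on which equality may fail, and confirm that the Frostman identities may be integrated against the measures themselves. Both facts follow from Proposition \ref{prop:eq measure} (which gives $I_{0}[\mu]<\infty$, so $\mu$ does not charge polar sets) and Proposition \ref{prop:def of bal} (which gives the analogous vanishing for $\nu$), so the verification is technical rather than deep.
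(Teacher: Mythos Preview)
Your argument is correct and follows essentially the same route as the paper: build $\tilde\mu=\mu|_{S\setminus U}+\nu$, use the balayage identity $p_{\nu}=p_{\mu|_U}+c^{\mu}_{U}$ q.e.\ on $U^{c}$ to get $p_{\tilde\mu}=p_{\mu}+c^{\mu}_{U}$ q.e.\ on $U^{c}$, feed this into the Euler--Lagrange relations for $\mu$, invoke the uniqueness part of Proposition~\ref{prop:eq measure}, and then subtract the two Frostman identities to obtain \eqref{RU-Rempty}. One small correction: you cite ``the hypothesis \eqref{regularity condition}'', but Proposition~\ref{prop:generalformula} only assumes $Q$ is admissible, not all of Assumption~\ref{ass:Q}; the q.e.\ inequality \eqref{EL3} already gives what you need, so simply drop that reference.
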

\begin{proof}
By Assumption \ref{ass:U}, $U$ is open, $\partial U$ is compact in $\C$, $\partial U \subset S$, and $\mathrm{cap}(\partial U)>0$. Since $Q$ is admissible on $\C$, this ensures that $\mathrm{cap}(\{z \in U^{c}: Q(z) < + \infty\})>0$, and thus $Q$ is admissible on $U^{c}$. 

Let $\mu$ be the equilibrium measure for $Q$ on $\C$, and define $\mu_{1}:= \mu|_{U^{c}}$ and $\mu_{2}:=\mu|_{U}$. By Proposition \ref{prop:eq measure}, $I_{0}[\mu]<+\infty$, and thus by \cite[Theorem 3.2.3]{RFbook} $\mu(P)=0$ for every Borel polar set $P$. 


By Proposition \ref{prop:def of bal}, the measure $\nu=\mathrm{Bal}(\mu_{2},\partial U)$ is supported on $\partial U$, and satisfies $\nu(\partial U)=\mu(U)$, $\nu(P)=0$ for every Borel polar set $P$ and 
\begin{align*}
p_{\nu}(z)=p_{\mu_2}(z)+c^{\hspace{0.02cm}\mu}_{U}\;\; \mbox{ for q.e. } z \in U^c.
\end{align*}
Define $\tilde{\mu}=\mu_{1}+\nu$. We have that
\begin{align}\label{lol17}
p_{\tilde{\mu}}(z)=p_{\mu_1}(z)+p_{\nu}(z)=p_{\mu_1}(z) + p_{\mu_2}(z) + c^{\hspace{0.02cm}\mu}_{U} = p_{\mu}(z) + c^{\hspace{0.02cm}\mu}_{U}\;\; \mbox{ for q.e. } z\in U^c.
\end{align}
(In fact, by Lemma \ref{lemma:further properties of nu} (i), \eqref{lol17} holds for all $z\in \overline{U}^{c}$ and for q.e. $z\in \partial U$.)
Since $\mu$ is the equilibrium measure for $Q$ on $\C$, by Proposition \ref{prop:eq measure} we have 
\begin{align}
2p_{\mu}(z)+Q(z) & = F \;\;\mbox{ for q.e. } z\in S, \label{lol14} \\ 
2p_{\mu}(z)+Q(z) & \geq F \;\;\mbox{ for q.e. } z \in S^c, \nonumber
\end{align}
for some constant $F\in \R$. Hence, by \eqref{lol17},
\begin{align}
2 p_{\tilde{\mu}}(z) - 2c^{\hspace{0.02cm}\mu}_{U} + Q(z) & = F \;\;\mbox{ for q.e. } z\in S \setminus U, \label{EL elliptic Ginibre} \\ 
2p_{\tilde{\mu}}(z) - 2c^{\hspace{0.02cm}\mu}_{U} + Q(z) & \geq F \;\;\mbox{ for q.e. } z \in S^{c}\setminus U. \label{lol16}
\end{align}
Since $\partial U \subset S$, we have $\mathrm{supp} \,\tilde{\mu} = S \setminus U$ (we also have either $S^{c}\setminus U=S^{c}$ or $S^{c}\setminus U=\emptyset$). Since $Q$ is lower-continuous on the compact set $S$, $Q$ is bounded from below on $S$, and by \cite[Proof of Theorem I.1.3 (b)]{SaTo} $Q$ is also bounded from above on $S$. Since $\tilde{\mu}$ is finite, we conclude that
\begin{align*}
-\infty < I_{0}[\tilde{\mu}] = \int p_{\tilde{\mu}}(z)d\tilde{\mu}(z) = \int \left(\frac{F+2c^{\hspace{0.02cm}\mu}_{U}}{2}-\frac{Q(z)}{2}\right) d\tilde{\mu}(z) < + \infty,
\end{align*}
where for the second equality we have used that $\tilde{\mu}(P)=0$ for all Borel polar sets $P$. So, $I_{0}[\tilde{\mu}]$ is finite and $\tilde{\mu}$ satisfies \eqref{EL elliptic Ginibre}, \eqref{lol16}, $\mathrm{supp} \,\tilde{\mu} = S \setminus U$ and $\tilde{\mu}(S \setminus U) = 1$. By Proposition \ref{prop:eq measure}, $\tilde{\mu}$ is the equilibrium measure for $Q$ on $U^c$. This already proves Proposition \ref{prop:general pot} (i), and it only remains to compute $\mathcal{I}_{Q}[U^{c}] - \mathcal{I}_{Q}[\C]$.

\medskip Since $\tilde{\mu}$ is the equilibrium measure for $Q$ on $U^c$ and satisfies \eqref{EL elliptic Ginibre}, by \eqref{def of Ical} we have
\begin{align}
\mathcal{I}_{Q}[U^{c}] = I_{Q}[\tilde{\mu}] & = \int p_{\tilde{\mu}}(z)d\tilde{\mu}(z)+\int Q(z) d\tilde{\mu}(z) = \frac{F+2c^{\hspace{0.02cm}\mu}_{U}}{2} + \frac{1}{2} \int Q(z) d\tilde{\mu}(z) \nonumber \\
& = \frac{F+2c^{\hspace{0.02cm}\mu}_{U}}{2} + \frac{1}{2} \int Q(z) d\mu_1(z) + \frac{1}{2} \int Q(z) d\nu(z) \nonumber \\
& = \frac{F+2c^{\hspace{0.02cm}\mu}_{U}}{2} + \frac{1}{2} \int Q(z)d\mu(z) +\frac{1}{2}\left[\int Q(z) d\nu(z) - \int Q(z) d\mu_2(z) \right]. \label{lol18}
\end{align}
On the other hand, using that $\mu$ is the equilibrium measure for $Q$ on $\C$ and satisfies \eqref{lol14}, we get
\begin{align}\label{lol19}
\mathcal{I}_{Q}[\C] = I_{Q}[\mu] = \int p_{\mu}(z)d\mu(z)+\int Q(z) d\mu(z) = \frac{F}{2} + \frac{1}{2} \int Q(z) d\mu(z).
\end{align}
Combining \eqref{lol18} and \eqref{lol19}, we obtain \eqref{RU-Rempty}.
\end{proof}
\begin{remark}
The following fact will be used later: if \eqref{regularity condition} holds, then following the proof of Proposition \ref{prop:generalformula} and using also Lemma \ref{lemma:further properties of nu} (i) we conclude that \eqref{lol16} holds with ``for q.e." replaced by ``for all", i.e. we have
\begin{align}\label{lol20}
2p_{\tilde{\mu}}(z) - 2c^{\hspace{0.02cm}\mu}_{U} + Q(z)  \geq F \qquad \mbox{ for all } z \in S^{c}\setminus U.
\end{align}
\end{remark}

\subsection{Leading order asymptotics for $\mathcal{P}_{n}$}\label{gap}
The goal of this subsection is to prove the following proposition. 
\begin{proposition}\label{prop:holeprobability1}
Fix $\beta >0$, and let $Q$ and $U$ be such that Assumptions \ref{ass:Q} and \ref{ass:U} hold. Then
\begin{align}\label{lol15}
\lim_{n\to \infty}\frac{1}{n^2}\log\mathcal{P}_{n}=-\frac{\beta}{2}\bigg(  \mathcal{I}_{Q}[U^{c}] - \mathcal{I}_{Q}[\C] \bigg).
\end{align}
\end{proposition}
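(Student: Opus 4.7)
The strategy is the two-step reduction outlined before Lemmas~\ref{lemma:upper bound} and~\ref{lemma:lower bound}: writing $\mathcal{P}_{n}=Z_{n}^{\tilde{Q}}/Z_{n}^{Q}$ via~\eqref{lol93}, it suffices to prove the matching upper and lower bounds
\begin{align*}
\lim_{n\to\infty}\frac{1}{n^{2}}\log Z_{n}^{E} \;=\; -\frac{\beta}{2}\,\mathcal{I}_{Q}[E],\qquad E\in\{\mathbb{C},\,U^{c}\},
\end{align*}
and \eqref{lol15} follows by subtracting the two limits.

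For the \emph{upper bound}, I would redistribute part of the external potential into pairwise weights using the identity $(n-1)\sum_{j}Q(z_{j})=\sum_{j<k}[Q(z_{j})+Q(z_{k})]$. After peeling off a small fraction $\epsilon>0$ of $Q$, the integrand of $Z_{n}^{E}$ factors as
\begin{align*}
\prod_{j<k}\Bigl[|z_{j}-z_{k}|\,e^{-\frac{n(1-\epsilon)}{2(n-1)}Q(z_{j})}e^{-\frac{n(1-\epsilon)}{2(n-1)}Q(z_{k})}\Bigr]^{\beta}\;\cdot\;\prod_{j}e^{-\epsilon n\beta Q(z_{j})/2}.
\end{align*}
On $E^{n}$ the first product is bounded by $\delta_{n}^{(1-\epsilon)\frac{n}{n-1}Q}(E)^{\beta n(n-1)/2}$ via~\eqref{fekete delta n Q}, while the second factor is integrable on $\mathbb{C}^{n}$ with integral of order $e^{\bigO(n)}$ thanks to admissibility~(iii) of Definition~\ref{def:admissible}. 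Taking $\frac{1}{n^{2}}\log$, letting $n\to\infty$ and then $\epsilon\to 0$, and invoking~\eqref{eqn:limit}, produces $\limsup_{n}\frac{1}{n^{2}}\log Z_{n}^{E}\leq-\frac{\beta}{2}\mathcal{I}_{Q}[E]$.

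For the \emph{lower bound}, I would restrict the integral defining $Z_{n}^{E}$ to a polydisk $\prod_{j=1}^{n}D(z_{j}^{\dagger},r_{n})$ anchored near a Fekete set $\{z_{j}^{*}\}_{j=1}^{n}$ for $Q$ on $E$, with $r_{n}\to 0$ to be chosen. The H\"older condition~\eqref{holder} gives $|Q(w)-Q(z_{j}^{\dagger})|=\bigO(r_{n}^{\alpha})$ on each disk meeting a neighborhood of $S$, and for $w_{j}\in D(z_{j}^{\dagger},r_{n})$ one has $\bigl|\log|w_{j}-w_{k}|-\log|z_{j}^{*}-z_{k}^{*}|\bigr|=\bigO(r_{n}/|z_{j}^{*}-z_{k}^{*}|)$ provided $r_{n}$ is smaller than the minimal Fekete separation. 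Combined with~\eqref{fekete zstar} and standard estimates on the log-separation sum coming from~\eqref{eqn:limit}, these bounds yield
\begin{align*}
Z_{n}^{E}\;\geq\;(\pi r_{n}^{2})^{n}\cdot \delta_{n}^{Q}(E)^{\beta n(n-1)/2}\cdot \exp(-o(n^{2})),
\end{align*}
and taking $r_{n}=n^{-M}$ with $M$ large enough produces the matching lower bound via~\eqref{eqn:limit}. When $E=U^{c}$, some Fekete points may lie on $\partial U\subset S$, so that $D(z_{j}^{*},r_{n})\not\subset U^{c}$; here the exterior ball condition~\eqref{eqn:exterior ball condition lol} supplies a tangent disk $D(\eta_{j},\epsilon_{0})\subset U^{c}$, and I would pick $z_{j}^{\dagger}\in[z_{j}^{*},\eta_{j}]$ at distance $r_{n}$ from $z_{j}^{*}$, so that $D(z_{j}^{\dagger},r_{n})\subset U^{c}$ while $|z_{j}^{\dagger}-z_{j}^{*}|\leq r_{n}$ keeps the H\"older perturbation negligible.

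The main obstacle is this constrained lower bound: one must deform all boundary Fekete points inward simultaneously, show that the resulting error $\bigO(n^{2}r_{n}^{\alpha})$ remains $o(n^{2})$ (here the H\"older exponent $\alpha\in(0,1]$ just barely suffices, which is the relaxation over the Lipschitz hypothesis of~\cite{A2018}), and, when $U$ is unbounded, verify that Fekete sets for $Q$ on $U^{c}$ stay in a fixed compact subset of $\mathbb{C}$ independent of $n$. For the last point I would combine the growth $Q(z)-2\log|z|\to+\infty$ from admissibility with the regularity inequality~\eqref{regularity condition}; by the argument of Proposition~\ref{prop:generalformula} this upgrades to~\eqref{lol20} for the equilibrium measure $\mu|_{S\setminus U}+\nu$ on $U^{c}$, pinning down its support inside $S$ and yielding, by a standard concentration argument, the required uniform compactness of the Fekete sets.
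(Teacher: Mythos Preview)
Your overall architecture matches the paper's: upper bound via the Fekete supremum, lower bound by restricting the integral to small disks around a Fekete configuration, then combining to get matching limits for $Z_n$ on $\mathbb{C}$ and on $U^c$. The $\epsilon$-peeling in your upper bound and the recentering at $z_j^{\dagger}$ inside the exterior tangent ball are harmless variants of what the paper does (the former in fact handles integrability of the leftover factor more cleanly for small $\beta$).

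The genuine gap is in the lower bound: you need, but never establish, a polynomial lower bound on the minimal Fekete separation $\min_{j\neq k}|z_j^{*}-z_k^{*}|$. Your sentence ``taking $r_{n}=n^{-M}$ with $M$ large enough'' presupposes that this minimum is at least $n^{-p}$ for some finite $p$, and your appeal to ``standard estimates on the log-separation sum coming from~\eqref{eqn:limit}'' does not supply it: \eqref{eqn:limit} only controls $\sum_{j<k}\log|z_j^{*}-z_k^{*}|$ at scale $n^{2}$, which says nothing about the minimum pairwise distance. In the paper this is exactly Lemma~\ref{lemma:dist fekete}, giving $\min_{j\neq k}|z_j^{*}-z_k^{*}|\geq C_{1}\,n^{-1-2/\alpha}$, and its proof is the most delicate part of the whole argument: it combines a polynomial extremality argument on $P(z)=\prod_{k\geq 2}(z-z_k^{*})$ with the Fekete inequality~\eqref{fekete inequality}, the H\"older bound~\eqref{holder} (this is where $\alpha$ genuinely enters, not merely in the $\bigO(n^{2}r_{n}^{\alpha})$ potential error you mention), and---when $U\neq\emptyset$---the exterior ball condition~\eqref{eqn:exterior ball condition lol}, used to transfer estimates from a point $w^{*}$ that may land inside $U$ back to a nearby point of $U^{c}$. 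Without this lemma neither the paper's multiplicative bound $|w_j-w_k|\geq(1-\tfrac{2}{n})|z_j^{*}-z_k^{*}|$ nor your additive error $\bigO(r_{n}/|z_j^{*}-z_k^{*}|)$ can be summed over all pairs to $o(n^{2})$. A secondary point: the paper does not place Fekete points in a fixed compact by a ``concentration argument'' but by directly invoking \cite[Theorem III.2.8]{SaTo} together with~\eqref{lol20} to choose $z_\ell^{*}\in\mathrm{supp}\,\tilde\mu=S\setminus U$; this is what makes the H\"older estimate~\eqref{holder}, valid only near $S$, applicable in the first place.
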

Note that Theorem \ref{thm:general pot} (i) directly follows by combining Propositions \ref{prop:generalformula} and \ref{prop:holeprobability1}. As explained at the end of Subsection \ref{subsection:prelim}, we will prove Proposition \ref{prop:holeprobability1} in two steps. First, we establish the following lemma.

\begin{lemma}[Upper bounds]\label{lemma:upper bound}
Let $Q$ and $U$ be such that Assumptions \ref{ass:Q} and \ref{ass:U} hold. Then
\begin{align}
& \limsup_{n\to \infty}\frac{1}{n^2}\log \mathcal{P}_{n} \leq -\frac{\beta}{2}\mathcal{I}_{Q}[U^{c}] - \liminf_{n\to \infty} \frac{1}{n^2}\log Z_{n}, \label{inequality} \\
& \limsup_{n\to \infty}\frac{1}{n^2}\log Z_{n,\beta} \leq -\frac{\beta}{2}\mathcal{I}_{Q}[\C]. \label{inequality 2}
\end{align}
\end{lemma}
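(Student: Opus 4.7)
My plan is to prove both inequalities via a Fekete-point argument, exploiting the limit \eqref{eqn:limit} which identifies $\mathcal{I}_{Q}[E] = -\lim_{n\to\infty}\log \delta_{n}^{Q}(E)$. The starting observation is
\[
\mathcal{P}_{n}Z_{n} = \int_{(U^{c})^{n}}\prod_{j<k}|z_{j}-z_{k}|^{\beta}\prod_{j}e^{-n\beta Q(z_{j})/2}\prod_{j}d^{2}z_{j},
\]
so that \eqref{inequality} reduces to bounding this integral from above, while \eqref{inequality 2} is the same statement with $U^{c}$ replaced by $\mathbb{C}$.

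The core estimate is a Fekete-type bound with a small regularization parameter $\varepsilon\in(0,1)$. For any $(z_{1},\ldots,z_{n})\in(U^{c})^{n}$, the definition of $\delta_{n}^{(1-\varepsilon)Q}(U^{c})$ gives
\[
\prod_{j<k}|z_{j}-z_{k}|^{2}e^{-(1-\varepsilon)[Q(z_{j})+Q(z_{k})]} \leq \delta_{n}^{(1-\varepsilon)Q}(U^{c})^{n(n-1)};
\]
raising to the power $\beta/2$ and multiplying by $\prod_{j}e^{-n\beta Q(z_{j})/2}$ yields
\[
\prod_{j<k}|z_{j}-z_{k}|^{\beta}\prod_{j}e^{-n\beta Q(z_{j})/2} \leq \delta_{n}^{(1-\varepsilon)Q}(U^{c})^{\beta n(n-1)/2}\exp\Bigl(-\tfrac{\beta[1+(n-1)\varepsilon]}{2}\sum_{j=1}^{n}Q(z_{j})\Bigr).
\]
Setting $\Xi_{n}(\varepsilon) := \int_{U^{c}} e^{-\beta[1+(n-1)\varepsilon]Q(z)/2}d^{2}z$ and integrating over $(U^{c})^{n}$ gives $\mathcal{P}_{n}Z_{n} \leq \delta_{n}^{(1-\varepsilon)Q}(U^{c})^{\beta n(n-1)/2}\Xi_{n}(\varepsilon)^{n}$.

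Taking $(1/n^{2})\log$ and letting $n\to\infty$, the Fekete factor contributes $-\tfrac{\beta}{2}\mathcal{I}_{(1-\varepsilon)Q}[U^{c}]$ via \eqref{eqn:limit}. Since the inequality \eqref{inequality} is invariant under $Q\mapsto Q+c$ for a real constant $c$ (because $(1/n^{2})\log Z_{n}$ and $\mathcal{I}_{Q}$ both shift by $-\beta c/2$ while $\mathcal{P}_{n}$ is unchanged), we may assume $Q\geq 0$; the admissibility condition $Q(z)-2\log|z|\to+\infty$ then forces $\Xi_{n}(\varepsilon)$ to be bounded in $n$ for each fixed $\varepsilon>0$, so $\Xi_{n}(\varepsilon)^{n}$ contributes only $o(n^{2})$ on the logarithmic scale. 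Letting $\varepsilon\downarrow 0$ and using the continuity $\mathcal{I}_{(1-\varepsilon)Q}[U^{c}]\to \mathcal{I}_{Q}[U^{c}]$ (established by testing $\mathcal{I}_{(1-\varepsilon)Q}$ against $\mu_{Q}$ and $\mathcal{I}_{Q}$ against $\mu_{(1-\varepsilon)Q}$) then gives \eqref{inequality}, and \eqref{inequality 2} follows by running the identical argument with $U^{c}$ replaced by $\mathbb{C}$.

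I expect the main obstacle to be the simultaneous control of $\Xi_{n}(\varepsilon)^{n}$ and of the Fekete constant $\delta_{n}^{(1-\varepsilon)Q}(\cdot)$. The regularization $\varepsilon>0$ is essential: a direct Fekete bound with $\varepsilon=0$ would leave the factor $\int e^{-\beta Q(z)/2}d^{2}z$, which need not be finite when $\beta\leq 4$ under the bare admissibility assumption. Turning on $\varepsilon>0$ makes the effective exponent $\beta[1+(n-1)\varepsilon]/2$ grow with $n$, so admissibility drives $\Xi_{n}(\varepsilon)$ down; however, one must also check that $(1-\varepsilon)Q$ is itself admissible, which demands that the growth of $Q(z)-2\log|z|$ dominates the perturbation $2\varepsilon\log|z|$ at infinity, and then that $\mathcal{I}_{(1-\varepsilon)Q}\to \mathcal{I}_{Q}$. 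Once these two regularity points are resolved the rest of the argument is routine.
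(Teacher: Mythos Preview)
Your proof and the paper's share the same Fekete backbone, but the paper takes the direct $\varepsilon=0$ route: from \eqref{eqn:feket} it bounds the braced factor by $(\delta_{n}^{Q}(U^{c}))^{n(n-1)}$ and integrates the leftover $\prod_{j}e^{-\frac{\beta}{2}Q(z_{j})}$, giving $\mathcal{P}_{n}\le (\delta_{n}^{Q}(U^{c}))^{\frac{\beta}{2}n(n-1)}Z_{n}^{-1}\bigl(\int_{U^{c}}e^{-\frac{\beta}{2}Q}\,d^{2}z\bigr)^{n}$; then \eqref{eqn:limit} yields both \eqref{inequality} and \eqref{inequality 2} in one stroke, with no regularization and no $\varepsilon\to0$ limit.

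Your $\varepsilon$-device is meant to sidestep the finiteness of $\int e^{-\frac{\beta}{2}Q}$ (the borderline is actually $\beta\le 2$, not $4$), and it can genuinely help when, say, $Q(z)\sim(2+\delta)\log|z|$ with $\beta$ small. But the two ``regularity points'' you flag at the end are real gaps, not routine checks. For \eqref{eqn:limit} to apply to $(1-\varepsilon)Q$ you need $(1-\varepsilon)(Q(z)-2\log|z|)-2\varepsilon\log|z|\to+\infty$, which fails for the most slowly growing admissible $Q$ (e.g.\ $Q(z)=2\log|z|+\log\log|z|$ at infinity); there your Fekete input is simply unavailable, and your bound becomes vacuous for every $\varepsilon>0$. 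And the continuity $\mathcal{I}_{(1-\varepsilon)Q}\to\mathcal{I}_{Q}$ via two-sided testing requires $\varepsilon\int Q\,d\mu_{(1-\varepsilon)Q}\to0$, hence uniform-in-$\varepsilon$ control of the support of $\mu_{(1-\varepsilon)Q}$, which you have not supplied. So as written your argument is incomplete precisely in the regimes it aims to extend; where both approaches work, the paper's is shorter.
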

\begin{proof}
Since $\mathcal{P}_{n}=\mathbb{P}(\# \{z_{j}\in U\} = 0)$, where $\mathbb{P}$ refers to \eqref{general density intro}, we have
\begin{align}
\mathcal{P}_{n} & = \frac{1}{Z_n} \int_{U^c}\ldots \int_{U^c} \prod_{1 \leq j < k \leq n}|z_j-z_k|^{\beta} \prod_{j=1}^{n} e^{-n \frac{\beta}{2} Q(z_{j})} d^{2}z_{j} \nonumber \\
& = \frac{1}{Z_n}\int_{U^c}\ldots \int_{U^c}\Bigg\{ \prod_{1 \leq j<k \leq n}|z_j-z_k|^{2}e^{-Q(z_{j})}e^{-Q(z_{k})}\Bigg\}^{\frac{\beta}{2}} \prod_{j=1}^{n} e^{-\frac{\beta}{2}Q(z_{j})} d^{2}z_{j}. \label{eqn:feket}
\end{align}
Let $\{z_1^*,z_2^*,\ldots,z_n^*\}$ be a Fekete set for $Q$ on $U^c$, i.e. 
\begin{align*}
\delta_n^{Q}(U^c)=\Bigg\{
\prod_{1 \leq j < k \leq n}|z_j^*-z_k^*|^{2} e^{-Q(z_{j}^{*})}e^{-Q(z_{k}^{*})}\Bigg\}^{\frac{1}{n(n-1)}}.
\end{align*}
Then using \eqref{eqn:feket} we obtain
\begin{align}\label{lol88}
\mathcal{P}_{n} & \leq (\delta_n^{Q}(U^c))^{\frac{\beta}{2} n(n-1)} \frac{1}{Z_{n}} \left(\int_{U^c}e^{-Q(z)}d^{2}z \right)^{n}.
\end{align}
Since $Q$ is admissible on $U^{c}$ (this follows from Assumptions \ref{ass:Q} and \ref{ass:U} as explained in the proof of Proposition \ref{prop:generalformula}), \eqref{eqn:limit} holds with $E$ replaced by $U^{c}$, and thus by taking $\limsup_{n\to \infty}\frac{1}{n^2}\log(\cdot)$ on both sides of \eqref{lol88} we find \eqref{inequality}.

By a similar argument,
\begin{align}\label{lol89}
Z_{n} \leq (\delta_n^{Q}(\C))^{\frac{\beta}{2} n(n-1)} \left(\int_{U^c}e^{-Q(z)}d^{2}z \right)^{n}.
\end{align}
Since $Q$ is admissible on $\C$, \eqref{eqn:limit} holds with $E$ replaced by $\C$, and by taking $\limsup_{n\to \infty}\frac{1}{n^2}\log(\cdot)$ on both sides of \eqref{lol89} we find \eqref{inequality 2}.
\end{proof}

The next goal is to establish the complementary lower bounds \eqref{lower bounds outline}. For this, we will rely on the following lemma, whose proof is given after the proof of Lemma \ref{lemma:lower bound}.

\begin{lemma}\label{lemma:dist fekete}
Let $Q$ be such that Assumption \ref{ass:Q} holds, and let $U$ be either empty or such that Assumption \ref{ass:U} holds. Let $\mathcal{F}_{n}=\{z_1^*,z_2^*,\ldots,z_n^*\}$ be a Fekete set for $Q$ on $U^c$ such that $\mathcal{F}_{n} \subset S\setminus U$. Then, for all large enough $n$, 
\begin{align}\label{fekete lower bound}
\min\{|z_j^*-z_k^*|: 1\leq j\neq k \leq n\}\geq \frac{C_{1}}{n^{1+\frac{2}{\alpha}}}
\end{align}
for some constant $C_{1}>0$, and where $\alpha\in (0,1]$ is the constant appearing in \eqref{holder}.
\end{lemma}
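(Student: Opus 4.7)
The key input is the Fekete inequality \eqref{fekete inequality}, rewritten in logarithmic form: for every $\eta \in U^c$,
\[
\sum_{k=2}^{n} \log|\eta - z_k^*| - \sum_{k=2}^{n}\log|z_1^* - z_k^*| \;\leq\; \frac{n-1}{2}\bigl(Q(\eta) - Q(z_1^*)\bigr).
\]
After relabelling, I may assume that $d := \min_{j\neq k}|z_j^* - z_k^*| = |z_1^* - z_2^*|$. My plan is to probe this pointwise inequality by averaging over $\eta$ in a small disk $D(z_1^*, r)$ with $r \leq \delta$. Since $z_1^* \in S\setminus U \subset \{z : \mathrm{dist}(z,S) \leq \delta\}$, the H\"older condition \eqref{holder} yields $Q(\eta) - Q(z_1^*) \leq K r^\alpha$ uniformly on the disk, so the averaged right-hand side is bounded by $\frac{(n-1)K}{2} r^\alpha$.

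For the averaged left-hand side, I would use the disk-mean of $\log|\cdot - w|$ coming from the Riesz decomposition of the Newtonian kernel,
\[
\frac{1}{\pi r^2}\int_{D(z_1^*, r)} \log|\eta - w|\, d^2\eta \;=\; \begin{cases} \log|z_1^* - w|, & |z_1^* - w| \geq r, \\ \log r - \frac{1}{2} + \frac{|z_1^* - w|^2}{2r^2}, & |z_1^* - w| < r, \end{cases}
\]
applied term by term with $w = z_k^*$. Subtracting $\sum_k \log|z_1^* - z_k^*|$, the averaged inequality collapses to
\[
\sum_{k\,:\, |z_1^* - z_k^*| < r} \Bigl[ \log\frac{r}{|z_1^* - z_k^*|} - \frac{1}{2} + \frac{|z_1^* - z_k^*|^2}{2r^2} \Bigr] \;\leq\; \frac{(n-1)K}{2} r^\alpha.
\]
Every bracket is non-negative (as a function of $s = |z_1^* - z_k^*|/r \in (0,1]$ it is convex, decreasing, and vanishes at $s=1$), so, provided $r > d$, retaining only the $k=2$ term gives $\log(r/d) \leq \frac{(n-1)K}{2} r^\alpha + \frac{1}{2}$.

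Optimizing by choosing $r = c\, n^{-1/\alpha}$ with $c$ small makes $(n-1)K r^\alpha$ bounded, so $r/d = O(1)$ and hence $d \gtrsim n^{-1/\alpha}$. Since $\alpha \in (0,1]$, one has $1/\alpha \leq 1 + 2/\alpha$, so this already implies the claimed separation \eqref{fekete lower bound} (in fact a stronger bound).

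The main obstacle is that the disk average is legitimate only when $D(z_1^*, r) \subset U^c$, which can fail if $z_1^* \in \partial U$. To handle this I would either restrict the average to $D(z_1^*, r) \cap U^c$ and control the lost area (using that $U$ has finitely many components and $\mathrm{cap}(\partial U) > 0$, so $U^c$ has positive lower density at every boundary point), or replace the averaging step by a pointwise estimate at a single $\eta = z_1^* + r \vec v$ with $\vec v$ a unit vector chosen so that $\eta \in U^c$. In that pointwise route the interaction with the remaining Fekete points is absorbed by a standard packing bound: since all pairwise distances are $\geq d$, at most $O((M/d)^2)$ Fekete points lie within distance $M$ of $z_1^*$. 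The cruder pointwise approach is likely what produces the weaker exponent $1 + 2/\alpha$ appearing in the lemma.
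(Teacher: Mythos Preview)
Your averaging argument is correct and elegant when $D(z_1^*,r)\subset U^c$, and in that regime it even yields the stronger separation $d\gtrsim n^{-1/\alpha}$. The genuine gap is the boundary case. The assertion that ``$U^c$ has positive lower density at every boundary point'' does not follow from Assumption~\ref{ass:U}: take $U=\{0<|z|<1\}$, where $0\in\partial U$ but $U^c$ has zero density there. And once you restrict the average to $D(z_1^*,r)\cap U^c$, the disk-mean identity for $\log|\cdot-w|$ is destroyed, so the left-hand side no longer collapses to a sum of non-negative brackets. Your alternative pointwise route (a single $\eta\in U^c$ plus packing) is only sketched; at a single $\eta$ the sum $\sum_{k\ge2}\bigl(\log|\eta-z_k^*|-\log|z_1^*-z_k^*|\bigr)$ contains many terms of order one, and a mere count of nearby Fekete points does not control their aggregate contribution.

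The paper's proof proceeds quite differently. It works with the Fekete polynomial $P(z)=\prod_{k\ge2}(z-z_k^*)$ and estimates $|P(z_1^*)|=|P(z_1^*)-P(z_2^*)|$ by a Cauchy integral over the circle $\{|w-z_1^*|=2n^{-2/\alpha}\}$, obtaining $|P(z_1^*)|\le n^{2/\alpha}\,|z_1^*-z_2^*|\,|P(w^*)|$ for the maximizer $w^*$ on that circle. The Fekete inequality~\eqref{fekete inequality} is then applied \emph{pointwise} at $w^*$, combined with the H\"older bound~\eqref{holder}. When $w^*\in U$, the exterior ball condition (Assumption~\ref{ass:U2}) produces a disk $D(\eta,n^{-1/\alpha})\subset U^c$ with $|\eta-z_1^*|\le n^{-1/\alpha}$; Taylor-expanding $P$ about $\eta$ with Cauchy bounds on the derivatives transfers the estimate back to points of $U^c$ at the cost of an extra factor $n$, which is precisely what produces the weaker exponent $1+2/\alpha$. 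So the paper trades sharpness for robustness: the polynomial/contour method together with Assumption~\ref{ass:U2} handles the boundary uniformly, whereas your averaging method would still need a comparable geometric input to close that case.
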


\noindent Using Lemma \ref{lemma:dist fekete} we now prove the following. 
\begin{lemma}[Lower bounds]\label{lemma:lower bound}
Let $Q$ and $U$ be such that Assumptions \ref{ass:Q} and \ref{ass:U} hold. Then
\begin{align}
& \liminf_{n\to \infty}\frac{1}{n^2}\log\mathcal{P}_{n} \geq - \frac{\beta}{2} \mathcal{I}_{Q}[U^{c}] - \limsup_{n\to\infty} \frac{1}{n^{2}}\log Z_{n}, \label{lower bound} \\
& \liminf_{n\to \infty}\frac{1}{n^2}\log Z_{n} \geq -\frac{\beta}{2}\mathcal{I}_{Q}[\C]. \label{lower bound 2}
\end{align}
\end{lemma}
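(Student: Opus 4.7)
The strategy is the classical Fekete-points lower bound. I focus on \eqref{lower bound}; the inequality \eqref{lower bound 2} follows by the same template, applied with $U=\emptyset$ and directly to $Z_n=\int_{\C^n}(\cdots)$ instead of the ratio defining $\mathcal{P}_n$ (note that Lemma~\ref{lemma:dist fekete} is formulated to allow $U=\emptyset$). Let $\{z_1^*,\ldots,z_n^*\}\subset S\setminus U$ be a Fekete set for $Q$ on $U^c$; the fact that a Fekete set can be chosen inside $S\setminus U$ for $n$ large follows from the standard concentration of Fekete configurations on the support of the equilibrium measure $\mu_{U^c}$, which by Proposition~\ref{prop:generalformula} equals $S\setminus U$. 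Lemma~\ref{lemma:dist fekete} then supplies the crucial separation $\min_{j\ne k}|z_j^*-z_k^*|\ge C_1 n^{-1-2/\alpha}$.

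The plan is to restrict the integral \eqref{eqn:feket} to the product $\prod_{j=1}^n D(z_j^*,\epsilon_n)$ and estimate the integrand from below, with $\epsilon_n:=n^{-M}$ for an $M$ to be chosen. On each disk the H\"older bound \eqref{holder} yields $Q(z_j)\le Q(z_j^*)+K\epsilon_n^\alpha$, and for $z_j\in D(z_j^*,\epsilon_n)$, $z_k\in D(z_k^*,\epsilon_n)$ the triangle inequality combined with $\log(1-x)\ge -2x$ for $x\le 1/2$ gives
\[
\log|z_j-z_k|\;\ge\;\log|z_j^*-z_k^*|\;-\;\frac{4\epsilon_n}{|z_j^*-z_k^*|}.
\]
Applying the defining identity of Fekete points,
\[
\beta\sum_{j<k}\log|z_j^*-z_k^*|\;-\;\tfrac{n\beta}{2}\sum_j Q(z_j^*)\;=\;\tfrac{\beta}{2}n(n-1)\log\delta_n^{Q}(U^c)\;-\;\tfrac{\beta}{2}\sum_j Q(z_j^*),
\]
the lower bound for $\log\mathcal{P}_n$ becomes $-\log Z_n+\tfrac{\beta}{2}n(n-1)\log\delta_n^{Q}(U^c)$ plus error terms $2n\log(\sqrt{\pi}\epsilon_n)$, $-\tfrac{\beta}{2}\sum_j Q(z_j^*)$, $-\tfrac{n^2\beta K}{2}\epsilon_n^\alpha$ and $-O\big(\epsilon_n n^{3+2/\alpha}\big)$, arising respectively from the Lebesgue volume factor $(\pi\epsilon_n^2)^n$, the boundedness of $Q$ on the compact set $S\setminus U$ (so $\sum_j Q(z_j^*)=O(n)$), the H\"older correction, and the sum of $\binom{n}{2}$ inverse separations controlled by Lemma~\ref{lemma:dist fekete}.

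Choosing $M$ so that all four error terms are $o(n^2)$ (any $M$ satisfying $M\alpha>1$ and $M>3+2/\alpha$ will do), dividing by $n^2$, taking $\liminf$, and invoking \eqref{eqn:limit} in the form $\log\delta_n^Q(U^c)\to -\mathcal{I}_Q[U^c]$ yields \eqref{lower bound}. The analogous procedure with $U=\emptyset$ (hence no $1/Z_n$ factor, integration over $\C^n$, and Fekete points in $S$) delivers \eqref{lower bound 2}.

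The only delicate point is the balance of scales in $\epsilon_n$: it must be small enough that $\binom{n}{2}$ pairwise corrections $\epsilon_n/|z_j^*-z_k^*|$ sum to $o(n^2)$—this is precisely where Lemma~\ref{lemma:dist fekete} is indispensable, since without a polynomial lower bound on separations the required $\epsilon_n$ could not be chosen large enough to also make $n\log\epsilon_n=o(n^2)$. A polynomial $\epsilon_n=n^{-M}$ reconciles both constraints simultaneously, so the genuine technical obstruction has been entirely outsourced to Lemma~\ref{lemma:dist fekete}; everything downstream is bookkeeping.
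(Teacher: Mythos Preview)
Your approach follows the paper's Fekete-point template and the error bookkeeping is largely sound, but there is a genuine gap in the volume step. You restrict the integral \eqref{eqn:feket} to $\prod_j D(z_j^*,\epsilon_n)$ and quote a volume factor $(\pi\epsilon_n^2)^n$, but $\mathcal{P}_n$ is an integral over $(U^c)^n$, and the Fekete points $z_j^*\in S\setminus U$ generically lie on $\partial U$: the constrained equilibrium measure $\tilde{\mu}=\mu|_{S\setminus U}+\nu$ from Proposition~\ref{prop:generalformula} places mass $\mu(U)>0$ on $\partial U$, so a positive fraction of the $z_j^*$ sit there. For those $j$ the full disk $D(z_j^*,\epsilon_n)$ spills into $U$, so $\prod_j D(z_j^*,\epsilon_n)\not\subset (U^c)^n$ and your restriction is not a valid lower bound.

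The correct restriction is to $B_j:=U^c\cap D(z_j^*,\epsilon_n)$, and one then needs $|B_j|\ge c\,\epsilon_n^2$ uniformly in $j$. This is precisely where Assumption~\ref{ass:U2} (the exterior ball condition) enters: the paper invokes \eqref{eqn:exterior ball condition lol} at this step to guarantee that $U^c$ contains a disk of fixed radius $\epsilon_0$ tangent at each boundary point, from which an elementary lens-area estimate gives $|B_j|\ge c\,\epsilon_n^2$ for $n$ large. Without such a condition (e.g.\ if $\partial U$ has a cusp pointing into $U^c$), $|B_j|$ is not controlled and the volume contribution need not be $e^{o(n^2)}$. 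Your argument never invokes Assumption~\ref{ass:U2}, so as written it does not close; the fix is to intersect each disk with $U^c$ and appeal to the exterior ball condition for the area lower bound.
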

\begin{proof}
The proof uses ideas from \cite[Proof of Theorem 1.5]{AR2017} and \cite[Proof of (III)]{A2018}.
Recall from Proposition \ref{prop:generalformula} that $\tilde{\mu} := \nu+\mu|_{S\setminus U}$ is the equilibrium measure for $Q$ on $U^{c}$, where $\mu$ is the equilibrium measure for $Q$ on $\C$ and $\nu = \mathrm{Bal}(\mu|_{U},\partial U)$. Let $ \{ z_1^*,z_2^*,\ldots,z_n^*\}$ be a Fekete set for $Q$ on $U^c$. Since $Q$ is admissible on $U^{c}$, by \cite[Theorem III.2.8]{SaTo} we can (and do) choose $z_{\ell}^*  \in \mathrm{supp}(\tilde{\mu}) = S\setminus U$ for $\ell=1,2,\ldots, n$ (indeed, by \eqref{lol20}, $\{z\in S^{c}\setminus U: 2p_{\tilde{\mu}}(z) - 2c^{\hspace{0.02cm}\mu}_{U} + Q(z) < F\}=\emptyset$, and thus, in the notation of \cite[Theorem III.2.8]{SaTo}, we have $R_{w}\subset \mathcal{S}_{w}$). Thus
\begin{align}
\mathcal{P}_{n} & = \frac{1}{Z_n}\int_{U^c}\ldots \int_{U^c} \prod_{1 \leq j < k \leq n}|z_j-z_k|^{\beta} \prod_{j=1}^n
e^{-n \frac{\beta}{2} Q(z_{j})}d^{2}z_{j} \nonumber \\
& \geq \frac{1}{Z_n}\int_{B_1}\ldots \int_{B_n}
\Bigg\{ \prod_{1 \leq j<k \leq n}|z_j-z_k|^{2}e^{-Q(z_{j})}e^{-Q(z_{k})}\Bigg\}^{\frac{\beta}{2}} \prod_{j=1}^n e^{-\frac{\beta}{2} Q(z_{j})}d^{2}z_{j} \nonumber \\
& \geq \frac{e^{-\mathfrak{m}n}}{Z_n}\int_{B_1}\ldots \int_{B_n}
\Bigg\{ \prod_{1 \leq j<k \leq n}|z_j-z_k|^{2}e^{-Q(z_{j})}e^{-Q(z_{k})}\Bigg\}^{\frac{\beta}{2}} \prod_{j=1}^n d^{2}z_{j}, \label{lol23}
\end{align}
where $\mathfrak{m}:=\max\{Q(z): \dist(z,S)\leq \delta\}\in \R$, $\delta>0$ is as in \eqref{holder}, $B_\ell:=U^c\cap D(z_{\ell}^*,\frac{C_{1}}{n^{2+\frac{2}{\alpha}}})$ for $\ell=1,2,\ldots, n$, and $C_{1}>0$ is as in Lemma \ref{lemma:dist fekete}. By Lemma \ref{lemma:dist fekete}, the inequality $|z_j^*-z_k^*|\geq \frac{C_{1}}{n^{1+\frac{2}{\alpha}}}$ holds for all $1\leq j \neq k \leq n$. If $z_j \in D(z_j^*,\frac{C_{1}}{n^{2+\frac{2}{\alpha}}})$ and $z_k\in D(z_k^*,\frac{C_{1}}{n^{2+\frac{2}{\alpha}}})$ for $j\neq k$, then 
\begin{align}\label{lol21}
|z_j-z_k| \geq |z_j^*-z_k^*|-\frac{2C_{1}}{n^{2+\frac{2}{\alpha}}} \geq
|z_j^*-z_k^*| - \frac{2}{n}  |z_j^*-z_k^*| =
\Big(1-\frac{2}{n}\Big)|z_j^*-z_k^*|.
\end{align}
Also, if $z_j\in D(z_j^*,\frac{C_{1}}{n^{2+\frac{2}{\alpha}}})$, then by \eqref{holder} we have
\begin{align}\label{lol22}
e^{-Q(z_{j})} \geq e^{-Q(z_{j}^{*})} e^{-K|z_{j}-z_{j}^{*}|^{\alpha}} \geq e^{-Q(z_{j}^{*})} e^{-\frac{c_{1}}{n^{2+2\alpha}}}, \qquad \mbox{where } c_{1} := C_{1}^{\alpha}K>0.
\end{align}
Using \eqref{lol21} and \eqref{lol22} in \eqref{lol23}, we get
\begin{align*}
& \mathcal{P}_{n} \geq \frac{e^{-\mathfrak{m}n}}{Z_n}\int_{B_1} \ldots \int_{B_n} \Bigg\{ \prod_{1 \leq j<k \leq n} \left(1-\frac{2}{n}\right)^{2} |z_j^{*}-z_k^{*}|^{2}  e^{-Q(z_{j})}e^{-Q(z_{k})}\Bigg\}^{\frac{\beta}{2}} \prod_{j=1}^n d^{2}z_{j} \\
& \geq \frac{e^{-\mathfrak{m}n}}{Z_n} e^{-\frac{\beta}{2}\frac{c_{1}}{n^{2\alpha}}} \left(1-\frac{2}{n}\right)^{\frac{\beta}{2} n(n-1)} \Bigg\{ \prod_{1 \leq j<k \leq n}|z_j^{*}-z_k^{*}|^{2} e^{-Q(z_{j}^{*})}e^{-Q(z_{k}^{*})}\Bigg\}^{\frac{\beta}{2}} \prod_{j=1}^n\int_{B_j} d^{2}z_{j}.
\end{align*}
From \eqref{eqn:exterior ball condition lol}, we infer that
$\int_{B_j} d^{2}z_{j}\geq c_{2} (\frac{C_{1}}{n^{2+\frac{2}{\alpha}}})^2$ holds for some $c_{2}>0$, for all large enough $n$ and all $j\in\{1,2,\ldots,n\}$. Thus
\begin{align*}
\mathcal{P}_{n} \geq 
\frac{e^{-\mathfrak{m}n}}{Z_n} e^{-\frac{\beta}{2}\frac{c_{1}}{n^{2\alpha}}} \left(1-\frac{2}{n}\right)^{\frac{\beta}{2} n(n-1)} (\delta_n^{Q}(U^c))^{\frac{\beta}{2} n(n-1)} \bigg(\frac{
\sqrt{c_{2}}C_{1}}{n^{2+\frac{2}{\alpha}}}\bigg)^{2n}.
\end{align*}
Taking $\liminf_{n\to \infty}\frac{1}{n^2}\log(\cdot)$ on both sides and using \eqref{eqn:limit}, we find \eqref{lower bound}.

By a similar analysis (using now Lemma \ref{lemma:dist fekete} with $U=\emptyset$), 
\begin{align*}
Z_{n} \geq e^{-\mathfrak{m}n} e^{-\frac{\beta}{2}\frac{\tilde{c}_{1}}{n^{2\alpha}}} \left(1-\frac{2}{n}\right)^{\frac{\beta}{2} n(n-1)}    (\delta_n^{Q}(\C))^{\frac{\beta}{2} n(n-1)} \bigg(\frac{
\sqrt{\pi}\tilde{C}_{1}}{n^{2+\frac{2}{\alpha}}}\bigg)^{2n},
\end{align*}
for some $\tilde{C}_{1},\tilde{c}_{1}>0$, and \eqref{lower bound 2} follows.
\end{proof}
We now finish the proof of Proposition \ref{prop:holeprobability1}.
\begin{proof}[Proof of Proposition \ref{prop:holeprobability1}]
The inequalities \eqref{inequality 2} and \eqref{lower bound 2} imply that $\lim_{n\to \infty}\frac{1}{n^2}\log Z_{n} = -\frac{\beta}{2}\mathcal{I}_{Q}[\C]$. This, together with \eqref{inequality} and \eqref{lower bound}, directly yields \eqref{lol15}.
\end{proof}

It remains to prove Lemma \ref{lemma:dist fekete}.

\begin{proof}[Proof of Lemma \ref{lemma:dist fekete}]
The proof uses ideas from \cite[Proof of Lemma 5.1]{AR2017} and \cite[Proof of Lemma 4.2]{A2018}.
Suppose $|z_1^*-z_2^*|\leq \frac{1}{n^{2/\alpha}}$. We will prove that $|z_1^*-z_2^*| \geq \frac{e^{-2K-2}}{2  n^{1+\frac{2}{\alpha}}}$, where $K$ is as in \eqref{holder}. Consider $P(z)=(z-z_2^*)\cdots (z-z_n^*)$. By the residue theorem,
\begin{align*}
& |P(z_1^*)|  = |P(z_1^*)-P(z_2^*)|  = \bigg|\frac{1}{2\pi i}\int_{|w-z_1^*|=\frac{2}{n^{2/\alpha}}} \bigg( \frac{P(w)}{(w-z_1^*)} - \frac{P(w)}{(w-z_2^*)} \bigg) dw\bigg| \\
& \leq \frac{1}{2\pi}\int_{|w-z_1^*|=\frac{2}{n^{2/\alpha}}}\frac{|z_1^*-z_2^*| \, }{|w-z_1^*||w-z_2^*|}|P(w)|\, |dw| \\
& \leq \frac{1}{2\pi}  \frac{n^{2/\alpha}}{2} n^{2/\alpha} |z_1^*-z_2^*| \, |P(w^*)| \, 2\pi \frac{2}{n^{2/\alpha}},
\end{align*}
where $w^*\in \{w :|w-z_1^*|=\frac{2}{n^{2/\alpha}}\}$ is such that $|P(w^*)|=\sup\{|P(w)|:|z_1^*-w|=\frac{2}{n^{2/\alpha}}\}$, and where for the last inequality we have used that $|w-z_2^*| \geq \tfrac{1}{n^{2/\alpha}}$ for all $w \in \{w: |w - z_{1}^{*}|=\frac{2}{n^{2/\alpha}}\}$. Therefore we have
\begin{align}\label{eqn:z1}
|P(z_1^*)|\leq n^{2/\alpha}  |P(w^*)| \, |z_1^*-z_2^*|.
\end{align}
There are two cases to consider: $w^{*} \in U^{c}$ and $w^{*}\in U$. To treat both cases, we will need the following inequality: if $z,w\in S$ and $|z-w|\leq \frac{2}{n^{1/\alpha}}$, then by \eqref{holder} we have
\begin{align}\label{eqn:exp}
e^{-(n-1)Q(z)}\leq e^{-(n-1)Q(w)}e^{(n-1)K|z-w|^{\alpha}} \leq c_{2}e^{-(n-1)Q(w)}, \qquad \mbox{where } c_{2} := e^{2K}>0.
\end{align}

\noindent \underline{Case I: $w^*\in U^c$.} Since  $\{z_1^*,z_2^*,\ldots,z_n^*\}$ is a Fekete set for $Q$ on $U^c$, and since $w^*\in U^c$,  \eqref{fekete inequality} implies that
\begin{align*}
|P(w^{*})|e^{-(n-1)\frac{Q(w^{*})}{2}} \leq |P(z_{1}^{*})| e^{-(n-1)\frac{Q(z_1^{*})}{2}}.
\end{align*}
Using also \eqref{eqn:z1} and \eqref{eqn:exp}, we infer that
\begin{align*}
|P(z_1^*)|e^{-(n-1)\frac{Q(z_1^*)}{2}} & \leq n^{2/\alpha} |z_1^*-z_2^*| \, |P(w^*)| c_{2} e^{-(n-1)\frac{Q(w^*)}{2}} \\
& \leq c_{2} n^{2/\alpha} |z_1^*-z_2^*| \, |P(z_1^*)| \, e^{-(n-1)\frac{Q(z_1^*)}{2}},
\end{align*}
and thus $|z_1^*-z_2^*|\geq \frac{1}{c_{2} n^{2/\alpha}}$.

\noindent \underline{Case II: $w^*\in U$}. Since $z_{1}^{*}\in U^{c}$, we have $\mathrm{dist}(z_1^*,\partial U):=\inf \{|z-z_1^*|\, :\,z\in \partial U\}<|w^{*}-z_{1}^{*}|=\frac{2}{n^{2/\alpha}}$. Let $\epsilon_{0} >0$ be as in Assumption \ref{ass:U2}, and let $n$ be large enough such that $\frac{2}{n^{2/\alpha}}<\frac{1}{n^{1/\alpha}}\leq \epsilon_{0}$. Since $U$ satisfies Assumption \ref{ass:U2}, we can choose $\eta\in U^c$ such that  $D(\eta,\frac{1}{n^{1/\alpha}}) \subset U^c $ and $|\eta-z_{1}^{*}|\leq \frac{1}{n^{1/\alpha}}$. Since $P$ is a degree $n-1$ polynomial, we have
\begin{align}\label{eqn: this is zeta}
|P(\zeta^*)|\le
|P(\eta)|+ \sum_{j=1}^{n-1} \bigg|\frac{P^{(j)}(\eta)}{j!}\bigg| \, |\zeta^*-\eta|^{j}.
\end{align}
Using again the residue theorem, we obtain
\begin{align*}
\bigg|\frac{P^{(j)}(\eta)}{j!}\bigg|\leq
 \int_{|z-\eta|=\frac{1}{n^{1/\alpha}}}\frac{|P(z)|}{|z-\eta|^{j+1}}\frac{|dz|}{2\pi}\leq
n^{j/\alpha} \, |P(\eta^*)|, \quad \mbox{for all } j\in \{0,\ldots,n-1\},
\end{align*}
where $\eta^*\in \{z:|z-\eta|=\frac{1}{n^{1/\alpha}}\}$ is such that $|P(\eta^*)|=\sup\{|P(z)|:|z-\eta|=\frac{1}{n^{1/\alpha}}\}$. We thus obtain, for any $j\in \{0,\ldots,n-1\}$,
\begin{align}\label{lol90}
\frac{|P^{(j)}(\eta)|}{j!} \, |w^*-\eta|^{j} \leq  |P(\eta^*)| \, \bigg(1+\frac{2}{n^{1/\alpha}}\bigg)^j \leq |P(\eta^*)| \, e^{2},
\end{align}
where for the first inequality we have used $|w^*-\eta|\leq |w^*-z_1^*|+|z_1^*-\eta|\leq
\frac{2}{n^{2/\alpha}}+\frac{1}{n^{1/\alpha}}$. Combining \eqref{lol90} with \eqref{eqn: this is zeta}, we find
\begin{align*}
|P(\zeta^*)|\leq |P(\eta)| +  n |P(\eta^*)| e^{2}.
\end{align*}
Therefore, using also \eqref{eqn:z1} and \eqref{eqn:exp}, we get
\begin{align}
|P(z_1^*)|e^{-(n-1)\frac{Q(z_1^*)}{2}} & \leq n^{2/\alpha} \,  c_{2} \, \left(|P(\eta)|e^{-(n-1)\frac{Q(\eta)}{2}}+n  |P(\eta^*)|e^{2} e^{-(n-1)\frac{Q(\eta^*)}{2}}\right) |z_1^*-z_2^*| \nonumber \\
& \leq n^{2/\alpha}  c_{2} \left(1+n e^{2}\right)|P(z_1^*)|e^{-(n-1)\frac{Q(z_1^*)}{2}} |z_1^*-z_2^*|, \label{lol91}
\end{align}
where for the last inequality we have used \eqref{fekete inequality} (recall that $\{z_1^*,z_2^*,\ldots,z_n^*\}$ is a Fekete set for $Q$ on $U^c$ and $\eta,\eta^*\in U^c$). Simplifying \eqref{lol91} yields
\begin{align*}
|z_1^*-z_2^*|\geq \frac{1}{2 c_{2} e^{2}n^{1+\frac{2}{\alpha}}}.
\end{align*}
We deduce from Case I and Case II above that if $|z_1^*-z_2^*|\leq \frac{1}{n^{2/\alpha}}$, then $|z_1^*-z_2^*|\geq \frac{1}{2 c_{2} e^{2}n^{1+\frac{2}{\alpha}}}$. By replacing $z_{1}^{*}$ and $z_{2}^{*}$ by respectively $z_{\ell}^{*}$ and $z_{k}^{*}$ for some arbitrary $1\leq \ell\neq k \leq n$, we conclude similarly that $|z_\ell^*-z_k^*|\leq \frac{1}{n^{2/\alpha}}$ implies $|z_{\ell}^*-z_k^*|\geq \frac{1}{2 c_{2} e^{2}n^{1+\frac{2}{\alpha}}}$. This finishes the proof.
\end{proof}
\subsection{The spherical ensemble}\label{subsection:spherical fekete}
In this subsection, we prove Theorem \ref{thm:general pot} (ii) and Proposition \ref{prop:general pot} (ii) about the spherical ensemble \eqref{spherical ensemble}. Thus, in this subsection, we take $Q(z) = \log(1+|z|^{2})$, $\mu$ is supported on the whole complex plane, $d\mu(z) = \frac{d^{2}z}{\pi(1+|z|^{2})^{2}}$, and $U$ is bounded and satisfies Assumption \ref{ass:U} (with $S=\C$). The analysis here follows the one done in the previous subsections but with an important difference: $Q(z) = \log(1+|z|^{2})$ does not meet criteria (iii) of Definition \ref{def:admissible}, and therefore the theory from \cite{SaTo} does not directly apply. 

\medskip Throughout this subsection, we will use $z,\zeta$ to denote variables on the plane, and $\hat{z},\hat{\zeta}$ for variables on the unit sphere $\mathbb{S}^{2}$. We will also use $|d\hat{z}|$, $d^{2}\hat{z}$ for the length and area measures on $\mathbb{S}^{2}$. If $z$ and $\hat{z}$ appear in a same equation (and similarly for $\zeta,\hat{\zeta}$), then it is assumed that $\varphi(\hat{z})=z$, where $\varphi:\mathbb{S}^{2}\to \C\cup\{\infty\}, \; \varphi(u,v,w)=\frac{u+iv}{1-w}$ is the stereographic projection from the north pole $(0,0,1)$. In particular,
\begin{align}\label{change of var on the sphere}
& |d\zeta| = \frac{1+|\zeta|^{2}}{2}|d\hat{\zeta}| = \frac{e^{Q(\zeta)}}{2}|d\hat{\zeta}|, \qquad d^{2}\zeta = \frac{e^{2Q(\zeta)}}{4}d^{2}\hat{\zeta}, \qquad |\zeta-z| = \frac{1}{2}|\hat{\zeta}-\hat{z}|e^{\frac{Q(\zeta)}{2}}e^{\frac{Q(z)}{2}}.
\end{align} 
Let $E\subset \C$ be a closed set. For $Q(z) = \log(1+|z|^{2})$, \eqref{fekete delta n Q} can be rewritten as
\begin{align}\label{lol24}
\delta_n^{Q}(E) = 2^{-1} \sup_{\hat{z}_1,\hat{z}_2,\ldots,\hat{z}_n\in \hat{E}}\Bigg\{\prod_{1\leq j<k \leq n}|\hat{z}_j-\hat{z}_k|^{2}\Bigg\}^{\frac{1}{n(n-1)}} =: 2^{-1}\hat{\delta}_n^{Q}(\hat{E}),
\end{align}
where $\hat{E}:=\varphi^{-1}(E) \subset \mathbb{S}^{2}$. From \eqref{lol24}, the existence of Fekete sets readily follows from the fact that $\hat{E}$ is compact. Define $\mathcal{I}_{Q}[E]$ as in \eqref{def of Ical}. The same proof as \cite[Theorem III.1.1]{SaTo} shows that $\{\delta_n^{Q}(E)\}_{n=2}^{\infty}$ is a decreasing sequence. Thus $\delta^{Q}(E) := \lim_{n\to\infty}\delta_n^{Q}(E)$ exists.

\begin{lemma}\label{lemma:limit fekete spherical}
$\delta^{Q}(E) = e^{-\mathcal{I}_{Q}[E]} = e^{-I_{Q}[E]}$.
\end{lemma}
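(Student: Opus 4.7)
The plan is to use the stereographic projection $\varphi$ to transport both the Fekete constant $\delta_{n}^{Q}(E)$ and the weighted energy $I_{Q}[\sigma]$ to the sphere $\mathbb{S}^{2}$, where everything becomes unweighted, and then invoke a standard Fekete convergence theorem on a compact set. For $\sigma \in \mathcal{P}(E)$ with pushforward $\hat{\sigma} := \varphi_{*}^{-1}\sigma \in \mathcal{P}(\hat{E})$, the last identity in \eqref{change of var on the sphere} yields
\begin{align*}
I_{Q}[\sigma] \; = \; \iint \log \frac{2}{|\hat{\zeta}-\hat{z}|}\, d\hat{\sigma}(\hat{\zeta})\,d\hat{\sigma}(\hat{z}) \; = \; \log 2 \, + \, \hat{I}_{0}[\hat{\sigma}], \qquad \hat{I}_{0}[\hat{\sigma}] := \iint \log \frac{1}{|\hat{\zeta}-\hat{z}|}\, d\hat{\sigma}(\hat{\zeta})\, d\hat{\sigma}(\hat{z}),
\end{align*}
so taking infima gives $\mathcal{I}_{Q}[E] = \log 2 + \hat{\mathcal{I}}_{0}[\hat{E}]$ with $\hat{\mathcal{I}}_{0}[\hat{E}] := \inf_{\hat{\sigma}\in \mathcal{P}(\hat{E})}\hat{I}_{0}[\hat{\sigma}]$. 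Combined with the identity $\delta_{n}^{Q}(E) = \tfrac{1}{2}\hat{\delta}_{n}^{Q}(\hat{E})$ from \eqref{lol24}, the lemma is reduced to showing $\hat{\delta}_{n}^{Q}(\hat{E}) \to e^{-\hat{\mathcal{I}}_{0}[\hat{E}]}$ as $n\to \infty$.

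Next I would pass to the compact closure $\overline{\hat{E}}\subset \mathbb{S}^{2}$. If $E$ is bounded then $\hat{E}=\overline{\hat{E}}$ already; if $E$ is unbounded then $\overline{\hat{E}}\setminus \hat{E}$ consists of at most the single north pole $\hat{p}=(0,0,1)$. Since $\{\hat{p}\}$ is polar, $\hat{\mathcal{I}}_{0}[\overline{\hat{E}}]=\hat{\mathcal{I}}_{0}[\hat{E}]$, and by approximating $\hat{p}$ by points of $\hat{E}$ and using continuity of each factor in the definition of $\hat{\delta}_{n}^{Q}$ in \eqref{lol24} we also get $\hat{\delta}_{n}^{Q}(\hat{E}) = \hat{\delta}_{n}^{Q}(\overline{\hat{E}})$. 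On the compact set $\overline{\hat{E}}\subset \mathbb{S}^{2}$ with the kernel $\log|\hat{\zeta}-\hat{z}|^{-1}$---continuous off the diagonal, bounded below, and globally lower semi-continuous---the convergence $\hat{\delta}_{n}^{Q}(\overline{\hat{E}}) \to e^{-\hat{\mathcal{I}}_{0}[\overline{\hat{E}}]}$ follows by the same argument as \cite[Theorems III.1.1 and III.1.3]{SaTo}, whose proofs rely only on lower semi-continuity, compactness, and Prokhorov's theorem, and therefore transfer verbatim from $\C$ to $\mathbb{S}^{2}$. (This also follows from \cite[Theorem 1.2 with $s=0$]{DS2007}.)

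Combining these ingredients,
\begin{align*}
\delta^{Q}(E) \; = \; \tfrac{1}{2}\lim_{n\to \infty}\hat{\delta}_{n}^{Q}(\hat{E}) \; = \; \tfrac{1}{2}e^{-\hat{\mathcal{I}}_{0}[\hat{E}]} \; = \; \tfrac{1}{2}e^{\log 2 - \mathcal{I}_{Q}[E]} \; = \; e^{-\mathcal{I}_{Q}[E]},
\end{align*}
which is the first equality of the lemma; the second equality $\mathcal{I}_{Q}[E]=I_{Q}[\mu_{E}]$ is immediate from \eqref{def of Ical} and the existence of the minimizer $\mu_{E}$ guaranteed by Proposition \ref{prop:eq measure spherical}. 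The main subtlety is in the unbounded case: one must verify that the blow-up of $Q(z)=\log(1+|z|^{2})$ at infinity is exactly cancelled by the conformal factor of $\varphi$, so that the transported problem on $\mathbb{S}^{2}$ is genuinely weight-free and the (non-admissible on $\C$) weight $Q$ causes no trouble; modulo this bookkeeping, everything is a direct transfer of the standard planar Fekete convergence proof.
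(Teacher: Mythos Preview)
Your proof is correct and follows essentially the same approach as the paper: both transport the problem to the sphere via the stereographic projection, where the weighted energy becomes the unweighted logarithmic energy (shifted by $\log 2$), and then invoke the standard Fekete convergence argument on a compact set. Your version is organized slightly more cleanly---you reduce both inequalities simultaneously to the known unweighted Fekete theorem on $\overline{\hat{E}}$, and you explicitly address the passage from $\hat{E}$ to its compact closure $\overline{\hat{E}}$ (a point the paper glosses over by simply calling $\hat{E}$ compact)---whereas the paper treats the two inequalities asymmetrically, citing \cite{SaTo} for one direction and writing out the truncation/weak-$^{*}$ compactness argument explicitly for the other.
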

\begin{proof}
The inequality $e^{-\mathcal{I}_{Q}[E]} \leq \delta^{Q}(E)$ follows exactly as in \cite[Proof of Theorem III.1.3]{SaTo}, so we omit the details here. We now prove the inequality $\delta^{Q}(E) \leq e^{-\mathcal{I}_{Q}[E]}$ by adapting \cite[proof of Theorem III.1.3]{SaTo}. For any $n\in \N_{\geq 2}$, let $\mathcal{F}_{n}=\{z_{1},\ldots,z_{n}\}$ be a Fekete set for $Q$ on $E$, $\hat{\mathcal{F}}_{n}:=\{ \hat{z}_{1},\ldots,\hat{z}_{n}\}$, and let $\hat{\mu}_{n} := \smash{\frac{1}{n}\sum_{j=1}^{n} \delta_{\hat{z}_{j}}}$ be the normalized counting measure. The measures $\{\hat{\mu}_{n}\}_{n\geq 2}$ are all supported in the compact set $\hat{E}$. Thus we can extract a subsequence $\{\hat{\mu}_{\ell}\}$ converging to some $\hat{\sigma}$ in the weak$^*$ topology on $\mathcal{P}(\hat{E})$. Set $h_{M}(\hat{z},\hat{t}):=\min\{M,\log|\hat{z}-\hat{t}|^{-1}\}$ for $\hat{z},\hat{t}\in \mathbb{S}^{2}$. We have
\begin{align*}
\hat{I}[\hat{\sigma}] & := \iint \log(|\hat{z}-\hat{t}|^{-1}) \; d\hat{\sigma}(\hat{z})d\hat{\sigma}(\hat{t}) = \lim_{M\to \infty} \iint h_{M}(\hat{z},\hat{t}) d\hat{\sigma}(\hat{z})d\hat{\sigma}(\hat{t}) \\
& = \lim_{M\to \infty} \lim_{\ell\to\infty} \iint h_{M}(\hat{z},\hat{t}) d\hat{\mu}_{\ell}(\hat{z})d\hat{\mu}_{\ell}(\hat{t}) \leq \lim_{M\to \infty} \lim_{\ell\to\infty} \bigg( \frac{M}{\ell} + \iint_{\hat{z}\neq \hat{t}} \log|\hat{z}-\hat{t}|^{-1} d\hat{\mu}_{\ell}(\hat{z})d\hat{\mu}_{\ell}(\hat{t}) \bigg) \\
& = \lim_{\ell\to\infty} \frac{\ell(\ell-1)}{\ell^{2}} \log \frac{1}{2 \, \delta_{\ell}^{Q}(E)} = \log \frac{1}{2 \, \delta^{Q}(E)}.
\end{align*}
On the other hand, changing variables using the stereographic projection $\varphi:\mathbb{S}^{2}\to \C\cup \{\infty\}$, we get
\begin{align*}
\hat{I}[\hat{\sigma}] = \iint \log(2^{-1}|z-t|^{-1}e^{\frac{Q(z)}{2}}e^{\frac{Q(w)}{2}}) \; d(\varphi_{*}\hat{\sigma})(z)d(\varphi_{*}\hat{\sigma})(t) = I_{Q}[\varphi_{*}\hat{\sigma}]-\log 2,
\end{align*}
where $\varphi_{*}\hat{\sigma}$ is the pushforward measure of $\hat{\sigma}$ by $\varphi$. Hence $\mathcal{I}_{Q}[E] := \inf_{\sigma \in \mathcal{P}(E)} I_{Q}[\sigma] \leq I_{Q}[\varphi_{*}\hat{\sigma}] = \log \frac{1}{\delta^{Q}(E)}$, which is equivalent to $\delta^{Q}(E) \leq e^{-\mathcal{I}_{Q}[E]}$.
\end{proof}
Here are the analogues of Proposition \ref{prop:generalformula} and Lemmas \ref{lemma:upper bound}--\ref{lemma:lower bound}.
\begin{proposition}\label{prop:generalformula spherical}
Let $Q(z):= \log(1+|z|^{2})$, let $\mu$, $S:=\mathrm{supp} \, \mu$ be as in \eqref{def of mu and S spherical}, and let $U$ be bounded and such that Assumption \ref{ass:U} holds. Then the equilibrium measure for $Q$ on $U^c$ is $\nu+\mu|_{\C\setminus U}$ and
\begin{align}\label{lol25}
\mathcal{I}_{Q}[U^{c}] - \mathcal{I}_{Q}[\C] = \frac{1}{2}\bigg(  \int_{\partial U}Q(z)d\nu(z) - \int_{U}Q(z)d\mu(z) \bigg),
\end{align}
where $\nu = \mathrm{Bal}(\mu|_{U},\partial U)$. 
\end{proposition}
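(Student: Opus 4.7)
\medskip \noindent \textbf{Proof proposal.} The plan is to mirror the proof of Proposition~\ref{prop:generalformula}, substituting Proposition~\ref{prop:eq measure spherical} for Proposition~\ref{prop:eq measure} at the crucial uniqueness step. Set $\mu_{1}:=\mu|_{\mathbb{C}\setminus U}$, $\mu_{2}:=\mu|_{U}$, and $\tilde{\mu} := \mu_{1}+\nu$. Since $U$ is bounded, $\mu_{2}$ has compact support and $U$ has no unbounded component, so Proposition~\ref{prop:def of bal} applies with $c^{\mu_{2}}_{U}=0$, giving $\nu(\partial U)=\mu(U)$, hence $\tilde{\mu}\in\mathcal{P}(U^{c})$, and $p_{\nu}=p_{\mu_{2}}$ q.e. on $U^{c}$. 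Consequently
\begin{align*}
p_{\tilde{\mu}}(z) = p_{\mu_{1}}(z)+p_{\nu}(z) = p_{\mu}(z) \qquad \mbox{for q.e. } z\in U^{c}.
\end{align*}
By Proposition~\ref{prop:eq measure spherical} applied to $E=\mathbb{C}$, there exists $F\in\mathbb{R}$ with $2p_{\mu}(z)+Q(z)=F$ q.e. on $\mathbb{C}$, and therefore
\begin{align*}
2p_{\tilde{\mu}}(z)+Q(z)=F \qquad \mbox{for q.e. } z\in U^{c},
\end{align*}
which yields \eqref{EL1} and \eqref{EL3} for $\tilde{\mu}$ on $U^{c}$ (the latter trivially since $U^{c}\setminus\mathrm{supp}\,\tilde{\mu}$ is polar, as $\mathrm{supp}\,\tilde{\mu}=(\mathbb{C}\setminus U)\cup\partial U=U^{c}$ up to the boundary).

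The next step is to verify the hypotheses needed for the uniqueness assertion of Proposition~\ref{prop:eq measure spherical}: that $\mathrm{cap}(U^{c})>0$ (immediate since $U$ is bounded) and that $I_{Q}[\tilde{\mu}]<+\infty$. For the latter, $\nu$ charges no polar set and $p_{\nu}$ is bounded on $\partial U$ by Proposition~\ref{prop:def of bal}, so $I_{0}[\nu]<+\infty$; the mixed term $\iint\log|z-w|^{-1}d\mu_{1}(z)d\nu(w)$ is finite for the same reason combined with $I_{0}[\mu]<+\infty$ (which follows from $I_{Q}[\mu]<+\infty$ and $Q\geq 0$). Since $U$ is bounded, $Q$ is bounded on $\overline{U}$, so $\int Q\,d\nu<+\infty$; and $\int Q\,d\mu_{1}\leq \int Q\,d\mu<+\infty$. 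Hence $I_{Q}[\tilde{\mu}]<+\infty$, and the uniqueness statement identifies $\tilde{\mu}$ as the equilibrium measure for $Q$ on $U^{c}$.

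Finally, to compute $\mathcal{I}_{Q}[U^{c}]-\mathcal{I}_{Q}[\mathbb{C}]$, I would integrate the Euler--Lagrange identity $2p_{\tilde{\mu}}+Q=F$ against $\tilde{\mu}$ (legal because $\tilde{\mu}$ charges no polar set) to obtain
\begin{align*}
\mathcal{I}_{Q}[U^{c}] = I_{Q}[\tilde{\mu}] = \frac{F}{2}+\frac{1}{2}\int Q\,d\tilde{\mu} = \frac{F}{2}+\frac{1}{2}\int Q\,d\mu + \frac{1}{2}\left(\int_{\partial U} Q\,d\nu - \int_{U} Q\,d\mu\right),
\end{align*}
and similarly $\mathcal{I}_{Q}[\mathbb{C}] = \tfrac{F}{2}+\tfrac{1}{2}\int Q\,d\mu$; subtracting yields \eqref{lol25}.

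\medskip \noindent The main obstacle is the failure of admissibility for $Q(z)=\log(1+|z|^{2})$, which is precisely what forces us to invoke Proposition~\ref{prop:eq measure spherical} rather than Proposition~\ref{prop:eq measure} and to verify the integrability $I_{Q}[\tilde{\mu}]<+\infty$ by hand (leveraging the boundedness of $U$ to control $\int Q\,d\nu$ and to ensure $c^{\mu_{2}}_{U}=0$). Once these two points are handled, the algebraic manipulations reducing to \eqref{lol25} are identical to those in Proposition~\ref{prop:generalformula}.
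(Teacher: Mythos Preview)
Your proposal is correct and follows essentially the same route as the paper's proof: both mimic Proposition~\ref{prop:generalformula}, use $c^{\hspace{0.02cm}\mu}_{U}=0$ because $U$ is bounded, check that $I_{Q}[\tilde{\mu}]<+\infty$, and then invoke the uniqueness part of Proposition~\ref{prop:eq measure spherical} in place of Proposition~\ref{prop:eq measure}. Your write-up is in fact more detailed than the paper's, which dispatches the finiteness of $I_{Q}[\tilde{\mu}]$ in a single clause (``since $\nu$ is finite and $Q(z)=\log(1+|z|^{2})$'') and simply refers back to \eqref{lol18}--\eqref{lol19} with $c^{\hspace{0.02cm}\mu}_{U}=0$ for the energy identity.
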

\begin{proof}
Following the proof of Proposition \ref{prop:generalformula}, we infer that $\tilde{\mu}:=\nu+\mu|_{S\setminus U}$ satisfies $2p_{\tilde{\mu}}(z)+Q(z)=0$ for q.e. $z\in \C\setminus U$. Furthermore, since $\nu$ is finite and $Q(z) = \log(1+|z|^{2})$, $I_{Q}[\tilde{\mu}]$ is finite. By Proposition \ref{prop:eq measure spherical}, $\tilde{\mu}$ is the equilibrium measure for $Q$ on $U^{c}$. Then \eqref{lol18}--\eqref{lol19} hold with $c^{\hspace{0.02cm}\mu}_{U}=0$, which implies \eqref{lol25}.
\end{proof}
\begin{lemma}[Upper bounds]\label{lemma:upper bound spherical}
Let $Q(z):= \log(1+|z|^{2})$ and let $U$ be bounded and such that Assumptions \ref{ass:U} holds. Then the inequalities \eqref{inequality}--\eqref{inequality 2} hold.
\end{lemma}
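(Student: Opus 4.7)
My plan is to reduce to the sphere representation \eqref{spherical ensemble on the sphere}, where the potential cancels and the problem becomes purely geometric. Via the stereographic projection $\varphi$ and the relations in \eqref{change of var on the sphere}, the spherical ensemble on $\mathbb{C}$ pulls back to the measure with density proportional to $\prod_{j<k}|\hat{z}_{j}-\hat{z}_{k}|^{\beta}$ on $(\mathbb{S}^{2})^{n}$, with normalization $\hat{Z}_{n} = 2^{n[(n-1)\beta/2+2]}Z_{n}$. In particular, $\mathcal{P}_{n} = \mathbb{P}(\#\{\hat{z}_{j}\in \hat{U}\}=0)$ with $\hat{U}=\varphi^{-1}(U)$. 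The conceptual point is that the role of $\int e^{-Q}d^{2}z$, which in the admissible setting of Lemma \ref{lemma:upper bound} was finite and provided the crucial bound on the volume factor, will here be played by the total area $4\pi$ of $\mathbb{S}^{2}$.

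To prove \eqref{inequality}, I pick a maximizer $\{\hat{z}_{1}^{*},\ldots,\hat{z}_{n}^{*}\}\subset \hat{U}^{c}$ of $\prod_{j<k}|\hat{z}_{j}-\hat{z}_{k}|^{2}$ on $\hat{U}^{c}$, which by \eqref{lol24} corresponds under $\varphi$ to a Fekete set for $Q$ on $U^{c}$, with $\hat{\delta}_{n}^{Q}(\hat{U}^{c}) = 2\,\delta_{n}^{Q}(U^{c})$. Bounding the integrand by its supremum over $(\hat{U}^{c})^{n}$ and $\int_{\hat{U}^{c}} d^{2}\hat{z} \le 4\pi$, and substituting the value of $\hat{Z}_{n}$, I obtain
\begin{align*}
\mathcal{P}_{n} \leq \frac{(\hat{\delta}_{n}^{Q}(\hat{U}^{c}))^{\beta n(n-1)/2}(4\pi)^{n}}{\hat{Z}_{n}} = \frac{(\delta_{n}^{Q}(U^{c}))^{\beta n(n-1)/2}\,\pi^{n}}{Z_{n}}.
\end{align*}
Taking $\frac{1}{n^{2}}\log$ and applying Lemma \ref{lemma:limit fekete spherical} to send $\delta_{n}^{Q}(U^{c}) \to e^{-\mathcal{I}_{Q}[U^{c}]}$ yields \eqref{inequality}.

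For \eqref{inequality 2}, I apply exactly the same Fekete bound to $\hat{Z}_{n} = \int_{(\mathbb{S}^{2})^{n}} \prod|\hat{z}_{j}-\hat{z}_{k}|^{\beta}\prod d^{2}\hat{z}_{j}$, which is majorized by $(\hat{\delta}_{n}^{Q}(\mathbb{S}^{2}))^{\beta n(n-1)/2}(4\pi)^{n}$. Substituting $\hat{\delta}_{n}^{Q}(\mathbb{S}^{2})=2\,\delta_{n}^{Q}(\mathbb{C})$ and $\hat{Z}_{n}=2^{n[(n-1)\beta/2+2]}Z_{n}$ gives $Z_{n}\leq (\delta_{n}^{Q}(\mathbb{C}))^{\beta n(n-1)/2}\pi^{n}$, and a final application of Lemma \ref{lemma:limit fekete spherical} delivers \eqref{inequality 2}.

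The principal obstacle, compared with the admissible case handled in Lemma \ref{lemma:upper bound}, is that $Q(z)=\log(1+|z|^{2})$ is not admissible on $\mathbb{C}$ in the sense of Definition \ref{def:admissible}: the integral $\int_{\mathbb{C}} e^{-Q(z)}d^{2}z$ diverges, so the majorization used in the admissible proof breaks down. The sphere transformation circumvents this by swapping the divergent weighted integral for the finite area of $\mathbb{S}^{2}$. Once this substitution is made, the rest is a clean analogue of the admissible argument, and the transfer of constants between $\delta_{n}^{Q}$ on $\mathbb{C}$ and $\hat{\delta}_{n}^{Q}$ on $\mathbb{S}^{2}$ is already handled by \eqref{lol24}.
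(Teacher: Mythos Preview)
Your proof is correct and, via the sphere, lands on exactly the bound $\mathcal{P}_{n}\le (\delta_{n}^{Q}(U^{c}))^{\beta n(n-1)/2}\pi^{n}/Z_{n}$ that the paper's approach produces. The paper, however, stays in the plane and follows the proof of Lemma~\ref{lemma:upper bound} verbatim: writing the spherical density \eqref{spherical ensemble} as
\[
\frac{1}{Z_{n}}\Big\{\prod_{j<k}|z_{j}-z_{k}|^{2}e^{-Q(z_{j})}e^{-Q(z_{k})}\Big\}^{\beta/2}\prod_{j}e^{-2Q(z_{j})}\,d^{2}z_{j},
\]
one bounds the braced factor by its supremum and is left with $\big(\int_{U^{c}}e^{-2Q(z)}d^{2}z\big)^{n}=\big(\int_{U^{c}}\frac{d^{2}z}{(1+|z|^{2})^{2}}\big)^{n}\le \pi^{n}$. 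So your diagnosis that ``the majorization used in the admissible proof breaks down'' is not quite accurate: for the spherical ensemble the residual weight is $e^{-2Q}$, not $e^{-\beta Q/2}$, because of the extra factor $(1+|z|^{2})^{-2}$ in \eqref{spherical ensemble}, and this is precisely integrable. Your sphere computation is equivalent to this---by \eqref{change of var on the sphere}, $e^{-2Q(z)}d^{2}z$ is exactly $\tfrac14 d^{2}\hat z$---so the two routes are two presentations of the same bound; the paper just reaches it without the change of variables.
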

\begin{proof}
The proof is almost identical to the proof of Lemma \ref{lemma:upper bound}, except that one must use Lemma \ref{lemma:limit fekete spherical} instead of \eqref{eqn:limit}. We omit further details.
\end{proof}
Note that since $S=\C$, all Fekete sets for $Q$ on $U^{c}$ are such that $\mathcal{F}_{n}\subset S\setminus U = U^{c}$.
\begin{lemma}\label{lemma:dist fekete spherical}
Let $Q(z):= \log(1+|z|^{2})$, let $U$ be either empty or such that Assumption \ref{ass:U} holds, and let $\mathcal{F}_{n}=\{z_1^*,z_2^*,\ldots,z_n^*\}$ be a Fekete set for $Q$ on $U^c$. Then, for all large enough $n$, 
\begin{align}\label{fekete lower bound spherical}
\min\{|\hat{z}_j^*-\hat{z}_k^*|: 1\leq j\neq k \leq n\}\geq \frac{C_{1}}{n^{3}},
\end{align}
for some $C_{1}>0$, and where $\hat{z}_{j}^{*}:=\varphi^{-1}(z_{j}^{*})$.
\end{lemma}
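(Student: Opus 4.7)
The plan is to reduce the statement to the planar Fekete estimate of Lemma \ref{lemma:dist fekete} by lifting the problem to the sphere. By the third identity in \eqref{change of var on the sphere} we have
\begin{align*}
\prod_{1\leq j<k\leq n}|z_j-z_k|^{2}\,e^{-Q(z_j)-Q(z_k)} = 4^{-\binom{n}{2}}\prod_{1\leq j<k\leq n}|\hat z_j-\hat z_k|^{2},
\end{align*}
so the Fekete set $\mathcal F_n$ for $Q(z)=\log(1+|z|^2)$ on $U^c$ corresponds under $\varphi^{-1}$ to an $n$-tuple $\{\hat z_j^{*}\}_{j=1}^{n}\subset\hat U^c\setminus\{N\}$ that maximizes $\prod_{j<k}|\hat z_j-\hat z_k|^{2}$ over $(\hat U^c)^n$. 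Equivalently, for each $j$,
\begin{align*}
\prod_{k\neq j}|\hat z-\hat z_k^{*}|^{2} \leq \prod_{k\neq j}|\hat z_j^{*}-\hat z_k^{*}|^{2} \qquad \text{for all } \hat z\in \hat U^c.
\end{align*}

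Fix any two Fekete points $\hat z_1^{*},\hat z_2^{*}$ and assume $|\hat z_1^{*}-\hat z_2^{*}|\leq 1/n^{2}$ (the bound is otherwise trivial). I would apply a rotation $R$ of $\mathbb S^{2}$ with $R\hat z_1^{*}=\mathbf S:=(0,0,-1)$ and set $w_j:=\varphi(R\hat z_j^{*})$, so that $w_1=0$ and $|w_2|\asymp|\hat z_1^{*}-\hat z_2^{*}|$ with absolute constants (since $\varphi$ is bi-Lipschitz on a fixed chord-distance neighborhood of $\mathbf S$). Inserting the chord-distance identity from \eqref{change of var on the sphere} into the preceding display applied to $R^{-1}\hat z$ (with $\hat z=\varphi^{-1}(w)$) and cancelling the common factors $\prod_{k\neq j}(1+|w_k|^2)^{-1}$ yields
\begin{align*}
|P_j(w)|^{2}e^{-(n-1)\tilde Q(w)} \leq |P_j(w_j)|^{2}e^{-(n-1)\tilde Q(w_j)}, \qquad w\in \tilde V,
\end{align*}
where $P_j(w):=\prod_{k\neq j}(w-w_k)$, $\tilde Q(w):=\log(1+|w|^2)$, and $\tilde V:=\varphi(R(\hat U^c\setminus\{N\}))\subset \C$. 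This is exactly the Fekete inequality \eqref{fekete inequality} with $Q\to\tilde Q$ and $U^c\to\tilde V$.

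I would then run the proof of Lemma \ref{lemma:dist fekete} verbatim on the pair $(w_1,w_2)$. The H\"older condition \eqref{holder} holds globally on $\C$ with $\alpha=1$ and $K=1$ since $|\nabla\tilde Q(w)|=2|w|/(1+|w|^2)\leq 1$, and the argument produces $|w_1-w_2|\geq C_1'/n^{1+2/\alpha}=C_1'/n^{3}$ for some $C_1'>0$. Combined with $|\hat z_1^{*}-\hat z_2^{*}|\asymp |w_2|$, this gives the desired $|\hat z_1^{*}-\hat z_2^{*}|\geq C_1/n^{3}$. The main technical point is to check that the exterior ball condition for $\tilde U:=\C\setminus\tilde V$ at $w_1=0$ holds with a radius $\tilde\epsilon_0>0$ that is independent of the rotation $R$. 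This is where the boundedness of $U$ is used: it forces $\hat U$ to stay at a uniform positive distance from $N$, so that $\varphi$ (and $\varphi\circ R$) has uniformly bounded bi-Lipschitz constants on the portion of the sphere near $\mathbf S$ from which the exterior ball is extracted, transferring Assumption \ref{ass:U2} for $U$ to a uniform exterior ball condition for $\tilde U$ at $w_1=0$. If $U=\emptyset$, the verification is vacuous: $\tilde V=\C$ and only Case I of the proof of Lemma \ref{lemma:dist fekete} is needed.
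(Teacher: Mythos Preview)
Your reduction to Lemma~\ref{lemma:dist fekete} via a rotation of the sphere is the same idea as the paper's, but you anchor the rotation differently. The paper picks one \emph{fixed} rotation, depending only on $U$ (not on $n$ or on the Fekete points): it rotates a point of $\hat U$ to the north pole, so that in the new planar coordinates $U^c$ becomes a fixed bounded set. All Fekete points then lie in that fixed compact for every $n$, which makes Euclidean and chord distances uniformly comparable and immediately yields a single exterior-ball radius and Lipschitz constant; there is nothing to check about uniformity in $R$. Your rotation instead varies with the pair under consideration, which forces you to argue uniformity by hand (you do this for the exterior-ball radius, invoking boundedness of $U$---indeed a standing assumption of the subsection though not stated in the lemma). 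One small point you do not address: if the antipode $-\hat z_1^{*}$ happens to be another Fekete point $\hat z_k^{*}$, then \emph{every} rotation $R$ with $R\hat z_1^{*}=\mathbf S$ sends $\hat z_k^{*}$ to $N$, so $w_k=\infty$ and $P_j$ is not a polynomial. This is easy to repair (perturb $R$ slightly, or drop that single factor and absorb the bounded ratio $|\hat z_1^{*}-\hat z_k^{*}|/|\hat z-\hat z_k^{*}|$ for $\hat z$ near $\hat z_1^{*}$), but the paper's fixed-rotation trick sidesteps it entirely. For $U=\emptyset$ the paper simply quotes the classical separation estimate of Habicht--van der Waerden rather than rerunning the argument of Lemma~\ref{lemma:dist fekete}.
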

\begin{proof}
If $U$ is empty, this follows from \cite{HW1951} (see also the introduction of \cite{SK1997}). Suppose now that $U$ is not empty. Let $\hat{U}:=\varphi^{-1}(U)$, and recall from \eqref{lol24} that $\hat{\mathcal{F}}_{n}:=\{\hat{z}_1^*,\hat{z}_2^*,\ldots,\hat{z}_n^*\} \subset \hat{U}^{c}:=S^{2}\setminus \hat{U}$ is such that
\begin{align*}
\prod_{1\leq j<k \leq n}|\hat{z}_j^{*}-\hat{z}_k^{*}|^{2} = \sup_{\hat{z}_1,\hat{z}_2,\ldots,\hat{z}_n\in  \hat{U}^{c}}\prod_{1\leq j<k \leq n}|\hat{z}_j-\hat{z}_k|^{2}.
\end{align*}
Since the sphere $\mathbb{S}^{2}$ is rotation-invariant, and since $U$ is open, we can (and do) assume without loss of generality that $\hat{U}^{c}$ does not contain a neighborhood of the north pole, i.e. that $U$ is unbounded and that $U^{c}$ is bounded. 

\medskip \noindent Since $Q$ satisfies \eqref{holder} with $\alpha=1$ and $K=1$, and since $U^{c}$ is bounded, a straightforward adaptation of the proof of Lemma \ref{lemma:dist fekete} yields 
\begin{align*}
\min\{|z_j^*-z_k^*|: 1\leq j\neq k \leq n\}\geq \frac{C_{2}}{n^{3}},
\end{align*}
for some $C_{2}>0$ and for all large enough $n$, which implies \eqref{fekete lower bound spherical} (using again that $U^{c}$ is bounded).
\end{proof}

\begin{lemma}[Lower bounds]\label{lemma:lower bound spherical}
Let $Q(z):= \log(1+|z|^{2})$ and $U$ be bounded and such that Assumption \ref{ass:U} holds. Then the inequalities \eqref{lower bound}--\eqref{lower bound 2} hold.
\end{lemma}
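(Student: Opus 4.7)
The plan is to mimic the proof of Lemma \ref{lemma:lower bound} while transferring the analysis to the sphere $\mathbb{S}^{2}$ via the stereographic projection $\varphi$. This setting is natural because, as observed in \eqref{spherical ensemble on the sphere}, the weighted plane density with $Q(z)=\log(1+|z|^{2})$ corresponds on $\mathbb{S}^{2}$ to the pure Coulomb interaction $\prod_{j<k}|\hat{z}_{j}-\hat{z}_{k}|^{\beta}$ with no external weight. Consequently, the H\"older estimate \eqref{lol22} that was used in Lemma \ref{lemma:lower bound} to absorb the weight near Fekete points is no longer needed, and the argument simplifies.

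For \eqref{lower bound}, let $\mathcal{F}_{n}=\{z_{1}^{*},\ldots,z_{n}^{*}\}$ be a Fekete set for $Q$ on $U^{c}$ (existence follows from compactness of $\hat{U}^{c}\subset \mathbb{S}^{2}$), and set $\hat{z}_{j}^{*}=\varphi^{-1}(z_{j}^{*})$. By Lemma \ref{lemma:dist fekete spherical}, $\min_{j\neq k}|\hat{z}_{j}^{*}-\hat{z}_{k}^{*}|\geq C_{1}n^{-3}$. Using the change of variables \eqref{change of var on the sphere} and restricting each $\hat{z}_{j}$ to the spherical disk $\hat{B}_{j}:=\hat{U}^{c}\cap\{\hat{z}\in\mathbb{S}^{2}:|\hat{z}-\hat{z}_{j}^{*}|<C_{1}n^{-4}\}$, the argument of \eqref{lol21} gives $|\hat{z}_{j}-\hat{z}_{k}|\geq(1-2/n)|\hat{z}_{j}^{*}-\hat{z}_{k}^{*}|$ for $\hat{z}_{j}\in\hat{B}_{j}$, $\hat{z}_{k}\in\hat{B}_{k}$, $j\neq k$. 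This produces the lower bound
\begin{align*}
\mathcal{P}_{n}\;\geq\;\frac{(1-2/n)^{\beta n(n-1)/2}}{Z_{n}\,2^{\beta n(n-1)/2}\,4^{n}}\,\prod_{1\leq j<k\leq n}|\hat{z}_{j}^{*}-\hat{z}_{k}^{*}|^{\beta}\,\prod_{j=1}^{n}|\hat{B}_{j}|.
\end{align*}

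To conclude, we need a lower bound $|\hat{B}_{j}|\geq c_{2}\,n^{-8}$ uniform in $j$. Since $U$ is bounded, $\partial U$ stays at positive Euclidean distance from $\infty$, hence $\partial\hat{U}=\varphi^{-1}(\partial U)$ stays at positive sphere-distance from the north pole; the stereographic projection is therefore bi-Lipschitz in a neighborhood of $\partial\hat{U}$, so Assumption \ref{ass:U2} for $U$ transfers to an exterior ball condition for $\hat{U}$ on $\mathbb{S}^{2}$ with comparable constants, which yields the desired area bound. Combining this with the identity $\prod_{j<k}|\hat{z}_{j}^{*}-\hat{z}_{k}^{*}|^{2}=(2\delta_{n}^{Q}(U^{c}))^{n(n-1)}$ from \eqref{lol24} and invoking Lemma \ref{lemma:limit fekete spherical}, and then taking $\liminf_{n\to\infty}n^{-2}\log(\cdot)$ of the above inequality, we obtain \eqref{lower bound}. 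The proof of \eqref{lower bound 2} is identical with $U=\emptyset$, in which case the exterior ball estimate is trivial since $\mathbb{S}^{2}$ has no boundary. The only real subtlety, and the main obstacle, is the transfer of Assumption \ref{ass:U2} from the plane to the sphere, but this is straightforward due to the boundedness of $U$.
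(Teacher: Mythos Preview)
Your proposal is correct and follows essentially the same approach as the paper: transfer to the sphere via \eqref{change of var on the sphere}, restrict each variable to a spherical cap of radius $C_{1}n^{-4}$ around the corresponding Fekete point, apply Lemma \ref{lemma:dist fekete spherical} to get the $(1-2/n)$ factor, bound the cap areas from below via the exterior ball condition, and conclude using \eqref{lol24} and Lemma \ref{lemma:limit fekete spherical}. Your observation that the H\"older step \eqref{lol22} drops out on the sphere (since the weight is absorbed into the chordal distance) is exactly the simplification the paper exploits, and your remark about transferring Assumption \ref{ass:U2} to $\hat{U}$ via the bi-Lipschitz property of $\varphi$ near the compact set $\partial U$ makes explicit what the paper leaves implicit.
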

\begin{proof}
Recall from Proposition \ref{prop:generalformula spherical} that $\tilde{\mu} := \nu+\mu|_{S\setminus U}$ is the equilibrium measure for $Q$ on $U^{c}$, where $\mu$ is as in \eqref{def of mu and S spherical} and $\nu = \mathrm{Bal}(\mu|_{U},\partial U)$. Let $ \{ z_1^*,z_2^*,\ldots,z_n^*\}\subset U^{c}$ be a Fekete set for $Q$ on $U^c$. Then, in a similar way as \eqref{lol23} (but using \eqref{spherical ensemble} instead of \eqref{general density intro}), we obtain 
\begin{align}
\mathcal{P}_{n} & = \frac{1}{Z_n}\int_{U^c}\ldots \int_{U^c} \prod_{1 \leq j < k \leq n}|z_j-z_k|^{\beta} \prod_{j=1}^n
e^{-[(n-1) \frac{\beta}{2}+2] Q(z_{j})}d^{2}z_{j} \nonumber \\
& \geq \frac{1}{Z_n}\int_{B_1}\ldots \int_{B_n}
\Bigg\{ \prod_{1 \leq j<k \leq n}|z_j-z_k|^{2}e^{-Q(z_{j})}e^{-Q(z_{k})}\Bigg\}^{\frac{\beta}{2}} \prod_{j=1}^n e^{-2 Q(z_{j})}d^{2}z_{j}, \label{lol23 bis}
\end{align}
where $B_\ell:=\varphi\big(\hat{U}^c \cap B(\hat{z}_{\ell}^*,\frac{C_{1}}{n^{4}}) \big)$ for $\ell=1,2,\ldots, n$, $B(\hat{z},r)\subset \R^{3}$ is the ball centered at $\hat{z}$ of radius $r$, $\hat{U}^c:= \varphi^{-1}(U^{c})$ and $\hat{z}_{\ell}:=\varphi^{-1}(z_{\ell})$. Changing variables using \eqref{change of var on the sphere}, we get
\begin{align*}
\mathcal{P}_{n} \geq \frac{1}{2^{\frac{\beta}{2} n(n-1)+2n}Z_n}\int_{\varphi^{-1}(B_1)}\ldots \int_{\varphi^{-1}(B_n)}
\Bigg\{ \prod_{1 \leq j<k \leq n}|\hat{z}_j-\hat{z}_k|^{2}\Bigg\}^{\frac{\beta}{2}} \prod_{j=1}^n d^{2}\hat{z}_{j}.
\end{align*}
If $\hat{z}_j \in B(\hat{z}_j^*,\frac{C_{1}}{n^{4}})$ and $\hat{z}_k\in B(\hat{z}_k^*,\frac{C_{1}}{n^{4}})$ for $j\neq k$, then by Lemma \ref{lemma:dist fekete spherical},
\begin{align}\label{lol21 bis}
|\hat{z}_j-\hat{z}_k| \geq |\hat{z}_j^*-\hat{z}_k^*|-\frac{2C_{1}}{n^{4}} \geq |\hat{z}_j^*-\hat{z}_k^*| - \frac{2}{n}  |\hat{z}_j^*-\hat{z}_k^*| = \Big(1-\frac{2}{n}\Big) |\hat{z}_j^*-\hat{z}_k^*| .
\end{align}
Using \eqref{lol21 bis} in \eqref{lol23 bis}, we get
\begin{align*}
& \mathcal{P}_{n} \geq \frac{1}{2^{\frac{\beta}{2} n(n-1)+2n}Z_n}\int_{\varphi^{-1}(B_1)}\ldots \int_{\varphi^{-1}(B_n)} \Bigg\{ \prod_{1 \leq j<k \leq n} \bigg(1-\frac{2}{n}\bigg)^{2} |\hat{z}_j^{*}-\hat{z}_k^{*}|^{2}  \Bigg\}^{\frac{\beta}{2}} \prod_{j=1}^n d^{2}\hat{z}_{j} \\
& \geq \frac{1}{2^{\frac{\beta}{2} n(n-1)+2n}Z_n} \left(1-\frac{2}{n}\right)^{\frac{\beta}{2} n(n-1)}  \Bigg\{ \prod_{1 \leq j<k \leq n}|\hat{z}_j^{*}-\hat{z}_k^{*}|^{2} \Bigg\}^{\frac{\beta}{2}} \prod_{j=1}^n\int_{\varphi^{-1}(B_j)} d^{2}\hat{z}_{j}.
\end{align*}
From \eqref{eqn:exterior ball condition lol}, we infer that $\int_{\varphi^{-1}(B_j)} d^{2}\hat{z}_{j}\geq c_{2} (\frac{C_{1}}{n^{4}})^2$ holds for some $c_{2}>0$, for all large enough $n$ and all $j\in\{1,2,\ldots,n\}$. Thus, by \eqref{lol24},
\begin{align*}
\mathcal{P}_{n} \geq 
\frac{1}{2^{2n}Z_n} \left(1-\frac{2}{n}\right)^{\frac{\beta}{2} n(n-1)}  (\delta_n^{Q}(U^c))^{\frac{\beta}{2} n(n-1)} \bigg(\frac{
\sqrt{c_{2}}C_{1}}{n^{4}}\bigg)^{2n}.
\end{align*}
Taking $\liminf_{n\to \infty}\frac{1}{n^2}\log(\cdot)$ on both sides and using Lemma \ref{lemma:limit fekete spherical} and \eqref{lol24}, we find \eqref{lower bound}.

By a similar analysis (using now Lemma \ref{lemma:dist fekete spherical} with $U=\emptyset$), 
\begin{align*}
Z_{n} \geq \frac{1}{2^{2n}} \left(1-\frac{2}{n}\right)^{\frac{\beta}{2} n(n-1)}   (\delta_n^{Q}(\C))^{\frac{\beta}{2} n(n-1)} \bigg(\frac{\sqrt{\pi}\tilde{C}_{1}}{n^{4}}\bigg)^{2n},
\end{align*}
for some $\tilde{C}_{1}>0$, and \eqref{lower bound 2} follows.
\end{proof}
\section{Methods to obtain balayage measures}\label{section: method to obtain balayage}

Let $U$ be an open set, and let $\mu$ be a finite measure with compact support. In this section, we briefly outline several general strategies that will be used to obtain explicitly the balayage measure $\mathrm{Bal}(\mu|_{U},\partial U)$. 
\subsection{Ansatz}\label{section:ansatz method}
Suppose that we have a candidate measure $\nu$ supported on $\partial U$, and we would like to prove that $\nu = \mathrm{Bal}(\mu|_{U},\partial U)$. The uniqueness part of Proposition \ref{prop:def of bal} tells us that if $\nu(\partial U)=\mu(U)$, $p_{\nu}$ is bounded on $\partial U$, $\nu(P)=0$ for every Borel polar set $P\subset \C$, and $p_{\nu}(z)=p_{\mu|_{U}}(z)+c^{\hspace{0.02cm}\mu}_{U}$ for q.e. $z\in U^c$, then $\nu=\mathrm{Bal}(\mu|_{U},\partial U)$. However, verifying $p_{\nu}(z)=p_{\mu|_{U}}(z)+c^{\hspace{0.02cm}\mu}_{U}$ for all $z\in \partial U$ is in general a difficult task. 

\medskip If $U$ is bounded, the simple but useful idea used in \cite{AR2017} is to instead verify that the moments of $\nu$ and $\mu|_{U}$ are the same.
\begin{lemma}\label{lemma: moment when U is bounded}
Suppose $U$ is bounded and satisfies Assumption \ref{ass:U} (with $S$ replaced by $\C$), $U^{c}$ is connected, $\partial U$ is piecewise smooth, and that $p_{\mu|_{U}}$ is continuous in a neighborhood of $\partial U$. Suppose also that $\nu$ is supported on $\partial U$ with a bounded and piecewise continuous density with respect to the arclength measure on $\partial U$ and
\begin{align}\label{matching moments general U bounded}
\int_{\partial U}z^n d\nu(z) = \int_{ U}z^nd\mu(z), \qquad \mbox{for all } n \in \N.
\end{align}
Then $\nu=\mathrm{Bal}(\mu|_{U},\partial U)$.
\end{lemma}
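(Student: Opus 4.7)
The plan is to invoke the uniqueness assertion of Proposition~\ref{prop:def of bal}: since $U$ is bounded there is no unbounded component of $U$ and thus $c^{\mu}_{U}=0$, so it suffices to check that $\nu$ satisfies the four defining properties of $\mathrm{Bal}(\mu|_{U},\partial U)$, namely (a) $\nu(\partial U)=\mu(U)$, (b) $p_{\nu}$ is bounded on $\partial U$, (c) $\nu(P)=0$ for every Borel polar set $P$, and (d) $p_{\nu}(z)=p_{\mu|_{U}}(z)$ for q.e.\ $z\in U^{c}$.

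Properties (a)--(c) follow essentially from the hypotheses. Property (a) is the $n=0$ case of \eqref{matching moments general U bounded}. For (b), when $\nu$ has a bounded, piecewise continuous density with respect to arclength on a piecewise smooth curve, the integral $\int\log\frac{1}{|z-w|}d\nu(w)$ is in fact continuous on all of $\mathbb{C}$ (the logarithmic kernel is locally integrable against such a one-dimensional density, even at corners). For (c), every Borel polar set in $\mathbb{C}$ has zero one-dimensional Hausdorff measure, hence zero arclength on $\partial U$, hence $\nu$-measure zero.

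The main step is (d). Fix $R>0$ with $\overline{U}\subset D(0,R)$. For $|z|>R$, the Laurent expansion $\log(1-w/z)=-\sum_{k\geq 1}\frac{1}{k}(w/z)^{k}$ (principal branch) gives, for any finite measure $\sigma$ supported in $\overline{D(0,R)}$,
\begin{align*}
p_{\sigma}(z)=-\sigma(\mathbb{C})\log|z|+\sum_{k=1}^{\infty}\frac{1}{k}\,\mathrm{Re}\Big(z^{-k}\int w^{k}\,d\sigma(w)\Big),
\end{align*}
with uniform convergence on $\{|z|\geq R+\varepsilon\}$. Applying this to $\sigma=\nu$ and to $\sigma=\mu|_{U}$ and using \eqref{matching moments general U bounded} (which, including the $n=0$ case, matches all moments and total masses), the difference $h:=p_{\nu}-p_{\mu|_{U}}$ vanishes identically on $\{|z|>R\}$. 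But $h$ is harmonic on the open set $\mathbb{C}\setminus\overline{U}$, and the assumptions that $U^{c}$ is connected and $\partial U$ is piecewise smooth force $\mathbb{C}\setminus\overline{U}$ to be itself connected (the only component is the unbounded one). By the identity theorem for harmonic functions, $h\equiv 0$ throughout $\mathbb{C}\setminus\overline{U}$. Finally, $h$ extends continuously across $\partial U$ (since $p_{\nu}$ is continuous on $\mathbb{C}$ by the argument in (b), and $p_{\mu|_{U}}$ is continuous in a neighborhood of $\partial U$ by hypothesis), so $h\equiv 0$ on $U^{c}$; this proves (d) and in fact gives pointwise equality everywhere on $U^{c}$.

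The argument is essentially Laurent-matching plus the identity theorem for harmonic functions; the only minor obstacle is the planar-topology step promoting ``$U^{c}$ connected'' to ``$\mathbb{C}\setminus\overline{U}$ connected'', but for bounded open $U$ with piecewise smooth boundary this is standard, since a bounded component of $\mathbb{C}\setminus\overline{U}$ would yield a bounded component of $U^{c}$, contradicting connectedness.
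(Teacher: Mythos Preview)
Your proof is correct and takes essentially the same approach as the paper: match moments via the Laurent expansion of $\log(z-w)$ to get $p_{\nu}=p_{\mu|_{U}}$ for large $|z|$, extend to $\overline{U}^{c}$ by harmonicity and connectedness, then pass to $\partial U$ by continuity. The only notable difference is at the boundary step: the paper invokes \cite[Theorem II.1.5(ii)]{SaTo} to obtain continuous boundary values of $p_{\nu}$ only where the density is continuous (yielding q.e.\ equality on $\partial U$), whereas you assert full continuity of $p_{\nu}$ on $\mathbb{C}$ from local integrability of the log kernel against a bounded one-dimensional density.
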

\begin{remark}\label{remark: pmu bounded on dU}
The assumption ``$p_{\mu|_{U}}$ is continuous in a neighborhood of $\partial U$" will always be verified in later sections. For example, this assumption is automatically satisfied if $\mu$ has a continuous density on $\mathcal{N}\cap \overline{U}$ with respect to $d^{2}z$, for some open set $\mathcal{N}$ such that $\partial U\subset \mathcal{N}$.
\end{remark}
\begin{proof}
The proof is an extension of \cite[Remark 4.6]{AR2017}. Since $U$ is bounded, there exists $R>0$ such that $U\subset \{z:|z|< R\}$. Given $z \in \C$, let $\log_{z}(\cdot)$ be a branch of the logarithm that is analytic on $\C\setminus (-z \infty,0]$. Then, for any fixed $z$ with $|z|>R$, $z-w$ remains bounded away from the cut $(-z \infty,0]$ for all $w \in U$. Hence, using \eqref{matching moments general U bounded} (with $z$ replaced by $w$) and the fact that the series $\log_{z}(z-w) = \log_{z} z + \sum_{m=1}^{+\infty} \frac{w^{m}}{m z^{m}}$ is uniformly convergent for $w\in U$, we get
\begin{align*}
\int_{\partial U}\log_{z}(z-w)d\nu(w) = \int_{ U}\log_{z}(z-w)d\mu(w), \qquad \mbox{for all } |z|>R.
\end{align*}
In particular, taking the real part on both sides, we get $p_{\nu}(z)=p_{\mu|_{U}}(z)$ for every $|z|>R$. Since $p_{\nu}$, $p_{\mu|_{U}}$ are harmonic in $\overline{U}^{c}$ (see \cite[Theorem 0.5.6]{SaTo}) and $U^{c}$ is connected, $p_{\nu}(z)=p_{\mu|_{U}}(z)$ for all $z \notin \overline{U}$. Recall that $p_{\mu|_{U}}$ is assumed to be continuous in a neighborhood of $\partial U$. Also, by assumption, $d\nu(z) = f(z)|dz|$ for $z\in \partial U$, where $f$ is bounded and piecewise continuous and $|dz|$ is the arclength measure on $\partial U$. This implies by \cite[Theorem II.1.5 (ii)]{SaTo} that $p_{\nu}$ has continuous boundary values on the domain of continuity of $f$. Hence $p_{\mu|_{U}} = p_{\nu}$ for q.e. $z\in \partial U$. Equation \eqref{matching moments general U bounded} with $n=0$ implies $\nu(\partial U)=\mu(U)$. Also, since $d\nu(z) = f(z)|dz|$, clearly $\nu(P)=0$ for every Borel polar set and $p_{\nu}$ is bounded on the compact $\partial U$. By Proposition \ref{prop:def of bal}, $\nu=\mathrm{Bal}(\mu|_{U},\partial U)$.
\end{proof}

The following lemma is the analogue of Lemma \ref{lemma: moment when U is bounded} for unbounded $U$.

\begin{lemma}\label{lemma: moment when U is unbounded}
Suppose $U$ is unbounded and satisfies Assumption \ref{ass:U} (with $S$ replaced by $\C$), $U^{c}$ is connected, $\partial U$ is piecewise smooth, and that $p_{\mu|_{U}}$ is continuous in a neighborhood of $\partial U$. Suppose also that $\nu$ is supported on $\partial U$ with a bounded and piecewise continuous density with respect to the arclength measure on $\partial U$ and
\begin{align}\label{eqn:moment unbounded}
& \begin{cases}
\ds \int_{\partial U}(z-z_{0})^{-n} d\nu(z) = \int_{ U}(z-z_{0})^{-n}d\mu(z), \quad \mbox{for all } n \in \N, \\[0.25cm]
\ds \int_{\partial U}\log |z-z_{0}|  d\nu(z) = \int_{ U} \log |z-z_{0}| d\mu(z) - c^{\hspace{0.02cm}\mu}_{U},
\end{cases}
\end{align}
for some $z_{0}\notin \overline{U}$. Then $\nu=\mathrm{Bal}(\mu|_{U},\partial U)$.
\end{lemma}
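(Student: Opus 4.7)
\medskip \noindent \textbf{Proof plan.} The plan is to verify the hypotheses of the uniqueness part of Proposition \ref{prop:def of bal}, following the same template as in Lemma \ref{lemma: moment when U is bounded} but with the power-series expansion performed about a finite point $z_{0}\notin\overline{U}$ (instead of about $\infty$, which is no longer at positive distance from $\overline{U}$ when $U$ is unbounded). Since $z_{0}\notin\overline{U}$, let $r:=\mathrm{dist}(z_{0},\overline{U})>0$, so that both $\mathrm{supp}\,\nu\subset\partial U$ and $\mathrm{supp}(\mu|_{U})\subset\overline{U}$ satisfy $|w-z_{0}|\geq r$. For $z\in D(z_{0},r/2)$ and $w$ in either of these supports, one has $|(z-z_{0})/(w-z_{0})|\leq 1/2$, and writing $z-w=-(w-z_{0})(1-(z-z_{0})/(w-z_{0}))$ yields the uniformly convergent expansion
\begin{equation*}
\log|z-w|\;=\;\log|w-z_{0}|\;-\;\mathrm{Re}\sum_{n=1}^{\infty}\frac{1}{n}\frac{(z-z_{0})^{n}}{(w-z_{0})^{n}}.
\end{equation*}

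Integrating this identity against $d\nu(w)$ and $d\mu|_{U}(w)$, interchanging sum and integral by uniform convergence, and subtracting, I obtain for every $z\in D(z_{0},r/2)$
\begin{equation*}
p_{\nu}(z)-p_{\mu|_{U}}(z)\;=\;\Big(\!-\!\int\log|w-z_{0}|d\nu(w)+\int\log|w-z_{0}|d\mu|_{U}(w)\Big)+\mathrm{Re}\sum_{n=1}^{\infty}\frac{(z-z_{0})^{n}}{n}\Big(\int\tfrac{d\nu(w)}{(w-z_{0})^{n}}-\int\tfrac{d\mu|_{U}(w)}{(w-z_{0})^{n}}\Big).
\end{equation*}
By the moment hypothesis \eqref{eqn:moment unbounded} the series vanishes identically, and by the logarithmic hypothesis the parenthesized constant equals $c^{\hspace{0.02cm}\mu}_{U}$. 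Hence $p_{\nu}(z)=p_{\mu|_{U}}(z)+c^{\hspace{0.02cm}\mu}_{U}$ throughout $D(z_{0},r/2)$.

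Next, since $\nu$ and $\mu|_{U}$ both have compact support contained in $\overline{U}$, the difference $p_{\nu}-p_{\mu|_{U}}-c^{\hspace{0.02cm}\mu}_{U}$ is harmonic on $\mathbb{C}\setminus\overline{U}=\mathrm{int}(U^{c})$. Vanishing on the open disk $D(z_{0},r/2)$, the identity theorem for harmonic functions forces it to vanish on the connected component of $\mathrm{int}(U^{c})$ containing $z_{0}$; using the assumption that $U^{c}$ is connected together with the piecewise smoothness of $\partial U$, this component is all of $\mathrm{int}(U^{c})$, so the identity $p_{\nu}=p_{\mu|_{U}}+c^{\hspace{0.02cm}\mu}_{U}$ propagates to every $z\notin\overline{U}$. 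The hypothesis that $p_{\mu|_{U}}$ is continuous in a neighborhood of $\partial U$, combined with the fact that the logarithmic potential of a bounded, piecewise continuous density with respect to arclength on a piecewise smooth curve has continuous boundary values except on a finite set (a consequence of \cite[Theorem II.1.5(ii)]{SaTo}), then extends the identity from $\mathrm{int}(U^{c})$ to quasi-everywhere on $\partial U$.

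It remains to check the remaining hypotheses of Proposition \ref{prop:def of bal}. Taking $n=0$ in the moment hypothesis gives $\nu(\partial U)=\mu(U)$. Since $d\nu=f\,|dz|$ with $f$ bounded and piecewise continuous on a piecewise smooth curve, the measure $\nu$ is absolutely continuous with respect to arclength measure, hence $\nu(P)=0$ for every Borel polar set $P$ (polar sets have zero one-dimensional Hausdorff measure), and the same smoothness together with boundedness of $f$ yields that $p_{\nu}$ is bounded (in fact continuous) on the compact set $\partial U$. All conditions of the uniqueness statement in Proposition \ref{prop:def of bal} are met, whence $\nu=\mathrm{Bal}(\mu|_{U},\partial U)$. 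The only genuinely delicate point is the harmonic-continuation step, where one must verify that the connectedness of $U^{c}$ transfers to $\mathrm{int}(U^{c})$; this is where the piecewise smoothness of $\partial U$ is essential, since otherwise $\mathrm{int}(U^{c})$ could fail to be connected even when $U^{c}$ is.
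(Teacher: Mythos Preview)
Your proof is correct and follows essentially the same route as the paper's: expand $\log|z-w|$ in powers of $(z-z_0)/(w-z_0)$ to obtain $p_\nu=p_{\mu|_U}+c^{\hspace{0.02cm}\mu}_{U}$ on a small disk about $z_0$, extend by harmonic continuation to $\overline{U}^c$, pass to q.e.\ $\partial U$ via continuity of $p_{\mu|_U}$ and \cite[Theorem II.1.5(ii)]{SaTo}, then invoke the uniqueness part of Proposition~\ref{prop:def of bal}. One small caveat on your closing remark: piecewise smoothness of $\partial U$ alone does not force $\mathrm{int}(U^c)$ to be connected when $U^c$ is (consider $U^c$ equal to two closed disks meeting at a point), so that step tacitly relies on the specific geometries used in the paper's applications---the paper's own proof glosses over this in exactly the same way.
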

\begin{remark}
The second equation in \eqref{eqn:moment unbounded} is equivalent to $p_{\nu}(z_{0}) = p_{\mu|_{U}}(z_{0}) + c^{\hspace{0.02cm}\mu}_{U}$ and is in practice more challenging to verify than the first equation. Note also that this difficulty is not present in \eqref{matching moments general U bounded} for the case where $U$ is bounded.
\end{remark}
\begin{proof}
Since $z_{0}\notin \overline{U}$, there exists $R>0$ such that $\{z:|z-z_{0}|< R\}\subset \overline{U}^{c}$. Fix $z$ with $|z-z_{0}|<R$. Using \eqref{eqn:moment unbounded} (with $z$ replaced by $w$), we see that the series 
\begin{align*}
\re \log(w-z) = \re \log (w-z_{0}) - \re \sum_{n=1}^{+\infty}  \frac{(z-z_{0})^n}{n\, (w-z_{0})^n}
\end{align*}
is uniformly convergent for $w\in U$, and thus $p_{\nu}(z)=p_{\mu|_{U}}(z)+c^{\hspace{0.02cm}\mu}_{U}$. The rest of the proof is identical to the proof of Lemma \ref{lemma: moment when U is bounded}.
\end{proof}

\subsection{Green function}\label{subsection: green function method for nu}
The other method to obtain $\mathrm{Bal}(\mu|_{U},\partial U)$ involves the Green function $g_{U}(z,w)$ of $U$ with a pole at $w \in U$. The definition of $g_{U}(z,\infty)$ (for $U$ unbounded) is given in Definition \ref{def:green with pole at inf}. Here is the definition of $g_{U}(z,w)$ for finite $w$ (see also e.g. \cite[Section II.4]{SaTo}).
\begin{definition}\label{def:green with pole at a}
Suppose that $U\subset (\C\cup \{\infty\})$ is open, connected and $\mathrm{cap}(\partial U)>0$. Then $g_{U}(z,w)$ is the unique function satisfying the following conditions:
\begin{itemize}
\item \vspace{-0.15cm} $g_{U}(z,w)$ is nonnegative and harmonic in $U\setminus \{w\}$ and bounded as $z$ stays away from $w$,
\item \vspace{-0.15cm} $g_{U}(z,w)-\frac{1}{2\pi}\log \frac{1}{|z-w|} = \bigO(1)$ as $z\to w$,
\item \vspace{-0.15cm} $\lim_{z\to x, z\in U}g_{U}(z,w)=0$ for q.e. $x\in \partial U$.
\end{itemize}
\end{definition}
Standard formulas for Green functions that will be used in this work are:
\begin{align*}
& g_{U}(z,w) = \frac{1}{2\pi} \log \bigg| \frac{1-z\overline{w}}{z-w} \bigg|, & & \mbox{if } U = \{z:|z|<1\} \mbox{ or if } U = \{z:|z|>1\}, \\
& g_{U}(z,w) = \frac{1}{2\pi}\log \bigg| \frac{z-\overline{w}}{z-w} \bigg|, & & \mbox{if } U = \{z:\im z >0\}.
\end{align*}
Moreover, given an open connected set $U$ with $\mathrm{cap}(\partial U)>0$, if $\varphi, \phi$ are conformal maps from $U$ to respectively $\{z:|z|<1\}$ and $\{z:\im z >0\}$, then
\begin{align}\label{green function with general conformap mapping}
g_{U}(z,w) = \frac{1}{2\pi} \log \bigg| \frac{1-\varphi(z)\overline{\varphi(w)}}{\varphi(z)-\varphi(w)} \bigg| = \frac{1}{2\pi} \log \bigg| \frac{\phi(z)-\overline{\phi(w)}}{\phi(z)-\phi(w)} \bigg|.
\end{align}
The following result is standard in potential theory but was not used to solve large gap problems in \cite{AR2017, A2018}. It expresses $\mathrm{Bal}(\mu|_{U},\partial U)$ in terms of $g_{U}$. 
\begin{theorem}\label{thm:dnu in terms of green general}
Let $\mu$ be a finite measure with compact support in $\C$. Suppose $U$ is open, connected, with compact and piecewise $C^{2}$ boundary. Then
\begin{align*}
d\nu(z) =  \bigg( \int_{U} \frac{\partial g_{U}(w,z)}{\partial \mathbf{n}_{z}}d\mu(w) \bigg)|dz|, \qquad \mbox{for all } z\in (\partial U)_{\star}
\end{align*}
where $\nu = \mathrm{Bal}(\mu|_{U},\partial U)$, $|dz|$ is the arclength measure on $\partial U$, $(\partial U)_{\star}$ consists of all points $z\in \partial U$ such that $\partial U$ is smooth in a neighborhood of $z$, and $\mathbf{n}_{z}$ is the normal at $z$ pointing inwards $U$. 
\end{theorem}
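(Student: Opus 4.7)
The plan is to recognize both sides as Borel measures on $\partial U$ and then use the fact, recorded in Lemma \ref{lemma:further properties of nu} (iii), that the balayage $\nu = \mathrm{Bal}(\mu|_U,\partial U)$ is uniquely determined by the integration identity $\int_{\partial U} h\, d\nu = \int_U h\, d\mu$ valid for every $h$ continuous on $\overline{U}$ and harmonic on $U$. If I denote by $\tilde{\nu}$ the candidate measure given by the right-hand side of the theorem, the goal reduces to proving
\begin{align*}
\int_{\partial U} \phi \, d\tilde{\nu} = \int_{\partial U}\phi \, d\nu \qquad \text{for every } \phi \in C(\partial U),
\end{align*}
whence $\tilde{\nu}=\nu$ by the Riesz representation theorem on the compact set $\partial U$.

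First I would verify that $\tilde{\nu}$ is a well-defined finite positive Borel measure on $\partial U$. Positivity of the density follows from the maximum principle applied to $g_U(w,\cdot)$: it vanishes on $\partial U$ and is positive inside $U$, so its inward normal derivative at smooth boundary points is nonnegative. The smooth part $(\partial U)_\star$ exhausts $\partial U$ up to a finite set of corners, which is an arclength-null set, so the density determines a Borel measure. Finiteness (and the identity $\tilde{\nu}(\partial U) = \mu(U)$) follows from Fubini's theorem together with the basic fact that the harmonic measure $\int_{\partial U}\partial_{\mathbf{n}_z}g_U(w,z)|dz|$ equals $1$ for every $w \in U$, which holds both when $U$ is bounded and when $U$ is unbounded (by recurrence of planar Brownian motion, or equivalently by regularity of the piecewise $C^2$ boundary).

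The central step is the identification of integrals. Given $\phi \in C(\partial U)$, I would let $H\phi$ denote the solution of the Dirichlet problem on $U$ with boundary data $\phi$ (the unique bounded harmonic extension when $U$ is unbounded). Since $\partial U$ is piecewise $C^2$, every boundary point is regular, so $H\phi \in C(\overline{U})$, and the Poisson representation
\begin{align*}
H\phi(w) = \int_{\partial U}\phi(z)\,\frac{\partial g_U(w,z)}{\partial \mathbf{n}_z}\,|dz|, \qquad w\in U,
\end{align*}
holds. Applying Lemma \ref{lemma:further properties of nu} (iii) to $H\phi$, noting that $H\phi|_{\partial U} = \phi$, and swapping the order of integration via Fubini (justified by boundedness of $H\phi$, compact support of $\mu$, and integrability of the Poisson kernel), I obtain
\begin{align*}
\int_{\partial U}\phi \, d\nu = \int_{\partial U} H\phi \, d\nu = \int_{U} H\phi \, d\mu = \int_{\partial U}\phi \, d\tilde{\nu},
\end{align*}
which closes the argument.

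The main technical point is the applicability of Lemma \ref{lemma:further properties of nu} (iii) when $U$ is unbounded: one has to check that the bounded harmonic extension $H\phi$ is indeed continuous on $\overline{U}$ in the relevant sense and that the Poisson integral formula for $H\phi$ survives in this setting (the correction coming from the Green function with pole at $\infty$, which would enter if we tried instead to verify the potential-theoretic identity $p_{\tilde{\nu}}=p_{\mu|_U}+c_U^{\mu}$ directly, is already absorbed into the statement of Lemma \ref{lemma:further properties of nu} (iii), so it does not need to be tracked separately here). Corners of $\partial U$ cause no difficulty since they form an arclength-null set on which neither measure puts mass, so the pointwise density formula on $(\partial U)_\star$ determines $\tilde{\nu}$ uniquely.
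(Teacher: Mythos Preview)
Your argument is correct and is essentially a dual reformulation of the paper's proof. The paper proceeds by directly citing two facts from Saff--Totik: the linearity identity $\nu=\int_U\mathrm{Bal}(\delta_\zeta,\partial U)\,d\mu(\zeta)$ and Theorem II.4.11, which identifies $\mathrm{Bal}(\delta_\zeta,\partial U)$ with the harmonic measure whose density is $\partial_{\mathbf{n}_z}g_U(\zeta,z)$. You instead test both measures against $\phi\in C(\partial U)$, invoke Lemma~\ref{lemma:further properties of nu}~(iii) with the harmonic extension $H\phi$, and use the Poisson representation $H\phi(w)=\int_{\partial U}\phi\,\partial_{\mathbf{n}_z}g_U(w,z)\,|dz|$ together with Fubini. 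The two routes use the same ingredients (balayage preserves harmonic integrals; the Poisson kernel is the normal derivative of the Green function) and differ only in whether one decomposes the measure or pairs against test functions; the paper's version is marginally shorter because it avoids the Riesz representation step and the separate verification that $\tilde\nu$ is a finite positive measure.
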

\begin{proof}
The proof directly follows from the identity $\nu = \int_{U} \mathrm{Bal}(\delta_{\zeta},\partial U)d\mu(\zeta)$ where $\delta_{\zeta}$ is the dirac measure at $\zeta$ (see \cite[eq between (II.5.2) and (II.5.3)]{SaTo}), and \cite[Theorem II.4.11]{SaTo}. (\cite[Theorem II.4.11]{SaTo} is stated for $C^{2}$ boundary but can readily be extended to piecewise $C^{2}$ boundary. Indeed, the proof of \cite[Theorem II.4.11]{SaTo} is based on \cite[(II.4.20) and (II.4.21)]{SaTo} and these equations are proved in \cite{SaTo} for piecewise $C^{2}$ boundary.)
\end{proof}

\section{General rotation-invariant potentials}\label{section:general rotation-invariant pot}
In this section, we prove Theorems \ref{thm:centered disk} (i), \ref{thm:regular annulus} (i), \ref{thm:unbounded annulus}, \ref{thm:g sector nu} (i) and \ref{thm:g sector C} (i). 
We thus assume that $Q(z) = g(|z|)$ for some $g: [0,+\infty)\to \R \cup \{+\infty\}$ such that Assumption \ref{ass:Q} holds, and such that $S = \{z : |z| \in \mathrm{S} \}$ with $\mathrm{S} := [r_{0},r_{1}]\cup [r_{2},r_{3}] \ldots \cup [r_{2\ell},r_{2\ell+1}]$ for some $0 \leq r_{0} < r_{1} < \ldots < r_{2\ell}<r_{2\ell+1}<+\infty$. We further assume that $g \in C^{2}(\mathrm{S}\setminus \{0\})$, so that by \eqref{mu radially symmetric intro 2} we have 
\begin{align}\label{mu radially symmetric proof}
d\mu(z) = \frac{\Delta Q(z)}{4\pi} d^{2}z = d\mu_{\mathrm{rad}}(r) \frac{d\theta}{2\pi}, \qquad d\mu_{\mathrm{rad}}(r):=\frac{r}{2} \big( g''(r) + \frac{1}{r}g'(r) \big) dr,
\end{align}
where $z=re^{i\theta}$, $r\geq 0$ and $\theta \in (-\pi,\pi]$.

\medskip (The proofs of Theorems \ref{thm:centered disk} (ii), \ref{thm:regular annulus} (ii), \ref{thm:g sector nu} (ii) and \ref{thm:g sector C} (ii) about the spherical ensemble are similar: it suffices to replace $g(r)$ by $\log(1+r^{2})$, $\ell$ by $0$, $r_{0}$ by $0$ and $r_{1}$ by $+\infty$.) 
\subsection{The disk centered at $0$: proof of Theorem \ref{thm:centered disk}}

Suppose $U=\{z:|z|<a\}$ for some $a\in [r_{2k},r_{2k+1}]\setminus \{0\}$, $k\in \{0,\ldots,\ell\}$. 

\subsubsection{Balayage measure: proof of \eqref{nu for centered disk}}
Define $\nu$ for $z \in \partial U$ by $d\nu(z) = \kappa\frac{d\theta}{2\pi}$, where $\theta \in (-\pi,\pi]$ and $\kappa = \int_{0}^{a}d\mu_{\mathrm{rad}}(r)$. It is easy to check that $\nu(\partial U)=\mu(U)$. Also, using the well-known formula (see e.g. \cite[Chapter 0, page 22]{SaTo})
\begin{align}\label{well-known formula}
\int_{0}^{2\pi}\log\frac{1}{|z-re^{i\theta}|}\frac{d\theta}{2\pi}=\left\{\begin{array}{lr}\log
\frac{1}{r}, & \mbox{ if } |z|\leq r,\\
\log \frac{1}{|z|}, & \mbox{ if
} |z|\geq r,
\end{array}\right. \qquad \mbox{for all } z\in \C, \; r>0,
\end{align}
we obtain, for all $z\in U^{c}$, that
\begin{align*}
& p_{\mu|_{U}}(z) = \int_{U} \log \frac{1}{|z-w|} d\mu(w) = \int_{0}^{a} \int_{0}^{2\pi} \log \frac{1}{|z-re^{i\theta}|} \frac{d\theta}{2\pi} d\mu_{\mathrm{rad}}(r) = \kappa \log \tfrac{1}{|z|} , \\
& p_{\nu}(z) = \int_{\partial U} \log \frac{1}{|z-w|} d\nu(w) = \kappa\int_{0}^{2\pi} \log \frac{1}{|z-a e^{i\theta}|} \frac{d\theta}{2\pi} = \kappa \log \tfrac{1}{|z|}.
\end{align*}
Hence, by Proposition \ref{prop:def of bal}, $\nu=\mathrm{Bal}(\mu|_{U},\partial U)$.

\subsubsection{The constant $C$: proof of \eqref{C in thm disk rot inv}}
Using now Theorem \ref{thm:general pot} (i), we have
\begin{align*}
C & = \frac{\beta}{4} \bigg( \int_{\partial U} g(a)d\nu(z) - \int_{U}g(|z|)d\mu(z) \bigg) = \frac{\beta}{4} \bigg( g(a)\nu(\partial U) -  \int_{0}^{a}g(r) d\mu_{\mathrm{rad}}(r) \bigg),
\end{align*}
which yields \eqref{C in thm disk rot inv}. 
\subsection{The annulus centered at $0$: proof of Theorem \ref{thm:regular annulus}}
Suppose $U = \{z: \rho_{1} < |z| < \rho_{2} \}$ for some $0<\rho_{1}<\rho_{2}$, $\rho_{1}\in [r_{2k},r_{2k+1}]$, $\rho_{2}\in [r_{2q},r_{2q+1}]$ with $k,q\in \{0,\ldots,\ell\}$.  

\subsubsection{Balayage measure: proof of \eqref{nu for centered annulus}.}
The proof of \eqref{nu for centered annulus} is similar to \cite[Example 3.4]{A2018}. Define $\nu$ as in \eqref{nu for centered annulus} for some $\lambda,\kappa \in \R$. We will show that $\nu = \mathrm{Bal}(\mu|_{U},\partial U)$, provided that $\kappa$ and $\lambda$ are as indicated in the statement of Theorem \ref{thm:regular annulus}. Since $\mu(U) = \int_{\rho_{1}}^{\rho_{2}}d\mu_{\mathrm{rad}}(r)$ and $\nu(\partial U) = \kappa$, to fulfill $\mu(U) = \nu(\partial U)$ we must take $\kappa = \int_{\rho_{1}}^{\rho_{2}}d\mu_{\mathrm{rad}}(r)$. Also, using \eqref{well-known formula} we get for $|z|\leq \rho_{1}$ that
\begin{align*}
& p_{\mu|_{U}}(z) = \int_{U} \log \frac{1}{|z-w|} d\mu(w) = \int_{\rho_{1}}^{\rho_{2}} \int_{0}^{2\pi} \log \frac{1}{|z-re^{i\theta}|} \frac{d\theta}{2\pi} d\mu_{\mathrm{rad}}(r) = \int_{\rho_{1}}^{\rho_{2}} (\log \tfrac{1}{r} ) d\mu_{\mathrm{rad}}(r), \\
& p_{\nu}(z) = \int_{\partial U} \log \frac{1}{|z-w|} d\nu(w) = (1-\lambda)\kappa\int_{0}^{2\pi} \log \frac{1}{|z-\rho_{2}e^{i\theta}|} \frac{d\theta}{2\pi} + \lambda \kappa \int_{0}^{2\pi} \log \frac{1}{|z-\rho_{1}e^{i\theta}|} \frac{d\theta}{2\pi} \\
& \hspace{0.81cm} = (1-\lambda)\kappa\log \frac{1}{\rho_{2}}  + \lambda \kappa \log \frac{1}{\rho_{1}}.
\end{align*}
In order for $p_{\mu|_{U}}(z) = p_{\nu}(z)$ to hold for all $|z| \leq \rho_{1}$, we must take
\begin{align}\label{lambda in proof disk}
\lambda = \frac{\kappa \log \rho_{2} - \int_{\rho_{1}}^{\rho_{2}} (\log r ) d\mu_{\mathrm{rad}}(r)}{\kappa \log(\rho_{2}/\rho_{1})}.
\end{align}
Similarly, for $|z|\geq \rho_{2}$, we get
\begin{align*}
& p_{\mu|_{U}}(z) = \int_{U} \log \frac{1}{|z-w|} d\mu(w) = \int_{\rho_{1}}^{\rho_{2}} \int_{0}^{2\pi} \log \frac{1}{|z-re^{i\theta}|} \frac{d\theta}{2\pi} d\mu_{\mathrm{rad}}(r) = \kappa \log \tfrac{1}{|z|}, \\
& p_{\nu}(z) = \int_{\partial U} \log \frac{1}{|z-w|} d\nu(w) = (1-\lambda)\kappa\int_{0}^{2\pi} \log \frac{1}{|z-\rho_{2}e^{i\theta}|} \frac{d\theta}{2\pi} + \lambda \kappa \int_{0}^{2\pi} \log \frac{1}{|z-\rho_{1}e^{i\theta}|} \frac{d\theta}{2\pi} \\
& \hspace{0.81cm} = (1-\lambda)\kappa\log \frac{1}{|z|}  + \lambda \kappa \log \frac{1}{|z|} = \kappa \log \tfrac{1}{|z|}.
\end{align*}
Hence $p_{\mu|_{U}}(z) = p_{\nu}(z)$ for all $|z| \geq \rho_{2}$. By Proposition \ref{prop:def of bal}, the measure $\nu$ defined in \eqref{nu for centered annulus}, with $\kappa = \int_{\rho_{1}}^{\rho_{2}}d\mu_{\mathrm{rad}}(r)$ and $\lambda$ as in \eqref{lambda in proof disk}, satisfies $\nu = \mathrm{Bal}(\mu|_{U},\partial U)$.

\subsubsection{The constant $C$: proof of \eqref{C in thm annulus rot inv}}
Using Theorem \ref{thm:general pot} (i), we obtain
\begin{align*}
C & = \frac{\beta}{4}\bigg(  g(\rho_{2}) \int_{\{|z|=\rho_{2}\}}d\nu(z) + g(\rho_{1}) \int_{\{|z|=\rho_{1}\}}d\nu(z) - \int_{\rho_{1}}^{\rho_{2}}g(r)d\mu_{\mathrm{rad}}(r) \bigg) \\
& = \frac{\beta}{4}\bigg(  (1-\lambda)\kappa g(\rho_{2})  + \lambda \kappa g(\rho_{1}) - \int_{\rho_{1}}^{\rho_{2}}g(r)d\mu_{\mathrm{rad}}(r) \bigg),
\end{align*}
which is \eqref{C in thm annulus rot inv}. 

\subsection{The complement of a disk centered at $0$: proof of Theorem \ref{thm:unbounded annulus}}
Suppose $U=\{z:|z|>a\}$ for some $a\in [r_{2k},r_{2k+1}]\setminus \{0\}$, $k\in \{0,\ldots,\ell\}$. 
\subsubsection{Balayage measure: proof of \eqref{nu for unbounded annulus}}
Define $\nu$ for $z \in \partial U$ by $d\nu(z) = \kappa\frac{d\theta}{2\pi}$, where $\theta \in (-\pi,\pi]$ and $\kappa = \int_{a}^{r_{2\ell+1}}d\mu_{\mathrm{rad}}(r)$. The definition of $\kappa$ implies that $\nu(\partial U)=\mu(U)$. Also, using \eqref{well-known formula}, we obtain, for all $z\in U^{c}$, that
\begin{align*}
& p_{\mu|_{U}}(z) = \int_{U} \log \frac{1}{|z-w|} d\mu(w) = \int_{a}^{+\infty} \int_{0}^{2\pi} \log \frac{1}{|z-re^{i\theta}|} \frac{d\theta}{2\pi} d\mu_{\mathrm{rad}}(r) = \int_{a}^{r_{2\ell+1}}(\log \tfrac{1}{r})d\mu_{\mathrm{rad}}(r), \\
& p_{\nu}(z) = \int_{\partial U} \log \frac{1}{|z-w|} d\nu(w) = \kappa\int_{0}^{2\pi} \log \frac{1}{|z-a e^{i\theta}|} \frac{d\theta}{2\pi} = \kappa \log \tfrac{1}{a}.
\end{align*}
Moreover, it is easy to check using Definition \ref{def:green with pole at inf} that $g_{U}(z,\infty) = \frac{1}{2\pi} \log\tfrac{|z|}{a}$. Thus
\begin{align*}
c^{\hspace{0.02cm}\mu}_{U} = 2\pi\int_{U}g_{U}(z,\infty)d\mu(z) = \int_{a}^{r_{2\ell+1}} (\log \tfrac{r}{a} ) d\mu_{\mathrm{rad}}(r) = \kappa \log \tfrac{1}{a} - \int_{a}^{r_{2\ell+1}}(\log \tfrac{1}{r})d\mu_{\mathrm{rad}}(r),
\end{align*}
which implies that $p_{\nu}(z) = p_{\mu|_{U}}(z) + c^{\hspace{0.02cm}\mu}_{U}$ for all $z \in U^{c}$. By Proposition \ref{prop:def of bal}, $\nu=\mathrm{Bal}(\mu|_{U},\partial U)$.
\subsubsection{The constant $C$: proof of \eqref{C for unbounded annulus}} 

Using Theorem \ref{thm:general pot} (i), we obtain
\begin{align*}
C = \frac{\beta}{4} \bigg( g(a)\int_{\partial U}d\nu(z) + 2c^{\hspace{0.02cm}\mu}_{U} - \int_{a}^{r_{2\ell+1}}g(r)d\mu_{\mathrm{rad}}(r) \bigg) = \frac{\beta}{4} \int_{a}^{r_{2\ell+1}} \big(g(a)-g(r)+2\log \tfrac{r}{a}\big)d\mu_{\mathrm{rad}}(r),
\end{align*}
which is \eqref{C for unbounded annulus}.

\subsection{The circular sector centered at $0$}
Let $p\in [1,\infty)$, $a\in (0,r_{1})$ and $U=\{re^{i\theta} : 0<r<a, \, 0<\theta < \frac{2\pi}{p}\}$.

\subsubsection{Balayage measure: proof of Theorem \ref{thm:g sector nu}}
\begin{figure}[h]
\begin{center}
\begin{tikzpicture}[master]
\node at (0,0) {\includegraphics[height=3cm]{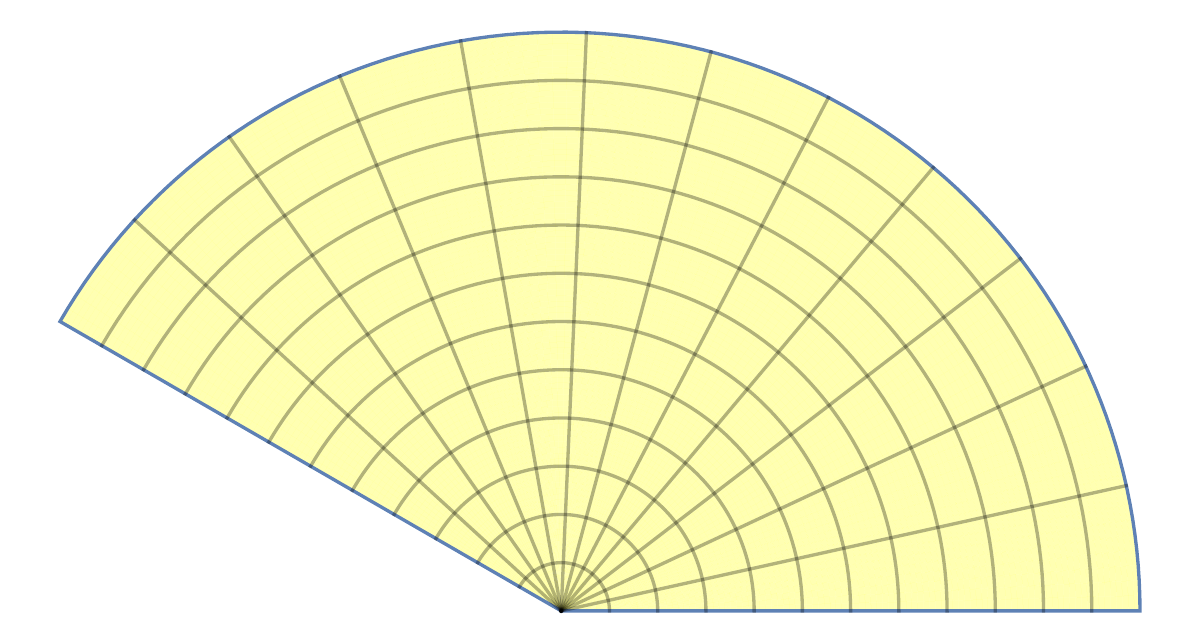}};
\draw[fill] (2.5,-1.35) circle (0.04);
\node at (2.5,-1.5) {\footnotesize $a$};

\draw[fill] (-2.5,0) circle (0.04);
\node at (-2.95,0) {\footnotesize $ae^{\frac{2\pi i}{p}}$};

\draw[fill] (-0.18,-1.34) circle (0.04);
\node at (-0.18,-1.55) {\footnotesize $0$};
\end{tikzpicture} \hspace{1cm}
\begin{tikzpicture}[slave]
\node at (0,0) {\includegraphics[height=4cm]{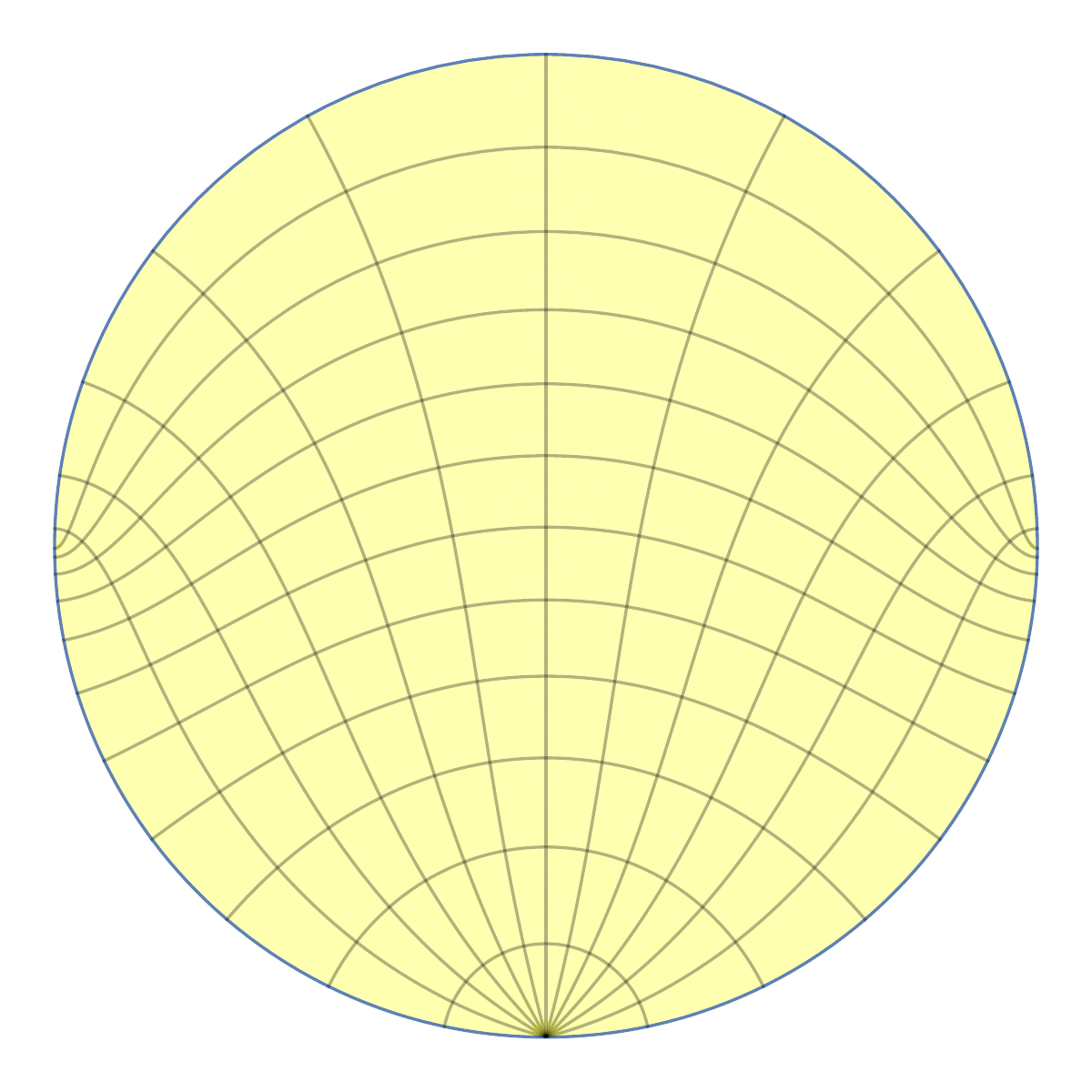}};
\node at (-3.5,1.4) {$\varphi$};
\coordinate (P) at ($(-3.5, 0.5) + (30:1cm and 0.6cm)$);
\draw[thick, black, -<-=0.45] ($(-3.5, 0.5) + (30:1cm and 0.6cm)$(P) arc (30:150:1cm and 0.6cm);
\draw[fill] (1.8,0) circle (0.04);
\node at (2,0) {\footnotesize $1$};
\draw[fill] (-1.8,0) circle (0.04);
\node at (-2.1,0) {\footnotesize $-1$};
\draw[fill] (0,-1.8) circle (0.04);
\node at (0,-2) {\footnotesize $-i$};
\end{tikzpicture}
\end{center}
\caption{\label{fig:conformal map from circular sector of disk} The conformal map $\varphi$ from $U$ to the unit disk.}
\end{figure}
By \cite{MathStackExchange}, the map $\tilde{\varphi}(z) := -i \frac{z^{2}+2iz+1}{z^{2}-2iz+1}$ is a conformal map from the upper-half of the unit disk $\{r e^{i\theta}:r\in (0,1), \; \theta \in (0,\pi) \}$ to the unit disk. Hence
\begin{align*}
\varphi(z) := -i \frac{(\frac{z}{a})_{(0)}^{p} + 2i (\frac{z}{a})_{(0)}^{\frac{p}{2}} + 1}{(\frac{z}{a})_{(0)}^{p} - 2i (\frac{z}{a})_{(0)}^{\frac{p}{2}}+1},
\end{align*}
where $z_{(0)}^{\frac{p}{2}}:=|z|^{\frac{p}{2}}e^{i \frac{p}{2} \arg_{0}(z)}$ and $\arg_{0}(z) \in (0,2\pi)$, is a conformal map from $U$ to $\{z:|z|<1\}$, see also Figure \ref{fig:conformal map from circular sector of disk}. Hence, by the first identity in \eqref{green function with general conformap mapping}, $g_{U}(z,w) = \frac{1}{2\pi} \log \big| \frac{1-\varphi(z)\overline{\varphi(w)}}{\varphi(z)-\varphi(w)} \big|$. Let $\nu := \mathrm{Bal}(\mu|_{U},\partial U)$. Using now Theorem \ref{thm:dnu in terms of green general}, for $r \in (0,a)$ we obtain
\begin{align}
& \frac{d\nu(r)}{dr} = \frac{d\nu(r e^{\frac{2\pi i}{p}})}{dr} = \int_{0}^{a}\int_{0}^{\frac{2\pi}{p}} \bigg[\frac{1}{r}\frac{d}{d\theta} g_{U}(x \, e^{i\alpha},r e^{i\theta}) \bigg]_{\theta=0} \frac{d\alpha}{2\pi}d\mu_{\mathrm{rad}}(x) \nonumber \\
& = \frac{p \, r^{\frac{p}{2}-1}(a^{p}-r^{p})}{a^{2p}+6a^{p}r^{p}+r^{2p}} \re \int_{0}^{a}\int_{0}^{\frac{2\pi}{p}} \frac{a^{2p} + a^{p}[e^{ip\alpha}x^{p}+4e^{\frac{ip\alpha}{2}}x^{\frac{p}{2}}r^{\frac{p}{2}}+r^{p}]+e^{ip\alpha}r^{p}x^{p} }{2\pi i (r^{\frac{p}{2}}-e^{\frac{i p \alpha}{2}}x^{\frac{p}{2}})(a^{p}-e^{\frac{ip\alpha}{2}}x^{\frac{p}{2}}r^{\frac{p}{2}})} \frac{d\alpha}{2\pi}d\mu_{\mathrm{rad}}(x), \label{lol49}
\end{align}
while for $z = ae^{i\theta}$ with $\theta \in (0,\tfrac{2\pi}{p})$, we get
\begin{align}
& \frac{d\nu(z)}{d\theta} = a \int_{0}^{a}\int_{0}^{\frac{2\pi}{p}} \bigg[(-1)\frac{d}{dr} g_{U}(x\,e^{i\alpha},r e^{i\theta}) \bigg]_{r =a} \frac{d\alpha}{2\pi}d\mu_{\mathrm{rad}}(x) \nonumber \\
& = \frac{p \sin(\frac{p\theta}{2})}{3+\cos(p\theta)} \re \int_{0}^{a}\int_{0}^{\frac{2\pi}{p}} \frac{(a^{p}+e^{ip\alpha}x^{p})(1+e^{ip\theta})+4e^{\frac{i p (\alpha+\theta)}{2}}a^{\frac{p}{2}}x^{\frac{p}{2}}}{2\pi i (e^{\frac{i p \theta}{2}}a^{\frac{p}{2}}-e^{\frac{i p \alpha}{2}}x^{\frac{p}{2}})(a^{\frac{p}{2}}-e^{\frac{i p (\alpha+\theta)}{2}}x^{\frac{p}{2}})} \frac{d\alpha}{2\pi}d\mu_{\mathrm{rad}}(x). \label{lol48}
\end{align}

\subsubsection*{Simplification of \eqref{lol49}: proof of \eqref{g sector nu segment}--\eqref{g sector nu segment 2}}

\begin{lemma}\label{lemma:simplif int segment}
Fix $r,x\in (0,a)$, $r\neq x$. Then
\begin{multline*}
I_{r}(x) := \int_{0}^{\frac{2\pi}{p}} \frac{a^{2p} + a^{p}[e^{ip\alpha}x^{p}+4e^{\frac{ip\alpha}{2}}x^{\frac{p}{2}}r^{\frac{p}{2}}+r^{p}]+e^{ip\alpha}r^{p}x^{p} }{2\pi i (r^{\frac{p}{2}}-e^{\frac{i p \alpha}{2}}x^{\frac{p}{2}})(a^{p}-e^{\frac{ip\alpha}{2}}x^{\frac{p}{2}}r^{\frac{p}{2}})} d\alpha \\
= \begin{cases}
\ds \frac{2(a^{2p}+6a^{p}r^{p}+r^{2p})}{\pi p r^{\frac{p}{2}}(a^{p}-r^{p})} \big( \arctanh((\tfrac{x}{r})^{\frac{p}{2}})-\arctanh((\tfrac{r}{a})^{\frac{p}{2}}(\tfrac{x}{a})^{\frac{p}{2}}) \big)-i\frac{a^{p}+r^{p}}{r^{\frac{p}{2}} p}, & \mbox{if } x<r, \\[0.3cm]
\ds \frac{2(a^{2p}+6a^{p}r^{p}+r^{2p})}{\pi p r^{\frac{p}{2}}(a^{p}-r^{p})} \big( \arctanh((\tfrac{r}{x})^{\frac{p}{2}}) + \tfrac{\pi i}{2} -\arctanh((\tfrac{r}{a})^{\frac{p}{2}}(\tfrac{x}{a})^{\frac{p}{2}}) \big)-i\frac{a^{p}+r^{p}}{r^{\frac{p}{2}} p}, & \mbox{if } x>r.
\end{cases}
\end{multline*}
\end{lemma}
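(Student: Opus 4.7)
\textbf{Proof plan for Lemma \ref{lemma:simplif int segment}.} The strategy is a straightforward substitution followed by partial fractions and term-by-term integration against a Fourier basis on a half-period. First I would rewrite the integral with $\beta = p\alpha/2$, so that $d\alpha = \frac{2}{p}d\beta$ and $\beta$ runs from $0$ to $\pi$. Abbreviating $A=a^{p}$, $R=r^{p/2}$, $X=x^{p/2}$ (so $0<X,R<\sqrt{A}$, hence $XR<A$), the integrand becomes a rational function in $w=e^{i\beta}$ whose numerator $N(w)$ has degree $2$ and whose denominator $D(w)=(R-wX)(A-wXR)$ also has degree $2$.

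Next I would perform the partial fraction expansion
\begin{equation*}
\frac{N(w)}{D(w)} = \frac{A+R^{2}}{R} + \frac{K}{R-wX} - \frac{AK/R}{A-wXR}, \qquad K := \frac{A^{2}+6AR^{2}+R^{4}}{A-R^{2}},
\end{equation*}
where the constant term comes from matching the leading coefficients in $w$ and the residues $K$ and $-AK/R$ are computed by evaluating $N$ at $w=R/X$ and $w=A/(XR)$ respectively; one verifies consistency by checking the coefficients of $w^{1}$ and $w^{0}$. The constant piece integrates to $\pi(A+R^{2})/R$ and, after multiplication by the $\frac{1}{p\pi i}$ prefactor, produces the universal term $-i(a^{p}+r^{p})/(pr^{p/2})$ in both branches of the stated formula.

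The two remaining integrals are evaluated by expanding in geometric series. Since $XR<A$, the expansion $(A-wXR)^{-1}=A^{-1}\sum_{k\ge0}(wXR/A)^{k}$ converges on the unit circle, and combined with $\int_{0}^{\pi}e^{ik\beta}d\beta = \pi\mathbf{1}_{k=0} + \frac{i(1-(-1)^{k})}{k}\mathbf{1}_{k\ge 1}$ one obtains
\begin{equation*}
\int_{0}^{\pi}\frac{d\beta}{A-e^{i\beta}XR}=\frac{\pi}{A}+\frac{2i}{A}\arctanh(XR/A),
\end{equation*}
using $\sum_{k\ \mathrm{odd}}y^{k}/k = \arctanh(y)$ for $|y|<1$. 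For the factor $(R-wX)^{-1}$ the branch depends on the sign of $X-R$: when $x<r$ one expands $(R-wX)^{-1}$ in powers of $wX/R$ and obtains $\pi/R+(2i/R)\arctanh(X/R)$; when $x>r$ one factors out $-e^{i\beta}X$ and expands in powers of $R/X\cdot e^{-i\beta}$, which yields only $(2i/R)\arctanh(R/X)$ with \emph{no} $\pi/R$ contribution. Collecting the three pieces and noting that the missing constant term in the $x>r$ case can be absorbed as $\frac{2K}{p\pi R}\cdot\frac{i\pi}{2}$ gives exactly the stated two-case formula.

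The step requiring a little care is the sign bookkeeping in the $x>r$ branch (where the geometric expansion must be in the reciprocal variable) and the verification that the partial fractions are consistent; everything else is routine Fourier integration against the half-period $[0,\pi]$. Nothing here is a genuine obstacle.
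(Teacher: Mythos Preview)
Your proof is correct. The partial-fraction decomposition checks out (the residues at $w=R/X$ and $w=A/(XR)$ are indeed $K$ and $-AK/R$, and the leading coefficient gives the constant $(A+R^2)/R$), and the termwise geometric-series integration over $[0,\pi]$ reproduces both branches of the formula; in particular the $\pi/R$ terms from the two poles cancel when $x<r$, while for $x>r$ the surviving constant $iK/(pR)$ is exactly the $\tfrac{\pi i}{2}$ shift.

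Your route is genuinely different from the paper's. The paper makes the substitution $\tilde\alpha=p\alpha$ so that the angle runs over the \emph{full} circle, then sets $z=e^{i\tilde\alpha}$; because $e^{ip\alpha/2}$ becomes $z_{(0)}^{1/2}$, the integrand acquires a branch cut along $[0,\infty)$, and the integral over $\mathbb{S}^1$ is evaluated by deforming the contour outward to a large circle, picking up jump contributions across the cut on $(1,M)$ plus half-residues at the simple poles $z_1=(r/x)^p$ and $z_2=(a^2/(rx))^p$, and finally letting $M\to\infty$. The cut integrals are then handled via explicit primitives. Your approach avoids all of this: by working on the half-period $[0,\pi]$ with $w=e^{i\beta}$ single-valued, the partial fractions are elementary rational functions and the two cases $x\lessgtr r$ emerge purely from the choice of which geometric expansion converges. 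This is more direct and requires no branch bookkeeping; the paper's contour-deformation viewpoint, on the other hand, makes the geometric origin of the poles (and hence of the constant $a^{2p}+6a^pr^p+r^{2p}$) somewhat more transparent.
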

\begin{proof}
We first do the change of variables $\tilde{\alpha} = \frac{1}{p}\alpha$, so that the integral is from $0$ to $2\pi$. Then we do the change of variables $z=e^{i\tilde{\alpha}}$. This yields
\begin{align*}
I_{r}(x) = \int_{\mathbb{S}^{1}} \frac{a^{2p} + a^{p}(z x^{p} + 4 z_{(0)}^{\frac{1}{2}}r^{\frac{p}{2}}x^{\frac{p}{2}} + r^{p}) + zr^{p}x^{p}}{2\pi i (r^{\frac{p}{2}}-z_{(0)}^{\frac{1}{2}}x^{\frac{p}{2}})(a^{p}-z_{(0)}^{\frac{1}{2}}r^{\frac{p}{2}}x^{\frac{p}{2}})} \frac{1}{p} \frac{dz}{iz},
\end{align*}
where $\mathbb{S}^{1}$ is the unit circle oriented in the counterclockwise direction. The integrand has a branch cut along $[0,+\infty)$, and simple poles at $z=z_{1}:=(\frac{r}{x})^{p}+i0_{+}$ and $z=z_{2}:=(\frac{a}{r})^{p}(\frac{a}{x})^{p}+i0_{+}$ (note that $z_{1}$ and $z_{2}$ lie on the side of the cut from the upper half-plane). Suppose that $x<r$, and let $M>z_{2}$. By deforming $\mathbb{S}^{1}$ into $M\mathbb{S}^{1}$, we pick up a branch cut contribution along $(1,M)+i0_{+}$ and $(1,M)-i0_{+}$, and two half-residues at $z_{1}$ and $z_{2}$. More precisely, 
\begin{align}
I_{r}(x) & =  \int_{M \, \mathbb{S}^{1}} \frac{a^{2p} + a^{p}(z x^{p} + 4 z_{(0)}^{\frac{1}{2}}r^{\frac{p}{2}}x^{\frac{p}{2}} + r^{p}) + zr^{p}x^{p}}{2\pi i (r^{\frac{p}{2}}-z_{(0)}^{\frac{1}{2}}x^{\frac{p}{2}})(a^{p}-z_{(0)}^{\frac{1}{2}}r^{\frac{p}{2}}x^{\frac{p}{2}})} \frac{1}{p} \frac{dz}{iz} \nonumber \\
& + \dashint_{1}^{M} \frac{a^{2p} + a^{p}(y x^{p} + 4 \sqrt{y} r^{\frac{p}{2}}x^{\frac{p}{2}} + r^{p}) + yr^{p}x^{p}}{2\pi i (r^{\frac{p}{2}}-\sqrt{y} x^{\frac{p}{2}})(a^{p}-\sqrt{y} r^{\frac{p}{2}}x^{\frac{p}{2}})} \frac{1}{p} \frac{dy}{iy} -\pi i \mbox{Res}_{z_{1}} - \pi i \mbox{Res}_{z_{2}} \nonumber \\
& - \int_{1}^{M} \frac{a^{2p} + a^{p}(y x^{p} - 4 \sqrt{y} r^{\frac{p}{2}}x^{\frac{p}{2}} + r^{p}) + yr^{p}x^{p}}{2\pi i (r^{\frac{p}{2}}+\sqrt{y} x^{\frac{p}{2}})(a^{p}+\sqrt{y} r^{\frac{p}{2}}x^{\frac{p}{2}})} \frac{1}{p} \frac{dy}{iy}, \label{lol50}
\end{align}
where $\dashint$ stands for the principal value integral, and $\mbox{Res}_{z_{1}}, \mbox{Res}_{z_{2}}$ are given by
\begin{align}
& \mbox{Res}_{z_{1}} = \mbox{Res}\bigg( \frac{a^{2p} + a^{p}(z x^{p} + 4 z_{(0)}^{\frac{1}{2}}r^{\frac{p}{2}}x^{\frac{p}{2}} + r^{p}) + zr^{p}x^{p}}{2\pi i (r^{\frac{p}{2}}-z_{(0)}^{\frac{1}{2}}x^{\frac{p}{2}})(a^{p}-z_{(0)}^{\frac{1}{2}}r^{\frac{p}{2}}x^{\frac{p}{2}})} \frac{1}{p} \frac{1}{iz}, z=z_{1} \bigg) = \frac{a^{2p}+6a^{p}r^{p}+r^{2p}}{p\pi r^{\frac{p}{2}}(a^{p}-r^{p})}, \label{lol52} \\
& \mbox{Res}_{z_{2}} = \mbox{Res}\bigg( \frac{a^{2p} + a^{p}(z x^{p} + 4 z_{(0)}^{\frac{1}{2}}r^{\frac{p}{2}}x^{\frac{p}{2}} + r^{p}) + zr^{p}x^{p}}{2\pi i (r^{\frac{p}{2}}-z_{(0)}^{\frac{1}{2}}x^{\frac{p}{2}})(a^{p}-z_{(0)}^{\frac{1}{2}}r^{\frac{p}{2}}x^{\frac{p}{2}})} \frac{1}{p} \frac{1}{iz}, z=z_{2} \bigg) = \frac{a^{2p}+6a^{p}r^{p}+r^{2p}}{-p\pi r^{\frac{p}{2}}(a^{p}-r^{p})}. \label{lol53} 
\end{align}
On the other hand,
\begin{multline*}
\lim_{M\to\infty} \int_{M \, \mathbb{S}^{1}} \frac{a^{2p} + a^{p}(z x^{p} + 4 z_{(0)}^{\frac{1}{2}}r^{\frac{p}{2}}x^{\frac{p}{2}} + r^{p}) + zr^{p}x^{p}}{2\pi i (r^{\frac{p}{2}}-z_{(0)}^{\frac{1}{2}}x^{\frac{p}{2}})(a^{p}-z_{(0)}^{\frac{1}{2}}r^{\frac{p}{2}}x^{\frac{p}{2}})} \frac{1}{p} \frac{dz}{iz} \\
= \lim_{M\to \infty} \int_{M \, \mathbb{S}^{1}} \frac{a^{p}x^{p} + r^{p}x^{p}}{2\pi i (x^{\frac{p}{2}}r^{\frac{p}{2}}x^{\frac{p}{2}})} \frac{1}{p} \frac{dz}{iz} = -i\frac{a^{p}+r^{p}}{p \, r^{\frac{p}{2}}}.
\end{multline*}
The integrals $\int_{1}^{M}$ and $\dashint_{1}^{M}$ can be evaluated using primitives. Indeed, it is easy to check that
\begin{multline*}
\frac{d}{dy} \bigg( \frac{2(a^{2p}+6a^{p}r^{p}+r^{2p})\log(\frac{r^{\frac{p}{2}}+x^{\frac{p}{2}}\sqrt{y}}{a^{p}+r^{\frac{p}{2}}x^{\frac{p}{2}}\sqrt{y}})-(a^{2p}-r^{2p})\log y}{2\pi p r^{\frac{p}{2}}(a^{p}-r^{p})} \bigg) \\
= \frac{a^{2p} + a^{p}(y x^{p} - 4 \sqrt{y} r^{\frac{p}{2}}x^{\frac{p}{2}} + r^{p}) + yr^{p}x^{p}}{2\pi i (r^{\frac{p}{2}}+\sqrt{y} x^{\frac{p}{2}})(a^{p}+\sqrt{y} r^{\frac{p}{2}}x^{\frac{p}{2}})} \frac{1}{p} \frac{1}{iy},
\end{multline*}
which allows to evaluate the integral $\int_{1}^{M}$ explicitly. The regularized integral $\dashint_{1}^{M}$ can also be evaluated explicitly using similar primitives. Taking then $M\to +\infty$, and using $\arctanh(y) = \frac{1}{2}\log(\frac{1+y}{1-y})$, we find
\begin{align}
& \lim_{M\to \infty}  \dashint_{1}^{M} \hspace{-0.1cm} \bigg( \frac{a^{2p} + a^{p}(y x^{p} + 4 \sqrt{y} r^{\frac{p}{2}}x^{\frac{p}{2}} + r^{p}) + yr^{p}x^{p}}{2\pi i (r^{\frac{p}{2}}-\sqrt{y} x^{\frac{p}{2}})(a^{p}-\sqrt{y} r^{\frac{p}{2}}x^{\frac{p}{2}})p} - \frac{a^{2p} + a^{p}(y x^{p} - 4 \sqrt{y} r^{\frac{p}{2}}x^{\frac{p}{2}} + r^{p}) + yr^{p}x^{p}}{2\pi i (r^{\frac{p}{2}}+\sqrt{y} x^{\frac{p}{2}})(a^{p}+\sqrt{y} r^{\frac{p}{2}}x^{\frac{p}{2}})p} \bigg) \frac{dy}{iy} \nonumber \\
& = \frac{2(a^{2p}+6a^{p}r^{p}+r^{2p})}{\pi p r^{\frac{p}{2}}(a^{p}-r^{p})} \big( \arctanh((\tfrac{x}{r})^{\frac{p}{2}})-\arctanh((\tfrac{r}{a})^{\frac{p}{2}}(\tfrac{x}{a})^{\frac{p}{2}}) \big). \label{51}
\end{align}
Combining \eqref{lol50}, \eqref{lol52}, \eqref{lol53}, and \eqref{51}, we find the claim for $x<r$. The proof for $x>r$ is similar (now $z_{1}$ is inside the unit circle) and we omit further details.
\end{proof}
Combining \eqref{lol49} with Lemma \ref{lemma:simplif int segment}, we directly obtain \eqref{g sector nu segment}. Then \eqref{g sector nu segment 2} follows from the facts that $d\mu_{\mathrm{rad}}(x)/dx$ is continuous on $\mathrm{S}\setminus\{0\}$ and that the series
\begin{align}\label{series of arctanh}
\arctanh(y) = \sum_{m=0}^{+\infty} \frac{y^{2m+1}}{2m+1}
\end{align}
converges uniformly for $y$ in compact subsets of $[0,1)$.
\subsubsection*{Simplification of \eqref{lol48}: proof of \eqref{g sector nu arxtanh}--\eqref{g sector nu}}
It is possible to prove \eqref{g sector nu arxtanh}--\eqref{g sector nu} by rewriting the integral over $\alpha$ in \eqref{lol48} as an integral in the complex plane (in a similar way as we proved \eqref{g sector nu segment}--\eqref{g sector nu segment 2}). However, we give here a different proof using Fourier series, in a similar spirit as in \cite[Example 6.6]{AR2017}. 

\bigskip Note that $z_{(0)}^{n}$ is harmonic in $U$ and continuous on $\overline{U}$ for any $p>1$ and $n\in [0,+\infty)$ (not only for integers $n$), where we recall that $z^{n}_{(0)}:=|z|^{n}e^{i\arg_{0}z}$ with $\arg_{0}z \in (0,2\pi)$. Thus, by Lemma \ref{lemma:further properties of nu} (iii),
\begin{align}\label{moments znp0p}
& \int_{\partial U} z_{(0)}^{n}d\nu(z) = \int_{U}z_{(0)}^{n} d\mu(z), \qquad \mbox{for all } n \in [0,+\infty).
\end{align}
Since the right-hand sides of \eqref{lol49} and \eqref{lol48} are continuous for $p\in [1,+\infty)$, \eqref{moments znp0p} also holds with $p=1$ provided that $\partial U$ is defined as in Remark \ref{boundary of U if p=1}. Let $h(\theta)$ be the right-hand side of \eqref{lol48}. Recall from \eqref{g sector nu segment} that $d\nu(z)=d\nu(e^{\frac{2\pi i}{p}}z)$ for all $z\in (0,a)$. Thus, for all $n \in [0,+\infty)$,
\begin{align}
& (1+e^{\frac{2\pi n i}{p}}) \int_{0}^{a} t^{n} d\nu(t) + \int_{0}^{\frac{2\pi}{p}} a^{n}e^{in\theta}h(\theta)d\theta = \int_{0}^{\frac{2\pi}{p}} \int_{0}^{a} r^{n}e^{in\theta} d\mu_{\mathrm{rad}}(r) \frac{d\theta}{2\pi} \nonumber \\
& \hspace{2.5cm} = \int_{0}^{a} r^{n} d\mu_{\mathrm{rad}}(r) \times \begin{cases}
\frac{1}{p}, & \mbox{if } n =0, \\
0, & \mbox{if } n \mbox{ is a multiple of } p \mbox{ and } n \neq 0, \\
\frac{i(1-e^{\frac{2\pi ni}{p}})}{2\pi n}, & \mbox{otherwise}.
\end{cases} \label{moments circular sector}
\end{align}
Comparing the imaginary parts of both sides, we get
\begin{align*}
\frac{\sin (\frac{2\pi n}{p})}{a^{n}}\int_{0}^{a} t^{n} d\nu(t) + \int_{0}^{\frac{2\pi}{p}} \sin(n \theta) h(\theta)d\theta = \frac{1}{a^{n}}\int_{0}^{a} r^{n} d\mu_{\mathrm{rad}}(r) \times \begin{cases}
0, & \mbox{if } n \in p\N, \\
\frac{\sin(\frac{\pi n}{p})^{2}}{\pi n}, & \mbox{otherwise}.
\end{cases}
\end{align*}
In particular, if $n=\frac{p}{2}\N$, then
\begin{align}\label{lol4}
\int_{0}^{\frac{2\pi}{p}} \sin(n \theta) \; h(\theta)d\theta = \frac{1}{a^{n}}\int_{0}^{a} r^{n} d\mu_{\mathrm{rad}}(r) \times \begin{cases}
0, & \mbox{if } n \in p \N, \\
\frac{1}{\pi n}, & \mbox{if }n=\frac{p}{2}+p\N.
\end{cases}
\end{align}
A direct analysis of \eqref{lol48} shows that $h$ is continuous on $(0,\frac{2\pi}{p})$. Let $\hat{h}$ be the Fourier series of the odd extension of $h$ from $(0,\frac{2\pi}{p})$ to $(-\frac{2\pi}{p},\frac{2\pi}{p})$. Thus $\hat{h}(\theta) = \sum_{m=1}^{+\infty} h_{m} \sin(\frac{p}{2}mx)$ for some coefficients $h_{m}\in \R$. Furthermore, it is easy to see from \eqref{lol48} that $h$ (and therefore $\hat{h}$) satisfies $h(\frac{\pi}{p}+\theta)=h(\frac{\pi}{p}-\theta)$ for all $\theta \in (0,\frac{\pi}{p})$. This extra symmetry implies that $h_{m}=0$ whenever $m$ is even. Hence
\begin{align*}
h(\theta) = \hat{h}(\theta) = \sum_{m=0}^{+\infty}  h_{2m+1} \sin((pm+\tfrac{p}{2})\theta), \qquad \mbox{for all } \theta \in (0,\tfrac{\pi}{p}).
\end{align*}
Using 
\begin{align*}
\int_{0}^{\frac{2\pi}{p}} \sin((k_{1}p+\tfrac{p}{2}) \theta)\sin((k_{2}p+\tfrac{p}{2}) \theta) d\theta = \frac{2}{p} \int_{0}^{\pi} \sin((2k_{1}+1)\alpha) \sin((2k_{2}+1) \alpha)d\alpha = \begin{cases}
0, & \mbox{if } k_{1} \neq k_{2} \\
\frac{\pi}{p}, & \mbox{if } k_{1}=k_{2}
\end{cases}
\end{align*}
together with \eqref{lol4}, we obtain
\begin{align}\label{h as a Fourier series smaller sector}
h(\theta) = \frac{1}{\pi^{2}} \sum_{m=0}^{+\infty}  \frac{\sin((pm+\frac{p}{2})\theta)}{(m+\frac{1}{2}) \, a^{pm+\frac{p}{2}}}\int_{0}^{a} r^{pm+\frac{p}{2}} d\mu_{\mathrm{rad}}(r), \qquad \theta \in (0,\tfrac{2\pi}{p}),
\end{align}
which is \eqref{g sector nu}. Then \eqref{g sector nu arxtanh} follows using \eqref{series of arctanh}. 
This finishes the proof of Theorem \ref{thm:g sector nu}.
\subsubsection{The constant C: proof of Theorem \ref{thm:g sector C}}
In the statement of Theorem \ref{thm:g sector C} (i), the assumption $p\geq 2$ is made to ensure that $U$ satisfies Assumption \ref{ass:U2} (this is required to use Theorem \ref{thm:general pot}). 

\medskip \noindent Since $d\nu(z)=d\nu(e^{\frac{2\pi i}{p}}z)$ for all $z\in (0,a)$, we have
\begin{align}
& \int_{\partial U} g(|z|) d\nu(z) = 2 \int_{0}^{a} g(r) d\nu(r) + g(a) \int_{0}^{\frac{2\pi}{p}} h(\theta)d\theta, \nonumber  \\
& \int_{0}^{\frac{2\pi}{p}} h(\theta)d\theta = \frac{2p}{\pi^{2}} \sum_{m=0}^{+\infty} \frac{1}{(pm+\frac{p}{2})^{2}} \frac{1}{a^{pm+\frac{p}{2}}} \int_{0}^{a} r^{pm+\frac{p}{2}} d\mu_{\mathrm{rad}}(r) \label{lol7}
\end{align}
where we recall that $h$ is given by \eqref{h as a Fourier series smaller sector}. Now \eqref{C in thm cir sector} directly follows from Theorem \ref{thm:general pot}.

\medskip \noindent The integral $\int_{0}^{a} g(r) d\nu(r)$ above is difficult to simplify further using \eqref{g sector nu segment 2}. Suppose that $g(r) = \sum_{k\in \mathcal{I}} g_{k}r^{k}$ uniformly for all $r \in [0,a]$, where $\mathcal{I}\subset [0,+\infty)$ is countable with no accumulation points and such that $\mathcal{I}\cap (\frac{p}{2}+p\N) = \emptyset$ (thus $g$ must not contain powers of the form $r^{k}$ for some $k \in \frac{p}{2}+p\N$). For later convenience we also assume without loss of generality that $0\notin \mathcal{I}$. Then we can simplify $\int_{0}^{a} g(r) d\nu(r)$ as follows. First, taking the real part of \eqref{moments circular sector} yields (for any $k \notin \frac{p}{2}+p\N$, $k\geq 0$)
\begin{align*}
& \int_{0}^{a} r^{k} d\nu(r) =  \frac{-a^{k}}{1+\cos\tfrac{2\pi k}{p}} \int_{0}^{\frac{2\pi}{p}} \cos (k\theta)h(\theta)d\theta  \nonumber \\
&  + \frac{1}{1+\cos\tfrac{2\pi k}{p}} \int_{0}^{a} r^{k} d\mu_{\mathrm{rad}}(r) \times \begin{cases}
\frac{1}{p}, & \mbox{if } k =0, \\
0, & \mbox{if } k \mbox{ is a multiple of } p \mbox{ and } k \neq 0, \\
\frac{\re[i(1-e^{\frac{2\pi ki}{p}})]}{2\pi k}, & \mbox{otherwise}.
\end{cases}
\end{align*}
Multiplying by $g_{k}$ and then summing over $k \in \mathcal{I}$ yields
\begin{align*}
\int_{0}^{a} g(r) d\nu(r) &  =  \sum_{k\in \mathcal{I}} \bigg\{ \frac{-g_{k}a^{k}}{1+\cos\tfrac{2\pi k}{p}} \int_{0}^{\frac{2\pi}{p}} \cos (k\theta)h(\theta)d\theta + (\tan\tfrac{\pi k}{p}) \frac{g_{k}}{2k\pi}  \int_{0}^{a} r^{k} d\mu_{\mathrm{rad}}(r) \bigg\}.
\end{align*}
Using then \eqref{h as a Fourier series smaller sector}, we get
\begin{align*}
\int_{0}^{a} g(r) d\nu(r) &  =  \sum_{k\in \mathcal{I}} \bigg\{ \frac{p g_{k}a^{k}}{\pi^{2}} \bigg[ \sum_{m=0}^{+\infty} \frac{a^{-(pm+\frac{p}{2})}}{(k+pm+\frac{p}{2})(k-pm-\frac{p}{2})} \int_{0}^{a} r^{pm+\frac{p}{2}} d\mu_{\mathrm{rad}}(r) \bigg]  \nonumber \\
& + (\tan\tfrac{\pi k}{p}) \frac{g_{k}}{2k\pi}  \int_{0}^{a} r^{k} d\mu_{\mathrm{rad}}(r) \bigg\}. 
\end{align*}
Using the identity $\frac{\tan(\pi y)}{y} = -\frac{2}{\pi}\sum_{m=0}^{+\infty} \frac{1}{(y+m+\frac{1}{2})(y-m-\frac{1}{2})}$, the above simplifies to
\begin{align}\label{lol8}
\int_{0}^{a} g(r) d\nu(r) =  \sum_{k\in \mathcal{I}} \frac{p g_{k}a^{k}}{\pi^{2}} \bigg[ \sum_{m=0}^{+\infty} \frac{1}{(k+pm+\frac{p}{2})(k-pm-\frac{p}{2})} \bigg( \frac{\int_{0}^{a} r^{pm+\frac{p}{2}} d\mu_{\mathrm{rad}}(r)}{a^{pm+\frac{p}{2}}} - \frac{\int_{0}^{a} r^{k} d\mu_{\mathrm{rad}}(r)}{a^{k}} \bigg) \bigg]
\end{align}
Substituting \eqref{lol8} in \eqref{C in thm cir sector} yields \eqref{g sector C}. Also, it is clear from \eqref{C in thm cir sector} and \eqref{g sector nu segment}--\eqref{g sector nu} that $C$ is continuous for $p\in [2,+\infty)$. Thus, if $\mathcal{I}\cap (\frac{p}{2}+p\N) \neq \emptyset$, then $C$ can be obtained by first replacing $p$ by $p'$ in the right-hand side of \eqref{g sector C}, and then taking $p'\to p$. This finishes the proof of Theorem \ref{thm:g sector C} (i). 

\medskip The proof of Theorem \ref{thm:g sector C} (ii) is similar (it suffices to replace $g(r)$ by $\log(1+r^{2})$, and to use the fact that $\log(1+r^{2}) = \sum_{k=1}^{+\infty} \frac{(-1)^{k+1}}{k}r^{2k}$ converges uniformly on $[0,a]$ if $a<1$). 

\medskip  The proof of Theorem \ref{thm:g sector C} (iii) (corresponding to the case $Q(z) = |z|^{2}$) is a particular case of the proof of Theorem \ref{thm:g sector C} (i), except for the fact that we are not allowed to use Theorem \ref{thm:general pot} for $p<2$. Instead we use \cite[Theorem 1.2 (A)]{A2018} (or \cite[Theorem 1.4]{AR2017} for $\beta=2$), using the fact that $U$ satisfies Remark \ref{remark:AR star domain} (b). (The assumption $a\neq 1$ is required to ensure that the sets $U_{m}$ in Remark \ref{remark:AR star domain} satisfy $U_{m}\subset S=\{z:|z|\leq 1\}$.)

\section{The elliptic Ginibre point process}\label{section:elliptic Ginibre and several hole regions}
In this section, we prove Theorems \ref{thm:general U EG}, \ref{thm:Elliptic disk C} and \ref{thm:Ginibre triangle C}.
\subsection{Proof of Theorem \ref{thm:general U EG}}
Suppose $U\subset \C$ is bounded and such that Assumption \ref{ass:U} holds (with $S$ replaced by $\C$), and define $\mu:=\frac{d^{2}z}{\pi(1-\tau^{2})}$, $\mathrm{supp} \, \mu := \C$, $\nu := \mathrm{Bal}(\mu|_{\zeta_{0}+\rho e^{i\theta_{0}}U},\zeta_{0}+\rho e^{i\theta_{0}}\partial U)$. By Proposition \ref{prop:def of bal}, 
\begin{align*}
\int_{\zeta_{0}+\rho e^{i\theta_{0}}\partial U} \log\frac{1}{|z-w|}d\nu(w) = \int_{\zeta_{0}+\rho e^{i\theta_{0}} U} \log\frac{1}{|z-w|}d\mu(w) \qquad \mbox{for q.e. } z \notin \zeta_{0}+\rho e^{i\theta_{0}}U.
\end{align*}
Using the change of variables $z=\zeta_{0}+\rho e^{i\theta_{0}}z'$, $w=\zeta_{0}+\rho e^{i\theta_{0}}w'$, $d\mu(w) = \rho^{2} d\mu(w')$, and the fact that $\nu(\zeta_{0}+\rho e^{i\theta_{0}}\partial U) = \mu(\zeta_{0}+\rho e^{i\theta_{0}} U)$, we get
\begin{align*}
\int_{\partial U} \log\frac{1}{|z'-w'|}d\nu(\zeta_{0}+\rho e^{i\theta_{0}}w') = \int_{U} \log\frac{1}{|z'-w'|} \; \rho^{2} d\mu(w') \qquad \mbox{for q.e. } z' \notin U.
\end{align*}
Hence, by the uniqueness part of Proposition \ref{prop:def of bal},
\begin{align}\label{dnu relation in EG}
\frac{d\nu(\zeta_{0}+\rho e^{i\theta_{0}}w')}{\rho^{2}} = d\nu_{0}(w'), \qquad w'\in \partial U,
\end{align}
where $\nu_{0} := \mathrm{Bal}(\mu|_{U},\partial U)$. Let $\tau \in [0,1)$ and define
\begin{align*}
C(\tau,\zeta_{0},\rho,\theta_{0}) = \frac{\beta}{4}\bigg(  \int_{\zeta_{0}+\rho e^{i\theta_{0}}\partial U}Q(z)d\nu(z) - \int_{\zeta_{0}+\rho e^{i\theta_{0}}U}Q(z) \frac{d^{2}z}{\pi(1-\tau^{2})} \bigg),
\end{align*}
where $Q(z):=\frac{1}{1-\tau^{2}}\big( |z|^{2}-\tau \, \re z^{2} \big)$. By Lemma \ref{lemma:further properties of nu} (iii),
\begin{align}\label{lol95}
\int_{\partial U} h(z') d\nu_{0}|_{\tau=0}(z') = \int_{U} h(z') \frac{d^{2}z'}{\pi}
\end{align}
holds for each function $h$ continuous on $\overline{U}$ and harmonic on $U$. Using $z=\zeta_{0}+\rho e^{i\theta_{0}}z'$ and \eqref{dnu relation in EG}, we get
\begin{align*}
C(\tau,\zeta_{0},\rho,\theta_{0}) & = \frac{\beta}{4}\frac{\rho^{2}}{1-\tau^{2}} \bigg(  \int_{\partial U}Q(\zeta_{0}+\rho e^{i\theta_{0}}z')d\nu_{0}|_{\tau=0}(z') - \int_{U}Q(\zeta_{0}+\rho e^{i\theta_{0}}z') \frac{d^{2}z'}{\pi} \bigg), \\
& = \frac{\beta}{4}\frac{\rho^{2}}{1-\tau^{2}} \bigg(  \int_{\partial U}\frac{\rho^{2}|z'|^{2}}{1-\tau^{2}} d\nu_{0}|_{\tau=0}(z') - \int_{U} \frac{\rho^{2}|z'|^{2}}{1-\tau^{2}} \frac{d^{2}z'}{\pi} \bigg) \\
& = \frac{\rho^{4}}{(1-\tau^{2})^{2}}C(0,0,1,0),
\end{align*}
where for the second equality we have used \eqref{lol95} with $h(z') := Q(\zeta_{0}+\rho e^{i\theta_{0}}z') - \frac{\rho^{2}|z'|^{2}}{1-\tau^{2}}$ ($h$ is harmonic on $U$ and continuous on $\overline{U}$ because $U$ is assumed to be bounded). The proof is complete.
\subsection{Proof of Theorems \ref{thm:Elliptic disk C} and \ref{thm:Ginibre triangle C}}
As mentioned in Remark \ref{remark:tau=theta0:z0=0}, the constants $C$ in \eqref{C ellipse EG}, \eqref{C annulus EG}, \eqref{C cardioid EG} were already known for $\tau=\theta_{0}=\zeta_{0}=0$ and $\beta=2$ from \cite{AR2017}. Also, the constant $C$ in \eqref{C circular sector EG} is already proved for $\tau=\theta_{0}=\zeta_{0}=0$ in Theorem \ref{thm:g sector C} (see also Remark \ref{remark:circular sector and ML}). Note also that each of the four hole regions $U$ considered in the statement of Theorem \ref{thm:Elliptic disk C} satisfies Assumptions \ref{ass:U} and \ref{ass:U2}. Hence Theorem \ref{thm:Elliptic disk C} directly follows by combining Theorems \ref{thm:general U EG} with \ref{thm:general pot}.

Similarly, since Theorem \ref{thm:Ginibre triangle C} was already proved for $\tau=0$, $\beta=2$, $\zeta_{0}=0$ and $\theta_{0} = \frac{\pi}{3}$ in \cite{AR2017}, Theorem \ref{thm:Ginibre triangle C} directly follows by combining Theorems \ref{thm:general U EG} with \ref{thm:general pot}.
\section{The disk}\label{section:disk}
In this section, $U=\{z:|z-z_{0}|<a \}$, $z_{0}:=x_{0}+iy_{0}$ for some $x_{0},y_{0}\in \R$ and $a>0$. Fix $b \in \N_{>0}$, and let $\mu$ be as in \eqref{mu S ML}, i.e. $d\mu(z) = \frac{b^{2}}{\pi}|z|^{2b-2}d^{2}z$ and $S:=\mathrm{supp}\,\mu =\{z:|z|\leq b^{-\frac{1}{2b}}\}$. Since $\mu$ is rotation-invariant, we will assume without loss of generality that $y_{0}=0$ and $x_{0}\geq 0$. 
\subsection{Balayage measure: proof of Theorem \ref{thm:ML disk nu}}\label{subsec:nu disk}
Suppose that $x_{0}\geq 0$ and $a>0$ are such that $U\subset S$ (i.e. $x_{0}+a \leq b^{-\frac{1}{2b}}$). The goal of this subsection is to find an explicit expression for $\nu:=\mathrm{Bal}(\mu|_{U},\partial U)$. We will obtain $\nu$ using the first method described in Subsection \ref{section:ansatz method}, i.e. by making an ansatz for $\nu$ and verifying that the moments of $\nu$ and $\mu$ match (using Lemma \ref{lemma: moment when U is bounded}).
\begin{lemma}\label{lemma:moments of mu DISK}
For any $n \in \N$, we have
\begin{align*}
\int_{U} z^{n} d\mu(z) = \sum_{j=0}^{b-1} \binom{n}{j} x_{0}^{n-j} f_{j},
\end{align*}
where
\begin{align*}
f_{j} = 2b^{2} \sum_{\substack{k=0 \\ k-j \, \mathrm{even}}}^{b-1} \binom{b-1}{k} \binom{k}{\frac{k-j}{2}} x_{0}^{k} \sum_{m=0}^{b-1-k}\binom{b-1-k}{m}\frac{x_{0}^{2(b-1-k-m)}a^{2+j+k+2m}}{2+j+k+2m}.
\end{align*}
\end{lemma}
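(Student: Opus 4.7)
\medskip

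\noindent\textbf{Proof plan for Lemma \ref{lemma:moments of mu DISK}.} Since $d\mu(z) = \frac{b^{2}}{\pi}|z|^{2b-2}d^{2}z$ and $U=\{z:|z-x_{0}|<a\}$, the plan is to translate by $x_{0}$, use polar coordinates $z-x_{0}=re^{i\theta}$ with $r\in(0,a)$, $\theta\in[0,2\pi)$, and expand every factor of $z$ and $\bar z$ so that the angular integral reduces to $\int_{0}^{2\pi}e^{ij\theta}d\theta$. Concretely, I would write
\begin{align*}
z^{n} &= \sum_{j=0}^{n}\binom{n}{j}x_{0}^{n-j}r^{j}e^{ij\theta}, \\
|z|^{2b-2} &= \big[(x_{0}+re^{i\theta})(x_{0}+re^{-i\theta})\big]^{b-1} = \big[(x_{0}^{2}+r^{2})+2x_{0}r\cos\theta\big]^{b-1}.
\end{align*}
The second quantity is the key: expanding $(u+v)^{b-1}$ with $u=x_{0}^{2}+r^{2}$ and $v=2x_{0}r\cos\theta$ gives a sum indexed by $k$, and then writing $\cos^{k}\theta=2^{-k}\sum_{\ell=0}^{k}\binom{k}{\ell}e^{i(k-2\ell)\theta}$ produces the pure Fourier modes.

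Having put the integrand in the form $\sum_{j,k,\ell,m}(\text{coefficient})\cdot r^{1+j+k+2m}e^{i(j+k-2\ell)\theta}$ (after additionally expanding $(x_{0}^{2}+r^{2})^{b-1-k}=\sum_{m=0}^{b-1-k}\binom{b-1-k}{m}x_{0}^{2(b-1-k-m)}r^{2m}$), the $\theta$-integration forces $\ell=(k+j)/2$, which is only possible when $k-j$ is even and $k\geq j$. After this selection, the remaining integral $\int_{0}^{a}r^{1+j+k+2m}dr=\frac{a^{2+j+k+2m}}{2+j+k+2m}$ is elementary. Using the identity $\binom{k}{(k+j)/2}=\binom{k}{(k-j)/2}$ and collecting the coefficient of $\binom{n}{j}x_{0}^{n-j}$ yields exactly the expression $f_{j}$ of the statement, where the outer sum over $j$ can be taken up to $b-1$ because $\binom{k}{(k-j)/2}=0$ whenever $j>k$ and $k\leq b-1$.

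There is no real obstacle here: the lemma is a bookkeeping computation, and the only choice to be made is which expansion of $|x_{0}+re^{i\theta}|^{2b-2}$ to use. The form $\big[(x_{0}^{2}+r^{2})+2x_{0}r\cos\theta\big]^{b-1}$ is preferable to the holomorphic/antiholomorphic splitting $(x_{0}+re^{i\theta})^{b-1}(x_{0}+re^{-i\theta})^{b-1}$, because after the $\theta$-integration the former already produces the single binomial $\binom{k}{(k-j)/2}$ together with the separate factor $\binom{b-1-k}{m}$ appearing in the definition of $f_{j}$, whereas the latter produces a convolution of two binomial coefficients that would then need to be resummed via the Vandermonde identity to reach the same closed form. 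So the main care is simply to match the indexing $(k,m)$ of the final answer to the order in which the binomial expansions are carried out.
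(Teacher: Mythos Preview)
Your proposal is correct and follows essentially the same route as the paper: polar coordinates centered at $x_{0}$, binomial expansion of $(x_{0}+re^{i\theta})^{n}$, the expansion $|z|^{2b-2}=[(x_{0}^{2}+r^{2})+2x_{0}r\cos\theta]^{b-1}$ via $(u+v)^{b-1}$, Fourier expansion of $\cos^{k}\theta$, the $\theta$-integral selecting $\ell=(k\pm j)/2$, and finally the elementary $r$-integral after expanding $(x_{0}^{2}+r^{2})^{b-1-k}$. The only cosmetic differences are the sign convention in the $\ell$-index (your $\ell=(k+j)/2$ versus the paper's $\ell=(k-j)/2$, reconciled by $\binom{k}{(k+j)/2}=\binom{k}{(k-j)/2}$ as you note) and the order in which the $(x_{0}^{2}+r^{2})^{b-1-k}$ expansion is performed.
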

\begin{proof}
By definitions of $\mu$, $U$, we have 
\begin{align*}
\int_{U} z^{n} d\mu(z) & = \int_{U} z^{n} \frac{b^{2}}{\pi}|z|^{2b-2}d^{2}z = \int_{0}^{2\pi}\int_{0}^{a} (x_{0}+re^{i\theta})^{n} \frac{b^{2}}{\pi}|x_{0}+re^{i\theta}|^{2b-2}rdrd\theta \\
& = \int_{0}^{2\pi} \int_{0}^{a} \sum_{j=0}^{n} \binom{n}{j} x_{0}^{n-j} r^{j} e^{i j \theta} \frac{b^{2}}{\pi}\bigg( x_{0}^{2}+r^{2}+r x_{0}(e^{i\theta}+e^{-i\theta}) \bigg)^{b-1}r dr d\theta
\end{align*}
By expanding further, using that $b\in \N_{>0}$, we obtain
\begin{align*}
& \int_{U} z^{n} d\mu(z) = \int_{0}^{2\pi} \int_{0}^{a} \sum_{j=0}^{n} \binom{n}{j} x_{0}^{n-j} r^{j} e^{i j \theta} \frac{b^{2}}{\pi} \sum_{k=0}^{b-1} \binom{b-1}{k}r^{k}x_{0}^{k}(e^{i\theta}+e^{-i\theta})^{k}(r^{2}+x_{0}^{2})^{b-1-k} r dr d\theta \\
	& = \int_{0}^{2\pi} \int_{0}^{a} \sum_{j=0}^{n} \binom{n}{j} x_{0}^{n-j} r^{j} e^{i j \theta} \frac{b^{2}}{\pi} \sum_{k=0}^{b-1} \binom{b-1}{k}r^{k}x_{0}^{k}\sum_{\ell=0}^{k}\binom{k}{\ell}e^{(2\ell-k)i\theta}(r^{2}+x_{0}^{2})^{b-1-k} r dr d\theta.
\end{align*}
Using that $\int_{0}^{2\pi} e^{ij\theta}e^{(2\ell-k)i\theta}d\theta$ is equal to $2\pi$ if $\ell = \frac{k-j}{2}$ and is $0$ otherwise, the above becomes
\begin{align*}
\int_{U} z^{n} d\mu(z) & = 2b^{2} \sum_{j=0}^{b-1} \binom{n}{j} x_{0}^{n-j} \sum_{\substack{k=0 \\ k-j \, \mathrm{even}}}^{b-1} \binom{b-1}{k} \binom{k}{\frac{k-j}{2}} x_{0}^{k} \int_{0}^{a}  r^{1+j+k} (r^{2}+x_{0}^{2})^{b-1-k} dr.
\end{align*}
The claim is found after evaluating the remaining integral $\int_{0}^{a}$ explicitly (using again that $b\in \N_{>0}$).
\end{proof}
\begin{lemma}\label{lemma:moments of nu DISK}
Let $c_{0},c_{1},\ldots,c_{b-1}\in \R$, and let $\hat{\nu}$ be the (possibly signed) measure supported on $\partial U$ defined by $d\hat{\nu}(z) := \big( c_{0} + 2 \sum_{\ell=1}^{b-1} c_{\ell} \cos (\ell \theta) \big)\frac{d\theta}{\pi}$ for $z=x_{0}+a e^{i\theta}$, $\theta \in [0,2\pi)$. For $n \in \N$, we have
\begin{align}\label{lol51}
\int_{\partial U} z^{n} d\hat{\nu}(z) = 2 \sum_{\ell=0}^{b-1} c_{\ell} \binom{n}{\ell} x_{0}^{n-\ell} a^{\ell}.
\end{align}
\end{lemma}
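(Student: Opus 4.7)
The proof is a direct computation combining the binomial expansion of $z^n = (x_0 + ae^{i\theta})^n$ on $\partial U$ with the orthogonality of the exponentials $\{e^{ij\theta}\}_{j\in\mathbb{Z}}$ on $[0,2\pi)$. The density of $\hat{\nu}$ is already expressed as a finite truncation of a Fourier series, so the only nontrivial work is bookkeeping.

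The plan is as follows. First I would rewrite the cosine density in complex-exponential form, using $2\cos(\ell\theta) = e^{i\ell\theta} + e^{-i\ell\theta}$, so that
\begin{align*}
c_0 + 2\sum_{\ell=1}^{b-1} c_\ell \cos(\ell\theta) = \sum_{\ell=-(b-1)}^{b-1} c_{|\ell|}\, e^{i\ell\theta}.
\end{align*}
Next, parametrizing $\partial U$ by $z = x_0 + ae^{i\theta}$ with $\theta\in[0,2\pi)$, I would apply the binomial theorem to write $z^n = \sum_{j=0}^n \binom{n}{j} x_0^{n-j} a^j e^{ij\theta}$.

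Substituting both expansions into $\int_{\partial U} z^n \, d\hat{\nu}(z)$ yields a double sum whose integrand is
\begin{align*}
\frac{1}{\pi}\sum_{j=0}^n \sum_{\ell=-(b-1)}^{b-1} \binom{n}{j} x_0^{n-j} a^j c_{|\ell|}\, e^{i(j+\ell)\theta}.
\end{align*}
Integrating term by term over $\theta \in [0,2\pi)$ and using $\int_0^{2\pi} e^{ik\theta}d\theta = 2\pi\,\delta_{k,0}$, only the pairs $(j,\ell)$ with $j+\ell=0$ survive. Since $\ell \in \{-(b-1),\ldots,b-1\}$, this forces $j \in \{0,1,\ldots,b-1\}$ (and automatically $j\le n$, since the $\binom{n}{j}$ factor vanishes otherwise). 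Collecting the surviving terms gives
\begin{align*}
\int_{\partial U} z^n \, d\hat{\nu}(z) = 2\sum_{\ell=0}^{b-1} c_\ell \binom{n}{\ell} x_0^{n-\ell} a^\ell,
\end{align*}
which is \eqref{lol51}. There is no real obstacle here; the only point requiring minor care is the interpretation of $\binom{n}{\ell}$ when $n < \ell$ (which is zero by convention), so the stated formula is valid uniformly in $n\in\mathbb{N}$.
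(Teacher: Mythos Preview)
Your proof is correct and follows essentially the same approach as the paper's: both expand $(x_0+ae^{i\theta})^n$ via the binomial theorem, rewrite the cosines as complex exponentials, and use orthogonality to pick out the surviving terms. The only cosmetic difference is that you write the density as a symmetric sum $\sum_{\ell=-(b-1)}^{b-1} c_{|\ell|}e^{i\ell\theta}$, whereas the paper observes directly that since $j\ge 0$ only the $e^{-i\ell\theta}$ part of each cosine can contribute and drops the $e^{i\ell\theta}$ terms before integrating.
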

\begin{proof}
Since $\partial U = \{z\in \C:|z-x_{0}|=a\}$, 
\begin{align*}
& \int_{\partial U} z^{n} d\hat{\nu}(z) = \int_{0}^{2\pi} (x_{0}+ae^{i\theta})^{n} \bigg( c_{0} + 2 \sum_{\ell=1}^{b-1} c_{\ell} \cos (\ell \theta) \bigg)\frac{d\theta}{\pi} \\
& = \int_{0}^{2\pi} \sum_{j=0}^{n}\binom{n}{j}x_{0}^{n-j}a^{j}e^{ij\theta} \bigg( c_{0} + \sum_{\ell=1}^{b-1} c_{\ell} (e^{\ell i \theta}+e^{-\ell i \theta}) \bigg)\frac{d\theta}{\pi} = \int_{0}^{2\pi} \sum_{j=0}^{n}\binom{n}{j}x_{0}^{n-j}a^{j}e^{ij\theta} \sum_{\ell=0}^{b-1} c_{\ell} e^{-\ell i \theta} \frac{d\theta}{\pi},
\end{align*}
and \eqref{lol51} follows.
\end{proof}
We now finish the proof of Theorem \ref{thm:ML disk nu}. Let $\hat{\nu}$ be as in Lemma \ref{lemma:moments of nu DISK} with $c_{0},c_{1},\ldots,c_{b-1}$ given by \eqref{def of cj DISK intro}. Since $c_{\ell} = \frac{f_{\ell}}{2a^{\ell}}$, it follows from Lemmas \ref{lemma:moments of mu DISK} and \ref{lemma:moments of nu DISK} that $\int_{\partial U}z^n d\hat{\nu}(z) = \int_{ U}z^nd\mu(z)$ for all $n \in \N$. By Lemma \ref{lemma: moment when U is bounded} and Remark \ref{remark: pmu bounded on dU}, $\hat{\nu}=\mathrm{Bal}(\mu|_{U},\partial U)$.

\subsection{The constant C: proof of Theorem \ref{thm:ML disk C}}\label{subsec:C disk EG}
In this subsection, $b \in \N_{>0}$, $d\mu(z) := \frac{b^{2}}{\pi}|z|^{2b-2}d^{2}z$, $S:=\mathrm{supp}\,\mu =\{z:|z|\leq b^{-\frac{1}{2b}}\}$, and $x_{0}\geq 0$, $a>0$ are such that $U:=\{z:|z-x_{0}|<a \}\subset S$ (i.e. $x_{0}+a \leq b^{-\frac{1}{2b}}$).
\begin{lemma}
\begin{align}\label{int |z|2b dmu DISK}
\int_{U} |z|^{2b} d\mu(z) = b^{2} \sum_{j=0}^{b-1} \binom{2b-1}{2j} \binom{2j}{j} \sum_{k=0}^{2b-1-2j}\binom{2b-1-2j}{k} \frac{x_{0}^{2(2b-1-j-k)}a^{2(1+j+k)}}{1+j+k}.
\end{align}
\end{lemma}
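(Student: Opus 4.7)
The plan is to mimic the proof of Lemma \ref{lemma:moments of mu DISK} from Subsection \ref{subsec:nu disk}, since computing $\int_U |z|^{2b}d\mu(z)$ is structurally analogous to computing $\int_U z^n d\mu(z)$: one simply replaces the factor $z^n$ by the rotation-invariant factor $|z|^{2b}$, which collapses many of the angular terms. First I would note that
\begin{align*}
\int_{U}|z|^{2b}d\mu(z)=\frac{b^{2}}{\pi}\int_{U}|z|^{4b-2}\,d^{2}z
\end{align*}
and parametrize $U$ in polar coordinates centred at $x_{0}$ via $z=x_{0}+re^{i\theta}$, $r\in(0,a)$, $\theta\in(0,2\pi)$, so that $|z|^{2}=x_{0}^{2}+r^{2}+2x_{0}r\cos\theta$ and $d^{2}z=r\,dr\,d\theta$.

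Next I would apply the binomial theorem twice. The first expansion,
\begin{align*}
(x_{0}^{2}+r^{2}+2x_{0}r\cos\theta)^{2b-1}=\sum_{j=0}^{2b-1}\binom{2b-1}{j}(x_{0}^{2}+r^{2})^{2b-1-j}(2x_{0}r)^{j}\cos^{j}\theta,
\end{align*}
reduces the $\theta$-integration to the elementary identity $\int_{0}^{2\pi}\cos^{j}\theta\,d\theta=2\pi\,4^{-j'}\binom{2j'}{j'}$ when $j=2j'$ is even and $0$ otherwise, which selects the even terms $j=2j'$ with $j'\in\{0,\ldots,b-1\}$ (using $b\in\mathbb{N}_{>0}$). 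The second expansion, $(x_{0}^{2}+r^{2})^{2b-1-2j'}=\sum_{k=0}^{2b-1-2j'}\binom{2b-1-2j'}{k}x_{0}^{2(2b-1-2j'-k)}r^{2k}$, reduces the $r$-integration to the monomial $\int_{0}^{a}r^{2j'+2k+1}\,dr=a^{2(j'+k+1)}/[2(j'+k+1)]$. Assembling these three pieces and relabelling $j'\to j$ yields \eqref{int |z|2b dmu DISK}.

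Since every expansion is a finite sum (because $b\in\mathbb{N}_{>0}$ makes $2b-1$ a nonnegative integer), there are no convergence issues, and the use of Fubini is justified by absolute integrability on the bounded set $U$. There is no real obstacle here — the computation is entirely parallel to Lemma \ref{lemma:moments of mu DISK} but with the extra factor $|z|^{2b}=|z|^{2b-2}|z|^{2}$ bumping the exponent of $(x_{0}^{2}+r^{2}+2x_{0}r\cos\theta)$ from $b-1$ to $2b-1$, thereby changing the upper limits of the two sums accordingly.
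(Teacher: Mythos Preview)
Your proposal is correct and follows essentially the same approach as the paper: the paper also parametrizes by $z=x_0+re^{i\theta}$, expands $(x_0^2+r^2+rx_0(e^{i\theta}+e^{-i\theta}))^{2b-1}$ via the binomial theorem into the factors $(x_0^2+r^2)^{2b-1-j}$ and $(rx_0)^j(e^{i\theta}+e^{-i\theta})^j$, integrates in $\theta$ (using orthogonality of exponentials rather than the cosine-power formula, which is the same thing), changes $j\to 2j$, expands $(x_0^2+r^2)^{2b-1-2j}$, and integrates in $r$. The only cosmetic difference is your use of $\int_0^{2\pi}\cos^{2j'}\theta\,d\theta=2\pi\,4^{-j'}\binom{2j'}{j'}$ in place of the exponential expansion $(e^{i\theta}+e^{-i\theta})^j=\sum_k\binom{j}{k}e^{i\theta(2k-j)}$; both routes are equivalent and lead to the same final formula.
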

\begin{proof}
Using the change of variables $z=x_{0}+re^{i\theta}$, $r\geq 0$, $\theta \in [0,2\pi)$, we obtain
\begin{align*}
& \int_{U} |z|^{2b} d\mu(z) = \int_{U} |z|^{2b} \frac{b^{2}}{\pi} |z|^{2b-2}d^{2}z = \frac{b^{2}}{\pi} \int_{0}^{2\pi} \int_{0}^{a} | x_{0} + r e^{i\theta}|^{4b-2} r dr d\theta \\
& = \frac{b^{2}}{\pi} \int_{0}^{2\pi} \int_{0}^{a} \big( x_{0}^{2}+r^{2}+rx_{0}(e^{i\theta} + e^{-i\theta}) \big)^{2b-1}r dr d\theta \\
& = \frac{b^{2}}{\pi} \sum_{j=0}^{2b-1} \binom{2b-1}{j}x_{0}^{j} \sum_{k=0}^{j} \binom{j}{k} \int_{0}^{a}r^{1+j}(x_{0}^{2}+r^{2})^{2b-1-j}dr \int_{0}^{2\pi}e^{i\theta(2k-j)}d\theta.
\end{align*}
After evaluating the $\theta$-integral, changing the indice of summation $j \to 2j$, and then performing the $r$-integral, we find \eqref{int |z|2b dmu DISK}.
\end{proof}
\begin{lemma}
Let $\nu = \mathrm{Bal}(\mu|_{U},\partial U)$. Then
\begin{align}\label{int |z|2b dnu DISK}
\int_{\partial U} |z|^{2b} d\nu(z) = 2\sum_{j=0}^{b} \binom{b}{j}a^{j}x_{0}^{j}(x_{0}^{2}+a^{2})^{b-j} \bigg[ c_{0} \binom{j}{\frac{j}{2}} \mathbf{1}_{j \, \mathrm{even}} + 2 \sum_{\ell=1}^{b-1} c_{\ell} \binom{j}{\frac{j+\ell}{2}} \mathbf{1}_{j+\ell \, \mathrm{even}} \bigg],
\end{align}
where $c_{0},\ldots,c_{b-1}$ are given by \eqref{def of cj DISK intro}.
\end{lemma}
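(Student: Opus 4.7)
The plan is a direct Fourier-type computation on $\partial U$: write $|z|^{2b}$ as a trigonometric polynomial in $\theta$ (with $z = x_0 + ae^{i\theta}$), and then integrate against the density of $\nu$ given in Theorem~\ref{thm:ML disk nu}, using only orthogonality of the exponentials $e^{im\theta}$.

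First I would use $|z|^{2} = x_{0}^{2} + a^{2} + ax_{0}(e^{i\theta}+e^{-i\theta})$ on $\partial U$ and expand
\begin{align*}
|z|^{2b} = \bigl(x_{0}^{2}+a^{2}+ax_{0}(e^{i\theta}+e^{-i\theta})\bigr)^{b} = \sum_{j=0}^{b}\binom{b}{j}(x_{0}^{2}+a^{2})^{b-j}(ax_{0})^{j}(e^{i\theta}+e^{-i\theta})^{j}
\end{align*}
and then expand $(e^{i\theta}+e^{-i\theta})^{j}=\sum_{k=0}^{j}\binom{j}{k}e^{i(j-2k)\theta}$. This reduces the computation of $\int_{\partial U}|z|^{2b}d\nu$ to evaluating integrals of the form $\int_{0}^{2\pi}e^{im\theta}\cdot\bigl(c_{0}+2\sum_{\ell=1}^{b-1}c_{\ell}\cos(\ell\theta)\bigr)\frac{d\theta}{\pi}$, which by orthogonality equal $2c_{0}$ if $m=0$, equal $2c_{\ell}$ if $|m|=\ell\in\{1,\dots,b-1\}$, and vanish otherwise.

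Next, I would apply this to each term: for fixed $j$, the exponent is $m=j-2k$, so the non-vanishing contributions come from $k=j/2$ (forcing $j$ even) paired with $c_{0}$, and from $k=(j\mp\ell)/2$ (forcing $j+\ell$ even and $0\leq k\leq j$) paired with $c_{\ell}$, $\ell=1,\dots,b-1$. Using the symmetry $\binom{j}{(j-\ell)/2}=\binom{j}{(j+\ell)/2}$, the two values $k=(j-\ell)/2$ and $k=(j+\ell)/2$ give equal contributions, which combine with the factor $2c_{\ell}$ coming from the density to produce the coefficient $4c_{\ell}\binom{j}{(j+\ell)/2}\mathbf{1}_{j+\ell\,\mathrm{even}}$; when $\ell>j$ this binomial coefficient is naturally zero, so the constraint $\ell\leq j$ need not be written explicitly. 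Collecting all contributions yields exactly the right-hand side of \eqref{int |z|2b dnu DISK}.

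No real obstacle is expected here: the argument is entirely algebraic and uses only the orthogonality of complex exponentials on $[0,2\pi)$. The only mild bookkeeping point is the parity condition $j+\ell$ even and the identification $\binom{j}{(j-\ell)/2}=\binom{j}{(j+\ell)/2}$, which must be handled carefully to avoid double counting.
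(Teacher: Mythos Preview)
Your proposal is correct and follows essentially the same approach as the paper: expand $|z|^{2b}=(x_0^2+a^2+ax_0(e^{i\theta}+e^{-i\theta}))^b$ binomially, expand $(e^{i\theta}+e^{-i\theta})^j$, integrate against the density from Theorem~\ref{thm:ML disk nu} using orthogonality of exponentials, and simplify with $\binom{j}{(j-\ell)/2}=\binom{j}{(j+\ell)/2}$. The coefficient bookkeeping you outline matches the paper's computation exactly.
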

\begin{proof}
Recall from Theorem \ref{thm:ML disk nu} that $d\nu(z) = \big( c_{0} + 2 \sum_{\ell=1}^{b-1} c_{\ell} \cos (\ell \theta) \big)\frac{d\theta}{\pi}$ for $z=x_{0}+a e^{i\theta}$, $\theta \in [0,2\pi)$, where $c_{0},\ldots,c_{b-1}$ are given by \eqref{def of cj DISK intro}. Since $b\in \N_{>0}$,
\begin{align*}
& \int_{\partial U} |z|^{2b} d\nu(z) = \int_{0}^{2\pi} \big( x_{0}^{2} + a^{2} + a x_{0} (e^{i\theta}+e^{-i\theta}) \big)^{b} \bigg( c_{0} + \sum_{\ell=1}^{b-1} c_{\ell} (e^{i\ell\theta}+e^{-i\ell\theta}) \bigg)\frac{d\theta}{\pi} \\
& = \int_{0}^{2\pi} \sum_{j=0}^{b} \binom{b}{j}a^{j}x_{0}^{j}(e^{i\theta}+e^{-i\theta})^{j}(x_{0}^{2}+a^{2})^{b-j} \bigg( c_{0} + \sum_{\ell=1}^{b-1} c_{\ell} (e^{i\ell\theta}+e^{-i\ell\theta}) \bigg)\frac{d\theta}{\pi} \\
& = \int_{0}^{2\pi} \sum_{j=0}^{b} \binom{b}{j}a^{j}x_{0}^{j}\sum_{k=0}^{j}\binom{j}{k}e^{i\theta(2k-j)} (x_{0}^{2}+a^{2})^{b-j} \bigg( c_{0} + \sum_{\ell=1}^{b-1} c_{\ell} (e^{i\ell\theta}+e^{-i\ell\theta}) \bigg)\frac{d\theta}{\pi} \\
& = 2\sum_{j=0}^{b} \binom{b}{j}a^{j}x_{0}^{j}(x_{0}^{2}+a^{2})^{b-j} \bigg[ c_{0} \binom{j}{\frac{j}{2}} \mathbf{1}_{j \, \mathrm{even}} + \sum_{\ell=1}^{b-1} c_{\ell} \bigg( \binom{j}{\frac{j+\ell}{2}} + \binom{j}{\frac{j-\ell}{2}} \bigg) \mathbf{1}_{j+\ell \, \mathrm{even}} \bigg].
\end{align*}
Using $\binom{j}{\frac{j+\ell}{2}} = \binom{j}{\frac{j-\ell}{2}}$, we find \eqref{int |z|2b dnu DISK}.
\end{proof}
Note that $U$ satisfies Assumptions \ref{ass:U} and \ref{ass:U2}. Theorem \ref{thm:ML disk C} now follows directly from \eqref{int |z|2b dmu DISK}, \eqref{int |z|2b dnu DISK} and Theorem \ref{thm:general pot} (i).

\section{The ellipse centered at $0$}\label{section:ellipse centered}
In this section, $U=\{z:(\frac{\re z}{a})^{2}+(\frac{\im z}{c})^{2}<1 \}$ for some $a>0$, $c>0$. 

\subsection{Balayage measure: proof of Theorem \ref{thm:ML ellipse nu}}
Fix $b \in \N_{>0}$, and let $\mu$ be as in \eqref{mu S ML}, i.e. $d\mu(z) = \frac{b^{2}}{\pi}|z|^{2b-2}d^{2}z$ and $S:=\mathrm{supp}\,\mu =\{z:|z|\leq b^{-\frac{1}{2b}}\}$. 
Suppose that $0<a,c\leq b^{-\frac{1}{2b}}$, so that $U := \{z:(\frac{\re z}{a})^{2}+(\frac{\im z}{c})^{2}<1\} \subset S$. Let $\alpha := \frac{a+c}{2}$ and $\gamma := \frac{a-c}{2}$. In this subsection, we obtain an explicit expression for $\nu:=\mathrm{Bal}(\mu|_{U},\partial U)$. While it is in principle possible to use Theorem \ref{thm:dnu in terms of green general}, in practice this method leads to an expression for $\nu$ in terms of elliptic integrals that is difficult to simplify. Instead we use the first method described in Subsection \ref{section:ansatz method}, i.e. by making an ansatz for $\nu$ and verifying that the moments of $\nu$ and $\mu$ match (using Lemma \ref{lemma: moment when U is bounded}). 

\medskip The most important technical challenge that we have to overcome in this subsection is to invert explicitly triangular matrices of size $(b-k+1)\times (b-k+1)$ for all $k\in \{0,1,\ldots,b-1\}$ (thus for large $b\in \N_{>0}$, we have to invert large matrices). This is needed in the proof of Lemma \ref{lemma:dpkp}, and this is in fact the only part in the proof where treating general $b\in \N_{>0}$ is clearly harder than treating a finite number of values of $b$, say $b\in \{1,2,3,4\}$.

\medskip We start by making the ansatz that $\nu(z) = [c_{0}+2\sum_{\ell=1}^{b}c_{\ell} \cos (2\ell \theta) ] \frac{d\theta}{\pi}$ for $z=a\cos \theta + c \sin \theta\in \partial U$ and some $c_{0},\ldots,c_{b}\in \R$. If this ansatz is true, then the moments of $\nu$ are given by the following lemma.
\begin{lemma}\label{lemma:moments of nu ellipse}
Let $\hat{\nu}$ be a measure supported on $\partial U$ and of the form $\hat{\nu}(z) = [c_{0}+2\sum_{\ell=1}^{b}c_{\ell} \cos (2\ell \theta) ] \frac{d\theta}{\pi}$ with $z=a\cos \theta + c \sin \theta\in \partial U$, $\theta\in [0,2\pi)$, and for some $\{c_{\ell}\}_{\ell=0}^{b} \subset \R$. Then
\begin{align}
& \int_{\partial U} z^{n} d\hat{\nu}(z) = \begin{cases}
\ds 2 \alpha^{\frac{n}{2}} \gamma^{\frac{n}{2}} \bigg( c_{0} \binom{n}{n/2} + \sum_{\ell=1}^{b} c_{\ell} \binom{n}{\frac{n}{2}-\ell} (\alpha^{\ell}\gamma^{-\ell} + \alpha^{-\ell}\gamma^{\ell}) \bigg), & \mbox{if } n \mbox{ is even}, \\
0, & \mbox{if } n \mbox{ is odd},
\end{cases} \label{moments in terms of the cj}
\end{align}
where $\binom{n}{\frac{n}{2}-\ell}:=0$ if $\ell \geq \frac{n}{2}+1$.
\end{lemma}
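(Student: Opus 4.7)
The plan is to reduce the integral $\int_{\partial U} z^n \, d\hat\nu(z)$ to standard Fourier-orthogonality computations by exploiting the parametrization $z(\theta) = a\cos\theta + ic\sin\theta$ of $\partial U$. The key observation is that
\begin{align*}
z(\theta) = \tfrac{a+c}{2}e^{i\theta} + \tfrac{a-c}{2}e^{-i\theta} = \alpha e^{i\theta} + \gamma e^{-i\theta},
\end{align*}
so that the binomial theorem gives the clean expansion
\begin{align*}
z(\theta)^n = \sum_{k=0}^{n}\binom{n}{k}\alpha^{n-k}\gamma^{k}\, e^{i(n-2k)\theta}.
\end{align*}

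First I would substitute this expansion into $\int_{\partial U} z^n \, d\hat\nu(z) = \frac{1}{\pi}\int_0^{2\pi} z(\theta)^n\bigl[c_0 + 2\sum_{\ell=1}^{b} c_\ell \cos(2\ell\theta)\bigr]\, d\theta$. After writing $\cos(2\ell\theta) = \tfrac{1}{2}(e^{2i\ell\theta}+e^{-2i\ell\theta})$, each term of the product involves integrals of the form $\int_0^{2\pi} e^{i(n-2k\pm 2\ell)\theta}\, d\theta$, which by orthogonality vanish unless the exponent is zero. Because $n-2k\pm 2\ell$ is always odd when $n$ is odd, this immediately forces the integral to vanish in that case, giving the second branch of \eqref{moments in terms of the cj}.

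When $n$ is even, the exponent $n-2k\pm 2\ell$ vanishes precisely for $k = n/2 \mp \ell$, which contributes a factor $2\pi$ to the integral. Collecting the surviving terms yields
\begin{align*}
\int_{\partial U} z^n d\hat\nu(z) = 2\, c_0 \binom{n}{n/2}\alpha^{n/2}\gamma^{n/2} + 2\sum_{\ell=1}^{b}c_\ell\Bigl(\binom{n}{\tfrac{n}{2}-\ell}\alpha^{\tfrac{n}{2}+\ell}\gamma^{\tfrac{n}{2}-\ell} + \binom{n}{\tfrac{n}{2}+\ell}\alpha^{\tfrac{n}{2}-\ell}\gamma^{\tfrac{n}{2}+\ell}\Bigr),
\end{align*}
which upon using $\binom{n}{n/2+\ell}=\binom{n}{n/2-\ell}$ and factoring $\alpha^{n/2}\gamma^{n/2}$ matches the first branch of \eqref{moments in terms of the cj}. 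Since the entire argument is a direct Fourier expansion, there is no real obstacle; the only care required is to remember the convention $\binom{n}{n/2-\ell}=0$ when $\ell > n/2$, which truncates the sum naturally and ensures the formula remains valid for small even $n$.
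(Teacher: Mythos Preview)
Your proof is correct and follows essentially the same route as the paper: both use the parametrization $z(\theta)=\alpha e^{i\theta}+\gamma e^{-i\theta}$, expand $z^n$ by the binomial theorem, apply Fourier orthogonality, and invoke $\binom{n}{n/2-\ell}=\binom{n}{n/2+\ell}$ to obtain the stated formula. (One tiny slip: the exponent $n-2k\pm 2\ell$ vanishes for $k=n/2\pm\ell$, not $n/2\mp\ell$, but your collected result is correct regardless.)
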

\begin{proof}
Using the change of variables $z=a\cos \theta + i c \sin \theta = \alpha e^{i\theta} + \gamma e^{-i\theta}$, $\theta \in [0,2\pi)$, we get
\begin{align*}
\int_{\partial U} z^{n} d\hat{\nu}(z) & = \int_{0}^{2\pi} (\alpha e^{i\theta} + \gamma e^{-i\theta})^{n} \bigg[c_{0} + \sum_{\ell=1}^{b}c_{\ell} (e^{2\ell i \theta} + e^{-2\ell i \theta})\bigg] \frac{d\theta}{\pi} \\
& = \sum_{j=0}^{n} \binom{n}{j}\alpha^{j}\gamma^{n-j} \int_{0}^{2\pi} \bigg[c_{0}e^{(2j-n)i\theta} + \sum_{\ell=1}^{b}c_{\ell} (e^{(2j+2\ell-n)i\theta} + e^{(2j-2\ell-n)i\theta})\bigg] \frac{d\theta}{\pi}.
\end{align*}
The claim follows after performing the $\theta$-integral and using that $\binom{n}{\frac{n}{2}-\ell}=\binom{n}{\frac{n}{2}+\ell}$.
\end{proof}
The goal is to choose $c_{0},\ldots,c_{b}$ in Lemma \ref{moments in terms of the cj} so that $\int_{\partial U} z^{n} d\hat{\nu}(z) = \int_{U} z^{n} d\mu(z)$ for all $n\in \N$ (by Lemma \ref{lemma: moment when U is bounded} this would imply $\hat{\nu}=\nu$). We now turn to the computation of $\int_{U} z^{n} d\mu(z)$. Recall that $\alpha = \frac{a+c}{2}$ and $\gamma = \frac{a-c}{2}$.
\begin{lemma}\label{lemma: moments of mu ellipse, first lemma}
For any $n\in \N$, we have $\int_{U} z^{n} d\mu(z)=0$ if $n$ is odd, while if $n$ is even we have
\begin{align}
\int_{U} z^{n} d\mu(z) & = 2 \alpha^{\frac{n}{2}}\gamma^{\frac{n}{2}} \frac{ (\alpha^{2}-\gamma^{2}) \, b^{2}}{2} \sum_{\ell=0}^{b-1} \binom{b-1}{\ell} (\alpha^{2}+\gamma^{2})^{b-1-\ell} \nonumber \\
&  \times \sum_{\substack{k=0 \, \mathrm{or} \, 1 \\ \ell-k \, \mathrm{even}}}^{\ell} \binom{\ell}{\frac{\ell-k}{2}} \frac{\binom{n}{\frac{n}{2}+k}}{\frac{n}{2}+b} \frac{1+\mathbf{1}_{k \neq 0}}{2} (\alpha^{\ell+k} \gamma^{\ell-k} + \alpha^{\ell-k} \gamma^{\ell+k}), \label{moments of mu ellipse 1}
\end{align}
where $\mathbf{1}_{k \neq 0}=1$ if $k \neq 0$ and $\mathbf{1}_{k \neq 0}=0$ if $k=0$.
\end{lemma}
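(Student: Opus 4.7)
The strategy is a direct computation via the ``ellipse coordinates'' $z = r(\alpha e^{i\theta} + \gamma e^{-i\theta})$ for $(r,\theta) \in [0,1] \times [0,2\pi)$, where $\alpha = \frac{a+c}{2}$ and $\gamma = \frac{a-c}{2}$. Since $\re z = ra\cos\theta$ and $\im z = rc\sin\theta$, this parametrizes $U$, the Jacobian is $rac = r(\alpha^{2}-\gamma^{2})$, and
\[
z\bar z \;=\; r^{2}(\alpha e^{i\theta}+\gamma e^{-i\theta})(\alpha e^{-i\theta}+\gamma e^{i\theta}) \;=\; r^{2}\bigl(\alpha^{2}+\gamma^{2}+2\alpha\gamma\cos 2\theta\bigr),
\]
so that $|z|^{2b-2}=r^{2b-2}(\alpha^{2}+\gamma^{2}+2\alpha\gamma\cos 2\theta)^{b-1}$. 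The $r$-integral separates and yields $\int_{0}^{1}r^{n+2b-1}\,dr = \frac{1}{n+2b}$, reducing the problem to
\[
\int_{U}z^{n}\,d\mu(z) \;=\; \frac{b^{2}(\alpha^{2}-\gamma^{2})}{\pi(n+2b)}\int_{0}^{2\pi}\!\bigl(\alpha e^{i\theta}+\gamma e^{-i\theta}\bigr)^{n}\bigl(\alpha^{2}+\gamma^{2}+2\alpha\gamma\cos 2\theta\bigr)^{b-1}d\theta.
\]
Since $\frac{1}{n+2b} = \frac{1}{2(\frac{n}{2}+b)}$, the combinatorial denominator $\frac{n}{2}+b$ in the target formula already appears naturally.

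Next I would extract the constant Fourier mode in $\theta$. Using $b\in\N_{>0}$, the second factor expands as
\[
\bigl(\alpha^{2}+\gamma^{2}+2\alpha\gamma\cos 2\theta\bigr)^{b-1} \;=\; \sum_{\ell=0}^{b-1}\binom{b-1}{\ell}(\alpha^{2}+\gamma^{2})^{b-1-\ell}(\alpha\gamma)^{\ell}\sum_{m=0}^{\ell}\binom{\ell}{m}e^{i(2\ell-4m)\theta},
\]
while the binomial theorem gives $(\alpha e^{i\theta}+\gamma e^{-i\theta})^{n}=\sum_{j=0}^{n}\binom{n}{j}\alpha^{j}\gamma^{n-j}e^{i(2j-n)\theta}$. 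The $\theta$-integral vanishes unless $2j-n+2\ell-4m=0$, i.e.\ $j = \frac{n}{2}+2m-\ell$, which immediately forces $n$ to be even (yielding the odd case) and selects exactly one index per $(\ell,m)$-pair. Collecting the resulting factors,
\[
\int_{U}z^{n}\,d\mu(z) \;=\; \frac{2\,b^{2}(\alpha^{2}-\gamma^{2})\,\alpha^{\frac{n}{2}}\gamma^{\frac{n}{2}}}{\frac{n}{2}+b}\sum_{\ell=0}^{b-1}\binom{b-1}{\ell}(\alpha^{2}+\gamma^{2})^{b-1-\ell}\sum_{m=0}^{\ell}\binom{\ell}{m}\binom{n}{\tfrac{n}{2}+\ell-2m}\alpha^{2\ell-2m}\gamma^{2m}.
\]

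The final step is cosmetic: reindex the inner sum by $k=\ell-2m$, which ranges over $\{-\ell,-\ell+2,\dots,\ell\}$ and necessarily has the same parity as $\ell$. The exponent of $\alpha$ becomes $\ell+k$ and that of $\gamma$ becomes $\ell-k$, and $\binom{\ell}{m}=\binom{\ell}{\frac{\ell-k}{2}}$. Using $\binom{n}{\frac{n}{2}+k}=\binom{n}{\frac{n}{2}-k}$ to pair $k$ with $-k$, each pair $\{k,-k\}$ with $k>0$ combines into the symmetric combination $\alpha^{\ell+k}\gamma^{\ell-k}+\alpha^{\ell-k}\gamma^{\ell+k}$, while the $k=0$ term contributes only half of the analogous symmetric expression; this is exactly what the factor $\tfrac{1+\mathbf{1}_{k\neq 0}}{2}$ in \eqref{moments of mu ellipse 1} records. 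Restricting the sum to $k\in\{0,1,\dots,\ell\}$ with $\ell-k$ even reproduces \eqref{moments of mu ellipse 1} verbatim.

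The proof is essentially a double binomial expansion followed by extraction of the $e^{i0\theta}$-coefficient, so there is no serious analytic obstacle; the only point that demands care is the reindexing $m\mapsto k=\ell-2m$ and the correct handling of the $k=0$ term (which is not double-counted), producing the slightly unusual factor $\tfrac{1+\mathbf{1}_{k\neq 0}}{2}$ in the stated form.
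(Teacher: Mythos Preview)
Your argument is essentially identical to the paper's proof: the same elliptic coordinates $z=r(\alpha e^{i\theta}+\gamma e^{-i\theta})$, the same double binomial expansion, the same selection rule $j=\tfrac{n}{2}+\ell-2m$ forcing $n$ even, and the same reindexing $k=\ell-2m$ with pairing $k\leftrightarrow -k$. The only slip is an extra factor of $2$ in your intermediate display (the prefactor should be $\tfrac{b^{2}(\alpha^{2}-\gamma^{2})}{\frac{n}{2}+b}$, not $\tfrac{2b^{2}(\alpha^{2}-\gamma^{2})}{\frac{n}{2}+b}$, as a check at $n=0$, $b=1$ shows), but this is harmless for the final formula.
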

\begin{proof}
With the change of variables $z=ar \cos \theta + i c r \sin \theta$, $0 \leq \theta \leq 2\pi$, $0 \leq r < 1$, we have $d^{2}z = acrdrd\theta$, and thus
\begin{align*}
\int_{U} z^{n} d\mu(z) & = \int_{U} z^{n} \frac{b^{2}}{\pi}|z|^{2b-2}d^{2}z  \\
& = a c \frac{b^{2}}{\pi} \int_{0}^{1} r^{n+2b-1} dr  \int_{0}^{2\pi} (a \cos \theta + i c \sin \theta)^{n} |a\cos \theta + i c \sin \theta|^{2b-2} d\theta \\
& =  \frac{a c \, b^{2}}{(n+2b)\pi} \int_{0}^{2\pi} (a \cos \theta + i c \sin \theta)^{n} |a\cos \theta + i c \sin \theta|^{2b-2} d\theta.
\end{align*}
Rewriting $a\cos \theta + i c \sin \theta = \alpha e^{i\theta} + \gamma e^{-i\theta}$, $|a\cos \theta + i c \sin \theta|^{2} = \alpha \gamma (e^{2i\theta}+e^{-2i\theta}) + \alpha^{2}+\gamma^{2}$, and using that $b\in \N_{>0}$, we obtain
\begin{align*}
& \int_{U} z^{n} d\mu(z) =  \frac{(\alpha^{2}-\gamma^{2}) \, b^{2}}{(n+2b)\pi} \int_{0}^{2\pi} \Big( \alpha e^{i\theta} + \gamma e^{-i\theta} \Big)^{n} \Big( \alpha \gamma (e^{2i\theta}+e^{-2i\theta}) + \alpha^{2}+\gamma^{2} \Big)^{b-1} d\theta \\
& = \frac{(\alpha^{2}-\gamma^{2}) \, b^{2}}{(n+2b)\pi} \sum_{\ell=0}^{b-1} \binom{b-1}{\ell} \alpha^{\ell} \gamma^{\ell} (\alpha^{2}+\gamma^{2})^{b-1-\ell} \int_{0}^{2\pi} (e^{2i\theta} + e^{-2i\theta})^{\ell}  \sum_{j=0}^{n} \binom{n}{j} \alpha^{j} \gamma^{n-j} e^{ij\theta}e^{-i(n-j)\theta}   d\theta.
\end{align*}
Expanding $(e^{2i\theta} + e^{-2i\theta})^{\ell}$, we then get
\begin{align*}
\int_{U} z^{n} d\mu(z) & = \frac{(\alpha^{2}-\gamma^{2}) \, b^{2}}{(n+2b)\pi} \sum_{\ell=0}^{b-1} \binom{b-1}{\ell} \alpha^{\ell} \gamma^{\ell} (\alpha^{2}+\gamma^{2})^{b-1-\ell} \\
& \times \sum_{m=0}^{\ell}\binom{\ell}{m} \sum_{j=0}^{n} \binom{n}{j} \alpha^{j} \gamma^{n-j} \int_{0}^{2\pi}  e^{2(2m-\ell)i\theta}   e^{-i(n-2j)\theta}   d\theta.
\end{align*}
From this identity, we see that $\int_{U} z^{n} d\mu(z)=0$ if $n$ is odd (because the $\theta$-integral vanish for all $\ell,m,j$). If $n$ is even, then the $\theta$-integral is non-zero only for $j = \frac{n}{2}+\ell-2m$, and thus
\begin{align*}
\int_{U} z^{n} d\mu(z) & = 2 \alpha^{\frac{n}{2}}\gamma^{\frac{n}{2}} \frac{ (\alpha^{2}-\gamma^{2}) \, b^{2}}{n+2b} \sum_{\ell=0}^{b-1} \binom{b-1}{\ell} \alpha^{\ell} \gamma^{\ell} (\alpha^{2}+\gamma^{2})^{b-1-\ell} \\
& \times \sum_{m=0}^{\ell}\binom{\ell}{m} \binom{n}{\frac{n}{2}+\ell-2m} \alpha^{\ell-2m} \gamma^{2m-\ell}.
\end{align*}
Note that $-(b-1) \leq \ell-2m \leq b-1$ for all $\ell \in \{0,\ldots,b-1\}$ and $m\in \{0,\ldots,\ell\}$. Hence, by changing indices $k=\ell-2m$, we get
\begin{align*}
\int_{U} z^{n} d\mu(z) & = 2 \alpha^{\frac{n}{2}}\gamma^{\frac{n}{2}} \frac{ (\alpha^{2}-\gamma^{2}) \, b^{2}}{2} \sum_{\ell=0}^{b-1} \binom{b-1}{\ell} (\alpha^{2}+\gamma^{2})^{b-1-\ell}  \sum_{\substack{k=-\ell \\ k+\ell \, \mathrm{even}}}^{\ell} \binom{\ell}{\frac{\ell-k}{2}} \frac{\binom{n}{\frac{n}{2}+k}}{\frac{n}{2}+b} \alpha^{\ell+k} \gamma^{\ell-k}.
\end{align*}
Since $\binom{\ell}{\frac{\ell-k}{2}}=\binom{\ell}{\frac{\ell+k}{2}}$, we can regroup the terms corresponding to $k$ and $-k$, and we obtain \eqref{moments of mu ellipse 1}.
\end{proof}
Comparing \eqref{moments in terms of the cj} with \eqref{moments of mu ellipse 1}, it is still not clear whether we can choose $c_{0},\ldots,c_{b}$ in Lemma \ref{lemma:moments of nu ellipse} so that $\int_{U} z^{n} d\mu(z) = \int_{\partial U} z^{n} d\hat{\nu}(z)$ for all $n \in \N_{>0}$. The following key lemma will allow us to rewrite \eqref{moments of mu ellipse 1} as $\int_{U} z^{n} d\mu(z) = 2 \alpha^{\frac{n}{2}}\gamma^{\frac{n}{2}} \sum_{j=0}^{b} e_{j} \binom{n}{\frac{n}{2}-j}$, where the $e_{j}$'s are defined in \eqref{def of ej}. As mentioned earlier, the most challenging part in the proof of Lemma \ref{lemma:dpkp} is to invert explicitly matrices of size $(b-k+1)\times (b-k+1)$ with $k \in \{0,1,\ldots,b-1\}$.
\begin{lemma}\label{lemma:dpkp}
Let $b \in \N_{>0}$ and $k \in \{0,1,\ldots,b-1\}$. Then
\begin{align}\label{identity with djk}
\frac{1}{\frac{n}{2}+b}\binom{n}{\frac{n}{2}-k} = \sum_{\ell=0}^{b} d_{\ell}^{(k)} \binom{n}{\frac{n}{2}-\ell}, \qquad \mbox{for all } n \in \N,
\end{align}
with $d_{0}^{(k)} = d_{1}^{(k)} = \ldots = d_{k-1}^{(k)} = 0$ and for $\ell \in \{k,k+1,\ldots,b\}$, $d_{\ell}^{(k)}$ is defined in \eqref{def of d ell pkp}, and where $\binom{n}{m}:=\frac{\Gamma(n+1)}{\Gamma(m+1)\Gamma(n-m)}$.
\end{lemma}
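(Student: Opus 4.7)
The plan is to reduce \eqref{identity with djk} to a rational function identity in the variable $m:=n/2$, translate it into a lower triangular linear system by evaluation, and finally verify that the explicit formula \eqref{def of d ell pkp} solves this system.

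First, I would perform the reduction. Dividing both sides of \eqref{identity with djk} by $\binom{2m}{m}$ and using the factorial ratio $\binom{2m}{m-\ell}/\binom{2m}{m}=\prod_{j=1}^{\ell}(m-j+1)/(m+j)$ (valid for all $m\in\C$ via the Gamma-function interpretation, for any integer $\ell\ge 0$), the common factor $\prod_{j=1}^{k}(m-j+1)/(m+j)$ cancels on both sides after using $d_{\ell}^{(k)}=0$ for $\ell<k$. With the index shift $q=\ell-k$, \eqref{identity with djk} reduces to the rational function identity
\begin{equation*}
\frac{1}{m+b}=\sum_{q=0}^{b-k}d_{k+q}^{(k)}\,M_q(m),\qquad M_q(m):=\prod_{j=1}^{q}\frac{m-k-j+1}{m+k+j},
\end{equation*}
with $M_0(m):=1$. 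Since both sides are rational functions of $m$, establishing this reduced form immediately yields \eqref{identity with djk} for all $n\in\N$.

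Second, I would evaluate at $m=k+r$ for $r=0,1,\ldots,b-k$. A direct inspection shows $M_q(k+r)=0$ whenever $q>r$ (the factor at $j=r+1$ vanishes), while for $q\le r$ one gets
\begin{equation*}
M_q(k+r)=\prod_{j=1}^{q}\frac{r-j+1}{2k+r+j}=\frac{r!\,(2k+r)!}{(r-q)!\,(2k+r+q)!}.
\end{equation*}
Hence evaluating the reduced identity at $m=k+r$ produces the lower triangular linear system
\begin{equation*}
\frac{1}{k+r+b}=\sum_{q=0}^{r}d_{k+q}^{(k)}\,\frac{r!\,(2k+r)!}{(r-q)!\,(2k+r+q)!},\qquad r=0,1,\ldots,b-k,
\end{equation*}
whose diagonal entries $r!(2k+r)!/(2k+2r)!$ are nonzero; the system therefore uniquely determines $d_k^{(k)},\ldots,d_b^{(k)}$. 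Once equality is established at these $b-k+1$ points, a dimension count upgrades it to the full rational identity: after clearing the common denominator $(m+k+1)\cdots(m+b)$, both sides become polynomials of degree at most $b-k$, so coincidence at $b-k+1$ distinct points forces the identity.

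Third, I would verify that \eqref{def of d ell pkp} solves the triangular system. For $r=0$ the system gives $d_k^{(k)}=1/(b+k)$, matching \eqref{def of d ell pkp} since the double sum is empty and $\binom{2k}{0}=1$. For $r\ge 1$, solving the triangular system recursively expresses $d_{k+r}^{(k)}$ as a linear combination of the reciprocals $1/(b+k+m)$ for $m=0,1,\ldots,r$; the two summands in \eqref{def of d ell pkp} precisely encode this decomposition, with the first term $\binom{2\ell}{\ell-k}/(b+\ell)$ giving the contribution from the highest index $m=\ell-k$ and the alternating double sum with kernel $(k+\ell+m-1)!/[m!(2k+m)!(\ell-k-m)!]$ collecting the contributions from $m=0,1,\ldots,\ell-k-1$.

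The main obstacle will be this final algebraic matching. While the triangular system is elementary in principle, recognizing its closed-form solution as \eqref{def of d ell pkp} requires nontrivial combinatorial rearrangement. A natural strategy is induction on $r$: assuming \eqref{def of d ell pkp} for $d_k^{(k)},\ldots,d_{k+r-1}^{(k)}$, substitute these into the $r$-th equation of the triangular system to isolate $d_{k+r}^{(k)}$, and then use the partial fraction identity $\prod_{j=0}^{r}(X+j)^{-1}=\sum_{j=0}^{r}\frac{(-1)^{j}}{j!(r-j)!}(X+j)^{-1}$ together with standard binomial summation lemmas to collapse the resulting expression into the two-term shape of \eqref{def of d ell pkp}. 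Pinpointing how the prefactor $2\ell$ and the factorial kernel emerge from this collapse is the subtlest step in the whole argument.
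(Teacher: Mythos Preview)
Your overall architecture---reduce to a rational function identity in $m=n/2$, evaluate at $m=k,k+1,\ldots,b$ to obtain a lower triangular linear system for $d_k^{(k)},\ldots,d_b^{(k)}$, and then upgrade from finitely many points to an identity via a polynomial degree count---is exactly what the paper does. The system you write is the paper's system $A\vec d^{(k)}=\vec f$ after dividing each row by $\binom{2(k+r)}{r}$, and your dimension count is the paper's Liouville-type argument in disguise.

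The genuine gap is the step you yourself flag as ``the main obstacle'': you never verify that the explicit formula \eqref{def of d ell pkp} solves the triangular system. Your proposed inductive strategy (partial fractions plus binomial summation) is plausible but vague, and as stated does not constitute a proof. The paper bypasses this difficulty in a cleaner way: instead of solving the system recursively, it writes down a candidate for the inverse matrix,
\[
B_{ij}=(-1)^{i-j}\,\frac{2(k+i)}{(i-j)!}\,\frac{(2k+i+j-1)!}{(2k+2j)!}\qquad (i\ge j),
\]
and checks $BA=I$ directly. The off-diagonal entries $[BA]_{p,q}$ with $p>q$ reduce to alternating sums of the form $\sum_{j=0}^{m}(-1)^j\binom{m}{j}P(j)$ where $P$ is a polynomial of degree $m-1$, and these vanish by the elementary finite-difference identity. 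Once $B=A^{-1}$ is established, the formula $\vec d^{(k)}=B\vec f$ gives \eqref{def of d ell pkp} in one stroke. This inverse-matrix verification is both shorter and more transparent than the induction you sketch, and it is what your proposal is missing.
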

\begin{proof}
In fact we will even prove that \eqref{identity with djk} holds for all $n\in \C\setminus (-1-2\N)$. Recall that $z\mapsto \Gamma(z)$ is a holomorphic function in $\C \setminus (-\N)$ with no zeros and with simple poles at each negative integer (see e.g. \cite[Section 5.2]{NIST}). Thus, as a function of $n \in \C$, the left-hand side of \eqref{identity with djk}, namely
\begin{align*}
\frac{1}{\frac{n}{2}+b} \frac{\Gamma(n+1)}{\Gamma(\frac{n}{2}-k+1)\Gamma(\frac{n}{2}+k+1)}
\end{align*}
has simple poles at $-1-2\N$ and simple zeros at $\{2k-2,2k-4,\ldots,0\} \cup (-2k-2-2\N)\setminus \{-2b\}$. On the other hand, $\binom{n}{\frac{n}{2}-\ell}$ has simple poles at $-1-2\N$ and simple zeros at $\{2\ell-2,2\ell-4,\ldots,0\} \cup (-2\ell-2-2\N)$.

\medskip Since $\binom{n}{\frac{n}{2}-\ell}$ has simple zeros at $-2\ell-2-2\N$, it is clear that the right-hand side of \eqref{identity with djk} vanishes at $-2b-2-2\N$ for any choices of $\smash{d_{0}^{(k)},\ldots,d_{b}^{(k)}}$.

\medskip Suppose that there exist $d_{0}^{(k)},\ldots,d_{b}^{(k)}$ such that \eqref{identity with djk} holds for all $n \in \C\setminus (-1-2\N)$. Then evaluating \eqref{identity with djk} successively at $n=0,2,4,\ldots,2k-2$ yields $d_{0}^{(k)} = 0$, $d_{1}^{(k)} = 0$, $\ldots$, and finally $d_{k-1}^{(k)} = 0$. In other words, $d_{0}^{(k)} = d_{1}^{(k)} = \ldots = d_{k-1}^{(k)} = 0$ ensures that the right-hand side of \eqref{identity with djk} vanishes at $n=0,2,4,\ldots,2k-2$. 

\medskip Let $R(n)$ be the right-hand side of \eqref{identity with djk} with $d_{0}^{(k)} = d_{1}^{(k)} = \ldots = d_{k-1}^{(k)} = 0$, and let  $L(n)$ be the left-hand side of \eqref{identity with djk}. We have shown above that $\frac{R(n)}{L(n)}$ has at most simple poles at $\{-2k-2,-2k-4,\ldots,-2b+2\}$, and no other poles except possibly at $\infty$. Also, since (see \cite[5.11.1]{NIST})
\begin{align}
\binom{n}{\frac{n}{2}-\ell} = 2^{n}\frac{\sqrt{2}}{\sqrt{\pi n}} \bigg( 1 - \frac{2\ell^{2}+\frac{1}{4}}{n} + \bigO(n^{-2}) \bigg), \qquad \mbox{as } n \to +\infty,
\end{align}
for any fixed $\ell\in \{k,\ldots,b\}$, we have $\frac{R(n)}{L(n)} = \bigO(n)$ as $n \to + \infty$. Thus, by Liouville's theorem, $\frac{R(n)}{L(n)}\prod_{r=1}^{b-k-1}(n+2k+2r)$ is a polynomial in $n$ of degree at most $b-k$. 

\medskip The most obvious approach is to try to find $d_{k}^{(k)},\ldots,d_{b}^{(k)}$ such that $R(n) = 0$ for each $n \in \{-2k-2,-2k-4,\ldots,-2b+2\}$ (this gives $b-k-1$ conditions) and such that $R(n)-L(n) = \bigO(n^{-1})$ as $n \to +\infty$ (this gives two other conditions). Using the fact that $\mbox{Res}(\Gamma(z),z=-m)=\frac{(-1)^{m}}{m!}$ for each $m\in -\N$ (see \cite[5.2.1]{NIST}), we obtain after simplification the linear system
\begin{align*}
\begin{cases}
\ds \sum_{\ell=b-r}^{b} (-1)^{b-r-\ell} \binom{b-r+\ell-1}{r+\ell-b} d_{\ell}^{(k)} = 0, & \mbox{for each } r \in \{1,\ldots,b-k-1\}, \\
\ds \sum_{\ell=k}^{b} d_{\ell}^{(k)} = 0, \qquad \mbox{ and } \qquad \sum_{\ell=k}^{b} \ell^{2}d_{\ell}^{(k)} = -1.
\end{cases}
\end{align*}
However, the matrix $\tilde{A}$ associated with this linear system is not triangular, and we have not managed to invert $\tilde{A}$ for arbitrary values of $b\in \N_{>0}$ and $k\in \{0,\ldots,b-1\}$. 

\medskip Hence we use another approach. By evaluating \eqref{identity with djk} at $n=2q$ for $q \in \{k,k+1,\ldots,b\}$, we get
\begin{align*}
\frac{1}{q+b}\binom{2q}{q-k} = \sum_{\ell=k}^{q} d_{\ell}^{(k)} \binom{2q}{q-\ell}, \qquad q = k,k+1,\ldots,b.
\end{align*}
This can be rewritten as $A\vec{d}^{(k)} = \vec{f}$, where
\begin{align*}
A = \begin{pmatrix}
\binom{2k}{0} & 0 & 0 & \dots & 0 \\
\binom{2k+2}{1} & \binom{2k+2}{0} & 0 & \dots & 0 \\
\binom{2k+4}{2} & \binom{2k+4}{1} & \binom{2k+4}{0} & \dots & 0 \\
\vdots & \vdots & \vdots & \ddots & \vdots \\
\binom{2b}{b-k} & \binom{2b}{b-k-1} & \binom{2b}{b-k-2} & \dots & \binom{2b}{0} 
\end{pmatrix}, \quad \vec{d}^{(k)} = \begin{pmatrix}
d_{k}^{(k)} \\ d_{k+1}^{(k)} \\ \vdots \\ d_{b}^{(k)}
\end{pmatrix}, \quad \vec{f} = \begin{pmatrix}
\frac{1}{k+b}\binom{2k}{0} \\
\frac{1}{k+1+b}\binom{2k+2}{1} \\
\frac{1}{k+2+b}\binom{2k+4}{2} \\
\vdots \\
\frac{1}{2b}\binom{2b}{b-k}
\end{pmatrix}.
\end{align*}
Since all diagonal elements of $A$ are $1$'s, $A_{k}$ is invertible and $\vec{d}^{(k)}$ is unique. Define $B=(B_{ij})_{i,j=0}^{b-k}$ by
\begin{align*}
B_{ij} = \begin{cases}
(-1)^{i-j} \frac{2(k+i)}{(i-j)!} \frac{(2k+i+j-1)!}{(2k+2j)!}, & \mbox{if } i\geq j, \\
0, & \mbox{if } i< j.
\end{cases}
\end{align*}
We claim that $B=A^{-1}$. To verify this, we will prove that $BA=I$ (proving $AB=I$ by a direct approach is more complicated). Since $A,B$ are both lower triangular, $BA$ is also lower triangular. Now, take $p,q\in \{0,\ldots,b-k\}$ with $q\leq p$. We have
\begin{align}
[BA]_{p,q} & = \sum_{j=q}^{p} B_{p,j}A_{j,q} = \sum_{j=q}^{p} (-1)^{p-j} \frac{2(k+p)}{(p-j)!} \frac{(2k+p+j-1)!}{(2k+2j)!} \binom{2k+2j}{j-q} \nonumber \\
& = 2(k+p) \sum_{j=q}^{p} (-1)^{p-j} \frac{(2k+p+j-1)!}{(2k+j+q)!(p-j)!(j-q)!} \nonumber \\
& = 2(k+p)(-1)^{p-q} \sum_{j=0}^{p-q} (-1)^{j} \frac{(2k+p+q+j-1)!}{(2k+j+2q)!(p-q-j)!j!} \nonumber \\
& = 2(k+p)(-1)^{p-q} \sum_{j=0}^{p-q} (-1)^{j} \binom{p-q}{j} \frac{(2k+p+q+j-1)!}{(2k+j+2q)!(p-q)!}. \label{lol11}
\end{align}
We will need the following fact (see e.g. \cite{Gould}): for any $m \in \N$ and any polynomial $P$ of degree $\leq m-1$, 
\begin{align}\label{lol12}
\sum_{j=0}^{m} (-1)^{j} \binom{m}{j} P(j) = 0.
\end{align}
For $q<p$, it is easy to check that
\begin{align*}
\frac{(2k+p+q+j-1)!}{(2k+j+2q)!(p-q)!}
\end{align*}
is a polynomial in $j$ of degree $p-q-1$. Combining \eqref{lol11} and \eqref{lol12}, we conclude that $[BA]_{p,q}=0$ for $q<p$. Finally, if $q=p$, then by \eqref{lol11} we have
\begin{align*}
[BA]_{p,p} = 2(k+p) \frac{(2k+p+p-1)!}{(2k+2p)!} = 1.
\end{align*}
Then the formula $\vec{d}^{(k)} = B\vec{f}$ yields \eqref{def of d ell pkp}. Thus, with $d_{0}^{(k)},\ldots, d_{b}^{(k)}$ as in the statement, we have $R(n)=L(n)$ for each $n \in \{2k,2k+2,\ldots,2b\}$. Since $\frac{R(n)}{L(n)}\prod_{r=1}^{b-k-1}(n+2k+2r)$ is a polynomial in $n$ of degree of at most $b-k$, we infer that $R(n) \equiv L(n)$. This completes the proof.  
\end{proof}
The following lemma is an immediate corollary of Lemmas \ref{lemma: moments of mu ellipse, first lemma} and \ref{lemma:dpkp}.
\begin{lemma}\label{lemma: moments of mu ellipse, second lemma}
For any $n\in \N$, we have $\int_{U} z^{n} d\mu(z)=0$ if $n$ is odd, while if $n$ is even we have
\begin{align}\label{moments in terms of the ej}
\int_{U} z^{n} d\mu(z) = 2 \alpha^{\frac{n}{2}}\gamma^{\frac{n}{2}} \sum_{j=0}^{b} e_{j} \binom{n}{\frac{n}{2}-j},
\end{align}
where $e_{j}$ is defined in \eqref{def of ej}.
\end{lemma}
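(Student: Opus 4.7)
The plan is to combine Lemmas \ref{lemma: moments of mu ellipse, first lemma} and \ref{lemma:dpkp} by a direct substitution. For $n$ odd, Lemma \ref{lemma: moments of mu ellipse, first lemma} already gives $\int_U z^n d\mu(z)=0$, so I can focus on the case $n\in 2\N$, for which I would start from the explicit triple-sum formula \eqref{moments of mu ellipse 1}. The only piece of that formula that depends on $n$ is the factor $\frac{1}{\frac{n}{2}+b}\binom{n}{\frac{n}{2}+k}$, which (using the elementary symmetry $\binom{n}{\frac{n}{2}+k}=\binom{n}{\frac{n}{2}-k}$ valid for $n$ even) is exactly of the form covered by Lemma \ref{lemma:dpkp}.

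Next, I would apply \eqref{identity with djk} to replace every such factor by $\sum_{j=0}^{b} d_{j}^{(k)}\binom{n}{\frac{n}{2}-j}$ and then interchange the summations, pulling the sum over $j$ to the outside. Since $d_{j}^{(k)}$ is independent of $n$, the resulting expression reads
\begin{align*}
\int_{U} z^{n} d\mu(z) = 2 \alpha^{\frac{n}{2}}\gamma^{\frac{n}{2}} \sum_{j=0}^{b} \binom{n}{\tfrac{n}{2}-j} \, E_{j},
\end{align*}
where $E_{j}$ is the coefficient collected from the $\ell$- and $k$-sums. A direct comparison with \eqref{def of ej} shows that $E_{j}=e_{j}$; indeed, the factors $\tfrac{(\alpha^{2}-\gamma^{2})b^{2}}{2}$, $\binom{b-1}{\ell}(\alpha^{2}+\gamma^{2})^{b-1-\ell}$, $\binom{\ell}{(\ell-k)/2}$, $\tfrac{1+\mathbf{1}_{k\neq 0}}{2}$, and $\alpha^{\ell+k}\gamma^{\ell-k}+\alpha^{\ell-k}\gamma^{\ell+k}$ appear in both, and the only new ingredient is the coefficient $d_{j}^{(k)}$ produced by Lemma \ref{lemma:dpkp}. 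This yields \eqref{moments in terms of the ej}.

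There is essentially no analytic obstacle here: all the difficult work has already been done in Lemma \ref{lemma:dpkp} (the inversion of the triangular system producing $d_{\ell}^{(k)}$) and in Lemma \ref{lemma: moments of mu ellipse, first lemma} (the integration on $U$). The only bookkeeping issue is to make sure the range conventions $d_{0}^{(k)}=\ldots=d_{k-1}^{(k)}=0$ are respected when interchanging the $\ell$- and $k$-sums; this is automatic since the formula for $d_{\ell}^{(k)}$ in \eqref{def of d ell pkp} is only non-trivial for $\ell\geq k$, which precisely allows one to drop the constraint $\ell\leq k$ in the inner sum without changing the value.
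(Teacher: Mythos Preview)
Your proposal is correct and follows exactly the same approach as the paper, which simply states that the lemma is ``an immediate corollary of Lemmas \ref{lemma: moments of mu ellipse, first lemma} and \ref{lemma:dpkp}''; you have merely spelled out the substitution and the interchange of sums in more detail. One minor slip: in your last sentence the inequality should read ``$k\leq \ell$'' rather than ``$\ell\leq k$'', but this is only a typo in a side remark and does not affect the argument.
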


Let $\hat{\nu}$ be as in Lemma \ref{lemma:moments of nu ellipse} with $c_{0},c_{1},\ldots,c_{b}$ given by \eqref{def of ej}. Since $c_{0}=e_{0}$ and $c_{\ell} = \frac{e_{\ell}}{\alpha^{\ell}\gamma^{-\ell}+\alpha^{-\ell}\gamma^{\ell}}$ for $\ell=1,\ldots,b$, it follows from Lemmas \ref{lemma:moments of nu ellipse} and \ref{lemma: moments of mu ellipse, second lemma} that $\int_{\partial U}z^n d\hat{\nu}(z) = \int_{ U}z^nd\mu(z)$ for all $n \in \N$. By Lemma \ref{lemma: moment when U is bounded} and Remark \ref{remark: pmu bounded on dU}, $\hat{\nu}=\mathrm{Bal}(\mu|_{U},\partial U)$, which finishes the proof of Theorem \ref{thm:ML ellipse nu}.

\subsection{The constant $C$: proof of Theorem \ref{thm:ML ellipse C}}
In this subsection, $b \in \N_{>0}$, $d\mu(z) := \frac{b^{2}}{\pi}|z|^{2b-2}d^{2}z$, $S:=\mathrm{supp}\,\mu =\{z:|z|\leq b^{-\frac{1}{2b}}\}$, and $a,c\in (0,b^{-\frac{1}{2b}}]$ so that $U := \{z:(\frac{\re z}{a})^{2}+(\frac{\im z}{c})^{2}<1\} \subset S$.
\begin{lemma}
\begin{align}\label{int |z|2b dmu ELLIPSE}
\int_{U} |z|^{2b} d\mu(z) = (\alpha^{2}-\gamma^{2}) \frac{b}{2} \sum_{j=0}^{b-1} \binom{2b-1}{2j}\binom{2j}{j} \alpha^{2j}\gamma^{2j}(\alpha^{2}+\gamma^{2})^{2b-1-2j}.
\end{align}
\end{lemma}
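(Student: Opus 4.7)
The plan is to compute the integral directly in elliptic polar coordinates, following the same basic route used in Lemma \ref{lemma: moments of mu ellipse, first lemma}, but now with the rotationally symmetric weight $|z|^{2b}$ rather than $z^n$, which makes the analysis strictly simpler.

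First I would parametrize $U$ via $z = a r\cos\theta + i c r\sin\theta$ with $r\in[0,1)$, $\theta\in[0,2\pi)$, so that $d^2z = ac\, r\, dr\, d\theta$. Substituting into $d\mu(z) = \frac{b^2}{\pi}|z|^{2b-2}d^2z$ gives
\begin{align*}
\int_U |z|^{2b}\, d\mu(z) = \frac{b^{2} ac}{\pi}\int_0^1 r^{4b-1}\,dr \int_0^{2\pi} |a\cos\theta + i c\sin\theta|^{4b-2}\, d\theta.
\end{align*}
The radial integral is $\frac{1}{4b}$, and $ac = \alpha^{2}-\gamma^{2}$. Next, I would use the key identity from the proof of Lemma \ref{lemma: moments of mu ellipse, first lemma}, namely $a\cos\theta + ic\sin\theta = \alpha e^{i\theta}+\gamma e^{-i\theta}$, which yields
\begin{align*}
|a\cos\theta + ic\sin\theta|^{2} = \alpha^{2}+\gamma^{2}+\alpha\gamma(e^{2i\theta}+e^{-2i\theta}).
\end{align*}

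Since $b\in\N_{>0}$, I can raise this to the power $2b-1$ using the binomial theorem:
\begin{align*}
|a\cos\theta + ic\sin\theta|^{4b-2} = \sum_{k=0}^{2b-1}\binom{2b-1}{k}(\alpha\gamma)^{k}(e^{2i\theta}+e^{-2i\theta})^{k}(\alpha^{2}+\gamma^{2})^{2b-1-k}.
\end{align*}
Expanding $(e^{2i\theta}+e^{-2i\theta})^{k} = \sum_{m=0}^{k}\binom{k}{m}e^{2i(2m-k)\theta}$ and using $\int_0^{2\pi}e^{in\theta}d\theta = 2\pi\,\delta_{n,0}$, only the terms with $m = k/2$ survive, forcing $k$ to be even. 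Writing $k = 2j$ with $j\in\{0,1,\ldots,b-1\}$, the $\theta$-integral becomes
\begin{align*}
\int_0^{2\pi}|a\cos\theta + ic\sin\theta|^{4b-2}\, d\theta = 2\pi\sum_{j=0}^{b-1}\binom{2b-1}{2j}\binom{2j}{j}\alpha^{2j}\gamma^{2j}(\alpha^{2}+\gamma^{2})^{2b-1-2j}.
\end{align*}

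Finally, assembling the factors $\frac{b^{2}}{\pi}\cdot(\alpha^{2}-\gamma^{2})\cdot\frac{1}{4b}\cdot 2\pi = \frac{b(\alpha^{2}-\gamma^{2})}{2}$ reproduces the claimed identity \eqref{int |z|2b dmu ELLIPSE}. There is no real obstacle here: unlike in Lemma \ref{lemma:dpkp}, no matrix inversion or combinatorial identity beyond the binomial theorem is needed, because the integrand is radially symmetric in the elliptic coordinates and the $\theta$-expansion collapses automatically to the diagonal term.
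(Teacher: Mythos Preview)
Your proof is correct and follows essentially the same approach as the paper's own proof: both use the elliptic-polar parametrization $z = ar\cos\theta + icr\sin\theta$, factor out the radial integral $\int_0^1 r^{4b-1}\,dr = \frac{1}{4b}$, rewrite $|a\cos\theta+ic\sin\theta|^2 = \alpha^2+\gamma^2+\alpha\gamma(e^{2i\theta}+e^{-2i\theta})$, binomially expand the $(2b-1)$-th power, and retain only the constant Fourier mode, which forces the summation index to be even.
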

\begin{proof}
With $z=ar \cos \theta + i c r \sin \theta$, $0 \leq \theta \leq 2\pi$, $0 \leq r < 1$, we have $d^{2}z = acrdrd\theta$, and thus
\begin{align*}
\int_{U} |z|^{2b} d\mu(z) & = ac \frac{b^{2}}{\pi} \int_{0}^{1}\int_{0}^{2\pi} |ar\cos \theta + i c r \sin \theta|^{4b-2} r d\theta dr \\
& = ac \frac{b}{4\pi} \int_{0}^{2\pi} \big( \alpha \gamma (e^{2i\theta} + e^{-2i\theta}) + \alpha^{2} + \gamma^{2} \big)^{2b-1} d\theta.
\end{align*}
Since $b\in \N_{>0}$,
\begin{align*}
\int_{U} |z|^{2b} d\mu(z) & = ac \frac{b}{4\pi} \sum_{j=0}^{2b-1} \binom{2b-1}{j} \alpha^{j} \gamma^{j} (\alpha^{2}+\gamma^{2})^{2b-1-j} \int_{0}^{2\pi} (e^{2i\theta} + e^{-2i \theta})^{j} d\theta \\
& = ac \frac{b}{4\pi} \sum_{j=0}^{2b-1} \binom{2b-1}{j} \alpha^{j} \gamma^{j} (\alpha^{2}+\gamma^{2})^{2b-1-j} \sum_{k=0}^{j} \binom{j}{k}\int_{0}^{2\pi} e^{2i\theta(2k-j)} d\theta.
\end{align*}
If $j$ is odd, all integrals in the $k$-sum vanish, while for $j$ even only the integral corresponding to $k=\frac{j}{2}$ is non-vanishing. After simplification, we obtain \eqref{int |z|2b dmu ELLIPSE}.
\end{proof}

\begin{lemma}
Let $\nu = \mathrm{Bal}(\mu|_{U},\partial U)$. Then
\begin{align}\label{int |z|2b dnu ELLIPSE}
\int_{\partial U} |z|^{2b} d\nu(z) = \sum_{j=0}^{b} \binom{b}{j} \alpha^{j} \gamma^{j} (\alpha^{2}+\gamma^{2})^{b-j} 2 \bigg( c_{0} \binom{j}{\frac{j}{2}} \mathbf{1}_{j \, \mathrm{even}} + \sum_{\ell=1}^{b} 2 c_{\ell} \binom{j}{\frac{j+\ell}{2}} \mathbf{1}_{j+\ell \, \mathrm{even}} \bigg),
\end{align}
where $c_{0},\ldots,c_{b}$ are given by \eqref{def of ej}.
\end{lemma}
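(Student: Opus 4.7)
The plan is to expand $|z|^{2b}$ on $\partial U$ in Fourier modes and integrate term-by-term against the explicit density of $\nu$ given by Theorem \ref{thm:ML ellipse nu}, exploiting orthogonality of $\{e^{2ik\theta}\}_{k\in\Z}$. Since $\alpha=\tfrac{a+c}{2}$ and $\gamma=\tfrac{a-c}{2}$, the parametrization $z=a\cos\theta+ic\sin\theta=\alpha e^{i\theta}+\gamma e^{-i\theta}$ of $\partial U$ yields
\begin{align*}
|z|^{2}=(\alpha e^{i\theta}+\gamma e^{-i\theta})(\alpha e^{-i\theta}+\gamma e^{i\theta})=\alpha^{2}+\gamma^{2}+\alpha\gamma\bigl(e^{2i\theta}+e^{-2i\theta}\bigr).
\end{align*}

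Since $b\in\N_{>0}$, the binomial theorem (applied twice) gives
\begin{align*}
|z|^{2b}=\sum_{j=0}^{b}\binom{b}{j}\alpha^{j}\gamma^{j}(\alpha^{2}+\gamma^{2})^{b-j}\sum_{k=0}^{j}\binom{j}{k}e^{2i\theta(j-2k)}.
\end{align*}
By Theorem \ref{thm:ML ellipse nu}, $d\nu(z)=\bigl(c_{0}+\sum_{\ell=1}^{b}c_{\ell}(e^{2i\ell\theta}+e^{-2i\ell\theta})\bigr)\tfrac{d\theta}{\pi}$. Multiplying the two expansions, the crucial integrals reduce to
\begin{align*}
\int_{0}^{2\pi}e^{2i\theta(j-2k)}\frac{d\theta}{\pi}=2\,\mathbf{1}_{k=j/2},\qquad \int_{0}^{2\pi}e^{2i\theta(j-2k\pm\ell)}\frac{d\theta}{\pi}=2\,\mathbf{1}_{k=(j\pm\ell)/2}.
\end{align*}

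Hence the $c_{0}$-term contributes $2 c_{0}\binom{j}{j/2}\mathbf{1}_{j\,\mathrm{even}}$, while for each $\ell\geq 1$ the two exponentials $e^{\pm 2i\ell\theta}$ each pick up a nonzero integral when $j+\ell$ is even (at $k=(j+\ell)/2$ and $k=(j-\ell)/2$, respectively). Using $\binom{j}{(j-\ell)/2}=\binom{j}{(j+\ell)/2}$, these combine to give $4c_{\ell}\binom{j}{(j+\ell)/2}\mathbf{1}_{j+\ell\,\mathrm{even}}$. Summing over $j$ and $\ell$ recovers \eqref{int |z|2b dnu ELLIPSE} exactly.

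There is no real obstacle: the only possible subtlety is bookkeeping around the ranges $0\leq(j\pm\ell)/2\leq j$ (automatic once $j+\ell$ is even and $\ell\leq b$, since terms outside this range contribute zero via the vanishing binomial coefficients), and the factor $2$ coming from $\int_{0}^{2\pi}d\theta/\pi=2$ which must be tracked carefully to match the prefactors in the statement.
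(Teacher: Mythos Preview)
Your proof is correct and follows essentially the same approach as the paper: both write $|z|^{2}=\alpha^{2}+\gamma^{2}+\alpha\gamma(e^{2i\theta}+e^{-2i\theta})$ on $\partial U$, expand $|z|^{2b}$ and $(e^{2i\theta}+e^{-2i\theta})^{j}$ by the binomial theorem, substitute the explicit density $d\nu(z)=\bigl(c_{0}+\sum_{\ell=1}^{b}c_{\ell}(e^{2i\ell\theta}+e^{-2i\ell\theta})\bigr)\frac{d\theta}{\pi}$ from Theorem~\ref{thm:ML ellipse nu}, and evaluate the resulting $\theta$-integrals by orthogonality, using $\binom{j}{(j-\ell)/2}=\binom{j}{(j+\ell)/2}$ to combine terms.
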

\begin{proof}
Recall from Theorem \ref{thm:ML ellipse nu} that $d\nu(z) = \big( c_{0} + 2 \sum_{\ell=1}^{b} c_{\ell} \cos (2\ell \theta) \big)\frac{d\theta}{\pi}$ for $z=a\cos \theta + i c \sin \theta$, $\theta \in [0,2\pi)$, where $c_{0},\ldots,c_{b}$ are given by \eqref{def of ej}. Since $b\in \N_{>0}$,
\begin{align*}
& \int_{\partial U} |z|^{2b}d\nu(z) = \int_{0}^{2\pi} \big( \alpha \gamma (e^{2i\theta} + e^{-2i\theta}) + \alpha^{2} + \gamma^{2} \big)^{b}  \bigg[c_{0}+\sum_{\ell=1}^{b}c_{\ell} (e^{2\ell i \theta} + e^{-2\ell i \theta}) \bigg] \frac{d\theta}{\pi} \\
& = \sum_{j=0}^{b} \binom{b}{j} \alpha^{j} \gamma^{j} (\alpha^{2}+\gamma^{2})^{b-j} \sum_{k=0}^{j} \binom{j}{k} \bigg( c_{0} \int_{0}^{2\pi}e^{2(2k-j)i\theta}\frac{d\theta}{\pi} + \sum_{\ell=1}^{b} c_{\ell} \int_{0}^{2\pi} e^{2(2k-j)i\theta}(e^{2\ell i \theta} + e^{-2\ell i \theta})\frac{d\theta}{\pi} \bigg),
\end{align*}
and after performing the $\theta$-integrals we find \eqref{int |z|2b dnu ELLIPSE}.
\end{proof}

Note that $U$ satisfies Assumptions \ref{ass:U} and \ref{ass:U2}. Theorem \ref{thm:ML ellipse C} now follows directly from \eqref{int |z|2b dmu ELLIPSE}, \eqref{int |z|2b dnu ELLIPSE} and Theorem \ref{thm:general pot} (i).

\section{The triangle}\label{section:triangle}
In this section, we will prove Theorem \ref{thm:Ginibre triangle nu}. In particular, we will show that $\mathrm{Bal}(d^{2}z|_{U},\partial U)$ has a polynomial density with respect to the arclength measure on $\partial U$ in the case where $U$ is an equilateral triangle.

\medskip In fact, we will first consider the following general setting: $a>0$, $U=aP$, where $P$ is the polygon with $p$ vertices ($p\geq 3$) at $\kappa^{-\frac{1}{2}},\kappa^{\frac{1}{2}},\ldots,\kappa^{p-1-\frac{1}{2}}$, where $\kappa^{\frac{1}{2}} := e^{\frac{\pi i}{p}}$, and $\mu$ is supported on $U$ and defined by $d\mu(z) = \frac{b^{2}}{\pi}|z|^{2b-2}d^{2}z$ with $b\in \N_{>0}$. The region $P$ can be written as
\begin{align}\label{def of P}
P = \bigcup_{j=0}^{p-1} \{r(t \kappa^{j-\frac{1}{2}} + (1-t)\kappa^{j+\frac{1}{2}}): 0 \leq r < 1, \; 0 \leq t \leq 1\}.
\end{align}
Suppose $z=x+iy = ar(t \kappa^{j-\frac{1}{2}} + (1-t) \kappa^{j+\frac{1}{2}})$ for some $j \in \{0,1,\ldots,p-1\}$. Then 
\begin{align}\label{change of var polygons}
d^{2}z = dxdy = \sin (\tfrac{2\pi}{p})a^{2}r  drdt.
\end{align}

\medskip In this section, we will try to obtain an explicit expression for $\nu$ using the first method described in Subsection \ref{section:ansatz method}. More precisely, we will make the ansatz that the density of $\nu$ is a polynomial with respect to the arclength measure on $\partial U$. However, we will manage to prove that this ansatz is correct only if $p=3$ and $b=1$. In particular, in the following settings, the analysis of this section is inconclusive but strongly suggests that $\mathrm{Bal}(\mu|_{U},\partial U)$ does not have a polynomial density with respect to $\partial U$ (see also Remark \ref{remark:triangle is polynomial}):
\begin{itemize}
\item[(i)] $\frac{d\mu(z)}{d^{2}z} = |z|^{2b-2}$ with $b\in \{2,3,\ldots\}$ and $U$ is an equilateral triangle centered at $0$, 
\item[(ii)] $\frac{d\mu(z)}{d^{2}z} = |z|^{2b-2}$ with $b\in \N_{>0}$ and $U$ is a regular polygon centered at $0$ with at least four edges.
\end{itemize} 
Note that Conjecture \ref{conj:behavior of balayage measure} also predicts that $\nu$ does not have polynomial density with respect to $\partial U$ in the above case (ii).
\subsection*{Balayage measure: proof of Theorem \ref{thm:Ginibre triangle nu}}\label{subsection: balayage triangle}
Here $a>0$, $U=aP$ with $P$ as in \eqref{def of P}, $p\geq 3$, $\mu$ is defined by $d\mu(z) = \frac{b^{2}}{\pi}|z|^{2b-2}d^{2}z$ with $b\in \N_{>0}$, and $\nu = \mathrm{Bal}(\mu|_{U},\partial U)$. 
\begin{lemma}\label{lemma:moments mu general polygon}
Let $n \in \N$. Then $\int_{U} z^{n} d\mu(z)=0$ if $n$ is not a multiple of $p$, and
\begin{align}\label{moments ML polygons}
\int_{U} z^{pn} d\mu(z) = \frac{(-1)^{n}a^{pn} }{p n+2b} \sum_{q=0}^{2b-2} \frac{\tilde{f}_{q}}{pn+q+1} = (-1)^{n}a^{pn} \sum_{q=0}^{2b-1} \frac{f_{q}}{pn+q+1},
\end{align}
where $f_{q}=\frac{\tilde{f}_{q}}{2b-1-q}$ for $q=0,1,\ldots,2b-2$, $f_{2b-1} = -\sum_{q'=0}^{2b-2} \frac{\tilde{f}_{q'}}{2b-1-q'}$, and
\begin{align*}
& \tilde{f}_{q} = \frac{a^{2b}b^{2}p\sin(\frac{2\pi}{p})}{\pi} (\cos \tfrac{\pi}{p})^{2(b-1)-q}\frac{\sin \frac{(q+1)\pi}{p}}{\sin \frac{\pi}{p}}\sum_{k=\lceil \frac{q}{2} \rceil}^{b-1}(-1)^{q+k}\binom{2k}{q}\binom{b-1}{k}.
\end{align*}
In particular, for $b=1$, we have
\begin{align}\label{lol64}
\int_{U} z^{pn} d\mu(z) = \frac{p(-1)^{n} \sin (\tfrac{2\pi}{p})a^{2+pn}}{(pn+1)(pn+2)\pi}.
\end{align}
\end{lemma}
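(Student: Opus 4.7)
The plan is to reduce the polygonal moment to a one-dimensional real integral by exploiting rotational symmetry, and then evaluate that integral via a complex substitution. Both $U$ and $d\mu$ are invariant under the rotation $z \mapsto \kappa z$ with $\kappa := e^{2\pi i/p}$, so the substitution $z \mapsto \kappa z$ inside $\int_U z^n d\mu$ multiplies the value by $\kappa^n$, forcing $\int_U z^n d\mu = 0$ whenever $p \nmid n$. For a multiple $pn$ of $p$, I would use the triangular slicing of $U = aP$ from \eqref{def of P}, parametrising the $j$-th slice by $z = a r \kappa^j (c + ius)$ with $r \in [0,1)$, $u := 2t-1 \in [-1,1]$, $c := \cos(\pi/p)$, $s := \sin(\pi/p)$. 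From $|z|^{2} = a^{2} r^{2}(c^{2} + u^{2} s^{2})$, the Jacobian $d^{2} z = c s \, a^{2} r \, dr\, du$ (equivalent to \eqref{change of var polygons} since $\sin(2\pi/p) = 2 c s$), and the identity $\sum_{j=0}^{p-1} \kappa^{jpn} = p$, the moment collapses to
\begin{equation*}
\int_{U} z^{pn}\, d\mu \;=\; \frac{p\, b^{2}\, c s\, a^{pn + 2b}}{\pi\,(pn+2b)} \int_{-1}^{1} (c + ius)^{pn}\, (c^{2} + u^{2} s^{2})^{b-1}\, du,
\end{equation*}
where the $r$-integral has been performed explicitly.

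I would then apply $u \mapsto -u$ to replace $(c+ius)^{pn}$ by $(c-ius)^{pn}$, and expand $(c^{2} + u^{2} s^{2})^{b-1} = \sum_{k=0}^{b-1} \binom{b-1}{k} c^{2(b-1-k)} (us)^{2k}$, reducing matters to the integrals $I_{k} := \int_{-1}^{1} u^{2k} (c - ius)^{pn}\, du$. The substitution $w = c - ius$ turns $I_{k}$ into a line integral of the polynomial $(w-c)^{2k} w^{pn}$ along the chord from $e^{-i\pi/p}$ to $e^{+i\pi/p}$; expanding $(w-c)^{2k}$ by the binomial theorem and integrating monomial by monomial yields differences of powers of $e^{\pm i\pi/p}$, which combine into factors of $\sin(\pi(pn + 2k - m + 1)/p) = (-1)^{n}\sin(\pi(2k-m+1)/p)$. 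After reindexing $q := 2k - m$ and interchanging the order of the resulting double sum so that for each $q \in \{0, 1, \ldots, 2b-2\}$ the inner sum in $k$ runs from $\lceil q/2 \rceil$ to $b-1$, the coefficient of $\frac{1}{pn+q+1}$ matches $\tilde f_{q} / (pn+2b)$ once $\sin(2\pi/p)/\sin(\pi/p) = 2 c$ is used to absorb the surviving factor $s^{-1}$. The equivalent form with the $f_{q}$ in \eqref{moments ML polygons} is then an immediate partial-fraction consequence of
\begin{equation*}
\frac{1}{(pn+2b)(pn+q+1)} \;=\; \frac{1}{2b-1-q}\left(\frac{1}{pn+q+1} - \frac{1}{pn+2b}\right), \qquad q \leq 2b-2,
\end{equation*}
with $f_{2b-1}$ collecting all the $\frac{1}{pn+2b}$ residues; the case $b=1$ in \eqref{lol64} is the one-line specialisation in which only $k = m = q = 0$ contributes.

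The main bookkeeping hazard is in the combined binomial expansion and interchange of summation: one must carefully track the sign $(-1)^{k}$ coming from $i^{2k}$, the sign $(-1)^{m} = (-1)^{2k-q}$ coming from $(-c)^{m}$, and the sign from $(c+is)^{N} - (c-is)^{N} = 2 i \sin(N\pi/p)$, and verify that the constraint $2k \geq q$ translates into precisely the lower limit $\lceil q/2 \rceil$ in the formula for $\tilde f_{q}$. Everything else is routine binomial manipulation, and the rotational symmetry together with the explicit parametrisation reduces the polygonal geometry to a single chord integral that is elementary to evaluate.
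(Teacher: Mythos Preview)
Your proposal is correct and follows the paper's proof almost step for step: the rotational-symmetry argument, the triangular slicing via \eqref{def of P}--\eqref{change of var polygons}, the radial integral, and the binomial expansion of $(c^{2}+u^{2}s^{2})^{b-1}$ are identical. The only substantive difference is in how the remaining one-dimensional integral $\int_{-1}^{1} u^{2k}(c\pm ius)^{pn}\,du$ is handled. The paper first expands $(c+ius)^{pn}$ binomially in $u$ (a sum with $\bigO(pn)$ terms) and then invokes the algebraic identity \eqref{lol2}, which is obtained by integrating the closed form for $\sum_{j\ \mathrm{even}}\binom{m}{j}c^{j}$; this collapses the long sum into one of length $2k+1$. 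You instead make the substitution $w=c-ius$ directly, turning the integral into a chord integral of $(w-c)^{2k}w^{pn}$, and expand the short factor $(w-c)^{2k}$; each monomial integrates to a boundary difference that immediately produces the $\sin((q+1)\pi/p)/(pn+q+1)$ structure. The two computations are equivalent (indeed the paper's identity \eqref{lol2} is precisely what the substitution $v=1\pm c$ gives when integrating $c^{2k}(1\pm c)^{m}$), but your route avoids introducing and then collapsing the large intermediate sum and makes it more transparent why the answer has only $2b-1$ terms.
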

\begin{proof}
Using  \eqref{def of P}, \eqref{change of var polygons}, we get
\begin{align*}
& \int_{U} z^{n} d\mu(z) = \int_{U} z^{n} \frac{b^{2}}{\pi}|z|^{2b-2}d^{2}z \\
& = \sin (\tfrac{2\pi}{p})a^{2} \sum_{j=0}^{p-1} \int_{0}^{1}\int_{0}^{1} \big(ar(t \kappa^{j-\frac{1}{2}} + (1-t) \kappa^{j+\frac{1}{2}})\big)^{n} \frac{b^{2}}{\pi} \big|ar(t \kappa^{j-\frac{1}{2}} + (1-t) \kappa^{j+\frac{1}{2}})\big|^{2b-2} r drdt \\
& = \sin (\tfrac{2\pi}{p})a^{2} \int_{0}^{1}\int_{0}^{1} \big(ar(t \kappa^{-\frac{1}{2}} + (1-t) \kappa^{\frac{1}{2}})\big)^{n} \frac{b^{2}}{\pi} \big|ar(t \kappa^{-\frac{1}{2}} + (1-t) \kappa^{\frac{1}{2}})\big|^{2b-2} rdrdt \sum_{j=0}^{p-1} \kappa^{jn}.
\end{align*}
The above sum is clearly $0$ unless $n$ is a multiple of $p$. Next, using that $b\in \N_{>0}$,
\begin{align*}
& \int_{U} z^{pn} d\mu(z) = \frac{b^{2}}{\pi} p \sin (\tfrac{2\pi}{p})a^{2b+pn} \int_{0}^{1}\int_{0}^{1} r^{pn+2b-1}\big(t \kappa^{-\frac{1}{2}} + (1-t) \kappa^{\frac{1}{2}}\big)^{pn}  \big| t \kappa^{-\frac{1}{2}} + (1-t) \kappa^{\frac{1}{2}} \big|^{2b-2} drdt \\
& = \frac{b^{2}p \sin (\tfrac{2\pi}{p})a^{2b+pn}}{(pn+2b)\pi}  \int_{0}^{1} \big(t \kappa^{-\frac{1}{2}} + (1-t) \kappa^{\frac{1}{2}}\big)^{pn}  \big| t \kappa^{-\frac{1}{2}} + (1-t) \kappa^{\frac{1}{2}} \big|^{2b-2} dt \\
& = \frac{b^{2}p \sin (\tfrac{2\pi}{p})a^{2b+pn}}{2(pn+2b)\pi}  \int_{-1}^{1} \big( \cos \tfrac{\pi}{p}  + i u \sin \tfrac{\pi}{p} \big)^{pn}  \big( (\cos \tfrac{\pi}{p})^{2} + u^{2} (\sin \tfrac{\pi}{p})^{2} \big)^{b-1} du \\
& = \frac{b^{2}p \sin (\tfrac{2\pi}{p})a^{2b+pn}}{2(pn+2b)\pi}  \int_{-1}^{1} \sum_{j=0}^{pn}\binom{pn}{j} ( \cos \tfrac{\pi}{p} )^{pn-j} i^{j} u^{j} ( \sin \tfrac{\pi}{p} )^{j}  \sum_{k=0}^{b-1} \binom{b-1}{k} ( \cos \tfrac{\pi}{p} )^{2(b-1-k)} u^{2k} ( \sin \tfrac{\pi}{p} )^{2k} du.
\end{align*}
After performing the $u$-integral, the above can be rewritten as
\begin{align}\label{lol3}
& \int_{U} z^{pn} d\mu(z) = \frac{b^{2}p \sin (\tfrac{2\pi}{p})a^{2b+pn}}{(pn+2b)\pi}   \sum_{k=0}^{b-1} \binom{b-1}{k}(\cos \tfrac{\pi}{p})^{pn+2(b-1)} \sum_{\substack{j=0 \\ j \, \mathrm{even}}}^{pn} \binom{pn}{j}  \frac{i^{j}(\tan \tfrac{\pi}{p})^{j+2k}}{j+2k+1}.
\end{align}
To simplify the above expression further, note that
\begin{align*}
\sum_{\substack{j=0 \\ j \, \mathrm{even}}}^{m} \binom{m}{j}c^{j} = \frac{(1-c)^{m}+(1+c)^{m}}{2}, \qquad \mbox{for all } m \in \N, \; c \in \C.
\end{align*}
Multiplying the above by $c^{2k}$, then integrating over $c$ from $0$ to an arbitrary $c\in \C\setminus \{0\}$, and then dividing by $c$, we get
\begin{align*}
\sum_{\substack{j=0 \\ j \, \mathrm{even}}}^{m} \binom{m}{j}\frac{c^{j+2k}}{j+2k+1} = \sum_{q=0}^{2k} \binom{2k}{q} \frac{(-1)^{q}}{m+q+1} \frac{(1+c)^{m+q+1}-(1-c)^{m+q+1}}{2c}.
\end{align*}
Using the above with $m=pn$ and $c=i\tan \frac{\pi}{p}$ yields (after simplifications)
\begin{align}\label{lol2}
(\cos \tfrac{\pi}{p})^{pn+2k} \sum_{\substack{j=0 \\ j \, \mathrm{even}}}^{pn} \binom{pn}{j}  \frac{i^{j}(\tan \tfrac{\pi}{p})^{j+2k}}{j+2k+1} = \sum_{q=0}^{2k} \binom{2k}{q} \frac{(-1)^{n+q+k}}{pn+q+1} (\cos \tfrac{\pi}{p})^{2k-q} \frac{\sin \frac{(q+1)\pi}{p}}{\sin \frac{\pi}{p}}.
\end{align}
The advantage of the above formula is that the right-hand side contains finitely many terms as $n \to +\infty$. Substituting \eqref{lol2} in \eqref{lol3} yields 
\begin{align*}
& \int_{U} z^{pn} d\mu(z) = \frac{b^{2}p \sin (\tfrac{2\pi}{p})a^{2b+pn}}{(pn+2b)\pi}   \sum_{k=0}^{b-1} \binom{b-1}{k} \sum_{q=0}^{2k} \binom{2k}{q} \frac{(-1)^{n+q+k}}{pn+q+1} (\cos \tfrac{\pi}{p})^{2(b-1)-q} \frac{\sin \frac{(q+1)\pi}{p}}{\sin \frac{\pi}{p}}.
\end{align*}
Switching the order of summation, we find the first identity in \eqref{moments ML polygons}. Using then the decomposition by parts
\begin{align*}
\frac{1}{p n+2b} \frac{1}{pn+q+1} = \frac{1}{2b-1-q} \bigg( \frac{1}{pn+q+1}-\frac{1}{pn+2b} \bigg),
\end{align*}
we obtain the second identity in \eqref{moments ML polygons}.
\end{proof}
\medskip Let us make the ansatz that $d\nu(z) = \sum_{\ell=0}^{M}c_{\ell}y^{2\ell}dy$ for $z=a\cos \frac{\pi}{p}+iy$, $y\in [-a\sin \frac{\pi}{p}, a\sin \frac{\pi}{p}]$ and some $M\in \N$, $c_{0},\ldots,c_{M}\in \R$. This ansatz will be proven to be correct only for $p=3$ (the triangle) and $b=1$ (the uniform measure).
\begin{lemma}\label{lemma: moment nu hat triangle}
Let $M\in \N$ and $c_{0},\ldots,c_{M}\in \R$. Let $\hat{\nu}$ be the measure supported on $\partial U$ defined for $z=a\cos \frac{\pi}{p}+iy$, $y\in [-a\sin \frac{\pi}{p}, a\sin \frac{\pi}{p}]$ by $d\hat{\nu}(z) = \sum_{\ell=0}^{M}c_{\ell}y^{2\ell}dy$ and by $d\hat{\nu}(z\kappa^{j})=d\hat{\nu}(z)$ for all $j\in \{0,1,\ldots,p-1\}$. For $n \in \N$, we have $\int_{\partial U} z^{n}d\hat{\nu}(z) = 0$ if $n$ is not a multiple of $p$, and
\begin{align}\label{lol}
\int_{\partial U} z^{pn}d\hat{\nu}(z) = (-1)^{n}a^{pn}  \sum_{q=0}^{2M} \frac{g_{q}}{pn+q+1},
\end{align}
where
\begin{align*}
g_{q} = 2p \frac{\sin \frac{(q+1)\pi}{p}}{(\cos \frac{\pi}{p})^{q}} \sum_{\ell=\lceil \frac{q}{2}\rceil}^{M} c_{\ell} a^{2\ell+1}\binom{2\ell}{q}(-1)^{q+\ell} (\cos \tfrac{\pi}{p})^{2\ell}.
\end{align*}
\end{lemma}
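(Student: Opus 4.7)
The plan is to exploit the $p$-fold rotational symmetry of $\hat{\nu}$ to reduce the integral over $\partial U$ to a single integral over the edge parametrized by $y\in[-a\sin\tfrac{\pi}{p},a\sin\tfrac{\pi}{p}]$, and then to recycle the algebraic identity \eqref{lol2} that was already used in the proof of Lemma \ref{lemma:moments mu general polygon}. Because $\partial U$ consists of $p$ edges obtained from the distinguished edge by multiplication by $\kappa^{j}$, $j=0,\ldots,p-1$, and because $d\hat\nu(z\kappa^{j})=d\hat\nu(z)$, one has
\begin{align*}
\int_{\partial U} z^{n}d\hat\nu(z)=\bigg(\sum_{j=0}^{p-1}\kappa^{jn}\bigg)\int_{-a\sin\frac{\pi}{p}}^{a\sin\frac{\pi}{p}}\Big(a\cos\tfrac{\pi}{p}+iy\Big)^{n}\sum_{\ell=0}^{M}c_{\ell}y^{2\ell}\,dy.
\end{align*}
Since $\kappa=e^{2\pi i/p}$, the geometric sum in brackets equals $p$ when $p\mid n$ and vanishes otherwise; this already gives the first claim.

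For $n=pn'$ (write $n$ in place of $n'$ from now on) the next step is to change variables $y=a\sin\tfrac{\pi}{p}\,u$, pull out the factor $(\cos\tfrac{\pi}{p})^{pn}$, and expand binomially:
\begin{align*}
\int_{\partial U} z^{pn}d\hat\nu(z) = p\,a^{pn+1}\bigg(\cos\tfrac{\pi}{p}\bigg)^{pn}\sin\tfrac{\pi}{p}\sum_{\ell=0}^{M}c_{\ell}\Big(a\sin\tfrac{\pi}{p}\Big)^{2\ell}\sum_{\substack{j=0\\ j\text{ even}}}^{pn}\binom{pn}{j}\big(i\tan\tfrac{\pi}{p}\big)^{j}\frac{2}{j+2\ell+1},
\end{align*}
where the integral over $[-1,1]$ annihilates the odd powers of $u$.

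The key move is to apply identity \eqref{lol2} (with $k$ replaced by $\ell$) to each inner $j$-sum. This converts the $j$-sum, whose length grows linearly in $n$, into a finite $q$-sum of the form
\begin{align*}
\sum_{q=0}^{2\ell}\binom{2\ell}{q}\frac{(-1)^{n+q+\ell}}{pn+q+1}\big(\cos\tfrac{\pi}{p}\big)^{2\ell-q}\frac{\sin\tfrac{(q+1)\pi}{p}}{\sin\tfrac{\pi}{p}},
\end{align*}
so that the factor $(\cos\tfrac{\pi}{p})^{pn}$ cancels out and the only $n$-dependence left is in $(-1)^{n}$ and the denominator $pn+q+1$. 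Substituting back, simplifying the trigonometric prefactors (the $\sin\tfrac{\pi}{p}$ and $(\sin\tfrac{\pi}{p})^{2\ell+1}$ match exactly), and swapping the order of summation (the constraint $0\le q\le 2\ell$ becomes $\lceil q/2\rceil\le\ell\le M$) yields
\begin{align*}
\int_{\partial U} z^{pn}d\hat\nu(z) = (-1)^{n}a^{pn}\sum_{q=0}^{2M}\frac{1}{pn+q+1}\bigg[2p\,\frac{\sin\tfrac{(q+1)\pi}{p}}{(\cos\tfrac{\pi}{p})^{q}}\sum_{\ell=\lceil q/2\rceil}^{M}c_{\ell}a^{2\ell+1}\binom{2\ell}{q}(-1)^{q+\ell}\big(\cos\tfrac{\pi}{p}\big)^{2\ell}\bigg],
\end{align*}
and the bracket is exactly $g_{q}$. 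The proof is entirely computational; the only non-trivial input is the identity \eqref{lol2}, which was already established. There is no real obstacle.
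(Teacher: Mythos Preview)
Your proof is correct and follows essentially the same approach as the paper: reduce to a single edge via the $p$-fold symmetry, change variables and expand binomially, apply the identity \eqref{lol2} to collapse the growing $j$-sum into a finite $q$-sum, and then swap the order of summation. The paper's proof is line-for-line the same computation.
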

\begin{proof}
If $n$ is not a multiple of $p$, then 
\begin{align*}
\int_{\partial U} z^{n}d\hat{\nu}(z) = (1+\kappa^{n}+\ldots+\kappa^{(p-1)n})\int_{a\cos \frac{\pi}{p}+ i [-a\sin \frac{\pi}{p}, a\sin \frac{\pi}{p}]} z^{n}d\nu(z) = 0.
\end{align*}
It remains to prove \eqref{lol}. For any $n\in \N$,
\begin{align*}
& \int_{\partial U} z^{pn}d\hat{\nu}(z) = p \int_{-a\sin \frac{\pi}{p}}^{a\sin \frac{\pi}{p}} \big( a \cos \tfrac{\pi}{p} + i y \big)^{pn} \sum_{\ell=0}^{M}c_{\ell}y^{2l}dy \\
& = p  \int_{-1}^{1} a^{pn} \big( \cos \tfrac{\pi}{p} + i y \sin \frac{\pi}{p} \big)^{pn} \sum_{\ell=0}^{M}c_{\ell}a^{2\ell+1}(\sin \tfrac{\pi}{p})^{2\ell+1} y^{2l}dy \\
& = 2p a^{pn} \sum_{\ell=0}^{M}c_{\ell}a^{2\ell+1}(\sin \tfrac{\pi}{p})^{2\ell+1} (\cos \tfrac{\pi}{p})^{pn}\sum_{\substack{j=0 \\ j \, \mathrm{even}}}^{pn} \binom{pn}{j} \frac{i^{j} (\tan \frac{\pi}{p})^{j}}{j+2\ell+1} dy.
\end{align*}
Using \eqref{lol2} in the form
\begin{align*}
(\cos \tfrac{\pi}{p})^{pn} \sum_{\substack{j=0 \\ j \, \mathrm{even}}}^{pn} \binom{pn}{j}  \frac{i^{j}(\tan \tfrac{\pi}{p})^{j}}{j+2\ell+1} = \sum_{q=0}^{2\ell} \binom{2\ell}{q} \frac{(-1)^{n+q+\ell}}{pn+q+1} (\tan \tfrac{\pi}{p})^{-2\ell}(\cos \tfrac{\pi}{p})^{-q} \frac{\sin \frac{(q+1)\pi}{p}}{\sin \frac{\pi}{p}},
\end{align*}
we get
\begin{align*}
& \int_{\partial U} z^{pn}d\hat{\nu}(z) = 2p a^{pn} \sum_{\ell=0}^{M}c_{\ell}a^{2\ell+1} \sum_{q=0}^{2\ell} \binom{2\ell}{q} \frac{(-1)^{n+q+\ell}}{pn+q+1} (\cos \tfrac{\pi}{p})^{2\ell} \frac{\sin \frac{(q+1)\pi}{p}}{(\cos \tfrac{\pi}{p})^{q}}.
\end{align*}
Switching the order of summation, we find the claim.
\end{proof}

Let $\hat{\nu}$ be as in Lemma \ref{lemma: moment nu hat triangle} for some $M\in \N$ and $c_{0},c_{1},\ldots,c_{M}\in \R$. Comparing \eqref{moments ML polygons} with \eqref{lol}, we see that in order for $\int_{\partial U}z^n d\hat{\nu}(z) = \int_{ U}z^nd\mu(z)$ to hold for all $n \in \N$, we must have $M=b$ and
\begin{align}\label{system for fq gq}
g_{2b} = 0, \qquad g_{q} = f_{q} \quad \mbox{for all } q \in \{0,\ldots,2b-1\}.
\end{align}
The equation $g_{2b} = 0$ is automatically satisfied if $\sin \frac{(2b+1)\pi}{p}=0$, i.e. if $p=2b+1$. Even with the choice $p=2b+1$, the system \eqref{system for fq gq} still contains $2M$ linear equations for the $M+1$ coefficients $c_{0},\ldots,c_{M}$. We have been able to solve this system only for
\begin{align*}
M=b, \qquad p=2b+1, \qquad 2M=M+1,
\end{align*} 
i.e. for $b=1$ and $p=3$. In this case, \eqref{system for fq gq} is satisfied with the choice $c_{0} = \frac{3a}{8\pi}$, $c_{1} = - \frac{1}{2\pi a}$, i.e. when $\frac{d\hat{\nu}(z)}{dy} := \frac{3a^{2}-4y^{2}}{8\pi a (1-\tau^{2})}$ with $z=a\cos \frac{\pi}{p}+iy$, $y\in [-a\sin \frac{\pi}{p}, a\sin \frac{\pi}{p}]$. By Lemma \ref{lemma: moment when U is bounded} and Remark \ref{remark: pmu bounded on dU}, $\hat{\nu}=\mathrm{Bal}(\mu|_{U},\partial U)$, which finishes the proof of Theorem \ref{thm:Ginibre triangle nu} for $\theta_{0}=0$ and $\zeta_{0}=0$. Since $d^{2}z$ is both rotation-invariant and translation-invariant, $\nu:=\mathrm{Bal}(d^{2}z|_{\zeta_{0}+e^{i\theta_{0}}aP},\zeta_{0}+e^{i\theta_{0}}a \, \partial P)$ and $\nu_{0}:=\mathrm{Bal}(d^{2}z|_{aP},a \,\partial P)$ are related by
\begin{align*}
d\nu(z) = d\nu_{0}(e^{-i\theta_{0}}(z-\zeta_{0})), \qquad \mbox{for all } z \in \zeta_{0} + e^{i\theta_{0}}a \, \partial P,
\end{align*}
(see also \eqref{dnu relation in EG}) which finishes the proof of Theorem \ref{thm:Ginibre triangle nu}.

\section{The rectangle}\label{section: rectangle}
Let $a_{2}>a_{1}$, $c_{2}>c_{1}$, and define $U = \{z: a_{1} < \re z < a_{2}, \; c_{1} < \im z < c_{2}\}$. 

\subsection{Balayage measure in terms of elliptic functions}\label{subsection:balayage square conformal}

\begin{figure}[h]
\begin{center}
\hspace{-1cm}\begin{tikzpicture}[master]
\node at (0,0) {\includegraphics[height=3.3cm]{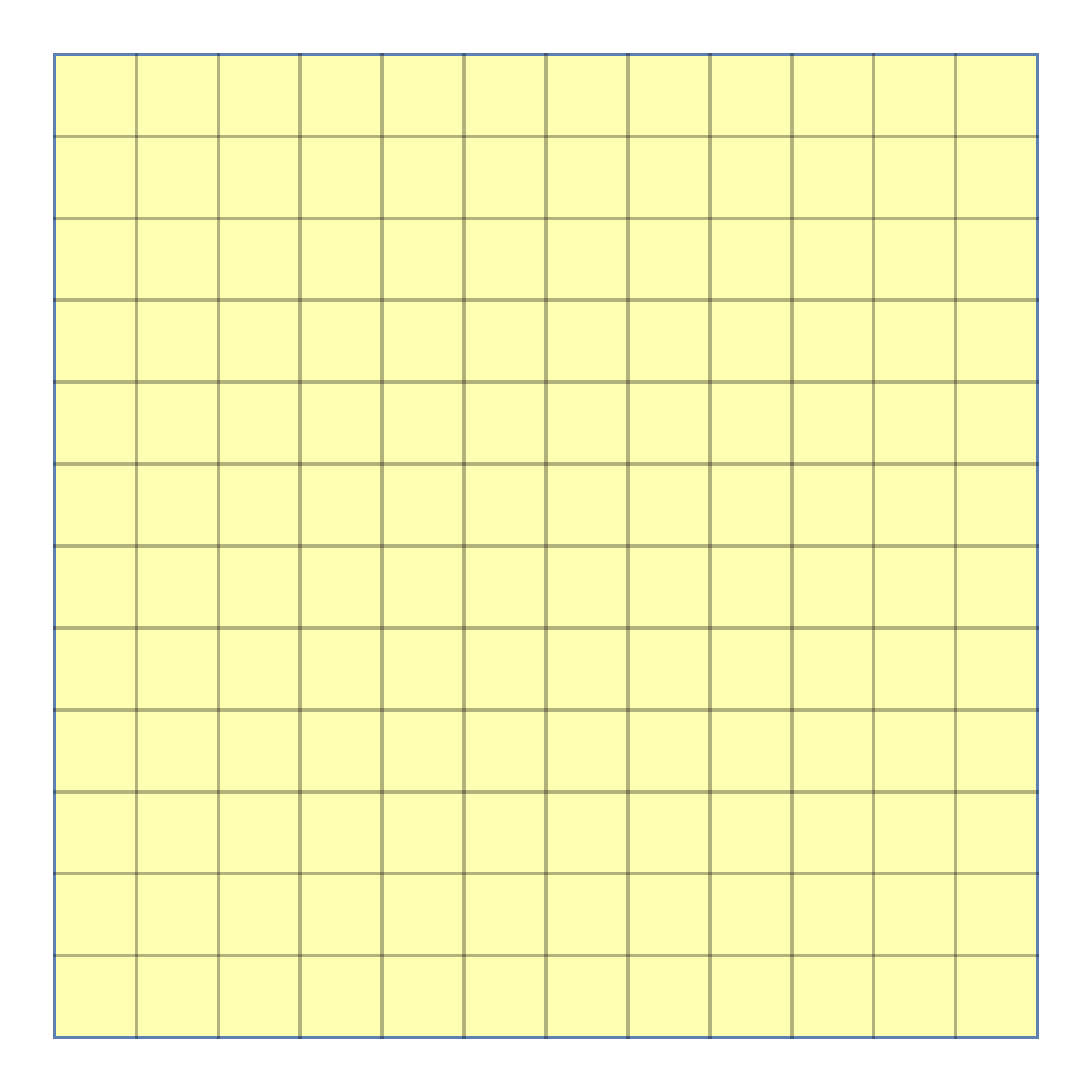}};
\draw[fill] (1.48,-1.48) circle (0.04);
\node at (1.48,-1.7) {\footnotesize $\frac{c}{2}-i\frac{c}{2}$};

\draw[fill] (1.48,1.48) circle (0.04);
\node at (1.48,1.7) {\footnotesize $\frac{c}{2}+i\frac{c}{2}$};

\end{tikzpicture} \hspace{4.2cm}
\begin{tikzpicture}[slave]
\node at (0,0) {\includegraphics[height=3cm]{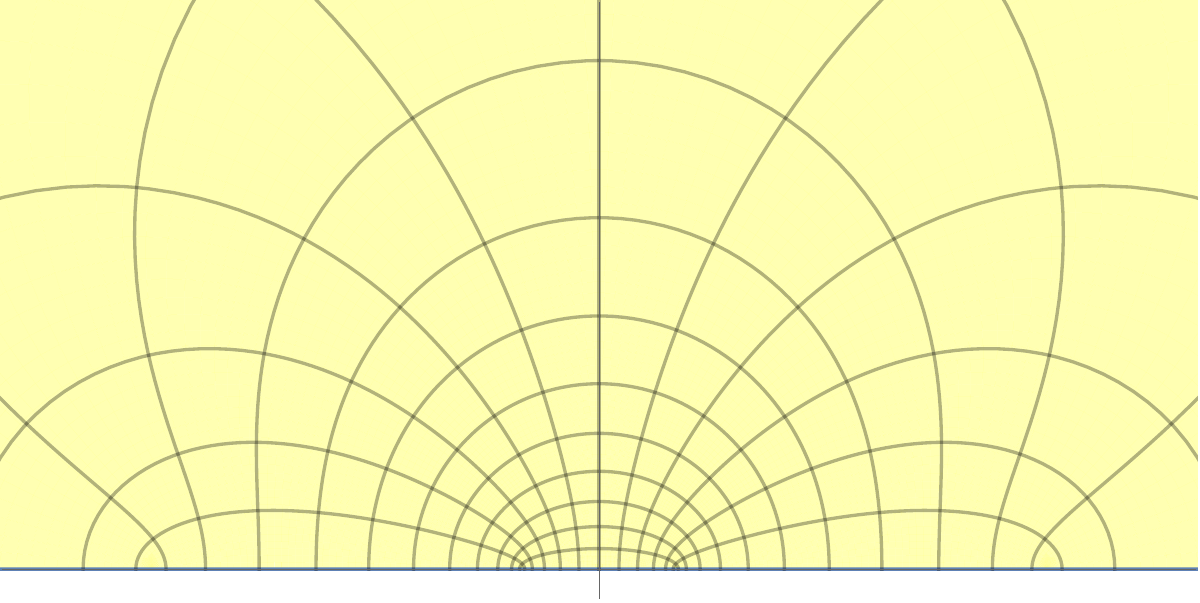}};
\node at (-4.8,1.8) {$\phi$};
\coordinate (P) at ($(-4.8, 0.9) + (30:1cm and 0.6cm)$);
\draw[thick, black, -<-=0.45] ($(-4.8, 0.9) + (30:1cm and 0.6cm)$(P) arc (30:150:1cm and 0.6cm);
\draw[fill] (2.2,-1.34) circle (0.04);
\node at (2.2,-1.65) {\footnotesize $\frac{1}{\kappa}$};
\draw[fill] (0.375,-1.34) circle (0.04);
\node at (0.375,-1.65) {\footnotesize $1$};
\end{tikzpicture}
\end{center}
\caption{\label{fig:conformal map from square to upper-half plane} The conformal map $\phi$ from $U$ to the upper-half plane.}
\end{figure}

Let $\nu := \mathrm{Bal}(\frac{d^{2}z}{\pi}|_{U},\partial U)$. In this subsection, we give another expression for $\nu$ than the one given in Theorem \ref{thm:Ginibre rectangle nu} (see also Remark \ref{remark: elliptic for square}). For simplicity, here we only focus on the special case where $\mu=\frac{d^{2}z}{\pi}$ and $a_{2}=c_{2}=-a_{1}=-c_{1}=\frac{c}{2}$ for some $c>0$. 

Recall from \cite[22.2.3]{NIST} that the Jacobi elliptic function $\mathrm{sn}$ is defined for $\kappa \in (0,1)$ and $z\in \C$ by
\begin{align*}
& \mathrm{sn}(z,\kappa) := \frac{\theta_{3}(0,q)}{\theta_{2}(0,q)}\frac{\theta_{1}(\zeta,q)}{\theta_{4}(\zeta,q)}, \quad \zeta := \frac{\pi z}{2K(\kappa)}, \quad q := \exp\bigg(\hspace{-0.15cm}-\hspace{-0.05cm}\frac{\pi K'(\kappa)}{K(\kappa)}\bigg), \quad K(\kappa) := \int_{0}^{1}\frac{dt}{\sqrt{1-t^{2}}\sqrt{1-\kappa^{2}t^{2}}} \\
& K'(\kappa) := \int_{0}^{1}\frac{dt}{\sqrt{1-t^{2}}\sqrt{1-(1-\kappa^{2})t^{2}}}, \qquad  \theta_{3}(z,q) := 1+2\sum_{n=1}^{+\infty}q^{n^{2}} \cos(2nz).
\end{align*}
The definitions $\theta_{1},\theta_{2}$ and $\theta_{4}$ are similar to that of $\theta_{3}$ (see e.g. \cite[20.2.1--20.2.4]{NIST}) and we omit them here. As a function of $z$ with $\kappa \in (0,1)$ fixed, $\mathrm{sn}(z,k)$ is doubly periodic with simple poles and simple zeros (see e.g. \cite[Chapter 22]{NIST} for further properties). Define also
\begin{align}\label{def of phi square}
\phi(z) := \mathrm{sn} \big( K(\kappa)\frac{z + i\tfrac{c}{2}}{c/2}, \kappa \big)
\end{align}
where the dependence of $\phi$ in $c$ and $\kappa \in (0,1)$ is omitted.
\begin{proposition}\label{prop:Ginibre square nu}
Let $\kappa \in (0,1)$ be the unique solution to
\begin{align}\label{eq kappa}
\int_{0}^{1} \frac{du}{\sqrt{1-(1-\kappa^{2})u^{2}}\sqrt{1-u^{2}}} = 2\int_{0}^{1} \frac{du}{\sqrt{1-\kappa^{2}u^{2}}\sqrt{1-u^{2}}}, \qquad (\kappa\approx 0.171573)
\end{align}
and let $\phi$ be as in \eqref{def of phi square}.
Let $\nu := \mathrm{Bal}(\mu|_{U},\partial U)$ with $\mu:=\frac{d^{2}z}{\pi}$. For $z=\frac{c}{2}+iy$, $y\in (-\frac{c}{2}, \frac{c}{2})$, we have
\begin{align}
\frac{d\nu(z)}{dy} & = \re \Bigg\{ \int_{-\frac{c}{2}}^{\frac{c}{2}} \int_{-\frac{c}{2}}^{\frac{c}{2}} \frac{- \phi'(\frac{c}{2}+i y)}{\pi \big(  \phi(\tilde{x}+i \tilde{y})-\phi(\frac{c}{2}+ i y) \big)} \frac{d\tilde{x}d\tilde{y}}{\pi} \Bigg\} \label{lol54} \\
& = \bigg(\frac{c^{2}}{2 K(\kappa)}\bigg)^{2} \re \bigg\{ \int_{\{w:\im w > 0\}} \frac{- \phi'(\frac{c}{2}+i y)}{\pi \big( w-\phi(\frac{c}{2}+ i y) \big)}  \frac{d^{2}w}{\pi |1-w^{2}| \; |1-\kappa^{2}w^{2}|} \bigg\} \label{lol55}
\end{align}
and $d\nu(i^{j}z)=d\nu(z)$ for all $j\in \{0,1,2,3\}$ and $z\in \frac{c}{2}+ i (-\frac{c}{2}, \frac{c}{2})$.
\end{proposition}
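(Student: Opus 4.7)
The plan is to invoke Theorem~\ref{thm:dnu in terms of green general} and identify the Green function of $U$ with that of the upper half-plane $H:=\{w:\im w>0\}$ via an explicit conformal map. The Jacobi function $\mathrm{sn}(\cdot,\kappa)$ is a conformal bijection of the rectangle $R_\kappa:=(-K(\kappa),K(\kappa))+i(0,K'(\kappa))$ onto $H$, sending the four corners $\pm K(\kappa)$ and $\pm K(\kappa)+iK'(\kappa)$ to $\pm 1$ and $\pm 1/\kappa$, respectively. Equation \eqref{eq kappa} is exactly $K'(\kappa)=2K(\kappa)$, so that $R_\kappa$ is a square of side $2K(\kappa)$; since $\kappa\mapsto K'(\kappa)/K(\kappa)$ is strictly decreasing from $+\infty$ to $0$ on $(0,1)$, this equation has a unique solution, and the affine map $z\mapsto\frac{2K(\kappa)}{c}(z+ic/2)$ carries $U$ biholomorphically onto $R_\kappa$. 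Hence $\phi$ defined by \eqref{def of phi square} is a conformal bijection $U\to H$ taking the corners $\pm c/2\pm ic/2$ of $U$ to $\pm 1$ and $\pm 1/\kappa$.

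\textbf{Normal derivative and verification of \eqref{lol54}.} By Theorem~\ref{thm:dnu in terms of green general} applied with $d\mu=d^2w/\pi$, on each open edge of $\partial U$ I have $\frac{d\nu(z)}{|dz|}=\int_U\frac{\partial g_U(w,z)}{\partial\mathbf{n}_z}\frac{d^2w}{\pi}$. By the second identity in \eqref{green function with general conformap mapping},
$$g_U(w,z)=g_H(\phi(w),\phi(z))=\frac{1}{2\pi}\log\left|\frac{\phi(w)-\overline{\phi(z)}}{\phi(w)-\phi(z)}\right|.$$
Conformal invariance of the Green function (together with the standard chain-rule identity $\partial_{\mathbf{n}_z}g_U=|\phi'(z)|\,\partial_{\mathbf{n}_{\phi(z)}}g_H$, where the inward normal of $U$ pushes forward to the inward normal of $H$) combined with the Poisson kernel on $H$ yields, for $z\in(\partial U)_*$,
$$\frac{\partial g_U(w,z)}{\partial \mathbf{n}_z}=\frac{|\phi'(z)|\,\im\phi(w)}{\pi\,|\phi(w)-\phi(z)|^2}.$$
On the right edge $\{c/2+iy:y\in(-c/2,c/2)\}$, $\phi$ maps bijectively and monotonically onto $(1,1/\kappa)\subset\mathbb{R}$ with $\phi(c/2-ic/2)=1$ and $\phi(c/2+ic/2)=1/\kappa$, so the tangential derivative $i\phi'(z)=\frac{d}{dy}\phi(c/2+iy)$ is real and strictly positive, i.e.\ $\phi'(z)=-i|\phi'(z)|$. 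Writing $\phi(w)=\tilde u+i\tilde v$ with $\tilde v>0$ and $\phi(z)\in\mathbb{R}$, a direct expansion gives
$$\re\left[\frac{-\phi'(z)}{\pi(\phi(w)-\phi(z))}\right]=\frac{|\phi'(z)|\,\tilde v}{\pi((\tilde u-\phi(z))^2+\tilde v^2)},$$
which matches the displayed integrand and proves \eqref{lol54}. The four-fold symmetry $d\nu(i^jz)=d\nu(z)$ follows because $z\mapsto iz$ preserves both $U$ and $\mu=d^2z/\pi$, and the balayage is uniquely determined by Proposition~\ref{prop:def of bal}.

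\textbf{Change of variable for \eqref{lol55} and main obstacle.} Finally, substituting $\tilde w=\phi(w)$ in \eqref{lol54} transforms the integral over $U$ into one over $H$ via $d^2w=|\phi'(w)|^{-2}d^2\tilde w$. Differentiating $\phi(w)=\mathrm{sn}(\xi,\kappa)$ with $\xi=\frac{2K(\kappa)}{c}(w+ic/2)$ gives $\phi'(w)=\frac{2K(\kappa)}{c}\mathrm{cn}(\xi,\kappa)\mathrm{dn}(\xi,\kappa)$, and the Pythagorean identities $\mathrm{cn}^2=1-\mathrm{sn}^2$ and $\mathrm{dn}^2=1-\kappa^2\mathrm{sn}^2$ yield
$$|\phi'(w)|^2=\frac{4K(\kappa)^2}{c^2}\,|1-\tilde w^2|\,|1-\kappa^2\tilde w^2|,$$
so that $d^2w=\frac{c^2}{4K(\kappa)^2}\frac{d^2\tilde w}{|1-\tilde w^2|\,|1-\kappa^2\tilde w^2|}$; substituting and regrouping yields \eqref{lol55}. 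The only substantive point beyond standard Green-function/Poisson-kernel book-keeping is the orientation analysis on $\partial U$ that pins down $\phi'(z)=-i|\phi'(z)|$ on the right edge and thus the sign in the real-part integrand; everything else is mechanical.
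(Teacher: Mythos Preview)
Your proof is correct and follows essentially the same route as the paper: identify $\phi$ as a conformal map $U\to H$ via the Jacobi $\mathrm{sn}$ function (the paper reaches this through the Schwarz--Christoffel integral and then inverts, while you cite the mapping property of $\mathrm{sn}$ directly), apply Theorem~\ref{thm:dnu in terms of green general} with the transplanted Green function, and simplify using that $\phi$ is real on the right edge with $\phi'\in i\mathbb{R}$. Your use of the Poisson kernel and the explicit $\mathrm{cn}\cdot\mathrm{dn}$ Jacobian is slightly more streamlined than the paper's step-by-step differentiation, but the underlying argument is the same; note also that your Jacobian gives the prefactor $\big(\tfrac{c}{2K(\kappa)}\big)^2$, which is what the change of variables actually produces.
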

\begin{proof}
Let $\kappa\in (0,1)$ and consider the function
\begin{align*}
f(w) = \int_{0}^{w} \frac{-du}{\sqrt{u-\frac{1}{\kappa}}\sqrt{u+\frac{1}{\kappa}}\sqrt{u-1}\sqrt{u+1}}, \qquad \im w > 0,
\end{align*}
where each branch is the principal one and the contour integral lies in the upper-half plane. The function $f$ can also be rewritten in terms of the incomplete elliptic integral of the first kind (see e.g. \cite[eq 8.111]{GRtable} or \cite[eq 19.2.4]{NIST}). This is a particular case of a Schwarz–Christoffel mapping: $f$ is a conformal map from the open upper-half plane to the interior of a rectangle with vertices at $f(\pm \frac{1}{\kappa} + i 0_{+})$, $f(\pm 1 + i 0_{+})$. The rectangle is a square if $\kappa$ is such that $f(1+i 0_{+})-f(-1+i 0_{+}) = -i(f(\frac{1}{\kappa}+i 0_{+})-f(1+i 0_{+}))$, i.e. if $\kappa$ is the unique solution to \eqref{eq kappa}. (\eqref{eq kappa} clearly determines $\kappa$ uniquely since the left-hand side decreases from $+\infty$ to $\frac{\pi}{2}$ as $\tau$ increases from $0$ to $1$, while the right-hand side increases from $\pi$ to $+\infty$.) From now on we assume that $\kappa\in (0,1)$ is the unique solution to \eqref{eq kappa}. Therefore the function
\begin{align*}
h(w) := -\frac{c}{2}i + \frac{c}{2} \frac{f(w)}{f(1+i0_{+})}, \qquad \im w >0,
\end{align*}
is a conformal map from the open upper half-plane to the open square of side length $c$ centered at $0$ ($h(\pm \frac{1}{\kappa} + i 0_{+}) = \pm \frac{c}{2} + i \frac{c}{2}$ and $h(\pm 1 + i 0_{+}) = \pm \frac{c}{2} - i \frac{c}{2}$). By e.g. \cite[22.15.11]{NIST} and \cite[8.14]{GRtable}, the inverse function of $f$ is $\mathrm{sn}(\frac{z}{\kappa},\kappa)$; thus the inverse function of $h$ is given by
\begin{align*}
\phi(z) := \mathrm{sn} \big( \tfrac{f(1+i 0_{+})}{\kappa}\frac{w + i\tfrac{c}{2}}{c/2}, \kappa \big),
\end{align*}
which is equivalent to \eqref{def of phi square}. 
Hence $\phi$ is a conformal map from $U$ to the open upper half-plane $\{z:\im z >0\}$, see also Figure \ref{fig:conformal map from square to upper-half plane}. By \eqref{green function with general conformap mapping}, $g_{U}(z,w) = \frac{1}{2\pi} \log \big| \frac{\phi(z)-\overline{\phi(w)}}{\phi(z)-\phi(w)} \big|$, and thus, by Theorem \ref{thm:dnu in terms of green general}, for $z=\frac{c}{2}+iy$, $y\in (-\frac{c}{2},\frac{c}{2})$, we have
\begin{align*}
\frac{d\nu(z)}{dy} & = \frac{-1}{2\pi} \re \int_{-\frac{c}{2}}^{\frac{c}{2}} \int_{-\frac{c}{2}}^{\frac{c}{2}} \bigg( \frac{d}{dz} \log  \frac{\phi(\tilde{x}+i\tilde{y})-\overline{\phi(z)}}{\phi(\tilde{x}+i\tilde{y})-\phi(z)} \bigg) \bigg|_{z=\frac{c}{2}+i y} \frac{d\tilde{x}d\tilde{y}}{\pi} \\
& = \re \int_{-\frac{c}{2}}^{\frac{c}{2}} \int_{-\frac{c}{2}}^{\frac{c}{2}} i\frac{\im \big( \overline{\phi(\frac{c}{2}+iy)} \phi'(\frac{c}{2}+iy) \big) - \phi(\tilde{x}+i\tilde{y})\im \phi'(\frac{c}{2}+iy)}{\pi (\phi(\tilde{x}+i\tilde{y}) - \overline{\phi(\frac{c}{2}+iy)})(\phi(\tilde{x}+i\tilde{y}) - \phi(\frac{c}{2}+iy))} \frac{d\tilde{x}d\tilde{y}}{\pi}.
\end{align*}
Simplifying the above expression, using that $\phi(\frac{c}{2}+iy) \in \R$ and $\phi'(\frac{c}{2}+iy)\in i \R$, we find \eqref{lol54}. The second identity \eqref{lol55} is obtained after making the change of variables $w = \phi(\tilde{x}+i\tilde{y})$.
\end{proof}

\subsection{Balayage measure: proof of Theorems \ref{thm:Ginibre rectangle nu}, \ref{thm:ML rectangle nu} and Corollary \ref{coro:ML square nu}}\label{subsection: balayage rectangle}
We start with the proof of Theorem \ref{thm:ML rectangle nu}. Fix $b \in \N_{>0}$, and let $\mu$ be as in \eqref{mu S ML}, i.e. $d\mu(z) = \frac{b^{2}}{\pi}|z|^{2b-2}d^{2}z$ and $S:=\mathrm{supp}\,\mu =\{z:|z|\leq b^{-\frac{1}{2b}}\}$. Let $a_{2}>a_{1}>0$, $c_{2}>c_{1}>0$, and define $U = \{z: a_{1} < \re z < a_{2}, \; c_{1} < \im z < c_{2}\}$. Suppose that $U\subset S$. The goal of this subsection is to find an explicit expression for $\nu:=\mathrm{Bal}(\mu|_{U},\partial U)$. We will obtain $\nu$ using the method described in Subsection \ref{subsection: green function method for nu}. In the previous subsection, we computed $\nu|_{b=1}$ using an expression for $g_{U}$ involving a conformal map from $U$ to the open upper half-plane. Here we obtain more explicit results using another expression for $g_{U}$ obtained via separation of variables in e.g. \cite{Duffy}. 

\medskip In \cite[Section 6.3]{Duffy}, an expression for $g_{U}$ is given in the case $a_{1}=c_{1}=0$. A straightforward adaptation of \cite[Section 6.3]{Duffy} yields the following expression for $g_{U}$, valid for all $a_{1}<a_{2}$ and $c_{1}<c_{2}$: 
\begin{align}\label{series representation for gU in the case of a square}
g_{U}(x+iy,\xi+i\eta) = \sum_{q=1}^{+\infty}\sum_{m=1}^{+\infty} A_{qm} \sin \bigg( \frac{x-a_{1}}{a_{2}-a_{1}} q \pi  \bigg) \sin \bigg( \frac{y-c_{1}}{c_{2}-c_{1}} m \pi  \bigg),
\end{align}
where
\begin{align}
& A_{qm} := \frac{4}{(a_{2}-a_{1})(c_{2}-c_{1})\lambda_{qm}} \sin \bigg( \frac{\xi-a_{1}}{a_{2}-a_{1}} q \pi  \bigg) \sin \bigg( \frac{\eta-c_{1}}{c_{2}-c_{1}} m \pi  \bigg), \nonumber \\
& \lambda_{qm} := \frac{q^{2}\pi^{2}}{(a_{2}-a_{1})^{2}} + \frac{m^{2}\pi^{2}}{(c_{2}-c_{1})^{2}}. \label{def of lambda qm}
\end{align}
By Theorem \ref{thm:dnu in terms of green general}, for $z=a_{2}+i\eta$, $\eta \in (c_{1},c_{2})$, we have
\begin{align*}
\frac{d\nu(z)}{d\eta} = (-1)\int_{U} \frac{d}{d\xi}g_{U}(x+iy,\xi+i\eta)\bigg|_{\xi=a_{2}}d\mu(x+iy).
\end{align*}
For each fixed $x\in (a_{1},a_{2})$ and $y,\eta\in(c_{1},c_{2})$, the derivative $\frac{d}{d\xi}g_{U}(x+iy,\xi+i\eta)\big|_{\xi=a_{2}}$ can be computed by differentiating the series \eqref{series representation for gU in the case of a square} term by term, and thus
\begin{align}
\frac{d\nu(z)}{d\eta} = \int_{U} \bigg[ &\sum_{q=1}^{+\infty}\sum_{m=1}^{+\infty}  \frac{4}{(a_{2}-a_{1})(c_{2}-c_{1})\lambda_{qm}} \frac{-q\pi}{a_{2}-a_{1}}\cos ( q \pi  ) \sin \bigg( \frac{\eta-c_{1}}{c_{2}-c_{1}} m \pi  \bigg) \nonumber \\
& \times \sin \bigg( \frac{x-a_{1}}{a_{2}-a_{1}} q \pi  \bigg) \sin \bigg( \frac{y-c_{1}}{c_{2}-c_{1}} m \pi  \bigg) \bigg] \frac{b^{2}}{\pi}(x^{2}+y^{2})^{b-1}dxdy. \label{lol57}
\end{align} 
Let $s_{q,m}(x,y)$ be the summand in the above integral. Since $\lambda_{qm} = \bigO(q^{2}+m^{2})$ as $q,m\to+\infty$, for all $(x,y)\in U$ and all sufficiently large $M_{1},M_{2}\in \N_{>0}$, we have the bound
\begin{align}\label{lol56}
\bigg|\sum_{q=1}^{M_{1}}\sum_{m=1}^{M_{2}} s_{q,m}(x,y) \bigg| \leq C \bigg| \sum_{q=1}^{M_{1}} \frac{(-1)^{q}}{q} \sin \bigg( \frac{x-a_{1}}{a_{2}-a_{1}} q \pi  \bigg) \bigg| \leq C \frac{\pi}{2}(1+2\gamma_{\mathrm{WG}})
\end{align}
for some constant $C>0$ and $\gamma_{\mathrm{WG}} \approx 0.0895$ is the Wilbraham--Gibbs constant. For the last inequality above, we have used that $\sum_{q=1}^{+\infty}\frac{(-1)^{q}}{q}\sin(\tilde{x}q\pi)$ is the Fourier series of the $1$-periodic function defined for $\tilde{x}\in (0,1)$ by $-\frac{\pi}{2}\tilde{x}$. Since the right-hand side of \eqref{lol56} is clearly integrable on $U$, by Lebesgue's dominated convergence theorem we can interchange $\int_{U}$ with $\sum_{q=1}^{+\infty}\sum_{m=1}^{+\infty}$ in \eqref{lol57}, so that
\begin{align}
& \frac{d\nu(z)}{d\eta} = \sum_{q=1}^{+\infty}\sum_{m=1}^{+\infty} \frac{4}{(a_{2}-a_{1})(c_{2}-c_{1})\lambda_{qm}} \frac{-q\pi}{a_{2}-a_{1}}(-1)^{q} \sin \bigg( \frac{\eta-c_{1}}{c_{2}-c_{1}} m \pi  \bigg) I_{q,m}   \label{lol58} \\
& I_{q,m}:= \int_{a_{1}}^{a_{2}}\int_{c_{1}}^{c_{2}} \sin \bigg( \frac{x-a_{1}}{a_{2}-a_{1}} q \pi  \bigg) \sin \bigg( \frac{y-c_{1}}{c_{2}-c_{1}} m \pi  \bigg)  \frac{b^{2}}{\pi}(x^{2}+y^{2})^{b-1}dxdy \nonumber \\
& \hspace{0.74cm} = \sum_{\ell=0}^{b-1} \frac{b^{2}}{\pi}\binom{b-1}{\ell} \int_{a_{1}}^{a_{2}} x^{2\ell} \sin \bigg( \frac{x-a_{1}}{a_{2}-a_{1}} q \pi  \bigg)  dx \int_{c_{1}}^{c_{2}} y^{2(b-1-\ell)} \sin \bigg( \frac{y-c_{1}}{c_{2}-c_{1}} m \pi  \bigg) dy. \label{def of Iqm}
\end{align}
Using \cite[2.633]{GRtable}, we obtain
\begin{align}
& \int_{a_{1}}^{a_{2}} x^{2\ell} \sin \bigg( \frac{x-a_{1}}{a_{2}-a_{1}} q \pi  \bigg)  dx = \sum_{\substack{v_{1}=0 \\v_{1} \, \mathrm{even} }}^{2\ell} \frac{(2\ell)!}{(2\ell-v_{1})!} \bigg( \frac{a_{2}-a_{1}}{q\pi} \bigg)^{v_{1}+1} (-1)^{1+\frac{v_{1}}{2}} \big( (-1)^{q}a_{2}^{2\ell-v_{1}} - a_{1}^{2\ell-v_{1}} \big), \nonumber \\
& \int_{c_{1}}^{c_{2}} y^{2(b-1-\ell)} \sin \bigg( \frac{y-c_{1}}{c_{2}-c_{1}} m \pi  \bigg) dy = \sum_{\substack{v_{2}=0 \\v_{2} \, \mathrm{even} }}^{2(b-1-\ell)} \frac{(2(b-1-\ell))!}{(2(b-1-\ell)-v_{2})!} \bigg( \frac{c_{2}-c_{1}}{m \pi} \bigg)^{v_{2}+1} (-1)^{1+\frac{v_{2}}{2}} \nonumber \\
& \hspace{5.7cm} \times \big( (-1)^{m}c_{2}^{2(b-1-\ell)-v_{2}} - c_{1}^{2(b-1-\ell)-v_{2}} \big). \label{int xp sin}
\end{align}
After substituting the above and \eqref{def of lambda qm} in \eqref{def of Iqm} and \eqref{lol58}, and regrouping the terms, we find
\begin{align*}
\frac{d\nu(z)}{d\eta} = \sum_{m=1}^{+\infty} \tilde{\mathcal{C}}_{b,m}^{R}\sin \bigg( \frac{\eta-c_{1}}{c_{2}-c_{1}} m \pi \bigg),
\end{align*}
where
\begin{align}
& \tilde{\mathcal{C}}_{b,m}^{R} = \frac{-4b^{2}}{(c_{2}-c_{1})\pi^{2}} \sum_{\ell=0}^{b-1} \binom{b-1}{\ell} \label{lol62} \\
& \times \bigg\{ \sum_{\substack{v_{1}=0 \\v_{1} \, \mathrm{even} }}^{2\ell} \frac{(2\ell)!}{(2\ell-v_{1})!} \bigg( \frac{a_{2}-a_{1}}{\pi} \bigg)^{v_{1}+1} (-1)^{\frac{v_{1}}{2}} \sum_{q=0}^{+\infty}\bigg( \frac{a_{2}^{2\ell-v_{1}}}{q^{2} + \frac{(a_{2}-a_{1})^{2}m^{2}}{(c_{2}-c_{1})^{2}}} \frac{1}{q^{v_{1}}}  - \frac{(-1)^{q}a_{1}^{2\ell-v_{1}}}{q^{2} + \frac{(a_{2}-a_{1})^{2}m^{2}}{(c_{2}-c_{1})^{2}}}\frac{1}{q^{v_{1}}} \bigg) \bigg\} \nonumber \\
& \times \bigg\{ \sum_{\substack{v_{2}=0 \\v_{2} \, \mathrm{even} }}^{2(b-1-\ell)} \frac{(2(b-1-\ell))!}{(2(b-1-\ell)-v_{2})!} \bigg( \frac{c_{2}-c_{1}}{m \pi} \bigg)^{v_{2}+1} (-1)^{\frac{v_{2}}{2}} \big( (-1)^{m}c_{2}^{2(b-1-\ell)-v_{2}} - c_{1}^{2(b-1-\ell)-v_{2}} \big) \bigg\}. \nonumber
\end{align}
In order to simplify $\tilde{\mathcal{C}}_{b,m}^{R}$, we will need the following lemma.
\begin{lemma}
For any $v_{1}\in 2\N$ and $m \in (0,+\infty)$, we have
\begin{align}
& \sum_{q=1}^{+\infty} \frac{1}{q^{2}+m^{2}} \frac{1}{q^{v_{1}}} = \frac{(-1)^{\frac{v_{1}}{2}}}{m^{v_{1}}} \frac{m \pi \coth(m\pi)-1}{2m^{2}} + \sum_{j=0}^{\frac{v_{1}}{2}-1} \frac{(-1)^{j}}{m^{2+2j}}\zeta(v_{1}-2j), \label{lol59} \\
& \sum_{q=1}^{+\infty} \frac{1}{q^{2}+m^{2}} \frac{(-1)^{q}}{q^{v_{1}}} = \frac{(-1)^{\frac{v_{1}}{2}}}{m^{v_{1}}} \frac{m \pi \frac{1}{\sinh(m\pi)}-1}{2m^{2}} + \sum_{j=0}^{\frac{v_{1}}{2}-1} \frac{(-1)^{j}}{m^{2+2j}} \bigg( \frac{1}{2^{v_{1}-2j-1}}-1 \bigg) \zeta(v_{1}-2j), \label{lol60} \\
& \sum_{\substack{q=1 \\ q\, \mathrm{odd}}}^{+\infty} \frac{1}{q^{2}+m^{2}} \frac{1}{q^{v_{1}}} = \frac{(-1)^{\frac{v_{1}}{2}}}{m^{v_{1}}} \frac{\pi \tanh(\frac{m\pi}{2})}{4m} + \sum_{j=0}^{\frac{v_{1}}{2}-1} \frac{(-1)^{j}}{m^{2+2j}} \bigg( 1-\frac{1}{2^{v_{1}-2j}} \bigg)\zeta(v_{1}-2j), \label{lol61}
\end{align}
where $\zeta$ is the Riemann zeta function (see e.g. \cite[Chapter 25]{NIST} for the definition of $\zeta$).
\end{lemma}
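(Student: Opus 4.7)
My plan is to reduce each of the three sums to a known Mittag--Leffler expansion plus an explicit linear combination of zeta values. The key ingredient is the partial-fraction identity, valid for any even $v_{1}\in 2\mathbb{N}$, $m\in(0,+\infty)$ and $q\in\mathbb{N}_{>0}$:
\begin{align*}
\frac{1}{q^{v_{1}}(q^{2}+m^{2})} \;=\; \frac{(-1)^{v_{1}/2}}{m^{v_{1}}}\frac{1}{q^{2}+m^{2}} \;+\; \sum_{j=0}^{v_{1}/2-1}\frac{(-1)^{j}}{m^{2+2j}}\,\frac{1}{q^{v_{1}-2j}}.
\end{align*}
I would prove this by induction on $v_{1}/2$, starting from the trivial telescoping $\tfrac{1}{q^{v_{1}}(q^{2}+m^{2})}=\tfrac{1}{m^{2}q^{v_{1}}}-\tfrac{1}{m^{2}q^{v_{1}-2}(q^{2}+m^{2})}$; alternatively it is simply the partial fraction decomposition of $\tfrac{1}{x^{v_{1}/2}(x+m^{2})}$ in the variable $x=q^{2}$.

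Given this identity, I would sum over $q\in\mathbb{N}_{>0}$ term by term (this is justified by absolute convergence once $v_{1}\geq 2$; the case $v_{1}=0$ is immediate). The right-hand side then splits into one ``Mittag--Leffler'' piece and a finite combination of zeta values, with the recipe depending on which of the three sums is being evaluated:
\begin{itemize}
\item For \eqref{lol59}, I use the classical expansion $\sum_{q=1}^{+\infty}\tfrac{1}{q^{2}+m^{2}}=\tfrac{m\pi\coth(m\pi)-1}{2m^{2}}$ together with $\sum_{q=1}^{+\infty}q^{-s}=\zeta(s)$.
\item For \eqref{lol60}, I multiply the partial-fraction identity by $(-1)^{q}$, and apply the expansion $\sum_{q=1}^{+\infty}\tfrac{(-1)^{q}}{q^{2}+m^{2}}=\tfrac{m\pi/\sinh(m\pi)-1}{2m^{2}}$ and Dirichlet's formula $\sum_{q=1}^{+\infty}\tfrac{(-1)^{q}}{q^{s}}=(2^{1-s}-1)\zeta(s)=\bigl(\tfrac{1}{2^{s-1}}-1\bigr)\zeta(s)$ with $s=v_{1}-2j$.
\item For \eqref{lol61}, I restrict the sum to odd $q$ and apply $\sum_{q\text{ odd}}\tfrac{1}{q^{2}+m^{2}}=\tfrac{\pi\tanh(m\pi/2)}{4m}$ together with $\sum_{q\text{ odd}}q^{-s}=(1-2^{-s})\zeta(s)$; the prefactor $\bigl(1-\tfrac{1}{2^{v_{1}-2j}}\bigr)$ in the stated formula matches exactly this Dirichlet series value.
\end{itemize}
Inserting each closed form into the partial fraction identity and collecting terms yields \eqref{lol59}--\eqref{lol61} after rearrangement.

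There is no real analytic obstacle here; the only points to be careful about are that the partial-fraction identity requires $v_{1}$ even (which is the standing hypothesis, since $v_{1}\in 2\mathbb{N}$), and that the Mittag--Leffler series for $\coth$, $1/\sinh$ and $\tanh$ are the ``symmetric'' sums (one typically writes $\pi\coth(\pi z)=\tfrac{1}{z}+2z\sum_{q\geq 1}\tfrac{1}{z^{2}+q^{2}}$, etc.), which I would state once and reuse. Thus the main task is purely bookkeeping: match the signs and the prefactors $\tfrac{1}{2^{v_{1}-2j-1}}-1$ versus $1-\tfrac{1}{2^{v_{1}-2j}}$ in \eqref{lol60} and \eqref{lol61} arising from the even, alternating, and odd sums, respectively.
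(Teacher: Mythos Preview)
Your proposal is correct and essentially identical to the paper's proof: the paper uses the same partial-fraction identity \(\tfrac{1}{q^{v_1}(q^2+m^2)}=\tfrac{(-1)^{v_1/2}}{m^{v_1}(q^2+m^2)}+\sum_{j=0}^{v_1/2-1}\tfrac{(-1)^j}{m^{2+2j}q^{v_1-2j}}\), sums it, and inserts the classical Mittag--Leffler expansions together with the zeta values you list. The only cosmetic difference is that the paper first observes \((\ref{lol59})-(\ref{lol60})=2\cdot(\ref{lol61})\) and thus carries out the computation for only two of the three identities, whereas you do all three directly.
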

\begin{proof}
Taking the difference between \eqref{lol59} and \eqref{lol60} and then dividing by $2$ yields \eqref{lol61}. Hence it is enough to prove two of these identities. For $v_{1}=0$, \eqref{lol59} and \eqref{lol61} follow respectively from \cite[1.421.4 and 1.421.2]{GRtable}. To prove \eqref{lol59} and \eqref{lol61} for general $v_{1}\in 2\N$, we use the following decomposition
\begin{align}\label{lol63}
\frac{1}{q^{2}+m^{2}} \frac{1}{q^{v_{1}}} = \frac{\frac{(-1)^{\frac{v_{1}}{2}}}{m^{v_{1}}}}{q^{2}+m^{2}} + \sum_{j=0}^{\frac{v_{1}}{2}-1} \frac{\frac{\frac{d^{j}}{d\tilde{q}^{j}}\frac{1}{\tilde{q}+m^{2}}|_{\tilde{q}=0}}{j!}}{q^{v_{1}-2j}} = \frac{\frac{(-1)^{\frac{v_{1}}{2}}}{m^{v_{1}}}}{q^{2}+m^{2}} + \sum_{j=0}^{\frac{v_{1}}{2}-1} \frac{\frac{(-1)^{j}}{m^{2+2j}}}{q^{v_{1}-2j}}.
\end{align}
Summing \eqref{lol63}, and simplifying using \eqref{lol59} with $v_{1}=0$, \eqref{lol61} with $v_{1}=0$, and the identities
\begin{align*}
& \sum_{q=1}^{+\infty} \frac{1}{q^{v_{1}}} = \zeta(v_{1}), \qquad \sum_{\substack{q=2 \\ q \, \mathrm{even}}}^{+\infty} \frac{1}{q^{v_{1}}} = \sum_{q=1}^{+\infty} \frac{1}{(2q)^{v_{1}}} = \frac{1}{2^{v_{1}}}\zeta(v_{1}), \\
& \sum_{\substack{q=1 \\ q \, \mathrm{odd}}}^{+\infty} \frac{1}{q^{v_{1}}} = \sum_{q=1}^{+\infty} \frac{1}{q^{v_{1}}} - \sum_{\substack{q=2 \\ q \, \mathrm{even}}}^{+\infty} \frac{1}{q^{v_{1}}} = \bigg( 1-\frac{1}{2^{v_{1}}} \bigg) \zeta(v_{1}),
\end{align*}
we find \eqref{lol59}, \eqref{lol61} for general $v_{1}\in 2\N$.
\end{proof}
Substituting \eqref{lol59} and \eqref{lol60} in \eqref{lol62}, and simplifying, we find that $\tilde{\mathcal{C}}_{b,m}^{R} = \mathcal{C}_{b,m}^{R}$, where $\mathcal{C}_{b,m}^{R}$ is as in Theorem \ref{thm:ML rectangle nu}. The density of $\nu$ along the left, the top and the bottom sides can be obtained similarly. This finishes the proof of Theorem \ref{thm:ML rectangle nu}.

\medskip \begin{proof}[Proof of Corollary \ref{coro:ML square nu}]
With $a_{2}=c_{2}=-a_{1}=-c_{1}$ (and general $b$), the coefficients in the statement of Theorem \ref{thm:ML rectangle nu} satisfy $\mathcal{C}_{b,m}^{R} = \mathcal{C}_{b,m}^{L} = \mathcal{C}_{b,m}^{T} = \mathcal{C}_{b,m}^{B}$ for all $m\in \N_{>0}$ and $\mathcal{C}_{b,m}^{R} = 0$ if $m$ is even. Using the functional relation $\coth (y) - \frac{1}{\sinh(y)} \equiv \tanh(\frac{y}{2})$, it is easy to check that $\mathcal{C}_{b,2m+1}^{R} = (-1)^{m}\mathcal{C}_{b,m}$, where $\mathcal{C}_{b,m}$ is as in the statement of Corollary \ref{coro:ML square nu}. Since $\sin(\frac{y+\frac{c}{2}}{c}(1+2m)\pi) = (-1)^{m} \cos(\frac{y}{c}(1+2m)\pi)$ for all $y\in \R$, \eqref{square right} is equivalent to \eqref{density square general b}. 
\end{proof} 

\begin{proof}[Proof of Theorem \ref{thm:Ginibre rectangle nu}]
With $b=1$ (and general $a_{1},a_{2},c_{1},c_{2}$), the coefficients in the statement of Theorem \ref{thm:ML rectangle nu} satisfy $\mathcal{C}_{b,m}^{R} = \mathcal{C}_{b,m}^{L} = \mathcal{C}_{b,m}^{T} = \mathcal{C}_{b,m}^{B} = 0$ if $m$ is even, while if $m$ is odd we have
\begin{align*}
& \mathcal{C}_{b,m}^{R} = \mathcal{C}_{b,m}^{L} = \frac{4(c_{2}-c_{1})}{\pi^{3}} \frac{\tanh(\frac{(a_{2}-a_{1})m}{(c_{2}-c_{1})}\frac{\pi}{2})}{m^{2}}, \qquad \mathcal{C}_{b,m}^{T} = \mathcal{C}_{b,m}^{B} = \frac{4(a_{2}-a_{1})}{\pi^{3}} \frac{\tanh(\frac{(c_{2}-c_{1})m}{(a_{2}-a_{1})}\frac{\pi}{2})}{m^{2}},
\end{align*}
where we have again used the functional relation $\coth (y) - \frac{1}{\sinh(y)} \equiv \tanh(\frac{y}{2})$. Now \eqref{square right left b1} and \eqref{square top bottom b1} directly follow from Theorem \ref{thm:ML rectangle nu} with $b=1$.
\end{proof}

\subsection{Balayage measure: behavior near a corner}\label{subsection: behavior near a corner}
Let $\mu := \frac{d^{2}z}{\pi}$, $a_{2}>a_{1}>0$, $c_{2}>c_{1}>0$, and define $U = \{z: a_{1} < \re z < a_{2}, \; c_{1} < \im z < c_{2}\}$. Recall from \eqref{square right left b1} that the measure $\nu:=\mathrm{Bal}(\mu|_{U},\partial U)$ is given for $z=a_{2}+iy$, $y\in (c_{1},c_{2})$ by
\begin{align}\label{lol94}
\frac{d\nu(z)}{dy} = \sum_{m=0}^{+\infty} \frac{4(c_{2}-c_{1})}{\pi^{3}} \frac{\tanh(\frac{a_{2}-a_{1}}{c_{2}-c_{1}}(1+2m)\frac{\pi}{2})}{(1+2m)^{2}} \sin \bigg( \frac{y-c_{1}}{c_{2}-c_{1}} (1+2m) \pi \bigg).
\end{align}
Conjecture \ref{conj:behavior of balayage measure} (with $p=4$ and $b=1$) predicts that $\frac{d\nu(z)}{dy} \asymp (y-c_{1})\log(y-c_{1})$ as $y\to c_{1}$, $y>c_{1}$ (see also Remark \ref{remark:vanishing of balayage near a corner}). In this subsection, we show that this is indeed the case. For $y\in (c_{1},c_{2})$, define $M=\lfloor \frac{1}{y-c_{1}}\rfloor \in \N$. Letting $y\to c_{1}$ in \eqref{lol94}, we get
\begin{align*}
\frac{d\nu(z)}{dy} & = (y-c_{1})\sum_{m=0}^{M} \frac{4}{\pi^{2}} \frac{\tanh(\frac{a_{2}-a_{1}}{c_{2}-c_{1}}(1+2m)\frac{\pi}{2})}{1+2m} +\bigO\bigg( (y-c_{1})^{3} \sum_{m=0}^{M} (1+2m) + \sum_{m=M+1}^{+\infty}  \frac{1}{(1+2m)^{2}} \bigg),
\end{align*}
where we have used that $|\sin x |\leq 1$ and $|\tanh x| \leq 1$ for all $x \geq 0$. Since $M=\lfloor \frac{1}{y-c_{1}}\rfloor$, we have
\begin{align*}
(y-c_{1})^{3} \sum_{m=0}^{M} (1+2m) + \sum_{m=M+1}^{+\infty}  \frac{1}{(1+2m)^{2}} \lesssim (y-c_{1})^{3} M^{2} + \frac{1}{M} \lesssim y-c_{1}.
\end{align*}
Using also that $\tanh x = 1 + \bigO(e^{-2x})$ as $x\to +\infty$, we obtain
\begin{align*}
\frac{d\nu(z)}{dy} & = (y-c_{1})\sum_{m=0}^{M} \frac{4}{\pi^{2}} \frac{1}{1+2m} +\bigO( y-c_{1} ), \qquad \mbox{as } y \to c_{1}.
\end{align*}
A first-order Riemann sum analysis shows that $\sum_{m=0}^{M} \frac{1}{1+2m} = \frac{\log M}{2} + \bigO(1)$ as $M\to + \infty$. Hence,
\begin{align*}
\frac{d\nu(z)}{dy} = \frac{2}{\pi^{2}} (y-c_{1}) \log \frac{1}{y-c_{1}} +\bigO( y-c_{1} ), \qquad \mbox{as } y \to c_{1},
\end{align*}
as desired.
\subsection{Proof of Theorem \ref{thm: some nice series}}
Recall that $T_{v,\alpha}$ is defined for $v\in [0,+\infty)$ and $\alpha \in (0,+\infty)$ in \eqref{def of Tva}. In this subsection, we obtain a simplified expression for $T_{v,\alpha}$ when $v\in 2\N$ and $\alpha \in (0,+\infty)$. Let $\mu$ be supported on $S:=\{z:|z|\leq 1\}$ and given by $d\mu(z) = \frac{d^{2}z}{\pi}$. Let $a_{2}=-a_{1}>0$ and $c_{2}=-c_{1}>0$ be such that $U=\{z: \re z\in (a_{1},a_{2}), \im z \in (c_{1},c_{2})\}\subset S$, and let $\nu := \mathrm{Bal}(\mu|_{U},\partial U)$. Then $\nu$ is given by Theorem \ref{thm:Ginibre rectangle nu} with $\tau=0$. By Lemma \ref{lemma:further properties of nu} (iii), we have 
\begin{align}\label{moments match for the square}
\int_{\partial U} z^{n} d\nu(z) = \int_{U} z^{n} d\mu(z), \qquad \mbox{for any } n \in \N.
\end{align}
In what follows, we let $\alpha := \frac{a_{2}-a_{1}}{c_{2}-c_{1}} = \frac{a_{2}}{c_{2}} \in (0,+\infty)$. 
\begin{lemma}
Let $n \in \N$. Then $\int_{\partial U} z^{n} d\nu(z) = 0$ if $n$ is odd, and
\begin{align}\label{lol69}
\int_{U} z^{2n} d\mu(z) = \frac{4 a_{2}^{2n+2}}{\pi (2n+1)(2n+2)}\frac{(\sqrt{\alpha} + \frac{i}{\sqrt{\alpha}})^{2+2n}-(\sqrt{\alpha} - \frac{i}{\sqrt{\alpha}})^{2+2n}}{2i \alpha^{1+n}}.
\end{align}
\end{lemma}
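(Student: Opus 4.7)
The plan is straightforward: both claims follow from direct computation combined with the symmetry of $U$ under $z\mapsto -z$. Since $a_{2}=-a_{1}$ and $c_{2}=-c_{1}$, the rectangle $U$ is invariant under negation, so $\int_{U} z^{n}d\mu(z)=0$ for odd $n$; by \eqref{moments match for the square}, the corresponding moments of $\nu$ also vanish. This disposes of the first assertion.

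For the second assertion, I would simply evaluate the double integral explicitly. Writing $z=x+iy$ and integrating first in $y$ yields
\begin{align*}
\int_{U} z^{2n}d\mu(z)=\frac{1}{\pi}\int_{-a_{2}}^{a_{2}}\int_{-c_{2}}^{c_{2}}(x+iy)^{2n}\,dy\,dx=\frac{1}{\pi i(2n+1)}\int_{-a_{2}}^{a_{2}}\bigl[(x+ic_{2})^{2n+1}-(x-ic_{2})^{2n+1}\bigr]dx,
\end{align*}
and then integrating in $x$ gives
\begin{align*}
\int_{U} z^{2n}d\mu(z)=\frac{(a_{2}+ic_{2})^{2n+2}-(a_{2}-ic_{2})^{2n+2}-(-a_{2}+ic_{2})^{2n+2}+(-a_{2}-ic_{2})^{2n+2}}{\pi i(2n+1)(2n+2)}.
\end{align*}
Since $2n+2$ is even, $(-a_{2}\pm ic_{2})^{2n+2}=(a_{2}\mp ic_{2})^{2n+2}$, so the four terms collapse to $2[(a_{2}+ic_{2})^{2n+2}-(a_{2}-ic_{2})^{2n+2}]$.

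To cast the answer in the form required by \eqref{lol69}, I would use the factorization
\begin{align*}
a_{2}\pm ic_{2}=\sqrt{a_{2}c_{2}}\Bigl(\sqrt{\alpha}\pm\frac{i}{\sqrt{\alpha}}\Bigr),\qquad \alpha=\frac{a_{2}}{c_{2}},
\end{align*}
so that $(a_{2}\pm ic_{2})^{2n+2}=(a_{2}c_{2})^{n+1}(\sqrt{\alpha}\pm i/\sqrt{\alpha})^{2n+2}=\frac{a_{2}^{2n+2}}{\alpha^{n+1}}(\sqrt{\alpha}\pm i/\sqrt{\alpha})^{2n+2}$. Substituting back gives exactly \eqref{lol69}. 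There is no genuine obstacle here — the computation is a short exercise in iterated integration and the only ``trick'' is the rewriting of $a_{2}\pm ic_{2}$ in terms of $\sqrt{\alpha}\pm i/\sqrt{\alpha}$, which is forced by the form of the target expression.
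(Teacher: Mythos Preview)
Your proof is correct, and your route is slightly different from the paper's. For the vanishing of odd moments, you invoke the symmetry $z\mapsto -z$ of $U$ and $\mu$ and then transfer to $\nu$ via \eqref{moments match for the square}; the paper instead expands $(x+iy)^{n}$ binomially, integrates each monomial, and reads off the vanishing from parity of the factors $a_{2}^{n-j+1}-a_{1}^{n-j+1}$ and $c_{2}^{j+1}-c_{1}^{j+1}$. For the even moments, you integrate $(x+iy)^{2n}$ directly using its antiderivatives in $y$ and then in $x$, landing immediately on a closed form in terms of $(a_{2}\pm ic_{2})^{2n+2}$, which you then factor through $a_{2}\pm ic_{2}=\sqrt{a_{2}c_{2}}(\sqrt{\alpha}\pm i/\sqrt{\alpha})$. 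The paper proceeds by the same binomial expansion as in the odd case, then recognizes the surviving even-$j$ sum as $\tfrac12[(1+i/\alpha)^{n+2}-(1-i/\alpha)^{n+2}]$ after the identity $\binom{n}{j}\tfrac{1}{(n-j+1)(j+1)}=\tfrac{1}{(n+1)(n+2)}\binom{n+2}{j+1}$. Your approach is shorter and avoids any binomial identity; the paper's has the minor virtue of making the binomial structure of the answer visible along the way.
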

\begin{proof}
A direct computation gives
\begin{align*}
\int_{U} z^{n} d\mu(z) & = \int_{a_{1}}^{a_{2}}\int_{c_{1}}^{c_{2}} (x+iy)^{n} \frac{dxdy}{\pi} = \sum_{j=0}^{n} \binom{n}{j} i^{j} \int_{a_{1}}^{a_{2}}\int_{c_{1}}^{c_{2}} y^{j}x^{n-j} \frac{dxdy}{\pi} \\
& = \frac{1}{\pi} \sum_{j=0}^{n} \binom{n}{j} i^{j} \frac{a_{2}^{n-j+1}-a_{1}^{n-j+1}}{n-j+1} \frac{c_{2}^{j+1}-c_{1}^{j+1}}{j+1}.
\end{align*}
Since $a_{2}=-a_{1}>0$ and $c_{2}=-c_{1}>0$, the above is $0$ if $n$ is odd, while for $n$ even we have
\begin{align*}
& \int_{U} z^{n} d\mu(z) = \frac{4}{\pi} \sum_{\substack{j=0 \\ j \, \mathrm{even}}}^{n} \binom{n}{j} i^{j} \frac{a_{2}^{n-j+1}}{n-j+1}\frac{c_{2}^{j+1}}{j+1} = \frac{4}{\pi (n+2)(n+1)} \sum_{\substack{j=0 \\ j \, \mathrm{even}}}^{n} \binom{n+2}{j+1} i^{j} a_{2}^{n-j+1}c_{2}^{j+1} \\
& = \frac{4 a_{2}^{n+2} (-i)}{\pi (n+2)(n+1)} \sum_{\substack{j=0 \\ j \, \mathrm{even}}}^{n} \binom{n+2}{j+1} \bigg(\frac{i}{\alpha}\bigg)^{j+1} = \frac{4 a_{2}^{n+2}}{\pi (n+2)(n+1)} \frac{(1+\frac{i}{\alpha})^{2+n}-(1-\frac{i}{\alpha})^{2+n}}{2i},
\end{align*}
and \eqref{lol69} follows.
\end{proof}

We now evaluate $\int_{\partial U} z^{n} d\nu(z)$ explicitly.
\begin{lemma}
Let $n \in \N$. Then $\int_{\partial U} z^{n} d\nu(z)=0$ if $n$ is odd, and 
\begin{align}\label{lol68}
\int_{\partial U} z^{2n} d\nu(z) = \frac{a_{2}^{2n+2}}{\pi} \sum_{v=0}^{n} \frac{(2n)!}{(2n-2v)!} \frac{4^{3+v}}{\pi^{3+2v}} \frac{(\sqrt{\alpha} - \frac{i}{\sqrt{\alpha}})^{2n-2v} + (\sqrt{\alpha} + \frac{i}{\sqrt{\alpha}})^{2n-2v}}{\alpha^{n+1}} T_{2v,\alpha}
\end{align}
\end{lemma}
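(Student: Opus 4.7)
The plan is to compute $\int_{\partial U} z^{n}\,d\nu(z)$ side-by-side using the explicit Fourier-series representation of $\nu$ from Theorem \ref{thm:Ginibre rectangle nu} with $\tau=0$. Set $A:=a_{2}=-a_{1}$, $C:=c_{2}=-c_{1}$, $\alpha=A/C$, and $\lambda_{m}:=(1+2m)\pi/(2C)$, $\lambda_{m}':=(1+2m)\pi/(2A)$. The symmetries $a_{1}=-a_{2}$, $c_{1}=-c_{2}$ make $\mathcal{C}^{R}_{1,m}=\mathcal{C}^{L}_{1,m}$ and $\mathcal{C}^{T}_{1,m}=\mathcal{C}^{B}_{1,m}$, and a single use of $\sin\!\bigl(\tfrac{(1+2m)\pi(y+C)}{2C}\bigr)=(-1)^{m}\cos(\lambda_{m}y)$ turns $d\nu_{R}/dy$ into an even function of $y$; likewise $d\nu_{T}/dx$ is an even function of $x$.

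For odd $n$, the polynomial $(A+iy)^{n}+(-A+iy)^{n}$ is odd in $y$, so it pairs with the even density $d\nu_{R}/dy$ to give $0$; the same parity argument kills the Top+Bottom contribution. This disposes of the odd moments. For $n=2n'$ even, I would use the expansion
\begin{align*}
(A+iy)^{2n'}+(-A+iy)^{2n'}=2\sum_{s=0}^{n'}\binom{2n'}{2s}(-1)^{s}A^{2n'-2s}y^{2s}
\end{align*}
and reduce everything to the one-variable integral $\int_{-C}^{C}y^{2s}\cos(\lambda_{m}y)\,dy$. Iterated integration by parts, combined with $\cos(\lambda_{m}C)=0$ and $\sin(\lambda_{m}C)=(-1)^{m}$, gives the closed form
\begin{align*}
\int_{-C}^{C}y^{2s}\cos(\lambda_{m}y)\,dy=2(-1)^{m}\sum_{p=0}^{s}(-1)^{p}\frac{(2s)!}{(2s-2p)!}\frac{(2C)^{2p+1}C^{2s-2p}}{((1+2m)\pi)^{2p+1}}.
\end{align*}

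Substituting and reindexing the double sum in $s$ and $p$ by setting $s=p+q$, the $q$-sum becomes $\sum_{q=0}^{n'-p}\binom{2n'-2p}{2q}(-1)^{q}\alpha^{-2q}=\tfrac12\bigl[(1+i/\alpha)^{2n'-2p}+(1-i/\alpha)^{2n'-2p}\bigr]$. Using the elementary identity $(1+i/\alpha)\sqrt{\alpha}=\sqrt{\alpha}+i/\sqrt{\alpha}$, this equals $\alpha^{p-n'}\mathcal{S}_{p}$, where
\begin{align*}
\mathcal{S}_{p}:=\frac{(\sqrt{\alpha}+i/\sqrt{\alpha})^{2n'-2p}+(\sqrt{\alpha}-i/\sqrt{\alpha})^{2n'-2p}}{2},
\end{align*}
and one arrives at the Right+Left contribution
\begin{align*}
a_{2}^{2n'+2}\sum_{p=0}^{n'}\frac{2^{2p+6}}{\pi^{2p+4}}\frac{(2n')!}{(2n'-2p)!}\mathcal{S}_{p}\,\alpha^{-p-n'-2}A_{p},\qquad A_{p}:=\sum_{m=0}^{\infty}\frac{\tanh(\alpha(1+2m)\pi/2)}{(1+2m)^{2p+3}}.
\end{align*}
Performing the entirely analogous calculation on the Top+Bottom pair, now with $\alpha$ replaced by $\alpha^{-1}$ and using the mirror identity $1+i\alpha=i\sqrt{\alpha}(\sqrt{\alpha}-i/\sqrt{\alpha})$ (which produces an extra factor $(-1)^{n'-p}$ that combines with the $(-1)^{n}$ from expanding $(x\pm iC)^{2n'}$), I would obtain
\begin{align*}
a_{2}^{2n'+2}\sum_{p=0}^{n'}\frac{(-1)^{p}\,2^{2p+6}}{\pi^{2p+4}}\frac{(2n')!}{(2n'-2p)!}\mathcal{S}_{p}\,\alpha^{\,p-n'}B_{p},\qquad B_{p}:=\sum_{m=0}^{\infty}\frac{\tanh(\alpha^{-1}(1+2m)\pi/2)}{(1+2m)^{2p+3}}.
\end{align*}

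Adding the two contributions and factoring out $\alpha^{-n'-1}$, the bracket collapses to $\alpha^{-p-1}A_{p}+(-1)^{p}\alpha^{p+1}B_{p}=2T_{2p,\alpha}$ by the very definition \eqref{def of Tva}. Rewriting $\frac{2^{2p+7}}{\pi^{2p+4}}=\frac{1}{\pi}\cdot\frac{4^{3+p}}{\pi^{3+2p}}\cdot 2$ and $2\mathcal{S}_{p}=(\sqrt{\alpha}+i/\sqrt{\alpha})^{2n'-2p}+(\sqrt{\alpha}-i/\sqrt{\alpha})^{2n'-2p}$ matches the expression to \eqref{lol68} term by term. The main obstacle is purely bookkeeping: tracking the powers of $\alpha$, the alternating signs, and the factors of $2$ coming from each side of the rectangle. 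The conceptual heart is the pair of identities $(1+i/\alpha)^{2}=(\sqrt{\alpha}+i/\sqrt{\alpha})^{2}/\alpha$ and $(1+i\alpha)^{2}=-\alpha(\sqrt{\alpha}-i/\sqrt{\alpha})^{2}$, which ensure that the Right+Left and Top+Bottom binomial sums produce the same symmetric function $\mathcal{S}_{p}$, so that the two $\tanh$-series combine exactly into $T_{2p,\alpha}$.
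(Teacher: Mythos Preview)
Your proof is correct and follows essentially the same approach as the paper: both start from the Fourier-series form of $\nu$ in Theorem \ref{thm:Ginibre rectangle nu} (with $\tau=0$), expand $z^{n}$ on each pair of opposite sides via the binomial theorem, integrate the resulting monomials against the sine/cosine modes by repeated integration by parts, reindex the double sum, and collapse the inner binomial sum via $\sum_{q}\binom{N}{2q}(-1)^{q}\beta^{2q}=\tfrac{1}{2}\bigl[(1+i\beta)^{N}+(1-i\beta)^{N}\bigr]$ together with the identities $\sqrt{\alpha}(1+i/\alpha)=\sqrt{\alpha}+i/\sqrt{\alpha}$ and $1+i\alpha=i\sqrt{\alpha}(\sqrt{\alpha}-i/\sqrt{\alpha})$. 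The only cosmetic difference is that you first rewrite the density as an even cosine series via $\sin\!\bigl(\tfrac{(1+2m)\pi(y+C)}{2C}\bigr)=(-1)^{m}\cos(\lambda_{m}y)$ before integrating, whereas the paper applies the integration-by-parts formula \eqref{int xp sin} directly to the sine series; the bookkeeping is otherwise identical.
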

\begin{proof}
Let $n \in \N$. By Theorem \ref{thm:Ginibre rectangle nu} with $\tau=0$, we have
\begin{align}
& \int_{\partial U} z^{n} d\nu(z) = \int_{c_{1}}^{c_{2}} \big( (a_{1}+iy)^{n} + (a_{2}+iy)^{n} \big) d\nu(a_{2}+iy) + \int_{a_{1}}^{a_{2}} \big( (x+ic_{1})^{n} + (x+ic_{2})^{n} \big) d\nu(x+ic_{2}) \nonumber \\
& = \int_{c_{1}}^{c_{2}} \sum_{j=0}^{n} \binom{n}{j} i^{j} y^{j} (a_{1}^{n-j} + a_{2}^{n-j}) \sum_{m=0}^{+\infty} \frac{4(c_{2}-c_{1})}{\pi^{3}}\frac{\tanh(\alpha(1+2m)\frac{\pi}{2})}{(1+2m)^{2}} \sin \bigg( \frac{y-c_{1}}{c_{2}-c_{1}} (1+2m)\pi \bigg) dy \nonumber \\
& + \int_{a_{1}}^{a_{2}} \sum_{j=0}^{n} \binom{n}{j} i^{n-j} x^{j} (c_{1}^{n-j} + c_{2}^{n-j}) \sum_{m=0}^{+\infty} \frac{4(a_{2}-a_{1})}{\pi^{3}}\frac{\tanh(\alpha^{-1}(1+2m)\frac{\pi}{2})}{(1+2m)^{2}} \sin \bigg( \frac{x-a_{1}}{a_{2}-a_{1}} (1+2m)\pi \bigg) dx. \label{lol73}
\end{align}
Since $c_{2}=-c_{1}$ and $a_{2}=-a_{1}$, for any $m\in \N$ and any $j$ odd, we have
\begin{align}\label{lol72}
\int_{c_{1}}^{c_{2}} y^{j} \sin \bigg( \frac{y-c_{1}}{c_{2}-c_{1}} (1+2m)\pi \bigg) dy = 0, \qquad \int_{a_{1}}^{a_{2}} x^{j} \sin \bigg( \frac{x-a_{1}}{a_{2}-a_{1}} (1+2m)\pi \bigg) dx = 0.
\end{align}
Hence only the terms corresponding to $j$ even  contribute in \eqref{lol73}. Since $a_{1}^{n-j} + a_{2}^{n-j}=0=c_{1}^{n-j} + c_{2}^{n-j}$ whenever $j$ is even and $n$ is odd, we conclude that $\int_{\partial U} z^{n} d\nu(z)=0$ if $n$ is odd. From now we assume that $n$ is even. For $j$ even,  the left-hand sides in \eqref{lol72} can be evaluated explicitly using \eqref{int xp sin}, and after simplification (using again $a_{2}=-a_{1}$ and $c_{2}=-c_{1}$) we get
\begin{align*}
& \int_{\partial U} z^{n} d\nu(z) =  \sum_{\substack{j=0 \\ j \, \mathrm{even}}}^{n} \binom{n}{j} i^{j} a_{2}^{n-j} \sum_{m=0}^{+\infty} \frac{32c_{2}}{\pi^{3}}\frac{\tanh(\alpha(1+2m)\frac{\pi}{2})}{(1+2m)^{2}} \sum_{\substack{v=0 \\ v \, \mathrm{even}}} \frac{j!(-1)^{\frac{v}{2}} c_{2}^{j-v}}{(j-v)!} \bigg( \frac{2c_{2}}{(1+2m)\pi} \bigg)^{v+1}  \\
& + \sum_{\substack{j=0 \\ j \, \mathrm{even}}}^{n} \binom{n}{j} i^{n-j} c_{2}^{n-j} \sum_{m=0}^{+\infty} \frac{32a_{2}}{\pi^{3}}\frac{\tanh(\alpha^{-1}(1+2m)\frac{\pi}{2})}{(1+2m)^{2}} \sum_{\substack{v=0 \\ v \, \mathrm{even}}} \frac{j!(-1)^{\frac{v}{2}} a_{2}^{j-v}}{(j-v)!} \bigg( \frac{2a_{2}}{(1+2m)\pi} \bigg)^{v+1}.
\end{align*}
Using $\sum_{\substack{j=0 \\ j \, \mathrm{even}}}^{n} \sum_{\substack{v=0 \\ v \, \mathrm{even}}}^{j} = \sum_{\substack{v=0 \\ v \, \mathrm{even}}}^{n} \sum_{\substack{j=v \\ j \, \mathrm{even}}}^{n}$, regrouping the sums and changing indices, we get
\begin{align*}
& \int_{\partial U} z^{n} d\nu(z) = \frac{a_{2}^{n+2}}{\pi} \sum_{\substack{v=0 \\ v \, \mathrm{even}}}^{n}  \frac{n!(-1)^{\frac{v}{2}}}{(n-v)!} \frac{2^{6+v}}{\pi^{3+v}} \sum_{\substack{j=0 \\ j \, \mathrm{even}}}^{n-v} \binom{n-v}{j} \\
& \hspace{1cm} \times \bigg( \frac{i^{j+v}}{\alpha^{j+v+2}} \sum_{m=0}^{+\infty} \frac{\tanh(\alpha(1+2m)\frac{\pi}{2})}{(1+2m)^{3+v}} + \frac{i^{n-v-j}}{\alpha^{n-v-j}} \sum_{m=0}^{+\infty} \frac{\tanh(\alpha^{-1}(1+2m)\frac{\pi}{2})}{(1+2m)^{3+v}} \bigg).
\end{align*}
After simplifying the above using
\begin{align*}
\sum_{\substack{j=0 \\ j \, \mathrm{even}}}^{n-v} \binom{n-v}{j} \beta^{j} = \frac{(1+\beta)^{n-v} + (1-\beta)^{n-v}}{2}, \qquad \beta \in \C,
\end{align*}
we find \eqref{lol68}.
\end{proof}

Theorem \ref{thm: some nice series} now directly follows by combining \eqref{moments match for the square}, \eqref{lol69} and \eqref{lol68}.

\subsection{The constant $C$: proof of Theorem \ref{thm:Ginibre rectangle C}}
Let $\tau \in [0,1)$, $Q(z)=\frac{1}{1-\tau^{2}}\big( |z|^{2}-\tau \, \re z^{2} \big) = \frac{x^{2}}{1+\tau}+\frac{y^{2}}{1-\tau}$, $z=x+iy$, $x,y\in \R$. Recall that $\mu$ and $S$ are defined in \eqref{mu S EG}, and suppose that $a_{1},a_{2},c_{1},c_{2}$ are such that $U\subset S$. Recall also from Theorem \ref{thm:Ginibre rectangle nu} that $\nu := \mathrm{Bal}(\mu|_{U},\partial U)$ is given by \eqref{square right left b1} and \eqref{square top bottom b1}.
\begin{lemma}
\begin{align}\label{int Q d2z RECTANGLE}
\int_{U}Q(z) d\mu(z) = \frac{1}{\pi(1-\tau^{2})} \bigg( \frac{(a_{2}^{3}-a_{1}^{3})(c_{2}-c_{1})}{3(1+\tau)} + \frac{(a_{2}-a_{1})(c_{2}^{3}-c_{1}^{3})}{3(1-\tau)} \bigg).
\end{align}
\end{lemma}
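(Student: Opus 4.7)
The plan is entirely computational: the identity is a direct evaluation of a separable double integral, so there is essentially no obstacle.

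First, I would recall that by \eqref{mu S EG} the measure $\mu$ has constant density $\frac{1}{\pi(1-\tau^2)}$ with respect to $d^2z$ on all of $\mathbb{C}$, and the potential in Cartesian coordinates reads
\begin{equation*}
Q(x+iy) \;=\; \frac{1}{1-\tau^{2}}\bigl(|z|^{2}-\tau\,\re z^{2}\bigr) \;=\; \frac{x^{2}}{1+\tau}+\frac{y^{2}}{1-\tau}.
\end{equation*}
Hence
\begin{equation*}
\int_{U} Q(z)\,d\mu(z) \;=\; \frac{1}{\pi(1-\tau^{2})}\int_{a_{1}}^{a_{2}}\!\!\int_{c_{1}}^{c_{2}}\!\Bigl(\frac{x^{2}}{1+\tau}+\frac{y^{2}}{1-\tau}\Bigr)\,dy\,dx.
\end{equation*}

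Next, I would split the integrand into its two summands and use Fubini to factor each into a product of one-dimensional integrals over $[a_{1},a_{2}]$ and $[c_{1},c_{2}]$. The elementary evaluations $\int_{a_{1}}^{a_{2}} x^{2}\,dx = \frac{a_{2}^{3}-a_{1}^{3}}{3}$, $\int_{a_{1}}^{a_{2}} dx = a_{2}-a_{1}$, and the analogous ones in $y$, then yield directly
\begin{equation*}
\int_{U} Q(z)\,d\mu(z) \;=\; \frac{1}{\pi(1-\tau^{2})}\Bigl(\frac{(a_{2}^{3}-a_{1}^{3})(c_{2}-c_{1})}{3(1+\tau)}+\frac{(a_{2}-a_{1})(c_{2}^{3}-c_{1}^{3})}{3(1-\tau)}\Bigr),
\end{equation*}
which is exactly \eqref{int Q d2z RECTANGLE}. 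There is no subtlety: the only ingredients are the explicit density of $\mu$, the rectangular shape of $U$, and Fubini's theorem applied to a bounded continuous integrand on a bounded domain.
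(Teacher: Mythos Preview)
Your proposal is correct and follows essentially the same approach as the paper: the paper simply writes $\int_{U}Q(z)\,d\mu(z)=\int_{a_{1}}^{a_{2}}\int_{c_{1}}^{c_{2}}\bigl(\frac{x^{2}}{1+\tau}+\frac{y^{2}}{1-\tau}\bigr)\frac{dx\,dy}{\pi(1-\tau^{2})}$ and states that the claim follows from a straightforward computation, which is exactly what you have carried out in detail.
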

\begin{proof}
Substituting the definitions of $Q$, $\mu$ and $U$, we get
\begin{align*}
\int_{U}Q(z) d\mu(z) = \int_{a_{1}}^{a_{2}}\int_{c_{1}}^{c_{2}} \bigg( \frac{x^{2}}{1+\tau} + \frac{y^{2}}{1-\tau} \bigg) \frac{dxdy}{\pi(1-\tau^{2})},
\end{align*}
and the claim follows after a straightforward computation.
\end{proof}

\begin{lemma}
\begin{align}
& \int_{U}Q(z) d\nu(z) = \frac{1}{\pi(1-\tau^{2})} \bigg\{ \frac{(a_{2}-a_{1})(c_{2}-c_{1})}{2}\bigg( \frac{a_{1}^{2}+a_{2}^{2}}{1+\tau} + \frac{c_{2}^{2}+c_{1}^{2}}{1-\tau} \bigg) \nonumber \\
& - \frac{32}{\pi^{5}} \frac{(c_{2}-c_{1})^{4}}{1-\tau} \sum_{m=0}^{+\infty} \frac{\tanh(\frac{a_{2}-a_{1}}{c_{2}-c_{1}}(1+2m)\frac{\pi}{2})}{(1+2m)^{5}} - \frac{32}{\pi^{5}} \frac{(a_{2}-a_{1})^{4}}{1+\tau} \sum_{m=0}^{+\infty} \frac{\tanh(\frac{c_{2}-c_{1}}{a_{2}-a_{1}}(1+2m)\frac{\pi}{2})}{(1+2m)^{5}} \bigg\}. \label{lol71}
\end{align}
\end{lemma}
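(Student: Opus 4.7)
The plan is to split $\int_{\partial U} Q\,d\nu$ into four side-integrals using the explicit Fourier series given by Theorem \ref{thm:Ginibre rectangle nu}. Write $Q(x+iy) = \frac{x^{2}}{1+\tau} + \frac{y^{2}}{1-\tau}$. On the right side $z=a_{2}+iy$, this becomes $\frac{a_{2}^{2}}{1+\tau} + \frac{y^{2}}{1-\tau}$, on the left $\frac{a_{1}^{2}}{1+\tau} + \frac{y^{2}}{1-\tau}$ (with the same density as on the right), and analogously for top/bottom. Because both densities are trigonometric Fourier series with explicit $\tanh$-coefficients, everything reduces to computing the elementary integrals
\begin{align*}
I_{m,n} = \int_{c_{1}}^{c_{2}} y^{n} \sin\!\Big(\tfrac{y-c_{1}}{c_{2}-c_{1}}(1+2m)\pi\Big)dy, \qquad J_{m,n} = \int_{a_{1}}^{a_{2}} x^{n} \sin\!\Big(\tfrac{x-a_{1}}{a_{2}-a_{1}}(1+2m)\pi\Big)dx,
\end{align*}
for $n\in\{0,2\}$. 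Using \eqref{int xp sin} with $q=1+2m$ (so $(-1)^{q}=-1$) gives
\begin{align*}
I_{m,0} = \frac{2(c_{2}-c_{1})}{(1+2m)\pi}, \qquad I_{m,2} = \frac{(c_{1}^{2}+c_{2}^{2})(c_{2}-c_{1})}{(1+2m)\pi} - \frac{4(c_{2}-c_{1})^{3}}{(1+2m)^{3}\pi^{3}},
\end{align*}
and the analogous formulas for $J_{m,0}, J_{m,2}$ with $a$ in place of $c$. Interchange of sum and integral is justified exactly as in \eqref{lol56} via dominated convergence using the Wilbraham--Gibbs bound.

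Summing right and left (both of which share the density \eqref{square right left b1} carrying $\tanh(\alpha(1+2m)\pi/2)$) contributes
\begin{align*}
\frac{8(a_{1}^{2}+a_{2}^{2})(c_{2}-c_{1})^{2}}{\pi^{4}(1+\tau)(1-\tau^{2})} S_{R} + \frac{8(c_{1}^{2}+c_{2}^{2})(c_{2}-c_{1})^{2}}{\pi^{4}(1-\tau)(1-\tau^{2})} S_{R} - \frac{32(c_{2}-c_{1})^{4}}{\pi^{6}(1-\tau)(1-\tau^{2})} S_{R}^{(5)},
\end{align*}
where $S_{R} := \sum_{m\geq 0}\tanh(\alpha(1+2m)\pi/2)/(1+2m)^{3}$ and $S_{R}^{(5)} := \sum_{m\geq 0}\tanh(\alpha(1+2m)\pi/2)/(1+2m)^{5}$. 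By the symmetry $(a_{i},c_{i},\tau,\alpha) \mapsto (c_{i},a_{i},-\tau,\alpha^{-1})$, which leaves $Q$ invariant and exchanges right/left with top/bottom, the top+bottom contribution is obtained by the same substitution, giving the analogous expression with $S_{T}, S_{T}^{(5)}$ (the analogous sums with $\alpha^{-1}$ in place of $\alpha$) and $(c_{2}-c_{1})\leftrightarrow (a_{2}-a_{1})$, $(1+\tau)\leftrightarrow (1-\tau)$.

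The cubic $S_{R}^{(5)}$ and $S_{T}^{(5)}$ pieces already match the asserted final form of \eqref{lol71}. For the $(1+2m)^{-3}$ pieces, the key is that the coefficients of $\frac{a_{1}^{2}+a_{2}^{2}}{1+\tau}$ and $\frac{c_{1}^{2}+c_{2}^{2}}{1-\tau}$ appearing in the total are both equal to $\frac{8}{(1-\tau^{2})\pi^{4}} \bigl[(c_{2}-c_{1})^{2} S_{R} + (a_{2}-a_{1})^{2} S_{T}\bigr]$. The identity $T_{0,\alpha}=\pi^{3}/32$ from Theorem \ref{thm: some nice series}, rewritten as
\begin{align*}
\frac{1}{2\alpha} S_{R} + \frac{\alpha}{2} S_{T} = \frac{\pi^{3}}{32}, \qquad \text{i.e.} \qquad (c_{2}-c_{1})^{2} S_{R} + (a_{2}-a_{1})^{2} S_{T} = \frac{\pi^{3}(a_{2}-a_{1})(c_{2}-c_{1})}{16},
\end{align*}
then collapses this bracket to an elementary expression and yields
\begin{align*}
\frac{8}{(1-\tau^{2})\pi^{4}}\cdot\frac{\pi^{3}(a_{2}-a_{1})(c_{2}-c_{1})}{16} = \frac{(a_{2}-a_{1})(c_{2}-c_{1})}{2\pi(1-\tau^{2})},
\end{align*}
which is precisely the coefficient in the first term of \eqref{lol71}.

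The main obstacle is bookkeeping: the four side-integrals each split into three Fourier pieces that must be reassembled so that the two $(1+2m)^{-3}$ series regroup into the symmetric combination $(c_{2}-c_{1})^{2}S_{R} + (a_{2}-a_{1})^{2}S_{T}$. Once this grouping is made, the reduction to \eqref{lol71} is immediate via $T_{0,\alpha}=\pi^{3}/32$; the deeper fact $T_{2,\alpha} = \pi^{5}(\alpha^{-1}-\alpha)/384$ from Theorem \ref{thm: some nice series} is not needed here (it is reserved for the proof of Theorem \ref{thm:Ginibre rectangle C} where \eqref{int Q d2z RECTANGLE} and \eqref{lol71} will be combined via Theorem \ref{thm:general pot}).
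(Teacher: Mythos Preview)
Your proposal is correct and follows essentially the same approach as the paper: split $\int_{\partial U} Q\,d\nu$ into the right+left and top+bottom side contributions using the Fourier densities \eqref{square right left b1}--\eqref{square top bottom b1}, evaluate the resulting $\int y^{n}\sin(\cdot)\,dy$ integrals via \eqref{int xp sin}, and then collapse the $(1+2m)^{-3}$ sums using the identity $T_{0,\alpha}=\pi^{3}/32$. The paper arrives at the intermediate expression \eqref{lol70} and then applies the same identity (citing \cite[0.243.3]{GRtable} or Theorem~\ref{thm: some nice series}) to obtain \eqref{lol71}; your explicit bookkeeping with $I_{m,n}$, $J_{m,n}$, $S_{R}$, $S_{T}$ is just a slightly more detailed presentation of the same computation.
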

\begin{proof}
Since $d\nu(a_{2}+iy)=d\nu(a_{1}+iy)$ and $d\nu(x+ic_{2})=d\nu(x+ic_{1})$, we have
\begin{align*}
& \int_{U}Q(z) d\nu(z) \\
& = \int_{c_{1}}^{c_{2}} \bigg( Q(a_{2}+iy) + Q(a_{1}+iy) \bigg) d\nu(a_{2}+iy) + \int_{a_{1}}^{a_{2}} \bigg( Q(x+ic_{2}) + Q(x+ic_{1}) \bigg) d\nu(x+ic_{2}) \\
& = \int_{c_{1}}^{c_{2}} \bigg( \frac{a_{1}^{2}+a_{2}^{2}}{1+\tau} + \frac{2y^{2}}{1-\tau} \bigg) d\nu(a_{2}+iy) + \int_{a_{1}}^{a_{2}} \bigg( \frac{2x^{2}}{1+\tau} + \frac{c_{1}^{2}+c_{2}^{2}}{1-\tau} \bigg) d\nu(x+ic_{2}).
\end{align*}
Substituting \eqref{square right left b1} and \eqref{square top bottom b1} then yields
\begin{align*}
& \int_{U}Q(z) d\nu(z) \\
& = \sum_{m=0}^{+\infty} \frac{4(c_{2}-c_{1})}{\pi^{3}(1-\tau^{2})} \frac{\tanh(\frac{a_{2}-a_{1}}{c_{2}-c_{1}}(1+2m)\frac{\pi}{2})}{(1+2m)^{2}} \int_{c_{1}}^{c_{2}} \bigg( \frac{a_{1}^{2}+a_{2}^{2}}{1+\tau} + \frac{2y^{2}}{1-\tau} \bigg) \sin \bigg( \frac{y-c_{1}}{c_{2}-c_{1}}(1+2m)\pi \bigg) dy \\
& + \sum_{m=0}^{+\infty} \frac{4(a_{2}-a_{1})}{\pi^{3}(1-\tau^{2})} \frac{\tanh(\frac{c_{2}-c_{1}}{a_{2}-a_{1}}(1+2m)\frac{\pi}{2})}{(1+2m)^{2}} \int_{a_{1}}^{a_{2}} \bigg( \frac{2x^{2}}{1+\tau} + \frac{c_{1}^{2}+c_{2}^{2}}{1-\tau} \bigg) \sin \bigg( \frac{x-a_{1}}{a_{2}-a_{1}}(1+2m)\pi \bigg) dx.
\end{align*}
Using now \eqref{int xp sin} and simplifying, we get
\begin{align}
& \int_{U}Q(z) d\nu(z) = \frac{1}{\pi(1-\tau^{2})} \bigg\{ \frac{8(a_{2}-a_{1})(c_{2}-c_{1})}{\pi^{3}}\bigg( \frac{a_{1}^{2}+a_{2}^{2}}{1+\tau} + \frac{c_{2}^{2}+c_{1}^{2}}{1-\tau} \bigg) \nonumber \\
& \times \bigg( \frac{c_{2}-c_{1}}{a_{2}-a_{1}} \sum_{m=0}^{+\infty} \frac{\tanh(\frac{a_{2}-a_{1}}{c_{2}-c_{1}}(1+2m)\frac{\pi}{2})}{(1+2m)^{3}} + \frac{a_{2}-a_{1}}{c_{2}-c_{1}} \sum_{m=0}^{+\infty} \frac{\tanh(\frac{c_{2}-c_{1}}{a_{2}-a_{1}}(1+2m)\frac{\pi}{2})}{(1+2m)^{3}} \bigg) \nonumber \\
& - \frac{32}{\pi^{5}} \frac{(c_{2}-c_{1})^{4}}{1-\tau} \sum_{m=0}^{+\infty} \frac{\tanh(\frac{a_{2}-a_{1}}{c_{2}-c_{1}}(1+2m)\frac{\pi}{2})}{(1+2m)^{5}} - \frac{32}{\pi^{5}} \frac{(a_{2}-a_{1})^{4}}{1+\tau} \sum_{m=0}^{+\infty} \frac{\tanh(\frac{c_{2}-c_{1}}{a_{2}-a_{1}}(1+2m)\frac{\pi}{2})}{(1+2m)^{5}} \bigg\}. \label{lol70}
\end{align}
By \cite[0.243.3]{GRtable} (or by Theorem \ref{thm: some nice series} with $v=0$ and $\alpha := \frac{a_{2}-a_{1}}{c_{2}-c_{1}}$),
\begin{align*}
\frac{c_{2}-c_{1}}{a_{2}-a_{1}} \sum_{m=0}^{+\infty} \frac{\tanh(\frac{a_{2}-a_{1}}{c_{2}-c_{1}}(1+2m)\frac{\pi}{2})}{(1+2m)^{3}} + \frac{a_{2}-a_{1}}{c_{2}-c_{1}} \sum_{m=0}^{+\infty} \frac{\tanh(\frac{c_{2}-c_{1}}{a_{2}-a_{1}}(1+2m)\frac{\pi}{2})}{(1+2m)^{3}} = \frac{\pi^{3}}{16},
\end{align*}
and substituting the above in \eqref{lol70} yields \eqref{lol71}.
\end{proof}
Note that $U$ satisfies Assumptions \ref{ass:U} and \ref{ass:U2}. Combining \eqref{int Q d2z RECTANGLE} and \eqref{lol71} with Theorem \ref{thm:general pot} (i), we infer that $\mathbb{P}(\# \{z_{j}\in U\} = 0) = \exp \big( -C n^{2}+o(n^{2}) \big)$ as $n\to+\infty$, where $C$ is given by
\begin{align}
 C & = \frac{(a_{2}-a_{1})^{2}(c_{2}-c_{1})^{2}}{\pi(1-\tau^{2})} \bigg( \frac{\alpha}{6(1+\tau)} + \frac{\alpha^{-1}}{6(1-\tau)} - \frac{32}{\pi^{5}} \frac{\alpha^{-2}}{1-\tau} \sum_{m=0}^{+\infty} \frac{\tanh(\alpha (1+2m)\frac{\pi}{2})}{(1+2m)^{5}} \nonumber \\
&  - \frac{32}{\pi^{5}} \frac{\alpha^{2}}{1+\tau} \sum_{m=0}^{+\infty} \frac{\tanh(\alpha^{-1}(1+2m)\frac{\pi}{2})}{(1+2m)^{5}} \bigg). \label{lol74}
\end{align}
where we recall that $\alpha := \frac{a_{2}-a_{1}}{c_{2}-c_{1}}$. The fact that $C$ in \eqref{lol74} can be rewritten as in the statement of Theorem \ref{thm:Ginibre rectangle C} is a consequence of the identity
\begin{align*}
T_{2,\alpha} = \pi^{5} \frac{\frac{1}{\alpha}-\alpha}{384},
\end{align*}
which follows from Theorem \ref{thm: some nice series}. (The fact that $C$ in \eqref{lol74} can be rewritten as in the statement of Theorem \ref{thm:Ginibre rectangle C} can also be seen as direct consequence of Theorem \ref{thm:general U EG}.) This finishes the proof of Theorem \ref{thm:Ginibre rectangle C}.

\subsection{The constant $C$: proof of Theorem \ref{thm:ML rectangle C} and Corollary \ref{coro:ML square C}}
In this subsection, $b \in \N_{>0}$, $d\mu(z) := \frac{b^{2}}{\pi}|z|^{2b-2}d^{2}z$, $S:=\mathrm{supp}\,\mu =\{z:|z|\leq b^{-\frac{1}{2b}}\}$, and $a_{2}>a_{1}$, $c_{2}>c_{1}$ are such that $U := \{z:\re z \in (a_{1},a_{2}), \im z \in (c_{1},c_{2})\} \subset S$. Let $\nu := \mathrm{Bal}(\mu|_{U},\partial U)$, and recall that $\nu$ is given in Theorem \ref{thm:ML rectangle nu}.
\begin{lemma}\label{lemma:int |z|2b dmu RECTANGLE}
\begin{align}\label{int |z|2b dmu RECTANGLE}
\int_{U} |z|^{2b} d\mu(z) =  \frac{b^{2}}{\pi} \sum_{j=0}^{2b-1} \binom{2b-1}{j} \frac{a_{2}^{2j+1}-a_{1}^{2j+1}}{2j+1} \frac{c_{2}^{2(2b-1-j)+1}-c_{1}^{2(2b-1-j)+1}}{2(2b-1-j)+1}.
\end{align}
\end{lemma}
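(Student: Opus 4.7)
The plan is very short since this lemma reduces to a direct one-line expansion. I would proceed as follows.

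First, I would substitute the explicit density $d\mu(z) = \frac{b^{2}}{\pi}|z|^{2b-2}d^{2}z$, which gives
\[
\int_{U} |z|^{2b} d\mu(z) = \frac{b^{2}}{\pi}\int_{U} |z|^{4b-2}d^{2}z = \frac{b^{2}}{\pi}\int_{a_{1}}^{a_{2}}\int_{c_{1}}^{c_{2}} (x^{2}+y^{2})^{2b-1}\,dy\,dx.
\]
Here I am using that $U$ is an axis-aligned rectangle in the $(x,y)$-plane, so the area measure factors as $dxdy$ and the bounds fully decouple once the integrand is separated in $x$ and $y$.

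Next, since $b \in \mathbb{N}_{>0}$, the exponent $2b-1$ is a nonnegative integer, and the binomial theorem applies:
\[
(x^{2}+y^{2})^{2b-1} = \sum_{j=0}^{2b-1} \binom{2b-1}{j} x^{2j}\,y^{2(2b-1-j)}.
\]
This lets me interchange the finite sum with the double integral, which reduces the computation to a product of elementary one-dimensional integrals.

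Finally, I would evaluate each factor separately:
\[
\int_{a_{1}}^{a_{2}} x^{2j}\,dx = \frac{a_{2}^{2j+1}-a_{1}^{2j+1}}{2j+1}, \qquad \int_{c_{1}}^{c_{2}} y^{2(2b-1-j)}\,dy = \frac{c_{2}^{2(2b-1-j)+1}-c_{1}^{2(2b-1-j)+1}}{2(2b-1-j)+1},
\]
multiply by the prefactor $\frac{b^{2}}{\pi}\binom{2b-1}{j}$, and sum over $j \in \{0,1,\ldots,2b-1\}$ to arrive at formula \eqref{int |z|2b dmu RECTANGLE}. There is no real obstacle here: the only structural input beyond elementary calculus is the integrality of $b$, which is precisely what licenses the finite binomial expansion and avoids the need to work with Gauss hypergeometric functions as would occur for general $b>0$.
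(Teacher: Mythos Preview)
Your proposal is correct and follows essentially the same approach as the paper: substitute the density to reduce to $\frac{b^{2}}{\pi}\int_{a_{1}}^{a_{2}}\int_{c_{1}}^{c_{2}}(x^{2}+y^{2})^{2b-1}\,dx\,dy$, expand via the binomial theorem (using $b\in\N_{>0}$), and integrate termwise. The paper's proof is in fact terser than yours, merely saying ``after expanding the above right-hand side (using that $b\in\N_{>0}$), we find \eqref{int |z|2b dmu RECTANGLE}.''
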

\begin{proof}
By definition of $U$ and $\mu$,
\begin{align*}
& \int_{U} |z|^{2b} d\mu(z) = \int_{U} |z|^{2b} \frac{b^{2}}{\pi} |z|^{2b-2}d^{2}z = \frac{b^{2}}{\pi} \int_{a_{1}}^{a_{2}} \int_{c_{1}}^{c_{2}} (x^{2}+y^{2})^{2b-1} dx dy.
\end{align*}
After expanding the above right-hand side (using that $b\in \N_{>0}$), we find \eqref{int |z|2b dmu RECTANGLE}.
\end{proof}
\begin{lemma}
For $a_{1},a_{2},c_{1},c_{2}$ such that $U\subset S$, we have
\begin{align}
& \int_{\partial U} |z|^{2b} d\nu(z) = \sum_{k=0}^{b} \binom{b}{k} \sum_{v=0}^{k} \frac{(2k)!(-1)^{1+v}}{(2k-2v)!}  \bigg\{ \bigg( \frac{c_{2}-c_{1}}{\pi} \bigg)^{2v+1} \nonumber \\
& \times \sum_{m=1}^{+\infty} \Big( a_{2}^{2(b-k)} \mathcal{C}_{b,m}^{R} + a_{1}^{2(b-k)} \mathcal{C}_{b,m}^{L}   \Big) \bigg( c_{2}^{2k-2v} \frac{(-1)^{m}}{m^{2v+1}} - \frac{c_{1}^{2k-2v}}{m^{2v+1}} \bigg) \nonumber \\
& + \bigg( \frac{a_{2}-a_{1}}{\pi} \bigg)^{2v+1} \sum_{m=1}^{+\infty} \Big( c_{2}^{2(b-k)} \mathcal{C}_{b,m}^{T}  + c_{1}^{2(b-k)} \mathcal{C}_{b,m}^{B}   \Big) \bigg( a_{2}^{2k-2v} \frac{(-1)^{m}}{m^{2v+1}} - \frac{a_{1}^{2k-2v}}{m^{2v+1}} \bigg) \bigg\}, \label{int |z|2b dnu RECTANGLE general b}
\end{align}
where the coefficients $\mathcal{C}_{b,m}^{R}$, $\mathcal{C}_{b,m}^{L}$, $\mathcal{C}_{b,m}^{T}$ and $\mathcal{C}_{b,m}^{B}$ are as in the statement of Theorem \ref{thm:ML rectangle nu}. 

\medskip \noindent If $a_{2}=c_{2}=-a_{1}=-c_{1}=\frac{c}{2}$ for some $c>0$, then \eqref{int |z|2b dnu RECTANGLE general b} simplifies to
\begin{align}
& \int_{\partial U} |z|^{2b} d\nu(z) = \frac{32b^{2}c^{4b}}{\pi^{4}4^{2b-1}} \sum_{j=0}^{b} \binom{b}{j} \sum_{v=0}^{j} \frac{(2j)!(-1)^{v}4^{v}}{(2j-2v)!\pi^{2v}} \sum_{\ell=0}^{b-1} \binom{b-1}{\ell} \sum_{v_{1}=0}^{\ell}\sum_{v_{2}=0}^{b-1-\ell} \frac{(2\ell)!}{(2\ell-2v_{1})!} \nonumber \\
& \times \frac{(2b-2-2\ell)!}{(2b-2-2\ell-2v_{2})!} \frac{4^{v_{1}+v_{2}}(-1)^{v_{2}}}{\pi^{2v_{1}+2v_{2}}} \bigg\{ \mathbf{1}_{v_{1}+v_{2}\, \mathrm{even}} T_{2(v_{1}+v_{2}+v)} + \sum_{q=0}^{v_{1}-1} \frac{4(-1)^{v_{1}+q}}{\pi}\bigg( 1- \frac{1}{4^{2+v_{2}+q+v}} \bigg) \nonumber \\
& \times \zeta\big(2(2+v_{2}+q+v)\big) \bigg( 1- \frac{1}{4^{v_{1}-q}} \bigg) \zeta\big(2(v_{1}-q)\big) \bigg\}, \label{int |z|2b dnu SQUARE general b}
\end{align}
where $T_{2v}$ is defined in \eqref{def of Tv}.
\end{lemma}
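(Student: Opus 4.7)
The plan is to compute $\int_{\partial U}|z|^{2b}d\nu(z)$ directly from the explicit Fourier series for $d\nu$ given in Theorem \ref{thm:ML rectangle nu}, after expanding $|z|^{2b}$ by the binomial theorem on each side of $\partial U$, and then to specialize to the square.

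First, since $|z|^{2b} = (x^{2}+y^{2})^{b}$, on the right side $\partial U \cap \{\re z = a_{2}\}$ we have $|z|^{2b} = (a_{2}^{2}+y^{2})^{b} = \sum_{k=0}^{b}\binom{b}{k}a_{2}^{2(b-k)} y^{2k}$, and similarly for the left, top, bottom sides. Using the Fourier series from Theorem \ref{thm:ML rectangle nu}, the contribution from the right side is
\begin{align*}
\int_{c_{1}}^{c_{2}}(a_{2}^{2}+y^{2})^{b}\,d\nu(a_{2}+iy) = \sum_{m=1}^{+\infty}\mathcal{C}_{b,m}^{R}\sum_{k=0}^{b}\binom{b}{k}a_{2}^{2(b-k)}\int_{c_{1}}^{c_{2}}y^{2k}\sin\bigg(\frac{y-c_{1}}{c_{2}-c_{1}}m\pi\bigg)dy,
\end{align*}
and the inner integral is given by \eqref{int xp sin} (applied with $a_{1},a_{2},q$ replaced by $c_{1},c_{2},m$ and $\ell=k$). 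Writing $v_{1}=2v$ in \eqref{int xp sin}, this yields
\begin{align*}
\int_{c_{1}}^{c_{2}}y^{2k}\sin\bigg(\frac{y-c_{1}}{c_{2}-c_{1}}m\pi\bigg)dy = \sum_{v=0}^{k}\frac{(2k)!}{(2k-2v)!}\bigg(\frac{c_{2}-c_{1}}{m\pi}\bigg)^{2v+1}(-1)^{1+v}\bigg(c_{2}^{2k-2v}\frac{(-1)^{m}}{1}-c_{1}^{2k-2v}\bigg).
\end{align*}
Performing the same procedure on the three other sides and summing gives \eqref{int |z|2b dnu RECTANGLE general b} after straightforward relabeling.

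For the square case $a_{2}=c_{2}=-a_{1}=-c_{1}=\tfrac{c}{2}$, the plan is to exploit the symmetry $\mathcal{C}_{b,m}^{R}=\mathcal{C}_{b,m}^{L}=\mathcal{C}_{b,m}^{T}=\mathcal{C}_{b,m}^{B}$ and the fact (from Corollary \ref{coro:ML square nu}) that these coefficients vanish for $m$ even with $\mathcal{C}_{b,2m+1}^{R}=(-1)^{m}\mathcal{C}_{b,m}$. The factors $c_{2}^{2k-2v}(-1)^{m}-c_{1}^{2k-2v} = (c/2)^{2k-2v}((-1)^{m}-1)$ and $a_{2}^{2k-2v}(-1)^{m}-a_{1}^{2k-2v}=(c/2)^{2k-2v}((-1)^{m}-1)$ collapse the four contributions into a single sum over odd $m$. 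After the substitution $m=2m'+1$, one gets series of the form
\begin{align*}
\sum_{m'=0}^{+\infty}\frac{(-1)^{m'}\mathcal{C}_{b,m'}}{(2m'+1)^{2v+1}},
\end{align*}
and substituting the explicit expression \eqref{def Cbm} for $\mathcal{C}_{b,m'}$ expands this into a double series whose inner summand is of the form $\tanh((2m'+1)\pi/2)/(2m'+1)^{3+2(v_{1}+v_{2}+v)}$ times a $\zeta$-product, i.e. exactly $T_{2(v_{1}+v_{2}+v)}$ as defined in \eqref{def of Tv}, plus the $\zeta$-tail already present in \eqref{def Cbm}.

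The main obstacle is the bookkeeping in this last step: one must carefully match the powers of $\pi$, the factors $4^{\ast}$, the alternating signs, the parity indicator $\mathbf{1}_{v_{1}+v_{2}\,\mathrm{even}}$ (which arises because terms with $v_{1}+v_{2}$ odd cancel after combining right$+$left and top$+$bottom), and the double $\zeta$-contributions coming from the inner sum $\sum_{q=0}^{v_{1}-1}$ of \eqref{def Cbm}. Once these are aligned, one recognizes the series $T_{2(v_{1}+v_{2}+v)}$ and reads off formula \eqref{int |z|2b dnu SQUARE general b}. All remaining steps are elementary rearrangements of finite sums.
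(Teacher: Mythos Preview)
Your proposal is correct and follows essentially the same route as the paper: split $\int_{\partial U}|z|^{2b}d\nu$ into the four side integrals, expand $(a_j^2+y^2)^b$ or $(x^2+c_j^2)^b$ binomially, evaluate $\int y^{2k}\sin(\cdot)$ via \eqref{int xp sin}, and for the square specialize using $\mathcal{C}_{b,m}^{R}=\mathcal{C}_{b,m}^{L}=\mathcal{C}_{b,m}^{T}=\mathcal{C}_{b,m}^{B}$, vanishing at even $m$, and $\mathcal{C}_{b,2m+1}^{R}=(-1)^{m}\mathcal{C}_{b,m}$ before inserting \eqref{def Cbm} and recognizing $T_{2(v_{1}+v_{2}+v)}$. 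One small correction: the indicator $\mathbf{1}_{v_{1}+v_{2}\,\mathrm{even}}$ does not arise from a cancellation between sides (in the square case all four sides contribute identically), but is already present inside $\mathcal{C}_{b,m}$ in \eqref{def Cbm} and is simply carried through.
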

\begin{proof}
By definition of $U$,
\begin{multline} \label{lol75}
\int_{\partial U} |z|^{2b} d\nu(z) = \int_{c_{1}}^{c_{2}} (a_{2}^{2}+y^{2})^{b}d\nu(a_{2}+iy) + \int_{c_{1}}^{c_{2}} (a_{1}^{2}+y^{2})^{b}d\nu(a_{1}+iy) \\
+ \int_{a_{1}}^{a_{2}} (x^{2}+c_{2}^{2})^{b}d\nu(x+ic_{2}) + \int_{a_{1}}^{a_{2}} (x^{2}+c_{1}^{2})^{b}d\nu(x+ic_{1}).
\end{multline}
Using the expression \eqref{square right} for $d\nu(a_{2}+iy)$, we get
\begin{align*}
& \int_{c_{1}}^{c_{2}} (a_{2}^{2}+y^{2})^{b}d\nu(a_{2}+iy) = \sum_{m=1}^{+\infty} \mathcal{C}_{b,m}^{R} \int_{c_{1}}^{c_{2}} (a_{2}^{2}+y^{2})^{b} \sin \bigg( \frac{y-c_{1}}{c_{2}-c_{1}} m \pi \bigg) dy \\
& = \sum_{k=0}^{b} \binom{b}{k} a_{2}^{2(b-k)} \sum_{m=1}^{+\infty} \mathcal{C}_{b,m}^{R} \int_{c_{1}}^{c_{2}} y^{2k} \sin \bigg( \frac{y-c_{1}}{c_{2}-c_{1}} m \pi \bigg) dy \\
& = \sum_{k=0}^{b} \binom{b}{k} a_{2}^{2(b-k)} \sum_{m=1}^{+\infty} \mathcal{C}_{b,m}^{R} \sum_{v=0}^{k} \frac{(2k)!(-1)^{1+v}}{(2k-2v)!} \bigg( \frac{c_{2}-c_{1}}{\pi} \bigg)^{2v+1} \bigg( c_{2}^{2k-2v} \frac{(-1)^{m}}{m^{2v+1}} - \frac{c_{1}^{2k-2v}}{m^{2v+1}} \bigg),
\end{align*}
where for the last equality we have used \eqref{int xp sin}. The last three integrals on the right-hand side of \eqref{lol75} can be simplified similarly, and substituting these simplified expressions in \eqref{lol75} yields \eqref{int |z|2b dnu RECTANGLE general b}.

\medskip If $a_{2}=c_{2}=-a_{1}=-c_{1}=\frac{c}{2}$, we have $\mathcal{C}_{b,m}^{R} = \mathcal{C}_{b,m}^{L} = \mathcal{C}_{b,m}^{T} = \mathcal{C}_{b,m}^{B}$ for all $m\in \N_{>0}$ and $\mathcal{C}_{b,m}^{R} = 0$ if $m$ is even. Using the functional relation $\coth (y) - \frac{1}{\sinh(y)} \equiv \tanh(\frac{y}{2})$, it is easy to check that $\mathcal{C}_{b,2m+1}^{R} = (-1)^{m}\mathcal{C}_{b,m}$, where $\mathcal{C}_{b,m}$ is as in the statement of Corollary \ref{coro:ML square nu}. Substituting these identities in \eqref{int |z|2b dnu RECTANGLE general b} yields after  simplication \eqref{int |z|2b dnu SQUARE general b}. 
\end{proof}

Note that $U$ satisfies Assumptions \ref{ass:U} and \ref{ass:U2}. Combining \eqref{int |z|2b dmu RECTANGLE} and \eqref{int |z|2b dnu RECTANGLE general b} with Theorem \ref{thm:general pot} (i), we infer that $\mathbb{P}(\# \{z_{j}\in U\} = 0) = \exp \big( -C n^{2}+o(n^{2}) \big)$ as $n\to+\infty$, where $C$ is given by \eqref{lol76}. This finishes the proof of Theorem \ref{thm:ML rectangle C}. 

\medskip Corolary \ref{coro:ML square C} follows similarly, using \eqref{int |z|2b dnu SQUARE general b} instead of \eqref{int |z|2b dnu RECTANGLE general b}.

\section{The complement of an ellipse centered at $0$}\label{section:complement ellipse}

In this section, $\tau \in [0,1)$, $Q(z)=\frac{1}{1-\tau^{2}}\big( |z|^{2}-\tau \, \re z^{2} \big) = \frac{x^{2}}{1+\tau}+\frac{y^{2}}{1-\tau}$, $z=x+iy$, $x,y\in \R$, and the measure $\mu$ and its support $S$ are defined in \eqref{mu S EG}. We also suppose that $a\in (0,1+\tau]$ and $c\in (0,1-\tau]$, so that $U^{c}\subset S$, where $U := \{z: \frac{(\re z)^{2}}{a^{2}}+\frac{(\im z)^{2}}{c^{2}}>1\}$.

\subsection{Balayage measure: proof of Theorem \ref{thm:EG complement ellipse nu}}

Let $\nu := \mathrm{Bal}(\mu|_{U},\partial U)$. We will obtain an explicit expression for $\nu$ using the method described in Subsection \ref{section:ansatz method}. Since $U$ is unbounded and $0\notin \overline{U}$, we will use Lemma \ref{lemma: moment when U is unbounded} with $z_{0}=0$. In particular, we are seeking for a measure $\nu$ supported on $\partial U$ and satisfying 
\begin{align}\label{eqn:moment unbounded with mass}
& \begin{cases}
\ds \int_{\partial U}z^{-n} d\nu(z) = \int_{ U}z^{-n}d\mu(z), \quad \mbox{for all } n \in \N, \\[0.3cm]
\ds \int_{\partial U}\log |z|  d\nu(z) = \int_{ U} \log |z| d\mu(z) - c^{\hspace{0.02cm}\mu}_{U},
\end{cases}
\end{align}
where $c^{\hspace{0.02cm}\mu}_{U} = 2\pi\int_{U} g_{U}(z,\infty) d\mu(z)$.
From now on we focus on the case $a> c$ and $\tau>0$ (the analysis below can easily be adapted to the other cases involving either $a<c$, $a=c$ or $\tau=0$). 

Define $\alpha = \frac{a+c}{2}$, $\gamma=\frac{a-c}{2}$, $\tau_{\star} = \frac{2\tau}{1+\tau^{2}}$, $\tau_{a,c} = \frac{a^{2}-c^{2}}{a^{2}+c^{2}}$ and
\begin{align}\label{def of Ri and Re}
R_{i}(\theta) = \sqrt{\frac{a^{2}+c^{2}}{2}} \frac{\sqrt{1-\tau_{a,c}^{2}}}{\sqrt{1-\tau_{a,c}\cos (2\theta)}}, \qquad R_{e}(\theta) = \sqrt{1+\tau^{2}} \frac{\sqrt{1-\tau_{\star}^{2}}}{\sqrt{1-\tau_{\star} \cos(2\theta)}}.
\end{align}
We will use the following two parametrizations of the set $\partial U$:
\begin{align}\label{two param of partial U when Uc ellipse}
\partial U = \{a\cos \theta + i c \sin \theta: \theta \in (-\pi,\pi]\} = \{R_{i}(\theta)e^{i\theta} : \theta \in (-\pi,\pi]\}.
\end{align}
The parametrization of $S\cap U$ in polar coordinates is given by
\begin{align}\label{param polar ellipses}
S\cap U = \{r e^{i\theta}: \theta \in (-\pi,\pi], r\in (R_{i}(\theta),R_{e}(\theta)]\}.
\end{align}
Define also 
\begin{align}\label{def of ri and re}
r_{i} := \frac{1-\sqrt{1-\tau_{\smash{a,c}}^{2}}}{\tau_{a,c}} = \frac{a-c}{a+c},  \qquad r_{e} := \frac{1-\sqrt{1-\tau_{\star}^{2}}}{\tau_{\star}} = \tau,
\end{align}
so that we can write, for all $\theta \in (-\pi,\pi]$,
\begin{align}
& \sqrt{1-\tau_{\smash{a,c}}\cos \theta} = \sqrt{\frac{\tau_{a,c}}{2}}e^{-i\frac{\theta}{2}} \sqrt{e^{i\theta}-r_{i}} \sqrt{\frac{1}{r_{i}}-e^{i\theta}}, \quad \sqrt{1-\tau_{\star}\cos \theta} = \sqrt{\frac{\tau_{\star}}{2}} e^{-i\frac{\theta}{2}} \sqrt{e^{i\theta}-r_{e}}\sqrt{\frac{1}{r_{e}}-e^{i\theta}}, \label{branch cuts with tau} \\
& \sqrt{1-\tau_{\smash{a,c}}\cos (2\theta)} = \sqrt{\frac{\tau_{a,c}}{2}}e^{-i\theta} \sqrt{e^{i\theta}-\sqrt{r_{i}}}\sqrt{e^{i\theta}+\sqrt{r_{i}}} \sqrt{\frac{1}{\sqrt{r_{i}}}-e^{i\theta}}\sqrt{e^{i\theta}+\frac{1}{\sqrt{r_{i}}}}, \label{branch cuts with tau 2} \\
& \sqrt{1-\tau_{\star}\cos (2\theta)} = \sqrt{\frac{\tau_{\star}}{2}}e^{-i\theta} \sqrt{e^{i\theta}-\sqrt{r_{e}}}\sqrt{e^{i\theta}+\sqrt{r_{e}}} \sqrt{\frac{1}{\sqrt{r_{e}}}-e^{i\theta}}\sqrt{\frac{1}{e^{i\theta}+\sqrt{r_{e}}}}, \label{branch cuts with tau 3}
\end{align}
where the principal branches for the roots are used (below, unless otherwise specified, all branches for the logarithms and roots are the principal ones). Since $\tau_{\star},\tau_{a,c}\in (0,1)$, we have $r_{i},r_{e}\in (0,1)$. In this section, to simplify the computations we will use repetitively the following simple but important identity: for any $f$ integrable on $\mathbb{S}^{1}$, we have
\begin{align}\label{f 2 theta to f 1 theta}
\int_{-\pi}^{\pi}f(e^{2i\theta})d\theta = \int_{-\pi}^{\pi}f(e^{i\theta})d\theta.
\end{align}
We start with a preliminary lemma.
\begin{lemma}\label{lemma:some integrals}
The following relations hold:
\begin{align}
& \int_{-\pi}^{\pi}e^{-2i\theta}\log R_{e}(\theta) \; d\theta = \pi r_{e}, \label{lol27} \\
& \int_{-\pi}^{\pi}e^{-2i\theta}\log R_{i}(\theta) \; d\theta = \pi r_{i}, \label{lol28} \\
& \int_{-\pi}^{\pi} R_{e}(\theta)^{2}d\theta = 2 \pi (1-\tau^{2}), \label{lol29} \\
& \int_{-\pi}^{\pi} R_{i}(\theta)^{2}d\theta = 2 \pi a c, \label{lol30} \\
& \int_{-\pi}^{\pi}e^{-i\theta n}R_{e}(\theta)^{2-n}d\theta = 0 = \int_{-\pi}^{\pi}e^{-i\theta n}R_{i}(\theta)^{2-n}d\theta, \qquad \mbox{for all } n \in \N_{>0}. \label{lol31}
\end{align}
\end{lemma}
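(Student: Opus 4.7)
The strategy is to unify all five identities by writing $R_{e}$ and $R_{i}$ in the common form
$R(\theta)=A\sqrt{(1-t^{2})/(1-t\cos(2\theta))}$
for parameters $A>0$ and $t\in(0,1)$, and exploiting the factorization
\begin{align*}
1-t\cos(2\theta)=\frac{(1-re^{2i\theta})(1-re^{-2i\theta})}{1+r^{2}},\qquad r:=\frac{1-\sqrt{1-t^{2}}}{t}\in(0,1),
\end{align*}
which is precisely what underlies the parameters $r_{e}=\tau$ and $r_{i}=(a-c)/(a+c)$ introduced in \eqref{def of ri and re}, as is immediate from \eqref{branch cuts with tau}. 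The five identities then follow from this single observation specialized to the two pairs $(A,t,r)=(\sqrt{1+\tau^{2}},\tau_{\star},\tau)$ and $(A,t,r)=(\sqrt{(a^{2}+c^{2})/2},\tau_{a,c},(a-c)/(a+c))$.

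For \eqref{lol27} and \eqref{lol28}: write $\log R(\theta)=\log(A\sqrt{1-t^{2}})-\tfrac{1}{2}\log(1-t\cos(2\theta))$, expand
\begin{align*}
\log(1-t\cos(2\theta))=\log(1-re^{2i\theta})+\log(1-re^{-2i\theta})-\log(1+r^{2})
\end{align*}
using the factorization, and use the power series $\log(1-x)=-\sum_{n\geq1}x^{n}/n$. Integrating $e^{-2i\theta}$ against $\log(1-re^{2i\theta})$ picks up only the $n=1$ term and gives $-2\pi r$; the other two contributions integrate to $0$. This yields $\pi r$, i.e.\ $\pi r_{e}$ respectively $\pi r_{i}$.

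For \eqref{lol29} and \eqref{lol30}: $R(\theta)^{2}=A^{2}(1-t^{2})/(1-t\cos(2\theta))$, and by \eqref{f 2 theta to f 1 theta}, $\int_{-\pi}^{\pi}d\theta/(1-t\cos(2\theta))=\int_{-\pi}^{\pi}d\theta/(1-t\cos\theta)=2\pi/\sqrt{1-t^{2}}$ by a standard residue calculation. Hence $\int_{-\pi}^{\pi}R(\theta)^{2}\,d\theta=2\pi A^{2}\sqrt{1-t^{2}}$. Substituting the parameters gives $2\pi(1-\tau^{2})$ and $2\pi a c$ respectively.

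The main (and only nontrivial) point is \eqref{lol31}. The key observation is that $R(\theta)$ is a function of $\cos(2\theta)$, hence its Fourier expansion contains only even harmonics $e^{2im\theta}$; this immediately kills the integral for every odd $n\geq 1$. For even $n\geq2$, the factorization gives
\begin{align*}
R(\theta)^{-(n-2)}=C^{-(n-2)/2}\bigl[(1-re^{2i\theta})(1-re^{-2i\theta})\bigr]^{(n-2)/2},\qquad C:=A^{2}(1-t^{2})(1+r^{2}),
\end{align*}
which, when expanded, is a trigonometric polynomial in $\theta$ of degree at most $n-2$. In particular it contains no $e^{in\theta}$ term, so $\int_{-\pi}^{\pi}e^{-in\theta}R(\theta)^{2-n}d\theta=0$. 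The only step that requires mild care is distinguishing the parity of $n$: for odd $n$ the function $R(\theta)^{2-n}$ involves a square root, but one still has $R^{2-n}=R\cdot R^{1-n}$, where $R$ has only even harmonics and $R^{1-n}=R^{-(n-1)}$ is (for odd $n\geq 3$) a trigonometric polynomial of degree at most $n-1$ in $\theta$ with only even harmonics, so the product has only even harmonics and the integral vanishes by the parity argument. Combining the two parity cases establishes \eqref{lol31}, and the lemma follows.
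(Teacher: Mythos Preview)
Your argument is correct and genuinely different from the paper's. The paper proves all five identities by passing to contour integrals on $\mathbb{S}^{1}$ via $z=e^{i\theta}$ and deforming contours: for \eqref{lol27} it splits the integrand into four pieces and computes residues and branch-cut contributions; for \eqref{lol29} it is a single residue; and for \eqref{lol31} with $n$ odd it deforms to $\infty$, picks up contributions along the cuts $(-\infty,-1/\sqrt{r_e}]$ and $[1/\sqrt{r_e},\infty)$, and shows these cancel by an odd-symmetry argument on the integrand. Your route stays entirely on the Fourier side: the same factorization $1-t\cos(2\theta)=(1-re^{2i\theta})(1-re^{-2i\theta})/(1+r^{2})$ underlies both, but you read off \eqref{lol27}--\eqref{lol28} from the power series of $\log(1-x)$, \eqref{lol29}--\eqref{lol30} from the standard Poisson-kernel integral, and \eqref{lol31} from the observation that $R(\theta)^{2-n}$ is a function of $\cos(2\theta)$ (so has only even harmonics) together with the fact that for even $n$ it is a trigonometric polynomial of degree $<n$. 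This is cleaner and avoids all branch-cut bookkeeping. One small remark: your treatment of odd $n$ is more elaborate than necessary---once you note that $R(\theta)^{2-n}$ itself (not just $R$) is a function of $\cos(2\theta)$, the vanishing for all odd $n\ge1$ is immediate without the decomposition $R^{2-n}=R\cdot R^{1-n}$.
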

\begin{proof}
\underline{Proof of \eqref{lol27}.} Using \eqref{def of Ri and Re}, \eqref{branch cuts with tau}, and \eqref{f 2 theta to f 1 theta}, we get
\begin{align*}
& \int_{-\pi}^{\pi}e^{-2i\theta}\log R_{e}(\theta) \; d\theta = \int_{-\pi}^{\pi}e^{-i\theta}\log R_{e}(\tfrac{\theta}{2}) \; d\theta = \int_{-\pi}^{\pi} e^{-i\theta} \log \frac{\sqrt{2}\sqrt{1+\tau^{2}}\sqrt{\frac{1}{\tau_{\star}}-\tau_{\star}}e^{\frac{i\theta}{2}}}{\sqrt{e^{i\theta}-r_{e}}\sqrt{\frac{1}{r_{e}}-e^{i\theta}}} \; d\theta ,
\end{align*}
where we recall that the principal branch of the log is used. A long but direct analysis shows that the above log can be split so that $\int_{-\pi}^{\pi}e^{-2i\theta}\log R_{e}(\theta) \; d\theta = I_{1}+I_{2}+I_{3}+I_{4}$, where
\begin{align*}
& I_{1} := \log \Big(\sqrt{2}\sqrt{1+\tau^{2}}\sqrt{\frac{1}{\tau_{\star}}-\tau_{\star}}\Big) \int_{-\pi}^{\pi} e^{-i\theta}  d\theta = 0, \qquad I_{2} := \int_{-\pi}^{\pi} e^{-i\theta} \frac{i\theta}{2} = \pi, \\
& I_{3} := -\frac{1}{2}\int_{-\pi}^{\pi} e^{-i\theta}\log(e^{i\theta}-r_{e})d\theta = -\frac{1}{2}\int_{\mathbb{S}^{1}} \log(z-r_{e})\frac{dz}{iz^{2}} = -\frac{1}{2}2\pi i \int_{-\infty}^{-1} \frac{dx}{ix^{2}} = -\pi, \\
& I_{4} := - \frac{1}{2}\int_{-\pi}^{\pi}e^{-i\theta}\log(\tfrac{1}{r_{e}}-e^{i\theta})d\theta = -\frac{1}{2} \int_{\mathbb{S}^{1}} \log(\tfrac{1}{r_{e}}-z)\frac{dz}{iz^{2}} = \pi r_{e},
\end{align*}
and $\mathbb{S}^{1}$ is the unit circle oriented counterclockwise. For $I_{3}$, we have first use the change of variables $z=e^{i\theta}$. Since $\frac{\log(z-r_{e})}{z^{2}}$ is analytic outside $\mathbb{S}^{1}$ except on $(-\infty,-1]$ (recall that $r_{e}\in (0,1)$), we have then deformed $\mathbb{S}^{1}$ into $C_{R}(0)\cup [-R+i 0_{+},-1+i 0_{+}] \cup [-1-i 0_{+},-R-i 0_{+}]$, where $C_{R}(0)$ is the circle centered at $0$ of radius $R>0$ and oriented counterclockwise. The integral along $C_{R}$ vanishes as $R\to \infty$, while the integral along $[-R+i 0_{+},-1+i 0_{+}] \cup [-1-i 0_{+},-R-i 0_{+}]$ yields $-\frac{1}{2}2\pi i \int_{-\infty}^{-1} \frac{dx}{ix^{2}}$. For $I_{4}$, since $\log(\tfrac{1}{r_{e}}-z)\frac{dz}{iz^{2}}$ is analytic inside $\mathbb{S}^{1}\setminus \{0\}$, the integral can be evaluated explicitly by a simple residue computation, using that
\begin{align*}
-\frac{1}{2}\log(\tfrac{1}{r_{e}}-z) \frac{1}{iz^{2}} = \frac{\log r_{e}}{2iz^{2}} - \frac{ir_{e}}{2z} + \bigO(1), \qquad \mbox{as } z \to 0.
\end{align*}
\underline{Proof of \eqref{lol28}.} The proof is similar to the proof of \eqref{lol27}.

\noindent \underline{Proof of \eqref{lol29}.} Using \eqref{def of Ri and Re}, \eqref{branch cuts with tau} and \eqref{f 2 theta to f 1 theta}, we get
\begin{align*}
\int_{-\pi}^{\pi} R_{e}(\theta)^{2} \; d\theta & = \int_{-\pi}^{\pi} \frac{2(1+\tau^{2})(\frac{1}{\tau_{\star}}-\tau_{\star})}{e^{-i\theta}(e^{i\theta}-r_{e})(\frac{1}{r_{e}}-e^{i\theta})}d\theta  = \int_{\mathbb{S}^{1}} \frac{2(1+\tau^{2})(\frac{1}{\tau_{\star}}-\tau_{\star})}{i(z-r_{e})(\frac{1}{r_{e}}-z)}dz = 4\pi (1+\tau^{2}) \frac{\frac{1}{\tau_{\star}}-\tau_{\star}}{\frac{1}{r_{e}}-r_{e}},
\end{align*}
where the last equality follows from a residue computation. The last expression can be further simplified using \eqref{def of ri and re} and $\tau_{\star} = \frac{2\tau}{1+\tau^{2}}$, and we find \eqref{lol29}.

\noindent \underline{Proof of \eqref{lol30}.} The proof is similar to the proof of \eqref{lol29}.

\noindent \underline{Proof of \eqref{lol31}.} We only prove the first identity, the proof of the second one being similar. Note that for $n$ odd, the integrand $e^{-i\theta n}R_{e}(\theta)^{2-n}$ is not of the form $f(e^{2i\theta})$ for some integrable function $f$, so we cannot use \eqref{f 2 theta to f 1 theta} (and also not \eqref{branch cuts with tau}) to simplify the analysis. Using \eqref{def of Ri and Re}, \eqref{branch cuts with tau 2} and \eqref{branch cuts with tau 3}, we obtain
\begin{align*}
& \int_{-\pi}^{\pi} e^{-i\theta n} R_{e}(\theta)^{2-n}d\theta = \int_{-\pi}^{\pi} e^{-i\theta n} \Bigg( \frac{\sqrt{2}\sqrt{1+\tau^{2}}\sqrt{\frac{1}{\tau_{\star}}-\tau_{\star}}}{e^{-i\theta } \sqrt{e^{i\theta}-\sqrt{r_{e}}}\sqrt{e^{i\theta}+\sqrt{r_{e}}} \sqrt{\frac{1}{\sqrt{r_{e}}}-e^{i\theta}}\sqrt{e^{i\theta}+\frac{1}{\sqrt{r_{e}}}}  } \Bigg)^{2-n} d\theta \\
& = \int_{\mathbb{S}^{1}} \Bigg( \frac{\sqrt{z-\sqrt{r_{e}}}\sqrt{z+\sqrt{r_{e}}} \sqrt{\frac{1}{\sqrt{r_{e}}}-z}\sqrt{z+\frac{1}{\sqrt{r_{e}}}}}{\sqrt{2}\sqrt{1+\tau^{2}}\sqrt{\frac{1}{\tau_{\star}}-\tau_{\star}}  } \Bigg)^{n-2} \frac{dz}{i z^{2n-1}}.
\end{align*}
If $n$ is even, $n \neq 0$, the integrand is analytic on $\C\setminus \{0\}$ and is $\bigO(z^{-3})$ as $z \to \infty$. Hence, by deforming the contour towards $\infty$, we conclude that the integral vanishes. This proves \eqref{lol31} for $n$ even. For $n$ odd, the integrand is analytic on $\{z:|z|\geq 1\}\setminus \big( (-\infty,-\frac{1}{\sqrt{r_{e}}}] \cup [\frac{1}{\sqrt{r_{e}}},+\infty) \big)$. Therefore, by deforming the contour towards $\infty$, we pick up some contributions along $(-\infty,-\frac{1}{\sqrt{r_{e}}}]\pm i 0_{+}$ and $[\frac{1}{\sqrt{r_{e}}},+\infty)\pm i 0_{+}$:
\begin{align*}
& \int_{-\pi}^{\pi} e^{-i\theta n} R_{e}(\theta)^{2-n}d\theta = 2(I_{1}+I_{2}), \quad I_{1} := \int_{-\infty}^{-\frac{1}{\sqrt{e}}} \Bigg( \frac{(-1)\sqrt{x^{2}-r_{e}} \; i \sqrt{x^{2}-\frac{1}{r_{e}}}}{\sqrt{2}\sqrt{1+\tau^{2}}\sqrt{\frac{1}{\tau_{\star}}-\tau_{\star}}  } \Bigg)^{n-2} \frac{dx}{i x^{2n-1}} \\
& I_{2} := \int_{\frac{1}{\sqrt{e}}}^{+\infty} \Bigg( \frac{\sqrt{x^{2}-r_{e}} \; (-i) \sqrt{x^{2}-\frac{1}{r_{e}}}}{\sqrt{2}\sqrt{1+\tau^{2}}\sqrt{\frac{1}{\tau_{\star}}-\tau_{\star}}  } \Bigg)^{n-2} \frac{dx}{i x^{2n-1}}.
\end{align*}
Note that the integrands of $I_{1}$ and $I_{2}$ are the same, and are odd functions. Hence $I_{1}=-I_{2}$, from which \eqref{lol31} follows also for $n$ odd.
\end{proof}
\begin{lemma}\label{lemma:moments of mu ELLIPSE complement}
For $n \in \N$, we have
\begin{align}\label{inverse moments EG}
& \int_{U} z^{-n} d\mu(z) = \begin{cases}
\ds \frac{1-\tau^{2}-ac}{1-\tau^{2}}, & \mbox{if } n = 0, \\[0.25cm]
\ds \frac{r_{e}-r_{i}}{1-\tau^{2}}, & \mbox{if } n=2, \\
0, & \mbox{otherwise}.
\end{cases} 
\end{align}
\end{lemma}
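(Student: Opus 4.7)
\medskip\noindent\textbf{Proof proposal for Lemma \ref{lemma:moments of mu ELLIPSE complement}.}

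My plan is to compute $\int_U z^{-n}\,d\mu(z)$ by switching to polar coordinates and reducing everything to the integral identities of Lemma \ref{lemma:some integrals}. The key observation is that since $\mu$ is supported on $S$ and $U^c\subset S$, we have $\mu$--a.e. on $U$ equal to $\mu$ restricted to the ``ring'' $S\cap U$, which admits the explicit polar parametrization \eqref{param polar ellipses}. Together with $d\mu(z)=\frac{d^2z}{\pi(1-\tau^2)}$, this yields
\begin{align*}
\int_U z^{-n}\,d\mu(z) \;=\; \frac{1}{\pi(1-\tau^2)}\int_{-\pi}^{\pi} e^{-in\theta}\int_{R_i(\theta)}^{R_e(\theta)} r^{1-n}\,dr\,d\theta.
\end{align*}

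Next, I carry out the radial integral explicitly in three regimes. For $n=0$, the inner integral is $\tfrac{1}{2}(R_e(\theta)^2-R_i(\theta)^2)$, and applying \eqref{lol29} and \eqref{lol30} gives $\frac{1}{2\pi(1-\tau^2)}\bigl(2\pi(1-\tau^2)-2\pi ac\bigr)=\frac{1-\tau^2-ac}{1-\tau^2}$. For $n=2$, the inner integral is $\log R_e(\theta)-\log R_i(\theta)$, so the claim reduces to $\frac{1}{\pi(1-\tau^2)}\int_{-\pi}^{\pi}e^{-2i\theta}\bigl(\log R_e(\theta)-\log R_i(\theta)\bigr)d\theta$, which by \eqref{lol27} and \eqref{lol28} equals $\frac{r_e-r_i}{1-\tau^2}$. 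For all remaining $n\in\mathbb{N}$ (i.e.\ $n=1$ or $n\geq 3$), the inner integral is $\frac{R_e(\theta)^{2-n}-R_i(\theta)^{2-n}}{2-n}$, and so the whole integral equals
\begin{align*}
\frac{1}{\pi(1-\tau^2)(2-n)}\int_{-\pi}^{\pi} e^{-in\theta}\bigl(R_e(\theta)^{2-n}-R_i(\theta)^{2-n}\bigr)d\theta,
\end{align*}
which vanishes by \eqref{lol31}.

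The main ``work'' has already been done inside Lemma \ref{lemma:some integrals}; the present lemma is a direct corollary once polar coordinates are invoked. The only mildly delicate point is to make sure the radial integration is valid (no boundary issues at $0$), but this is automatic since $R_i(\theta)\geq c>0$ for all $\theta$, so the integrand $r^{1-n}$ is smooth on $[R_i(\theta),R_e(\theta)]$ for every $n\in\mathbb{N}$. No Fubini issues arise because $z\mapsto z^{-n}$ is bounded on $S\cap U$ and the domain has finite area. Thus the proof should fit in a few lines once the setup is in place.
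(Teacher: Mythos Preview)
Your proposal is correct and follows essentially the same approach as the paper's own proof: pass to polar coordinates via the parametrization \eqref{param polar ellipses}, perform the radial integral (distinguishing $n=2$ from $n\neq 2$), and invoke the identities \eqref{lol27}--\eqref{lol31} of Lemma \ref{lemma:some integrals}. The only cosmetic difference is that the paper treats $n=0$ together with the generic $n\neq 2$ case via $\frac{R_e^{2-n}-R_i^{2-n}}{2-n}$, whereas you single it out; your added remarks on integrability and Fubini are fine but not strictly needed.
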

\begin{proof}
For any $r\in \N$, using \eqref{param polar ellipses} we get
\begin{align*}
\int_{U} z^{-n} d\mu(z) & = \int_{-\pi}^{\pi} \int_{R_{i}(\theta)}^{R_{e}(\theta)} r^{1-n}e^{-in\theta} \frac{drd\theta}{\pi(1-\tau^{2})} \\
& = \begin{cases}
\ds \frac{1}{\pi(1-\tau^{2})}\int_{-\pi}^{\pi}e^{-i\theta n} \frac{R_{e}(\theta)^{2-n}-R_{i}(\theta)^{2-n}}{2-n}d\theta, & \mbox{if } n \neq 2, \\[0.25cm]
\ds \frac{1}{\pi(1-\tau^{2})} \int_{-\pi}^{\pi}e^{-2i\theta} \log \frac{R_{e}(\theta)}{R_{i}(\theta)} \; d\theta, & \mbox{if } n=2.
\end{cases}
\end{align*}
Then \eqref{inverse moments EG} with $n=2$ follows from \eqref{lol27}--\eqref{lol28}; \eqref{inverse moments EG} with $n=0$ follows from \eqref{lol29}--\eqref{lol30}; and \eqref{inverse moments EG} with $n\neq 0,2$ follows from \eqref{lol31}.
\end{proof}
\begin{lemma}\label{lemma:moments of nu ELLIPSE complement}
Let $c_{0},c_{1}\in \R$, and let $\hat{\nu}$ be the measure supported on $\partial U$ defined by $d\hat{\nu}(z) = \big( c_{0} + 2 c_{1} \cos (2 \theta) \big)d\theta$ for $z=a \cos \theta + i c \sin \theta$, $\theta \in (-\pi,\pi]$. For $n \in \N$, we have
\begin{align}\label{moments nu complement ellipse}
\int_{\partial U} z^{-n} d\hat{\nu}(z) = \begin{cases}
2\pi c_{0}, & \mbox{if } n = 0, \\
\ds \frac{8\pi c_{1}}{(a+c)^{2}}, & \mbox{if } n=2, \\
0, & \mbox{otherwise}.
\end{cases}
\end{align}
\end{lemma}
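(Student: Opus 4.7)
The approach is a direct Fourier series computation. Using the parametrization $z = a\cos\theta + ic\sin\theta = \alpha e^{i\theta} + \gamma e^{-i\theta}$ with $\alpha = (a+c)/2$ and $\gamma = (a-c)/2$, I would rewrite
\[
z = \alpha e^{i\theta}\bigl(1 + \rho\, e^{-2i\theta}\bigr), \qquad \rho := \gamma/\alpha = (a-c)/(a+c).
\]
Since $a,c>0$, the ratio satisfies $|\rho|<1$, so for every $n\geq 1$ the binomial series
\[
z^{-n} = \alpha^{-n}\, e^{-in\theta}\sum_{k=0}^{\infty}\binom{-n}{k}\rho^{k}\, e^{-2ik\theta}
\]
converges uniformly in $\theta \in (-\pi,\pi]$. (The degenerate case $\gamma=0$, i.e.\ $a=c$, is even simpler since only the $k=0$ term survives; the argument below is unchanged.)

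Writing the density as $d\hat\nu(z) = \bigl(c_0 + c_1 e^{2i\theta} + c_1 e^{-2i\theta}\bigr)d\theta$ and invoking the orthogonality relation $\int_{-\pi}^{\pi} e^{im\theta}\,d\theta = 2\pi\,\mathbf{1}_{m=0}$, I would interchange sum and integral (justified by uniform convergence) to obtain, for each $n\geq 1$,
\[
\int_{\partial U} z^{-n} d\hat\nu(z) = 2\pi\, \alpha^{-n}\sum_{k=0}^{\infty}\binom{-n}{k}\rho^{k}\Bigl(c_0\, \mathbf{1}_{n+2k=0} + c_1\, \mathbf{1}_{n+2k=2} + c_1\, \mathbf{1}_{n+2k=-2}\Bigr).
\]
For $n\geq 1$ the equations $n+2k=0$ and $n+2k=-2$ have no solution with $k\in\mathbb{Z}_{\geq 0}$, while $n+2k=2$ has the unique non-negative solution $k=0$, which occurs only when $n=2$. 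This forces all terms to vanish except in the single case $n=2$, where the surviving contribution is $2\pi c_1 \alpha^{-2} = 8\pi c_1/(a+c)^2$. The case $n=0$ is immediate from the definition: $\int_{\partial U}d\hat\nu = \int_{-\pi}^{\pi}(c_0 + 2c_1\cos(2\theta))d\theta = 2\pi c_0$. This establishes \eqref{moments nu complement ellipse}.

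There is no serious obstacle; the only subtlety is verifying uniform convergence of the binomial expansion (which holds strictly because $a,c>0$ force $|\rho|<1$), after which the computation is purely mechanical orthogonality.
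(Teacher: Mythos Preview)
Your proof is correct. The paper takes a slightly different route: it substitutes $z=e^{i\theta}$ to turn the $\theta$-integral into a contour integral over the unit circle, writes the integrand as a rational function with poles at $\pm z_0$ (where $z_0 = i\sqrt{(a-c)/(a+c)}$, satisfying $|z_0|<1$), and then deforms the contour to $\infty$, reading off the answer from the Laurent expansion at infinity. Your approach expands $z^{-n}=\alpha^{-n}e^{-in\theta}(1+\rho e^{-2i\theta})^{-n}$ directly as a binomial/Fourier series and invokes orthogonality of exponentials. The two arguments are essentially dual: your condition $|\rho|<1$ is exactly the paper's $|z_0|<1$ (indeed $\rho = -z_0^2$), and the Fourier coefficients you isolate are the same Laurent coefficients the paper extracts at $\infty$. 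Your version is a little more elementary in that it avoids contour deformation and residue calculus; the paper's version fits the complex-analytic framework used elsewhere in that section. Either way the computation is short and the content is the same.
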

\begin{proof}
With $z_{0} := \frac{i\sqrt{a-c}}{\sqrt{a+c}}$, we have
\begin{align}\label{lol77}
\int_{\partial U} z^{-n} d\nu(z) = \int_{-\pi}^{\pi} \frac{c_{0}+c_{1}(e^{2i\theta}+e^{-2i\theta})}{\big( \frac{a+c}{2e^{i\theta}}(e^{i\theta}-z_{0})(e^{i\theta}+z_{0}) \big)^{n}} d\theta = \bigg( \frac{2}{a+c} \bigg)^{n}\int_{\mathbb{S}^{1}} \frac{c_{0}+c_{1}(z^{2}+z^{-2})}{(z-z_{0})^{n}(z-z_{0})^{n}} \frac{z^{n-1}}{i}dz.
\end{align}
Since $|z_{0}|<1$, the integrand is analytic in $\{z:|z|\geq 1\}$ and as $z\to \infty$ it behaves as
\begin{align}\label{lol78}
\begin{cases}
ic_{1}z-\frac{ic_{0}}{z}+\bigO(z^{-2}), & \mbox{if } n=0, \\[0.1cm]
-\frac{2i c_{1}}{a+c}+\bigO(z^{-2}), & \mbox{if } n=1,
\end{cases} \qquad\qquad \begin{cases}
-\frac{4ic_{1}}{(a+c)^{2}z}+\bigO(z^{-2}), & \mbox{if } n=2, \\[0.1cm]
\bigO(z^{-2}), & \mbox{if } n \geq 3.
\end{cases}
\end{align}
Hence, by deforming the contour of integration in the right-hand side of \eqref{lol77} towards $\infty$, and using \eqref{lol78}, we find \eqref{moments nu complement ellipse}.
\end{proof}
For $z=a \cos \theta + i c \sin \theta$, $\theta \in (-\pi,\pi]$, let $d\hat{\nu}(z) = \big( c_{0} + 2 c_{1} \cos (2 \theta) \big)d\theta$, where $c_{0}$ and $c_{1}$ are given by \eqref{def of c0 c1 complement ellipse intro}. By Lemmas \ref{lemma:moments of mu ELLIPSE complement} and \ref{lemma:moments of nu ELLIPSE complement}, we have that $\hat{\nu}$ satisfies the first equation in \eqref{eqn:moment unbounded with mass} for all $n \in \N$ (with $\nu$ replaced by $\hat{\nu}$). In order to conclude that $\hat{\nu} = \mathrm{Bal}(\mu|_{U},\partial U)$, we still have to show that $\hat{\nu}$ satisfies the second equation in \eqref{eqn:moment unbounded with mass}. This is, by far, the most complicated part of the proof of Theorem \ref{thm:EG complement ellipse nu}. We will proceed by computing $c^{\hspace{0.02cm}\mu}_{U}$, $\int_{\partial U}\log |z|  d\hat{\nu}(z)$ and $\int_{ U} \log |z| d\mu(z)$ separately. We start with $c^{\hspace{0.02cm}\mu}_{U}$ (Lemma \ref{lemma: cQ unbounded annulus} will also be used later on to prove Theorem \ref{thm:EG complement ellipse C}.)

\begin{lemma}\label{lemma: cQ unbounded annulus}
\begin{align}\label{cQ simplif}
c^{\hspace{0.02cm}\mu}_{U} = \frac{a^{2}}{4(1+\tau)} + \frac{c^{2}}{4(1-\tau)} - \frac{1}{2} + \log\bigg( \frac{2}{a+c} \bigg).
\end{align}
\end{lemma}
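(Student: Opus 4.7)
The plan is to reduce $c_U^\mu = 2\pi \int_U g_U(z,\infty)\,d\mu(z)$ to expressions involving the equilibrium potential $p_\omega$ of $\partial U$, where $\omega$ is the logarithmic equilibrium measure of the ellipse $\partial U$ (whose capacity equals $\alpha := (a+c)/2$). The cornerstone is the identity
\[
2\pi g_U(z,\infty) + p_\omega(z) = -\log \alpha, \qquad z \in \overline{U}.
\]
Both sides are harmonic on $U$, agree on $\partial U$ (where $g_U = 0$ and $p_\omega = -\log\mathrm{cap}(\partial U) = -\log\alpha$), and their logarithmic singularities at $\infty$ cancel, so they must agree on $\overline U$. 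Consequently
\[
c_U^\mu = -\log\alpha \cdot \mu(U) - \int p_\omega\,d\mu|_U.
\]

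To evaluate the remaining integral, I would apply Fubini and split $\mu|_U = \mu - \mu|_{U^c \cap S}$ to obtain $\int p_\omega\,d\mu|_U = \int p_\mu\,d\omega - \int_{U^c\cap S}p_\omega\,d\mu$. For the first piece I would exploit the Euler-Lagrange equation $2p_\mu + Q = F$ (valid on all of $S$ by Assumption \ref{ass:Q} and Proposition \ref{prop:eq measure}), combined with a direct computation at $z=0$ using the angular identity $\int_0^{2\pi}\log|(1+\tau)\cos\theta + i(1-\tau)\sin\theta|\,d\theta = 2\pi\log\tfrac{(1+\tau)+(1-\tau)}{2} = 0$, to pin down $F = 1$. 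Since $\partial U\subset S$, this yields $\int p_\mu\,d\omega = \tfrac{1}{2} - \tfrac{1}{2} I_U$ with $I_U := \int_{\partial U}Q\,d\omega$. For the second piece, the key observation is that $p_\omega \equiv -\log\alpha$ on the closed hole $\overline{U^c}$: it is constant on $\partial U$ by definition of the equilibrium potential and bounded harmonic in the interior of $U^c$, hence constant there by the maximum principle. Thus $\int_{U^c\cap S}p_\omega\,d\mu = -\log\alpha \cdot \mu(U^c\cap S)$.

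Plugging everything back and using $\mu(U) + \mu(U^c\cap S) = \mu(S) = 1$ yields the clean cancellation
\[
c_U^\mu = -\log\alpha\bigl(\mu(U) + \mu(U^c\cap S)\bigr) - \tfrac{1}{2} + \tfrac{I_U}{2} = -\log\alpha - \tfrac{1}{2} + \tfrac{I_U}{2}.
\]
It remains to compute $I_U$: under the Joukowski parametrization $z = \alpha e^{i\psi} + \gamma e^{-i\psi} = a\cos\psi + ic\sin\psi$ (with $\gamma := (a-c)/2$), the conformal map $\phi(w)=\alpha w+\gamma/w$ from $\{|w|>1\}$ onto $U$ realizes $\omega$ as the pushforward of $d\psi/(2\pi)$ from the unit circle. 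Substituting $Q(z) = \frac{(\re z)^2}{1+\tau} + \frac{(\im z)^2}{1-\tau}$ and integrating trigonometrically gives $I_U = \frac{a^2}{2(1+\tau)} + \frac{c^2}{2(1-\tau)}$. Rewriting $-\log\alpha = \log\frac{2}{a+c}$ then produces \eqref{cQ simplif}. The main obstacle is the potential-theoretic identity of the first paragraph together with the constancy of $p_\omega$ on the closed hole; both follow from standard arguments (conformal representation via Joukowski plus the maximum principle), but require careful bookkeeping of constants and signs --- especially the fact that $F=1$, which in turn relies on the specific relation $(1+\tau)+(1-\tau)=2$ that makes the angular log-integral on $\partial S$ vanish.
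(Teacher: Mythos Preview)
Your proof is correct and takes a genuinely different route from the paper's. The paper proceeds by brute force: it writes $g_U(z,\infty)=\tfrac{1}{2\pi}\log|\varphi(z)|$ via the Joukowski map $\varphi$, parametrizes $S\cap U$ in polar coordinates, finds an explicit primitive for $r\mapsto r\log\varphi(re^{i\theta})$, and then splits the resulting boundary integral into four pieces $I_1,I_2,I_3,I_4$, each of which is evaluated through contour deformation, residue computation, and careful branch-cut bookkeeping (including a case distinction on whether an auxiliary point $z_{a,c}$ is real). This occupies roughly two pages.

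Your approach replaces all of that by the identity $2\pi g_U(\cdot,\infty)=-\log\alpha-p_\omega$ on $\overline U$, which reduces $c_U^\mu$ to $-\log\alpha-\int_U p_\omega\,d\mu$. Fubini and the splitting $\mu=\mu|_U+\mu|_{U^c}$ then convert this into two clean ingredients: the Euler--Lagrange constant $F=1$ (pinned down by the one-point evaluation $p_\mu(0)=\tfrac12$, using that the angular log-integral over $\partial S$ vanishes because $\mathrm{cap}(\partial S)=1$) and the constancy of $p_\omega$ on the closed ellipse $\overline{U^c}$. The only genuine computation left is the trigonometric integral $I_U=\int_{\partial U}Q\,d\omega$. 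What your method buys is transparency and brevity: the final formula emerges as $-\log\alpha-\tfrac12+\tfrac12 I_U$ with no contour integrals at all, and the structure makes clear why the answer depends only on $\alpha$, $I_U$, and the normalization $F=1$. What the paper's method buys is self-containedness: it never invokes the Euler--Lagrange relation for $\mu$ or the equilibrium measure of $\partial U$, and the same contour-integral machinery is reused verbatim for the neighboring lemmas (\ref{lemma:some integrals}, \ref{lemma: int Re^2 log Re dtheta}) needed elsewhere in the section.
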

\begin{proof}
Recall from Proposition \ref{prop:def of bal} that $c^{\hspace{0.02cm}\mu}_{U}=2\pi\int_{U}g_{U}(z,\infty)d\mu(z)$. Let $V:=\{z:|z|>1\}$. Since $g_{V}(z,\infty) = \frac{1}{2\pi}\log |z|$ (see e.g. \cite[Section II.4]{SaTo}), and since 
\begin{align}\label{def of varphi}
\varphi(z) = \frac{z + \sqrt{z-\sqrt{a^{2}-c^{2}}} \sqrt{z+\sqrt{a^{2}-c^{2}}}}{a+c}
\end{align}
is a conformal map from $U$ to $V$ (see e.g. \cite{KS2008} and Figure \ref{fig:conformal map from exterior of ellipse}), we have
\begin{align*}
g_{U}(z,\infty) = \frac{1}{2\pi}\log |\varphi(z)|.
\end{align*}
\begin{figure}
\begin{center}
\begin{tikzpicture}[master]
\node at (0,0) {\includegraphics[width=7.35cm]{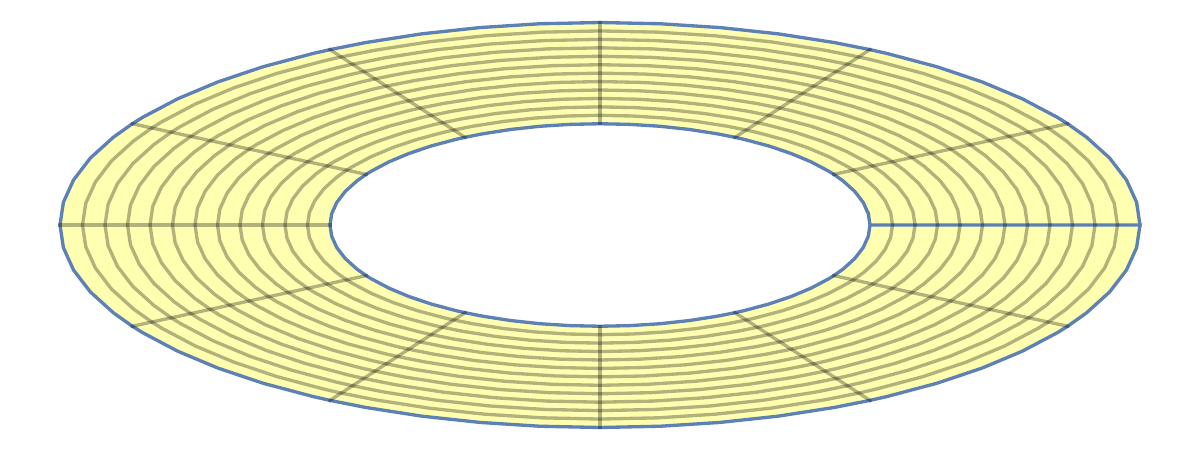}};
\draw[fill] (1.65,0) circle (0.05);
\draw[fill] (0,0.6) circle (0.05);
\node at (1.5,0) {\footnotesize $a$};
\node at (-0.02,0.45) {\footnotesize $ic$};
\end{tikzpicture}
\begin{tikzpicture}[slave]
\node at (0,0) {\includegraphics[width=7.35cm]{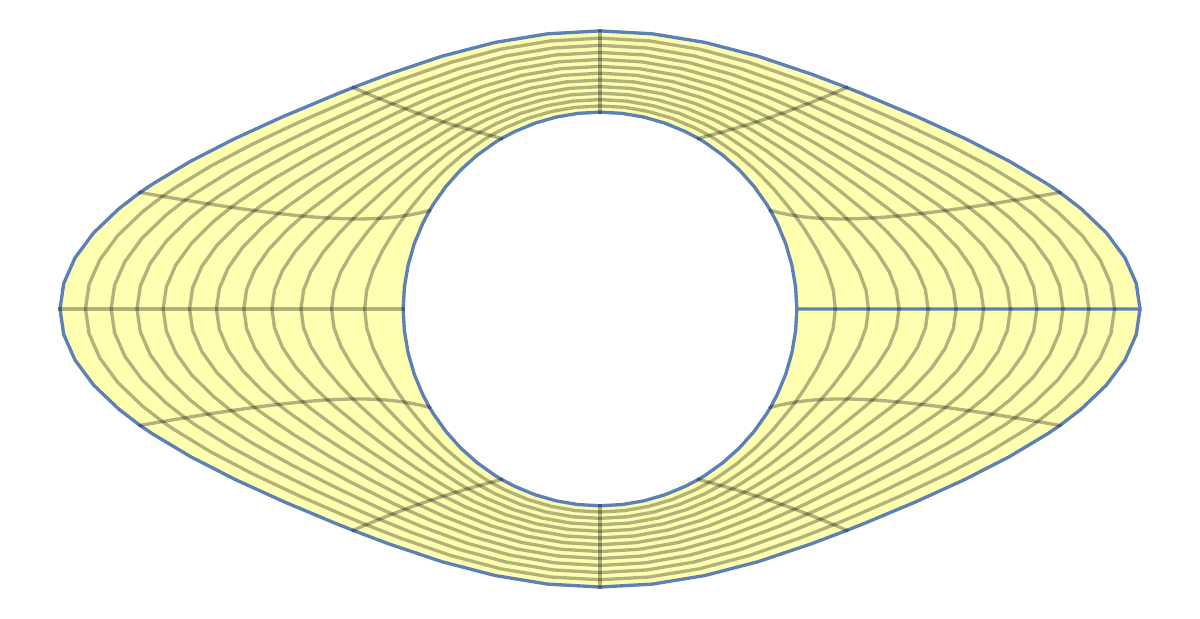}};
\node at (-4,1.4) {$\varphi$};
\coordinate (P) at ($(-4, 0.5) + (30:1cm and 0.6cm)$);
\draw[thick, black, -<-=0.45] ($(-4, 0.5) + (30:1cm and 0.6cm)$(P) arc (30:150:1cm and 0.6cm);
\draw[fill] (1.2,0) circle (0.05);
\node at (1.05,0) {\footnotesize $1$};
\draw[fill] (0,1.2) circle (0.05);
\node at (0,1) {\footnotesize $i$};
\end{tikzpicture}
\end{center}
\caption{\label{fig:conformal map from exterior of ellipse} The conformal map $\varphi$ from $U$ to the exterior of the unit disk.}
\end{figure}
Hence, by \eqref{param polar ellipses},
\begin{align*}
c^{\hspace{0.02cm}\mu}_{U} = \int_{U} \log |\varphi(z)| d\mu(z) = \frac{1}{\pi(1-\tau^{2})} \re \int_{-\pi}^{\pi} \int_{R_{i}(\theta)}^{R_{e}(\theta)} r \log \varphi(r e^{i\theta}) dr d\theta.
\end{align*}
Quite remarkably, $r\mapsto r \log \varphi(r e^{i\theta})$ admits an explicit primitive: indeed, it can be checked that
\begin{align*}
r \log \varphi(r e^{i\theta}) = \frac{d}{dr} \Bigg\{ & - \frac{e^{-i\theta}r}{4} \sqrt{r e^{i\theta} - \sqrt{a^{2}-c^{2}}}\sqrt{r e^{i\theta} + \sqrt{a^{2}-c^{2}}} - \frac{a^{2}-c^{2}}{4 \, e^{2i\theta}} \log(a+c) \\
& + \bigg( \frac{r^{2}}{2} - \frac{a^{2}-c^{2}}{4 \, e^{2i\theta}} \bigg) \log \varphi(r e^{i\theta})  \Bigg\}.
\end{align*}
Using this primitive, we obtain $c^{\hspace{0.02cm}\mu}_{U} = \frac{1}{\pi(1-\tau^{2})}\re (I_{1}+I_{2}+I_{3}+I_{4})$, where
\begin{align*}
& I_{1} = -\int_{-\pi}^{\pi} \frac{e^{-i\theta}R_{e}(\theta)}{4} \sqrt{R_{e}(\theta)e^{i\theta} - \sqrt{a^{2}-c^{2}}} \sqrt{R_{e}(\theta)e^{i\theta} + \sqrt{a^{2}-c^{2}}} \; d\theta, \\
& I_{2} = \int_{-\pi}^{\pi} \bigg( \frac{R_{e}(\theta)^{2}}{2} - \frac{a^{2}-c^{2}}{4 \, e^{2i\theta}} \bigg) \log \varphi(R_{e}(\theta)e^{i\theta}) \; d\theta, \\
& I_{3} = \int_{-\pi}^{\pi} \frac{e^{-i\theta}R_{i}(\theta)}{4} \sqrt{R_{i}(\theta)e^{i\theta} - \sqrt{a^{2}-c^{2}}} \sqrt{R_{i}(\theta)e^{i\theta} + \sqrt{a^{2}-c^{2}}} \; d\theta, \\
& I_{4} = -\int_{-\pi}^{\pi} \bigg( \frac{R_{i}(\theta)^{2}}{2} - \frac{a^{2}-c^{2}}{4 \, e^{2i\theta}} \bigg) \log \varphi(R_{i}(\theta)e^{i\theta}) \; d\theta.
\end{align*}
Since $\varphi$ maps $\partial U$ to the unit circle, $I_{3}$ and $I_{4}$ are simpler than their counterparts $I_{1}$ and $I_{2}$. We will evaluate these integrals in order of increasing difficulty: first $I_{3}$, then $I_{1}$, then $I_{4}$, and finally $I_{2}$.

\noindent \underline{Analysis of $I_{3}$.} Using \eqref{f 2 theta to f 1 theta} and \eqref{def of Ri and Re}, we get
\begin{align}
I_{3} & = \int_{-\pi}^{\pi} \frac{R_{i}(\theta)}{4} \sqrt{R_{i}(\theta)^{2}-(a^{2}-c^{2})e^{-2i\theta}}d\theta = \int_{-\pi}^{\pi} \frac{R_{i}(\frac{\theta}{2})}{4} \sqrt{R_{i}(\tfrac{\theta}{2})^{2}-(a^{2}-c^{2})e^{-i\theta}}d\theta \nonumber \\
& = \frac{1}{4} \frac{a^{2}+c^{2}}{2} \int_{-\pi}^{\pi} \frac{1-\tau_{a,c}^{2}}{1-\tau_{a,c} \cos \theta} \sqrt{1-(a^{2}-c^{2})\frac{2}{a^{2}+c^{2}}\frac{1-\tau_{a,c}\cos \theta}{1-\tau_{a,c}^{2}}e^{-i\theta}}d\theta. \label{lol32}
\end{align}
Since $\tau_{a,c} = \frac{a^{2}-c^{2}}{a^{2}+c^{2}}$, we have
\begin{align}\label{lol33}
1-\frac{2}{a^{2}+c^{2}}\frac{(a^{2}-c^{2})}{1-\tau_{a,c}^{2}}\bigg( 1-\tau_{a,c} \frac{z+z^{-1}}{2} \bigg)\frac{1}{z} = \frac{(a^{2}(z-1)+c^{2}(z+1))^{2}}{4a^{2}c^{2}z^{2}},
\end{align}
and thus, by squaring the first identity in \eqref{branch cuts with tau} and substituting it in \eqref{lol32}, we get
\begin{align*}
I_{3} & = \frac{1}{4} \frac{a^{2}+c^{2}}{2} \int_{-\pi}^{\pi} \frac{1-\tau_{a,c}^{2}}{-\frac{\tau_{a,c}}{2e^{i\theta}} (e^{i\theta}-r_{i})(e^{i\theta}-\frac{1}{r_{i}})} \frac{a^{2}(e^{i\theta}-1)+c^{2}(e^{i\theta}+1)}{2ac e^{i\theta}}d\theta \\
& = - \frac{a^{2}+c^{2}}{4} \int_{-\pi}^{\pi} \frac{\frac{1}{\tau_{a,c}}-\tau_{a,c}}{(z-r_{i})(z-\frac{1}{r_{i}})} \frac{a^{2}(z-1)+c^{2}(z+1)}{2ac}\frac{dz}{iz}.
\end{align*}
The integrand is analytic in $\{|z|\leq 1\}\setminus \{0,r_{i}\}$. Hence, by deforming the contour towards $0$, we pick up some residues at $0$ and $r_{i}$. After a computation (using again $\tau_{a,c} = \frac{a^{2}-c^{2}}{a^{2}+c^{2}}$ to simplify), we get
\begin{align} \label{I3}
I_{3} = \frac{\pi}{2}ac.
\end{align}

\noindent \underline{Analysis of $I_{1}$.} In a similar way as for \eqref{lol32}, we obtain
\begin{align*}
I_{1} = - \frac{1}{4} \int_{-\pi}^{\pi} \frac{(1+\tau_{\star}^{2})(1-\tau_{\star}^{2})}{1-\tau_{\star} \cos \theta} \sqrt{1-\frac{a^{2}-c^{2}}{(1+\tau^{2})(1-\tau_{\star}^{2})}(1-\tau_{\star} \cos \theta)e^{-i\theta}}d\theta.
\end{align*}
Define
\begin{align}\label{def of zac}
z_{a,c} = \frac{(1+\tau^{2})}{2((1-\tau^{2})^{2}+a^{2}\tau-c^{2}\tau)} \bigg( a^{2}-c^{2} + \frac{1-\tau^{2}}{1+\tau^{2}}\sqrt{(a^{2}-c^{2})(a^{2}-c^{2}-4\tau)} \bigg).
\end{align}
A long but direct computation shows that $|z_{a,c}|<1$ for all $\tau \in [0,1)$, $a\in (0,1+\tau]$ and $c\in (0,1-\tau]$. Recall that we only treat the case $a>c$ in this proof. Hence it readily follows from \eqref{def of zac} that $z_{a,c} \in \R$ only if $a^{2}-c^{2}-4\tau\geq 0$, while $z_{a,c} \notin \R$ if $a^{2}-c^{2}-4\tau< 0$. In the latter case, for definiteness we choose the branch of the square root in \eqref{def of zac} so that $z_{a,c}$ lies in the upper half-plane. Both cases $z_{a,c} \in \R$ and $z_{a,c} \notin \R$ can be handled similarly, but the situation when $z_{a,c} \notin \R$ requires more care with branch cuts, and here we will only treat the case $z_{a,c} \notin \R$. For $z_{a,c} \notin \R$, we can rewrite $I_{1}$ as
\begin{align*}
I_{1} & = \int_{-\pi}^{\pi} \frac{1}{2} \frac{(1+\tau^{2})(\frac{1}{\tau_{\star}}-\tau_{\star})}{(e^{i\theta}-r_{e})(e^{i\theta}-\frac{1}{r_{e}})} \sqrt{1+\frac{\tau(a^{2}-c^{2})}{(1-\tau^{2})^{2}}} (e^{i\theta}-z_{a,c})^{1/2}_{-\frac{\pi}{2}} (e^{i\theta}-\overline{z_{a,c}})^{1/2}_{-\frac{\pi}{2}} d\theta \\
& = \int_{\mathbb{S}^{1}} \frac{1}{2} \frac{(1+\tau^{2})(\frac{1}{\tau_{\star}}-\tau_{\star})}{(z-r_{e})(z-\frac{1}{r_{e}})} \sqrt{1+\frac{\tau(a^{2}-c^{2})}{(1-\tau^{2})^{2}}} (z-z_{a,c})^{1/2}_{-\frac{\pi}{2}} (z-\overline{z_{a,c}})^{1/2}_{-\frac{\pi}{2}} \frac{dz}{iz},
\end{align*}
where $(z)^{1/2}_{-\frac{\pi}{2}} := \sqrt{|z|}e^{\frac{i}{2}\arg_{-\frac{\pi}{2}}(z)}$ and $\arg_{-\frac{\pi}{2}}(z) \in [-\frac{\pi}{2},\frac{3\pi}{2})$. It is easy to check that the integrand is $\bigO(z^{-2})$ as $z \to \infty$. Furthermore, since $|z_{a,c}|<1$, the integrand is analytic in $\{z:|z|\geq 1\}\setminus \{\frac{1}{r_{e}}\}$. Hence, by deforming the contour towards $\infty$, we only pick up a residue at $\frac{1}{r_{e}}$:
\begin{align}
I_{1} & = \frac{1}{2} \frac{(1+\tau^{2})(\frac{1}{\tau_{\star}}-\tau_{\star})}{\frac{1}{r_{e}}-r_{e}} \sqrt{1+\frac{\tau(a^{2}-c^{2})}{(1-\tau^{2})^{2}}} (\tfrac{1}{r_{e}}-z_{a,c})^{1/2}_{-\frac{\pi}{2}} (\tfrac{1}{r_{e}}-\overline{z_{a,c}})^{1/2}_{-\frac{\pi}{2}} \frac{1}{i \frac{1}{r_{e}}}(-2\pi i) \nonumber \\
& = -\pi \frac{(1+\tau^{2})(\frac{1}{\tau_{\star}}-\tau_{\star})}{\frac{1}{r_{e}}-r_{e}} \sqrt{1+\frac{\tau(a^{2}-c^{2})}{(1-\tau^{2})^{2}}} r_{e} \sqrt{(\tfrac{1}{r_{e}}-z_{a,c})(\tfrac{1}{r_{e}}-\overline{z_{a,c}})} = - \frac{\pi}{2}(1-\tau^{2}), \label{I1}
\end{align}
where for the last equality we have used \eqref{def of ri and re}. If $z_{a,c}\in \R$, using a similar analysis we also obtain $I_{1} = - \frac{\pi}{2}(1-\tau^{2})$.

\noindent \underline{Analysis of $I_{4}$.} Using \eqref{def of varphi}, we get
\begin{align*}
I_{4} & = - \int_{-\pi}^{\pi} \bigg( \frac{R_{i}(\theta)^{2}}{2} - \frac{a^{2}-c^{2}}{4e^{2i\theta}} \bigg) \log \bigg( e^{i\theta} \frac{R_{i}(\theta)+\sqrt{R_{i}(\theta)^{2}-(a^{2}-c^{2})e^{-2i\theta}}}{a+c} \bigg) \; d\theta \\
& = - \int_{-\pi}^{\pi} \bigg( \frac{R_{i}(\theta)^{2}}{2} - \frac{a^{2}-c^{2}}{4e^{2i\theta}} \bigg) \bigg[ i \theta + \log \bigg(  \frac{R_{i}(\theta)+\sqrt{R_{i}(\theta)^{2}-(a^{2}-c^{2})e^{-2i\theta}}}{a+c} \bigg) \bigg] \; d\theta.
\end{align*}
Since $R_{i}(\theta)=R_{i}(-\theta)$, $\int_{-\pi}^{\pi} \theta R_{i}(\theta)^{2}d\theta = 0$. Also, by a direct computation, $\int_{-\pi}^{\pi} \frac{i \theta d\theta}{e^{2i \theta}} = -\pi$. Thus, using also \eqref{f 2 theta to f 1 theta}, \eqref{def of Ri and Re} and \eqref{lol33}, we obtain
\begin{align}
I_{4} & = -\frac{a^{2}-c^{2}}{4}\pi - \int_{-\pi}^{\pi} \bigg( \frac{R_{i}(\frac{\theta}{2})^{2}}{2}-\frac{a^{2}-c^{2}}{4e^{i\theta}} \bigg) \log \bigg(  \frac{R_{i}(\frac{\theta}{2})+\sqrt{R_{i}(\frac{\theta}{2})^{2}-(a^{2}-c^{2})e^{-i\theta}}}{a+c} \bigg) d\theta \nonumber \\
& = -\frac{a^{2}-c^{2}}{4}\pi - \int_{-\pi}^{\pi} \bigg( \frac{a^{2}+c^{2}}{2}\frac{1-\tau_{a,c}^{2}}{2(1-\tau_{a,c}\cos \theta)}-\frac{a^{2}-c^{2}}{4e^{i\theta}} \bigg) \nonumber \\
& \times  \log \bigg( \sqrt{\frac{a^{2}+c^{2}}{2}} \frac{\sqrt{1-\tau_{a,c}^{2}}}{\sqrt{1-\tau_{a,c}\cos\theta}} \frac{1+ \frac{a^{2}(e^{i\theta}-1)+c^{2}(e^{i\theta}+1)}{2ac e^{i\theta}}}{a+c} \bigg) d\theta. \label{lol34}
\end{align}
Using \eqref{branch cuts with tau} and
\begin{align*}
1+ \frac{a^{2}(z-1)+c^{2}(z+1)}{2ac z} = \bigg( 1+\frac{a}{2c}+\frac{c}{2a} \bigg) \frac{1}{z} (z-w_{a,c}), \qquad w_{a,c}:= \frac{a^{2}-c^{2}}{(a+c)^{2}}\in [0,1),
\end{align*}
we then find
\begin{align}
& I_{4} = -\frac{a^{2}-c^{2}}{4}\pi - \int_{-\pi}^{\pi} \bigg( \frac{a^{2}+c^{2}}{2} \frac{1-\tau_{a,c}^{2}}{2 \frac{-\tau_{a,c}}{2e^{i\theta}}(e^{i\theta}-r_{i})(e^{i\theta}-\frac{1}{r_{i}})} - \frac{a^{2}-c^{2}}{4e^{i\theta}} \bigg) \nonumber \\
& \times \log \bigg( \sqrt{\frac{a^{2}+c^{2}}{2}} \frac{e^{i \frac{\theta}{2}}\sqrt{1-\tau_{a,c}^{2}}}{\sqrt{\frac{\tau_{a,c}}{2}}\sqrt{e^{i\theta}-r_{i}}\sqrt{\frac{1}{r_{i}}-e^{i\theta}}} \frac{(1+\frac{a}{2c}+\frac{c}{2a})\frac{1}{e^{i\theta}}(e^{i\theta}-w_{a,c})}{a+c} \bigg) d\theta \nonumber \\
& = \int_{-\pi}^{\pi} \bigg( \frac{a^{2}+c^{2}}{2} \frac{(\frac{1}{\tau_{a,c}}-\tau_{a,c})e^{i\theta}}{(e^{i\theta}-r_{i})(e^{i\theta}-\frac{1}{r_{i}})} + \frac{a^{2}-c^{2}}{4e^{i\theta}} \bigg) \nonumber \\
& \times \log \bigg( \sqrt{\frac{a^{2}+c^{2}}{2}} \frac{\sqrt{1-\tau_{a,c}^{2}}}{\sqrt{\frac{\tau_{a,c}}{2}}\sqrt{e^{i\theta}-r_{i}}\sqrt{\frac{1}{r_{i}}-e^{i\theta}}} \frac{(1+\frac{a}{2c}+\frac{c}{2a})\frac{1}{e^{i\theta}}(e^{i\theta}-w_{a,c})}{a+c} \bigg) d\theta. \label{lol80}
\end{align}
A long but direct analysis shows that, for all $\theta \in (-\pi,\pi]$,
\begin{align}
& \log \bigg( \sqrt{\frac{a^{2}+c^{2}}{2}} \frac{\sqrt{1-\tau_{a,c}^{2}}}{\sqrt{\frac{\tau_{a,c}}{2}}\sqrt{e^{i\theta}-r_{i}}\sqrt{\frac{1}{r_{i}}-e^{i\theta}}} \frac{(1+\frac{a}{2c}+\frac{c}{2a})\frac{1}{e^{i\theta}}(e^{i\theta}-w_{a,c})}{a+c} \bigg) = \log(e^{i\theta}-w_{a,c})-\log(e^{i\theta}) \nonumber \\
& + \log \bigg( \sqrt{a^{2}+c^{2}} \sqrt{\frac{1}{\smash{\tau_{a,c}}}-\tau_{a,c}} \frac{1+\frac{a}{2c}+\frac{c}{2a}}{a+c} \bigg) - \frac{1}{2} \big( \log(e^{i\theta}-r_{i})+\log(\tfrac{1}{r_{i}}-e^{i\theta}) \big), \label{lol79}
\end{align}
where the principal branch is used for the logarithms. After substituting \eqref{lol79} in \eqref{lol80}, and then changing variables $z=e^{i\theta}$, we obtain that $I_{4} = I_{4,1} + I_{4,2} + I_{4,3}$, where
\begin{align*}
& I_{4,1} = \log \bigg( \sqrt{a^{2}+c^{2}} \sqrt{\frac{1}{\smash{\tau_{a,c}}}-\tau_{a,c}} \frac{1+\frac{a}{2c}+\frac{c}{2a}}{a+c} \bigg) \int_{\mathbb{S}^{1}} \bigg( \frac{a^{2}+c^{2}}{2} \frac{(\frac{1}{\tau_{a,c}}-\tau_{a,c})z}{(z-r_{i})(z-\frac{1}{r_{i}})} + \frac{a^{2}-c^{2}}{4z} \bigg)\frac{dz}{iz}, \\
& I_{4,2} = \int_{\mathbb{S}^{1}} \bigg( \frac{a^{2}+c^{2}}{2} \frac{(\frac{1}{\tau_{a,c}}-\tau_{a,c})z}{(z-r_{i})(z-\frac{1}{r_{i}})} + \frac{a^{2}-c^{2}}{4z} \bigg) \frac{-1}{2} \big( \log(z-r_{i})+\log(\tfrac{1}{r_{i}}-z) \big)\frac{dz}{iz}, \\
& I_{4,3} = \int_{\mathbb{S}^{1}} \bigg( \frac{a^{2}+c^{2}}{2} \frac{(\frac{1}{\tau_{a,c}}-\tau_{a,c})z}{(z-r_{i})(z-\frac{1}{r_{i}})} + \frac{a^{2}-c^{2}}{4z} \bigg) \big( \log(z-w_{a,c})-\log(z) \big)\frac{dz}{iz}.
\end{align*}
The integrand of $I_{1}$ is analytic in $\C \setminus \{0,r_{i},\frac{1}{r_{i}}\}$, and has no residue at $0$. Hence, by deforming the contour towards $0$, we pick up a residue at $r_{i}$ and obtain
\begin{align}\label{I41}
I_{4,1} = -(a^{2}+c^{2})\pi \frac{\frac{1}{\tau_{a,c}}-\tau_{a,c}}{\frac{1}{r_{i}-r_{i}}} \log \bigg( \sqrt{a^{2}+c^{2}} \sqrt{\frac{1}{\smash{\tau_{a,c}}}-\tau_{a,c}} \frac{1+\frac{a}{2c}+\frac{c}{2a}}{a+c} \bigg) = - a c \pi \log \frac{a+c}{\sqrt{a^{2}-c^{2}}},
\end{align}
where for the last equality we have used \eqref{def of ri and re} and $\tau_{a,c} = \frac{a^{2}-c^{2}}{a^{2}+c^{2}}$. The integrand of $I_{4,3}$ is analytic in $\{z:|z|\geq 1\}\setminus \{\frac{1}{r_{i}}\}$ and is $\bigO(z^{-3})$ as $z\to \infty$. Hence, by deforming the contour towards $\infty$, we obtain
\begin{align}\label{I43}
I_{4,3} = - a c \pi \log(1-r_{i}w_{a,c}) = - a c \pi \log \frac{4ac}{(a+c)^{2}}.
\end{align}
It only remains to evaluate $I_{4,2}$. For this, we write $I_{4,2} = I_{4,2}^{(1)}+I_{4,2}^{(2)}+I_{4,3}^{(3)}$, where
\begin{align}
& I_{4,2}^{(1)} = -\int_{\mathbb{S}^{1}} \frac{a^{2}-c^{2}}{4z^{2}}\big( \log(z-r_{i})+\log(\tfrac{1}{r_{i}}-z) \big) \frac{dz}{2i}, \nonumber \\
& I_{4,2}^{(2)} = \int_{\mathbb{S}^{1}} \frac{a^{2}+c^{2}}{2} \frac{-(\frac{1}{\tau_{a,c}}-\tau_{a,c})}{(z-r_{i})(z-\frac{1}{r_{i}})} \log(z-r_{i})\frac{dz}{2i}, \;\;\; I_{4,2}^{(3)} = \int_{\mathbb{S}^{1}} \frac{a^{2}+c^{2}}{2} \frac{-(\frac{1}{\tau_{a,c}}-\tau_{a,c})}{(z-r_{i})(z-\frac{1}{r_{i}})} \log(\tfrac{1}{r_{i}}-z)\frac{dz}{2i}. \label{def of I42p2p and I42p3p}
\end{align}
For $I_{4,2}^{(1)}$, we deform the contour towards $\infty$, and we pick up a contribution from the branch cuts along $(-\infty,-1)$ and $(\frac{1}{r_{i}},+\infty)$:
\begin{align}\label{I42p1p}
I_{4,2}^{(1)} = \int_{-\infty}^{-1} \frac{a^{2}-c^{2}}{4x^{2}}2\pi i \frac{-dx}{2i} + \int_{\frac{1}{r_{i}}}^{+\infty} \frac{a^{2}-c^{2}}{4x^{2}}(-2\pi i) \frac{-dx}{2i} = -\frac{a^{2}-c^{2}}{4}\pi (1-r_{i}).
\end{align}
The integrand of $I_{4,2}^{(3)}$ is analytic in $\{z:|z|\leq 1\}\setminus \{r_{i}\}$, so by deforming the contour towards $0$ we get
\begin{align}\label{I42p3p}
I_{4,2}^{(3)} = \frac{a^{2}+c^{2}}{2}\pi \frac{\frac{1}{\tau_{a,c}}-\tau_{a,c}}{\frac{1}{r_{i}}-r_{i}} \log(\tfrac{1}{r_{i}}-r_{i}).
\end{align}
For $I_{4,2}^{(2)}$, by deforming the contour towards $\infty$, we pick up a contribution along the branch cut $(-\infty,-1)$, and also a residue at $\frac{1}{r_{i}}$:
\begin{align}
I_{4,2}^{(2)} & = \frac{a^{2}+c^{2}}{2}\pi \frac{\frac{1}{\tau_{a,c}}-\tau_{a,c}}{\frac{1}{r_{i}}-r_{i}} \log(\tfrac{1}{r_{i}}-r_{i}) + \int_{-\infty}^{-1} \frac{a^{2}+c^{2}}{2} \frac{\frac{1}{\tau_{a,c}}-\tau_{a,c}}{(x-r_{i})(x-\frac{1}{r_{i}})} 2\pi i \frac{-1}{2i}dx \nonumber \\
& = \frac{a^{2}+c^{2}}{2}\pi \frac{\frac{1}{\tau_{a,c}}-\tau_{a,c}}{\frac{1}{r_{i}}-r_{i}} \log(\tfrac{1}{r_{i}}-r_{i}) + \frac{a^{2}+c^{2}}{2}\pi \frac{\frac{1}{\tau_{a,c}}-\tau_{a,c}}{\frac{1}{r_{i}}-r_{i}} \log(r_{i}). \label{I42p2p}
\end{align}
Substituting \eqref{I42p1p}, \eqref{I42p3p} and \eqref{I42p2p} in $I_{4,2} = I_{4,2}^{(1)}+I_{4,2}^{(2)}+I_{4,3}^{(3)}$, and simplifying using \eqref{def of ri and re} and $\tau_{a,c} = \frac{a^{2}-c^{2}}{a^{2}+c^{2}}$, we get
\begin{align}\label{I42}
I_{4,2} = -\frac{\pi}{2}(a-c)c + \pi \frac{ac}{2} \log \bigg( \frac{16a^{2}c^{2}}{(a-c)(a+c)^{3}} \bigg).
\end{align}
Substituting now \eqref{I41}, \eqref{I43} and \eqref{I42} in $I_{4} = I_{4,1} + I_{4,2} + I_{4,3}$, we get
\begin{align}\label{I4}
I_{4} = -\frac{(a-c)c\pi}{2}.
\end{align}

\noindent \underline{Analysis of $I_{2}$.} Recall that $z_{a,c}$ is defined in \eqref{def of zac} and satisfies $|z_{a,c}|<1$. Just as in the analysis of $I_{1}$, we will only treat here the case where $z_{a,c}\notin \R$ (the case $z_{a,c}\in \R$ is simpler). In a similar way as for \eqref{lol34}, we get
\begin{align}
I_{2} & = \frac{a^{2}-c^{2}}{4}\pi + \int_{-\pi}^{\pi} \bigg( \frac{R_{e}(\frac{\theta}{2})^{2}}{2}-\frac{a^{2}-c^{2}}{4e^{i\theta}} \bigg) \log \bigg(  \frac{R_{e}(\frac{\theta}{2})+\sqrt{R_{e}(\frac{\theta}{2})^{2}-(a^{2}-c^{2})e^{-i\theta}}}{a+c} \bigg) d\theta \nonumber \\
& = \frac{a^{2}-c^{2}}{4}\pi + \int_{-\pi}^{\pi} \bigg( \frac{(1+\tau^{2})(1-\tau_{\star}^{2})}{2(1-\tau_{\star}\cos \theta)}-\frac{a^{2}-c^{2}}{4e^{i\theta}} \bigg) \nonumber \\
& \times  \log \bigg( \frac{\sqrt{1+\tau^{2}}\sqrt{1-\tau_{\star}^{2}}}{\sqrt{1-\tau_{\star}\cos\theta}} \frac{1+ \sqrt{1+\frac{\tau(a^{2}-c^{2})}{(1-\tau^{2})^{2}}} \frac{1}{e^{i\theta}} (e^{i\theta}-z_{a,c})_{-\frac{\pi}{2}}^{1/2}(e^{i\theta}-\overline{z_{a,c}})_{-\frac{\pi}{2}}^{1/2} }{a+c} \bigg) d\theta, \label{lol35}
\end{align}
where we recall that $(z)^{1/2}_{-\frac{\pi}{2}} := \sqrt{|z|}e^{\frac{i}{2}\arg_{-\frac{\pi}{2}}(z)}$ and $\arg_{-\frac{\pi}{2}}(z) \in [-\frac{\pi}{2},\frac{3\pi}{2})$. Following the analysis of $I_{4}$, we split the above $\log$ in three parts, and then make the change of variables $z=e^{i\theta}$. This yields $I_{2} = I_{2,1} + I_{2,2} + I_{2,3}$, where
\begin{align*}
& I_{2,1} = -\log \bigg( \frac{\sqrt{2}\sqrt{1+\tau^{2}}\sqrt{\frac{1}{\tau_{\star}}-\tau_{\star}}}{a+c} \bigg) \int_{\mathbb{S}^{1}} \bigg( \frac{(1+\tau^{2})(\frac{1}{\tau_{\star}}-\tau_{\star})}{(z-r_{e})(z-\frac{1}{r_{e}})} + \frac{a^{2}-c^{2}}{4z^{2}} \bigg)\frac{dz}{i}, \\
& I_{2,2} = \int_{\mathbb{S}^{1}} \bigg( \frac{(1+\tau^{2})(\frac{1}{\tau_{\star}}-\tau_{\star})}{(z-r_{e})(z-\frac{1}{r_{e}})} + \frac{a^{2}-c^{2}}{4z^{2}} \bigg) \frac{1}{2} \big( \log(z-r_{e})+\log(\tfrac{1}{r_{e}}-z) \big)\frac{dz}{i}, \\
& I_{2,3} = \int_{\mathbb{S}^{1}} \bigg( \frac{(1+\tau^{2})(\frac{1}{\tau_{\star}}-\tau_{\star})}{(z-r_{e})(z-\frac{1}{r_{e}})} + \frac{a^{2}-c^{2}}{4z^{2}} \bigg) \log \tilde{\varphi}(z)\frac{dz}{i},
\end{align*}
where
\begin{align*}
\tilde{\varphi}(z) := 1+ \sqrt{1+\frac{\tau(a^{2}-c^{2})}{(1-\tau^{2})^{2}}} \frac{1}{z} (z-z_{a,c})_{-\frac{\pi}{2}}^{1/2}(z-\overline{z_{a,c}})_{-\frac{\pi}{2}}^{1/2}.
\end{align*}
Note that in \eqref{lol35} there were several possible splittings of the log at our disposal (before making the change of variables $z=e^{i\theta}$). The important advantage of the splitting $I_{2} = I_{2,1} + I_{2,2} + I_{2,3}$ is that $I_{2,3}$ involves $\log \tilde{\varphi}$, and that $\log \tilde{\varphi}$ is analytic in $\{z:|z|\geq 1\}$. Indeed, a direct analysis shows that
\begin{align*}
\tilde{\varphi}(re^{i\theta}) = 1+\sqrt{1+\frac{a^{2}-c^{2}}{\tau(\frac{1}{\tau}-\tau)^{2}re^{i\theta}}\Big(re^{i\theta} + \frac{1}{re^{i\theta}} - (\tau^{-1}+\tau)\Big)}, \qquad \mbox{for all } r\geq 1, \; \theta \in (-\pi,\pi],
\end{align*}
where the principal branch is used for the square root. In particular this shows that $\re \tilde{\varphi}(z) \geq 1$ for all $z \in \{z\in  \C:|z|\geq 1\}$, and thus that $\log \tilde{\varphi}$ is analytic in $\{z:|z|\geq 1\}$. The analysis of $I_{2,1}$ is similar to that of $I_{4,1}$ and we find
\begin{align*}
I_{2,1} = \pi (1-\tau^{2}) \log \bigg( \frac{1-\tau^{2}}{(a+c)\sqrt{\tau}} \bigg).
\end{align*}
For $I_{2,2}$, in a similar way as for $I_{4,2}$ we find 
\begin{align*}
I_{2,2} = \frac{\pi}{4}(a^{2}-c^{2})(1-r_{e}) - \pi \frac{1-\tau^{2}}{2} \log \frac{(1-r_{e}^{2})^{2}}{r_{e}}.
\end{align*}
For $I_{2,3}$, since $\tilde{\varphi}$ is analytic in $\{z:|z|\geq 1\}$, and since the integrand is $\bigO(z^{-2})$ as $z\to \infty$, by deforming the contour towards $\infty$ we get
\begin{align*}
I_{2,3} & = \frac{(1+\tau^{2})(\frac{1}{\tau_{\star}}-\tau_{\star})}{-(\frac{1}{r_{e}}-r_{e})} (\log \tilde{\varphi}(\tfrac{1}{r_{e}})) \; \frac{1}{i}(-2\pi i) \\
& = 2\pi \frac{(1+\tau^{2})(\frac{1}{\tau_{\star}}-\tau_{\star})}{\frac{1}{r_{e}}-r_{e}} \log \bigg( 1+ \sqrt{1+\frac{\tau(a^{2}-c^{2})}{(1-\tau^{2})^{2}}} r_{e} \sqrt{(\tfrac{1}{r_{e}}-z_{a,c})(\tfrac{1}{r_{e}}-\overline{z_{a,c}})} \bigg) = \pi (1-\tau^{2}) \log 2.
\end{align*}
By substituting the above expressions in $I_{2} = I_{2,1} + I_{2,2} + I_{2,3}$, and simplifying, we obtain
\begin{align}\label{I2}
I_{2} = \frac{\pi}{4}(a^{2}-c^{2})(1-\tau)+\pi(1-\tau^{2})\log \bigg( \frac{2}{a+c} \bigg).
\end{align}
Finally, substituting \eqref{I3}, \eqref{I1}, \eqref{I4} and \eqref{I2} in $c^{\hspace{0.02cm}\mu}_{U} = \frac{1}{\pi(1-\tau^{2})}\re (I_{1}+I_{2}+I_{3}+I_{4})$, and simplifying, we get \eqref{cQ simplif}.
\end{proof}
Our next goal is to find explicit expressions for $\int_{\partial U}\log |z|  d\hat{\nu}(z)$ and $\int_{ U} \log |z| d\mu(z)$.
\begin{lemma}\label{lemma:int log |z|  dnu(z)}
Let $c_{0},c_{1} \in \R$, and let $\hat{\nu}$ be the measure supported on $\partial U$ defined by $d\nu(z) = \big( c_{0} + 2 c_{1} \cos (2 \theta) \big)d\theta$ for $z=a \cos \theta + i c \sin \theta$, $\theta \in (-\pi,\pi]$. Then
\begin{align}\label{lol37}
\int_{\partial U}\log |z| \, d\hat{\nu}(z) = 2\pi \bigg( c_{0} \log \frac{a+c}{2} + c_{1} \frac{a-c}{a+c} \bigg).
\end{align}
\end{lemma}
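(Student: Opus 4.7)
The plan is to compute the integral by a direct Fourier series expansion of $\log|z|$ on $\partial U$. The key observation is the factorization
\[
a\cos\theta + ic\sin\theta = \alpha e^{i\theta} + \gamma e^{-i\theta} = e^{-i\theta}\bigl(\alpha e^{2i\theta} + \gamma\bigr),
\]
where $\alpha = (a+c)/2 > 0$ and $\gamma = (a-c)/2$. Taking moduli gives $|a\cos\theta+ic\sin\theta| = \alpha\,|1+\rho e^{2i\theta}|$, with $\rho := \gamma/\alpha = (a-c)/(a+c) \in (-1,1)$; the identity $|\rho + e^{2i\theta}| = |1+\rho e^{2i\theta}|$ is immediate from expanding both sides as $1+\rho^2+2\rho\cos(2\theta)$.

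Since $|\rho|<1$, I would then use the Taylor expansion $\log(1+w) = \sum_{k=1}^{\infty}(-1)^{k+1}w^k/k$ with $w = \rho e^{2i\theta}$ and take real parts to obtain the convergent Fourier series
\[
\log|a\cos\theta+ic\sin\theta| = \log\alpha + \sum_{k=1}^{+\infty}\frac{(-1)^{k+1}\rho^k}{k}\cos(2k\theta),
\]
uniformly in $\theta$.

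The final step is to substitute this into $\int_{-\pi}^{\pi}\log|a\cos\theta+ic\sin\theta|(c_0+2c_1\cos(2\theta))\,d\theta$ and exchange the sum and the integral (justified by uniform convergence). The constant term contributes $2\pi c_0\log\alpha = 2\pi c_0\log\frac{a+c}{2}$. For $k\geq 1$, $\int_{-\pi}^{\pi}\cos(2k\theta)\,d\theta = 0$, while $\int_{-\pi}^{\pi}\cos(2k\theta)\cos(2\theta)\,d\theta = \pi\delta_{k,1}$, so only the $k=1$ term in the series survives and yields $2c_1\cdot\pi\cdot\rho = 2\pi c_1\frac{a-c}{a+c}$. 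Summing the two contributions gives \eqref{lol37}. There is no serious obstacle; the only point requiring a little care is the sign of $\rho$ when $a<c$, but the argument goes through verbatim since only $|\rho|<1$ is used.
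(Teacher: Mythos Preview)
Your proof is correct and takes a genuinely different route from the paper's. The paper writes $|z|^2 = \frac{a^2-c^2}{4e^{i\theta}}(e^{i\theta}+r_0)(e^{i\theta}+\tfrac{1}{r_0})$ with $r_0=\frac{a-c}{a+c}$ (the same quantity as your $\rho$), applies the identity $\int_{-\pi}^{\pi}f(e^{2i\theta})\,d\theta=\int_{-\pi}^{\pi}f(e^{i\theta})\,d\theta$ to halve the frequency, and then splits the logarithm into three pieces $I_1+I_2+I_3$; the nontrivial piece $I_3$ is handled by the change of variables $z=e^{i\theta}$ and contour deformation toward $0$, picking up a residue at $0$ and a branch-cut contribution along $(-1,-r_0)$. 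Your Fourier-series argument is more elementary and self-contained, and it handles $a<c$ and $a>c$ in one stroke (the paper's section is written under the standing assumption $a>c$, so $r_0\in(0,1)$). The paper's contour-integral approach, on the other hand, is consistent with the heavier machinery used elsewhere in that section (e.g.\ for computing $c^{\mu}_U$), where no comparably simple series expansion is available.
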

\begin{proof}
Let $r_{0} := \frac{a-c}{a+c}\in (0,1)$. Using the definition of $\hat{\nu}$ and \eqref{f 2 theta to f 1 theta}, we obtain
\begin{align}
& \int_{\partial U}\log |z| \; d\hat{\nu}(z) = \frac{1}{2} \int_{-\pi}^{\pi} \log\big(a^{2}(\cos \theta)^{2}+c^{2}(\sin \theta)^{2}\big) \; (c_{0}+2c_{1}\cos (2\theta))d\theta \nonumber \\
& = \frac{1}{2}\int_{-\pi}^{\pi} \log \bigg( \frac{a^{2}-c^{2}}{4e^{2i\theta}}(e^{2i\theta}+r_{0})(e^{2i\theta}+\tfrac{1}{r_{0}}) \bigg)  \; (c_{0}+2c_{1}\cos (2\theta))d\theta \nonumber \\
& = \frac{1}{2}\int_{-\pi}^{\pi} \log \bigg( \frac{a^{2}-c^{2}}{4e^{i\theta}}(e^{i\theta}+r_{0})(e^{i\theta}+\tfrac{1}{r_{0}}) \bigg) \; (c_{0}+2c_{1}\cos \theta)d\theta = I_{1}+I_{2}+I_{3}, \label{lol36}
\end{align}
where
\begin{align*}
& I_{1} =  \frac{1}{2} \log  \frac{a^{2}-c^{2}}{4} \; \int_{-\pi}^{\pi}   \big( c_{0} + c_{1}(e^{i\theta}+e^{-i\theta}) \big)d\theta   = \pi c_{0} \log  \frac{a^{2}-c^{2}}{4}, \\
& I_{2} = \frac{1}{2} \int_{-\pi}^{\pi} (-i\theta)  \big( c_{0} + c_{1}(e^{i\theta}+e^{-i\theta}) \big)d\theta = 0, \\
& I_{3} = \frac{1}{2} \int_{-\pi}^{\pi} \big( \log(e^{i\theta}+r_{0}) + \log(e^{i\theta}+\tfrac{1}{r_{0}})\big) \big( c_{0} + c_{1}(e^{i\theta}+e^{-i\theta}) \big)d\theta.
\end{align*}
For $I_{3}$, after the change of variables $z=e^{i\theta}$, we deform the contour towards $0$. In this way we pick up a branch cut contribution along $(-1,-r_{0})$, and a residue contribution at $0$:
\begin{align*}
I_{3} = -\frac{1}{2}\int_{-1}^{-r_{0}} 2\pi i \bigg( c_{0} + c_{1} \big( x+\frac{1}{x} \big) \bigg) \frac{dx}{ix} - \frac{i c_{1} (\frac{1}{r_{0}}+r_{0})}{2}2\pi i = \pi (2c_{1}r_{0}-c_{0}\log r_{0}).
\end{align*}
Substituting the above expressions in \eqref{lol36}, we obtain \eqref{lol37}.
\end{proof}

\begin{lemma}\label{lemma: int Re^2 log Re dtheta}
The following relations hold:
\begin{align}
& \int_{-\pi}^{\pi} R_{e}(\theta)^{2}\log R_{e}(\theta) d\theta = 0, \label{lol38} \\
& \int_{-\pi}^{\pi} R_{i}(\theta)^{2}\log R_{i}(\theta) d\theta = 2\pi ac \log \frac{a+c}{2}. \label{lol39}
\end{align}
\end{lemma}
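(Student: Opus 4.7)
\noindent\textbf{Proof plan for Lemma \ref{lemma: int Re^2 log Re dtheta}.} The plan is to treat both identities simultaneously by noticing that $R_e$ and $R_i$ share the common form
\[
R(\theta)^{2} = \alpha^{2}\,\frac{1-\rho^{2}}{1-\rho\cos(2\theta)},
\]
with $(\alpha^{2},\rho)=(1+\tau^{2},\tau_{\star})$ for $R_e$ and $(\alpha^{2},\rho)=((a^{2}+c^{2})/2,\tau_{a,c})$ for $R_i$. In each case the parameters $r_e,r_i$ introduced in \eqref{def of ri and re} are precisely the unique $r\in(0,1)$ with $r+1/r = 2/\rho$, so that $1-\rho^{2}=(1-r^{2})^{2}/(1+r^{2})^{2}$. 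First I would apply \eqref{f 2 theta to f 1 theta} to rewrite the integrand with $\cos\theta$ in place of $\cos(2\theta)$, and split the logarithm as
\[
\log R(\theta)=\log\alpha+\tfrac12\log(1-\rho^{2})-\tfrac12\log(1-\rho\cos(2\theta)),
\]
which reduces the problem to evaluating $\int_{-\pi}^{\pi}R^{2}d\theta$ (already given in \eqref{lol29}--\eqref{lol30}) and the single nontrivial integral
\[
J:=\int_{-\pi}^{\pi}R^{2}\log(1-\rho\cos(2\theta))\,d\theta.
\]

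Next I would compute $J$ by using the factorization $1-\rho\cos\theta=(1-re^{i\theta})(1-re^{-i\theta})/(1+r^{2})$, together with \eqref{f 2 theta to f 1 theta}, to get
\[
J = \alpha^{2}(1-\rho^{2})(1+r^{2})\int_{-\pi}^{\pi}\frac{-\log(1+r^{2})+\log(1-re^{i\theta})+\log(1-re^{-i\theta})}{(1-re^{i\theta})(1-re^{-i\theta})}d\theta.
\]
The constant term contributes $-2\pi\log(1+r^{2})/(1-r^{2})$ by the standard Poisson integral. The two logarithm terms are equal by $\theta\mapsto -\theta$; after $z=e^{i\theta}$ this becomes $(2/i)\oint_{|z|=1}\log(1-rz)/[(1-rz)(z-r)]\,dz$, whose only singularity inside the unit circle is a simple pole at $z=r$ (since $r\in(0,1)$, the logarithmic branch point $z=1/r$ lies outside). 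The residue gives $\log(1-r^{2})/(1-r^{2})$, so this contribution is $4\pi\log(1-r^{2})/(1-r^{2})$. Combining and using $(1+r^{2})(1-\rho^{2})/(1-r^{2})=\sqrt{1-\rho^{2}}$ yields the clean form $J=2\pi\alpha^{2}\sqrt{1-\rho^{2}}\bigl[2\log(1-r^{2})-\log(1+r^{2})\bigr]$.

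Plugging back and using $\log(1-\rho^{2})=2\log(1-r^{2})-2\log(1+r^{2})$, the decomposition collapses to the unified identity
\[
\int_{-\pi}^{\pi}R^{2}\log R\,d\theta \;=\; \pi\,\alpha^{2}\sqrt{1-\rho^{2}}\,\log\frac{\alpha^{2}}{1+r^{2}}.
\]
For $R_e$ we have $\alpha^{2}=1+\tau^{2}=1+r_e^{2}$, so the logarithm vanishes and \eqref{lol38} follows. For $R_i$, $1+r_i^{2}=2(a^{2}+c^{2})/(a+c)^{2}$, so $\alpha^{2}/(1+r_i^{2})=(a+c)^{2}/4$, while $\alpha^{2}\sqrt{1-\tau_{a,c}^{2}}=\frac{a^{2}+c^{2}}{2}\cdot\frac{2ac}{a^{2}+c^{2}}=ac$, yielding \eqref{lol39}. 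The main obstacle is purely bookkeeping: keeping track of branches when splitting $\log(1-\rho\cos\theta)$ into $\log(1-re^{i\theta})+\log(1-re^{-i\theta})-\log(1+r^{2})$ (both factors stay in the right half-plane on $|z|=1$, so principal branches work), and then organizing the algebraic cancellation between $\log(1-\rho^{2})$ and $\log(1-r^{2})$ so that the final constant simplifies to $\alpha^{2}/(1+r^{2})$. Once this consolidation is observed, the two cases fall out as instant substitutions.
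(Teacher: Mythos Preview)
Your proof is correct and takes a genuinely different route from the paper. The paper uses the factorization from \eqref{branch cuts with tau}, namely $1-\tau_{a,c}\cos\theta = \frac{\tau_{a,c}}{2}e^{-i\theta}(e^{i\theta}-r_i)(\tfrac{1}{r_i}-e^{i\theta})$, splits $\log R_i$ accordingly (which introduces the branch of $\log(z-r_i)$ through the unit circle), and then identifies the resulting contour integrals with the quantities $I_{4,2}^{(2)}$ and $I_{4,2}^{(3)}$ already computed in the proof of Lemma~\ref{lemma: cQ unbounded annulus}. Your approach instead uses the symmetric factorization $1-\rho\cos\theta = (1-re^{i\theta})(1-re^{-i\theta})/(1+r^2)$, so that the only singularity inside $\mathbb{S}^1$ is the simple pole at $z=r$ (the logarithmic branch point at $z=1/r$ stays outside), and a single residue yields $J$ directly. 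This produces the unified closed form $\int R^2\log R\,d\theta = \pi\alpha^2\sqrt{1-\rho^2}\log\bigl(\alpha^2/(1+r^2)\bigr)$, from which both \eqref{lol38} and \eqref{lol39} drop out by substitution. Your argument is self-contained and avoids the earlier branch-cut machinery; the paper's version is shorter in situ because it recycles $I_{4,2}^{(2)}$, $I_{4,2}^{(3)}$, but would be longer if written from scratch.
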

\begin{proof}
We only do the proof for \eqref{lol39} (the proof for \eqref{lol38} is similar). Using \eqref{def of Ri and Re}, \eqref{branch cuts with tau}, \eqref{f 2 theta to f 1 theta}, $\int_{-\frac{\pi}{2}}^{\frac{\pi}{2}} \theta R_{i}(\frac{\theta}{2})d\theta=0$, and the change of variables $z=e^{i\theta}$, we get
\begin{align*}
\int_{-\pi}^{\pi} R_{i}(\theta)^{2}\log R_{i}(\theta) d\theta = \frac{a^{2}+c^{2}}{2} \log \Big( (a^{2}+c^{2}) \big( \tfrac{1}{\tau_{a,c}}-\tau_{a,c} \big) \Big) \int_{\mathbb{S}^{1}} \frac{-(\frac{1}{\tau_{a,c}}-\tau_{a,c})}{(z-r_{i})(z-\frac{1}{r_{i}})}\frac{dz}{i} - 2 I_{4,2}^{(2)} - I_{4,2}^{(3)},
\end{align*}
where $I_{4,2}^{(2)}, I_{4,2}^{(3)}$ are as in \eqref{def of I42p2p and I42p3p}. The first integral above can be evaluated by a simple residue computation, while explicit expressions for $I_{4,2}^{(2)}, I_{4,2}^{(3)}$ have already been obtained in \eqref{I42p3p} and \eqref{I42p2p}. Using then \eqref{def of ri and re} and $\tau_{a,c} = \frac{a^{2}-c^{2}}{a^{2}+c^{2}}$ to simplify, we obtain \eqref{lol39}.
\end{proof}

\begin{lemma}\label{lemma:int log|z| dmu}
\begin{align}\label{int log|z| dmu}
\int_{ U} \log |z| \, d\mu(z) = - \frac{1-\tau^{2} - a c +2 a c \log \frac{a+b}{2}}{2(1-\tau^{2})}.
\end{align}
\end{lemma}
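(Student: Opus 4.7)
The plan is to compute $\int_{U}\log|z|\, d\mu(z)$ by using the polar parametrization of $S\cap U$ given in \eqref{param polar ellipses}, reducing to a one-dimensional integral over $\theta$, and then invoking the already-proved formulas for $\int_{-\pi}^{\pi} R_e(\theta)^2\log R_e(\theta)\,d\theta$, $\int_{-\pi}^{\pi} R_i(\theta)^2\log R_i(\theta)\,d\theta$, $\int_{-\pi}^{\pi} R_e(\theta)^2\,d\theta$, and $\int_{-\pi}^{\pi} R_i(\theta)^2\,d\theta$ from Lemmas \ref{lemma:some integrals} and \ref{lemma: int Re^2 log Re dtheta}. Since $\log|z|$ and $d\mu$ are both integrable near $\infty$ only in combination with the restriction $\mathrm{supp}(\mu|_U) = S\cap U$ (recall $S$ is bounded), the integral is well-defined and the reduction is straightforward.

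Concretely, I first write
\begin{align*}
\int_U\log|z|\,d\mu(z) = \frac{1}{\pi(1-\tau^2)}\int_{-\pi}^{\pi}\int_{R_i(\theta)}^{R_e(\theta)} r\log r\, dr\, d\theta,
\end{align*}
using the fact that $\mathrm{supp}\,\mu=S$ so the effective domain of integration is $S\cap U$. The inner radial integral admits the elementary primitive $\int r\log r\, dr = \tfrac12 r^2\log r - \tfrac14 r^2$, yielding
\begin{align*}
\int_U\log|z|\,d\mu(z) = \frac{1}{2\pi(1-\tau^2)}\int_{-\pi}^{\pi}\Bigl(R_e(\theta)^2\log R_e(\theta) - R_i(\theta)^2\log R_i(\theta) - \tfrac{1}{2}\bigl(R_e(\theta)^2-R_i(\theta)^2\bigr)\Bigr)d\theta.
\end{align*}

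Substituting \eqref{lol29}, \eqref{lol30}, \eqref{lol38}, and \eqref{lol39} gives
\begin{align*}
\int_U\log|z|\,d\mu(z) = \frac{1}{2\pi(1-\tau^2)}\Bigl( 0 - 2\pi ac\log\tfrac{a+c}{2} - \tfrac{1}{2}\bigl(2\pi(1-\tau^2) - 2\pi ac\bigr)\Bigr),
\end{align*}
which simplifies at once to the claimed expression. There is essentially no obstacle in the argument: all the genuine difficulty has already been absorbed into Lemmas \ref{lemma:some integrals} and \ref{lemma: int Re^2 log Re dtheta}, whose proofs required careful contour-deformation and branch-cut analysis with the parametrizations \eqref{branch cuts with tau}--\eqref{branch cuts with tau 3}. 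Once those four integral evaluations are in hand, the present lemma is a short bookkeeping computation.
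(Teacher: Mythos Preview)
Your proof is correct and follows essentially the same approach as the paper: polar parametrization of $S\cap U$, the elementary primitive $\int r\log r\,dr = \tfrac12 r^2\log r - \tfrac14 r^2$, and substitution of the four integral identities \eqref{lol29}, \eqref{lol30}, \eqref{lol38}, \eqref{lol39}. (Your prefactor $\frac{1}{\pi(1-\tau^2)}$ is in fact the correct one, matching $d\mu(z)=\frac{d^2z}{\pi(1-\tau^2)}$.)
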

\begin{proof}
Since $\frac{d}{dr}(\frac{r^{2}\log r}{2} - \frac{r^{2}}{4}) = r \log r$, we have
\begin{align*}
& \int_{U} \log |z| d\mu(z) = \int_{-\pi}^{\pi}\int_{R_{i}(\theta)}^{R_{e}(\theta)} r \log r  \; \frac{1-\tau^{2}}{\pi}drd\theta \\
& = \frac{1-\tau^{2}}{\pi} \int_{-\pi}^{\pi} \bigg( \frac{R_{e}(\theta)^{2}\log R_{e}(\theta)}{2} - \frac{R_{i}(\theta)^{2}\log R_{i}(\theta)}{2} - \frac{R_{e}(\theta)^{2}}{4} + \frac{R_{i}(\theta)^{2}}{4} \bigg) d\theta.
\end{align*}
Substituting \eqref{lol29}, \eqref{lol30}, \eqref{lol38} and \eqref{lol39}, we find \eqref{int log|z| dmu}.
\end{proof}

Let $\hat{\nu}$ be as in Lemma \ref{lemma:int log |z|  dnu(z)} with $c_{0},c_{1}$ are given by \eqref{def of c0 c1 complement ellipse intro}. It is easy to check using Lemmas \ref{lemma: cQ unbounded annulus}, \ref{lemma:int log |z|  dnu(z)} and \ref{lemma:int log|z| dmu} that $\hat{\nu}$ satisfies $\int_{\partial U}\log |z|  d\hat{\nu}(z) = \int_{ U} \log |z| d\mu(z) - c^{\hspace{0.02cm}\mu}_{U}$. Moreover, it follows from Lemmas \ref{lemma:moments of mu ELLIPSE complement} and \ref{lemma:moments of nu ELLIPSE complement} that $\int_{\partial U}z^{-n} d\hat{\nu}(z) = \int_{ U}z^{-n}d\mu(z)$ holds for all $n\in \N$. In particular, $\hat{\nu}$ satisfies \eqref{eqn:moment unbounded with mass}. By Lemma \ref{lemma: moment when U is unbounded} and Remark \ref{remark: pmu bounded on dU}, $\hat{\nu}=\mathrm{Bal}(\mu|_{U},\partial U)$, which concludes the proof of Theorem \ref{thm:EG complement ellipse nu}.

\subsection{The constant $C$: proof of Theorem \ref{thm:EG complement ellipse C}}

\begin{lemma}\label{lemma: int (1-cos)R4 dtheta}
The following relations hold:
\begin{align}
& \int_{-\pi}^{\pi} \big(1-\tau \cos (2\theta)\big) R_{e}(\theta)^{4}d\theta = 2\pi (1-\tau^{2})^{2}, \label{lol40} \\
& \int_{-\pi}^{\pi} \big(1-\tau \cos (2\theta)\big) R_{i}(\theta)^{4}d\theta = a c \pi \big( a^{2}(1-\tau) + c^{2}(1+\tau) \big). \label{lol41}
\end{align}
\end{lemma}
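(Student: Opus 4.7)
Both identities are proved by the same contour integration strategy; I outline the argument for \eqref{lol40} and indicate the modifications needed for \eqref{lol41}.

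For \eqref{lol40}, observe from \eqref{def of Ri and Re} that $R_e(\theta)^4 = (1+\tau^2)^2(1-\tau_\star^2)^2/(1-\tau_\star\cos(2\theta))^2$, and that both this and $1-\tau\cos(2\theta)$ are functions of $e^{2i\theta}$ alone. Applying \eqref{f 2 theta to f 1 theta} therefore yields
\begin{align*}
\int_{-\pi}^{\pi}(1-\tau\cos(2\theta))R_e(\theta)^4\,d\theta = (1+\tau^2)^2(1-\tau_\star^2)^2 \int_{-\pi}^{\pi} \frac{1-\tau\cos\theta}{(1-\tau_\star\cos\theta)^2}\,d\theta.
\end{align*}
The prefactor simplifies to $(1-\tau^2)^4/(1+\tau^2)^2$ by the elementary identity $1-\tau_\star^2 = (1-\tau^2)^2/(1+\tau^2)^2$. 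For the remaining integral I set $z = e^{i\theta}$; squaring the second relation in \eqref{branch cuts with tau} gives $1-\tau_\star\cos\theta = -\tau_\star(z-r_e)(z-1/r_e)/(2z)$ with $r_e = \tau$, while $1-\tau\cos\theta = -\tau(z^2 - (2/\tau)z+1)/(2z)$ is an elementary factorization. The resulting integrand on $\mathbb{S}^1$ is rational with no pole on the circle, a double pole inside at $z=\tau$, and a double pole outside at $z=1/\tau$. The residue at $z=\tau$ (the numerator does not vanish there, since it equals $\tau^2-1$) is an elementary computation of $\frac{d}{dz}[(z^2-(2/\tau)z+1)/(z-1/\tau)^2]$ evaluated at $z=\tau$, and gives
\begin{align*}
\int_{-\pi}^{\pi} \frac{1-\tau\cos\theta}{(1-\tau_\star\cos\theta)^2}\,d\theta = \frac{2\pi(1+\tau^2)^2}{(1-\tau^2)^2}.
\end{align*}
Multiplying by the prefactor produces the claimed value $2\pi(1-\tau^2)^2$.

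The proof of \eqref{lol41} runs along the same lines, with $R_i,\tau_{a,c},r_i$ replacing $R_e,\tau_\star,r_e$. Using $1-\tau_{a,c}^2 = 4a^2c^2/(a^2+c^2)^2$, the first reduction gives
\begin{align*}
\int_{-\pi}^{\pi}(1-\tau\cos(2\theta))R_i(\theta)^4\,d\theta = \frac{4a^4 c^4}{(a^2+c^2)^2} \int_{-\pi}^{\pi}\frac{1-\tau\cos\theta}{(1-\tau_{a,c}\cos\theta)^2}\,d\theta.
\end{align*}
The residue computation at the double pole $z=r_i$ is cleanest after using $r_i + 1/r_i = 2/\tau_{a,c}$ (which follows from $r_i = (a-c)/(a+c)$ and $\tau_{a,c} = (a^2-c^2)/(a^2+c^2)$); the factor $1-\tau\tau_{a,c}$ that appears then rewrites as $(a^2(1-\tau)+c^2(1+\tau))/(a^2+c^2)$, and after the algebraic dust settles the inner integral equals $\pi(a^2(1-\tau)+c^2(1+\tau))(a^2+c^2)^2/(4a^3c^3)$. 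Combining with the prefactor gives the claim.

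The only mildly technical step is the residue evaluation at the double pole, but it is a brief direct computation whose output simplifies dramatically thanks to the algebraic relations $r_e \cdot (1/r_e) = 1$, $r_e + 1/r_e = 2/\tau_\star$ (respectively $r_i+1/r_i = 2/\tau_{a,c}$). Apart from that, the argument consists of routine applications of \eqref{f 2 theta to f 1 theta} and \eqref{branch cuts with tau}.
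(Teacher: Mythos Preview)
Your proposal is correct and follows essentially the same approach as the paper: apply \eqref{f 2 theta to f 1 theta} to reduce the $\cos(2\theta)$-dependence, rewrite the remaining $\theta$-integral as a contour integral over $\mathbb{S}^{1}$ via \eqref{branch cuts with tau}, and evaluate by computing the residue at the double pole inside the unit circle ($r_{e}=\tau$, respectively $r_{i}$). The only cosmetic difference is that you spell out the $R_{e}$ identity and say the $R_{i}$ case is analogous, whereas the paper does the reverse.
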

\begin{proof}
We only do the proof for \eqref{lol41} (the proof for \eqref{lol40} is identical upon replacing $a$ and $c$ by $1+\tau$ and $1-\tau$, respectively). By \eqref{def of Ri and Re}, \eqref{f 2 theta to f 1 theta} and \eqref{branch cuts with tau},
\begin{align*}
& \int_{-\pi}^{\pi} \big(1-\tau \cos (2\theta)\big) R_{i}(\theta)^{4}d\theta = \int_{-\pi}^{\pi} \big(1-\tau \cos (\theta)\big) R_{i}(\tfrac{\theta}{2})^{4}d\theta \\
& = \bigg( \frac{a^{2}+c^{2}}{2}(1-\tau_{a,c}^{2}) \bigg)^{2} \int_{-\pi}^{\pi} \frac{1-\tau \cos \theta}{(1-\tau_{a,c}\cos\theta)^{2}}d\theta = \bigg( (a^{2}+c^{2}) \big( \tfrac{1}{\tau_{a,c}} - \tau_{a,c} \big) \bigg)^{2} \int_{\mathbb{S}^{1}} \frac{z-\tau \frac{z^{2}+1}{2}}{(z-r_{i})^{2}(\frac{1}{r_{i}}-z)^{2}} \frac{dz}{i}.
\end{align*}
The integrand is analytic inside the unit disk, except for a double pole at $z=r_{i}$. Using the residue
\begin{align*}
\mbox{Res} \bigg( \frac{z-\tau \frac{z^{2}+1}{2}}{(z-r_{i})^{2}(\frac{1}{r_{i}}-z)^{2}}; z=r_{i} \bigg) = \frac{r_{i}^{2}(1+r_{i}^{2}-2r_{i}\tau)}{(1-r_{i}^{2})^{3}},
\end{align*}
and simplifying using \eqref{def of ri and re} and $\tau_{a,c} = \frac{a^{2}-c^{2}}{a^{2}+c^{2}}$, we find \eqref{lol41}.
\end{proof}

Recall that $\tau \in [0,1)$, $Q(z)=\frac{1}{1-\tau^{2}}\big( |z|^{2}-\tau \, \re z^{2} \big) = \frac{x^{2}}{1+\tau}+\frac{y^{2}}{1-\tau}$, $z=x+iy$, $x,y\in \R$, and that $\mu$ and $S$ are defined in \eqref{mu S EG}. Recall also that $\hat{\nu}=\mathrm{Bal}(\mu|_{U},\partial U)$ is given by Theorem \ref{thm:EG complement ellipse nu}.
\begin{lemma}\label{lemma:int Q dmu complement ellipse}
\begin{align}\label{int Q dmu complement ellipse}
\int_{U} Q(z) d\mu(z) = \frac{2(1-\tau^{2})^{2} - a c (a^{2}(1-\tau)+c^{2}(1+\tau))}{4(1-\tau^{2})^{2}}.
\end{align}
\end{lemma}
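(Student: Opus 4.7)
The plan is to compute $\int_{U} Q(z)\,d\mu(z)$ by direct integration in the polar parametrization~\eqref{param polar ellipses}, reducing the angular integral to a linear combination of the two quantities already evaluated in Lemma~\ref{lemma: int (1-cos)R4 dtheta}.

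First, I would rewrite the integrand in a form that meshes with $R_e$ and $R_i$. Writing $z = re^{i\theta}$ and using $\cos^2\theta - \sin^2\theta = \cos(2\theta)$, one has the simple identity
\begin{align*}
Q(re^{i\theta}) \;=\; \frac{r^2\cos^2\theta}{1+\tau} + \frac{r^2\sin^2\theta}{1-\tau} \;=\; \frac{r^2\bigl(1-\tau\cos(2\theta)\bigr)}{1-\tau^2}.
\end{align*}
This is the key observation: the angular weight that appears is exactly the factor $1-\tau\cos(2\theta)$ that is paired with $R_e(\theta)^4$ and $R_i(\theta)^4$ in Lemma~\ref{lemma: int (1-cos)R4 dtheta}.

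Next, using the parametrization \eqref{param polar ellipses} of $S\cap U$ and $d\mu = \frac{d^2z}{\pi(1-\tau^2)}$, I would split the integral into radial and angular parts:
\begin{align*}
\int_{U} Q(z)\,d\mu(z) \;=\; \frac{1}{\pi(1-\tau^2)^2} \int_{-\pi}^{\pi}\bigl(1-\tau\cos(2\theta)\bigr) \int_{R_i(\theta)}^{R_e(\theta)} r^{3}\,dr\,d\theta.
\end{align*}
The inner integral evaluates to $\tfrac{1}{4}(R_e(\theta)^4 - R_i(\theta)^4)$, yielding
\begin{align*}
\int_{U} Q(z)\,d\mu(z) \;=\; \frac{1}{4\pi(1-\tau^2)^2}\left[ \int_{-\pi}^{\pi}\bigl(1-\tau\cos(2\theta)\bigr)R_e(\theta)^4\,d\theta \;-\; \int_{-\pi}^{\pi}\bigl(1-\tau\cos(2\theta)\bigr)R_i(\theta)^4\,d\theta \right].
\end{align*}

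Finally, substituting the two values $2\pi(1-\tau^2)^2$ and $ac\pi\bigl(a^2(1-\tau)+c^2(1+\tau)\bigr)$ furnished by Lemma~\ref{lemma: int (1-cos)R4 dtheta} and simplifying immediately gives \eqref{int Q dmu complement ellipse}. No step here is a genuine obstacle: the algebraic identity $Q(re^{i\theta}) = r^2(1-\tau\cos(2\theta))/(1-\tau^2)$ is the only nontrivial ingredient, and all serious work (the residue calculations that evaluate the two angular integrals) has already been carried out in Lemma~\ref{lemma: int (1-cos)R4 dtheta}.
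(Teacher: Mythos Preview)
Your proof is correct and essentially identical to the paper's: both use the polar parametrization \eqref{param polar ellipses}, the identity $\frac{\cos^2\theta}{1+\tau}+\frac{\sin^2\theta}{1-\tau}=\frac{1-\tau\cos(2\theta)}{1-\tau^2}$, and then invoke Lemma~\ref{lemma: int (1-cos)R4 dtheta} to evaluate the two angular integrals. The only cosmetic difference is that you apply this trigonometric identity before performing the radial integration, whereas the paper applies it afterward.
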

\begin{proof}
Using the parametrization \eqref{param polar ellipses} of $S\cap U$, we obtain
\begin{align*}
\int_{U} Q(z) d\mu(z) & = \frac{1}{\pi(1-\tau^{2})} \int_{-\pi}^{\pi} \int_{R_{i}(\theta)}^{R_{e}(\theta)} Q(re^{i\theta}) r \; dr d\theta \\
&  = \frac{1}{\pi(1-\tau^{2})} \int_{-\pi}^{\pi} \bigg( \frac{(\cos \theta)^{2}}{1+\tau} + \frac{(\sin \theta)^{2}}{1-\tau} \bigg) \frac{R_{e}(\theta)^{4}-R_{i}(\theta)^{4}}{4} d\theta \\
& = \frac{1}{4\pi(1-\tau^{2})^{2}}\int_{-\pi}^{\pi} \big(1-\tau \cos (2\theta)\big) \big( R_{e}(\theta)^{4} - R_{i}(\theta)^{4} \big) d\theta.
\end{align*}
Now \eqref{int Q dmu complement ellipse} directly follows from Lemma \ref{lemma: int (1-cos)R4 dtheta}.
\end{proof}

\begin{lemma}\label{lemma:int Q dnu complement ellipse}
Let $c_{0}$ and $c_{1}$ be as in \eqref{def of c0 c1 complement ellipse intro}. Then
\begin{align}\label{int Q dnu complement ellipse}
\int_{\partial U} Q(z) d\nu(z) =  \bigg( \frac{a^{2}}{1+\tau} + \frac{c^{2}}{1-\tau} \bigg) \pi c_{0} + \bigg( \frac{a^{2}}{1+\tau} - \frac{c^{2}}{1-\tau} \bigg) \pi c_{1}.
\end{align}
\end{lemma}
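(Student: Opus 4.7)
The plan is to carry out a direct computation using the explicit parametrization of $\partial U$ and the orthogonality of the functions $1$ and $\cos(2\theta)$ on $(-\pi,\pi]$. First, I would substitute $z = a\cos\theta + ic\sin\theta$ for $\theta \in (-\pi,\pi]$, so that
\begin{align*}
Q(z) = \frac{(\re z)^{2}}{1+\tau} + \frac{(\im z)^{2}}{1-\tau} = \frac{a^{2}\cos^{2}\theta}{1+\tau} + \frac{c^{2}\sin^{2}\theta}{1-\tau}.
\end{align*}
Using the double-angle identities $\cos^{2}\theta = \tfrac{1+\cos(2\theta)}{2}$ and $\sin^{2}\theta = \tfrac{1-\cos(2\theta)}{2}$, this can be rewritten as $Q(z) = A + B\cos(2\theta)$ with
\begin{align*}
A := \frac{1}{2}\bigg(\frac{a^{2}}{1+\tau} + \frac{c^{2}}{1-\tau}\bigg), \qquad B := \frac{1}{2}\bigg(\frac{a^{2}}{1+\tau} - \frac{c^{2}}{1-\tau}\bigg).
\end{align*}

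Next I would use the definition $d\nu(z) = (c_{0} + 2c_{1}\cos(2\theta))d\theta$ (from Theorem \ref{thm:EG complement ellipse nu}) to expand the integral:
\begin{align*}
\int_{\partial U}Q(z)\,d\nu(z) = \int_{-\pi}^{\pi}\big(A + B\cos(2\theta)\big)\big(c_{0}+2c_{1}\cos(2\theta)\big)\,d\theta.
\end{align*}
The four resulting terms are immediate from the orthogonality relations $\int_{-\pi}^{\pi}d\theta = 2\pi$, $\int_{-\pi}^{\pi}\cos(2\theta)\,d\theta = 0$, and $\int_{-\pi}^{\pi}\cos^{2}(2\theta)\,d\theta = \pi$. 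This yields $2\pi A c_{0} + 2\pi B c_{1}$, which after substituting back the values of $A$ and $B$ gives exactly the right-hand side of \eqref{int Q dnu complement ellipse}.

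There is essentially no obstacle here: the lemma is a routine consequence of the explicit form of $\nu$ established in the hard work of Theorem \ref{thm:EG complement ellipse nu}. The only mild subtlety is to notice that $Q(z)$ restricted to $\partial U$ is already a pure first-harmonic-in-$2\theta$ function, which exactly matches the Fourier content of $d\nu/d\theta$; this is precisely why the two remaining cross-terms $\int \cos(2\theta)d\theta$ vanish and only two terms survive. Combining this lemma with Lemma \ref{lemma:int Q dmu complement ellipse}, the explicit values of $c_{0},c_{1}$ from \eqref{def of c0 c1 complement ellipse intro}, the formula \eqref{cQ simplif} for $c^{\hspace{0.02cm}\mu}_{U}$, and Theorem \ref{thm:general pot}(i) will then produce the constant $C$ in \eqref{EG complement ellipse C} after algebraic simplification.
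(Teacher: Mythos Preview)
Your proof is correct and follows essentially the same route as the paper: both parametrize $\partial U$ by $z = a\cos\theta + ic\sin\theta$, substitute into $Q$, and integrate against $d\nu(z) = (c_{0}+2c_{1}\cos(2\theta))\,d\theta$. The only difference is in the final evaluation: the paper rewrites $\cos\theta,\sin\theta,\cos(2\theta)$ in terms of $z=e^{i\theta}$ and computes a residue at $z=0$, whereas you use the double-angle identities to write $Q|_{\partial U}=A+B\cos(2\theta)$ and invoke the elementary orthogonality relations $\int_{-\pi}^{\pi}\cos(2\theta)\,d\theta=0$, $\int_{-\pi}^{\pi}\cos^{2}(2\theta)\,d\theta=\pi$. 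Your route is slightly more direct here; the paper's residue approach is simply consistent with the machinery used elsewhere in the section.
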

\begin{proof}
Using the first parametrization of $\partial U$ in \eqref{two param of partial U when Uc ellipse}, we get
\begin{align*}
\int_{\partial U} Q(z) d\nu(z) & = \int_{-\pi}^{\pi} Q(a\cos\theta + i c \sin \theta) \big( c_{0} + 2c_{1} \cos(2\theta) \big) d\theta \\
& = \int_{\mathbb{S}^{1}} \bigg( \frac{a^{2}}{1+\tau} \bigg( \frac{z+z^{-1}}{2} \bigg)^{2} + \frac{c^{2}}{1-\tau} \bigg( \frac{z-z^{-1}}{2i} \bigg)^{2} \bigg) \big( c_{0} + c_{1}(z^{2}+z^{-2}) \big) \frac{dz}{iz}.
\end{align*}
The integrand is analytic in $\{z:|z|\leq 1\}\setminus \{0\}$, and a residue computation yields
\begin{align*}
& \mbox{Res}\bigg( \bigg( \frac{a^{2}}{1+\tau} \bigg( \frac{z+z^{-1}}{2} \bigg)^{2} + \frac{c^{2}}{1-\tau} \bigg( \frac{z-z^{-1}}{2i} \bigg)^{2} \bigg) \big( c_{0} + c_{1}(z^{2}+z^{-2}) \big) \frac{1}{iz} , z=0 \bigg) \\
& = \bigg( \frac{a^{2}}{1+\tau} + \frac{c^{2}}{1-\tau} \bigg) \frac{ c_{0}}{2i} + \bigg( \frac{a^{2}}{1+\tau} - \frac{c^{2}}{1-\tau} \bigg) \frac{c_{1}}{\pi i},
\end{align*}
from which we directly obtain \eqref{int Q dnu complement ellipse}.
\end{proof}
Note that $U$ satisfies Assumptions \ref{ass:U} and \ref{ass:U2}. Combining the formulas \eqref{cQ simplif}, \eqref{int Q dmu complement ellipse} and \eqref{int Q dnu complement ellipse} with  Theorem \ref{thm:general pot} (i), we infer that $\mathbb{P}(\# \{z_{j}\in U\} = 0) = \exp \big( -C n^{2}+o(n^{2}) \big)$ as $n\to+\infty$, where $C$ is given by \eqref{EG complement ellipse C}.  This finishes the proof of Theorem \ref{thm:EG complement ellipse C}.

\section{The complement of a disk}\label{section:complement disk}

In this section, $\tau \in [0,1)$, $Q(z)=\frac{1}{1-\tau^{2}}\big( |z|^{2}-\tau \, \re z^{2} \big) = \frac{x^{2}}{1+\tau}+\frac{y^{2}}{1-\tau}$, $z=x+iy$, $x,y\in \R$, and the measure $\mu$ and its support $S$ are defined in \eqref{mu S EG}. We also suppose that $x_{0},y_{0}\in \R$ and $a>0$ are such that $U^{c}\subset S$, where $U := \{z: |z-(x_{0}+iy_{0})| > a\}$. In particular, $(\frac{x_{0}}{1+\tau})^{2}+(\frac{y_{0}}{1-\tau})^{2}<1$.

\subsection{Balayage measure: proof of Theorem \ref{thm:EG complement disk nu}}
Let $\nu := \mathrm{Bal}(\mu|_{U},\partial U)$. By Theorem \ref{thm:dnu in terms of green general}, for all $w=x_{0}+iy_{0}+ae^{i\alpha} \in \partial U$, $\alpha \in (-\pi,\pi]$, we have
\begin{align*}
d\nu(w) =  \bigg( \int_{U} \frac{\partial g_{U}(z,w)}{\partial \mathbf{n}_{w}}d\mu(z) \bigg)|dw|,
\end{align*}
where $|dw|=ad\alpha$ is the arclength measure on $\partial U$, and $\frac{\partial}{\partial \mathbf{n}_{w}}$ is the normal at $w$ pointing inwards $U$. The set $S\cap U$ can be parametrized as
\begin{align}\label{param S cap U when Uc is disk}
S\cap U = \{x_{0}+iy_{0} + re^{i\theta}: \theta  \in (-\pi,\pi], r \in (a,R(e^{i\theta})]\},
\end{align}
where we define $R(e^{i\theta})$ as the right-hand side of \eqref{def of R Uc is a disk}. Hence
\begin{align}\label{lol81}
\frac{d\nu(w)}{a\, d\alpha} = \int_{-\pi}^{\pi}\int_{a}^{R(e^{i\theta})} \frac{d}{dx}g_{U}(x_{0}+iy_{0}+re^{i\theta},x_{0}+iy_{0}+x e^{i\alpha})\bigg|_{x =a}\frac{rdrd\theta}{\pi(1-\tau^{2})},
\end{align}
where $\mathbf{n}_{w}$ is the normal vector on $\partial U$ at $w$ pointing inwards $U$. Using the first equality in \eqref{green function with general conformap mapping} with $\varphi(z)=\frac{z-(x_{0}+iy_{0})}{a}$, we get
\begin{align}\label{lol82}
g_{U}(x_{0}+iy_{0}+re^{i\theta},x_{0}+iy_{0}+r_{w}e^{i\alpha}) = \frac{1}{2\pi}\re \log \frac{1-\frac{re^{i\theta}}{a}\frac{r_{w}e^{-i\alpha}}{a}}{\frac{re^{i\theta}}{a} - \frac{r_{w}e^{i\alpha}}{a}},
\end{align}
where the principal branch of the logarithm is used. Substituting \eqref{lol82} in \eqref{lol81} yields
\begin{align}\label{lol83}
\frac{d\nu(w)}{a\, d\alpha} = \re \int_{-\pi}^{\pi}\int_{a}^{R(e^{i\theta})} \frac{1}{2\pi a}\frac{re^{i\theta}+ae^{i\alpha}}{re^{i\theta}-ae^{i\alpha}} \frac{rdrd\theta}{\pi(1-\tau^{2})}.
\end{align}
The integrand in \eqref{lol83} admits an explicit primitive,
\begin{align}\label{lol84}
\frac{d}{dr}\bigg( \frac{re^{i(\alpha-\theta)}}{\pi^{2}} + \frac{r^{2}}{4a\pi^{2}} + \frac{a e^{2i(\alpha-\theta)}}{\pi^{2}} \log(re^{i(\theta-\alpha)} - a) \bigg) = \frac{1}{2\pi a}\frac{re^{i\theta}+ae^{i\alpha}}{re^{i\theta}-ae^{i\alpha}} \frac{r}{\pi}.
\end{align}
Using \eqref{lol84} to compute the $r$-integral in \eqref{lol83}, we get 
\begin{align}\label{dnu over dalpha}
\frac{d\nu(w)}{a\, d\alpha} = \re (I_{1}+I_{2}+I_{3}),
\end{align}
where
\begin{align}
& I_{1} := \frac{e^{i\alpha}}{1-\tau^{2}} \int_{-\pi}^{\pi} \frac{e^{-i\theta}(R(e^{i\theta})-a)}{\pi^{2}}d\theta, \qquad I_{2} := \frac{1}{1-\tau^{2}} \int_{-\pi}^{\pi} \frac{R(e^{i\theta})^{2}-a^{2}}{4a\pi^{2}}d\theta, \label{I1I2 disk} \\
& I_{3} := \frac{e^{2i\alpha}}{1-\tau^{2}} \int_{-\pi}^{\pi} \frac{a e^{-2i\theta}}{\pi^{2}} \bigg( \log\Big( e^{i(\theta-\alpha)}\frac{R(e^{i\theta})}{a}-1 \Big) - \log\Big( e^{i(\theta-\alpha)} - 1 \Big) \bigg) d\theta, \label{I3 disk}
\end{align}
where the principal branch is used for the logarithms. We need an analytic continuation of $R$. To avoid long discussions on the parameters $\tau,x_{0},y_{0},a$, in this subsection we will only treat the case $\tau,x_{0},y_{0}>0$ (the cases when $\tau=0$ and/or $w_{0},y_{0}\leq 0$ are similar, but require slight modifications of the branch cut structure of $R(z)$ defined in \eqref{def of R(z) disk} below). Let us define
\begin{align*}
& w_{0} := \frac{2(1+\tau^{2})-x_{0}^{2}-y_{0}^{2}+2\sqrt{\Delta}}{4\tau-(x_{0}+iy_{0})^{2}}, & & w_{1} := \frac{2(1+\tau^{2})-x_{0}^{2}-y_{0}^{2}-2\sqrt{\Delta}}{4\tau-(x_{0}+iy_{0})^{2}}, \\
& \Delta := (1-\tau^{2})^{2}-x_{0}^{2}(1-\tau)^{2} - y_{0}^{2}(1+\tau)^{2}.
\end{align*}

\begin{figure}[h]
\begin{center}
\begin{tikzpicture}[master]
\node at (0,0) {\includegraphics[width=10cm]{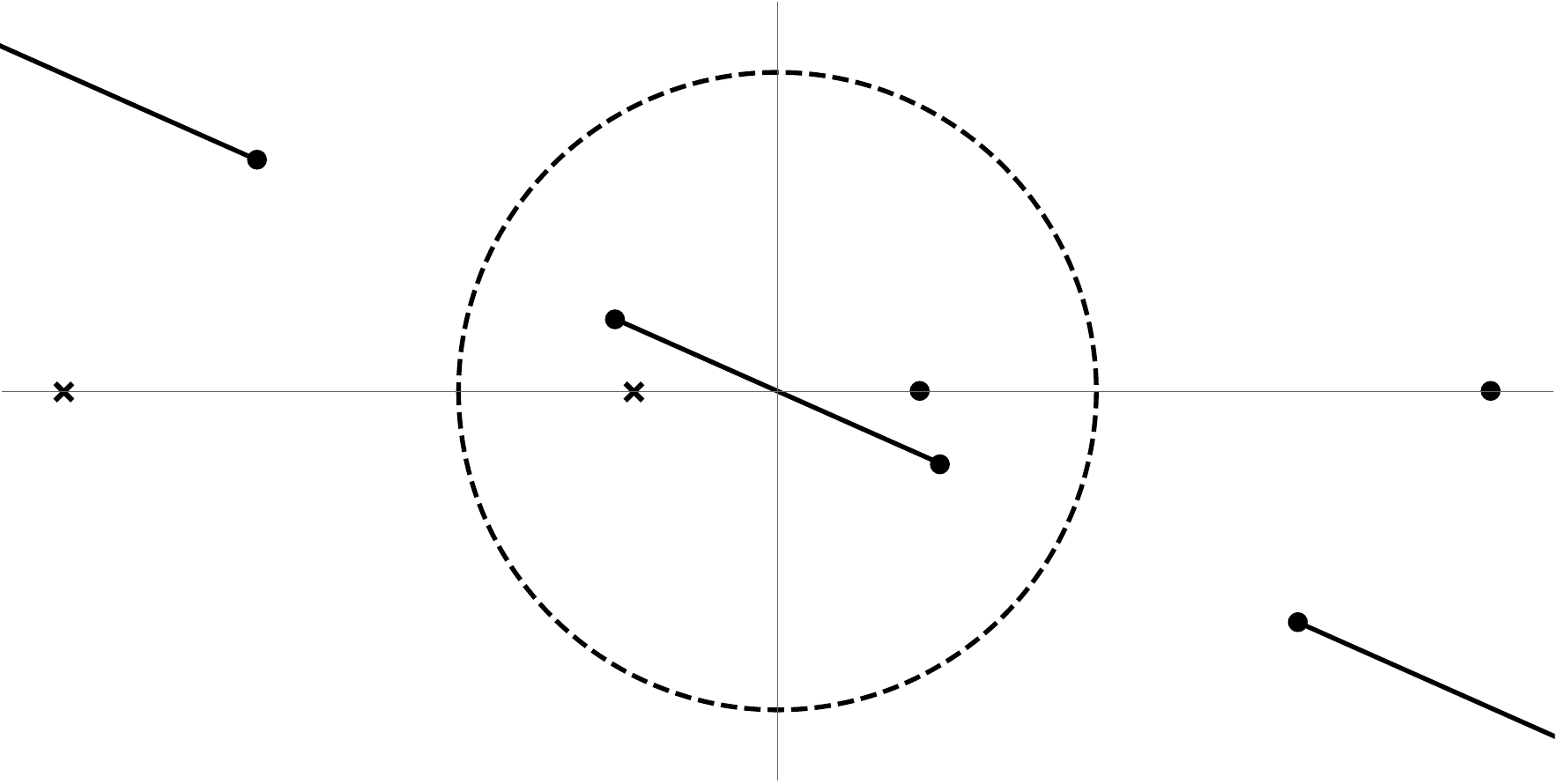}};
\node at (0.9,0.25) {\footnotesize $\sqrt{\tau}$};
\node at (-1,-0.25) {\footnotesize $-\sqrt{\tau}$};

\node at (4.5,0.3) {\footnotesize $\frac{1}{\sqrt{\tau}}$};
\node at (-4.6,0.3) {\footnotesize $-\frac{1}{\sqrt{\tau}}$};

\node at (-31:1.33) {\footnotesize $\sqrt{w_{1}}$};
\node at (147:1.33) {\footnotesize $-\sqrt{w_{1}}$};

\node at (-21:3.46) {\footnotesize $\sqrt{w_{0}}$};
\node at (161:3.55) {\footnotesize $-\sqrt{w_{0}}$};
\end{tikzpicture}
\end{center}
\caption{\label{fig:domain of analyticity of R} The function $R(z)$ in \eqref{def of R(z) disk} has poles at $-\frac{1}{\sqrt{\tau}}$, $-\sqrt{\tau}$ (crosses), and has discontinuities along the cuts $(-\sqrt{w_{0}}\infty,-\sqrt{w_{0}})$, $(-\sqrt{w_{1}},\sqrt{w_{1}})$ and $(\sqrt{w_{1}},\sqrt{w_{1}}\infty)$. The unit circle is the dashed curve. The figure has been made with $\tau=0.2$, $x_{0}=0.67$ and $y_{0}=0.45$.}
\end{figure}

Since $y_{0}>0$, $w_{0},w_{1}$ are well-defined, and since $x_{0},y_{0}>0$, we have $\im (w_{0})<0$. Note also that $\arg(w_{0})=\arg(w_{1})$. For any $\tau,x_{0},y_{0}>0$ such that $(\frac{x_{0}}{1+\tau})^{2}+(\frac{y_{0}}{1-\tau})^{2}<1$, it is easy to check that $0<|w_{1}|<1<|w_{0}|$ and $\Delta>0$. We will use the following analytic continuation of $R$:
\begin{align}\label{def of R(z) disk}
R(z) = z \frac{(1-\tau^{2})^{2}\big( \sqrt{4\tau - (x_{0}-iy_{0})^{2}} \frac{(z+\sqrt{w_{0}})^{1/2}_{\star}(z+\sqrt{w_{1}})^{1/2}_{\star}(z-\sqrt{w_{1}})^{1/2}_{\star}(\sqrt{w_{0}}-z)^{1/2}_{\star}}{1-\tau^{2}} - \frac{x_{0}(z^{2}+1)}{(1+\tau)^{2}} - \frac{y_{0}(z^{2}-1)}{(1-\tau)^{2}i} \big)}{2(-\tau)(z-\sqrt{\tau})(z+\sqrt{\tau})(z-\frac{1}{\sqrt{\tau}})(z+\frac{1}{\sqrt{\tau}})}
\end{align}
where $(z)^{1/2}_{*} := \sqrt{|z|}e^{\frac{i}{2}\arg_{\star}(z)}$, $\arg_{\star}(z)$ has a branch cut along the line $(0,-\sqrt{w_{1}}\infty)$ and is such that $\arg_{\star}(1)=0$, and all other branches in \eqref{def of R(z) disk} are the principal ones (in particular $\sqrt{w_{0}},\sqrt{w_{1}}$ are in the lower-right quadrant, and $\sqrt{4\tau - (x_{0}-iy_{0})^{2}}$ is in the upper-right quadrant). A necessary condition for the numerator of $R(z)$ to vanish is
\begin{align*}
(4\tau - (x_{0}-iy_{0})^{2}) \frac{(z^{2}-w_{1}) (w_{0}-z^{2})}{(1-\tau^{2})^{2}} = \bigg( \frac{x_{0}(z^{2}+1)}{(1+\tau)^{2}} + \frac{y_{0}(z^{2}-1)}{(1-\tau)^{2}i} \bigg)^{2}.
\end{align*}
A computation shows that the above equation is satisfied if and only if $z \in \{-\frac{1}{\sqrt{\tau}},-\sqrt{\tau},\sqrt{\tau},\frac{1}{\sqrt{\tau}}\}$. However, using the definition of $\smash{(z)_{\star}^{1/2}}$ and our assumption that $x_{0}>0$, we infer that
\begin{align*}
& \sqrt{4\tau - (x_{0}-iy_{0})^{2}} \frac{(z+\sqrt{w_{0}})^{1/2}_{\star}(z+\sqrt{w_{1}})^{1/2}_{\star}(z-\sqrt{w_{1}})^{1/2}_{\star}(\sqrt{w_{0}}-z)^{1/2}_{\star}}{1-\tau^{2}} \\
&  = \bigg( \frac{x_{0}(z^{2}+1)}{(1+\tau)^{2}} + \frac{y_{0}(z^{2}-1)}{(1-\tau)^{2}i} \bigg) \times  \begin{cases} 
-1, & \mbox{if } z\in \{-\frac{1}{\sqrt{\tau}},-\sqrt{\tau}\}, \\
1, & \mbox{if } z \in \{\sqrt{\tau},\frac{1}{\sqrt{\tau}}\}.
\end{cases}
\end{align*}
This implies that the numerator of $R(z)$ has simple zeros at $z=\sqrt{\tau}$ and $z=\frac{1}{\sqrt{\tau}}$, and no other zeros. Hence $R(z)$ is analytic in $\C \setminus \big( (-\sqrt{w_{0}}\infty,-\sqrt{w_{0}})\cup (-\sqrt{w_{1}},\sqrt{w_{1}}) \cup (\sqrt{w_{1}},\sqrt{w_{1}}\infty) \cup \{-\frac{1}{\sqrt{\tau}},-\sqrt{\tau}\}\big)$, see also Figure \ref{fig:domain of analyticity of R}.

Now, we come back to the problem of evaluating $I_{1}, I_{2}, I_{3}$ in \eqref{I1I2 disk}--\eqref{I3 disk}. Since $\int_{-\pi}^{\pi}e^{-i\theta}d\theta=0$, for $I_{1}$ we have
\begin{align}\label{lol85}
I_{1} = \frac{e^{i\alpha}}{1-\tau^{2}} \int_{-\pi}^{\pi} \frac{e^{-i\theta}R(e^{i\theta})}{\pi^{2}}d\theta = \frac{e^{i\alpha}}{\pi^{2}(1-\tau^{2})} \int_{\mathbb{S}^{1}} \frac{R(z)}{z}\frac{dz}{iz} = \frac{e^{i\alpha}}{\pi^{2}(1-\tau^{2})} ( I_{1,1} + I_{1,2}),
\end{align}
where $\mathbb{S}^{1}$ is the unit circle oriented counterclockwise, and
\begin{align*}
& I_{1,1} := \int_{\mathbb{S}^{1}} \frac{(1-\tau^{2})^{2}(-\frac{x_{0}(z^{2}+1)}{(1+\tau)^{2}}-\frac{y_{0}(z^{2}-1)}{(1-\tau)^{2}i})}{2(-\tau)(z-\sqrt{\tau})(z+\sqrt{\tau})(z-\frac{1}{\sqrt{\tau}})(z+\frac{1}{\sqrt{\tau}})} \frac{dz}{iz}, \\
& I_{1,2} := \int_{\mathbb{S}^{1}} \frac{(1-\tau^{2})\sqrt{4\tau - (x_{0}-iy_{0})^{2}} (z+\sqrt{w_{0}})^{1/2}_{\star}(z+\sqrt{w_{1}})^{1/2}_{\star}(z-\sqrt{w_{1}})^{1/2}_{\star}(\sqrt{w_{0}}-z)^{1/2}_{\star}}{2(-\tau)(z-\sqrt{\tau})(z+\sqrt{\tau})(z-\frac{1}{\sqrt{\tau}})(z+\frac{1}{\sqrt{\tau}})} \frac{dz}{iz}.
\end{align*}
The integral $I_{1,1}$ can be evaluated by deforming $\mathbb{S}^{1}$ towards $0$: we pick up residues at $0$, $\sqrt{\tau}$ and $-\sqrt{\tau}$, and after simplifying we find
\begin{align*}
I_{1,1} = - \pi \big( x_{0}(1-\tau)-iy_{0}(1+\tau) \big).
\end{align*}
For $I_{1,2}$, we also deform $\mathbb{S}^{1}$ towards $0$: here we pick up two residues at $\sqrt{\tau}$ and $-\sqrt{\tau}$ (they cancel each other), and a branch cut contribution along the segment $[-\sqrt{w_{1}},\sqrt{w_{1}}]$:
\begin{align}
I_{1,2} = (-2) \dashint_{[-\sqrt{w_{1}},\sqrt{w_{1}}]} &  \frac{(1-\tau^{2})\sqrt{4\tau - (x_{0}-iy_{0})^{2}}  }{2(-\tau)(z-\sqrt{\tau})(z+\sqrt{\tau})(z-\frac{1}{\sqrt{\tau}})(z+\frac{1}{\sqrt{\tau}})} \nonumber \\
& \times |w_{0}-z^{2}|e^{\frac{i}{2}\arg(\sqrt{w_{0}}+z)}e^{\frac{i}{2}\arg(\sqrt{w_{0}}-z)} \, |z^{2}-w_{1}|^{2} e^{\frac{i}{2}(\pi + 2\arg(\sqrt{w_{1}}))} \frac{dz}{iz}, \label{lol47}
\end{align}
where $\dashint$ stands for principal value integral. Since $\arg \sqrt{w_{0}} = \arg \sqrt{w_{1}}$, the above integrand is odd, and thus $I_{1,2} = 0$. Substituting the above expressions for $I_{1,1}$ and $I_{1,2}$ in \eqref{lol85} yields
\begin{align}\label{lol45}
I_{1} = \frac{-x_{0}(1-\tau)+iy_{0}(1+\tau)}{\pi(1-\tau^{2})}e^{i\alpha}.
\end{align}
For $I_{2}$, we have
\begin{align*}
I_{2} = \frac{1}{1-\tau^{2}} \int_{-\pi}^{\pi} \frac{R(e^{i\theta})^{2}}{4a\pi^{2}}d\theta - \frac{1}{1-\tau^{2}} \int_{-\pi}^{\pi} \frac{a^{2}}{4a\pi^{2}}d\theta = \frac{1}{1-\tau^{2}} \frac{1}{4a\pi^{2}} \int_{\mathbb{S}^{1}} R(z)^{2}\frac{dz}{iz} - \frac{1}{1-\tau^{2}} \frac{a}{2\pi}.
\end{align*}
From \eqref{def of R(z) disk}, we infer that $\frac{R(z)^{2}}{iz}$ has double poles at $z=-\frac{1}{\sqrt{\tau}}$ and $z=-\sqrt{\tau}$, and no other poles. Hence, by deforming the contour towards $0$, we pick up a residue at $-\sqrt{\tau}$, and an integral along $[-\sqrt{w_{1}},\sqrt{w_{1}}]$. In a similar way as for $I_{1,2}$, we find (by odd symmetry of the integrand) that the integral along $[-\sqrt{w_{1}},\sqrt{w_{1}}]$ vanishes. After computing the residue at $-\sqrt{\tau}$ and simplifying, we obtain
\begin{align}\label{lol46}
I_{2} = \frac{1}{2a\pi} - \frac{1}{1-\tau^{2}} \frac{a}{2\pi}.
\end{align}
For $I_{3}$, since $R(e^{i\theta})\geq a$ for all $\theta \in (-\pi,\pi]$, we can write $I_{3} = I_{3,1} + I_{3,2} + I_{3,3}$, where
\begin{align}
& I_{3,1} := \frac{e^{2i\alpha}}{1-\tau^{2}} \int_{-\pi}^{\pi} \frac{a\, e^{-2i\theta}}{\pi^{2}} \log \frac{R(e^{i\theta})}{a} \;d\theta, \nonumber \\
& I_{3,2} := \frac{1}{1-\tau^{2}} \int_{-\pi}^{\pi}\frac{a\, e^{2i(\alpha-\theta)}}{\pi^{2}} \log \bigg( e^{i(\theta-\alpha)} - \frac{a}{R(e^{i\theta})} \bigg) d\theta , \label{I32 lol} \\
& I_{3,3} := \frac{1}{1-\tau^{2}} \int_{-\pi}^{\pi}\frac{a  \, e^{2i(\alpha-\theta)} }{\pi^{2}} (-1)\log \big( e^{i(\theta-\alpha)} - 1 \big) d\theta, \nonumber
\end{align}
and the principal branches for the logarithms are used. For $I_{3,3}$, by changing variables $\theta \to \theta+\alpha$, then changing variables $e^{i\theta}=z$, and then deforming the contour towards $\infty$, we obtain
\begin{align}\label{lol43}
I_{3,3} = \frac{-a}{\pi^{2}(1-\tau^{2})i}  \int_{\mathbb{S}^{1}} \frac{\log(z-1)}{z^{3}}dz = \frac{-a}{\pi^{2}(1-\tau^{2})i}  \int_{-\infty}^{-1} \frac{2\pi i}{x^{3}}dx = \frac{1}{1-\tau^{2}} \frac{a}{\pi}.
\end{align}
Suppose first that the strict inequality $R(e^{i\theta})>a$ holds for all $\theta \in (-\pi,\pi]$. Then
\begin{align*}
\log \bigg( e^{i(\theta-\alpha)} - \frac{a}{R(e^{i\theta})} \bigg) = \log(e^{i(\theta-\alpha)}) - \sum_{j=1}^{+\infty} \frac{a^{j}}{j e^{i j(\theta-\alpha)}R(e^{i\theta})^{j}}
\end{align*}
uniformly for $\theta \in (-\pi,\pi]$, and thus
\begin{align}\label{lol42}
I_{3,2} = \frac{1}{1-\tau^{2}} \int_{-\pi}^{\pi}\frac{a\, e^{2i(\alpha-\theta)}}{\pi^{2}} \log ( e^{i(\theta-\alpha)}) d\theta - \frac{1}{1-\tau^{2}} \sum_{j=1}^{+\infty} \frac{a^{j+1}e^{i\alpha(2+j)}}{j\pi^{2}} \int_{-\pi}^{\pi} \frac{e^{-i\theta(2+j)}}{R(e^{i\theta})^{j}} d\theta.
\end{align}
The first integral can easily be computed:
\begin{align*}
\frac{1}{1-\tau^{2}} \int_{-\pi}^{\pi}\frac{a\, e^{2i(\alpha-\theta)}}{\pi^{2}} \log ( e^{i(\theta-\alpha)}) d\theta = \frac{1}{1-\tau^{2}} \int_{-\pi}^{\pi}\frac{a\, e^{-2i\theta}}{\pi^{2}} i\theta d\theta = -\frac{1}{1-\tau^{2}} \frac{a}{\pi}.
\end{align*}
For the other integrals in \eqref{lol42}, we write
\begin{align*}
\int_{-\pi}^{\pi} \frac{e^{-i\theta(2+j)}}{R(e^{i\theta})^{j}} d\theta = \int_{\mathbb{S}^{1}} \frac{1}{R(z)^{j}z^{3+j}} \frac{dz}{i}, \qquad j \in \N_{>0}.
\end{align*}
Since $R(z) = \bigO(z)$ as $z\to \infty$ (see \eqref{def of R(z) disk}), by deforming the contour towards $\infty$, we obtain an integral along $(-\sqrt{w_{0}}\infty,-\sqrt{w_{0}})\cup (\sqrt{w_{0}},\sqrt{w_{0}}\infty)$. In a similar way as for $I_{1,2}$ we can show that the integrand is odd, and therefore $\int_{-\pi}^{\pi} \frac{e^{-i\theta(2+j)}}{R(e^{i\theta})^{j}} d\theta = 0$ for each $j \in \N_{>0}$. Thus, if $R(e^{i\theta})>a$ holds for all $\theta \in (-\pi,\pi]$, we have
\begin{align}\label{I32 again lol}
I_{3,2} = - \frac{1}{1-\tau^{2}} \frac{a}{\pi}.
\end{align}
If $R(e^{i\theta})=a$ for some $\theta \in (-\pi,\pi]$, then by replacing $a$ by $a'$ in the above analysis and letting $a'\nearrow a$, using the fact that $I_{3,2}$ in \eqref{I32 lol} is continuous for $a \in [0,\min_{\theta \in (-\pi,\pi]} R(e^{i\theta})]$, we conclude that \eqref{I32 again lol} also holds in this case. 

It only remains to evaluate $I_{3,1}$. Using \eqref{def of R(z) disk} with $z=e^{i\theta}$, we obtain 
\begin{align}\label{splitting of I31 disk}
I_{3,1} = \frac{e^{2i\alpha}}{1-\tau^{2}} \frac{a}{\pi^{2}} (I_{3,1}^{(1)}+I_{3,1}^{(2)} + I_{3,1}^{(3)} + I_{3,1}^{(4)}),
\end{align}
where
\begin{align*}
& I_{3,1}^{(1)} := \log \bigg( \frac{(1-\tau^{2})^{2}}{2a\tau} \bigg) \int_{-\pi}^{\pi} e^{-2i\theta} d\theta = 0, \\
& I_{3,1}^{(2)} := \int_{-\pi}^{\pi} e^{-2i\theta} i \theta d\theta = - \pi, \\
& I_{3,1}^{(3)} := \int_{-\pi}^{\pi} e^{-2i\theta} \log \phi(e^{i\theta}) d \theta, \\
& \phi(z) :=  \frac{\frac{\sqrt{4\tau - (x_{0}+iy_{0})^{2}} (z+\sqrt{w_{0}})^{1/2}_{\star}(z+\sqrt{w_{1}})^{1/2}_{\star}(z-\sqrt{w_{1}})^{1/2}_{\star}(\sqrt{w_{0}}-z)^{1/2}_{\star}}{1-\tau^{2}}-\frac{x_{0}(z^{2}+1)}{(1+\tau)^{2}}-\frac{y_{0}(z^{2}-1)}{(1-\tau)^{2}i}}{\sqrt{z-\sqrt{\tau}}\sqrt{z+\sqrt{\tau}}\sqrt{z-\frac{1}{\sqrt{\tau}}}\sqrt{z+\frac{1}{\sqrt{\tau}}} }, \\
& I_{3,1}^{(4)} := \int_{-\pi}^{\pi} e^{-2i\theta} \bigg( -\frac{1}{2}\log(e^{i\theta}-\sqrt{\tau}) -\frac{1}{2}\log(e^{i\theta}+\sqrt{\tau}) - \frac{1}{2}\log(\tfrac{1}{\sqrt{\tau}}-e^{i\theta})- \frac{1}{2}\log(e^{i\theta}+\tfrac{1}{\sqrt{\tau}}) \bigg) d\theta.
\end{align*}
A direct analysis shows that
\begin{align*}
\theta \mapsto \phi(e^{i\theta}) = \bigg( \frac{e^{i\theta}(1-\tau^{2})^{2}}{2\tau \sqrt{e^{i\theta}-\sqrt{\tau}}\sqrt{e^{i\theta}+\sqrt{\tau}}\sqrt{e^{i\theta}-\frac{1}{\sqrt{\tau}}}\sqrt{e^{i\theta}+\frac{1}{\sqrt{\tau}}}} \bigg)^{-1}R(e^{i\theta})
\end{align*}
is real valued and positive for all $\theta \in (-\pi,\pi)$. For $I_{3,1}^{(4)}$, we change variables $z=e^{i\theta}$, then deform the contour towards $\infty$:
\begin{align*}
I_{3,1}^{(4)} & = -\int_{\mathbb{S}^{1}} \bigg( \log(z-\sqrt{\tau}) + \log(z+\sqrt{\tau}) + \log(\tfrac{1}{\sqrt{\tau}}-z) + \log(z+\tfrac{1}{\sqrt{\tau}}) \bigg) \frac{dz}{2iz^{3}} \\
& = -2\pi \int_{-\infty}^{-1} \frac{dx}{x^{3}} - \pi \int_{-\infty}^{-\frac{1}{\sqrt{\tau}}} \frac{dx}{x^{3}} + \pi \int_{\frac{1}{\sqrt{\tau}}}^{+\infty} \frac{dx}{x^{3}} = \pi (1+\tau).
\end{align*}
We now turn to the analysis of $I_{3,1}^{(3)}$. A long but direct computation shows that
\begin{align*}
\log \phi(e^{i\theta}) + \log \phi(e^{i(\theta+\pi)}) = \log \frac{4\tau(1-\frac{x_{0}^{2}}{(1+\tau)^{2}}-\frac{y_{0}^{2}}{(1-\tau)^{2}})}{(1-\tau^{2})^{2}}, \qquad \mbox{for all } \theta \in (-\pi,\pi].
\end{align*}
This important identity allows us to prove that $I_{3,1}^{(3)}$ vanishes:
\begin{align*}
I_{3,1}^{(3)} & = \frac{1}{2}\int_{-\pi}^{\pi} e^{-2i\theta} \log \phi(e^{i\theta}) d \theta + \frac{1}{2} \int_{-\pi}^{\pi} e^{-2i\theta} \log \phi(e^{i(\theta+\pi)}) d \theta \\
& = \frac{1}{2}  \log \bigg( \frac{4\tau(1-\frac{x_{0}^{2}}{(1+\tau)^{2}}-\frac{y_{0}^{2}}{(1-\tau)^{2}})}{(1-\tau^{2})^{2}} \bigg) \int_{-\pi}^{\pi} e^{-2i\theta}d\theta = 0.
\end{align*}
Substituting the simplified expressions for $I_{3,1}^{(1)},I_{3,1}^{(2)},I_{3,1}^{(3)},I_{3,1}^{(4)}$ in \eqref{splitting of I31 disk} yields $I_{3,1} = \frac{e^{2i\alpha}}{\frac{1}{\tau}-\tau} \frac{a}{\pi}$. By combining this identity with \eqref{lol43}, \eqref{lol42} and $I_{3} = I_{3,1} + I_{3,2} + I_{3,3}$, we find
\begin{align}\label{lol44}
I_{3} = I_{3,1} = \frac{e^{2i\alpha}}{\frac{1}{\tau}-\tau} \frac{a}{\pi}.
\end{align}
Substituting \eqref{lol45}, \eqref{lol46} and \eqref{lol44} in \eqref{dnu over dalpha}, we obtain \eqref{def of dnu complement disk}. This finishes the proof of Theorem \ref{thm:EG complement disk nu}.
\subsection{The constant $C$: proof of Theorem \ref{thm:EG complement disk C}}
\begin{lemma}\label{lemma:J1J2J3}
The following relations hold:
\begin{align}
& J_{1} := \int_{-\pi}^{\pi} R(e^{i\theta})^{2}d\theta = 2\pi (1-\tau^{2}), \label{J1} \\
& J_{2} := \int_{-\pi}^{\pi} \bigg( \frac{2x_{0}\cos \theta}{1+\tau} + \frac{2y_{0}\sin \theta}{1-\tau} \bigg) \frac{R(e^{i\theta})^{3}}{3} d\theta = -2\pi \big( x_{0}^{2}(1-\tau) + y_{0}^{2}(1+\tau) \big), \label{J2} \\
& J_{3} := \int_{-\pi}^{\pi} \bigg( \frac{(\cos \theta)^{2}}{1+\tau} + \frac{(\sin \theta)^{2}}{1-\tau} \bigg) \frac{R(e^{i\theta})^{4}}{4}d\theta = \pi \bigg( x_{0}^{2}(1-\tau) + y_{0}^{2}(1+\tau) + \frac{1-\tau^{2}}{2} \bigg). \label{J3}
\end{align}
\end{lemma}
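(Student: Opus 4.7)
\textbf{Proof proposal for Lemma \ref{lemma:J1J2J3}.} The plan is to avoid any direct contour-integral computation with the explicit expression \eqref{def of R(z) disk} for $R$, and instead exploit the geometric meaning of $R(e^{i\theta})$: by the last remark preceding Lemma \ref{lemma:J1J2J3}, $R(e^{i\theta})$ is precisely the length of the radial segment from $x_0+iy_0$ to $\partial S$ in the direction $e^{i\theta}$, and the parametrization \eqref{param S cap U when Uc is disk} realizes $S\cap U$ as $\{x_0+iy_0+re^{i\theta}: r\in(a,R(e^{i\theta})],\ \theta\in(-\pi,\pi]\}$. The idea is therefore to evaluate $\int_{S\cap U}f(z)d^2z$ in two different ways for three well-chosen test functions $f$, thereby obtaining three linear equations in the unknowns $J_1,J_2,J_3$.

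First I would take $f\equiv 1$. On the one hand $\int_{S\cap U}d^2z=\mathrm{Area}(S)-\mathrm{Area}(U^c)=\pi(1-\tau^2)-\pi a^2$ from \eqref{mu S EG}. On the other hand, using \eqref{param S cap U when Uc is disk} and $\int_a^{R}r\,dr=(R^2-a^2)/2$, the same integral equals $J_1/2-\pi a^2$. Equating yields \eqref{J1}. Next I would take $f(z)=z$. Since $\partial S$ is an ellipse centered at $0$, symmetry gives $\int_S z\,d^2z=0$; meanwhile $\int_{U^c}z\,d^2z=(x_0+iy_0)\pi a^2$ by translation. On the other hand, the change of variables gives
\begin{equation*}
\int_{S\cap U}z\,d^2z=(x_0+iy_0)\big(\pi(1-\tau^2)-\pi a^2\big)+\int_{-\pi}^{\pi}e^{i\theta}\,\frac{R(e^{i\theta})^3-a^3}{3}\,d\theta,
\end{equation*}
and the $a^3$ contribution vanishes. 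Equating the two expressions yields
\begin{equation*}
\int_{-\pi}^{\pi}e^{i\theta}\frac{R(e^{i\theta})^3}{3}\,d\theta=-(x_0+iy_0)\pi(1-\tau^2).
\end{equation*}
Taking real and imaginary parts and combining them with weights $2x_0/(1+\tau)$ and $2y_0/(1-\tau)$ produces \eqref{J2}.

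Finally I would take $f=Q$. On one side, $\int_S Q(z)d^2z$ can be computed using the ellipse parametrization $(x,y)=((1+\tau)r\cos\theta,(1-\tau)r\sin\theta)$, which gives $\pi(1-\tau^2)/2$; and $\int_{U^c}Q\,d^2z$ can be computed in polar coordinates centered at $x_0+iy_0$, giving $Q(x_0+iy_0)\pi a^2+\pi a^4/(2(1-\tau^2))$. On the other side, expanding
\begin{equation*}
Q(x_0+iy_0+re^{i\theta})=Q(x_0+iy_0)+2r\Big(\tfrac{x_0\cos\theta}{1+\tau}+\tfrac{y_0\sin\theta}{1-\tau}\Big)+r^2\Big(\tfrac{\cos^2\theta}{1+\tau}+\tfrac{\sin^2\theta}{1-\tau}\Big),
\end{equation*}
integrating in $r$ from $a$ to $R(e^{i\theta})$ against $r\,dr$, and then integrating in $\theta$, yields an expression involving $Q(x_0+iy_0)J_1$, $J_2$, $J_3$ and elementary terms (the $a^3$ contribution to the middle term vanishes because $\int_{-\pi}^{\pi}\cos\theta\,d\theta=\int_{-\pi}^{\pi}\sin\theta\,d\theta=0$). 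Equating both sides and substituting the already known values of $J_1$ and $J_2$ leaves a single linear equation for $J_3$, which solves to \eqref{J3}.

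I do not anticipate any serious obstacle: each of the three identities reduces to an elementary area/moment balance, and no contour manipulation is required. The only place needing a little care is the $f=Q$ step, where one must keep track of the constant term $Q(x_0+iy_0)$ and verify that the cross terms in $(x-x_0)^2/(1+\tau)+(y-y_0)^2/(1-\tau)$ appearing in $\int_{U^c}Q\,d^2z$ vanish by angular symmetry; this is the same mechanism that makes the middle $r^3$-term in the interior expansion collapse to $J_2$.
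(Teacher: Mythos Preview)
Your argument is correct and complete: each of $J_1,J_2,J_3$ falls out of a two-way computation of $\int_{S\cap U}f\,d^2z$ for $f\in\{1,z,Q\}$, and the algebra you outline checks out (in particular the final equation reduces to $Q(x_0+iy_0)\pi(1-\tau^2)+J_2+J_3=\tfrac{\pi(1-\tau^2)}{2}$, which together with the value of $J_2$ gives \eqref{J3}).

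This is a genuinely different route from the paper. The paper converts each $J_k$ into a contour integral over $\mathbb{S}^1$ via $z=e^{i\theta}$, then deforms toward $0$ using the explicit analytic continuation \eqref{def of R(z) disk} of $R$; the branch-cut contributions along $[-\sqrt{w_1},\sqrt{w_1}]$ are shown to vanish by an odd-symmetry argument (as for $I_{1,2}$ earlier), and what remains is a residue at $z=-\sqrt{\tau}$. Your approach sidesteps all of this: it never uses the formula \eqref{def of R(z) disk} or its branch structure, relying only on the geometric characterization of $R(e^{i\theta})$ as the distance from $x_0+iy_0$ to $\partial S$ in direction $e^{i\theta}$. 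The gain is simplicity and robustness (no case analysis on the sign of $\tau,x_0,y_0$, no tracking of arguments of square roots). The paper's contour method, on the other hand, is the same machinery already set up for the harder integrals $I_1,\ldots,I_4$ and $I_{3,1}^{(j)}$ in the proof of Theorem~\ref{thm:EG complement disk nu}, so reusing it here costs essentially nothing in context; it would also extend to integrands not expressible as moments of $S\cap U$.
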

\begin{remark}
Interestingly, $J_{1}$ is independent of $x_{0}$ and $y_{0}$.
\end{remark}
\begin{proof}
Using the change of variables $z=e^{i\theta}$, we get
\begin{align*}
& J_{1} = \int_{\mathbb{S}^{1}} R(z)^{2} \frac{dz}{iz}, \qquad J_{2} = \int_{\mathbb{S}^{1}} \bigg( \frac{2x_{0}\frac{z+z^{-1}}{2}}{1+\tau} + \frac{2y_{0}\frac{z-z^{-1}}{2i}}{1-\tau} \bigg) \frac{R(z)^{3}}{3} \frac{dz}{iz}, \\
& J_{3} = \int_{\mathbb{S}^{1}} \bigg( \frac{(\frac{z+z^{-1}}{2})^{2}}{1+\tau} + \frac{(\frac{z-z^{-1}}{2i})^{2}}{1-\tau} \bigg) \frac{R(z)^{4}}{4}\frac{dz}{iz},
\end{align*}
where $\mathbb{S}^{1}$ is the unit circle oriented counterclockwise. Since $R(z) = \bigO(z)$ as $z \to 0$, all integrands remain bounded at $0$, and thus, by \eqref{def of R(z) disk}, all integrands are analytic in $\{z:|z|\leq 1\}\setminus \big( (-\sqrt{w_{1}},\sqrt{w_{1}}) \cup \{-\sqrt{\tau}\} \big)$. By deforming the contour towards $0$, for $J_{1}$ we obtain
\begin{align}\label{J1 res and integral}
J_{1} = 2\pi i \; \mbox{Res}\bigg( \frac{R(z)^{2}}{iz} , z=-\sqrt{\tau} \bigg) + \int_{[-\sqrt{w_{1}},\sqrt{w_{1}}]} \frac{R_{-}(z)^{2}-R_{+}(z)^{2}}{iz}dz,
\end{align}
where $[-\sqrt{w_{1}},\sqrt{w_{1}}]$ is the line segment joining $-\sqrt{w_{1}}$ and $\sqrt{w_{1}}$. In general $R_{-}\neq R_{+}$ and $R_{-}\neq -R_{+}$. However, since $\arg(\sqrt{w_{0}}) = \arg(\sqrt{w_{1}})$, a direct analysis (similar to that of \eqref{lol47}) shows that both $\frac{R_{-}(z)^{2}}{iz}$ and $\frac{R_{+}(z)^{2}}{iz}$ are odd on $[-\sqrt{w_{1}},\sqrt{w_{1}}]$, so that
\begin{align*}
\int_{[-\sqrt{w_{1}},\sqrt{w_{1}}]} \frac{R_{-}(z)^{2}}{iz}dz = \int_{[-\sqrt{w_{1}},\sqrt{w_{1}}]} \frac{R_{+}(z)^{2}}{iz}dz = 0.
\end{align*}
Hence $J_{1}$ is equal to the residue in \eqref{J1 res and integral}, and after a direct computation we obtain \eqref{J1}. In a similar way, by deforming the contours of $J_{2}, J_{3}$ towards $0$, we find that $J_{2}, J_{3}$ can each be expressed as the sum of a residue at $-\sqrt{\tau}$ and an integral along $[-\sqrt{w_{1}},\sqrt{w_{1}}]$. By odd symmetry of the integrands, the integrals along $[-\sqrt{w_{1}},\sqrt{w_{1}}]$ vanish, and after computing the residues at $-\sqrt{\tau}$ we find \eqref{J2} and \eqref{J3}.
\end{proof}
Recall that for $z=x+iy$, $x,y\in \R$, we have $Q(z)=\frac{1}{1-\tau^{2}}\big( |z|^{2}-\tau \, \re z^{2} \big) = \frac{x^{2}}{1+\tau}+\frac{y^{2}}{1-\tau}$.
\begin{lemma}\label{lemma:int Q dmu complement disk}
\begin{align}\label{int Q dmu complement disk}
\int_{U} Q(z) d\mu(z) = \frac{1}{1-\tau^{2}} \bigg( \frac{1-\tau^{2}}{2}-a^{2}\bigg( \frac{x_{0}^{2}}{1+\tau} + \frac{y_{0}^{2}}{1-\tau} \bigg) - \frac{a^{4}}{2(1-\tau^{2})} \bigg).
\end{align}
\end{lemma}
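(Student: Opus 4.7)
The plan is to change variables to polar coordinates centered at $z_0 := x_0 + iy_0$, using the parametrization \eqref{param S cap U when Uc is disk} of $S \cap U$, and reduce everything to the three integrals $J_1$, $J_2$, $J_3$ already evaluated in Lemma \ref{lemma:J1J2J3}. Writing $z = z_0 + re^{i\theta}$ with $\theta \in (-\pi,\pi]$ and $r \in (a, R(e^{i\theta})]$, I would expand
\[
Q(z) = Q(z_0) + 2\Big(\tfrac{x_0 \cos\theta}{1+\tau} + \tfrac{y_0 \sin\theta}{1-\tau}\Big) r + \Big(\tfrac{\cos^2\theta}{1+\tau} + \tfrac{\sin^2\theta}{1-\tau}\Big) r^2,
\]
which follows from $Q(x+iy) = \tfrac{x^2}{1+\tau} + \tfrac{y^2}{1-\tau}$. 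Integrating each of the three terms against $r\,dr$ from $r=a$ to $r = R(e^{i\theta})$ yields elementary primitives $\tfrac{R^2-a^2}{2}$, $\tfrac{R^3-a^3}{3}$, $\tfrac{R^4-a^4}{4}$.

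Next I would integrate over $\theta$ and split each resulting integral into its ``$R$-part'' and its ``$a$-part''. By inspection the three $R$-parts are precisely $Q(z_0)\,\tfrac{J_1}{2}$, $J_2$, and $J_3$, so Lemma \ref{lemma:J1J2J3} delivers them in closed form. The $a$-parts are elementary: the first equals $\pi a^2 Q(z_0)$, the second vanishes because $\int_{-\pi}^{\pi}\cos\theta\,d\theta = \int_{-\pi}^{\pi}\sin\theta\,d\theta = 0$, and the third equals $\tfrac{a^4}{4}\cdot\tfrac{2\pi}{1-\tau^2}$. Collecting, I would obtain
\[
\int_U Q\,d\mu = \tfrac{1}{1-\tau^{2}}\Big[Q(z_0)(1-\tau^{2}-a^{2}) - \big(x_0^{2}(1-\tau)+y_0^{2}(1+\tau)\big) + \tfrac{1-\tau^{2}}{2} - \tfrac{a^{4}}{2(1-\tau^{2})}\Big].
\]

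The final step is the algebraic simplification: the identity $Q(z_0)(1-\tau^{2}) = x_0^{2}(1-\tau) + y_0^{2}(1+\tau)$ cancels the $Q(z_0)(1-\tau^{2})$ coming from the $J_1$ term against the $J_2$ and the $x_0^2,y_0^2$ pieces of $J_3$. What survives is exactly \eqref{int Q dmu complement disk}. There is no real obstacle here since the heavy lifting, namely the contour deformations used to evaluate $J_1, J_2, J_3$, has already been done in Lemma \ref{lemma:J1J2J3}; the mild thing to watch is that the cancellation of the $x_0^2, y_0^2$ terms happens automatically and is forced by the fact that $Q$ is quadratic and matches the quadratic form appearing in the definition of the equilibrium ellipse $\partial S$.
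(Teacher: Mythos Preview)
Your proposal is correct and follows essentially the same approach as the paper: pass to polar coordinates centered at $z_0$ via \eqref{param S cap U when Uc is disk}, expand $Q(z_0+re^{i\theta})$ as a quadratic in $r$, integrate in $r$, and recognize the $R$-parts as $Q(z_0)\tfrac{J_1}{2}$, $J_2$, $J_3$ from Lemma~\ref{lemma:J1J2J3}. The paper writes the intermediate expression as $\frac{1}{\pi(1-\tau^{2})}\big( \tfrac12 Q(z_0)(J_{1}-2\pi a^{2}) + J_{2} + J_{3} - \tfrac{a^{4}\pi}{2(1-\tau^{2})} \big)$, which is exactly your formula before the final cancellation.
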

\begin{proof}
Using the definition \eqref{mu S EG} of $\mu$ and the polar parametrization \eqref{param S cap U when Uc is disk} of $S\cap U$, we obtain
\begin{align*}
& \int_{U} Q(z) d\mu(z) = \int_{-\pi}^{\pi} \int_{a}^{R(e^{i\theta})} Q(x_{0}+iy_{0}+re^{i\theta}) \frac{r \, dr d\theta}{\pi(1-\tau^{2})} \\
& = \hspace{-0.1cm} \int_{-\pi}^{\pi} \int_{a}^{R(e^{i\theta})} \hspace{-0.1cm} \bigg\{ \hspace{-0.05cm}\bigg( \frac{x_{0}^{2}}{1+\tau} + \frac{y_{0}^{2}}{1-\tau} \bigg)r + \hspace{-0.05cm} \bigg( \frac{2x_{0}\cos \theta}{1+\tau} + \frac{2y_{0}\sin \theta}{1-\tau} \bigg)r^{2} + \hspace{-0.05cm} \bigg( \frac{(\cos \theta)^{2}}{1+\tau} + \frac{(\sin \theta)^{2}}{1-\tau} \bigg) r^{3} \bigg\} \frac{\, dr d\theta}{\pi(1-\tau^{2})} \\
& = \frac{1}{\pi(1-\tau^{2})}\bigg( \frac{1}{2}\bigg( \frac{x_{0}^{2}}{1+\tau} + \frac{y_{0}^{2}}{1-\tau} \bigg) (J_{1}-2\pi a^{2}) + J_{2} + J_{3} - \frac{a^{4}\pi}{2(1-\tau^{2})} \bigg),
\end{align*}
where $J_{1},J_{2},J_{3}$ are defined in the statement of Lemma \ref{lemma:J1J2J3}. Substituting \eqref{J1}, \eqref{J2} and \eqref{J3} in the above expression, we find \eqref{int Q dmu complement disk}.
\end{proof}
Recall that $\nu := \mathrm{Bal}(\mu|_{U},\partial U)$ is given by \eqref{def of dnu complement disk}.
\begin{lemma}\label{lemma:int Q dnu complement disk}
\begin{align}\label{int Q dnu complement disk}
\int_{\partial U} Q(z) d\nu(z) = \frac{x_{0}^{2}(1-\tau) + y_{0}^{2}(1+\tau)  + a^{2} \big( 1-x_{0}^{2}\frac{3-2\tau}{1+\tau}-y_{0}^{2}\frac{3+2\tau}{1-\tau} \big) - a^{4}\frac{1+\tau^{2}}{1-\tau^{2}}}{1-\tau^{2}}.
\end{align}
\end{lemma}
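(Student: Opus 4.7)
The plan is to parametrize $\partial U$ by $z=x_0+iy_0+ae^{i\theta}$ with $\theta\in(-\pi,\pi]$, expand both $Q(z)$ and the density $d\nu(z)/d\theta$ given by \eqref{def of dnu complement disk} as finite Fourier series in $\theta$, and then use the orthogonality of the trigonometric system to evaluate $\int_{\partial U}Qd\nu$ as a short finite sum. This reduces the problem to bookkeeping, with no analytic difficulty.

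First I would write $\re z = x_0+a\cos\theta$, $\im z = y_0+a\sin\theta$, insert these into $Q(z)=\frac{(\re z)^2}{1+\tau}+\frac{(\im z)^2}{1-\tau}$, and apply the identities $\cos^2\theta=\frac{1}{2}(1+\cos 2\theta)$, $\sin^2\theta=\frac{1}{2}(1-\cos 2\theta)$. This yields
\begin{align*}
Q(x_0+iy_0+ae^{i\theta}) = A_0 + A_1\cos\theta + B_1\sin\theta + A_2\cos(2\theta),
\end{align*}
where
\begin{align*}
A_0 &= \tfrac{x_0^2}{1+\tau}+\tfrac{y_0^2}{1-\tau}+\tfrac{a^2}{1-\tau^2}, \quad A_1=\tfrac{2ax_0}{1+\tau}, \quad B_1=\tfrac{2ay_0}{1-\tau}, \quad A_2=-\tfrac{a^2\tau}{1-\tau^2}.
\end{align*}
Similarly, \eqref{def of dnu complement disk} gives $d\nu(z)/d\theta = \frac{a}{\pi(1-\tau^2)}\big(C_0+C_1\cos\theta+D_1\sin\theta+C_2\cos(2\theta)\big)$ with
\begin{align*}
C_0=\tfrac{1-\tau^2}{2a}-\tfrac{a}{2}, \quad C_1=-x_0(1-\tau), \quad D_1=-y_0(1+\tau), \quad C_2=a\tau.
\end{align*}

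Next, I would apply the standard orthogonality relations $\int_{-\pi}^{\pi}d\theta=2\pi$, $\int_{-\pi}^{\pi}\cos(m\theta)\cos(n\theta)\,d\theta=\int_{-\pi}^{\pi}\sin(m\theta)\sin(n\theta)\,d\theta=\pi\delta_{mn}$ for $m,n\geq 1$, and $\int_{-\pi}^{\pi}\cos(m\theta)\sin(n\theta)\,d\theta=0$. All cross terms between distinct Fourier frequencies vanish, leaving
\begin{align*}
\int_{\partial U}Q(z)\,d\nu(z) = \frac{a}{1-\tau^2}\big(2A_0C_0+A_1C_1+B_1D_1+A_2C_2\big).
\end{align*}

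Finally, I would substitute the values of the coefficients and simplify. Using $\frac{1-\tau}{1+\tau}=\frac{(1-\tau)^2}{1-\tau^2}$ and analogously for the $y_0$-term, one finds
\begin{align*}
\tfrac{a}{1-\tau^2}\cdot 2A_0C_0 &= \tfrac{(1-\tau^2-a^2)(x_0^2(1-\tau)+y_0^2(1+\tau)+a^2)}{(1-\tau^2)^2},\\
\tfrac{a}{1-\tau^2}\cdot A_1C_1 &= -\tfrac{2a^2x_0^2(1-\tau)^2}{(1-\tau^2)^2},\quad \tfrac{a}{1-\tau^2}\cdot B_1D_1 = -\tfrac{2a^2y_0^2(1+\tau)^2}{(1-\tau^2)^2},\quad \tfrac{a}{1-\tau^2}\cdot A_2C_2 = -\tfrac{a^4\tau^2}{(1-\tau^2)^2}.
\end{align*}
Collecting by powers of $x_0^2$, $y_0^2$ and $a$, the coefficient of $x_0^2$ telescopes as $\frac{(1-\tau)(1+2(1-\tau))}{1-\tau^2}a^2=\frac{a^2(3-2\tau)}{1+\tau}$ (with sign), the coefficient of $y_0^2$ as $\frac{a^2(3+2\tau)}{1-\tau}$, and the pure $a^4$ contribution as $\frac{a^4(1+\tau^2)}{1-\tau^2}$. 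This directly matches \eqref{int Q dnu complement disk}. The entire proof is just algebra; there is no real obstacle beyond careful bookkeeping of the Fourier coefficients.
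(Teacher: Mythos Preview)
Your proof is correct and follows essentially the same approach as the paper: parametrize $\partial U$ by $\theta$, substitute \eqref{def of dnu complement disk}, and integrate. The paper simply says ``performing the integral using primitives'' where you have made the Fourier orthogonality explicit, but these are the same elementary computation.
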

\begin{proof}
Since $\partial U = \{x_{0}+iy_{0} + ae^{i\theta}: \theta  \in (-\pi,\pi]\}$, we have
\begin{align*}
\int_{\partial U} Q(z) d\nu(z) = \int_{-\pi}^{\pi} \bigg( \frac{(x_{0}+a\cos\theta)^{2}}{1+\tau} + \frac{(y_{0}+a\sin\theta)^{2}}{1-\tau} \bigg)d\nu(x_{0}+iy_{0}+ae^{i\theta}).
\end{align*}
Substituting \eqref{def of dnu complement disk} and performing the integral using primitives, we obtain \eqref{int Q dnu complement disk}.
\end{proof}

\begin{lemma}
\begin{align}\label{cQ simplif disk}
c^{\hspace{0.02cm}\mu}_{U} = \frac{1}{1-\tau^{2}}\int_{-\pi}^{\pi}R(e^{i\theta})^{2}\log R(e^{i\theta}) \frac{d\theta}{2\pi} - \frac{1}{2} + \frac{a^{2}}{2(1-\tau^{2})} - \log a.
\end{align}
\end{lemma}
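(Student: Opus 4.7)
The plan is to start from $c^{\hspace{0.02cm}\mu}_{U}=2\pi\int_{U}g_{U}(z,\infty)d\mu(z)$ (Proposition \ref{prop:def of bal}) and use the explicit Green function of the exterior of a disk. Since the map $\varphi(z)=\frac{z-(x_0+iy_0)}{a}$ is a conformal bijection from $U$ onto $\{w:|w|>1\}$, the standard formula for the Green function with pole at infinity (see Subsection \ref{subsection: green function method for nu}) gives $g_{U}(z,\infty)=\frac{1}{2\pi}\log\frac{|z-(x_0+iy_0)|}{a}$. Therefore $c^{\hspace{0.02cm}\mu}_{U}=\int_{U}\log\frac{|z-(x_0+iy_0)|}{a}\,d\mu(z)$.

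Next I would parametrize $S\cap U$ in polar coordinates centered at $z_0=x_0+iy_0$ using \eqref{param S cap U when Uc is disk}, so that $d^{2}z=r\,dr\,d\theta$ and
\begin{align*}
c^{\hspace{0.02cm}\mu}_{U}=\frac{1}{\pi(1-\tau^{2})}\int_{-\pi}^{\pi}\int_{a}^{R(e^{i\theta})}r\log(r/a)\,dr\,d\theta.
\end{align*}
The inner integral is evaluated explicitly using the primitive $\frac{d}{dr}\bigl(\tfrac{r^{2}\log(r/a)}{2}-\tfrac{r^{2}}{4}\bigr)=r\log(r/a)$, which at $r=a$ contributes $-a^{2}/4$ and at $r=R(e^{i\theta})$ contributes $\tfrac{R(e^{i\theta})^{2}\log R(e^{i\theta})}{2}-\tfrac{R(e^{i\theta})^{2}\log a}{2}-\tfrac{R(e^{i\theta})^{2}}{4}$. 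After collecting the four resulting pieces one gets
\begin{align*}
c^{\hspace{0.02cm}\mu}_{U}=\frac{1}{1-\tau^{2}}\int_{-\pi}^{\pi}R(e^{i\theta})^{2}\log R(e^{i\theta})\frac{d\theta}{2\pi}-\frac{\log a}{\pi(1-\tau^{2})}\int_{-\pi}^{\pi}\frac{R(e^{i\theta})^{2}}{2}d\theta-\frac{J_{1}}{4\pi(1-\tau^{2})}+\frac{a^{2}}{2(1-\tau^{2})}.
\end{align*}

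The final step is to insert the value $J_{1}=\int_{-\pi}^{\pi}R(e^{i\theta})^{2}d\theta=2\pi(1-\tau^{2})$ from Lemma \ref{lemma:J1J2J3}. The middle term becomes $-\log a$ and the third becomes $-1/2$, giving precisely \eqref{cQ simplif disk}. There is really no main obstacle here: the identification of the Green function is standard, the radial integral is elementary, and the only non-trivial input is $J_{1}$, which has already been established (and is what makes the $\log a$ coefficient clean). All the hard work specific to the complement-of-a-disk geometry has been absorbed into Lemma \ref{lemma:J1J2J3}.
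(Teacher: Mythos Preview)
Your proposal is correct and follows essentially the same approach as the paper: identify $g_{U}(z,\infty)=\frac{1}{2\pi}\log\frac{|z-(x_{0}+iy_{0})|}{a}$ via the conformal map $\varphi$, integrate in polar coordinates centered at $x_{0}+iy_{0}$ using the elementary primitive of $r\log(r/a)$ (the paper splits this as $r\log r - r\log a$ but the computation is the same), and then invoke $J_{1}=2\pi(1-\tau^{2})$ from Lemma~\ref{lemma:J1J2J3} to obtain the constants $-\log a$ and $-\tfrac{1}{2}$.
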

\begin{proof}
It is well-known and easy to check (see e.g. \cite[Section II.4]{SaTo}) that $g_{V}(z,\infty) = \frac{1}{2\pi}\log |z|$, where $V:=\{z:|z|>1\}$. Since $\varphi(z) := \frac{z-(x_{0}+iy_{0})}{a}$ is a conformal map from $V$ to $U$, we have $g_{U}(z,\infty) =\frac{1}{2\pi} \log |\varphi(z)| = \frac{1}{2\pi} \log |\frac{z-(x_{0}+iy_{0})}{a}|$.  Hence, by Proposition \ref{prop:def of bal},
\begin{align*}
& c^{\hspace{0.02cm}\mu}_{U} = 2\pi \int_{U}g_{U}(z)d\mu(z) = 2\pi\int_{-\pi}^{\pi}\int_{a}^{R(e^{i\theta})}g_{U}(x_{0}+iy_{0}+re^{i\theta})\frac{r \, drd\theta}{\pi(1-\tau^{2})} \\
& = \frac{1}{\pi(1-\tau^{2})} \hspace{-0.05cm} \int_{-\pi}^{\pi} \hspace{-0.1cm} \bigg( \hspace{-0.05cm} \frac{R(e^{i\theta})^{2}\log R(e^{i\theta})}{2}-\frac{R(e^{i\theta})^{2}}{4}-\frac{a^{2}\log a}{2} + \frac{a^{2}}{4} \bigg)d\theta - \frac{\log a}{\pi(1-\tau^{2})} \int_{-\pi}^{\pi} \frac{R(e^{i\theta})^{2}-a^{2}}{2}d\theta,
\end{align*}
where for the last equality we have used $\frac{d}{dr}(\frac{r^{2}\log r}{2} - \frac{r^{2}}{4}) = r\log r$. Using \eqref{J1} to simplify the above expression, we find \eqref{cQ simplif disk}. 
\end{proof}

Note that $U$ satisfies Assumptions \ref{ass:U} and \ref{ass:U2}. Combining the formulas \eqref{cQ simplif disk}, \eqref{int Q dmu complement disk} and \eqref{int Q dnu complement disk} with Theorem \ref{thm:general pot} (i), we infer that $\mathbb{P}(\# \{z_{j}\in U\} = 0) = \exp \big( -C n^{2}+o(n^{2}) \big)$ as $n\to+\infty$, where $C$ is given by \eqref{EG complement disk C}. The fact that $\int_{-\pi}^{\pi}R(e^{i\theta})^{2}\log R(e^{i\theta}) \frac{d\theta}{2\pi}=0$ if $x_{0}=y_{0}=0$ was already proved in \eqref{lol38}. This finishes the proof of Theorem \ref{thm:EG complement disk C}.

\paragraph{Acknowledgements.} The author is grateful to Julian Mauersberger and Philippe Moreillon for valuable discussions and a careful reading. Support is acknowledged from the Swedish Research Council, Grant No. 2021-04626. 

\footnotesize

\end{document}